\newtheorem{Cor}[subsubsection]{Corollary}
\newtheorem{Lm}[subsubsection]{Lemma}
\newtheorem{Pp}[subsubsection]{Proposition}
\newtheorem{Con}[subsubsection]{Conjecture}
\newtheorem{Thm}[subsubsection]{Theorem}
\newtheorem{Def}[subsubsection]{Definition}
\newtheorem{Rem}[subsubsection]{Remark}
\theoremstyle{definition}
\theoremstyle{remark}
\newcommand{\nc}{\newcommand}
\nc{\renc}{\renewcommand}
\nc{\ssec}{\subsection}
\nc{\sssec}{\subsubsection}
\nc{\on}{\operatorname}
\nc\ol{\overline}
\nc\wt{\widetilde}
\nc\tboxtimes{\wt{\boxtimes}}
\newcommand{\cA}{{\mathcal A}}
\newcommand{\cB}{{\mathcal B}}
\newcommand{\cC}{{\mathcal C}}
\newcommand{\cD}{{\mathcal D}}
\newcommand{\cH}{{\mathcal H}}
\newcommand{\cE}{{\mathcal E}}
\newcommand{\cG}{{\mathcal G}}
\newcommand{\cI}{{\mathcal I}}
\newcommand{\cJ}{{\mathcal J}}
\newcommand{\cO}{{\mathcal O}}
\newcommand{\cL}{{\mathcal L}}
\newcommand{\cM}{{\mathcal M}}
\newcommand{\cF}{{\mathcal F}}
\newcommand{\cK}{{\mathcal K}}
\newcommand{\cR}{{\mathcal R}}
\newcommand{\cS}{{\mathcal S}}
\newcommand{\cT}{{\mathcal T}}
\newcommand{\cU}{{\mathcal U}}
\newcommand{\cV}{{\mathcal V}}
\newcommand{\cX}{{\mathcal X}}
\newcommand{\cY}{{\mathcal Y}}
\newcommand{\cZ}{{\mathcal Z}}
\newcommand{\GG}{{\mathbb G}}
\newcommand{\NN}{{\mathbb N}}
\newcommand{\ZZ}{{\mathbb Z}}
\newcommand{\PP}{{\mathbb P}}
\newcommand{\EE}{{\mathbb E}}
\nc{\gi}{\mathfrak{i}}
\newcommand{\gq}{\mathfrak{q}}
\newcommand{\gu}{\mathfrak{u}}
\newcommand{\gt}{\mathfrak{t}}
\newcommand{\gr}{\mathfrak{r}}
\newcommand{\gs}{\mathfrak{s}}
\newcommand{\gU}{\mathfrak{U}}
\nc{\gM}{\mathfrak{M}}
\nc{\gV}{\mathfrak{V}}
\nc{\gE}{\mathfrak{E}}
\nc{\gL}{\mathfrak{L}}
\nc{\gG}{\mathfrak{G}}
\nc{\bC}{{\mathbf C}}
\nc{\uZ}{\underline{\cZ}}
\newcommand{\Rep}{{\on{Rep}}}
\newcommand{\Sch}{{\on{Sch}}}
\newcommand{\Gm}{\mathbb{G}_m}
\newcommand{\toup}[1]{\stackrel{#1}{\to}}
\newcommand{\hook}[1]{\stackrel{#1}{\hookrightarrow}}
\newcommand{\getsup}[1]{\stackrel{#1}{\gets}}
\newcommand{\IC}{\on{IC}}
\newcommand{\Hom}{\on{Hom}}
\newcommand{\Mod}{\on{Mod}}
\newcommand{\Sym}{\on{Sym}}
\newcommand{\Ker}{\on{Ker}}
\newcommand{\triv}{\on{triv}}
\newcommand{\Bun}{\on{Bun}}
\newcommand{\Bunb}{\on{\overline{Bun}} }
\newcommand{\Bunt}{\on{\widetilde\Bun}}
\newcommand{\Spec}{\on{Spec}}
\newcommand{\supp}{\on{supp}}
\newcommand{\HOM}{{{\mathcal H}om}}
\newcommand{\Gr}{\on{Gr}}
\newcommand{\Fun}{{\on{Fun}}}
\newcommand{\pr}{\on{pr}}
\newcommand{\id}{\on{id}}
\newcommand{\QED}{$\square$} 
\newcommand{\iso}{{\widetilde\to}}
\newcommand{\comp}{\circ}
\renewcommand{\H}{{\on{H}}}   
\newcommand{\DD}{\mathbb{D}}  
\newcommand{\D}{\on{D}}       
\newcommand{\ov}[1]{\overline{#1}}
\newcommand{\select}[1]{{\it{#1}}}
\newcommand{\und}[1]{\underline{#1}}
\renewcommand{\div}{\on{div}}
\newcommand{\<}{\langle}
\renewcommand{\>}{\rangle}
\newcommand{\Av}{\on{Av}}
\newcommand{\ev}{\mathit{ev}}
\newcommand{\Conv}{\on{Conv}}
\newcommand{\Loc}{\on{Loc}}
\newcommand{\Lie}{\on{Lie}}
\newcommand{\Sph}{\on{Sph}}
\newcommand{\Res}{\on{Res}}
\newcommand{\ttimes}{\tilde\times}
\newcommand{\act}{\on{act}}
\newcommand{\dimrel}{\on{dim.rel}}
\newcommand{\codim}{\on{codim}}
\newcommand{\SL}{\on{SL}}
\newcommand{\Vect}{\on{Vect}}
\newcommand{\Ind}{\on{Ind}}
\newcommand{\ra}{\rightarrow}
\newcommand{\la}{\leftarrow}
\newcommand{\diag}{\on{diag}}
\nc{\Perv}{\on{Perv}}
\nc{\Gra}{\on{Gra}}
\nc{\PPerv}{\on{{\PP}erv}}
\nc{\oX}{\overset{\scriptscriptstyle\circ}{X}}
\nc{\ocL}{\overset{\scriptscriptstyle\circ}{\cL}}
\nc{\gRes}{\on{gRes}}
\nc{\Sign}{\on{Sign}}
\nc{\goodat}{\rm{good\, at}}
\nc{\Whit}{\on{Whit}}
\nc{\add}{\on{add}}
\nc{\FS}{\on{FS}}
\nc{\oo}[1]{\overset{\scriptscriptstyle\circ}{#1}}
\nc{\can}{\on{can}}
\nc{\summ}{\on{sum}}
\nc{\SiSu}{\on{SS}}
\nc{\Irr}{\on{Irr}}
\nc{\Hecke}{\on{Hecke}}
\nc{\oHecke}{\overset{\scriptstyle\bullet}{\Hecke}}
\nc{\og}[1]{\overset{\scriptscriptstyle\bullet}{#1}}
\nc{\of}{\overset{\scriptstyle\bullet}{f}}
\nc{\Exp}{\on{{\mathcal E}xp}}
\nc{\Chain}{\on{Chain}}
\nc{\Map}{\on{Map}}
\nc{\cSet}{\on{{\mathcal S}et}}
\nc{\Cat}{\on{\mathcal{C}at}}
\nc{\bfitDelta}{\bm{\mathit{\Delta}}}
\nc{\Grpd}{\on{Grpd}}
\nc{\Kan}{\on{{\mathcal K}an}}
\nc{\Spc}{\on{Spc}}
\nc{\Yon}{\on{Yon}}
\nc{\colim}{\on{colim}}
\nc{\Fin}{\on{{\mathcal F}in}}
\nc{\Alg}{\on{Alg}}
\nc{\Triv}{\on{Triv}}
\nc{\Grp}{\on{{\mathcal G}rp}}
\nc{\Mon}{\on{Mon}}
\nc{\Disc}{\on{Disc}}
\nc{\PreStk}{\on{PreStk}}
\nc{\Stk}{\on{Stk}}
\nc{\DGCat}{\on{DGCat}}
\nc{\Ab}{\on{Ab}}
\nc{\ComGrp}{\on{{\mathcal C}omGrp}}
\nc{\Ptd}{\on{Ptd}}
\nc{\Shv}{\on{Shv}}
\nc{\DG}{\on{DG}}
\nc{\cLS}{\on{\mathcal L\mathcal S}}
\nc{\Ge}{\on{Ge}}
\nc{\Ran}{\on{Ran}}
\nc{\Surj}{\on{Surj}}
\nc{\FactGe}{\on{FactGe}}
\nc{\Quad}{\on{Quad}}
\nc{\detrel}{\on{detrel}}
\nc{\Fact}{\on{Fact}}
\nc{\Sptr}{\on{Sptr}}
\nc{\cPr}{\on{{\mathcal P}r}}
\nc{\Tw}{\on{Tw}}
\nc{\QCoh}{\on{QCoh}}
\nc{\uMap}{\und{\Map}}
\nc{\cTw}{{\mathcal T}w}
\nc{\FactPic}{\on{FactPic}}
\nc{\oblv}{\on{oblv}}
\nc{\com}{\on{com}}
\nc{\Glue}{\on{Glue}}
\nc{\Sat}{\on{Sat}}
\nc{\coind}{\on{coind}}
\nc{\counit}{\on{coun}}
\nc{\SI}{\on{SI}}
\nc{\Tot}{\on{Tot}}
\nc{\Conf}{\on{Conf}}
\nc{\coInd}{\on{coInd}}
\nc{\bvartriangle}{\boldsymbol{\vartriangle}}
\nc{\Inv}{\on{Inv}}
\nc{\ind}{\on{ind}}
\newcommand*{\doublerightarrow}[2]{\mathrel{
  \settowidth{\@tempdima}{$\scriptstyle#1$}
  \settowidth{\@tempdimb}{$\scriptstyle#2$}
  \ifdim\@tempdimb>\@tempdima \@tempdima=\@tempdimb\fi
  \mathop{\vcenter{
    \offinterlineskip\ialign{\hbox to\dimexpr\@tempdima+1em{##}\cr
    \rightarrowfill\cr\noalign{\kern.5ex}
    \rightarrowfill\cr}}}\limits^{\!#1}_{\!#2}}}
\newcommand*{\triplerightarrow}[1]{\mathrel{
  \settowidth{\@tempdima}{$\scriptstyle#1$}
  \mathop{\vcenter{
    \offinterlineskip\ialign{\hbox to\dimexpr\@tempdima+1em{##}\cr
    \rightarrowfill\cr\noalign{\kern.5ex}
    \rightarrowfill\cr\noalign{\kern.5ex}
    \rightarrowfill\cr}}}\limits^{\!#1}}}
\begin{document}
\author{G. Dhillon}
\address{UCLA, Mathematical Sciences Building, 520 Portola Plaza, Los Angeles, CA 90095}
\email{gsd@math.ucla.edu}
\author{S. Lysenko}
\address{Institut Elie Cartan Lorraine, Universit\'e de Lorraine, 
B.P. 239, F-54506 Vandoeuvre-l\`es-Nancy Cedex, France}
\email{Sergey.Lysenko@univ-lorraine.fr}
\thanks{We are grateful to Misha Finkelberg for regular fruitful discussions and to Sam Raskin for answering the second author's questions.}
\title{Semi-infinite parabolic $\IC$-sheaf II: the Ran space version}
\date{August 1,  2025}
\subjclass[2020]{22E57, 14D24}
\begin{abstract}
This paper is a sequel to \cite{DL}. Let $G$ be a split reductive group. We define the parabolic semi-infinite category on the Ran version of the affine Grassmanian $\Gr_{G,\Ran}$. We study the semi-infinite parabolic $\IC$-sheaf in this category denoted $\IC^{\frac{\infty}{2}}_{P,\Ran}$. We provide several descriptions of this object, one as certain colimit, another as an intermediate extension in certain category. We relate the global and local semi-infinite categories of sheaves. We also relate the intersection cohomology sheaf $\IC_{\Bunt_P}$ of the Drinfeld compactification $\Bunt_P$ of $\Bun_P$ with $\IC^{\frac{\infty}{2}}_{P,\Ran}$.
\end{abstract}
\maketitle
\tableofcontents

\section{Introduction}

\sssec{} This paper is a second one (a sequel to \cite{DL}) in a series whose aim is to construct the analogs of residues of Eisenstein series in the geometric Langlands program. 

\sssec{} Work over an algebraically closed field $k$. Let $G$ be a split connected reductive group, $P\subset G$ a parabolic subgroup with unipotent radical $U(P)$ and the standard Levi subgroup $M$. For $\cO=k[[t]]\subset F=k((t))$ in \select{loc.cit.} we considered the group ind-scheme $H=M(\cO)U(P)(F)$ acting on the affine Grassmanian $\Gr_G$. Let $S^0_P\subset \Gr_G$ be the $H$-orbit of $G(\cO)$. In \select{loc.cit.} we introduced the object $\IC^{\frac{\infty}{2}}_P$ of the semi-infinite category $Shv(\Gr_G)^H$ playing the role of the intersection cohomology sheaf of the closure $\bar S^0_P$ of $S^0_P$.  

 Fix a smooth projective connected curve $X$, let $\Ran$ denote the Ran space of $X$ (cf. Section~\ref{Sect_1.1.1_now}). In this paper we introduce and study the Ran versions of $\IC^{\frac{\infty}{2}}_P$, they live on the Ran version $\Gr_{G,\Ran}$ of the affine Grassmanian of $G$. We should acknowledge right away that many our techniques are inspired by the case of $P=B$ studied in Gaitsgory's paper \cite{Gai19SI}. However, the results that we get are formally different even in the case $P=B$. In addition, we introduce important new ingredients, which were absent in Gaitsgory's paper, they will be used for further papers in this series (cf. Section~\ref{Sect_intro_Graded Satake functors} for details). 
 
\sssec{} We introduce the Ran versions $\bar S^0_{P,\Ran}\subset \Gr_{G,\Ran}$ (resp., $H_{\Ran}$) of $\bar S^0_P$ (resp., of $H$). We define two versions of the semi-infinite category of sheaves on $\bar S^0_{P,\Ran}$ denoted $\SI^{\le 0}_{P,\Ran}(S)$ and $\cS\cI^{\le 0}_{P,\Ran}(S)$, the former corresponds to $H_{\Ran}$-equivariance, and the latter to $\gL(U(P))_{\Ran}$-equivariance. To get reasonable t-structures, we further impose some unitality conditions and get full subcategories 
$$
\SI^{\le 0}_{P,\Ran, untl}(S)\;\;\;\mbox{and}\;\;\; \cS\cI^{\le 0}_{P,\Ran, untl}(S)
$$
of the above categories. We equip these two categories with t-structures. One has the oblivion functor $\oblv: \SI^{\le 0}_{P,\Ran, untl}(S)\to\cS\cI^{\le 0}_{P,\Ran, untl}(S)$, which is right t-exact (we do not know if it is left t-exact)\footnote{The t-structure on $\SI^{\le 0}_{P,\Ran, untl}(S)$ might turn out sick. In particular, we do not know it is it compatible with filtered colimits, while this property is clear for $\cS\cI^{\le 0}_{P,\Ran, untl}(S)$}. 

\sssec{} We define $\IC^{\frac{\infty}{2}}_{P,\Ran}\in \cS\cI^{\le 0}_{P,\Ran, untl}(S)
$ as the intermediate extension of the dualizing object from the open stratum. 

\sssec{} 
\label{Sect_1.0.5_now}
In (\cite{Ly10}, Section~6) we developed some version of the Drinfeld-Pl\"ucker formalism for commutative factorization categories (more general than the formalism of \cite{Gai19SI}). In Section~\ref{Sect_Drinfeld-Plucker formalism: the Ran space version} we apply it to `spread' the Drinfeld-Plucker formalism from its form established in \cite{DL} to the Ran space. This is just a specialization of the ideas from \cite{Ly10}.

 We apply the resulting Drinfeld-Plucker formalism in its $\Bunt_P$-version to construct an object $'\IC^{\frac{\infty}{2}}_{P,\Ran}\in \SI^{\le 0}_{P,\Ran, untl}(S)$, which is another version of the semi-infinite parabolic $\IC$-sheaf. Our Corollary~\ref{Cor_3.3.7} shows that $\oblv('\IC^{\frac{\infty}{2}}_{P,\Ran})\,\iso\, \IC^{\frac{\infty}{2}}_{P,\Ran}$. 

\sssec{} Let $\Lambda_{G,P}$ denote the fundamental group of $M$, and $\Lambda_{G,P}^{pos}$ the $\ZZ_+$-span of positive $G$-coroots in $\Lambda_{G,P}$. Set $\Lambda_{G,P}^{pos, *}=\Lambda_{G,P}^{pos}-\{0\}$.

The ind-scheme $\bar S^0_{P,\Ran}$ is stratified by locally closed ind-subschemes $S^{\theta}_{P,\Ran}$ for $\theta\in -\Lambda_{G,P}^{pos}$ (cf. Section~\ref{Sect_1.3.27_now}). We calculate the $*$-restrictions of $'\IC^{\frac{\infty}{2}}_{P,\Ran}$ to these strata. The answer is given in terms of the Satake functors for $M$, and even more precisely, in terms of \select{graded Satake functors}, which we introduce. This is an new ingredient reviewed in the next subsection.

\sssec{Graded Satake functors} 
\label{Sect_intro_Graded Satake functors}
In \cite{Ly10} we associated to a non-unital commutative algebra $C$ in $\DGCat_{cont}$ a commutative factorization sheaf of categories $\Fact(C)$ on $\Ran$ and studied many related constructions and properties. 

 We consider the examples of $C$
\begin{equation}
\label{examples_C_introduction}
\Rep(\check{M})_{neg}, \; \Rep(\check{M})_{neg}^{untl}, \; \Rep(\check{M})_{<0}, \; \Rep(\check{M})_{<0}^-
\end{equation} 
defined as follows. Write $\check{M}$ for the Langlands dual of $M$, let $Z(\check{M})$ denote its center. For $\theta\in \Lambda_{G,P}$ let $\Rep(\check{M})_{\theta}\subset \Rep(\check{M})$ be the full subcategory of those representations of $\check{M}$, on which $Z(\check{M})$ acts by $\theta$. Set
$$
\Rep(\check{M})_{neg}=\underset{\theta\in -\Lambda_{G,P}^{pos}}{\oplus} \Rep(\check{M})_{\theta},\;\;\;\;\; \Rep(\check{M})_{<0}=\underset{\theta\in -\Lambda_{G,P}^{pos, *}}{\oplus} \Rep(\check{M})_{\theta}
$$
Let 
$$
\Rep(\check{M})_{neg}^{untl}=\Vect\oplus \Rep(\check{M})_{<0}
$$
Set $\Lambda^-_{M, G}=\Lambda^+_M\cap (-\Lambda^{pos})$, here $\Lambda$ is the set of coweights of $G$, $\Lambda^{pos}$ is the $\ZZ_+$-span of positive coroots in $\Lambda$, and $\Lambda^+_M$ is the set of $M$-dominant coweights. Then 
$$
\Rep(\check{M})_{<0}^-=\underset{0\ne \lambda\in \Lambda^-_{M, G}}{\oplus} \Vect\otimes U^{\lambda},
$$
where $U^{\lambda}$ denotes the irreducible $\check{M}$-module with highest weight $\lambda$. We have the full embeddings
$$
\Rep(\check{M})_{<0}^-\subset \Rep(\check{M})_{<0}\subset \Rep(\check{M})_{neg}^{untl}\subset \Rep(\check{M})_{neg}.
$$
The categories (\ref{examples_C_introduction}) are graded (in a way compatible with the symmetric monoidal structures) by $-\Lambda_{G,P}^{pos}$ in the unital (resp., by
$-\Lambda_{G,P}^{pos,*}$ in the non-unital) case. In this situation $\Fact(C)$ inherits a grading by the same semi-group (cf. \cite{Ly10}, Appendix~E).

 Moreover, in the unital (resp., non-unital case) each graded component $\Fact(C)_{\theta}$ is a naturally a module category over $Shv((X^{-\theta}\times\Ran)^{\subset})$ (resp., over $Shv(X^{-\theta})$). Here for $\eta\in\Lambda_{G,P}^{pos}$ we denote by $X^{\eta}$ the moduli scheme of $\Lambda_{G,P}^{pos, *}$-valued divisors on $X$ of degree $\eta$. The prestack $(X^{\eta}\times \Ran)^{\subset}$ is defined in Section~\ref{Sect_1.3.13_now}.
 
 The usual Satake functor for $M$ is the functor $\Sat_{M,\Ran}: \Fact(\Rep(\check{M}))\to \Sph_{M,\Ran}$ taking values in spherical sheaves on $\Gr_{M,\Ran}$ (cf. \cite{Ly10}, Section~7). We define versions of $\Sat_{M,\Ran}$ for $\Rep(\check{M})$ replaced by 
 every category in (\ref{examples_C_introduction}). In the unital case the resulting functors 
 denoted $\Sat_{M,\Ran, +}$, $\Sat_{M, \Ran}^{untl}$ take values in some full subcategories of $\Sph_{M,\Ran}$, which are also graded by $-\Lambda_{G,P}^{pos}$. The $\theta$-component of $\Sat_{M,\Ran, +}$, $\Sat_{M, \Ran}^{untl}$ is $Shv((X^{-\theta}\times\Ran)^{\subset})$-linear. We study these functors systematically in Appendix~\ref{Sect_append_Graded Satake functors}.  

 Let $\Conf^*=\underset{\eta\in \Lambda_{G,P}^{pos,*}}{\sqcup} X^{\eta}$. We have the version of the group scheme of arcs $\gL^+(M)_{\Conf^*}$ over $\Conf^*$, some versions of the affine Grasslmanian $\Gr_{M, \Conf^*, +}$ over $\Conf^*$, and the corresponding category of spherical sheaves $\Sph_{M,\Conf^*, +}$ defined in Section~\ref{Sect_3.2.7_now}. In the non-unital case we get the functor
$$
\Sat_{M,\Conf^*}: \Fact(\Rep(\check{M})_{<0})\to \Sph_{M,\Conf^*, +}
$$
graded by $-\Lambda_{G,P}^{pos,*}$. For $\theta\in -\Lambda_{G,P}^{pos, *}$ the $\theta$-component of the latter functor is $Shv(X^{-\theta})$-linear.

 For the full subcategory $\Rep(\check{M})_{<0}^-\subset  \Rep(\check{M})_{<0}$ the corresponding graded Satake functor is the restriction of $\Sat_{M,\Conf^*}$. It takes values in a full subcategory of $\Sph_{M,\Conf^*, +}$ essentially given by the property that the corresponding spherical sheaf is the extension by zero from the negative part of the affine grassmanian $\Gr_{M,\Conf^*, +}$. 
 
 Our key technical result about graded Satake functors is Proposition~\ref{Pp_3.2.5_Satake}. It provides a relation between the image of a commutative factorization algebra $\Fact(A)$ (attached to a free commutative algebra $A$ in $\Rep(\check{M})_{neg}^{untl}$) under $\Sat_{M,\Ran}^{untl}$ and the image of $\Fact(A_{<0})$ under $\Sat_{M, \Conf^*}$. Here $A_{<0}$ is obtained from $A$ by removing the unit. This result is inspired by a result of Raskin (\cite{Ras2}, Proposition-Construction~4.10.1) in the case of a torus. It is our Proposition~\ref{Pp_3.2.5_Satake} that allows to prove the unitality of our object $'\IC^{\frac{\infty}{2}}_{P,\Ran}$. 
 
\sssec{Main results} The $*$-restrictions of $'\IC^{\frac{\infty}{2}}_{P,\Ran}$ to the strata $S^{\theta}_P$, $\theta\in -\Lambda_{G,P}^{pos}$ is calculated in Corollary~\ref{Cor_3.3.5}. The answer is given in terms of the Satake functor $Sat_{P,\Conf^*}$ applied to the factorization algebra attached to the non-unital commutative algebra $\cO(U(\check{P}))_{<0}$. Here $U(\check{P})$ is the unipotent radical of the dual parabolic $\check{P}$ of the Langlands dual group $\check{G}$, and $\cO(U(\check{P}))$ is the space of regular functions on $U(\check{P})$. We denoted by $\cO(U(\check{P}))_{<0}$ the algebra obtained from $\cO(U(\check{P}))$ by removing the unit, so 
$$
\cO(U(\check{P}))_{<0}\in \Rep(\check{M})_{<0}^-.
$$
This allows to prove Corollary~\ref{Cor_3.3.7} already mentioned in Section~\ref{Sect_1.0.5_now}. 

  Consider the Drinfeld compactification $\Bunt_P$ of $\Bun_P$, we introduce also its version allowing some poles $_{\Ran, \infty}\Bunt_P$ (cf. Section~\ref{Sect_1.3.5_now}).  
We define global semi-infinite categories $\Inv(_{\Ran, \infty}\Bunt_P)$, $\Inv(\Bunt_P)$. Our main results relate the local object $'\IC^{\frac{\infty}{2}}_{P,\Ran}$ with the global one, the $\IC$-sheaf $\IC_{\Bunt_P}$ of $\Bunt_P$. For this we introduce a diagram
$$
\gL^+(M)_{\Ran}\backslash \Gr_{G,\Ran}\getsup{\pi_{loc,\Ran}}
\cY_{\Ran}\toup{\pi_{glob,\Ran}} {_{\Ran,\infty}\Bunt_P}
$$
in Section~\ref{Sect_The stack cY_Ran}. For us $\cY_{\Ran}$ is a tool allowing to formulate the desired relation.

 The stack $\cY_{\Ran}$ classifies: $M$-torsor $\cF_M$ on $X$, $G$-torsor $\cF_G$ on $X$, a point $\cI\in\Ran$ and an isomorphism $\cF_M\times_M G\,\iso\, \cF_G\mid_{X-\Gamma_{\cI}}$ of $G$-torsors. Here $\Gamma_{\cI}$ is the union of graphs of the points of $X$ that constitute $\cI$.  
 
 We define a closed prestack $\cG_{\Ran, M}\times^{\gL^+(M)_{\Ran}} \bar S^0_{P,\Ran}\hook{} \cY_{\Ran}$ and the corresponding "almost local" version of the semi-infinite category $\Inv(\cG_{\Ran, M}\times^{\gL^+(M)_{\Ran}} \bar S^0_{P,\Ran})_{untl}$. The map $\pi_{glob,\Ran}$ restricts to a morphism $\bar\pi^0_{glob}:
 \cG_{\Ran, M}\times^{\gL^+(M)_{\Ran}} \bar S^0_{P,\Ran}\to \Bunt_P$. Our Theorem~\ref{Thm_loc_glob_equiv} claims that one has a canonical equivalence
$$
(\bar\pi^0_{glob})^!: \Inv(\Bunt_P)\,\iso\,\Inv(\cG_{\Ran, M}\times^{\gL^+(M)_{\Ran}} \bar S^0_{P,\Ran})_{untl}.
$$

 Our Theorem~\ref{Th_4.2.12_global_compatibility} establishes a canonical isomorphism
$$
(\bar \pi^0_{loc})^!('\IC^{\frac{\infty}{2}}_{P,\Ran})\,\iso\, (\bar\pi^0_{glob})^!\IC_{\Bunt_P}[\dim\Bun_P]
$$ 
in $\Inv(\cG_{\Ran, M}\times^{\gL^+(M)_{\Ran}}\bar S^0_{P,\Ran})_{untl}$.  

  The stack $\Bunt_P$ is stratified by locally closed substacks $_{\theta}\Bunt_P$ for $\theta\in -\Lambda_{G,P}^{pos}$ (cf. Section~\ref{Sect_1.3.19_now}). An important application of the above results is our Theorem~\ref{Th_5.5.2}. It provides a canonical description of the $*$-restriction of $\IC_{\Bunt_P}$ to the stratum $_{\theta}\Bunt_P$ in terms of the graded Satake functors for $M$. 
  
  This result has a long history of previous partial results: after some further stratification the corresponding $*$-restriction has been calculated \select{non-canonically} in (\cite{BFGM}, Theorem~1.12). In the case of $B=P$ this is the result of Gaitsgory (\cite{Gai19Ran}, Theorem~3.9.3). The canonicity of the isomorphism of Theorem~\ref{Th_5.5.2} will be essential for our construction of the residues of geometric Eisenstein series in subsequent papers in this series.

\sssec{Other results} We "almost calculate" also the !-restriction of $'\IC^{\frac{\infty}{2}}_{P,\Ran}$ to a stratum $S^{\theta}_{P,\Ran}$. More precisely, we formulate Conjecture~\ref{Con_3.3.8}, which is "almost proved" as in explained in Remark~\ref{Rem_4.3.10_now}. 

 Another new result, which is absent in Gaitsgory's paper \cite{Gai19Ran}, is Proposition~\ref{Pp_3.3.6}. It is based on some unexpected ULA property established in 
Proposition~\ref{Pp_ULA_and_perverse_appendix} for some morphism of configuration spaces. This is a part of Appendix~\ref{Sect_The ULA property and perversity}, where we prove more general results relating the ULA property and perversity.  

 In Appendix~\ref{Sect_Invariants_under category objects} we propose a general point of view on the category of invariants under the action of a "category object" in prestacks. In Section~\ref{Sect_Placid group schemes over Ran} we explain how to define the perverse t-structure on the category of invariants $Shv(Y)^G$, where $Y\to\Ran$ is a relative ind-scheme of ind-finite type, and $G\to \Ran$ is a placid prosmooth group scheme over $\Ran$ acting on $Y$. 
 
 In Appendix~\ref{Sect_append_Graded Satake functors} we systematically study the graded Satake functors for $M$. In particular, we define the ranification functor for $M$ (an analog of a similar functor considered by Raskin for a torus in \cite{Ras2}), and discuss the related factorization algebras/coalgebras and the chiral symmetric monoidal structures.

\ssec{Conventions}

\sssec{} 
\label{Sect_0.1.1_now}
Our conventions are those of \cite{DL}, except that our sheaf theory is either in the constructible context or the theory of $\cD$-modules. 

 We work over an algebraically closed field $k$. Write $\Sch^{aff}$ for the category of classical affine schemes, $\Sch_{ft}$ for the category of classical schemes of finite type. Write $\PreStk=\Fun((\Sch^{aff})^{op}, \Spc)$ for the category of prestacks, $\PreStk_{lft}$ for its full subcategory of prestacks locally of finite type. Our stacks are considered for the etale topology. \index{$k, \Sch^{aff}, \Sch_{ft}, \PreStk, \PreStk_{lft}, X, \Omega_X, \Omega^{\otimes\frac{1}{2}}$, Section~\ref{Sect_0.1.1_now}} 
 
 For a scheme $S$ and a group scheme $\cG$ over $S$, a trivial $\cG$-torsor on $S$ is denoted $\cF^0_{\cG}$. For an affine algebraic group $L$ write $\Bun_L$ for the moduli stack of $L$-torsors on $X$. 
 
 Let $X$ be a smooth projective connected curve. We assume $char(k)\ne 2$ to use the results of (\cite{GLys}, Section~5), namely we need to use the corrected Jacquet functors and their compatibility with the factorization. For this reason we also fix a square root $\Omega^{\otimes\frac{1}{2}}$ of the canonical line bundle $\Omega_X$, which was used in (\cite{GLys}, 5.2.2) for the construction of the corrected Jacquet functors. 
 
  The field of coefficients of our sheaf theories $e$ is algebraically closed of characteristic zero. For a scheme $S$ over $e$ we denote by $\cO(S)\in\Vect^{\heartsuit}$ the space of regular functions on $S$.

\sssec{} 
\label{Sect_0.1.2_now}
As in \cite{DL}, we let $G$ be a connected reductive group, $T\subset B\subset G$ a maximal torus and Borel subgroups, $B^-$ an opposite Borel with $B\cap B^-=T$. We write $U$ (resp., $U^-$) for the unipotent radical of $B$ (resp., of $B^-$).  
\index{$G, T, B, B^-, U, U^-, P, P^-, M$, Section~\ref{Sect_0.1.2_now}} 
 
 Let $P\subset G$ be a standard parabolic, $P^-$ an opposite parabolic with common Levi subgroup $M=P\cap P^-$. Write $w_0$ for the longest element of the Weyl group $W$, and similarly for $w_0^M\in W_M$, where $W_M$ is the Weyl group of $(M, T)$. Write $U(P)$ (resp., $U(P^-)$) for the unipotent radical of $P$ (resp., of $P^-$). Set $B_M=B\cap M$, $B_M^-=B^-\cap M$. 
\index{$W, W_M, w_0, w_0^M, U(P), U(P^-)$, Section~\ref{Sect_0.1.2_now}} 

 Let $\cI_G$ be the set of vertices of the Dynkin diagram. For $i\in\cI_G$ we write $\alpha_i$ (resp., $\check{\alpha}_i$) for the corresponding simple coroot (resp., simple root). Let $\cI_M\subset \cI_G$ correspond to the Dynkin diagram of $M$. Write $\check{P}, \check{B}, \check{T}$, $U(\check{P}), U(\check{P}^-)$ for the corresponding dual objects.
\index{$B_M, B_M^-, \cI_G, \alpha_i, \check{\alpha}_i, \cI_M$, Section~\ref{Sect_0.1.2_now}} 
\index{$\check{P}, \check{B}, \check{T}, U(\check{P}), U(\check{P}^-), $, Section~\ref{Sect_0.1.2_now}} 
\index{$\Lambda_{G,P}^{pos}, \Lambda_{G,P}^{pos, *}, \cO, F$, Section~\ref{Sect_0.1.2_now}} 
 
 Write $\Lambda$ (resp., $\check{\Lambda}$) for the coweights (resp., weights) lattice of $T$, $\Lambda^+$ (resp., $\check{\Lambda}^+$) for the dominant coweights (resp. weights) of $(G,T)$. Write $\Lambda^+_M$ for the dominant coweights of $B_M$. Let $\Lambda^{pos}\subset\Lambda$ be the $\ZZ_+$-span of positive coroots. Let $\Lambda_M^{pos}\subset\Lambda$ be the $\ZZ_+$-span of $\alpha_i$, $i\in \cI_M$. 
 
  Let $\Lambda_{G,P}$ be the lattice of cocharacters of $M/[M,M]$, $\check{\Lambda}_{G,P}$ be the dual lattice. Write $\Lambda_{G,P}^{pos}$ for the $\ZZ_+$-span of $\alpha_i$ for $i\in \cI_G-\cI_M$ in $\Lambda_{G,P}$. Set $\Lambda_{G,P}^{pos, *}=\Lambda_{G,P}^{pos}-\{0\}$. Set $\cO=k[[t]]\subset F=k((t))$. 
\index{$\Lambda, \check{\Lambda}, \Lambda^+, \check{\Lambda}^+, \Lambda^+_M, \Lambda^{pos}, \Lambda_M^{pos}, \Lambda_{G,P}, \check{\Lambda}_{G,P}$, Section~\ref{Sect_0.1.2_now}}   

\sssec{} 
\label{Sect_0.1.3_now}
For $\check{\lambda}\in \check{\Lambda}^+$ write $\cV^{\check{\lambda}}$ for the Weyl $G$-module with a fixed highest weight vector $v^{\check{\lambda}}$ of weight $\check{\lambda}$. For $\check{\lambda}\in \check{\Lambda}^+$ write $\tilde\cV^{\check{\lambda}}$ for the corresponding dual Weyl $G$-module, it is equipped with a distinguished linear functional $u^{\check{\lambda}}: \tilde\cV^{\check{\lambda}}\to k$.  
\index{$\cV^{\check{\lambda}}, v^{\check{\lambda}}, \tilde\cV^{\check{\lambda}}, u^{\check{\lambda}}$, Section~\ref{Sect_0.1.3_now}}

 For the convenience of the reader, recall their definitions. Let 
$\Rep(B)^{\heartsuit, fd}$ (resp., $\Rep(G)^{\heartsuit, fd}$) be the abelian category of finite-dimensional representations of $B$ (resp., $G$). Let 
$$
\Ind_B^G: \Rep(B)^{\heartsuit, fd}\to \Rep(G)^{\heartsuit, fd}\;\;\;\;\mbox{and}\;\;\;\; \coInd_B^G: \Rep(B)^{\heartsuit, fd}\to \Rep(G)^{\heartsuit, fd}
$$ 
be the left and right adjoints to the restriction. For $\check{\lambda}\in \check{\Lambda}^+$ we let $\cV^{\check{\lambda}}=\Ind_B^G(k^{\check{\lambda}})$, where $k^{\check{\lambda}}$ is the 1-dimensional space $k$, on which $T$ acts by $\check{\lambda}$ and $U$ acts trivially. For $\check{\lambda}_i\in\check{\Lambda}^+$ we have a canonical map $\cV^{\check{\lambda}_1+\check{\lambda}_2}\to \cV^{\check{\lambda}_1}\otimes \cV^{\check{\lambda}_2}$ sending $v^{\check{\lambda}_1+\check{\lambda}_2}$ to $v^{\check{\lambda}_1}\otimes v^{\check{\lambda}_2}$. 
\index{$\Rep(B)^{\heartsuit, fd}, \Rep(G)^{\heartsuit, fd}, \Ind_B^G, \coInd_B^G$, Section~\ref{Sect_0.1.3_now}}

 For $\check{\lambda}\in \check{\Lambda}^+$ set $\tilde\cV^{\check{\lambda}}=\coInd_{B^-}^G(e^{\check{\lambda}})$. For $\check{\lambda}_i\in\check{\Lambda}^+$ we have a canonical map $\tilde\cV^{\check{\lambda}_1}\otimes \tilde\cV^{\check{\lambda}_2}\to \tilde \cV^{\check{\lambda}_1+\check{\lambda}_2}$ whose composition with $u^{\check{\lambda}_1+\check{\lambda}_2}$ is $u^{\check{\lambda}_1}\otimes u^{\check{\lambda}_2}$. 
 
 Recall that $\cV^{\check{\lambda}}$ has a unique irreducible quotient $V^{\check{\lambda}}$ of highest weight $\check{\lambda}$. We have a canonical inclusion $V^{\check{\lambda}}\hook{} \tilde\cV^{\check{\lambda}}$, hence a canonical map $\cV^{\check{\lambda}}\to \tilde\cV^{\check{\lambda}}$. 
 
\sssec{} Starting from Section~\ref{Sect_Relation between local and global: geometry} we will assume $[G,G]$ simply-connected.\footnote{This is done to simplify the definitions of Drinfeld compactifications $\Bunb_P, \Bunt_P$ and use the results of \cite{BG}. This also simplifies the definitions of the strata of $\Gr_{G,\Ran}$.}
 
\sssec{} 
\label{Sect_0.1.5_now}
Our conventions and notations related to higher categories are those of \cite{DL, Ly10}. In particular, for $A\in Alg(\DGCat_{cont})$ we write $A-mod$ for $A-mod(\DGCat_{cont})$. The category of $\infty$-categories is denoted by $1-\Cat$. 
\index{$A-mod, 1-\Cat$, Section~\ref{Sect_0.1.5_now}}

\sssec{} 
\label{Sect_0.1.6_now}
We denote by $Alg$ (resp., $Alg^{nu}$) the operad of unital (resp., non-unital) associative algebras. The corresponding operads for commutative algebras are denoted $CAlg, CAlg^{nu}$. 
\index{$Alg, Alg^{nu}, CAlg, CAlg^{nu}$, Section~\ref{Sect_0.1.6_now}}

\section{Parabolic semi-infinite category of sheaves over $\Ran$}

In this section we introduce our main geometric objects, define several versions of the semi-infinite parabolic category of sheaves in its $\Ran$ version and study its properties related, in particular, to natural stratifications of the underlying prestacks. 

\ssec{Basic geometric objects}
\sssec{} 
\label{Sect_1.1.1_now}
For $x\in X$ write $\cO_x$ for the completed local ring of $X$ at $x$, $F_x$ for its fraction field. Write $fSets$ for the category of finite non-empty sets and surjections. Recall the following geometric objects from \cite{GLys2, Gai19Ran}. 

 The Ran space of $X$ is the prestack $\Ran\in\PreStk_{lft}$ sending $S\in\Sch^{aff}$ to the set of finite non-empty subsets $\cI\subset \Map(S, X)$. One equivalently has $\Ran\,\iso\,\underset{I\in fSets^{op}}{\colim} X^I$. 
 
  For $S\in\Sch^{aff}$, an $S$-point $\cI\in\Map(S, \Ran)$, and $i\in \cI$ we denote by $\Gamma_i\subset S\times X$ the graph of the corresponding map. Let $\Gamma_{\cI}$ denote the sum of $\Gamma_i$ for $i\in\cI$ as a relative effective Cartier divisor on $S\times X$ over $S$. For $\cI\subset \Map(S, X)$ write $\hat\cD_{\cI}$ for the formal completion of $S\times X$ along $\Gamma_{\cI}$ viewed as a formal scheme. Let $\cD_{\cI}$ denote the affine scheme obtained from $\hat\cD_{\cI}$, the image of $\hat\cD_{\cI}$ under $\colim: \Ind(\Sch^{aff})\to \Sch^{aff}$, see (\cite{GLys}, 7.1.2). For $n\ge 0$ we will also use $n\Gamma_{\cI}$ as an effective relative Cartier divisor on $S\times X$ over $S$.    
 
 Note that $\Gamma_{\cI}\subset \cD_{\cI}$ is a closed subscheme, set $\oo{\cD}_{\cI}=\cD_{\cI}-\Gamma_{\cI}$. Note that $\oo{\cD}_{\cI}\in\Sch^{aff}$. 
\index{$fSets, \Ran, \Gamma_i, \Gamma_{\cI}, \hat\cD_{\cI}, \cD_{\cI}, n\Gamma_{\cI}, \oo{\cD}_{\cI}$, Section~\ref{Sect_1.1.1_now}} 
 
\sssec{} 
\label{Sect_1.1.2_now}
For $J\in fSets$ we write $\Ran^J_d\subset \Ran^J$ for the open subfunctor classifying for $S\in\Sch^{aff}$ collections $\cI_j\subset \Map(S, X), j\in J$ such that for any $j_1\ne j_2$ one has 
$$
\Gamma_{\cI_{j_1}}\cap \Gamma_{\cI_{j_2}}=\emptyset
$$ 

 Write $\PreStk_{corr}$ for the category of correspondences in $\PreStk$ (cf. \cite{R}). Recall that $\Ran$ is an object of $CAlg^{nu}(\PreStk_{corr})$, where for $J\to *$ the product is given by the diagram
$$
\Ran^J\gets \Ran^J_d\to \Ran,
$$  
here the right arrow is the union of the corresponding finite subsets. 

For a notion of a factorization prestack over $\Ran$ we refer to (\cite{GLys}, 2.2). A concise definition is as follows. This is a map $Z_{\Ran}\to\Ran$ in $\PreStk$, which is lifted to a morphism in $CAlg^{nu}(\PreStk_{corr})$ and such that for any $J$ the induced morphism 
$$
Z^J_{\Ran}\times_{\Ran^J} \Ran^J_d\to Z_{\Ran}\times_{\Ran}  \Ran^J_d
$$ 
is an isomorphism. 
\index{$\Ran^J_d, \PreStk_{corr}$, Section~\ref{Sect_1.1.2_now}} 
 
\sssec{} 
\label{Sect_1.1.3}
For $Y\in\PreStk$ denote by $\gL^+(Y)_{\Ran}\to \Ran$ the prestack over $\Ran$ whose fibre over $(S, \cI)$ as above is $\Map(\hat\cD_{\cI}, Y)$. If $Y$ is an affine scheme, this fibre coincides with $\Map(\cD_{\cI}, Y)$. For $I\in fSets, n>0$
let $\gL^+(Y)_{n, I}$ be the prestack over $X^I$ whose fibre for $S\in\Sch^{aff}$ over an $S$-point $\cI\in\Map(S, X^I)$ is $\Map(n\Gamma_{\cI}, Y)$. 

 Denote by $\gL(Y)_{\Ran}\to \Ran$ the prestack whose fibre over $(S, \cI)$ is $\Map(\oo{\cD}_{\cI}, Y)$. The prestacks $\gL(Y)_{\Ran}, \gL^+(Y)_{\Ran}$ have natural factorization structure over $\Ran$, see (\cite{GLys2}, 1.2.5). 
 
 For a prestack $Z_{\Ran}\to \Ran$ over $\Ran$ we use the notation $Z_I$ to denote the base change $Z_{\Ran}\times_{\Ran} X^I$ for $I\in fSets$, that is, the subscript $\Ran$ is replaced by the subscript $I$. In particular, for $I\in fSets$ we get
$$
\gL(Y)_I=\gL(Y)_{\Ran}\times_{\Ran} X^I, \; \gL^+(Y)_I=\gL^+(Y)_{\Ran}\times_{\Ran} X^I.
$$ 
\index{$\gL^+(Y)_{\Ran}, \gL^+(Y)_{n, I}, \gL(Y)_{\Ran}, Z_I$, Section~\ref{Sect_1.1.3}}
 
\sssec{} 
\label{Sect_def_Gr_G,Ran}
We write $\Gr_{G, \Ran}$ for the Ran version of the affine Grassmanian of $G$. It is a prestack over $\Ran$ whose fibre over $(S,\cI)$ as above is the groupoid of pairs: $(\cF_G, \beta)$, where $\cF_G$ is a $G$-torsor over $S\times X$, $\beta: \cF_G\,\iso\, \cF^0_G\mid_{S\times X-\Gamma_{\cI}}$ is a trivialization over $S\times X-\Gamma_{\cI}$.

 The Beauville-Laszlo theorem says that the fibre of $\Gr_{G, \Ran}$ over $(S, \cI)$ as above can equivalently be described as the groupoid of pairs: $(\cF_G,\beta)$, where $\cF_G$ is a $G$-torsor on $\cD_{\cI}$, and $\beta: \cF_G\,\iso\, \cF^0_G\mid_{\oo{\cD}_{\cI}}$ is a trivialization. Namely, the restriction from the first datum to the second one defines this equivalence. If $I\in fSets$ then $\Gr_{G, I}$ identifies with the stack quotient $\gL(G)_I/\gL^+(G)_I$, and similarly for $\Gr_{G,\Ran}$.
 
Note that $\Gr_{G,\Ran}$ is naturally a factorization prestack over $\Ran$.
\index{$\Gr_{G, \Ran}$, Section~\ref{Sect_def_Gr_G,Ran}}

\sssec{Unital structures} 
\label{Sect_1.1.5_now}
Let
$$
(\Ran\times\Ran)^{\subset} \subset\Ran\times\Ran
$$
be the subfunctor sending $S\in\Sch^{aff}$ to $(\cI,\cI')\in \Map(S, \Ran)$ such that $\Gamma_{\cI}$ is set-theoretically contained in $\Gamma_{\cI'}$, which means by definition that
\begin{equation}
\label{inclusion_for_Ran_times_Ran_subset} 
(S\times X)-\Gamma_{\cJ'}\subset (S\times X)-\Gamma_{\cJ}.
\end{equation}

  Let $\varphi_s,\varphi_b: (\Ran\times\Ran)^{\subset}\to\Ran$ be the map sending the above point to $\cI$ and $\cI'$ respectively, the letters $s$ and $b$ stand for small and big.
We view $(\Ran\times\Ran)^{\subset}$ as a category object in $\PreStk_{lft}$ acting on $\Ran$ on the right in the sence of Section~\ref{Sect_Invariants_under category objects}. Here $\varphi_b$ is the action map. 
  
  Now given a map $Z\to\Ran$ in $\PreStk$, a \select{unital structure} on $Z$ is a right $(\Ran\times\Ran)^{\subset}$-action on $Z$ such that the map $Z\to \Ran$ is equivariant with respect to the right actions of $(\Ran\times\Ran)^{\subset}$.  
\index{$(\Ran\times\Ran)^{\subset}, \varphi_s,\varphi_b$, Section~\ref{Sect_1.1.5_now}}  
  
\sssec{} Note that $(\Ran\times\Ran)^{\subset}\toup{\varphi_b}\Ran$ is not a factorization prestack over $\Ran$. Indeed, given $S\in\Sch^{aff}$ and $\{\cI'_j\}_{j\in J}\in\Map(S,\Ran^J_d)$, let $\cI'$ be its image under 
$$
\Map(S, \Ran^J_d)\to \Map(S, \Ran),
$$ 
and $(\cI, \cI')\in \Map(S, (\Ran\times\Ran)^{\subset})$. Then for $j\in J$ we naturally have the `piece' $\cI_j$ of $\cI$ which is set-theoretically contained in $\cI'_j$. Though $\cI=\underset{j\in J}{\sqcup} \cI_j$, for some $j$ the set $\cI_j$ could be empty. 
  
  However, $(\Ran\times\Ran)^{\subset}\in CAlg^{nu}(\PreStk_{corr})$ naturally in such a way that 
$$
\varphi_b: (\Ran\times\Ran)^{\subset}\to\Ran
$$ 
is a map in $CAlg^{nu}(\PreStk_{corr})$. The product for $J\to *$ in $(\Ran\times\Ran)^{\subset}$ is given by the diagram
$$
((\Ran\times\Ran)^{\subset})^J\gets ((\Ran\times\Ran)^{\subset})^J\times_{(\phi_b)^J,\Ran^J} \Ran^J_d\to (\Ran\times\Ran)^{\subset},
$$
where the right arrow is given by the union of the corresponding finite subsets of $\Map(S, X)$ for $S\in\Sch^{aff}$. 
  
Let $Z_{\Ran}\to\Ran$ be a factorization prestack. Similarly, $Z_{\Ran}\times_{\Ran, \varphi_s} (\Ran\times\Ran)^{\subset}$ is not a factorization prestack over $\Ran$ via the map
\begin{equation}
\label{map_phi_b_composed_with_pr}
Z_{\Ran}\times_{\Ran, \varphi_s} (\Ran\times\Ran)^{\subset}\toup{\pr} (\Ran\times\Ran)^{\subset}\toup{\varphi_b}\Ran.
\end{equation}
However, 
$$
Z_{\Ran}\times_{\Ran, \varphi_s} (\Ran\times\Ran)^{\subset}\in CAlg^{nu}(\PreStk_{corr})
$$ 
naturally in such a way that (\ref{map_phi_b_composed_with_pr}) is a map in $CAlg^{nu}(\PreStk_{corr})$.

 Assume given a unital structure on $Z_{\Ran}$. By definition, the unital and factorization structures on $Z_{\Ran}$ are \select{compatible} if the action map 
$$
\varphi_b: Z_{\Ran}\times_{\Ran, \varphi_s} (\Ran\times\Ran)^{\subset}\to Z_{\Ran}
$$ 
is a morphism in $CAlg^{nu}(\PreStk_{corr})$. 

\sssec{} We equip $\Gr_{G,\Ran}$ with a unital structure by letting 
$$
\varphi_b: \Gr_{G,\Ran}\times_{\Ran} (\Ran\times\Ran)^{\subset}\to \Gr_{G,\Ran}
$$ 
be the map that, in terms of Section~\ref{Sect_def_Gr_G,Ran} sends for $S\in \Sch^{aff}$ the collection $(\cI, \cI', \cF_G,\beta)$ to $(\cI',\cF_G, \beta')$, where $\beta'$ is the restriction of $\beta$ under (\ref{inclusion_for_Ran_times_Ran_subset}). 

 The unital and factorization structures on $\Gr_{G,\Ran}$ are compatible.

\ssec{Categories of sheaves} 

\sssec{} 
\label{Sect_1.2.1_now}
If $Z\to \Ran$ is a map in $\PreStk_{lft}$, and $Z$ is equipped with a unital structure, we define $Shv(Z)_{untl}\in\DGCat_{cont}$ as the category of $(\Ran\times\Ran)^{\subset}$-equivariant objects in $Shv(Z)$ as in Section~\ref{Sect_A.0.4}. 
In particular, we get $Shv(\Gr_{G,\Ran})_{untl}$. 

 By (\cite{GLys2}, 1.6.4) and (\cite{Gai19Ran}, 4.1.2) the map $\varphi_s: (\Ran\times\Ran)^{\subset}\to \Ran$ is universally homologically contructible in the sense of (\cite{Gai19Ran}, A.1.8). So, by Section~\ref{Sect_A.0.6}, $Shv(Z)_{untl}\subset Shv(Z)$ is the full subcategory of $K\in Shv(Z)$ such that for the diagram
$$
Z\getsup{\varphi_s} Z\times_{\Ran} (\Ran\times\Ran)^{\subset}\toup{\varphi_b} Z
$$
the object $\varphi_b^! K$ lies in the image of the full embedding 
$$
\varphi_s^!: Shv(Z)\to Shv(Z\times_{\Ran} (\Ran\times\Ran)^{\subset}).
$$ 
\index{$Shv(Z)_{untl}, Shv(\Gr_{G,\Ran})_{untl}$, Section~\ref{Sect_1.2.1_now}} 
 
\sssec{} For a notion of a sheaf of categories on $Y\in\PreStk_{lft}$ we refer to (\cite{Ly10}, Appendix~A). In the case of $\cD$-modules it is also given in (\cite{GLys}, 1.6.2). So, for $Y\in\PreStk_{lft}$ one has $ShvCat(Y)\in 1-\Cat$ defined in (\cite{Ly10}, A.1). For a morphism $f: Y\to Y'$ in $\PreStk_{lft}$ the corresponding restriction functor is denoted 
$$
f^!: ShvCat(Y')\to ShvCat(Y)
$$ 

In particular, 
$$
ShvCat(\Ran)\,\iso\, \underset{I\in fSets}{\lim} ShvCat(X^I), 
$$
and $ShvCat(X^I)\,\iso\, Shv(X^I)-mod$. Recall that $\Ran$ is 1-affine for our sheaf theories by (\cite{Ly10}, 2.1.2). 
 
\sssec{} The theory of placid group ind-shemes acting on $\DG$-categories is developed for $\cD$-modules in (\cite{Chen}, Appendix B) and in (\cite{Ly9}, Sections 1.3.2 - 1.3.24) in the constructible context, see also (\cite{DL}, Section~A.4). In particular, the corresponding categories of invariants are defined in \select{loc.cit.}
  
\sssec{} 
\label{Sect_1.2.4_now}
For $I\in fSets$ we have 
$$
\Gr_{G, I}=X^I\times_{\Ran}\Gr_{G, \Ran},
$$
and similarly for other groups. For a map $\phi: I\to J$ in $fSets$ we write $\vartriangle=\vartriangle^{(I/J)}: X^J\to X^I$ for the diagonal closed immersion. 

 Recall that $\gL^+(G)_{\Ran}$ (resp. $\gL(G)_{\Ran}$) is a placid group scheme (resp., placid group ind-scheme) over $\Ran$. Set 
$$
H_{\Ran}=\gL^+(M)_{\Ran}\gL(U(P))_{\Ran}\subset \gL(P)_{\Ran}, \; H_I=X^I\times_{\Ran} H_{\Ran}
$$

 For $I=*$ and $x\in X$ write $\Gr_{G, x}$ for the fibre of $\Gr_{G, *}$ over $x\in X$, and similarly for other groups.
\index{$\Gr_{G, I}, H_{\Ran}, H_I, \Gr_{G, x}$, Section~\ref{Sect_1.2.4_now}} 
  
\sssec{} 
\label{Sect_1.2.5}
For $\lambda\in\Lambda$, a $T$-torsor $\cF_T$ on $X$, and $x\in X$ 
denote by $\cF_T(\lambda x)$ the $T$-torsor on $X$ equipped with isomorphisms for $\check{\mu}\in\check{\Lambda}$
$$
\cL^{\check{\mu}}_{\cF_T(\lambda x)}\,\iso\, \cL^{\check{\mu}}_{\cF_T}(\<\lambda, \check{\mu}\>x).
$$
 
Given $\und{\lambda}: I\to\Lambda$ let $s^{\und{\lambda}}: X^I\to \Gr_{T, I}$ be the map sending $(x_i)\in X^I$ to 
$$
\cI=(x_i)\in X^I, \; \cF_T=\cF^0_T(-\sum_i \lambda_i x_i), \; \beta: \cF_T\,\iso\, \cF^0_T\mid_{X-\Gamma_{\cI}},
$$ 
where $\beta$ is the evident trivialization. 
 
 For $I\in fSets$ write $\oo{X}{}^I\subset X^I$ for the complement to all the diagonals.
Let $I\to J$ be a map in $fSets$, $(x_j)\in \oo{X}{}^J(k)$ and $(x_i)\in X^I(k)$ be its image under $\vartriangle: X^J\to X^I$. Let $\und{\mu}: J\to \Lambda$ is given by $\mu_j=\underset{i\in I_j}{\sum}\lambda_i$. The fibre of $\Gr_{T, I}$ over $(x_i)$ is $\underset{j\in J}{\prod} \Gr_{T, x_j}$, and 
$$
s^{\und{\lambda}}(x_i)=\{t^{\mu_j}_{x_j}T(\cO_{x_j})\}\in \underset{j\in J}{\prod} \Gr_{T, x_j}.
$$
Here $t_{x_j}\in \cO_{x_j}$ is a uniformizer. More precisely, the compositions $X^J\to X^I\toup{s^{\und{\lambda}}}\Gr_{T, I}$ and $X^J\toup{s^{\und{\mu}}}\Gr_{T, J}\hook{} \Gr_{T, I}$ coincide.

Write $\und{0}: I\to \Lambda$ for the constant map with value $0$. 

\index{$\cF_T(\lambda x), s^{\und{\lambda}}, \oo{X}{}^I, \und{0}$, Section~\ref{Sect_1.2.5}}

\sssec{} 
\label{Sect_1.2.6_now}
As in \cite{DL}, set $\Lambda_{M, ab}=\{\lambda\in\Lambda\mid \<\lambda, \check{\alpha}_i\>=0\;\mbox{for}\; i\in\cI_M\}$ and $\Lambda_{M, ab}^+=\Lambda_{M, ab}\cap \Lambda^+$.
\index{$\Lambda_{M, ab}, \Lambda_{M, ab}^+$, Section~\ref{Sect_1.2.6_now}}

\sssec{} 
\label{Sect_1.2.7_now}
For $\und{\lambda}: I\to \Lambda^+_{M, ab}$ let $H_{\und{\lambda}}$ (resp., $U(P)_{\und{\lambda}}$) denote the stabilizer in $\gL(P)_I$ (resp., in $\gL(U(P))_I$)
 of the composition 
$$
X^I\toup{s^{-\und{\lambda}}} \Gr_{T, I}\to \Gr_{P, I}.
$$ 
Note that $H_{\und{\lambda}}\,\iso\, \gL^+(M)_I\rtimes U(P)_{\und{\lambda}}$ is the semi-direct product, where $U(P)_{\und{\lambda}}$ is a normal subgroup.

 If $I\to J$ is a map in $fSets$ then $U(P)_{\und{\lambda}}\times_{X^I} X^J\,\iso\, U(P)_{\und{\mu}}$, where $\und{\mu}: J\to \Lambda^+_{M, ab}$ is given by $\mu_j=\sum_{i\in I_j} \lambda_i$. 
 
\index{$H_{\und{\lambda}}, U(P)_{\und{\lambda}}$, Section~\ref{Sect_1.2.7_now}} 
 
\sssec{} 
\label{Sect_1.2.8_now}
Equip $\Map(I, \Lambda^+_{M, ab})$ with the pointwise sum. Equip $\Map(I, \Lambda^+_{M, ab})$ with the relation $\und{\lambda}_1\le\und{\lambda}_2$ iff there is $\und{\lambda}\in\Lambda^+_{M, ab}$ with $\und{\lambda}_2-\und{\lambda}_1=\und{\lambda}$. With this relation $\Map(I, \Lambda^+_{M, ab})$ becomes a filtered category.

 If $\und{\lambda}_1\le\und{\lambda}_2$ then $H_{\und{\lambda}_1}\subset H_{\und{\lambda}_2}$, and 
$$
H_I\,\iso\, \underset{\und{\lambda}\in (\Map(I, \Lambda^+_{M, ab}), \le)}{\colim} H_{\und{\lambda}},\;\;\;\;\; \gL(U(P))_I\,\iso\, \underset{\und{\lambda}\in (\Map(I, \Lambda^+_{M, ab}), \le)}{\colim} U(P)_{\und{\lambda}}.
$$ 
Since $H_{\und{\lambda}}\to X^I$ is a placid group scheme, this shows that $H_I\to X^I$ is a placid group ind-scheme. So, by definition, $H_{\Ran}$ is a placid group ind-scheme over $\Ran$. 

 For $I\in fSets$ set
$$
\SI_{P, I}=Shv(\Gr_{G, I})^{H_I}\in Shv(X^I)-mod.
$$
If $I\to J$ is a map in $fSets$ then one has canonically 
$$
\SI_{P, I}\otimes_{Shv(X^I)} Shv(X^J)\,\iso\, \SI_{P, J}
$$ 
by (\cite{Ly9}, Section 1.3.24, Claim 2), because each $H_{\und{\lambda}}$ is prosmooth over $X^I$. So, the compatible collection $\{\SI_{P, I}\}_{I\in fSets}$ forms a sheaf of categories on $\Ran$ that we denote $\SI_{P,\Ran}$.
 
 By definition, 
$$
\SI_{P,\Ran}\,\iso\, \underset{I\in fSet}{\lim} \SI_{P, I}
$$ 
calculated in $Shv(\Ran)-mod$ or, equivalently, in $\DGCat_{cont}$, as $\Ran$ is 1-affine for our sheaf theories. We write informally
$$
\SI_{P,\Ran}=Shv(\Gr_{G,\Ran})^{H_{\Ran}}\in Shv(\Ran)-mod.
$$
\index{$\SI_{P, I}, \SI_{P,\Ran}$, Section~\ref{Sect_1.2.8_now}}

\sssec{} 
\label{Sect_1.2.9_now}
Let $I\in fSets$, $\und{\lambda}: I\to \Lambda^+_{M, ab}$. As in (\cite{DL}, A.1), the functor 
\begin{equation}
\label{functor_oblv_SI_PI_to_gL^+(M)_I-invariants_on_Gr_G,I}
\oblv: \SI_{P, I}\to Shv(\Gr_{G, I})^{\gL^+(M)_I}
\end{equation}
is fully faithful, and if $\und{\lambda}_i\in\Map(I, \Lambda^+_{M, ab})$ with $\und{\lambda}_1\le\und{\lambda}_2$ then  
$$
Shv(\Gr_{G, I})^{H_{\und{\lambda}_2}}\subset Shv(\Gr_{G, I})^{H_{\und{\lambda}_1}} 
$$
is a full subcategory. By (\cite{LyWhit_loc_glob}, 1.3.8) and (\cite{Ly}, 2.7.7), 
$$
\SI_{P, I}\,\iso\, \underset{\und{\lambda}\in (\Map(I, \Lambda^+_{M, ab}), \le)^{op}}{\lim} Shv(\Gr_{G, I})^{H_{\und{\lambda}}}
$$
coincides with $\underset{\und{\lambda}\in \Map(I, \Lambda^+_{M, ab})}{\cap} Shv(\Gr_{G, I})^{H_{\und{\lambda}}}$ taken inside $Shv(\Gr_{G, I})^{\gL^+(M)_I}$. 

 For $\und{\lambda}\in \Map(I, \Lambda^+_{M, ab})$ the functor $\oblv: Shv(\Gr_{G, I})^{H_{\und{\lambda}}}\to Shv(\Gr_{G, I})^{\gL^+(M)_I}$ has a partially defined left adjoint $\Av^{U(P)_{\und{\lambda}}}_!$, which is everywhere defined in the constructible context by Lemma~\ref{Lm_A.1.3}. The partially defined left adjoint $\Av^{\gL(U(P))_I}_!$
 to (\ref{functor_oblv_SI_PI_to_gL^+(M)_I-invariants_on_Gr_G,I}) is then given as 
$$
 \underset{\und{\lambda}\in \Map(I, \Lambda^+_{M, ab})}{\colim} \Av^{U(P)_{\und{\lambda}}}_!
$$ 
by (\cite{DL}, A.3.3). It is everywhere defined in the constructible context. 

\index{$\Av^{U(P)_{\und{\lambda}}}_{^^21}, \Av^{\gL(U(P))_I}_{^^21}$, Section~\ref{Sect_1.2.9_now}}

\sssec{$\Sph_{M, I}$-action} 
\label{Sect_Sph_M_I-action}
Let $\gL(P)_I$ act on $\Gr_{M, I}$ through its quotient $\gL(M)_I$. Let $I\in fSets$. Write $s^{\und{0}}_M$ for the composition $X^I\toup{s^{\und{0}}} \Gr_{T, I}\to\Gr_{M, I}$. Since $s^{\und{0}}_M$ is $H_I$-invariant, 
$$
(s^{\und{0}}_M)_*: Shv(X^I)\to Shv(\Gr_{M, I})
$$ 
is a map in $Shv(H_I)-mod$. By adjointness, it gives the $Shv(\gL(P)_I)$-linear
functor 
\begin{equation}
\label{functor_coinvariants_for_Sect_1.2.9}
Shv(\gL(P)_I)\otimes_{Shv(H_I)} Shv(X^I)\to Shv(\Gr_{M, I})
\end{equation}
As in (\cite{DL}, Lemma~A.2.2) one checks that (\ref{functor_coinvariants_for_Sect_1.2.9}) is an equivalence.

 Now for any $C\in Shv(\gL(P)_I)-mod$ we get
$$
C^{H_I}=\Fun_{Shv(H_I)}(Shv(X^I), C)\,\iso\, \Fun_{Shv(\gL(P)_I)}(Shv(\gL(P)_I)\otimes_{Shv(H_I)} Shv(X^I), C).
$$
Thus, the monoidal category $\Fun_{Shv(\gL(P)_I)}(Shv(\Gr_{M, I}), Shv(\Gr_{M,I}))$ acts on $C^{H_I}$. We have
$$
\Fun_{Shv(\gL(P)_I)}(Shv(\Gr_{M,I}), Shv(\Gr_{M,I}))\,\iso\, Shv(\Gr_{M, I})^{H_I}\,\iso\, Shv(\Gr_{M, I})^{\gL^+(M)_I},
$$
as monoidal categories, because $\gL(U(P))_I$ is ind-prounipotent. So, 
$$
\Sph_{M, I}:=Shv(\Gr_{M, I})^{\gL^+(M)_I}
$$
acts naturally on $C^{H_I}$ on the left. In particular, $\Sph_{M, I}$ acts on $\SI_{P, I}$. 



\index{$s^{\und{0}}_M, \Sph_{M, I}$, Section~\ref{Sect_Sph_M_I-action}}

\sssec{} 
\label{Sect_1.2.11_now}
As in \cite{Ly10}, we equip $\Sph_{M, I}$ with a perverse t-structure following the convention of (\cite{DL}, A.5). In particular, for the map $q: \Gr_{M, I}\to \gL^+(M)_I\backslash \Gr_{M, I}$ we have the functor $\oblv[\dimrel]=q^*[\dimrel(q)]: \Sph_{M, I}\to Shv(\Gr_{M, I})$ exact for the perverse t-structures and defined in \select{loc.cit.}

\index{$\oblv[\dimrel]$, Section~\ref{Sect_1.2.11_now}}

\sssec{} 
\label{Sect_1.2.12_now}
We use the following notations from \cite{Ly10}. 
Given $C\in CAlg^{nu}(\DGCat_{cont})$ we consider $C(X)=C\otimes Shv(X)\in CAlg(Shv(X)-mod)$ and the corresponding commutative factorization category $\Fact(C)\in Shv(\Ran)-mod$ defined in (\cite{Ly10}, Section~2). For $I\in fSets$ we write $C_{X^I}\in Shv(X^I)-mod$ for the category of sections of $\Fact(C)$ over $X^I$.

\index{$C(X), \Fact(C), C_{X^I}$, Section~\ref{Sect_1.2.12_now}}

\sssec{} 
\label{Sect_1.2.13_now}
In (\cite{Ly10}, Section~7) we introduced a sheaf of categories $\Sph_{M, \Ran}$ over $\Ran$ as the compatible collection of categories $\Sph_{M, I}$ for $I\in fSets$.
For $I\in fSets$ we defined the Satake functors
$$
\Sat_{M, I}: \Rep(\check{M})_{X^I}\to \Sph_{M, I},
$$
which glue into the functor
$$
\Sat_{M, \Ran}: \Fact(\Rep(\check{M}))\to \Sph_{M, \Ran}.
$$
Then $\Sat_{M, I}$ (resp., $\Sat_{M, \Ran}$) is a map in $Alg(Shv(X^I)-mod)$ (resp., in $Alg(Shv(\Ran)-mod))$. We also consider the corresponding objects for $M$ replaced by $G$.

\index{$\Sph_{M, \Ran}, \Sat_{M, I}, \Sat_{M, \Ran}$, Section~\ref{Sect_1.2.13_now}}

\begin{Rem} 
\label{Rem_Sat_and_factorization}
To make $\Sat_{M, \Ran}$ compatible with factorization, one needs to twist $\Rep(\check{M})$ as in (\cite{Ly10}, 7.3.17). Namely, in this case we let $\epsilon\in Z(\check{M})$ be the image of $-1$ under $2\check{\rho}_M: \Gm\to \check{T}$ and replace
$\Rep(\check{M})$ by $\Rep(\check{M})^{\epsilon}\in CAlg(\DGCat_{cont})$ as in \select{loc.cit.}. The sheaves of categories $\Fact(\Rep(\check{M}))$ and $\Fact(\Rep(\check{M})^{\epsilon})$ on $\Ran$ are isomorphic, however, this isomorphism is not compatible with the factorization structures. The functor $\Sat_{M, \Ran}: \Fact(\Rep(\check{M})^{\epsilon})\to \Sph_{M, \Ran}$ is compatible with factorization structures. 
\index{$\Rep(\check{M})^{\epsilon}$, Remark~\ref{Rem_Sat_and_factorization}}
\end{Rem}

\begin{Rem} 
\label{Rem_1.2.11}
Note that $\Sph_{M, I}$ acts by convolutions on the left on $Shv(\Gr_{G, I})^{\gL^+(M)_I}$. 
Globalizing the argument in the local situation from (\cite{DL}, 3.3.20), one shows that this action preserves the full subcategory $\SI_{P,I}$. It is compatible with the !-restrictions along $\vartriangle^{(I/J)}: X^J\to X^I$ for a map $I\to J$ in $fSets$. So, they glue to a $\Sph_{M,\Ran}$-action on $\SI_{P,\Ran}$. For $K\in \SI_{P, I}, F\in \Sph_{M, I}$ we write $F\ast K\in\SI_{P, I}$ for this action. 
\end{Rem}

\sssec{} 
\label{Sect_1.2.15}
For $I\in fSets$ the monoidal category $\Sph_{G,I}$ acts naturally on the right on $Shv(\Gr_{G,I})^{\gL^+(M)_I}$ and preserves the full subcategory $\SI_{P,I}$. For a map $I\to J$ in $fSets$ these action are compatible with the !-restrictions along $\vartriangle^{(I/J)}: X^J\to X^I$. They glue to a $\Sph_{G,\Ran}$-action on the right on $\SI_{P, \Ran}$. 
For $K\in \SI_{P, I}, F\in \Sph_{G, I}$ we write $K\ast F\in \SI_{P, I}$ for this action.

\index{$\vartriangle^{(I/J)}, K\ast F$, Section~\ref{Sect_1.2.15}}

\sssec{Hecke functors} 
\label{Sect_1.2.14}
Pick $I\in fSets$. For the convenience of the reader, recall the right action of $\Sph_{G,I}$ on $Shv(X^I\times \Bun_G)$. 

 Let $\cH_{G,I}$ be the Hecke stack whose $S$-points for $S\in\Sch^{aff}$ is the groupoid of collections: $G$-torsors $\cF_G, \cF'_G$ on $S\times X$, $\cI\in \Map(S,X^I)$ and an isomorphism $\cF_G\,\iso\,\cF'_G\mid_{S\times X-\Gamma_{\cI}}$. We have the diagram of projections
$$
X^I\times \Bun_G\getsup{h^{\la}_G} \cH_{G,I}\toup{h^{\ra}_G} X^I\times \Bun_G,
$$
where $h^{\la}_G$ (resp., $h^{\ra}_G$) sends the above point to $(\cF_G, \cI)$ (resp., $(\cF'_G,\cI)$). Let $\cG_I\to X^I\times \Bun_G$ be the torsor under $\gL^+(G)_I$ classifying $\cF_G\in\Bun_G, \cI\in X^I$ and a trivialization $\cF_G^0\,\iso\, \cF_G\mid_{\cD_{\cI}}$. We write $\cG_{I, G}$ when we need to express its dependence on $G$. 

We have two identifications $\id^l, \id^r: \cH_{G, I}\,\iso\, \Gr_{G, I}\times^{\gL^+(G)_I} \cG_I$ for which the projections $h^{\la}_G, h^{\ra}_G$ correspond to the projections 
on the second factor.

We have the natural diagram
$$
\Gr_{G, I}\times^{\gL^+(G)_I} \cG_I\toup{q_I} (\gL^+(G)_I\backslash \Gr_{G,I})\times \Bun_G\toup{\diag_I} (\gL^+(G)_I\backslash \Gr_{G,I})\times (X^I\times \Bun_G).
$$
Given $\cS\in Shv(\gL^+(G)_I\backslash \Gr_G), K\in Shv(X^I\times \Bun_G)$ denote by 
$$
(\cS\tboxtimes K)^l, (\cS\tboxtimes K)^r\in Shv(\cH_{G, I})
$$ 
the image of $q_I^*[\dimrel(q_I)]\diag_I^!(\cS\boxtimes K)$ under $\id^l, \id^r$ respectively. The result of the right action of $\cS$ on $K$ is 
$$
\H^{\ra}_G(\cS, K)=(h^{\ra}_G)_*(\cS\tboxtimes K)^l.
$$
Write for brevity $K\ast \cS=\H^{\ra}_G(\cS, K)$. This generalizes the notation from (\cite{BG}, 3.2.4). Restricting the above action under $Shv(X^I)\to \Sph_{G, I}$, one gets the usual action of $(Shv(X^I), \otimes^!)$ on $Shv(X^I\times\Bun_G)$. 

 The compatibility of the $\Sph_{G, I}$-action on $Shv(X^I\times \Bun_G)$ with the monoidal structure on $\Sph_{G, I}$ uses the base change isomorphisms established in \cite{Ly4}. 

 The stack quotient $\gL^+(G)_I\backslash \Gr_{G,I}$ can be seen as the prestack classifying $\cI\in X^I$, $G$-torsors $\cF_G, \cF'_G$ on $\cD_{\cI}$ together with an isomorphism $\cF_G\,\iso\, \cF'_G\mid_{\oo{\cD}_{\cI}}$ of $G$-torsors. It carries an involution interchanging $\cF_G$ and $\cF'_G$. It gives rise to a covariant functor $\Sph_{G,I}\to \Sph_{G,I}, \cS\mapsto \ast\cS$. 
 
\index{$\cH_{G,I}, h^{\la}_G, h^{\ra}_G, \cG_I, \cG_{I, G}, \id^l, \id^r$, Section~\ref{Sect_1.2.14}}

\index{$(\cS\tboxtimes K)^l, (\cS\tboxtimes K)^r, \H^{\ra}_G, {K\ast \cS}, \ast\cS$, Section~\ref{Sect_1.2.14}}  
  
\ssec{Relation between local and global: geometry}
\label{Sect_Relation between local and global: geometry}

\sssec{} 
\label{Sect_1.3.1_now}
From now on for the rest of the paper we assume $[G,G]$ simply-connected. 
For $\theta\in \Lambda_{G,P}^{pos}$ write $X^{\theta}$ for the moduli space of $\Lambda_{G,P}^{pos}$-valued divisor of degree $\theta$.

Let $I\in fSets$. Let $\ov{\Gr}_{P, I}^0\subset \Gr_{G, I}$ be the closed ind-subscheme classifying $(\cI\in X^I, \cF_G\in\Bun_G,\beta: \cF_G\,\iso\, \cF^0_G\mid_{X-\Gamma_{\cJ}})$ such that for any $\check{\lambda}\in \check{\Lambda}^+\cap \check{\Lambda}_{G,P}$ the map
$$
\cL^{\check{\lambda}}_{\cF^0_{M/[M,M]}}\to \cV^{\check{\lambda}}_{\cF_G}
$$
initially defined over $X-\Gamma_{\cI}$ is regular over $X$. Let $\Gr_{P, I}^0\subset \ov{\Gr}_{P, I}^0$ be the open subscheme given by the property that the above maps are vector subbundles on $X$.  

 Note that $\ov{\Gr}_{G, I}^0=\Gr_{G,I}^0\,\iso\, \Gr_{[G,G], I}$ canonically.
 
\index{$X^{\theta}, \ov{\Gr}_{P, I}^0, \Gr_{P, I}^0$, Section~\ref{Sect_1.3.1_now}} 

\sssec{} 
\label{Sect_1.3.2_now}
For $I\in fSets$ write $_{I, \infty}\Bunb_P$ for the stack whose $S$-points for $S\in\Sch^{aff}$ is a collection:  
$\cI\in \Map(S, X^I)$, a $G$-torsor $\cF_G$ on $S\times X$, a $M/[M,M]$-torsor $\cF_{M/[M,M]}$ on $S\times X$, and a collection of maps
$$
\kappa^{\check{\lambda}}: \cL^{\check{\lambda}}_{\cF_{M/[M,M]}}\to \cV^{\check{\lambda}}_{\cF_G}, \;\;\;\check{\lambda}\in \check{\Lambda}^+\cap \check{\Lambda}_{G,P}
$$
over $S\times X-\Gamma_{\cI}$ satisfying the Pl\"ucker relations, and such that for any geometric point $s\in S$ the restriction of $\kappa^{\check{\lambda}}$ to $s\times X$ is injective.

 Let $\Bunb_P$ be the stack defined in (\cite{BG}, 1.3.2). So, $X^I\times \Bunb_P\subset {_{I, \infty}\Bunb_P}$ is a closed substack.

 We have the map $\pi_I: \Gr_{G, I}\to {_{I, \infty}\Bunb_P}$ sending $(\cI\in X^I, \cF_G,\beta: \cF_G\,\iso\, \cF^0_G\mid_{X-\Gamma_{\cI}})$ to $(\cI, \cF^0_{M/[M,M]},\cF_G,\kappa)$, where $\kappa$ is induced by the $P$-structure on the trivial $G$-torsor. 
 
 Note that $\pi_I^{-1}(X^I\times \Bunb_P)=\ov{\Gr}_{P, I}^0$.   
 
\index{${_{I, \infty}\Bunb_P}, \Bunb_P, \pi_I$, Section~\ref{Sect_1.3.2_now}} 
 
\sssec{} As in Section~\ref{Sect_1.2.14}, one defines a right action of $\Sph_{G, I}$ on $Shv(_{I, \infty}\Bunb_P)$. The functor $\pi_I^!: Shv(_{I, \infty}\Bunb_P)\to Shv(\Gr_{G, I})$ commutes with the $\Sph_{G, I}$-actions on the right. We have a partially defined left adjoint $(\pi_I)_!$ of $\pi_I^!$, which is everywhere defined in the constructible context. 

 In the constructible context, $(\pi_I)_!$ commutes with the $\Sph_{G, I}$-actions.
 
\sssec{} 
\label{Sect_1.3.4_now}
Write $\bar S^0_{P, I}\subset \Gr_{G, I}$ for the closed ind-subscheme 
classifying 
\begin{equation}
\label{point_of_Gr_G,Ran}
(\cI\in X^I, \cF_G\in\Bun_G,\beta: \cF_G\,\iso\, \cF^0_G\mid_{X-\Gamma_{\cI}})\in \Gr_{G, I}
\end{equation}
such that for any $V\in\Rep(\check{G})^{\heartsuit}$ finite-dimensional, 
$$
V^{U(P)}_{\cF^0_M}\to V_{\cF_G}  
$$
initially defined over $X-\Gamma_{\cI}$ is regular over $X$. Let $v^0_{S, I}: S^0_{P, I}\subset \bar S^0_{P, I}$ be the open subscheme given by the property that the above maps have no zeros on $X$ (are morphisms of vector bundles).  

 Note that if $P=G$ then $S^0_{G, I}=\bar S^0_{G, I}\,\iso\, X^I$ is the unit section of $\Gr_{G,I}\to X^I$. 
 
\index{$\bar S^0_{P, I}, S^0_{P, I}, v^0_{S, I}$, Section~\ref{Sect_1.3.4_now}} 
 
\sssec{} 
\label{Sect_1.3.5_now}
Let $\Bunt_P$ be defined as in (\cite{BG}, 4.1.1).\footnote{If $P=G$ then the canoncal maps $\Bun_P\to \Bunt_P\to \Bunb_P$ are isomorphisms.} Let $_{I, \infty}\Bunt_P$ be the stack whose $S$-point for $S\in\Sch^{aff}$ is a collection $(\cI\in\Map(S, X^I), \cF_G, \cF_M, \kappa)$, where $\cF_G$ (resp., $\cF_M$) is a $G$-torsor (resp., a $M$-torsor) on $S\times X$, and $\kappa$ is a collection of maps
$$
\kappa^V: V^{U(P)}_{\cF_M}\to V_{\cF_G}
$$
over $S\times X-\Gamma_{\cI}$ for $V\in\Rep(G)^{\heartsuit}$ finite-dimensional, which satisfy the Pl\"ucker relations as in \select{loc.cit.} and such that for any geometric point $s\in S$ the restriction of $\kappa^V$ to $s\times X$ is an injection. 

 We have the closed embedding $X^I\times\Bunt_P\hook{} {_{I, \infty}\Bunt_P}$ given by requiring that all $\kappa^V$ are regular over $S\times X$.
 
 Let $\tilde\pi_I: \Gr_{G,I}\to {_{I, \infty}\Bunt_P}$ be the map sending (\ref{point_of_Gr_G,Ran}) to $(\cI, \cF^0_M, \cF_G, \kappa)$, where $\kappa$ comes from the canonical $P$-structure on $\cF^0_G$. One has 
$$
\tilde\pi_I^{-1}(X^I\times\Bunt_P)=\bar S^0_{P, I},\;\;\;\; \tilde\pi_I^{-1}(X^I\times\Bun_P)= S^0_{P, I}.
$$ 
  
  Let $\gr_I: {_{I,\infty}\Bunt_P}\to {_{I,\infty}\Bunb_P}$ be the map sending $(\cI, \cF_M, \cF_G,\kappa)$ to 
$$
(\cI, \cF_{M/[M,M]}, \cF_G,\kappa),
$$ 
where $\cF_{M/[M,M]}$ is the extension of scalars of $\cF_M$. We have $\gr_I\comp\tilde\pi_I=\pi_I$.

\index{$\Bunt_P, {_{I, \infty}\Bunt_P}, \tilde\pi_I, \gr_I$, Section~\ref{Sect_1.3.5_now}}
  
\sssec{} 
\label{Sect_1.3.6_Hecke_action}
Let us give some details for the definition of the right action of $\Sph_{G, I}$ on $Shv(_{I, \infty}\Bunt_P)$. Set 
$$
Z_{P, G, I}={_{I,\infty}\Bunt_P}\times_{X^I\times \Bun_G} \cH_{G,I},
$$ 
where we used $h^{\la}_G$ to form the fibred product. We get the diagram
$$
_{I, \infty}\Bunt_P\getsup{'h^{\la}_G} Z_{P, G, I}\toup{'h^{\ra}_G}{_{I, \infty}\Bunt_P},
$$ 
where $'h^{\la}_G$ (resp., $'h^{\ra}_G$) sends a collection 
$$
(\cI, \cF_M, \cF_G,\kappa, \cF_G\,\iso\,\cF'_G\mid_{X-\Gamma_{\cI}})
$$ 
to $(\cI, \cF_M, \cF_G, \kappa)$ (resp., $(\cI, \cF_M, \cF'_G, \kappa')$, where $\kappa'$ is induced by $\kappa$.

 Let $\cG_{I, P, G}={_{I,\infty}\Bunt_P}\times_{X^I\times \Bun_G} \cG_I$. We get two identifications 
$$
\id^l_{G}, \id^r_{G}: Z_{P, G, I}\,\iso\, \Gr_{G, I}\times^{\gL^+(G)_I} \cG_{I,P,G}
$$ 
for which the projections $'h^{\la}_G, {'h^{\ra}_G}$ correspond to the projection 
on the second factor. 

 We get the diagram
\begin{multline*}
\Gr_{G, I}\times^{\gL^+(G)_I} \cG_{I,P,G}\toup{q_{I,G}} (\gL^+(G)_I\backslash \Gr_{G,I})\times_{X^I} (_{I,\infty}\Bunt_P)\toup{\diag_{I,G}} \\ (\gL^+(G)_I\backslash \Gr_{G,I})\times (_{I,\infty}\Bunt_P)
\end{multline*}
Given $\cS\in \Sph_{G,I}, K\in Shv(_{I,\infty}\Bunt_P)$ we denote by $(\cS\tboxtimes K)^l, (\cS\tboxtimes K)^r\in Shv(Z_{P,G,I})$ the image of $q_{I,G}^*[\dimrel(q_{I,G})]
\diag_{I,G}^!(\cS\boxtimes K)$ under $\id^l_G, \id^r_G$. The right action of $\cS$ on $K$ is
defined as
$$
\H^{\ra}_{P,G}(\cS, K)=('h^{\ra}_G)_*(\cS\tboxtimes K)^l
$$
Write for brevity $K\ast \cS=\H^{\ra}_{P,G}(\cS, K)$, the meaning should be clear from the context.

\index{$Z_{P, G, I}, {'h^{\la}_G}, {'h^{\ra}_G}, \cG_{I, P, G}, q_{I,G}, \diag_{I,G}, \H^{\ra}_{P,G}, {K\ast \cS}$, Section~\ref{Sect_1.3.6_Hecke_action}}

\sssec{} 
\label{Sect_1.3.7_Hecke_action}
Similarly to (\cite{DL}, 3.2.11), we define a left action of $\Sph_{M, I}$ on $Shv(_{I,\infty}\Bunt_P)$ as follows. Set
$$
Z_{P, M, I}=\cH_{M, I}\times_{X^I\times\Bun_M} (_{I,\infty}\Bunt_P),
$$
where we used $h^{\ra}_M$ to form the fibred product. We get a diagram
$$
 _{I,\infty}\Bunt_P\getsup{'h^{\la}_M} Z_{P, M, I}\toup{'h^{\ra}_M} {_{I,\infty}\Bunt_P},
$$
where $'h^{\la}_M$ (resp., $h^{\ra}_M$) sends 
$$
(\cI, \cF'_M, \cF'_G, \kappa', \cF_M\,\iso\, \cF'_M\mid_{X-\Gamma_{\cI}})
$$ 
to $(\cI, \cF_M, \cF'_G, \kappa)$ (resp., $(\cI, \cF'_M, \cF'_G, \kappa')$. Here $\kappa$ is obtained from $\kappa'$ naturally. Set
$$
\cG_{I, P, M}={_{I, \infty}\Bunt_P}\times_{X^I\times \Bun_M} \cG_{I, M}.
$$
As above, we get two identifications 
$$
\id^l_M, \id^r_M: Z_{P, M, I}\,\iso\, \Gr_{M, I}\times^{\gL^+(M)_I} \cG_{I,P, M}
$$
for which the projections $'h^{\la}_M, {'h^{\ra}_M}$ correspond to the projection on the second factor. 

 We get the diagram
\begin{multline*}
\Gr_{M, I}\times^{\gL^+(M)_I} \cG_{I,P, M}\toup{q_{I, M}} (\gL^+(M)_I\backslash \Gr_{M,I})\times_{X^I}(_{I,\infty}\Bunt_P)\toup{\diag_{I, M}} \\ (\gL^+(M)_I\backslash \Gr_{M,I})\times (_{I,\infty}\Bunt_P).
\end{multline*}
Given $\cS\in \Sph_{M, I}, K\in Shv(_{I,\infty}\Bunt_P)$ we denote by $(\cS\tboxtimes K)^l, (\cS\tboxtimes K)^r\in Shv(Z_{P,M,I})$ the image of $q_{I,M}^*[\dimrel(q_{I,M})]\diag_{I,M}^!(\cS\boxtimes K)$ under $\id^l_M, \id^r_M$. The result of the left action of $\cS$ on $K$ is
$$
\H^{\la}_{P,M}(\cS, K)=('h^{\la}_M)_*(\ast\cS\tboxtimes K)^r.
$$
Write for brevity $\cS\ast K=\H^{\la}_{P,M}(\cS, K)$, the meaning should be clear from the context.

\index{$Z_{P, M, I}, {'h^{\la}_M}, {'h^{\ra}_M}, \cG_{I, P, M}, \id^l_M, \id^r_M, q_{I, M}$, Section~\ref{Sect_1.3.7_Hecke_action}}

\index{$\diag_{I, M}, \H^{\la}_{P,M}, \cS\ast K$, Section~\ref{Sect_1.3.7_Hecke_action}}

\sssec{} The functors 
$$
\gr_I^!: Shv(_{I,\infty}\Bunb_P)\to Shv(_{I,\infty}\Bunt_P)\;\;\;\mbox{and}\;\;\; \tilde\pi_I^!: Shv(_{I,\infty}\Bunt_P)\to Shv(\Gr_{G,I})
$$ 
naturally commute with these $\Sph_{G, I}$-actions. Their partially defined left adjoints $(\gr_I)_!, (\tilde\pi_I)_!$ are everywhere defined in the constructible context and commute with the $\Sph_{G, I}$-actions.     
  
\sssec{} 
\label{Sect_1.3.7}
Replacing $X^I$ by $\Ran$, we get the $\Ran$ versions of all the above objects. In particular, we have $_{\Ran,\infty}\Bunt_P, {_{\Ran,\infty}\Bunb_P}$.
The factorization (resp., unital) structure on $\Gr_{G,\Ran}$ yields one on the
prestacks $\Gr_{P,\Ran}^0, \ov{\Gr}_{P,\Ran}^0, S^0_{P,\Ran}, \bar S^0_{P,\Ran}$. The corresponding unital and factorization structures are compatible.

 Write $\eta_{\Ran}: \bar S^0_{P,\Ran}\hook{} \ov{\Gr}^0_{P,\Ran}$ for the corresponding closed immersion.
 
\index{$_{\Ran,\infty}\Bunt_P, {_{\Ran,\infty}\Bunb_P}, \eta_{\Ran}, \pi_{\Ran}$, Section~\ref{Sect_1.3.7}} 

\sssec{} 
\label{Sect_1.3.10}
For $\lambda\in\Lambda$ the connected component $\Bun_T^{\lambda}$ of $\Bun_T$ is defined as in (\cite{DL}, 3.2.5). Namely, for $\check{\lambda}\in\check{\Lambda}$, $\deg\cL^{\check{\lambda}}_{\cF_T}=-\<\lambda, \check{\lambda}\>$. For $\theta\in\Lambda_{G,P}$ the component $\Bun_M^{\theta}$ is defined as the preimage of $\Bun_{M/[M,M]}^{\theta}$ under $\Bun_M\to \Bun_{M/[M,M]}$. The component $\Bun_P^{\theta}, \Bunt_P^{\theta}, \Bunb_P^{\theta}$ denotes the preimage of the corresponding component $\Bun_M^{\theta}$ of $\Bun_M$. 

 For $\theta\in\Lambda_{G,P}$ we use the notations $\Gr_M^{\theta}, \Gr_M^+,
 \Gr_M^{+,\theta}, \Lambda^+_{M, G}$ introduced in (\cite{DL}, Section~3.2). In particular, $\Gr_M^{\theta}$ is the connected component of $\Gr_M$ containing $t^{\lambda}M(\cO)$ for $\lambda\in\Lambda$ over $\theta$, and 
$$
\Lambda^+_{M, G}=\Lambda^+_M\cap w_0^M(\Lambda^{pos}).
$$

View $\Gr_M$ as the ind-scheme classifying a $M$-torsor $\cF_M$ on $\Spec\cO$ together with a trivialization $\beta_M: \cF_M\,\iso\, \cF^0_M\mid_{\Spec F}$. For $\theta\in -\Lambda^{pos}_{G, P}$ write $\Gr^{-, \theta}_M$ for the scheme defined in (\cite{DL}, 4.2.11). Namely, it is the subscheme of those $(\cF_M, \beta_M: \cF_M\,\iso\, \cF^0_M\mid_{\Spec F})\in \Gr_M^{\theta}$ for which for any $V\in \Rep(G)^{\heartsuit}$ finite-dimensional the natural map
$$
V^{U(P)}_{\cF^0_M}\to V^{U(P)}_{\cF_M}
$$
is regular over $\Spec\cO$.  

 By (\cite{BG}, 6.2.3), for $\lambda\in\Lambda^+_M$ over $\theta\in -\Lambda_{G,P}^{pos}$ the property $\Gr_M^{\lambda}\subset \Gr_M^{-,\theta}$ is equivalent to $\lambda\in \Lambda^+_M\cap (-\Lambda^{pos})$. Set 
$$
\Lambda^-_{M,G}=\Lambda^+_M\cap (-\Lambda^{pos}).
$$

\index{$\Bun_T^{\lambda}, \Bun_M^{\theta}, \Bun_P^{\theta}, \Bunt_P^{\theta}, \Bunb_P^{\theta}, \Gr_M^{\theta}$, Section~\ref{Sect_1.3.10}}
 
\index{$\Gr_M^+,\Gr_M^{+,\theta}, \Lambda^+_{M, G}, \Lambda^-_{M,G}$, Section~\ref{Sect_1.3.10}}
 
\sssec{}  
\label{Sect_1.3.11_now}
For $\theta\in\Lambda_{G,P}$ write $\Gr_{M,\Ran,\theta}$ for the preimage of $\Bun_M^{\theta}$ under the map $\Gr_{M,\Ran}\to \Bun_M$ sending 
$(\cI,\cF_M,\beta: \cF_M\,\iso\, \cF^0_M\mid_{X-\Gamma_{\cI}})$ to $\cF_M$. One similarly defines the prestack $\Gr_{P,\Ran,\theta}$ and its versions with $\Ran$ replaced by $X^I$. For $\theta\in\Lambda_{G,G}$ we similarly define the prestack $\Gr_{G,\Ran,\theta}$.

\index{$\Gr_{M,\Ran,\theta}, \Gr_{P,\Ran,\theta}, \Gr_{M, I, \theta}, \Gr_{P, I, \theta}, \Gr_{G,\Ran,\theta}$, Section~\ref{Sect_1.3.11_now}}
    
\sssec{} 
\label{Sect_1.3.12_now}
For $d\ge 0$ write $X^{(d)}$ for the $d$-th symmetric power of $X$. For $\theta\in\Lambda^{pos}_{G,P}$, $S\in\Sch^{aff}$ and $D\in \Map(S, X^{\theta})$ we have a well-defined scheme $S\times X-\supp(D)$ as in (\cite{DL}, 4.2.14). Namely, let $\Theta:\Lambda_{G,P}^{pos}\to\ZZ_+$ be the homomorphism of semigroups sending each $\alpha_i, i\in \cI_G-\cI_M$ to $1\in\ZZ_+$. Then $S\times X-\supp(D)$ is defined as $S\times X-\supp(\Theta(D))$, where by $\supp(\Theta(D))$ we mean the pullback under $\Theta(D)\times\id: S\times X\to X^{(d)}\times X$ of the incidence divisor on $X\times X^{(d)}$, here $d=\Theta(\theta)$.   

 Write $\hat\cD_D$ for the formal completion of $S\times X$ along $\supp(\Theta(D))$. Write $\cD_D$ for the affine scheme obtained from $\hat\cD_D$, the image of $\hat\cD_D$ under $\colim: \Ind(\Sch^{aff})\to \Sch^{aff}$. Set $\oo{\cD}_D=\cD_D-\supp(\Theta(D))$. 
 
 For $n\ge 0$ and $D\in \Map(S, X^{\theta})$ denote by $D_n$ the $n$-th thickening of $\supp(\Theta(D))$, that is, the closed subscheme of $S\times X$ whose sheaf of ideals is the $n$-th power of the sheaf of ideals in $\cO_{S\times X}$ defining $\supp(\Theta(D))$. 
 
 For $\theta\in\Lambda_{G,P}^{pos}$ write $\Gr_{M, X^{\theta}}$ for the prestack classifying $D\in X^{\theta}$, a $M$-torsor $\cF_M$ on $X$ with a trivialization $\cF^0_M\,\iso\, \cF_M\mid_{X-\supp(D)}$. Our convention is that $\Gr_{M, X^0}=X^0=\Spec k$. For $\theta'\in\Lambda_{G,P}$ write $\Gr_{M, X^{\theta}, \theta'}$ for the preimage of $\Bun_M^{\theta'}$ under the map $\Gr_{M, X^{\theta}}\to \Bun_M$ sending the above point to $\cF_M$.  
 
\index{$X^{(d)}, \Theta, \hat\cD_D, \cD_D, \oo{\cD}_D, D_n, \Gr_{M, X^{\theta}}, \Gr_{M, X^{\theta}, \theta'}$, Section~\ref{Sect_1.3.12_now}} 
   
\sssec{} 
\label{Sect_1.3.13_now}
For $\theta\in\Lambda_{G,P}^{pos}$ let $(X^{\theta}\times\Ran)^{\subset}$ be the prestack, whose $S$-points for $S\in\Sch^{aff}$ are pairs $(\cI\in\Map(S,\Ran), D\in \Map(S, X^{\theta}))$ such that
$$
S\times X-\Gamma_{\cI}\subset S\times X-\supp(D).
$$

 Let $\pr_{\Ran}^{\theta}: (X^{\theta}\times\Ran)^{\subset}\to X^{\theta}$ be the projection. As in (\cite{Gai19Ran}, Lemma~1.3.3), $\pr_{\Ran}^{\theta}$ is universally homologically contractible in the sense of (\cite{Gai19Ran}, A.1.8).
 
\index{$(X^{\theta}\times\Ran)^{\subset}, \pr_{\Ran}^{\theta}$, Section~\ref{Sect_1.3.13_now}} 
 

From (\cite{Stack}, Lemma 31.18.9) one gets the following.
\begin{Rem} 
\label{Rem_1.3.14_now}
For $S\in\Sch^{aff}$, the groupoid of $S$-points
of $(X^{\theta}\times\Ran)^{\subset}$ identifies with the groupoid classifying $\cF_{M/[M,M]}\in \Map(S, \Bun_{M/[M,M]}^{-\theta})$, $\cI\in\Map(S,\Ran)$ with a trivialization $\beta: \cF_{M/[M,M]}\,\iso\, \cF^0_{M/[M,M]}$ on $S\times X-\Gamma_{\cI}$ such that for $\check{\lambda}\in\check{\Lambda}^+\cap\check{\Lambda}_{G,P}$ the map
$$
\cL^{\check{\lambda}}_{\cF^0_{M/[M,M]}}\to \cL^{\check{\lambda}}_{\cF_{M/[M,M]}}
$$
initially defined over $S\times X-\Gamma_{\cI}$ is regular over $S\times X$. Write 
$$
q_{\Ran}^+: (X^{\theta}\times\Ran)^{\subset}\to \Gr_{M/[M,M],\Ran}
$$ 
for the map sending $(\cI, D)$ to $(\cI, \cF_{M/[M,M]}, \beta)$, here $\cF_{M/[M,M]}=\cF^0_{M/[M,M]}(D)$. 

\index{$q_{\Ran}^+$, Remark~\ref{Rem_1.3.14_now}}
\end{Rem} 
 
\sssec{} 
\label{Sect_1.3.15_now}
For $\theta\in\Lambda_{G,P}^{pos}$ let 
$$
\ov{\Gr}^{\, -\theta}_{P,\Ran}\subset (X^{\theta}\times\Ran)^{\subset}\times_{\Ran} \Gr_{G,\Ran}
$$ 
be the closed subfunctor whose $S$-points are $S$-points $(D, \cI, \cF_G,\beta: \cF_G\,\iso\,\cF^0_G\mid_{X-\Gamma_{\cI}})$ of the RHS such that for any $\check{\lambda}\in\check{\Lambda}^+\cap \check{\Lambda}_{G,P}$ the map
$$
\cL^{\check{\lambda}}_{\cF^0_{M/[M,M]}}(\<D, \check{\lambda}\>)\to \cV^{\check{\lambda}}_{\cF_G}
$$
initially defined over $S\times X-\Gamma_{\cI}$ is regular on $S\times X$. Let $j^{-\theta}_{P,\Ran}: \Gr^{-\theta}_{P,\Ran}\subset \ov{\Gr}^{\, -\theta}_{P,\Ran}$ be the open subscheme where the above maps have no zeros on $S\times X$ (are maps of vector bundles on $S\times X$). 

 Note that $\ov{\Gr}^{\, -\theta}_{P,\Ran}, \Gr^{-\theta}_{P,\Ran}$ do not change if one replaces $(G,P)$ by $([G,G], P\cap[G,G])$. For $\theta=0$ these objects are those defined in Section~\ref{Sect_1.3.7}. 
 
The projection
$
\ov{\Gr}^{\, -\theta}_{P,\Ran}\to \Gr_{G,\Ran}
$
is ind-proper and factors through 
$$
\bar v^{-\theta}_{P,\Ran}: \ov{\Gr}^{\, -\theta}_{P,\Ran}\to
\ov{\Gr}^0_{P,\Ran}.
$$
Its restriction 
$$
v^{-\theta}_{P,\Ran}=\bar v^{-\theta}_{P,\Ran}\comp j^{-\theta}_{P,\Ran}
: \Gr^{-\theta}_{P,\Ran}\to \ov{\Gr}^0_{P, \Ran}
$$ 
is a locally closed embedding. The ind-schemes $\Gr^{-\theta}_{P, \Ran}$, $\theta\in\Lambda^{pos}_{G,P}$ form a stratification of $\ov{\Gr}^0_{P,\Ran}$. Note that $\Gr^{-\theta}_{P, \Ran}\hook{} \Gr_{P,\Ran, -\theta}$ is closed.

\index{$\ov{\Gr}^{\theta}_{P,\Ran}, j^{\theta}_{P,\Ran}, \Gr^{\theta}_{P,\Ran}, \bar v^{\theta}_{P,\Ran}, v^{\theta}_{P,\Ran}$, Section~\ref{Sect_1.3.15_now}}

\sssec{} 
\label{Sect_1.3.16_now}
For $\theta\in\Lambda_{G,P}^{pos}$ let 
$$
_{\le -\theta}\Bunb_P\subset X^{\theta}\times\Bunb_P
$$ 
be the closed substack whose $S$-points are those $S$-points 
$(D, \cF_{M/[M,M]}, \cF_G,\kappa)$ of the RHS such that for any $\check{\lambda}\in\check{\Lambda}^+\cap \check{\Lambda}_{G,P}$ the map
$$
\cL^{\check{\lambda}}_{\cF_{M/[M,M]}}(\<D, \check{\lambda}\>)\to \cV^{\check{\lambda}}_{\cF_G}
$$
is regular over $S\times X$.  Let $_{-\theta}\Bunb_P\subset {_{\le -\theta}\Bunb_P}$ be the open substack where all the above maps have no zeros on $S\times X$. The projection $_{-\theta}\Bunb_P\to\Bunb_P$ is a locally closed embedding. The substacks $_{-\theta}\Bunb_P$ for $\theta\in\Lambda^{pos}_{G,P}$ form a stratification of $\Bunb_P$.  

The map $\pi_{\Ran}$ yields the projection
$$
(X^{\theta}\times\Ran)^{\subset}\times_{\Ran} \Gr_{G,\Ran}\to
X^{\theta}\times (_{\Ran,\infty}\Bunb_P).
$$ 
The preimage of $\Ran\times{_{\le -\theta}\Bunb_P}$ (resp., of $\Ran\times (_{-\theta}\Bunb_P)$) under this map is $\ov{\Gr}^{\, -\theta}_{P,\Ran}$ and $\Gr^{\, -\theta}_{P,\Ran}$ respectively. 

  One has an isomorphism $X^{\theta}\times\Bun_P\,\iso\, {_{-\theta}\Bunb_P}$ sending $(D, \cF_P)$ to 
$$
(D, \cF_{M/[M,M]}(-D), \cF_G), 
$$
where $\cF_{M/[M,M]}, \cF_G$ are obtained from $\cF_P$ by extension of scalars. 

\index{$_{\le \theta}\Bunb_P, {_{\theta}\Bunb_P}$, Section~\ref{Sect_1.3.16_now}}
 
\sssec{}  
\label{Sect_1.3.17_now}
As in \cite{BFGM} for $\theta\in\Lambda^{pos}_{G,P}$ write $\Mod^{+,\theta}_{\Bun_M}$ for the stack classifying $D\in X^{\theta}, \cF_M,\cF'_M\in\Bun_M$ with an isomorphism $\beta: \cF_M\,\iso\,\cF'_M\mid_{X-\supp(D)}$ such that for any $V\in\Rep(G)^{\heartsuit}$ finite-dimensional, the natural map
$$
V^{U(P)}_{\cF_M}\to V^{U(P)}_{\cF'_M}
$$
is regular over $X$, and the induced map $\cF_{M/[M,M]}\,\iso\, \cF'_{M/[M,M]}$ over $X-\supp(D)$ extends to an isomorphism $\cF_{M/[M,M]}(D)\,\iso\, \cF'_{M/[M,M]}$ over $X$. 
 
 The scheme $\Mod_M^{+,\theta}$ is obtained from $\Mod^{+,\theta}_{\Bun_M}$ by base change posing $\cF'_M=\cF^0_M$. 
 
 For $\theta\in -\Lambda^{pos}_{G,P}$ denote by $\Mod_M^{-,\theta}$ the scheme defined in (\cite{DL}, 4.2.12). Namely, it is obtained from $\Mod^{+,-\theta}_{\Bun_M}$ by the base change posing $\cF_M=\cF^0_M$. Note that $\Mod_M^{-,\theta}\hook{}\Gr_{M, X^{-\theta}}$ is closed.
 
\index{$\Mod^{+,\theta}_{\Bun_M}, \Mod_M^{+,\theta}, \Mod_M^{-,\theta}$, Section~\ref{Sect_1.3.17_now}} 
 
\begin{Rem} 
\label{Rem_1.3.17}
Let $\theta\in -\Lambda^{pos}_{G,P}$.\\ 
i) The scheme $\Mod_M^{-,\theta}$ can be seen as the scheme classifying $D\in X^{-\theta}$, a $M$-torsor $\cF_M$ on $\cD_D$ together with a trivialization $\beta: \cF_M\,\iso\, \cF^0_M$ over $\oo{\cD}_D$ such that for any $V\in\Rep(G)^{\heartsuit}$ finite-dimensional, the natural map
$$
V^{U(P)}_{\cF^0_M}\to V^{U(P)}_{\cF_M}
$$
is regular over $\cD_D$, and the induced map $\cF_{M/[M,M]}\,\iso\, \cF^0_{M/[M,M]}\mid_{\oo{\cD}_D}$ extends to an isomorphism $\cF_{M/[M,M]}\,\iso\,  \cF^0_{M/[M,M]}(D)$ over $\cD_D$.
 
 Let $\gL^+(M)_{\theta}$ be the group scheme over $X^{-\theta}$ whose fibre over $D$ is $\Map(\cD_D, M)$. It acts naturally on $\Mod_M^{-,\theta}\hook{}\Gr_{M, X^{-\theta}}$. For $n\ge 0$ let $\gL^+(M)_{n,\theta}$ be the group scheme over $X^{-\theta}$ whose fibre over $D$ is $\Map(D_n, M)$. The notation $D_n$ is that of Section~\ref{Sect_1.3.12_now}.
 
 For $\theta=0$ we get $\Mod_M^{-,\theta}=\Spec k=X^{-\theta}$, and the group scheme $\cL^+(M)_{\theta}$ is trivial. 
 
\smallskip\noindent 
ii) The stack quotient $\Mod_M^{-,\theta}/\cL^+(M)_{\theta}$ classifies: $D\in X^{-\theta}$, $M$-torsors $\cF_M,\cF'_M$ on $\cD_D$ with a trivialization $\beta_D: \cF_M\,\iso\, \cF'_M$ over $\oo{\cD}_D$ such that for any $V\in\Rep(G)^{\heartsuit}$ finite-dimensional, the natural map
$$
V^{U(P)}_{\cF_M}\to V^{U(P)}_{\cF'_M}
$$
is regular over $\cD_D$, and the reduction of $\beta_D$ to $M/[M,M]$ extends to an isomorphism $\cF_{M/[M,M]}(D)\,\iso\,  \cF'_{M/[M,M]}$ over $\cD_D$.

\index{$\gL^+(M)_{\theta}, \gL^+(M)_{n,\theta}$, Remark~\ref{Rem_1.3.17}}
\end{Rem}
  
\sssec{} 
\label{Sect_1.3.19_now}
For $\theta\in\Lambda_{G,P}^{pos}$ let $_{-\theta}\Bunt_P$ be the preimage of $_{-\theta}\Bunb_P\hook{} \Bunb_P$ under $\gr: \Bunt_P\to\Bunb_P$. By (\cite{BFGM}, Section~3.3), one has an isomorphism
$$
_{-\theta}\Bunt_P\,\iso\,\Bun_P\times_{\Bun_M} \Mod^{+,\theta}_{\Bun_M}
$$
where we used the map $h^{\ra}: \Mod^{+,\theta}_{\Bun_M}\to \Bun_M$, $(D,\cF_M, \cF'_M,\beta)\mapsto \cF'_M$ to form the fibred product. We have the diagram
$$
\Mod^{+,\theta}_{\Bun_M}\;\getsup{p^{-\theta}_{glob}}\;
{_{-\theta}\Bunt_P}\;\toup{v^{-\theta}_{glob}}\; \Bunt_P,
$$
where $v^{-\theta}_{glob}$ is the inclusion, and $p^{-\theta}_{glob}$ is the projection. Write 
$$
\bar v^{-\theta}_{glob}: \Bunt_P\times_{\Bun_M} \Mod^{+,\theta}_{\Bun_M}\to \Bunt_P
$$ 
for the map defined as follows. First, we used $h^{\ra}$ to form the fibred product. Then $\bar v^{-\theta}_{glob}$ sends $(\cF_M, \cF'_M, D,\beta)\in \Mod^{+,\theta}_{\Bun_M}$, $(\cF'_M, \cF'_G,\kappa)\in\Bunt_P$ to $(\cF_M, \cF'_G, \kappa')$, here $\kappa'$ are obtained naturally from $\kappa, \beta$. The map $\bar v^{-\theta}_{glob}$ is proper. 
  
\index{$_{\theta}\Bunt_P, v^{\theta}_{glob}, p^{\theta}_{glob}, \bar v^{\theta}_{glob}$, 
Section~\ref{Sect_1.3.19_now}}  
  
\sssec{}  
\label{Sect_1.3.20_now}
For $\theta\in\Lambda^{pos}_{G,P}$ set 
$$
\Gr_{M, \Ran,+}^{-\theta}=(X^{\theta}\times\Ran)^{\subset}\times_{\Gr_{M/[M,M],\Ran}} \Gr_{M,\Ran}, 
$$
where we used the map $q_{\Ran}^+$ to form the fibred product. So, for $S\in\Sch^{aff}$ an $S$-point of $\Gr_{M, \Ran,+}^{-\theta}$ is a collection 
$$
(\cI\in\Map(S,\Ran), D\in \Map(S, X^{\theta}), \cF_M, \beta_M),
$$ 
where $\cF_M$ is a $M$-torsor on $S\times X$, $\beta_M: \cF_M\,\iso\, \cF^0_M\mid_{S\times X-\Gamma_{\cI}}$ is a trivialization such that the induced trivialization $\cF_{M/[M,M]}\,\iso\, \cF^0_{M/[M,M]}\mid_{S\times X-\Gamma_{\cI}}$ extends to an isomorphism $\cF_{M/[M,M]}\,\iso\, \cF^0_{M/[M,M]}(D)$ over $S\times X$. Set
$$
\Gr_{M,\Ran,+}=\underset{\theta\in\Lambda^{pos}_{G,P}}{\sqcup} \Gr_{M,\Ran,+}^{-\theta}.
$$ 

\index{$\Gr_{M, \Ran,+}^{\theta}, \Gr_{M,\Ran,+}$, Section~\ref{Sect_1.3.20_now}}

 As in (\cite{BG}, 4.3.2), one gets the following.
\begin{Lm} 
\label{Lm_1.3.21_now}
For $\theta\in\Lambda_{G,P}^{pos}$ there is a unique morphism $\gt^{-\theta}_{P,\Ran}: \Gr^{-\theta}_{P,\Ran}\to \Gr_{M, \Ran,+}^{-\theta}$ with the following property. For $(\cI, D,\cF_G, \beta: \cF_G\,\iso\, \cF^0_G\mid_{X-\Gamma_{\cJ}})\in \Gr^{-\theta}_{P,\Ran}$ its image $(D, \cI, \cF_M,\beta_M)$ is the unique point of $\Gr^{-\theta}_{M,\Ran, +}$ for which the composition
$$
V^{U(P)}_{\cF_M}\toup{\beta_M} V^{U(P)}_{\cF^0_M}\to V_{\cF^0_G}\toup{\beta} V_{\cF_G}
$$
is a map of vector bundles over $X$ for any $V\in\Rep(G)^{\heartsuit}$ finite-dimensional. 
In fact, $\Gr^{-\theta}_{P,\Ran}\,\iso\, \Gr_{P,\Ran}\times_{\Gr_{M,\Ran}} \Gr_{M, \Ran,+}^{-\theta}$ naturally. 
\QED
\index{$\gt^{\theta}_{P,\Ran}$, Lemma~\ref{Lm_1.3.21_now}}
\end{Lm}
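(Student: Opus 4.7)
The strategy is to mimic the construction in (\cite{BG}, 4.3.2) at the level of $S$-points, $S\in\Sch^{aff}$, relying on the fact that passing to the Ran-version only amounts to carrying along the extra datum $\cI\in\Map(S,\Ran)$. Fix $(\cI, D, \cF_G, \beta) \in \Gr^{-\theta}_{P,\Ran}(S)$. By the open condition defining $\Gr^{-\theta}_{P,\Ran}\subset \ov{\Gr}^{\, -\theta}_{P,\Ran}$, for every $\check{\lambda}\in \check{\Lambda}^+\cap\check{\Lambda}_{G,P}$ the twisted Pl\"ucker map
$$
\kappa^{\check\lambda}: \cL^{\check{\lambda}}_{\cF^0_{M/[M,M]}}(\<D,\check{\lambda}\>)\hook{} \cV^{\check{\lambda}}_{\cF_G}
$$
is a sub-bundle on all of $S\times X$, and these maps are mutually compatible under tensor product (Pl\"ucker relations): this holds on the open $S\times X-\Gamma_\cI$ by construction from $\beta$, and extends over $S\times X$ since both sides are vector bundles.

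The main step is to upgrade this Pl\"ucker datum to a genuine $P$-reduction $\cF_P$ of $\cF_G$ on $S\times X$. Under our running assumption that $[G,G]$ is simply connected, the Pl\"ucker presentation of $\Bun_P$ recalled in (\cite{BG}, 1.3.6) asserts that a $P$-structure on a $G$-bundle $\cF_G$ is equivalent to the data of an $M/[M,M]$-torsor $\cF'_{M/[M,M]}$ together with a family of sub-bundle inclusions $\cL^{\check\lambda}_{\cF'_{M/[M,M]}} \hook{} \cV^{\check\lambda}_{\cF_G}$ satisfying the Pl\"ucker relations for $\check\lambda\in \check\Lambda^+\cap\check\Lambda_{G,P}$. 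Applying this with $\cF'_{M/[M,M]}=\cF^0_{M/[M,M]}(D)$ produces the desired $\cF_P$. Set $\cF_M := \cF_P \times^P M$ and transport $\beta$ through the canonical $P$-structure on $\cF^0_G\mid_{S\times X-\Gamma_\cI}$ to obtain a trivialization $\beta_M: \cF_M \iso \cF^0_M\mid_{S\times X-\Gamma_\cI}$. By construction the reduction of $\beta_M$ to $M/[M,M]$ extends to $\cF_{M/[M,M]}\iso\cF^0_{M/[M,M]}(D)$, so $(D,\cI,\cF_M,\beta_M)$ is a point of $\Gr^{-\theta}_{M,\Ran,+}$. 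For the characterising composition condition: the composite
$$
V^{U(P)}_{\cF_M}\toup{\beta_M} V^{U(P)}_{\cF^0_M}\to V_{\cF^0_G}\toup{\beta} V_{\cF_G}
$$
coincides, via the $P$-reduction $\cF_P$, with the canonical inclusion $V^{U(P)}_{\cF_M}\hook{} V_{\cF_P\times^P G}= V_{\cF_G}$, hence is a map of vector bundles on $S\times X$. Uniqueness is automatic: the sub-bundles $V^{U(P)}_{\cF_M}\hook{} V_{\cF_G}$ for a generating family of $V\in\Rep(G)^{\heartsuit, fd}$ recover the $P$-reduction, and hence $(\cF_M,\beta_M)$.

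For the final identification, the natural morphism
$$
\Gr^{-\theta}_{P,\Ran}\to \Gr_{P,\Ran}\times_{\Gr_{M,\Ran}}\Gr^{-\theta}_{M,\Ran,+}
$$
is the pair $(\gt^{-\theta}_{P,\Ran}, \text{forget})$, where the second map sends $(\cI,D,\cF_G,\beta)$ to $(\cI,\cF_P,\beta_P)$ using the $P$-reduction constructed above. Its inverse takes a compatible pair, forms $\cF_G=\cF_P\times^P G$ with induced trivialization, extracts $D$ from the $\Gr^{-\theta}_{M,\Ran,+}$-factor, and one checks that the resulting Pl\"ucker maps $\kappa^{\check\lambda}$ are sub-bundle inclusions on all of $S\times X$, i.e.~that the point lies in the open locus $\Gr^{-\theta}_{P,\Ran}\subset \ov\Gr^{\,-\theta}_{P,\Ran}$. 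The only genuine obstacle is the Pl\"ucker presentation of $P$-reductions used to manufacture $\cF_P$; but this is a local statement on $S\times X$, insensitive to the auxiliary datum $\cI$ (which only constrains the trivializations away from $\Gamma_\cI$), and hence reduces verbatim to (\cite{BG}, 1.3.6).
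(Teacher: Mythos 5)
Your construction is correct and is exactly the argument the paper intends: the lemma is stated without proof, with the remark that it is obtained "as in (\cite{BG}, 4.3.2)", i.e. by the same Pl\"ucker-style upgrading of the sub-bundle maps $\cL^{\check{\lambda}}_{\cF^0_{M/[M,M]}}(\<D,\check{\lambda}\>)\hook{}\cV^{\check{\lambda}}_{\cF_G}$ (valid since $[G,G]$ is simply connected) to a $P$-reduction $\cF_P$ on all of $S\times X$, from which $(\cF_M,\beta_M)$, the uniqueness, and the identification $\Gr^{-\theta}_{P,\Ran}\,\iso\,\Gr_{P,\Ran}\times_{\Gr_{M,\Ran}}\Gr^{-\theta}_{M,\Ran,+}$ follow just as you describe. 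So your proposal fleshes out the paper's (cited) proof rather than giving a different one.
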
 

\sssec{} 
\label{Sect_1.3.22_now}
For $\theta\in\Lambda_{G,P}^{pos}$ write $q^+_{\theta}: X^{\theta}\to \Gr_{M/[M,M], X^{\theta}}$ for the map sending $D$ to $\cF_{M/[M,M]}=\cF^0_{M/[M,M]}(D)$ with the evident trivialization 
$$
\cF_{M/[M,M]}\,\iso\,\cF^0_{M/[M,M]}\mid_{X-\supp(D)}.
$$ 
Set 
$$
\Gr^{-\theta}_{M, X^{\theta}, +}=X^{\theta}\times_{\Gr_{M/[M,M], X^{\theta}}} \Gr_{M, X^{\theta}},
$$
where we used the map $q^+_{\theta}$. Our convention is that $\Gr^0_{M, X^0, +}=\Spec k$.  

\index{$q^+_{\theta}, \Gr^{\theta}_{M, X^{\theta}, +}$, Section~\ref{Sect_1.3.22_now}}

\sssec{} If $\theta\in -\Lambda_{G,P}^{pos}$ then the square is cartesian
$$
\begin{array}{ccc} 
\Gr_{M, \Ran, +}^{\theta} & \to & \ov{\Gr}_{P,\Ran}^0\\
\downarrow && \downarrow\\
\Gr_{M, \Ran, \theta} & \to & \Gr_{G,\Ran},
\end{array}
$$
where the vertical arrows are closed immersions.

\sssec{} For $-\theta\in\Lambda_{G,P}^{pos}$ the prestacks $(X^{\theta}\times\Ran)^{\subset}, \ov{\Gr}^{\theta}_{P,\Ran}, \Gr^{\theta}_{P, \Ran}, \Gr_{M,\Ran, +}^{\theta}$ have natural unital structures.

Note that if $G=P$ then $\Gr^0_{G,\Ran, +}=\Gr_{[G,G], \Ran}=\Gr^0_{G,\Ran}$.

\sssec{} 
\label{Sect_1.3.25_now}
For $-\theta\in\Lambda^{pos}_{G,P}$ let $i^{\theta}_{P, \Ran}: \Gr_{M, \Ran,+}^{\theta}\to \Gr^{\theta}_{P,\Ran}$ be the section obtained from the section $\Gr_{M,\Ran}\to \Gr_{P,\Ran}$. 

\index{$i^{\theta}_{P, \Ran}$, Section~\ref{Sect_1.3.25_now}}

\sssec{} 
\label{Sect_1.3.26_now}
For $\theta\in\Lambda_{G,P}^{pos}$ we define the versions $\ov{\Gr}^{\, -\theta}_{P^-, \Ran}, \Gr^{-\theta}_{P^-,\Ran}$ of the above objects with $P$ replaced by $P^-$ as follows\footnote{We warn the reader that these are not the exact analogs of the corresponding objects for $P$, as in our definition the divisor appearing is $D$ and not $-D$.}
. Let 
$$
\ov{\Gr}^{\,-\theta}_{P^-, \Ran}\subset (X^{\theta}\times\Ran)^{\subset}\times_{\Ran} \Gr_{G,\Ran}
$$
be the closed subfunctor whose $S$-points for $S\in\Sch^{aff}$ are $S$-points $(D,\cI, \cF_G, \beta: \cF_G\,\iso\, \cF^0_G\mid_{X-\Gamma_{\cI}})$ of the RHS such that for any $\check{\lambda}\in\check{\Lambda}^+\cap \check{\Lambda}_{G,P}$ the map
$$
\tilde\cV^{\check{\lambda}}_{\cF_G}\to \cL^{\check{\lambda}}_{\cF^0_{M/[M,M]}}(\<D, \check{\lambda}\>)
$$
initially defined over $S\times X-\Gamma_{\cI}$ is regular over $S\times X$. Let $j^{-\theta}_{P^-, \Ran}: \Gr^{-\theta}_{P^-,\Ran}\subset \ov{\Gr}^{\,-\theta}_{P^-, \Ran}$ be the open subscheme where the above maps are morphisms of vector bundles on $S\times X$. 
Note that $\Gr^{-\theta}_{P^-,\Ran}\hook{}\Gr_{P^-,\Ran, -\theta}$ is closed.

 As for $P$, one defines the map $\gt^{-\theta}_{P^-, \Ran}: \Gr^{-\theta}_{P^-,\Ran}\to \Gr^{-\theta}_{M,\Ran, +}$.  
 
\index{$\ov{\Gr}^{\theta}_{P^-, \Ran}, \Gr^{\theta}_{P^-,\Ran}, j^{\theta}_{P^-, \Ran}, \gt^{\theta}_{P^-, \Ran}$, Section~\ref{Sect_1.3.26_now}} 

\sssec{} 
\label{Sect_1.3.27_now}
For $\theta\in -\Lambda^{pos}_{G,P}$ write $S^{\theta}_{P, \Ran}$ for the preimage of $\Gr_{P, \Ran}^{\theta}$ under $\bar S^0_{P, \Ran}\hook{} \ov{\Gr}^0_{P, \Ran}$. So, $\{S^{\theta}_{P, \Ran}\}$, $\theta\in -\Lambda^{pos}_{G, P}$ is a stratification of $\bar S^0_{P, \Ran}$. Write $v^{\theta}_{S, \Ran}: S^{\theta}_{P, \Ran}\to \bar S^0_{P, \Ran}$ for the locally closed immersion. Write $\eta^{\theta}_{\Ran}: S^{\theta}_{P,\Ran}\to \Gr^{\theta}_{P,\Ran}$ for the natural closed immersion.
  
 Consider the prestack $\Mod_{M,\Ran}^{-,\theta,\subset}:=\Mod_M^{-,\theta}\times_{X^{-\theta}}(X^{-\theta}\times\Ran)^{\subset}$ classifying $(D, \cI)\in (X^{-\theta}\times\Ran)^{\subset}$ and 
$$
(D, \cF'_M,\beta_M: \cF_M^0\,\iso\, \cF'_M\mid_{X-\supp(D)})\in \Mod_M^{-,\theta}.
$$ 
We have the closed immersion 
\begin{equation}
\label{inclusion_Mod_M_Ran^theta^subset}
\Mod_{M,\Ran}^{-,\theta,\subset}\hook{} \Gr^{\theta}_{M,\Ran, +}
\end{equation}
sending the above collection to $(D, \cI, \cF'_M, \beta_M: \cF_M^0\,\iso\, \cF'_M\mid_{X-\Gamma_{\cI}})$. This closed subscheme is preserved under the action of $\gL^+(M)_{\Ran}$ on $\Gr^{\theta}_{M,\Ran, +}$. Define the map $\gt^{\theta}_{S,\Ran}$
by the cartesian square
$$
\begin{array}{ccc}
S^{\theta}_{P,\Ran} & \hook{} & \Gr^{\theta}_{P,\Ran}\\ \\
\downarrow\lefteqn{\scriptstyle \gt^{\theta}_{S,\Ran}} && \downarrow\lefteqn{\scriptstyle \gt^{\theta}_{P,\Ran}}\\ \\
\Mod_{M,\Ran}^{-,\theta,\subset} & \hook{} &\Gr^{\theta}_{M,\Ran, +},
\end{array}
$$  
where the horizontal arrows are closed immersions. For $I\in fSets$ let $\Mod_{M,I}^{-,\theta,\subset}=\Mod_{M,\Ran}^{-,\theta,\subset}\times_{\Ran} X^I$. Let 
$
i^{\theta}_{S,\Ran}: \Mod_{M,\Ran}^{-,\theta,\subset}\to S^{\theta}_{P,\Ran}
$ 
be the restriction of $i^{\theta}_{P,\Ran}$.  

\index{$S^{\theta}_{P, \Ran}, v^{\theta}_{S, \Ran}, \eta^{\theta}_{\Ran}, \Mod_{M,\Ran}^{-,\theta,\subset}, \gt^{\theta}_{S,\Ran}$, Section~\ref{Sect_1.3.27_now}}
\index{$\Mod_{M,I}^{-,\theta,\subset}, i^{\theta}_{S,\Ran}$, Section~\ref{Sect_1.3.27_now}}

\sssec{} 
\label{Sect_1.3.28_now}
For $-\theta\in\Lambda_{G,P}^{pos}$ set $\bar S^{\theta}_{P,\Ran}=\bar S^0_{P,\Ran}\times_{\ov{\Gr}^0_{P,\Ran}} \ov{\Gr}^{\,\theta}_{P,\Ran}$. Let $\bar v^{\theta}_{S,\Ran}: \bar S^{\theta}_{P,\Ran}\to \bar S^0_{P,\Ran}$ be the projection. Denote by 
$$
j^{\theta}_{S,\Ran}: S^{\theta}_{P,\Ran}\to \bar S^{\theta}_{P,\Ran}
$$ 
the natural open immersion.

\index{$\bar S^{\theta}_{P,\Ran}, \bar v^{\theta}_{S,\Ran}, j^{\theta}_{S,\Ran}$, Section~\ref{Sect_1.3.28_now}} 

\sssec{} For $\theta\in -\Lambda^{pos}_{G,P}$ the prestacks $\bar S^{\theta}_{P,\Ran}$, $S^{\theta}_{P,\Ran}, \Mod_{M,\Ran}^{-,\theta,\subset}$ have natural unital structures, and the maps 
$$
\eta_{\Ran}, j^{\theta}_{S,\Ran}, \bar v^{\theta}_{S,\Ran}, \eta^{\theta}_{\Ran}, \gt^{\theta}_{P,\Ran}, i^{\theta}_{P,\Ran}
$$ 
are equivariant under the actions of the category object $(\Ran\times\Ran)^{\subset}$.


\sssec{} 
\label{Sect_1.3.27}
For $\theta\in -\Lambda^{pos}_{G,P}$ the stack quotient $\Gr^{\theta}_{M,\Ran, +}/\gL^+(M)_{\Ran}$ classifies $(D,\cI)\in (X^{-\theta}\times\Ran)^{\subset}$, $M$-torsors $\cF_M, \cF'_M$ on $\cD_{\cI}$ with an isomorphism $\beta: \cF_M\,\iso\,\cF'_M\mid_{\oo{\cD}_{\cI}}$ whose reduction to $M/[M,M]$ extends to an isomorphism on $\cD_{\cI}$
$$
\cF_{M/[M,M]}(D)\,\iso\, \cF'_{M/[M,M]}.
$$

 The closed substack $\Mod_{M,\Ran}^{-,\theta,\subset}/\gL^+(M)_{\Ran}$ of the latter classifies a point as above for which $\beta$ extends to an isomorphism
\begin{equation}
\label{iso_beta_extended_for_Sect_1.3.27}
\beta: \cF_M\,\iso\, \cF'_M\mid_{\cD_{\cI}-\supp(D)}
\end{equation}
and such that for any $V\in \Rep(G)^{\heartsuit}$ finite-dimensional the natural map
\begin{equation}
\label{map_U(P)_invariants_for_Sect_1.3.27}
V^{U(P)}_{\cF_M}\to V^{U(P)}_{\cF'_M}
\end{equation}
is regular over $\cD_{\cI}$. We used here the open immersion $\oo{\cD}_{\cI}\hook{} \cD_{\cI}-\supp(D)$. 

\sssec{} 
\label{Sect_1.3.31_now}
Let $\theta\in -\Lambda^{pos}_{G,P}$. Recall that 
$$
Shv(\Gr^{\theta}_{M,\Ran, +})^{\gL^+(M)_{\Ran}}\,\iso\, \underset{I\in fSets}{\lim} Shv(\Gr^{\theta}_{M, I, +})^{\gL^+(M)_I}.
$$ 
For each $I$ we have the functor $\oblv[\dimrel]: Shv(\Gr^{\theta}_{M, I, +})^{\gL(M)_I}\to  Shv(\Gr^{\theta}_{M, I, +})$ defined in (\cite{DL}, A.5.3). By Section~\ref{Sect_A.4.1}, these functors commute with the $!$-restrictions along $X^J\to X^I$ for maps $I\to J$ in $fSets$. Passing to the limit over $I\in fSets$ in the above functors, one gets the functor denoted
$$
\oblv[\dimrel]: Shv(\Gr^{\theta}_{M,\Ran, +})^{\gL^+(M)_{\Ran}}\to Shv(\Gr^{\theta}_{M,\Ran, +}).
$$ 

 Similarly, one defines the functor
$$
\oblv[\dimrel]: Shv(\Mod_{M,\Ran}^{-,\theta,\subset})^{\gL^+(M)_{\Ran}}\to Shv(\Mod_{M,\Ran}^{-,\theta,\subset}).
$$  

\index{$\oblv[\dimrel]$, Section~\ref{Sect_1.3.31_now}}

\sssec{Actions of $\Map(I,  \Lambda_{M, ab})$} 
\label{Sect_1.3.32_now}
Consider the action of $\Lambda_{M, ab}$ on $\Gr_{G, x}$ such that $\lambda\in\Lambda_{M, ab}$ acts on $\Gr_{G, x}$ as multiplcation by $t_x^{\lambda}$, where $t_x\in \cO_x$ is a uniformizer. The global analog of this action is as follows. Let $I\in fSets$. 

Let us define an action of $\Map(I, \Lambda_{M, ab})$ on the diagram over $X^I$
$$
\Gr_{M, I}/\gL^+(M)_I\gets \Gr_{P, I}/\gL^+(M)_I\to
\Gr_{G, I}/\gL^+(M)_I\to X^I/\gL^+(M)_I.
$$ 

 Set $\bar T=\Lambda_{M, ab}\otimes\Gm$. Note that the natural map $\Lambda_{M, ab}\otimes \GG\to \Lambda\otimes\Gm=T$ coming from $\Lambda_{M, ab}\subset \Lambda$ takes values in the center $Z(M)$ of $M$. 
 
 The stack quotient $X^I/\gL^+(M)_I$ classifies $\cI\in X^I$ and a $M$-torsor $\cF_M$ on $\cD_{\cI}$. We let $\und{\lambda}\in \Map(I, \Lambda_{M, ab})$ act on $X^I/\gL^+(M)_I$ sending the point $(\cI, \cF_M)$ to $(\cI, \cF'_M)$. Here 
$$
\cF_{\bar T}=\cF^0_{\bar T}(-\sum_{i\in I} \lambda_i \Gamma_i)
$$ 
is the $\bar T$-torsor on $\cD_{\cI}$, and $\cF'_M=\cF_M\times^{\bar T} \cF_{\bar T}$ is the $M$-torsor on $\cD_{\cI}$, the quotient of $\cF_M\times \cF_{\bar T}$ by the diagonal action of $\bar T$. We used that $\Gamma_i$ is an effective Cartier divisor on $\cD_{\cI}$.    

 The stack quotient $\Gr_{G, I}/\gL^+(M)_I$ classifies: $\cI\in X^I$, $M$-torsor $\cF_M$ on $\cD_{\cI}$, $G$-torsor $\cF_G$ on $\cD_{\cI}$ with an isomorphism $\beta: \cF_M\times^M G\,\iso\,\cF_G\mid_{\oo{\cD}_{\cI}}$ of $G$-torsors. 
 
 Let $\und{\lambda}\in \Map(I, \Lambda_{M, ab})$ act on $\Gr_{G, I}/\gL^+(M)_I$ sending the above point to 
$(\cI, \cF'_M, \cF_G, \beta')$, where $\cF'_M$ is as above, and
$$
\beta': \cF'_M\times^M G\,\iso\,\cF_G\mid_{\oo{\cD}_{\cI}}
$$
is obtained from $\beta$ by composing with $\cF_M\,\iso\, \cF'_M\mid_{\oo{\cD}_{\cI}}$. 

 The $\Map(I, \Lambda_{M, ab})$-action on $\Gr_{M, I}/\gL^+(M)_I,\; \Gr_{P, I}/\gL^+(M)_I$ is defined similarly. 
 
\index{$t_x,\bar T$, Section~\ref{Sect_1.3.32_now}} 
 
\begin{Rem} 
\label{Rem_action_on_conn_components}
For $\und{\lambda}\in \Map(I, \Lambda_{M, ab})$ let $\lambda=\sum_i \und{\lambda}(i)$, $\theta\in\Lambda_{G,P}$. The action map 
$$
\und{\lambda}: \Gr_{M, I, \theta}/\gL^+(M)_I\to \Gr_{M, I, \theta+\lambda}/\gL^+(M)_I
$$ 
is an isomorphism, and similarly for $\und{\lambda}: \Gr_{P, I, \theta}/\gL^+(M)_I\to \Gr_{P, I, \theta+\lambda}/\gL^+(M)_I$.
\end{Rem}

\sssec{} 
\label{Sect_1.3.30}
For $\und{\lambda}\in \Map(I, \Lambda_{M, ab})$, $K\in Shv(\Gr_{M, I}/\gL^+(M)_I)$ we denote by $\und{\lambda}K\in Shv(\Gr_{M, I}/\gL^+(M)_I)$ 
the direct image of $K$ under the action map 
$$
\und{\lambda}: \Gr_{M, I}/\gL^+(M)_I\to \Gr_{M, I}/\gL^+(M)_I.
$$

If $K\in Shv(\Gr_{G, I}/\gL^+(M)_I)$ then $\und{\lambda}K\in Shv(\Gr_{G, I}/\gL^+(M)_I)$ is defined similarly. This action of $\Map(I, \Lambda_{M, ab})$ on $Shv(\Gr_{G, I})^{\gL^+(M)_I}$ preserves the full subcategory $\SI_{P, I}$. 

\index{$\und{\lambda}K$, Section~\ref{Sect_1.3.30}}

\sssec{Example} 
\label{Sect_1.3.31}
Given $I\in fSets$, $\und{\lambda}: I\to \Lambda_{M, ab}$ let $s^{\und{\lambda}}_G: X^I\to \Gr_{G, I}$ be the composition $X^I\toup{s^{\und{\lambda}}}\Gr_{T, I}\to \Gr_{G, I}$, here $s^{\und{\lambda}}$ is defined in Section~\ref{Sect_1.2.5}. 
 This map is $\gL^+(M)_I$-equivariant. So, we may view $(s^{\und{\lambda}}_G)_*\omega$ as an object of $Shv(\Gr_{G,I})^{\gL^+(M)_I}$. Then $\und{\lambda}(s^{\und{0}}_G)_*\omega\,\iso\, (s^{\und{\lambda}}_G)_*\omega$. We set for brevity $\delta_I=(s^{\und{0}}_G)_*\omega$. 
 
\index{$s^{\und{\lambda}}_G, \delta_I$, Example~\ref{Sect_1.3.31}} 
 
\sssec{} The $\Map(I, \Lambda_{M, ab})$-action on $Shv(\Gr_{G, I}/\gL^+(M)_I)$ is compatible with $!$-restric\-tions under $\vartriangle: X^J\to X^I$ for a map $\phi: I\to J$ in $fSets$. Namely, for $\und{\mu}=\phi_*\und{\lambda}$ one has 
$$
\vartriangle^!(\und{\lambda}K)\,\iso\, \und{\mu}(\vartriangle^! K)
$$ 
in $Shv(\Gr_{G, J}/\gL^+(M)_J)$ functorially on $K\in Shv(\Gr_{G, I}/\gL^+(M)_I)$.

\ssec{Structure of $\SI^{\le 0}_{P,\Ran}$}
\label{Sect_Structure_of_SI^le0_P,Ran}

\sssec{} 
\label{Sect_1.4.1_now}
If $-\theta\in\Lambda^{pos}_{G,P}$ then $H_{\Ran}$ acts naturally on 
$\Gr_{P,\Ran}^{\theta}, \ov{\Gr}_{P,\Ran}^{\, \theta}$ over $(X^{-\theta}\times\Ran)^{\subset}$, so one similarly gets for $I\in fSets$ the categories
$$
\SI^{\le \theta}_{P, I}=Shv(\ov{\Gr}_{P, I}^{\, \theta})^{H_I}, \;\;\;\; \SI^{\theta}_{P, I}=Shv(\Gr_{P, I}^{\theta})^{H_I}
$$
and their Ran versions
$$
\SI^{\le \theta}_{P,\Ran}=\underset{I\in fSets}{\lim} \SI^{\le \theta}_{P, I},\;\;\;\; \SI^{\theta}_{P,\Ran}=\underset{I\in fSets}{\lim}\SI^{\theta}_{P, I}.
$$

\index{$\SI^{\le \theta}_{P, I}, \SI^{\theta}_{P, I}, \SI^{\le \theta}_{P,\Ran}, \SI^{\theta}_{P,\Ran}$, Section~\ref{Sect_1.4.1_now}}

\sssec{} 
\label{Sect_1.4.2}
For $-\theta\in\Lambda^{pos}_{G,P}$, $I\in fSets$ we get the adjoint pairs in $Shv(X^I)-mod$
$$
(j^{\theta}_{P,I})^*: \SI^{\le \theta}_{P,I}\leftrightarrows \SI^{\theta}_{P,I}: (j^{\theta}_{P,I})_*
$$
and
$$
(\bar v^{\theta}_{P,I})_!: \SI^{\le \theta}_{P,I}\leftrightarrows \SI^{\le 0}_{P,I}: (\bar v^{\theta}_{P,I})^!
$$

 For a map $I\to J$ in $fSets$ these functors are compatible with the $!$-restrictions along $\vartriangle^{(I/J)}: X^J\to X^I$, so yield similar functors between the Ran versions of the corresponding categories. 

\sssec{} 
Pick $\lambda_0\in\Lambda^+_{M, ab}$ satisfying $\<\lambda_0, \check{\alpha}_i\>>0$ for $i\in \cI_G-\cI_M$. 

\begin{Lm}
\label{Lm_1.4.3} 
Let $-\theta\in\Lambda_{G,P}^{pos}$, $I\in fSets$. \\
i) The functor 
\begin{equation}
\label{funct_for_Lm_1.4.3}
(\gt^{\theta}_{P, I})^!: Shv(\Gr_{M, I, +}^{\theta})^{\gL^+(M)_I}\to Shv(\Gr_{P, I}^{\theta})^{\gL^+(M)_I}
\end{equation} 
is fully faithful, its essential image is the full subcategory $\SI_{P, I}^{\theta}$. The composition
$$
\SI_{P, I}^{\theta}\hook{} \Shv(\Gr_{P, I}^{\theta})^{\gL^+(M)_I}\toup{(i^{\theta}_{P, I})^!} Shv(\Gr_{M, I, +}^{\theta})^{\gL^+(M)_I}
$$
is an equivalence.

\smallskip\noindent
ii) The partially defined left adjoint $(\gt^{\theta}_{P, I})_!$ to (\ref{funct_for_Lm_1.4.3}) is defined on the full subcategory 
$
\SI_{P, I}^{\theta},
$
and 
$
(\gt^{\theta}_{P, I})_!\,\iso\, (i^{\theta}_{P, I})^!
$ 
canonically as functors 
$$
\SI_{P, I}^{\theta}\to Shv(\Gr_{M, I, +}^{\theta})^{\gL^+(M)_I}.
$$ 

 The partially defined left adjoint $(i^{\theta}_{P, I})^*$ to 
$$
(i^{\theta}_{P, I})_*: Shv(\Gr_{M, I, +}^{\theta})^{\gL^+(M)_I}\to Shv(\Gr_{P, I}^{\theta})^{\gL^+(M)_I}
$$ 
is defined on the full subcategory $\SI_{P, I}^{\theta}$, and one has canonically
$(\gt^{\theta}_{P, I})_*\,\iso\, (i^{\theta}_{P, I})^*$ as functors 
$$
\SI_{P, I}^{\theta}\to Shv(\Gr_{M, I, +}^{\theta})^{\gL^+(M)_I}.
$$
\end{Lm}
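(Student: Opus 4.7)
The plan is to reduce both parts to the structural description of $\gt^{\theta}_{P,I}$ as a quotient by the ind-prounipotent group $\gL(U(P))_I$, after which everything follows from adjunction formalism.

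First, I would invoke the cartesian square of \lemref{Lm_1.3.21_now}, i.e.
$$
\Gr^{\theta}_{P,I} \,\iso\, \Gr_{P,I} \times_{\Gr_{M,I}} \Gr_{M,I,+}^{\theta},
$$
under which $\gt^{\theta}_{P,I}$ is the projection to the second factor and $i^{\theta}_{P,I}$ is the base change of the canonical Levi section $\Gr_{M,I} \to \Gr_{P,I}$. Because $\Gr_{P,I} \to \Gr_{M,I}$ has fiber $\Gr_{U(P),I}$ on which $\gL(U(P))_I$ acts transitively through the section, the same is true of $\gt^{\theta}_{P,I}$: it is $\gL(U(P))_I$-invariant with each fiber an orbit of $\gL(U(P))_I$ through the image of $i^{\theta}_{P,I}$.

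Second, using the filtered colimit presentation $\gL(U(P))_I \,\iso\, \underset{\und{\lambda}}{\colim}\, U(P)_{\und{\lambda}}$ from \secref{Sect_1.2.8_now}, in which each $U(P)_{\und{\lambda}}$ is a placid prounipotent group scheme over $X^I$, and the standard fact that $!$-pullback along a quotient by a prounipotent group scheme is an equivalence onto its invariants, a colimit-over-$\und\lambda$ argument as in (\cite{DL}, A.3.3) and (\cite{Ly9}, 1.3.24) yields
$$
(\gt^{\theta}_{P,I})^!: Shv(\Gr_{M,I,+}^{\theta}) \,\iso\, Shv(\Gr^{\theta}_{P,I})^{\gL(U(P))_I}.
$$
Taking further $\gL^+(M)_I$-invariants of both sides and using the semidirect-product decomposition $H_I \,\iso\, \gL^+(M)_I \ltimes \gL(U(P))_I$ from \secref{Sect_1.2.7_now} yields the first equivalence of (i) with source $Shv(\Gr_{M,I,+}^{\theta})^{\gL^+(M)_I}$ and target $\SI^{\theta}_{P,I}$. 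The identification of the inverse equivalence with $(i^{\theta}_{P,I})^!$ is then immediate from the relation $\gt^{\theta}_{P,I} \circ i^{\theta}_{P,I} = \id$, which forces $(i^{\theta}_{P,I})^! \circ (\gt^{\theta}_{P,I})^! \,\iso\, \id$, completing part (i).

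Part (ii) is formal from the equivalence structure of (i). Since $(\gt^{\theta}_{P,I})^!: Shv(\Gr_{M,I,+}^{\theta})^{\gL^+(M)_I} \,\iso\, \SI^{\theta}_{P,I}$ has inverse $(i^{\theta}_{P,I})^!$, its partial left adjoint on the essential image $\SI^{\theta}_{P,I}$ coincides with the inverse, giving $(\gt^{\theta}_{P,I})_! \,\iso\, (i^{\theta}_{P,I})^!$ on $\SI^{\theta}_{P,I}$. The dual identification $(i^{\theta}_{P,I})^* \,\iso\, (\gt^{\theta}_{P,I})_*$ is obtained by running the same argument for $*$-pullback: $(\gt^{\theta}_{P,I})^*$ is also an equivalence onto $\SI^{\theta}_{P,I}$ (prounipotent quotients induce equivalences for both $!$- and $*$-pullback), with inverse $(i^{\theta}_{P,I})^*$; its right adjoint on the image is then $(\gt^{\theta}_{P,I})_* \,\iso\, (i^{\theta}_{P,I})^*$. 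The main obstacle is the careful justification of the basic equivalence $(\gt^{\theta}_{P,I})^!: Shv(\Gr_{M,I,+}^{\theta}) \,\iso\, Shv(\Gr^{\theta}_{P,I})^{\gL(U(P))_I}$ in the ind-prounipotent setting, which requires the colimit-over-$\und\lambda$ argument together with base-change properties of each finite-level $U(P)_{\und\lambda}$-approximation of $\gt^{\theta}_{P,I}$.
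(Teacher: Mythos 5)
Part (i) of your proposal and the first claim of part (ii) are fine. For (i) you argue exactly as the paper does: $\gL(U(P))_I$ is ind-prounipotent and acts transitively on the fibres of $\gt^{\theta}_{P,I}$, so $!$-pullback is fully faithful with image the equivariant category (the paper cites \cite{DL}, Lemma A.4.4, A.4.5, resp. \cite{Chen}, Lemma B.4.1), and the identification of the inverse with $(i^{\theta}_{P,I})^!$ follows from $\gt^{\theta}_{P,I}\comp i^{\theta}_{P,I}=\id$. Your derivation of $(\gt^{\theta}_{P,I})_!\iso (i^{\theta}_{P,I})^!$ purely from full faithfulness of $(\gt^{\theta}_{P,I})^!$ is correct and is even a more elementary route for that particular claim than the paper's (which obtains it from the contraction principle); the paper itself records this formal observation as Remark \ref{Rem_4.2.12_now}.

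The gap is in the second claim of (ii), namely that $(i^{\theta}_{P,I})^*$ is defined on $\SI^{\theta}_{P,I}$ and that $(\gt^{\theta}_{P,I})_*\iso (i^{\theta}_{P,I})^*$ there. Your argument rests on the assertion that ``prounipotent quotients induce equivalences for both $!$- and $*$-pullback'', so that $(\gt^{\theta}_{P,I})^*$ is again an equivalence onto $\SI^{\theta}_{P,I}$ with inverse $(i^{\theta}_{P,I})^*$. This conflates two different situations. For a torsor under a prounipotent group \emph{scheme} the map is pro-smooth and $q^*$ is the natural fully faithful functor; here, by contrast, $\gt^{\theta}_{P,I}$ is a fibration whose fibres are orbits of the ind-prounipotent group \emph{ind-scheme} $\gL(U(P))_I$, i.e.\ infinite-dimensional ind-schemes of ind-finite type. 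For such a map only $(\gt^{\theta}_{P,I})^!$ is well behaved; $(\gt^{\theta}_{P,I})^*$ is merely a partially defined left adjoint to $(\gt^{\theta}_{P,I})_*$, the usual comparison $q^!\iso q^*[2d]$ breaks down ($d=\infty$), and there is no formal reason for $(\gt^{\theta}_{P,I})^*$ to be defined, let alone an equivalence with inverse $(i^{\theta}_{P,I})^*$ — asserting this is essentially restating the claim to be proved. The identity $(\gt^{\theta}_{P,I})_*\iso (i^{\theta}_{P,I})^*$ on equivariant objects is a genuine geometric statement (a contraction/hyperbolic-localization principle), not a consequence of unipotent equivariance alone. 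The paper supplies the missing ingredient: the $\Gm$-action on $\Gr^{\theta}_{P,I}$ via the coweight $\lambda_0$ (with $\<\lambda_0,\check{\alpha}_i\>>0$ for $i\in\cI_G-\cI_M$), which contracts $\Gr^{\theta}_{P,I}$ onto $\Gr^{\theta}_{M,I,+}$ and commutes with $\gL^+(M)_I$, after which a version of Braden's theorem (\cite{DG1}, Proposition 3.2.2) gives both adjunction identities, including the existence of $(i^{\theta}_{P,I})^*$ on $\SI^{\theta}_{P,I}$ in the $\cD$-module setting. You need this contracting $\Gm$-action (or an equivalent substitute) to complete part (ii).
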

\begin{proof} i) The group ind-scheme $\gL(U(P))_I\to X^I$ is ind-prounipotent, and acts transitively on the fibres of $\gt^{\theta}_{P, I}: \Gr_{P, I}^{\theta}\to \Gr^{\theta}_{M, I, +}$.  
So, the claim follows from (\cite{DL}, Lemma~ A.4.4, A.4.5) whose proof works for both constructible context and $\cD$-modules. In the case of $\cD$-modules we may also invoke (\cite{Chen}, Lemma~B.4.1).

\medskip\noindent
ii) Let $\Gm$ act on $\Gr_{P, I}^{\theta}$ over $X^I$ via $\lambda_0$, so $\Gm$ contracts $\Gr_{P, I}^{\theta}$ to $\Gr_{M, I, +}^{\theta}$. This action commutes with that of $\gL^+(M)_I$. Our claim follows now from a version of a theorem of Braden (\cite{DG1}, Proposition~3.2.2).  
\end{proof}

\begin{Lm} 
\label{Lm_1.4.5}
Let $I\in fSets$. Consider the $\Gm$-action on $\ov{\Gr}_{P, I}^0$ via $\lambda_0$. The attracting (resp. repelling) locus of this $\Gm$-action is
$$
\underset{\theta\in -\Lambda_{G,P}^{pos}}{\sqcup} \Gr^{\theta}_{P, I}\;\;\;\;\;\mbox{and}\;\;\;\;\;\underset{\theta\in -\Lambda_{G,P}^{pos}}{\sqcup} \Gr^{\theta}_{P^-, I}\cap \ov{\Gr}_{P, I}^0
$$
respectively. \QED
\end{Lm}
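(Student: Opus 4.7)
The plan is to apply a Braden-type theorem to the $\Gm$-action on $\ov{\Gr}^0_{P,I}$ via $\lambda_0$, and then match the resulting Bialynicki-Birula strata with the decompositions $\{\Gr^\theta_{P,I}\}$ and $\{\Gr^\theta_{P^-,I} \cap \ov{\Gr}^0_{P,I}\}$ by comparing Pl\"ucker conditions.

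First, I would record the infinitesimal setup. Since $\lambda_0 \in \Lambda^+_{M,ab}$, the cocharacter $\lambda_0$ is central in $M$, while the conditions $\<\lambda_0, \check\alpha_i\> > 0$ for $i \in \cI_G - \cI_M$ say that $\lambda_0$ is strictly positive on the roots of $U(P)$ and strictly negative on those of $U(P^-)$. Thus the $\Gm$-action via $\lambda_0$ contracts $U(P)$ to $1$, repels $U(P^-)$, and has fixed subgroup exactly $M = Z_G(\lambda_0)$. The closed ind-subscheme $\ov{\Gr}^0_{P,I} \subset \Gr_{G,I}$ is moreover $\Gm$-stable, because its defining Pl\"ucker maps involve only $\check\lambda \in \check\Lambda^+ \cap \check\Lambda_{G,P}$, and such $\check\lambda$ pair integrally with $\lambda_0 \in \Lambda_{M,ab}$; therefore its attracting and repelling loci coincide with the intersections with the corresponding loci of $\Gr_{G,I}$.

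Next, I would invoke Braden's contraction principle for $\Gr_{G,I}$ with respect to this $\Gm$-action, in its ind-placid/relative form (as used, e.g., in \cite{DG1}). Fibrewise over $X^I$ this is the classical semi-infinite orbit decomposition: the $\Gm$-fixed locus of $\Gr_{G,I}$ equals $\Gr_{M,I}$; the attracting locus is the disjoint union over connected components $\Gr_{M,I,\mu}$ (indexed by $\mu \in \Lambda_{G,P}$, using that $[G,G]$ is simply-connected) of the $\gL(P)_I$-orbits through them; and symmetrically the repelling locus is the disjoint union of $\gL(P^-)_I$-orbits.

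Finally, to match strata, a point in the $\mu$-th attracting piece lies in $\ov{\Gr}^0_{P,I}$ iff every Pl\"ucker map $\kappa^{\check\lambda}$ for $\check\lambda \in \check\Lambda^+ \cap \check\Lambda_{G,P}$ is regular over $X$, which forces $\<\mu,\check\lambda\> \leq 0$ for all such $\check\lambda$, i.e.\ the image $\theta \in \Lambda_{G,P}$ of $\mu$ satisfies $\theta \in -\Lambda^{pos}_{G,P}$. Unwinding the definition in Section~\ref{Sect_1.3.15_now}, this intersection is exactly the image of $v^{\theta}_{P,I}$, namely the locally closed stratum $\Gr^\theta_{P,I}$, with the divisor $D \in X^{-\theta}$ recording the zero locus of the $\kappa^{\check\lambda}$'s. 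A parallel argument on the repelling side identifies its intersection with $\ov{\Gr}^0_{P,I}$ as $\bigsqcup_{\theta \in -\Lambda^{pos}_{G,P}} \Gr^\theta_{P^-,I} \cap \ov{\Gr}^0_{P,I}$; the intersection cannot be omitted, since the $P^-$-Pl\"ucker positivity and the $P$-Pl\"ucker regularity are independent conditions. The main obstacle I expect is a clean formulation of Braden's theorem in this placid ind-scheme setting relative to $X^I$; once that is granted, the identification of strata reduces to matching Pl\"ucker conditions with $\Gm$-weight decompositions on $\Gr_{M,I}$.
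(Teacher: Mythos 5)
The paper gives no proof of this lemma at all (it is asserted with a box as a standard fact), and your argument is exactly the routine verification being taken for granted: $\ov{\Gr}^0_{P,I}$ is $\Gm$-stable, the attracting and repelling loci of $\Gr_{G,I}$ for the action via $\lambda_0$ are $\Gr_{P,I}$ and $\Gr_{P^-,I}$ (fixed locus $\Gr_{M,I}$), and intersecting with $\ov{\Gr}^0_{P,I}$ matches the Pl\"ucker conditions with the strata $\Gr^{\theta}_{P,I}$, resp. $\Gr^{\theta}_{P^-,I}\cap\ov{\Gr}^0_{P,I}$, for $\theta\in-\Lambda^{pos}_{G,P}$. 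So your proposal is correct and is essentially the intended argument; the only points I would polish are that the attractor/repeller identification is a Bia{\l}ynicki-Birula-type statement rather than Braden's theorem proper (the latter concerns the sheaf functors), and that on the repelling side one should spell out, working over the diagonal strata of $X^I$ so as to reduce to one point of $\Gamma_{\cI}$ at a time, why membership in $\ov{\Gr}^0_{P,I}$ forces the colored divisor of the $P^-$-Pl\"ucker data to be effective, so that the intersection with the component $\Gr_{P^-,I,\theta}$ really lands in the closed subscheme $\Gr^{\theta}_{P^-,I}$.
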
 

\begin{Lm} 
\label{Lm_1.4.6}
Let $I\in fSets$, $-\theta\in\Lambda^{pos}_{G,P}$. The partially defined left adjoint of $(v^{\theta}_{P, I})_*: \SI_{P,I}^{\theta}\to \SI_{P,I}^{\le 0}$ is defined and denoted $(v^{\theta}_{P, I})^*: \SI_{P,I}^{\le 0}\to \SI_{P,I}^{\theta}$. Besides, the natural left-lax $Shv(X^I)$-structure on $(v^{\theta}_{P, I})^*$ is strict. For a map $I\to J$ in $fSets$ the canonical natural transformation 
$$
(v^{\theta}_{P, J})^*\!\vartriangle^!\to \vartriangle^!\!(v^{\theta}_{P, I})^*,\;\;\;\; \SI^{\le 0}_{P, I} \to \SI^{\theta}_{P, J}
$$ 
is an isomorphism, here $\vartriangle: X^J\to X^I$. 
\end{Lm}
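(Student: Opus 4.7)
The plan is to reduce all three assertions to the $\Gm$-contraction principle (Braden's theorem, cf.\ \cite{DG1}, Proposition~3.2.2) applied to the $\Gm$-action on $\ov{\Gr}^0_{P,I}$ via $\lambda_0$, in the spirit of the proof of \lemref{Lm_1.4.3}~ii. Since $\lambda_0\in\Lambda^+_{M,ab}$ takes values in the center $Z(M)$, this $\Gm$-action commutes with $H_I$, so every $K\in\SI^{\le 0}_{P,I}$ is automatically $\Gm$-equivariant. By \lemref{Lm_1.4.5}, the connected component $\Gr^\theta_{P,I}$ of the attracting locus is matched with the connected component $\Gr^\theta_{P^-,I}\cap \ov{\Gr}^0_{P,I}$ of the repelling locus, both over the fixed-point stratum $\Gr^\theta_{M,I,+}$.

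For part (i), the key observation is that $v^\theta_{P,I}$ is a locally closed immersion, so $(v^\theta_{P,I})^*$ is defined at the level of ambient sheaf categories as the left adjoint of $(v^\theta_{P,I})_*$. Since $v^\theta_{P,I}$ is $H_I$-equivariant, this left adjoint preserves $H_I$-equivariance and thus restricts to the desired continuous left adjoint $\SI^{\le 0}_{P,I}\to \SI^\theta_{P,I}$.

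For part (ii), I would prove strictness of the canonical left-lax $Shv(X^I)$-structure on $(v^\theta_{P,I})^*$ as follows. Under the equivalence $\SI^\theta_{P,I}\,\iso\, Shv(\Gr^\theta_{M,I,+})^{\gL^+(M)_I}$ of \lemref{Lm_1.4.3}~i (and the identification $(\gt^\theta_{P,I})_*\,\iso\, (i^\theta_{P,I})^*$ of \lemref{Lm_1.4.3}~ii), Braden's theorem identifies $(v^\theta_{P,I})^*$ with the composition $(q^{-,\theta})_!(w^\theta)^!$, where $w^\theta\colon \Gr^\theta_{P^-,I}\cap\ov{\Gr}^0_{P,I}\hookrightarrow \ov{\Gr}^0_{P,I}$ is the locally closed inclusion of the repelling stratum and $q^{-,\theta}$ is its retraction to $\Gr^\theta_{M,I,+}$. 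Since $(w^\theta)^!$ and $(q^{-,\theta})_!$ are $!$-functors along morphisms over $X^I$, both are strictly $Shv(X^I)$-linear by standard base-change and projection-formula arguments, which forces $(v^\theta_{P,I})^*$ to be so too.

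For part (iii), I would combine the identification of part~(ii) with standard base change for $!$-functors in cartesian squares. The formation of $v^\theta_{P,I}$, $w^\theta$, $i^\theta_{P,I}$, and $q^{-,\theta}$ is compatible with restriction along the diagonal $\vartriangle^{(I/J)}\colon X^J\to X^I$ (i.e.\ yields cartesian squares) thanks to the factorization structure on $\Gr_{G,\Ran}$. The main obstacle will be justifying the equivariant, over-$X^I$ version of Braden's theorem in our placid ind-scheme setting; I would handle this by exhausting $\ov{\Gr}^0_{P,I}$ by $H_I$-stable closed finite-type sub-ind-schemes and reducing to \cite{DG1}, Proposition~3.2.2 on each piece (in the $\cD$-module context, via \cite{Chen}, Appendix~B).
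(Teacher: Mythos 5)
Your overall strategy coincides with the paper's: apply Braden's theorem to the $\Gm$-action on $\ov{\Gr}^0_{P,I}$ via $\lambda_0$, use \lemref{Lm_1.4.5} to identify the attracting and repelling loci, and deduce the $Shv(X^I)$-linearity and the compatibility with $\vartriangle^!$ from the resulting explicit formula. However, the formula you extract from Braden is incorrect. The partially defined left adjoint of the composite $(v^{\theta}_{P,I})_*\comp(\gt^{\theta}_{P,I})^!$ is identified by Braden's theorem with $(\gt^{\theta}_{P^-,I})_*(v^{\theta}_{P^-,I})^!$, i.e.\ the \emph{$*$-pushforward} along the repeller retraction composed with $!$-restriction to the repeller, not with $(q^{-,\theta})_!(w^{\theta})^!$. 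By the contraction principle applied along $q^{-,\theta}$ (whose fibres contract to the fixed points under the inverted action), on $\Gm$-monodromic objects one has $(q^{-,\theta})_*\,\iso\,\iota^!$ while $(q^{-,\theta})_!\,\iso\,\iota^*$ for the fixed-point section $\iota$; these differ already for $\PP^1$ with the standard action, so with the $!$-pushforward you are not computing the left adjoint (moreover $(q^{-,\theta})_!$ is itself only partially defined, so the subsequent linearity argument would not even have a well-defined functor to apply to). With the corrected formula, your arguments in (ii)--(iii) run as in the paper, since the functor is a composition of a $!$-pullback and a $*$-pushforward along maps over $X^I$ whose formation commutes with base change along $\vartriangle^{(I/J)}$.

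There is a second gap in your part (i): you assert that $(v^{\theta}_{P,I})^*$ exists on the ambient sheaf categories merely because $v^{\theta}_{P,I}$ is a locally closed immersion. This is fine in the constructible context, but the paper also works with $\cD$-modules, where the $*$-restriction along a closed immersion is only a partially defined left adjoint; the existence of $(v^{\theta}_{P,I})^*$ on $\SI^{\le 0}_{P,I}$ is exactly the nontrivial content of the lemma and cannot be taken for granted. The paper's route is to use \lemref{Lm_1.4.3} to replace $\SI^{\theta}_{P,I}$ by $Shv(\Gr^{\theta}_{M,I,+})^{\gL^+(M)_I}$ and then to show, via Braden applied to $\gL^+(M)_I$-equivariant objects, that the left adjoint of $(v^{\theta}_{P,I})_*\comp(\gt^{\theta}_{P,I})^!$ is defined on the full subcategory $\SI^{\le 0}_{P,I}$ and equals $(\gt^{\theta}_{P^-,I})_*(v^{\theta}_{P^-,I})^!$. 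Note also that the relevant hypothesis for Braden is monodromicity, which holds because $\lambda_0(\Gm)\subset \gL^+(M)_I$ acts through the equivariance structure, not because the $\Gm$-action commutes with all of $H_I$ (it normalizes, but does not centralize, $\gL(U(P))_I$).
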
 
\begin{proof}
This is analogous to (\cite{Gai19Ran}, 1.5.3). Using Lemma~\ref{Lm_1.4.3}, it suffices to show that the composition 
$$
\Shv(\Gr^{\theta}_{M, I, +})^{\gL^+(M)_I}\toup{(\gt_{P,I}^{\theta})^!} \SI_{P,I}^{\theta}\toup{(v^{\theta}_{P, I})_*} \SI_{P,I}^{\le 0}
$$ 
admits a left adjoint. For this, it suffices to show that the partially defined left adjoint to the composition
$$
\Shv(\Gr^{\theta}_{M, I, +})^{\gL^+(M)_I}\toup{(\gt_{P,I}^{\theta})^!} Shv(\Gr_{P, I}^{\theta})^{\gL^+(M)_I}\toup{(v^{\theta}_{P, I})_*} Shv(\ov{\Gr}^0_{P, I})^{\gL^+(M)_I}
$$
is defined on the full subcategory $\SI_{P,I}^{\le 0}\subset Shv(\ov{\Gr}^0_{P, I})^{\gL^+(M)_I}$. 

 Consider the $\Gm$-action on $\ov{\Gr}^0_{P, I}$ by the coweight $\lambda_0$. 
Apply Braden's theorem (\cite{DG1}, 3.4.3). Using Lemma~\ref{Lm_1.4.5}, we conclude that the desired left adjoint is given by 
$$
(\gt^{\theta}_{P^-, I})_*(v^{\theta}_{P^-, I})^!
$$
for the diagram
$$
\Gr_{M, I,+}^{\theta}\getsup{\gt^{\theta}_{P^-, I}} \Gr_{P^-, I}^{\theta}\cap \ov{\Gr}^0_{P, I}\toup{v^{\theta}_{P^-, I}} \ov{\Gr}^0_{P, I}.
$$
Our claims follow.
\end{proof}

\begin{Cor} 
\label{Cor_1.4.7_left_adjoint}
Let $I\in fSets$, $-\theta\in\Lambda^{pos}_{G,P}$. The left adjoint of $(v^{\theta}_{P, I})^!: \SI^{\le 0}_{P, I}\to \SI^{\theta}_{P, I}$ is defined and denoted $(v^{\theta}_{P, I})_!: \SI^{\theta}_{P, I}\to \SI^{\le 0}_{P, I}$. Moreover, the natural left-lax $Shv(X^I)$-structure on $(v^{\theta}_{P, I})_!$ is strict. For a map $I\to J$ and the corresponding diagonal $\vartriangle: X^J\to X^I$ the canonical natural transformation
$$
(v^{\theta}_{P, J})_!\vartriangle^!\to \vartriangle^!(v^{\theta}_{P, I})_!, \;\;\; \SI^{\theta}_{P, I}\to \SI^{\le 0}_{P, J}
$$
is an equivalence.
\end{Cor}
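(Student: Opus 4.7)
The plan is to argue in parallel with Lemma~\ref{Lm_1.4.6}, applied now to the pair $((v^\theta_{P,I})_!, (v^\theta_{P,I})^!)$ in place of $((v^\theta_{P,I})^*, (v^\theta_{P,I})_*)$. Under the equivalence $(\gt^\theta_{P,I})^!: Shv(\Gr^\theta_{M,I,+})^{\gL^+(M)_I} \iso \SI^\theta_{P,I}$ of Lemma~\ref{Lm_1.4.3}(i), constructing the sought-after left adjoint $(v^\theta_{P,I})_!$ of $(v^\theta_{P,I})^!$ amounts to producing a continuous left adjoint to the composition $(i^\theta_{P,I})^! \circ (v^\theta_{P,I})^!: \SI^{\le 0}_{P,I} \to Shv(\Gr^\theta_{M,I,+})^{\gL^+(M)_I}$.

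First, I would apply Braden's theorem (\cite{DG1}, 3.4.3) to the $\Gm$-action on $\ov{\Gr}^0_{P,I}$ via the coweight $\lambda_0 \in \Lambda^+_{M,ab}$. Objects of $\SI^{\le 0}_{P,I}$ are $\Gm$-monodromic since $\lambda_0$ factors through a torus contained in $H_I$. By Lemma~\ref{Lm_1.4.5}, the attracting and repelling loci of this action decompose as $\bigsqcup_{\theta \in -\Lambda^{pos}_{G,P}} \Gr^\theta_{P,I}$ and $\bigsqcup_{\theta \in -\Lambda^{pos}_{G,P}} \Gr^\theta_{P^-,I} \cap \ov{\Gr}^0_{P,I}$ respectively. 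Braden's theorem, in its ``left-adjoint'' form, yields a canonical isomorphism
\[
(v^\theta_{P,I})_! (\gt^\theta_{P,I})^* \;\iso\; (v^\theta_{P^-,I})_* (\gt^\theta_{P^-,I})^!
\]
of functors $Shv(\Gr^\theta_{M,I,+})^{\gL^+(M)_I} \to \SI^{\le 0}_{P,I}$. Since the right-hand side is manifestly everywhere defined and continuous, so is the left-hand side; transporting back via Lemma~\ref{Lm_1.4.3}, this provides the functor $(v^\theta_{P,I})_!: \SI^{\theta}_{P,I} \to \SI^{\le 0}_{P,I}$ left adjoint to $(v^\theta_{P,I})^!$.

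For the strictness of the $Shv(X^I)$-linear structure and the compatibility with $\vartriangle^!$ along $\vartriangle: X^J \to X^I$: both properties can be read off from the explicit description via the repelling side. The functor $(\gt^\theta_{P^-,I})^!$ is $!$-pullback along a morphism with (ind-pro-unipotent) affine-space fibers relative to $X^I$, hence strictly $Shv(X^I)$-linear and compatible with $\vartriangle^!$ by smooth base change; and $(v^\theta_{P^-,I})_*$ is $*$-pushforward along an ind-closed subscheme, for which the projection formula with respect to the $Shv(X^I)$-module structure and base change along $\vartriangle$ are standard.

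The principal technical obstacle will be the usual one of validating the $\Gm$-equivariant form of Braden's theorem at the required level of generality (ind-schemes over $X^I$ with placid group actions), together with verifying that the repelling-side formula genuinely descends to the $H_I$-equivariant categories; both of these issues, however, are identical in nature to those already addressed in the proof of Lemma~\ref{Lm_1.4.6}, and should yield to the same remedy.
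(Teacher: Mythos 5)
There is a genuine gap: the ``left-adjoint form'' of Braden's theorem that your argument hinges on does not exist. Braden's theorem (\cite{DG1}) identifies the two hyperbolic localization functors going \emph{from} the ambient space \emph{to} the fixed locus, i.e. $(\gt^{\theta}_{P,I})_!(v^{\theta}_{P,I})^*\,\iso\,(\gt^{\theta}_{P^-,I})_*(v^{\theta}_{P^-,I})^!$ on monodromic objects — this is exactly what the proof of Lemma~\ref{Lm_1.4.6} uses to produce $(v^{\theta}_{P,I})^*$. Your proposed identity $(v^{\theta}_{P,I})_!(\gt^{\theta}_{P,I})^*\,\iso\,(v^{\theta}_{P^-,I})_*(\gt^{\theta}_{P^-,I})^!$ goes in the opposite direction (from the fixed locus to the ambient space) and is false in general: already for $\AA^1$ with the scaling $\Gm$-action the attractor is all of $\AA^1$ and the repeller is the origin, so the left-hand side applied to $e$ is the constant sheaf on $\AA^1$ while the right-hand side is a skyscraper at $0$. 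The same failure occurs in the situation at hand, e.g. for $\theta=0$: the left-hand side is supported on the open stratum $\Gr^0_{P,I}$, while the right-hand side is supported on $\Gr^0_{P^-,I}\cap\ov{\Gr}^0_{P,I}$, which meets the deeper strata and is much smaller than $\Gr^0_{P,I}$. A secondary problem is that even the shape of your reduction is off: the equivalence of Lemma~\ref{Lm_1.4.3} is $(\gt^{\theta}_{P,I})^!$, not $(\gt^{\theta}_{P,I})^*$, and the latter functor (a left adjoint along an ind-pro-unipotent fibration) is not among the functors whose existence has been established, so it cannot be used to ``transport back''.

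The correct route — and the one the paper takes — requires no second application of Braden at all: once Lemma~\ref{Lm_1.4.6} provides the left adjoint $(v^{\theta}_{P,I})^*$ of $(v^{\theta}_{P,I})_*$ (strictly $Shv(X^I)$-linearly and compatibly with $\vartriangle^!$), the existence of the left adjoint $(v^{\theta}_{P,I})_!$ of $(v^{\theta}_{P,I})^!$, together with the strictness of its left-lax $Shv(X^I)$-structure and the base-change isomorphism $(v^{\theta}_{P,J})_!\vartriangle^!\to\vartriangle^!(v^{\theta}_{P,I})_!$, follows by a purely formal recollement-type argument (as in \cite{Gai19Ran}, 1.5.6, using \cite{Ly9}, Lemma~1.8.15, 1.8.16). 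If you want to keep the spirit of your approach, the fix is to replace your claimed identity by this formal deduction from Lemma~\ref{Lm_1.4.6}; no explicit formula for $(v^{\theta}_{P,I})_!$ in terms of the repelling locus should be expected.
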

\begin{proof}
This is analogous to (\cite{Gai19Ran}, 1.5.6). Namely, this follows from Lemma~\ref{Lm_1.4.6} using for example (\cite{Ly9}, Lemma~1.8.15, 1.8.16).  
\end{proof}

 Passing to the limit over $I\in fSets$, we obtain the following.
 
\begin{Cor} 
\label{Cor_1.4.8}
Let $-\theta\in\Lambda^{pos}_{G,P}$. \\
i) The left adjoint of 
$
(v^{\theta}_{P, \Ran})_*: \SI_{P,\Ran}^{\theta}\to \SI_{P,\Ran}^{\le 0}
$ 
is defined and denoted 
\begin{equation}
\label{functor_(v^theta_P,Ran)^*}
(v^{\theta}_{P, \Ran})^*: \SI_{P,\Ran}^{\le 0}\to \SI_{P,\Ran}^{\theta}.
\end{equation} 
The natural left-lax $Shv(\Ran)$-structure on $(v^{\theta}_{P, \Ran})^*$ is strict. 

\medskip\noindent
ii) The left adjoint of $(v^{\theta}_{P, \Ran})^!: \SI^{\le 0}_{P, \Ran}\to \SI^{\theta}_{P, \Ran}$ is defined and denoted 
$$
(v^{\theta}_{P, \Ran})_!: \SI^{\theta}_{P, \Ran}\to \SI^{\le 0}_{P, \Ran}.
$$
The natural left-lax $Shv(\Ran)$-structure on $(v^{\theta}_{P, \Ran})_!$ is strict. \QED
\end{Cor}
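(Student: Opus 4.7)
The plan is to assemble the Ran-level left adjoints out of the levelwise ones furnished by Corollary~\ref{Cor_1.4.7_left_adjoint}, and to check that the compatibility statements in \emph{loc.cit.} are exactly the cocycle data needed to pass the construction through the limit over $fSets$.

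First, I recall that, computed in $Shv(\Ran)-mod$ (equivalently in $\DGCat_{cont}$, by 1-affineness of $\Ran$), both categories are limits
$$
\SI_{P,\Ran}^{\le 0}\,\iso\,\underset{I\in fSets}{\lim}\,\SI_{P,I}^{\le 0},\qquad \SI_{P,\Ran}^{\theta}\,\iso\,\underset{I\in fSets}{\lim}\,\SI_{P,I}^{\theta},
$$
with transition functors given by $\vartriangle^!$ for the diagonals $\vartriangle^{(I/J)}\!:X^J\to X^I$ associated to surjections $I\to J$. The Ran-level functors $(v^{\theta}_{P,\Ran})_*$ and $(v^{\theta}_{P,\Ran})^!$ arise as the limits of $(v^{\theta}_{P,I})_*$ and $(v^{\theta}_{P,I})^!$ respectively; they are continuous and $Shv(\Ran)$-linear because this holds at every finite level.

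Second, Corollary~\ref{Cor_1.4.7_left_adjoint} provides, for each $I\in fSets$, the desired left adjoints $(v^{\theta}_{P,I})^*$ and $(v^{\theta}_{P,I})_!$ together with strict $Shv(X^I)$-linear structures and the base-change isomorphisms
$$
(v^{\theta}_{P, J})^*\!\vartriangle^!\,\iso\,\vartriangle^!\!(v^{\theta}_{P, I})^*,\qquad (v^{\theta}_{P, J})_!\!\vartriangle^!\,\iso\,\vartriangle^!\!(v^{\theta}_{P, I})_!.
$$
These isomorphisms say precisely that the collection $\{(v^{\theta}_{P,I})^*\}_{I\in fSets}$ (resp.\ $\{(v^{\theta}_{P,I})_!\}_{I\in fSets}$) is a morphism of diagrams $\SI_{P,-}^{\le 0}\to \SI_{P,-}^{\theta}$ in $fSets^{op}$-indexed objects of $\DGCat_{cont}$. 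Passing to the limit produces functors
$$
(v^{\theta}_{P,\Ran})^*\!:\SI_{P,\Ran}^{\le 0}\to\SI_{P,\Ran}^{\theta},\qquad (v^{\theta}_{P,\Ran})_!\!:\SI_{P,\Ran}^{\theta}\to\SI_{P,\Ran}^{\le 0}
$$
carrying strict $Shv(\Ran)$-linear structures.

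Third, I would verify the adjunction $((v^{\theta}_{P,\Ran})^*, (v^{\theta}_{P,\Ran})_*)$ and $((v^{\theta}_{P,\Ran})_!, (v^{\theta}_{P,\Ran})^!)$ at the Ran level. Since mapping spaces in a limit category are computed as limits of mapping spaces in each level, and the levelwise adjunctions provide compatible equivalences
$$
\Map_{\SI_{P,I}^{\theta}}((v^{\theta}_{P,I})^*F,\,G)\,\iso\,\Map_{\SI_{P,I}^{\le 0}}(F,\,(v^{\theta}_{P,I})_*G),
$$
the limits of the two sides identify, giving the adjunction at the Ran level. The analogous argument works for $(v^{\theta}_{P,\Ran})_!\dashv (v^{\theta}_{P,\Ran})^!$. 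One can also invoke the general formalism of passing to adjoints in limits, e.g.\ (\cite{Ly9}, Lemma~1.8.15, 1.8.16), which was already the tool used in the proof of Corollary~\ref{Cor_1.4.7_left_adjoint}.

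The only potential obstacle is the verification that the cocycle/base-change data upgrade correctly under the limit — concretely, that strictness of the $Shv(X^I)$-linear structures on each $(v^{\theta}_{P,I})^*$ (resp.\ $(v^{\theta}_{P,I})_!$) indeed passes to a strict $Shv(\Ran)$-linear structure on the limit functor. This is formal given that strictness is a property checkable on each $X^I$ and that the base change identities of Corollary~\ref{Cor_1.4.7_left_adjoint} are isomorphisms (not merely natural transformations), so no homotopical obstruction arises.
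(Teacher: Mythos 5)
Your proposal is correct and takes essentially the same route as the paper, which obtains the statement simply by passing to the limit over $I\in fSets$ using the levelwise adjoints and their compatibility with $\vartriangle^!$. The only slip is attributional: the levelwise left adjoint $(v^{\theta}_{P,I})^*$ is supplied by Lemma~\ref{Lm_1.4.6}, while Corollary~\ref{Cor_1.4.7_left_adjoint} supplies $(v^{\theta}_{P,I})_!$.
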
 

\sssec{} 
\label{Sect_1.4.9_now}
For $-\theta\in\Lambda_{G,P}^{pos}$ the $H_{\Ran}$-action on $\ov{\Gr}^{\theta}_{P,\Ran}$ preserves the subschemes $\bar S^{\theta}_{P,\Ran}, S^{\theta}_{P,\Ran}$. For $I\in fSets$ we get the corresponding versions of the semi-infinite categories
$$
\SI^{\le\theta}_{P,I}(S)=Shv(\bar S^{\theta}_{P, I})^{H_I},\;\;\;\; \SI^{\theta}_{P, I}(S)=Shv(S^{\theta}_{P, I})^{H_I}
$$
and their Ran versions
$$
\SI^{\le\theta}_{P,\Ran}(S)=\underset{I\in. fSets}{\lim} \SI^{\le\theta}_{P,I}(S),\;\;\;\; \SI^{\theta}_{P, \Ran}(S)=\underset{I\in. fSets}{\lim} \SI^{\theta}_{P,I}(S)
$$

 We often view $\SI^{\le\theta}_{P,I}(S)$ (resp., $\SI^{\theta}_{P, I}(S)$) as a full subcategory of $\SI^{\le\theta}_{P,I}$ (resp., of $\SI^{\theta}_{P, I}$) via extension by zero under $\bar S^{\theta}_{P,I}\hook{} \ov{\Gr}^{\theta}_{P, I}$ (resp., under $S^{\theta}_{P, I}\hook{} \Gr^{\theta}_{P, I}$). 
 
   All the functors introduced in Sections~\ref{Sect_1.4.2}-\ref{Cor_1.4.8} preserve the corresponding $S$-versions of the semi-infinite categories and induce similar adjoint pairs between them. In particular, the following version of Lemma~\ref{Lm_1.4.3} is obtained similarly.
   
\index{$\SI^{\le\theta}_{P,I}(S), \SI^{\theta}_{P, I}(S), \SI^{\le\theta}_{P,\Ran}(S), \SI^{\theta}_{P, \Ran}(S)$, Section~\ref{Sect_1.4.9_now}}   
   
\begin{Lm} 
\label{Lm_1.4.10_now}
Let $-\theta\in\Lambda^{pos}_{G,P}$, $I\in fSets$. \\
i) The functor
\begin{equation}
\label{functor_for_Lm_1.4.10}
(\gt^{\theta}_{S, I})^!: Shv(\Mod_{M, I}^{-,\theta,\subset})^{\gL^+(M)_I}\to Shv(S^{\theta}_{P,I})^{\gL^+(M)_I}
\end{equation}
is fully faithful, its essential image is the full subcategory $\SI^{\theta}_{P,I}(S)$. The composition
$$
\SI^{\theta}_{P,I}(S)\hook{} Shv(S^{\theta}_{P, I})^{\gL^+(M)_I} \toup{(i^{\theta}_{S, I})^!} Shv(\Mod_{M, I}^{-,\theta,\subset})^{\gL^+(M)_I}
$$
is an equivalence. \\
ii) The partially defined left adjoint $(\gt^{\theta}_{S, I})_!$ to (\ref{functor_for_Lm_1.4.10})
is defined on the full subcategory $\SI^{\theta}_{P,I}(S)$, and $(\gt^{\theta}_{S, I})_!\,\iso\,(i^{\theta}_{S, I})^!$ as functors 
$$
\SI^{\theta}_{P, I}(S)\to Shv(\Mod_{M, I}^{-,\theta,\subset})^{\gL^+(M)_I}
$$

 The partially defined left adjoint $(i^{\theta}_{S, I})^*$ to
$$
(i^{\theta}_{S, I})_*:  Shv(\Mod_{M, I}^{-,\theta,\subset})^{\gL^+(M)_I}\to Shv(S^{\theta}_{P, I})^{\gL^+(M)_I} 
$$
is defined on the full subcategory $\SI^{\theta}_{P, I}(S)$, and one has canonically
$(\gt^{\theta}_{S, I})_*\,\iso\, (i^{\theta}_{S, I})^*$ as functors
$$
\SI^{\theta}_{P, I}(S)\to Shv(\Mod_{M, I}^{-,\theta,\subset})^{\gL^+(M)_I}.
$$
\QED
\end{Lm}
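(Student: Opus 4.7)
The plan is to prove Lemma~\ref{Lm_1.4.10_now} by transcribing the proof of Lemma~\ref{Lm_1.4.3}, exploiting the fact that the pair $(\gt^{\theta}_{S,I}, i^{\theta}_{S,I})$ is obtained from $(\gt^{\theta}_{P,I}, i^{\theta}_{P,I})$ by the cartesian base change along the closed embedding $\Mod_{M,I}^{-,\theta,\subset}\hookrightarrow \Gr^{\theta}_{M,I,+}$ (the defining square in Section~\ref{Sect_1.3.27_now}). In particular, $\gL(U(P))_I$ acts on $\Gr^{\theta}_{P,I}$ preserving the closed subscheme $S^{\theta}_{P,I}$, and the restricted action remains transitive on the fibres of $\gt^{\theta}_{S,I}$, because the fibres of $\gt^{\theta}_{S,I}$ are literally identified with the fibres of $\gt^{\theta}_{P,I}$ over points of $\Mod_{M,I}^{-,\theta,\subset}$.

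For part (i), I would argue as in Lemma~\ref{Lm_1.4.3}(i): since $\gL(U(P))_I$ is an ind-prounipotent group ind-scheme acting transitively on the fibres of $\gt^{\theta}_{S,I}$, full faithfulness of $(\gt^{\theta}_{S,I})^!$ and the identification of its essential image with the $H_I$-equivariant subcategory $\SI^{\theta}_{P,I}(S)$ follow from (\cite{DL}, Lemmas~A.4.4, A.4.5), or from (\cite{Chen}, Lemma~B.4.1) in the $\cD$-module setting. Since $i^{\theta}_{S,I}$ is defined as a section of $\gt^{\theta}_{S,I}$, the composition $(i^{\theta}_{S,I})^!\comp(\gt^{\theta}_{S,I})^!$ is canonically the identity, which provides the inverse equivalence.

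For part (ii), I would invoke Braden's theorem (\cite{DG1}, Proposition~3.2.2) applied to the $\Gm$-action on $\Gr^{\theta}_{P,I}$ via $\lambda_0$ used in Lemma~\ref{Lm_1.4.3}(ii). This $\Gm$-action preserves $S^{\theta}_{P,I}$: indeed $\lambda_0\in \Lambda^+_{M,ab}$ is central in $M$, hence normalizes both $\gL^+(M)_I$ and $\gL(U(P))_I$, so it normalizes $H_I$ and therefore preserves the $H_I$-stable closed subscheme $\bar S^0_{P,I}$. The corresponding $\Gm$-contracting fixed locus on $S^{\theta}_{P,I}$ is precisely $\Mod_{M,I}^{-,\theta,\subset}$, since on the ambient $\Gr^{\theta}_{P,I}$ the contracting locus is $\Gr^{\theta}_{M,I,+}$, and the intersection with $S^{\theta}_{P,I}$ is by definition $\Mod_{M,I}^{-,\theta,\subset}$. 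Braden's theorem then yields both $(\gt^{\theta}_{S,I})_!\simeq (i^{\theta}_{S,I})^!$ and $(\gt^{\theta}_{S,I})_*\simeq (i^{\theta}_{S,I})^*$, exactly in parallel with Lemma~\ref{Lm_1.4.3}(ii).

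The main obstacle I anticipate is the verification that the $\Gm$-contraction of $\Gr^{\theta}_{P,I}$ actually restricts to a genuine $\Gm$-contraction of $S^{\theta}_{P,I}$, with fixed-point locus precisely $\Mod_{M,I}^{-,\theta,\subset}$ and not some intermediate subscheme of $\Gr^{\theta}_{M,I,+}$. This comes down to checking that the $U(P)$-regularity condition defining $\bar S^0_{P,I}$ is compatible with the analogous condition defining $\Mod_M^{-,\theta}$ in the $\Gm$-limit; concretely, for a point of $S^{\theta}_{P,I}$ the limit of its $\Gm$-orbit lies in $\Gr^{\theta}_{M,I,+}$ by Lemma~\ref{Lm_1.4.3}(ii), and one must check it satisfies the $V^{U(P)}$-regularity of Section~\ref{Sect_1.3.17_now}. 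Once this geometric compatibility is established, the remainder is a direct transcription of Lemma~\ref{Lm_1.4.3}.
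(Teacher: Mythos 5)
Your proposal is correct and is exactly the argument the paper intends: the paper gives no separate proof of Lemma~\ref{Lm_1.4.10_now}, stating only that it is "obtained similarly" to Lemma~\ref{Lm_1.4.3}, i.e.\ via the transitivity of the ind-prounipotent $\gL(U(P))_I$-action on the fibres together with (\cite{DL}, A.4.4, A.4.5) for (i), and Braden's theorem for the $\Gm$-action via $\lambda_0$ for (ii). Your anticipated obstacle is in fact automatic: since $S^{\theta}_{P,I}\hookrightarrow \Gr^{\theta}_{P,I}$ is a closed immersion preserved by the $\Gm$-action, the limit of any orbit stays in $S^{\theta}_{P,I}$, and the cartesian square of Section~\ref{Sect_1.3.27_now} identifies $S^{\theta}_{P,I}\cap\Gr^{\theta}_{M,I,+}$ with $\Mod_{M,I}^{-,\theta,\subset}$, so no separate regularity check is needed.
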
   

\begin{Cor} 
\label{Cor_1.4.11_now_adjoints}
Let $\theta\in -\Lambda^{pos}_{G,P}$. \\
i) The left adjoint to $(v^{\theta}_{S,\Ran})_*: \SI^{\theta}_{P, \Ran}(S)\to \SI^{\le 0}_{P,\Ran}(S)$ is defined and denoted 
\begin{equation}
\label{functor_(v^theta_S,Ran)^*}
(v^{\theta}_{S,\Ran})^*: \SI^{\le 0}_{P,\Ran}(S)\to \SI^{\theta}_{P, \Ran}(S).
\end{equation} 
The natural left-lax $Shv(\Ran)$-structure on $(v^{\theta}_{S,\Ran})^*$ is strict.

\smallskip\noindent
ii) The left adjoint to $(v^{\theta}_{S,\Ran})^!: \SI^{\le 0}_{P,\Ran}(S)\to \SI^{\theta}_{P,\Ran}(S)$ is defined and denoted
\begin{equation}
\label{functor_(v^theta_S,Ran)_!}
(v^{\theta}_{S,\Ran})_!: \SI^{\theta}_{P,\Ran}(S)\to  \SI^{\le 0}_{P,\Ran}(S)
\end{equation}
The functor (\ref{functor_(v^theta_S,Ran)_!}) is fully faithful, and 
its natural left-lax $Shv(\Ran)$-structure is strict.

\index{$(v^{\theta}_{S,\Ran})^*, (v^{\theta}_{S,\Ran})_{^^21}$, Corollary~\ref{Cor_1.4.11_now_adjoints}}
\end{Cor}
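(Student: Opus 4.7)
The plan is to deduce this corollary from $S$-versions of Lemma~\ref{Lm_1.4.6} and Corollary~\ref{Cor_1.4.7_left_adjoint} established at finite level $I\in fSets$, and then to pass to the limit over $I$ exactly as in Corollary~\ref{Cor_1.4.8}. The key enabling fact, already noted in Section~\ref{Sect_1.4.9_now}, is that all the functors of Sections~\ref{Sect_1.4.2}--\ref{Cor_1.4.8} preserve the $S$-versions of the semi-infinite categories, so essentially the same chain of reductions goes through.

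First I would establish at finite level $I$ that the left adjoint $(v^{\theta}_{S, I})^*$ of $(v^{\theta}_{S, I})_*: \SI^{\theta}_{P, I}(S)\to \SI^{\le 0}_{P, I}(S)$ is defined, is strictly $Shv(X^I)$-linear, and is compatible with $!$-restrictions along diagonals $\vartriangle^{(I/J)}: X^J\to X^I$. Using Lemma~\ref{Lm_1.4.10_now} (the $S$-analog of Lemma~\ref{Lm_1.4.3}), this reduces to exhibiting a left adjoint of the composition
$$
Shv(\Mod^{-,\theta,\subset}_{M, I})^{\gL^+(M)_I}\to \SI^{\theta}_{P, I}(S)\hook{}\SI^{\le 0}_{P, I}(S).
$$
I would then apply Braden's theorem in the relative form of (\cite{DG1}, 3.4.3) to the $\Gm$-action on $\bar S^0_{P, I}$ via $\lambda_0\in \Lambda^+_{M, ab}$ (chosen with $\<\lambda_0,\check{\alpha}_i\>>0$ for $i\in \cI_G-\cI_M$). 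This action preserves $\bar S^0_{P, I}$ because $\lambda_0$ factors through $Z(M)$, and the required $S$-analog of Lemma~\ref{Lm_1.4.5} asserts that its attracting (resp.\ repelling) locus inside $\bar S^0_{P, I}$ is $\underset{\theta\in -\Lambda^{pos}_{G,P}}{\sqcup} S^{\theta}_{P, I}$ (resp., $\underset{\theta\in -\Lambda^{pos}_{G,P}}{\sqcup}\Gr^{\theta}_{P^-, I}\cap\bar S^0_{P, I}$). Granting this, Braden identifies the desired left adjoint with a functor built from the $P^-$-stratum, mirroring the proof of Lemma~\ref{Lm_1.4.6}.

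For part ii), once $(v^{\theta}_{S, I})^*$ exists with the stated properties, the existence of $(v^{\theta}_{S, I})_!$, its strictness as a $Shv(X^I)$-linear functor, and its compatibility with $!$-restrictions along diagonals all follow by the purely formal argument of Corollary~\ref{Cor_1.4.7_left_adjoint} (invoking \cite{Ly9}, 1.8.15--1.8.16). Passing to the limit over $I\in fSets$ along the compatible system then produces (\ref{functor_(v^theta_S,Ran)^*}) and (\ref{functor_(v^theta_S,Ran)_!}) together with the assertions about the strictness of their $Shv(\Ran)$-linear structures, exactly as in Corollary~\ref{Cor_1.4.8}. Fully faithfulness of $(v^{\theta}_{S, \Ran})_!$ follows because at each finite level $v^{\theta}_{S, I}: S^{\theta}_{P, I}\to \bar S^0_{P, I}$ is a locally closed ind-scheme embedding, so the unit $\id\to (v^{\theta}_{S, I})^!(v^{\theta}_{S, I})_!$ is an isomorphism at finite level, and this property is inherited under passage to the limit.

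The main obstacle I expect is the $S$-analog of Lemma~\ref{Lm_1.4.5} --- the explicit description of the attracting and repelling loci of the $\Gm$-action inside $\bar S^0_{P, I}$ --- and the verification that Braden's theorem in the form of \cite{DG1} applies in this relative ind-scheme setting over $X^I$. In particular one must check that the bounded pieces of the attracting/repelling strata have the expected finite-type structure, that the Braden diagram satisfies the properness hypotheses, and that the $H_I$-equivariance is compatible with the $\Gm$-action through $\lambda_0$; once these points are in hand, the rest of the argument is a direct transposition of the proofs of Lemma~\ref{Lm_1.4.6} and Corollary~\ref{Cor_1.4.7_left_adjoint}.
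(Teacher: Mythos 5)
Your proposal is correct in substance, but it takes a different (more laborious) route than the paper. The paper's own proof of this corollary is a one-line deduction from Corollary~\ref{Cor_1.4.8}: by Section~\ref{Sect_1.4.9_now} the $S$-versions $\SI^{\le\theta}_{P,I}(S)$, $\SI^{\theta}_{P,I}(S)$ sit inside $\SI^{\le\theta}_{P,I}$, $\SI^{\theta}_{P,I}$ as full subcategories (extension by zero), all the functors of Sections~\ref{Sect_1.4.2}--\ref{Cor_1.4.8} preserve these subcategories, and hence the adjoint pairs of Corollary~\ref{Cor_1.4.8} restrict to the $S$-versions; no new Braden argument is run. You instead reprove the finite-level statements directly on $\bar S^0_{P,I}$ (an $S$-analog of Lemma~\ref{Lm_1.4.6} via Braden, then the formal step of Corollary~\ref{Cor_1.4.7_left_adjoint}, then the limit over $I$ as in Corollary~\ref{Cor_1.4.8}). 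This works, and it has the merit of producing the explicit hyperbolic-localization description of $(v^{\theta}_{S,\Ran})^*$ that the paper only records later, in Section~\ref{Sect_1.5.17} together with Lemma~\ref{Lm_1.5.14_now}; but it duplicates the ambient argument. Also, the point you flag as the main obstacle is not really one: the $\Gm$-action via $\lambda_0$ preserves the closed sub-ind-scheme $\bar S^0_{P,I}$ (it is $H_I$-stable), so the $S$-analog of Lemma~\ref{Lm_1.4.5} follows from Lemma~\ref{Lm_1.4.5} itself by intersecting the attracting/repelling loci with this $\Gm$-stable closed subfunctor, and Braden applies in exactly the same relative setting the paper already uses in Lemma~\ref{Lm_1.4.6}.

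One step in your write-up is thinner than it should be: the fully faithfulness of $(v^{\theta}_{S,\Ran})_!$. The functor $(v^{\theta}_{S,I})_!$ is defined only as an abstract left adjoint inside the equivariant categories (via the mechanism of Corollary~\ref{Cor_1.4.7_left_adjoint}), not a priori as the sheaf-theoretic extension by zero -- in the $\cD$-module context the latter is only partially defined -- so ``$v^{\theta}_{S,I}$ is a locally closed embedding, hence the unit is an isomorphism'' is not yet an argument. To make it one, identify the abstract adjoint with a concrete functor on generators, e.g. via Lemma~\ref{Lm_1.4.13_existence} (every object of $\SI^{\theta}_{P,I}(S)$ is of the form $(\gt^{\theta}_{S,I})^!K$ by Lemma~\ref{Lm_1.4.10_now}, and $(v^{\theta}_{S,I})_!(\gt^{\theta}_{S,I})^!K$ is computed as an averaged $!$-extension when the latter exists), or use the everywhere-defined $v_!$ in the constructible context; then the unit becomes computable and is an isomorphism, and your observation that this is inherited by the limit over $I\in fSets$ (using the strict linearity and compatibility with $\vartriangle^!$ that you establish) is correct.
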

\begin{proof} This follows from Corollary~\ref{Cor_1.4.8}.
\end{proof}  

\begin{Rem} i) Let $I\in fSets$. The objects $(v^{\theta}_{P, I})_! K$ for $\theta\in -\Lambda^{pos}_{G,P}, K\in \SI^{\theta}_{P, I}$ generate $\SI^{\le 0}_{P, I}$.

\smallskip\noindent
ii) The objects of the form $(v^{\theta}_{S, I})_!K$ for $\theta\in -\Lambda^{pos}_{G,P}, K\in \SI^{\theta}_{P, I}(S)$ generate $\SI^{\le 0}_{P, I}(S)$.
\end{Rem}

\begin{Lm} 
\label{Lm_1.4.13_existence}
Let $\theta\in -\Lambda^{pos}_{G,P}$, $K\in Shv(\Mod_{M, I}^{-,\theta, \subset})^{\gL^+(M)_I}$ such that 
$$
(v^{\theta}_{S, I}i^{\theta}_{S, I})_!K\in Shv(\bar S^0_{P, I})^{\gL^+(M)_I}
$$ 
is defined (this is always the case for $\theta=0$). Then $\Av_!^{\gL(U(P))_I}((v^{\theta}_{S, I}i^{\theta}_{S, I})_!K)\in \SI_{P, I}^{\le 0}(S)$ is defined, and
$$
\Av_!^{\gL(U(P))_I}((v^{\theta}_{S, I}i^{\theta}_{S, I})_!K)\,\iso\, (v^{\theta}_{S, I})_!(\gt^{\theta}_{S, I})^!K.  
$$
\end{Lm}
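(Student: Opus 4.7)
The plan is to verify the isomorphism by a short Yoneda computation, reducing everything to the adjunctions supplied by \lemref{Lm_1.4.10_now} and \corref{Cor_1.4.11_now_adjoints}.

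First, the right-hand side makes sense: \lemref{Lm_1.4.10_now}(i) places $(\gt^\theta_{S,I})^! K$ inside the full subcategory $\SI^\theta_{P,I}(S)$, on which the left adjoint $(v^\theta_{S,I})_!$ of $(v^\theta_{S,I})^!$ is defined by the $X^I$-level analog of \corref{Cor_1.4.11_now_adjoints}(ii). Abbreviate $j := v^\theta_{S,I}\comp i^\theta_{S,I}: \Mod_{M,I}^{-,\theta,\subset}\to \bar S^0_{P,I}$, so that $(v^\theta_{S,I}i^\theta_{S,I})_! K = j_! K$ is defined by hypothesis.

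For any test object $F\in \SI^{\le 0}_{P,I}(S)$ I would then run through the chain
\begin{align*}
&\Hom_{\SI^{\le 0}_{P,I}(S)}\bigl((v^\theta_{S,I})_!(\gt^\theta_{S,I})^! K,\, F\bigr)\\
&\quad\,\iso\, \Hom_{\SI^\theta_{P,I}(S)}\bigl((\gt^\theta_{S,I})^! K,\, (v^\theta_{S,I})^! F\bigr)\\
&\quad\,\iso\, \Hom_{Shv(\Mod_{M,I}^{-,\theta,\subset})^{\gL^+(M)_I}}\bigl(K,\, (i^\theta_{S,I})^! (v^\theta_{S,I})^! F\bigr)\\
&\quad = \Hom\bigl(K,\, j^! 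F\bigr)\\
&\quad\,\iso\, \Hom_{Shv(\bar S^0_{P,I})^{\gL^+(M)_I}}\bigl(j_! K,\, \oblv F\bigr),
\end{align*}
invoking successively: the $(v^\theta_{S,I})_!\dashv (v^\theta_{S,I})^!$ adjunction; the equivalence $(\gt^\theta_{S,I})^!$ of \lemref{Lm_1.4.10_now}(i) with inverse $(i^\theta_{S,I})^!$ (using \lemref{Lm_1.4.10_now}(ii)); the compatibility $(i^\theta_{S,I})^!(v^\theta_{S,I})^! = j^!$; and the adjunction $j_! \dashv j^!$ against $\oblv F$.

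The last functor $F\mapsto \Hom(j_!K,\oblv F)$ is precisely the one whose corepresenting object is by definition $\Av_!^{\gL(U(P))_I}(j_! K)$; it is now visibly corepresented by $(v^\theta_{S,I})_!(\gt^\theta_{S,I})^! K \in \SI^{\le 0}_{P,I}(S)$, so Yoneda yields both the existence of this average and the claimed isomorphism. No step is substantively hard; the only point to monitor is the passage between $H_I$- and $\gL^+(M)_I$-equivariance when invoking each adjunction, which is routine given the $\gL^+(M)_I$-equivariance of the maps $v^\theta_{S,I}, i^\theta_{S,I}, j$ and the full faithfulness of the oblivion $\SI^\theta_{P,I}(S)\hookrightarrow Shv(S^\theta_{P,I})^{\gL^+(M)_I}$.
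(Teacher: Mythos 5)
Your proposal is correct and follows essentially the same route as the paper: the same chain of adjunction isomorphisms $\Hom((v^{\theta}_{S,I})_!(\gt^{\theta}_{S,I})^!K,F)\,\iso\,\Hom((\gt^{\theta}_{S,I})^!K,(v^{\theta}_{S,I})^!F)\,\iso\,\Hom(K,(i^{\theta}_{S,I})^!(v^{\theta}_{S,I})^!F)\,\iso\,\Hom((v^{\theta}_{S,I}i^{\theta}_{S,I})_!K,\oblv F)$, using Lemma~\ref{Lm_1.4.10_now} for the middle step and concluding by corepresentability that $\Av_!^{\gL(U(P))_I}$ is defined and given by the claimed object. Your explicit remarks about where $(\gt^{\theta}_{S,I})^!K$ lives and about the equivariance bookkeeping are just a more detailed writing of what the paper leaves implicit.
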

\begin{proof}
For $F\in \SI_{P, I}^{\le 0}(S)$ we have
\begin{multline*}
\HOM_{\SI_{P, I}^{\le 0}(S)}((v^{\theta}_{S, I})_!(\gt^{\theta}_{S, I})^!K, F)\,\iso\, \HOM_{\SI^{\theta}_{P, I}}((\gt^{\theta}_{S, I})^!K, (v^{\theta}_{S, I})^!F)\,\iso\\\HOM_{Shv(\Mod_{M, I}^{-,\theta, \subset})^{\gL^+(M)_I}}(K, (i^{\theta}_{S, I})^!(v^{\theta}_{S, I})^!F)\,\iso\, \HOM_{Shv(\bar S^0_{P, I})^{\gL^+(M)_I}}((v^{\theta}_{S, I}i^{\theta}_{S, I})_!K, F),
\end{multline*}
here the second isomorphism comes from Lemma~\ref{Lm_1.4.10_now}. 
\end{proof}

\ssec{Relation with the unitality}
\sssec{} 
\label{Sect_1.4.12}
Let $-\theta\in\Lambda_{G,P}^{pos}$. Recall the maps 
$$
\Gr_{P,\Ran}^{\theta}\hook{j^{\theta}_{P,\Ran}} \ov{\Gr}^{\theta}_{P,\Ran}\toup{\bar v^{\theta}_{P,\Ran}} \ov{\Gr}^0_{P,\Ran}\getsup{\eta_{\Ran}}\bar S^0_{P,\Ran}.
$$
From Remark~\ref{Rem_A.1.7} we conclude that the functors 
\begin{equation}
\label{functors_for_Sect_1.4.12}
(j^{\theta}_{P,\Ran})^!, (j^{\theta}_{P,\Ran})_*, (\bar v^{\theta}_{P,\Ran})^!, (\bar v^{\theta}_{P,\Ran})_*, \eta_{\Ran}^!, (\eta_{\Ran})_*
\end{equation}
preserve the corresponding unital categories of sheaves.  

\begin{Lm} 
\label{Lm_1.4.12}
An object $K\in Shv(\ov{\Gr}^0_{P,\Ran})$ lies in the full subcategory $Shv(\ov{\Gr}^0_{P,\Ran})_{untl}$ iff for any $\theta\in -\Lambda^{pos}_{G,P}$, $(v^{\theta}_{P,\Ran})^!K\in Shv(\Gr^{\theta}_{P,\Ran})_{untl}$. 
\end{Lm}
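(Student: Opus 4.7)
The ``only if'' direction is immediate from Section~\ref{Sect_1.4.12}. Writing $v^{\theta}_{P,\Ran}=\bar v^{\theta}_{P,\Ran}\circ j^{\theta}_{P,\Ran}$ as a composition of $(\Ran\times\Ran)^\subset$-equivariant locally closed immersions, the functors $(\bar v^{\theta}_{P,\Ran})^!$ and $(j^{\theta}_{P,\Ran})^!$ both preserve the unital subcategories, hence so does their composition $(v^{\theta}_{P,\Ran})^!$.

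For the ``if'' direction, the plan is to apply the criterion from Section~\ref{Sect_1.2.1_now}: $K$ is unital if and only if $\varphi_b^!K$ lies in the essential image $\cE$ of the fully faithful functor
$$
\varphi_s^!\colon Shv(\ov{\Gr}^0_{P,\Ran})\hook{} Shv(\ov{\Gr}^0_{P,\Ran}\times_\Ran(\Ran\times\Ran)^\subset).
$$
Set $Z=\ov{\Gr}^0_{P,\Ran}$, $Z^\theta=\Gr^\theta_{P,\Ran}$, and $\tilde v^\theta=v^{\theta}_{P,\Ran}\times_\Ran\id$. Since $v^{\theta}_{P,\Ran}$ is $(\Ran\times\Ran)^\subset$-equivariant, the square
$$
\begin{array}{ccc}
Z^\theta\times_\Ran(\Ran\times\Ran)^\subset & \toup{\tilde v^\theta} & Z\times_\Ran(\Ran\times\Ran)^\subset\\
\varphi_s^{Z^\theta}\downarrow && \downarrow\varphi_s^Z\\
Z^\theta & \hook{v^{\theta}_{P,\Ran}} & Z
\end{array}
$$
(and its analog for $\varphi_b$) is cartesian, so base change yields $(\tilde v^\theta)^!\varphi_b^!K\,\iso\,\varphi_b^!(v^{\theta}_{P,\Ran})^!K$ and $(\tilde v^\theta)^!\varphi_s^{Z,!}\,\iso\,\varphi_s^{Z^\theta,!}(v^{\theta}_{P,\Ran})^!$. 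In particular, $(\tilde v^\theta)^!$ sends $\cE$ into the analogous essential image $\cE^\theta$ on the stratum.

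It then suffices to show that $F\in\cE$ iff $(\tilde v^\theta)^!F\in\cE^\theta$ for every $\theta\in -\Lambda^{pos}_{G,P}$, and apply this to $F=\varphi_b^!K$. For this, membership in $\cE$ is equivalent to iso-ness of the canonical unit morphism $F\to\varphi_s^!\varphi_{s,!}F$ (well-defined since $\varphi_s^!$ is fully faithful with left adjoint $\varphi_{s,!}$), and this morphism commutes with $(\tilde v^\theta)^!$ via the above base change. Since the family $\{(\tilde v^\theta)^!\}_\theta$ is jointly conservative (the $Z^\theta$ exhaust $Z$ as a locally closed stratification), iso-ness can be detected stratum-by-stratum, yielding the claim. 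The main obstacle is the base-change behaviour of the left adjoint $\varphi_{s,!}$ along the locally closed embeddings $\tilde v^\theta$; I expect this to follow from the universal homological contractibility of $\varphi_s$ recorded in Section~\ref{Sect_1.2.1_now}, via the standard formalism of $!$-adjoints on placid prestacks.
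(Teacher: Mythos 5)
Your reduction of the ``if'' direction to the statement ``$F\in\cE$ iff $(\tilde v^{\theta})^!F\in\cE^{\theta}$ for all $\theta$'', tested via the unit of the adjunction $((\varphi_s)_!,\varphi_s^!)$, is a viable repackaging, and the obstacle you flag at the end is actually the harmless part: $\varphi_s$ is pseudo-proper (it is a colimit of the proper projections $(X^I\times X^J)^{\subset}\to X^I$), so $(\varphi_s)_!$ exists as a left adjoint to $\varphi_s^!$ and satisfies base change against arbitrary $!$-pullbacks, in particular along $\tilde v^{\theta}$; universal homological contractibility is only what makes $\varphi_s^!$ fully faithful. (Also note that equivariance of $v^{\theta}_{P,\Ran}$ gives only commutativity, not cartesianness, of the $\varphi_b$-square; but commutativity is all your argument uses there, and the $\varphi_s$-square is cartesian for trivial reasons, so this is a cosmetic point.)

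The genuine gap is the parenthetical ``the family $\{(\tilde v^{\theta})^!\}_{\theta}$ is jointly conservative since the $Z^{\theta}$ exhaust $Z$''. For a \emph{finite} stratification this is excision, but here the stratification of $\ov{\Gr}^0_{P,\Ran}$ by the $\Gr^{\theta}_{P,\Ran}$, $\theta\in-\Lambda^{pos}_{G,P}$, is infinite, and joint conservativity of $!$-restrictions to infinitely many locally closed strata is not a formal consequence of their set-theoretically exhausting the space; it is exactly the point where the paper's proof does its work. The paper handles it by introducing, for each finite collection $\{\theta_i\}$ with $-\Lambda^{pos}_{G,P}-\cup_i\{\theta\le\theta_i\}$ finite, the open subprestack $U^{\{\theta_i\}}=\ov{\Gr}^0_{P,\Ran}-\ov{\Gr}^{\{\theta_i\}}_{P,\Ran}$, which contains only finitely many strata (so the finite dévissage applies there), and then gluing the resulting witnesses over this filtered open cover by Zariski descent, using $Shv(\ov{\Gr}^0_{P,\Ran})\,\iso\,\lim\, Shv(U^{\{\theta_i\}})$. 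To complete your argument you must supply the conservativity by one of these routes: either run your unit-map criterion on each $U^{\{\theta_i\}}$ and conclude by descent as in the paper, or prove conservativity directly by observing that every closed finite-type subscheme of $\ov{\Gr}^0_{P,I}$ meets only finitely many strata (boundedness of the defect divisor on finite-type pieces), so that $Shv$ of the Ran version, being a limit over such pieces, detects vanishing stratum by stratum. As written, the step carrying the actual content of the lemma is asserted rather than proved.
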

\begin{proof} Let us prove the if direction. Equip $-\Lambda_{G,P}^{pos}$ with the order so that $\theta_1\le\theta_2$ iff $\theta_2-\theta_1\in \Lambda_{G,P}^{pos}$. Let $\{\theta_i\}$ be a finite collection of elements of $-\Lambda_{G,P}^{pos}$ such that $-\Lambda^{pos}-\cup_i \{\theta\in -\Lambda^{pos}_{G,P}\mid \theta\le \theta_i\}$ is finite. Let $\ov{\Gr}_{P,\Ran}^{\{\theta_i\}}$ be the image of $\sqcup_i \ov{\Gr}_{P,\Ran}^{\theta_i}\to \ov{\Gr}_{P,\Ran}^0$. Let $U^{\{\theta_i\}}=\ov{\Gr}_{P,\Ran}^0-\ov{\Gr}_{P,\Ran}^{\{\theta_i\}}$, it is open in $\ov{\Gr}_{P,\Ran}^0$. By our assumption, $K\in Shv(U^{\{\theta_i\}})_{untl}$ for any such finite collection.

 Our claim follows now from the Zariski descent for $Shv$. Namely, consider the diagram
$$
\ov{\Gr}^0_{P,\Ran}\getsup{\phi_s} \ov{\Gr}^0_{P,\Ran}\times_{\Ran} (\Ran\times\Ran)^{\subset}\toup{\phi_b} \ov{\Gr}^0_{P,\Ran}
$$
 For any collection $\{\theta_i\}$ as above, $\phi_b^!K$ over $U^{\{\theta_i\}}\times_{\Ran}(\Ran\times\Ran)^{\subset}$ is of the form $\phi_s^!K_{\{\theta_i\}}$ for some $K_{\{\theta_i\}}\in Shv(U^{\{\theta_i\}})$. We have
$$
Shv(\ov{\Gr}^0_{P,\Ran})\,\iso\, \lim Shv(U^{\{\theta_i\}})
$$ 
taken over the collections $\{\theta_i\}$ as above. The objects $K_{\{\theta_i\}}$ organize naturally into an object $K'$ of the latter limit equipped with $\phi_b^! K\,\iso\, \phi_s^! K'$. We are done.
\end{proof}

\sssec{} 
\label{Sect_1.4.13}
For $\theta\in -\Lambda^{pos}_{G,P}$ the functors $(\eta^{\theta}_{\Ran})^!, (\eta^{\theta}_{\Ran})_*$ also preserve the corresponding unital categories of sheaves by Remark~\ref{Rem_A.1.7}.

\begin{Cor} 
\label{Cor_1.4.15_now}
An object $K\in Shv(\bar S^0_{P,\Ran})$ lies in the full subcategory $Shv(\bar S^0_{P,\Ran})_{untl}$ iff for any $\theta\in -\Lambda^{pos}_{G,P}$, $(v^{\theta}_{S,\Ran})^!K\in Shv(S^{\theta}_{P,\Ran})_{untl}$.
\end{Cor}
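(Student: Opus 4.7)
The plan is to deduce this corollary from Lemma~\ref{Lm_1.4.12} by pushing forward along the closed immersion $\eta_{\Ran}:\bar S^0_{P,\Ran}\hook{}\ov{\Gr}^0_{P,\Ran}$, combined with base change along the stratum inclusions.

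The ``only if'' direction would be immediate: the map $v^{\theta}_{S,\Ran}$ is equivariant under $(\Ran\times\Ran)^{\subset}$ by the unnumbered paragraph following Section~\ref{Sect_1.3.28_now}, so $(v^{\theta}_{S,\Ran})^!$ preserves the corresponding unital subcategories in view of Remark~\ref{Rem_A.1.7}, as already summarized in Section~\ref{Sect_1.4.13}.

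For the ``if'' direction, the first step is to observe that, by the very definition of $S^{\theta}_{P,\Ran}$ in Section~\ref{Sect_1.3.27_now} as the preimage of $\Gr^{\theta}_{P,\Ran}$ in $\bar S^0_{P,\Ran}$ under $\eta_{\Ran}$, the square
$$
\begin{array}{ccc}
S^{\theta}_{P,\Ran} & \hook{\eta^{\theta}_{\Ran}} & \Gr^{\theta}_{P,\Ran}\\ \\
\downarrow\lefteqn{\scriptstyle v^{\theta}_{S,\Ran}} && \downarrow\lefteqn{\scriptstyle v^{\theta}_{P,\Ran}}\\ \\
\bar S^0_{P,\Ran} & \hook{\eta_{\Ran}} & \ov{\Gr}^0_{P,\Ran}
\end{array}
$$
is cartesian. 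Proper base change along these closed immersions then yields a canonical isomorphism
$$
(v^{\theta}_{P,\Ran})^!(\eta_{\Ran})_*K\,\iso\,(\eta^{\theta}_{\Ran})_*(v^{\theta}_{S,\Ran})^!K.
$$
Since $(\eta^{\theta}_{\Ran})_*$ preserves unitality (Section~\ref{Sect_1.4.13}) and $(v^{\theta}_{S,\Ran})^!K$ is unital by hypothesis, one concludes $(v^{\theta}_{P,\Ran})^!(\eta_{\Ran})_*K\in Shv(\Gr^{\theta}_{P,\Ran})_{untl}$ for every $\theta\in -\Lambda^{pos}_{G,P}$. Applying Lemma~\ref{Lm_1.4.12} would then give $(\eta_{\Ran})_*K\in Shv(\ov{\Gr}^0_{P,\Ran})_{untl}$.

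To finish, since $\eta_{\Ran}$ is a closed immersion the counit $\eta_{\Ran}^!(\eta_{\Ran})_*K\,\iso\, K$ is an isomorphism, and $\eta_{\Ran}^!$ preserves unitality by Section~\ref{Sect_1.4.12}, yielding $K\in Shv(\bar S^0_{P,\Ran})_{untl}$. The main point I expect to need care is verifying that the cartesian-square base change and the preservation of unitality are compatible in the $(\Ran\times\Ran)^{\subset}$-equivariant sheaf theory; this should reduce, via the description in Section~\ref{Sect_1.2.1_now}, to checking the identity $(v^{\theta}_{P,I})^!(\eta_I)_*\,\iso\,(\eta^{\theta}_I)_*(v^{\theta}_{S,I})^!$ at the level of each $X^I$-ind-scheme, where proper base change is standard, and then taking the limit over $I\in fSets$.
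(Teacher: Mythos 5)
Your proof is correct and is essentially the paper's argument: the paper's proof is literally "combine Section~\ref{Sect_1.4.13} and Lemma~\ref{Lm_1.4.12}", and your write-up just makes explicit the base-change isomorphism $(v^{\theta}_{P,\Ran})^!(\eta_{\Ran})_*K\simeq(\eta^{\theta}_{\Ran})_*(v^{\theta}_{S,\Ran})^!K$ and the use of $\eta_{\Ran}^!(\eta_{\Ran})_*\simeq\id$ together with the unitality-preservation statements of Sections~\ref{Sect_1.4.12}--\ref{Sect_1.4.13}.
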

\begin{proof} Combine Section~\ref{Sect_1.4.13} and Lemma~\ref{Lm_1.4.12}.
\end{proof}

\sssec{Unital categories} 
\label{Sect_1.5.5_now}
We define the unital versions of the corresponding category
$$
\SI_{P,\Ran, untl},\;\; 
\SI^{\le\theta}_{P,\Ran, untl}(S), \;\; \SI^{\theta}_{P, \Ran, untl}(S), \;\; Shv(\Mod^{-,\theta, \subset}_{M, \Ran})^{\gL^+(M)_{\Ran}}_{untl}
$$ 
as the preimage of the corresponding unital category of sheaves under $\oblv$. Namely, $Shv(\Mod^{-,\theta, \subset}_{M, \Ran})^{\gL^+(M)_{\Ran}}_{untl}$ is the preimage of $Shv(\Mod^{-,\theta, \subset}_{M, \Ran})_{untl}$ under 
$$
\oblv: Shv(\Mod^{-,\theta, \subset}_{M, \Ran})^{\gL^+(M)_{\Ran}}\to Shv(\Mod^{-,\theta, \subset}_{M, \Ran}).
$$ 
The category $\SI^{\theta}_{P, \Ran, untl}(S)$ is defined as the preimage of $Shv(S^{\theta}_{P,\Ran})_{untl}$ under
$$
\oblv: \SI^{\theta}_{P, \Ran}(S)\to Shv(S^{\theta}_{P,\Ran})
$$
The category $\SI_{P,\Ran, untl}$ is defined as the preimage of $Shv(\Gr_{G,\Ran})_{untl}$ under 
$$
\oblv: \SI_{P,\Ran}\to Shv(\Gr_{G,\Ran})
$$ 
The category $\SI^{\le\theta}_{P, \Ran, untl}(S)$ is defined as the preimage of $Shv(\bar S_{P,\Ran}^{\theta})_{untl}$ under
$$
\oblv: \SI^{\le\theta}_{P,\Ran}(S)\to Shv(\bar S^{\theta}_{P,\Ran}).
$$
One similarly defines
$$
\SI^{\le\theta}_{P,\Ran, untl},\;\; \SI^{\theta}_{P,\Ran, untl}, \;\; Shv(\Gr^{\theta}_{M,\Ran, +})^{\gL^+(M)_{\Ran}}_{untl}
$$

\index{$\SI_{P,\Ran, untl}, \SI^{\le\theta}_{P,\Ran, untl}(S), \SI^{\theta}_{P, \Ran, untl}(S)$, Section~\ref{Sect_1.5.5_now}}
\index{$Shv(\Mod^{-,\theta, \subset}_{M, \Ran})^{\gL^+(M)_{\Ran}}_{untl}$, Section~\ref{Sect_1.5.5_now}}
\index{$\SI^{\le\theta}_{P,\Ran, untl}, \SI^{\theta}_{P,\Ran, untl}, Shv(\Gr^{\theta}_{M,\Ran, +})^{\gL^+(M)_{\Ran}}_{untl}$, Section~\ref{Sect_1.5.5_now}}

\begin{Rem} One may equivalently define $Shv(\Mod^{-,\theta, \subset}_{M, \Ran})^{\gL^+(M)_{\Ran}}_{untl}$ as the preimage of $Shv(\Mod^{-,\theta, \subset}_{M, \Ran})_{untl}$ under 
$$
\oblv[\dimrel]: Shv(\Mod^{-,\theta, \subset}_{M, \Ran})^{\gL^+(M)_{\Ran}}\to Shv(\Mod^{-,\theta, \subset}_{M, \Ran}).
$$ 
\end{Rem}

\sssec{} From Section~\ref{Sect_1.4.12} we see that the functors (\ref{functors_for_Sect_1.4.12}) preserve the corresponding semi-infinite unital categories of sheaves. Using Section~\ref{Sect_1.4.13}, for $\theta\in -\Lambda^{pos}_{G,P}$ this gives the adjoint pairs
$$
(\eta_{\Ran})_!: \SI^{\le 0}_{P,\Ran, untl}(S)
\leftrightarrows \SI^{\le 0}_{P,\Ran, untl}: \eta_{\Ran}^!, 
$$
$$
(\eta^{\theta}_{\Ran})_!: \SI^{\theta}_{P,\Ran, untl}(S)\leftrightarrows \SI^{\theta}_{P,\Ran, untl}: (\eta^{\theta}_{\Ran})^!,
$$
and
$$
(j^{\theta}_{P,\Ran})^*: \SI^{\le\theta}_{P,\Ran, untl}\leftrightarrows \SI^{\theta}_{P,\Ran, untl}: (j^{\theta}_{P,\Ran})_*,
$$
$$
(\bar v^{\theta}_{P,\Ran})_!: \SI^{\le \theta}_{P,\Ran, untl}\leftrightarrows \SI^{\le 0}_{P,\Ran, untl}: (\bar v^{\theta}_{P,\Ran})^!.
$$
In the above $(j^{\theta}_{P,\Ran})_*$ is fully faithful.
 
\sssec{} From Lemma~\ref{Lm_1.4.12} we immediately derive the following. 

\begin{Cor} 
\label{Cor_1.5.9_now}
Let $K\in \SI^{\le 0}_{P,\Ran}$. Then $K\in \SI^{\le 0}_{P,\Ran, untl}$ iff for any $\theta\in -\Lambda_{G,P}^{pos}$, $(v^{\theta}_{P,\Ran})^!K\in \SI^{\theta}_{P,\Ran, untl}$. \QED
\end{Cor}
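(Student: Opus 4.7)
The plan is to reduce the statement to Lemma~\ref{Lm_1.4.12} by unpacking the definitions of the unital subcategories. By the construction in Section~\ref{Sect_1.5.5_now}, the subcategory $\SI^{\le 0}_{P,\Ran, untl} \subset \SI^{\le 0}_{P,\Ran}$ is, by definition, the preimage of $Shv(\ov{\Gr}^0_{P,\Ran})_{untl}$ under the forgetful functor
$$
\oblv\colon \SI^{\le 0}_{P,\Ran} \to Shv(\ov{\Gr}^0_{P,\Ran}),
$$
and likewise $\SI^{\theta}_{P,\Ran, untl} \subset \SI^{\theta}_{P,\Ran}$ is the preimage of $Shv(\Gr^{\theta}_{P,\Ran})_{untl}$ under the analogous forgetful functor. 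So the statement to prove translates into an assertion about the object $\oblv(K) \in Shv(\ov{\Gr}^0_{P,\Ran})$.

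Next, I would observe that $\oblv$ commutes with the $!$-restriction along $v^{\theta}_{P,\Ran}\colon \Gr^{\theta}_{P,\Ran} \hookrightarrow \ov{\Gr}^0_{P,\Ran}$, in the sense that the canonical natural transformation
$$
(v^{\theta}_{P,\Ran})^! \comp \oblv \,\iso\, \oblv\comp (v^{\theta}_{P,\Ran})^!
$$
is an equivalence; this is immediate because the right-hand $\oblv$ on $\SI^{\theta}_{P,\Ran}$ is by definition the forgetful functor to sheaves and $!$-pullback commutes with forgetting equivariance against the placid group ind-scheme $H_{\Ran}$ (the relevant categories of invariants being full subcategories of the ambient sheaf categories in the sense of Section~\ref{Sect_1.2.9_now}). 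Consequently $(v^{\theta}_{P,\Ran})^! K \in \SI^{\theta}_{P,\Ran, untl}$ is equivalent to $(v^{\theta}_{P,\Ran})^! \oblv(K) \in Shv(\Gr^{\theta}_{P,\Ran})_{untl}$.

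Finally, I would apply Lemma~\ref{Lm_1.4.12} to $\oblv(K) \in Shv(\ov{\Gr}^0_{P,\Ran})$: it states that $\oblv(K) \in Shv(\ov{\Gr}^0_{P,\Ran})_{untl}$ if and only if $(v^{\theta}_{P,\Ran})^! \oblv(K) \in Shv(\Gr^{\theta}_{P,\Ran})_{untl}$ for every $\theta \in -\Lambda_{G,P}^{pos}$. Combining this equivalence with the two definitional translations above yields the claim. There is no genuine obstacle here; the content of the corollary is entirely packaged in Lemma~\ref{Lm_1.4.12}, and the only thing to check carefully is the commutation of $\oblv$ with the locally closed $!$-restriction, which is formal.
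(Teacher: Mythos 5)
Your proposal is correct and matches the paper's argument: the paper derives the corollary directly from Lemma~\ref{Lm_1.4.12}, exactly as you do, the only content being the definitional unpacking of the unital subcategories via $\oblv$ and the (formal) compatibility of $\oblv$ with $(v^{\theta}_{P,\Ran})^!$ on the $H_{\Ran}$-invariant strata.
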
 

 Similarly, we get the following version of Corollary~\ref{Cor_1.4.15_now}.
 
\begin{Cor} 
\label{Cor_1.5.10_now}
Let $K\in \SI^{\le 0}_{P,\Ran}(S)$. Then $K\in \SI^{\le 0}_{P,\Ran, untl}(S)$ iff for any $\theta\in -\Lambda_{G,P}^{pos}$, $(v^{\theta}_{S,\Ran})^!K\in \SI^{\theta}_{P,\Ran, untl}(S)$. \QED
\end{Cor}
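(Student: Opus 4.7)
The plan is to reduce to the non-equivariant case already handled by Corollary~\ref{Cor_1.4.15_now}. By the definition in Section~\ref{Sect_1.5.5_now}, the unital category $\SI^{\le 0}_{P,\Ran,untl}(S)$ (resp.\ $\SI^{\theta}_{P,\Ran,untl}(S)$) is by construction the preimage under the forgetful functor
$$
\oblv\colon \SI^{\le 0}_{P,\Ran}(S)\to Shv(\bar S^0_{P,\Ran})\qquad(\text{resp.\ }\oblv\colon \SI^{\theta}_{P,\Ran}(S)\to Shv(S^{\theta}_{P,\Ran}))
$$
of the full subcategory of unital sheaves. So it suffices to translate the characterization of unitality at the level of underlying sheaves given in Corollary~\ref{Cor_1.4.15_now} across $\oblv$.

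The second preliminary step I would record is that the $!$-pullback along the locally closed embedding $v^{\theta}_{S,\Ran}$ commutes with $\oblv$, i.e.\ for every $K\in \SI^{\le 0}_{P,\Ran}(S)$ one has a canonical isomorphism
$$
\oblv\bigl((v^{\theta}_{S,\Ran})^!K\bigr)\,\iso\,(v^{\theta}_{S,\Ran})^!\oblv(K).
$$
This is immediate from the fact that $v^{\theta}_{S,\Ran}$ is $H_{\Ran}$-equivariant: for any equivariant morphism of prestacks with a placid group ind-scheme action, the $!$-restriction functor on the underlying sheaf categories lifts canonically to the equivariant categories in a manner compatible with $\oblv$ (this is the same input that is used already in Section~\ref{Sect_1.4.2}).

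With these two ingredients in place the equivalence is a direct chase. We have $K\in \SI^{\le 0}_{P,\Ran,untl}(S)$ iff $\oblv(K)\in Shv(\bar S^0_{P,\Ran})_{untl}$, which by Corollary~\ref{Cor_1.4.15_now} is equivalent to $(v^{\theta}_{S,\Ran})^!\oblv(K)\in Shv(S^{\theta}_{P,\Ran})_{untl}$ for every $\theta\in -\Lambda_{G,P}^{pos}$; by the commutation of $\oblv$ with $(v^{\theta}_{S,\Ran})^!$ this is in turn the condition $\oblv\bigl((v^{\theta}_{S,\Ran})^!K\bigr)\in Shv(S^{\theta}_{P,\Ran})_{untl}$, which is exactly $(v^{\theta}_{S,\Ran})^!K\in \SI^{\theta}_{P,\Ran,untl}(S)$.

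There is no real obstacle here: the substantive input is Corollary~\ref{Cor_1.4.15_now}, whose proof rests on the Zariski-descent argument of Lemma~\ref{Lm_1.4.12} together with the observation in Section~\ref{Sect_1.4.13} that $(\eta^{\theta}_{\Ran})^!$ preserves unitality. The present corollary is a purely formal equivariant enhancement of that statement.
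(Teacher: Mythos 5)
Your proposal is correct and is essentially the paper's own argument: the paper derives this corollary "similarly" to Corollary~\ref{Cor_1.5.9_now}, i.e.\ by combining the definition of the unital subcategories as preimages under $\oblv$ (Section~\ref{Sect_1.5.5_now}) with Corollary~\ref{Cor_1.4.15_now}, using that $\oblv$ commutes with $(v^{\theta}_{S,\Ran})^!$ along the equivariant embedding. Nothing further is needed.
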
 

From Lemma~\ref{Lm_1.4.3} one immediately derives the following.
\begin{Cor}
\label{Cor_1.5.11_now}
Let $\theta\in -\Lambda^{pos}_{G,P}$. The functor
$$
(\gt^{\theta}_{P,\Ran})^!: Shv(\Gr^{\theta}_{M,\Ran, +})^{\gL^+(M)_{\Ran}}\to Shv(\Gr^{\theta}_{P,\Ran})^{\gL^+(M)_{\Ran}}
$$
is fully faithful, its essential image is the full subcategory $\SI^{\theta}_{P,\Ran}$. It induces an equivalence between the full subcategories $Shv(\Gr^{\theta}_{M,\Ran, +})^{\gL^+(M)_{\Ran}}_{untl}\,\iso\, \SI^{\theta}_{P,\Ran, untl}$. The composition
$$
\SI^{\theta}_{P,\Ran}\hook{} Shv(\Gr_{P,\Ran}^{\theta})^{\gL^+(M)_{\Ran}}\toup{(i^{\theta}_{P,\Ran})^!} Shv(\Gr^{\theta}_{M,\Ran,+})^{\gL^+(M)_{\Ran}}
$$
is an equivalence. It induces an equivalence between the full subcategories 
$$
\SI^{\theta}_{P,\Ran, untl}\,\iso\, Shv(\Gr^{\theta}_{M,\Ran,+})^{\gL^+(M)_{\Ran}}_{untl}.
$$
The functor 
$$
(\gt^{\theta}_{P,\Ran})_!: \SI^{\theta}_{P,\Ran}\to Shv(\Gr^{\theta}_{M,\Ran, +})^{\gL^+(M)_{\Ran}}
$$ 
preserves the corresponding full subcategories of unital objects.
\end{Cor}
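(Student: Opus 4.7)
The plan is to derive everything by passing to the limit over $I \in fSets$ from the finite-level statement Lemma~\ref{Lm_1.4.3}, and then to verify unitality preservation via equivariance of the relevant maps under $(\Ran\times\Ran)^{\subset}$.

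First, note that by definition
$$
\SI^{\theta}_{P,\Ran} = \underset{I\in fSets}{\lim}\, \SI^{\theta}_{P,I}, \qquad Shv(\Gr^{\theta}_{M,\Ran,+})^{\gL^+(M)_{\Ran}} = \underset{I\in fSets}{\lim}\, Shv(\Gr^{\theta}_{M,I,+})^{\gL^+(M)_I},
$$
the transition functors being $!$-restrictions along the diagonals $\vartriangle^{(I/J)}: X^J \to X^I$. Since the squares
$$
\begin{array}{ccc}
\Gr^{\theta}_{P,J} & \to & \Gr^{\theta}_{P,I}\\
\downarrow\lefteqn{\scriptstyle \gt^{\theta}_{P,J}} && \downarrow\lefteqn{\scriptstyle \gt^{\theta}_{P,I}}\\
\Gr^{\theta}_{M,J,+} & \to & \Gr^{\theta}_{M,I,+}
\end{array}
$$
are cartesian, the functors $(\gt^{\theta}_{P,I})^!$ assemble into a morphism of diagrams over $fSets$ whose limit is $(\gt^{\theta}_{P,\Ran})^!$. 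The limit of fully faithful functors in $\DGCat_{cont}$ is fully faithful, and since at each level $I$ the essential image of $(\gt^{\theta}_{P,I})^!$ is precisely $\SI^{\theta}_{P,I}$ by Lemma~\ref{Lm_1.4.3}(i), the essential image of $(\gt^{\theta}_{P,\Ran})^!$ is $\SI^{\theta}_{P,\Ran}$. The same argument, applied to the composition with $(i^{\theta}_{P,I})^!$ (which is similarly compatible with diagonal restrictions, the relevant squares being cartesian), produces the asserted equivalence
$\SI^{\theta}_{P,\Ran} \iso Shv(\Gr^{\theta}_{M,\Ran,+})^{\gL^+(M)_{\Ran}}$.

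For the unital statements, recall from Section~\ref{Sect_1.2.1_now} that $K \in Shv(Z)$ lies in $Shv(Z)_{untl}$ iff $\varphi_b^! K$ lies in the essential image of $\varphi_s^!$, where $\varphi_s, \varphi_b$ are the two projections out of $Z \times_{\Ran}(\Ran\times\Ran)^{\subset}$. By Section~\ref{Sect_1.3.27_now} (and the remark immediately following Section~\ref{Sect_1.3.28_now}), the maps $\gt^{\theta}_{P,\Ran}$ and $i^{\theta}_{P,\Ran}$ are equivariant under the action of the category object $(\Ran\times\Ran)^{\subset}$. Therefore their $!$-pullbacks intertwine $\varphi_s^!$ and $\varphi_b^!$ on both sides, so both $(\gt^{\theta}_{P,\Ran})^!$ and $(i^{\theta}_{P,\Ran})^!$ preserve unital objects. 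Since both functors are fully faithful as parts of the established equivalence, the induced maps between unital subcategories are fully faithful; and since the inverse equivalence is realized by the same equivariant functor in the opposite direction, it also preserves unitality, giving the desired equivalence on unital subcategories.

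Finally, for the assertion about $(\gt^{\theta}_{P,\Ran})_!$: by passing Lemma~\ref{Lm_1.4.3}(ii) to the limit (using Corollary~\ref{Cor_1.4.7_left_adjoint}-style arguments to check that the partial left adjoints assemble into a Ran-level left adjoint), one obtains the canonical identification $(\gt^{\theta}_{P,\Ran})_! \iso (i^{\theta}_{P,\Ran})^!$ of functors $\SI^{\theta}_{P,\Ran} \to Shv(\Gr^{\theta}_{M,\Ran,+})^{\gL^+(M)_{\Ran}}$. Since $(i^{\theta}_{P,\Ran})^!$ preserves unital objects by the equivariance argument of the previous paragraph, so does $(\gt^{\theta}_{P,\Ran})_!$. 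The main subtlety I anticipate lies in the limit-formation step: verifying that the finite-level compatibilities of Lemma~\ref{Lm_1.4.3} are strict enough (not merely lax) to assemble, and that the passage to unital subcategories commutes with the limit over $fSets$; the latter is not automatic but follows from the fact that unitality is a pointwise condition witnessed by the same equivariance data at every level $I$.
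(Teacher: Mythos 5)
Your proof is correct and follows essentially the same route the paper intends: the paper derives this corollary "immediately" from Lemma~\ref{Lm_1.4.3} by passing to the limit over $fSets$ (the functors being compatible with $!$-restrictions along diagonals), with the unital assertions coming from the stated equivariance of $\gt^{\theta}_{P,\Ran}$ and $i^{\theta}_{P,\Ran}$ under the category object $(\Ran\times\Ran)^{\subset}$, exactly as you argue. Your use of the section property $\gt^{\theta}_{P,\Ran}\comp i^{\theta}_{P,\Ran}=\id$ to get unitality in both directions, and of the identification $(\gt^{\theta}_{P,\Ran})_!\,\iso\,(i^{\theta}_{P,\Ran})^!$ for the last claim, matches the intended derivation.
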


\begin{Rem} We don't know if $(\gt^{\theta}_{P,\Ran})_*: \SI^{\theta}_{P,\Ran}\to Shv(\Gr^{\theta}_{M,\Ran, +})^{\gL^+(M)_{\Ran}}$ preserves the corresponding full subcategories of unital objects.
\end{Rem}

\begin{Cor} 
\label{Cor_1.5.13_now}
Let $\theta\in -\Lambda^{pos}_{G,P}$. The functor
$$
(\gt_{S,\Ran}^{\theta})^!: Shv(\Mod^{-,\theta,\subset}_{M,\Ran})^{\gL^+(M)_{\Ran}}\to Shv(S^{\theta}_{P,\Ran})^{\gL^+(M)_{\Ran}}
$$
is fully faithful, its essential image is the full subcategory $\SI^{\theta}_{P,\Ran}(S)$. It induces an equivalence between the full subcategories
$$
Shv(\Mod^{-,\theta,\subset}_{M,\Ran})^{\gL^+(M)_{\Ran}}_{untl}\,\iso\,\SI^{\theta}_{P,\Ran, untl}(S) 
$$
The composition
$$
\SI^{\theta}_{P,\Ran}(S)\to Shv(S^{\theta}_{P,\Ran})^{\gL^+(M)_{\Ran}}\toup{(i^{\theta}_{S,\Ran})^!} Shv(\Mod^{-,\theta,\subset}_{M,\Ran})^{\gL^+(M)_{\Ran}}
$$
is an equivalence. It induces an equivalence between the full subcategories
$$
\SI^{\theta}_{P,\Ran, untl}(S)\,\iso\, Shv(\Mod^{-,\theta,\subset}_{M,\Ran})^{\gL^+(M)_{\Ran}}_{untl}.
$$
The functor
$$
(\gt^{\theta}_{S,\Ran})_!: \SI^{\theta}_{P,\Ran}(S)\to Shv(\Mod^{-,\theta,\subset}_{M,\Ran})^{\gL^+(M)_{\Ran}}
$$
preserves the corresponding full subcategories of unital objects. 
\end{Cor}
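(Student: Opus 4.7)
The plan is to obtain Corollary~\ref{Cor_1.5.13_now} as the $S$-version of Corollary~\ref{Cor_1.5.11_now} by (i) passing to the limit in $I\in fSets$ in Lemma~\ref{Lm_1.4.10_now}, and then (ii) importing the unitality statements from the equivariance of all maps in sight under the action of the category object $(\Ran\times\Ran)^{\subset}$.

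First, for the first assertion (full faithfulness of $(\gt^{\theta}_{S,\Ran})^!$ with essential image $\SI^{\theta}_{P,\Ran}(S)$, and the identification via $(i^{\theta}_{S,\Ran})^!$) I would argue as follows. For every map $I\to J$ in $fSets$ the diagram
$$
S^{\theta}_{P,J}\hook{} S^{\theta}_{P,I}, \qquad \Mod_{M,J}^{-,\theta,\subset}\hook{} \Mod_{M,I}^{-,\theta,\subset}
$$
consists of base changes along $\vartriangle^{(I/J)}: X^J\to X^I$, and both $\gt^{\theta}_{S,I}$ and $i^{\theta}_{S,I}$ are compatible with these base changes in the obvious sense. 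By Section~\ref{Sect_A.4.1} and the fact that the $\gL^+(M)_I$-actions are pulled back from $\gL^+(M)_{I}$ to $\gL^+(M)_{J}$, the functors $(\gt^{\theta}_{S,I})^!$ and $(i^{\theta}_{S,I})^!$ of Lemma~\ref{Lm_1.4.10_now} commute with $!$-restriction along $\vartriangle^{(I/J)}$. Taking the limit over $I\in fSets$ then yields the statement in the Ran setting: the Ran version of $(\gt^{\theta}_{S,\Ran})^!$ is fully faithful with essential image $\SI^{\theta}_{P,\Ran}(S)$, and the composition with $(i^{\theta}_{S,\Ran})^!$ is the inverse equivalence.

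Next, for the equivalences on unital subcategories, I would invoke the equivariance of $\gt^{\theta}_{S,\Ran}$, $i^{\theta}_{S,\Ran}$, and of the obvious closed immersions under the action of $(\Ran\times\Ran)^{\subset}$ recorded just after Section~\ref{Sect_1.3.28_now}. By Remark~\ref{Rem_A.1.7} the $!$-pullback along any such equivariant morphism preserves the full subcategory of unital sheaves. Applying this to $\gt^{\theta}_{S,\Ran}$, we see that $(\gt^{\theta}_{S,\Ran})^!$ sends $Shv(\Mod^{-,\theta,\subset}_{M,\Ran})^{\gL^+(M)_{\Ran}}_{untl}$ into $\SI^{\theta}_{P,\Ran,untl}(S)$ (using that $\SI^{\theta}_{P,\Ran,untl}(S)$ is by definition the preimage of $Shv(S^{\theta}_{P,\Ran})_{untl}$ under $\oblv$). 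Similarly $(i^{\theta}_{S,\Ran})^!$ sends $\SI^{\theta}_{P,\Ran,untl}(S)$ back into $Shv(\Mod^{-,\theta,\subset}_{M,\Ran})^{\gL^+(M)_{\Ran}}_{untl}$. Since these two functors are mutually inverse by the first paragraph, they restrict to the claimed equivalence between the unital subcategories.

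Finally, for the claim about $(\gt^{\theta}_{S,\Ran})_!$, Lemma~\ref{Lm_1.4.10_now}(ii) gives a canonical isomorphism $(\gt^{\theta}_{S,I})_!\,\iso\,(i^{\theta}_{S,I})^!$ on $\SI^{\theta}_{P,I}(S)$, compatible with $!$-restriction along all $\vartriangle^{(I/J)}$, hence passing to the limit yields
$$
(\gt^{\theta}_{S,\Ran})_!\,\iso\,(i^{\theta}_{S,\Ran})^!
$$
on $\SI^{\theta}_{P,\Ran}(S)$. By the preceding paragraph $(i^{\theta}_{S,\Ran})^!$ preserves unital subcategories, hence so does $(\gt^{\theta}_{S,\Ran})_!$.

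The only genuinely nonformal step is the first, namely commuting the full faithfulness and the limit in $I\in fSets$; the main technical input there, already used in Corollary~\ref{Cor_1.5.11_now}, is that the relevant $\gL^+(M)_I$-invariants and the partially defined adjoints of Lemma~\ref{Lm_1.4.10_now} are compatible with base change along the diagonals $\vartriangle^{(I/J)}$. Everything else (preservation of unitality and identification of $(\gt^{\theta}_{S,\Ran})_!$ with $(i^{\theta}_{S,\Ran})^!$ on the essential image) is an essentially formal consequence of the preceding material.
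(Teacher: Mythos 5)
Your proposal is correct and follows the same route the paper intends: Corollary~\ref{Cor_1.5.13_now} is stated there without a separate proof, being the $S$-version of Corollary~\ref{Cor_1.5.11_now}, i.e.\ obtained by passing to the limit over $I\in fSets$ in Lemma~\ref{Lm_1.4.10_now} (using compatibility with $!$-restriction along the diagonals) and then invoking the $(\Ran\times\Ran)^{\subset}$-equivariance of $\gt^{\theta}_{S,\Ran}$, $i^{\theta}_{S,\Ran}$ together with Remark~\ref{Rem_A.1.7} for the unital subcategories, exactly as you do. Your closing identification $(\gt^{\theta}_{S,\Ran})_!\iso (i^{\theta}_{S,\Ran})^!$ can even be seen purely formally at the Ran level (the left adjoint of a fully faithful $!$-pullback on its essential image is the inverse equivalence), but your limit argument from Lemma~\ref{Lm_1.4.10_now}(ii) is equally fine.
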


\begin{Lm} 
\label{Lm_1.5.14_now}
Let $\theta\in -\Lambda^{pos}_{G,P}$. The composition 
$$
\Gr^{\theta}_{P^-,\Ran}\cap \bar S^0_{P,\Ran}\hook{} \Gr^{\theta}_{P^-,\Ran}\cap \ov{\Gr}^0_{P,\Ran}\toup{\gt^{\theta}_{P^-, \Ran}}\Gr^{\theta}_{M,\Ran, +}
$$ 
factors through the closed immersion (\ref{inclusion_Mod_M_Ran^theta^subset}).  The resulting map is still denoted
$$
\gt^{\theta}_{P^-, \Ran}: \Gr^{\theta}_{P^-,\Ran}\cap \bar S^0_{P,\Ran}\to \Mod^{-,\theta, \subset}_{M,\Ran}
$$
by abuse of notations.

\index{$\gt^{\theta}_{P^-, \Ran}$, Lemma~\ref{Lm_1.5.14_now}}
\end{Lm}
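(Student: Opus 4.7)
The plan is to verify that the image of any point of $\Gr^{\theta}_{P^-,\Ran}\cap \bar S^0_{P,\Ran}$ under $\gt^{\theta}_{P^-,\Ran}$ satisfies the two additional conditions that carve out $\Mod^{-,\theta,\subset}_{M,\Ran}$ inside $\Gr^{\theta}_{M,\Ran,+}$: namely, (a) the trivialization $\beta_M$ extends from $X-\Gamma_{\cI}$ to $X-\supp(D)$, and (b) for every finite-dimensional $V\in\Rep(G)^{\heartsuit}$, the map $V^{U(P)}_{\cF^0_M}\to V^{U(P)}_{\cF_M}$ is regular on $X$. Since $\Mod^{-,\theta,\subset}_{M,\Ran}$ is a closed subfunctor of $\Gr^{\theta}_{M,\Ran,+}$, both can be checked pointwise. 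The $P^-$-analog of Lemma~\ref{Lm_1.3.21_now} identifies
$$
\Gr^{\theta}_{P^-,\Ran}\,\iso\,\Gr_{P^-,\Ran}\times_{\Gr_{M,\Ran}}\Gr^{\theta}_{M,\Ran,+},
$$
so the data of our point includes a genuine $P^-$-reduction $\cF_{P^-}$ of $\cF_G$ on $X$, with $\cF_M=\cF_{P^-}/U(P^-)$ on $X$; meanwhile $\bar S^0_{P,\Ran}$ furnishes, for each such $V$, a regular map $\rho_V:V^{U(P)}_{\cF^0_M}\to V_{\cF_G}$ on $X$ agreeing with the natural embedding $V^{U(P)}\hookrightarrow V$ over $X-\Gamma_{\cI}$ via $\beta$.

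The heart of the argument is the canonical $M$-equivariant isomorphism $V^{U(P)}\iso V_{U(P^-)}$ obtained as the composition $V^{U(P)}\hookrightarrow V\twoheadrightarrow V_{U(P^-)}$ for opposite parabolics $(P,P^-)$ with common Levi $M$ (a standard consequence of the density of the open Bruhat cell $U(P^-)\cdot M\cdot U(P)$ in $G$). Twisting by $\cF_M$, this identifies the canonical quotient $V_{\cF_G}\twoheadrightarrow V_{U(P^-),\cF_M}$ furnished by $\cF_{P^-}$ with $V^{U(P)}_{\cF_M}$; composing $\rho_V$ with this quotient yields the desired map $V^{U(P)}_{\cF^0_M}\to V^{U(P)}_{\cF_M}$ regular on $X$, settling (b). For (a) one specializes to $V=\cV^{\check\lambda}$ with $\check\lambda\in\check\Lambda^+\cap\check\Lambda_{G,P}$, where $V^{U(P)}$ is the one-dimensional $M/[M,M]$-representation of weight $\check\lambda$: the map produced above, $\cL^{\check\lambda}_{\cF^0_{M/[M,M]}}\to\cL^{\check\lambda}_{\cF_{M/[M,M]}}$ regular on $X$, together with the Plücker surjection $\tilde\cV^{\check\lambda}_{\cF_G}\to\cL^{\check\lambda}_{\cF^0_{M/[M,M]}}(\langle D,\check\lambda\rangle)$ from $\Gr^{\theta}_{P^-,\Ran}$, is matched via the canonical pairing $u^{\check\lambda}(v^{\check\lambda})=1$ to trivialize $\cF_{M/[M,M]}$ over $X-\supp(D)$ compatibly with $\beta_M^{-1}|_{X-\Gamma_{\cI}}$; running this argument for general $V$ and using that $[G,G]$ (hence $[M,M]$) is simply connected (Section~\ref{Sect_1.3.1_now}) lifts this to the full $M$-trivialization, giving (a). The principal technical obstacle is matching the normalizations between the two families of Plücker data, to confirm that the $M$-structure reconstructed on $X-\supp(D)$ from $\bar S^0_{P,\Ran}$ agrees with the one coming from $\cF_{P^-}/U(P^-)$.
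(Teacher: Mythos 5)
Your overall route is the same as the paper's (following \cite{BFGM}, Proposition~2.6): use the $P^-$-reduction coming from the $P^-$-analogue of Lemma~\ref{Lm_1.3.21_now}, compose the regular maps $V^{U(P)}_{\cF^0_M}\to V_{\cF_{P^-}}$ furnished by $\bar S^0_{P,\Ran}$ with the projection to $(V_{U(P^-)})_{\cF_M}$, and then convert these into the data defining a point of $\Mod^{-,\theta,\subset}_{M,\Ran}$. The genuine gap is the step you call the heart of the argument: the claim that for \emph{every} finite-dimensional $V\in\Rep(G)^{\heartsuit}$ the composition $V^{U(P)}\hookrightarrow V\twoheadrightarrow V_{U(P^-)}$ is an isomorphism. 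This is true in characteristic zero (by complete reducibility; the big-cell density you invoke is not really the operative point), but the paper's standing hypotheses only exclude characteristic $2$, and in positive characteristic the statement fails for general $V$ (already for $G=\SL_2$ and $V=L(1)\otimes L(1)$ in characteristic $2$ the composition has a kernel; analogous tensor-product examples exist in any characteristic $p$). This is exactly why the paper replaces it by Lemma~\ref{Lm_1.4.17} (= \cite{BFGM}, Lemma~2.7), formulated for $M$-modules $\cU\simeq \cV^{U(P)}$ via the induced module $\Ind(\cU)$ and the composition $\cU\to(\Ind(\cU))^{U(P)}\to(\Ind(\cU))_{U(P^-)}$; the Remark following that lemma records explicitly that your shortcut is only available when $\mathrm{char}(k)=0$. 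As written, your proof therefore does not establish the lemma in the paper's generality.

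A second, smaller issue is that your treatment of condition (a) (extension of $\beta_M$ to $X-\supp(D)$) is left incomplete: you end by acknowledging an unresolved ``matching of normalizations'' between the two families of Pl\"ucker data. In the paper this is not a separate verification requiring the maps $u^{\check\lambda}, v^{\check\lambda}$ and simply-connectedness: unwinding the $S$-points, a point of $\Gr^{\theta}_{P^-,\Ran}\cap\bar S^0_{P,\Ran}$ already carries a $P^-$-torsor $\cF_{P^-}$ with a trivialization off $\Gamma_{\cI}$ inducing $\cF_{M/[M,M]}\iso\cF^0_{M/[M,M]}(D)$ on all of $X$, and once the maps on $U(P)$-invariants are shown to be regular over $X$ (via Lemma~\ref{Lm_1.4.17}), the Pl\"ucker-type description of $\Mod^{-,\theta}_M$ yields both the regularity condition and the extension of the trivialization across $\Gamma_{\cI}-\supp(D)$ in one stroke, with no normalization ambiguity to reconcile.
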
  
\begin{proof} The proof is as in (\cite{BFGM}, Proposition~2.6). We give the details for the convenience of the reader. 
 
 For $S\in\Sch^{aff}$ an $S$-point of $\Gr^{\theta}_{P^-,\Ran}\cap \bar S^0_{P,\Ran}$ is a collection: an $S$-point $(D, \cI)$ of $(X^{-\theta}\times\Ran)^{\subset}$, a $P^-$-torsor $\cF_{P^-}$ on $S\times X$ with a trivialization $\beta: \cF_{P^-}\,\iso\,\cF^0_{P^-}\mid_{S\times X-\Gamma_{\cI}}$ inducing an isomorphism $\cF_{M/[M,M]}\,\iso\, \cF^0_{M/[M,M]}(D)$ on $S\times X$, and such that for any $V\in\Rep(G)^{\heartsuit}$ finite-dimensional, 
$$
V^{U(P)}_{\cF^0_M}\hook{} V_{\cF_{P^-}}
$$
is regular over $S\times X$. 

 In particular, for such a collection the composition $V^{U(P)}_{\cF^0_M}\hook{} V_{\cF_{P^-}}\to (V_{U(P^-)})_{\cF_M}$ is regular, where $\cF_M$ is the extension of scalars of $\cF_{P^-}$. The claim follows now from Lemma~\ref{Lm_1.4.17} below.
\end{proof} 
\begin{Lm}[\cite{BFGM}, Lemma~2.7] 
\label{Lm_1.4.17}
For any finite-dimensional $M$-module $\cU$ isomorphic to $\cV^{U(P)}$ for some finite-dimensional $G$-module $V$, the composition
$\cU\to (\Ind(\cU))^{U(P)}\to (\Ind(\cU))_{U(P^-)}$ is an isomorphism. Here the induced $G$-module $\Ind(\cU)$ is characterized by 
$$
\Hom_M(\cU, V^{U(P)})\,\iso\, \Hom_G(\Ind(\cU), V)
$$
for $V\in\Rep(G)^{\heartsuit}$ finite-dimensional.
\QED
\end{Lm}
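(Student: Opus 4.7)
The plan is to reduce via semisimplicity to the case of an irreducible $G$-module, and then to use the action of $Z(M)^0$ to identify both $V^{U(P)}$ and $V_{U(P^-)}$ with the top $Z(M)^0$-weight subspace of $V$.

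Since $\mathrm{char}(k) = 0$ and $G$ is reductive, $\Rep(G)^{\heartsuit, fd}$ is semisimple. The functors $V \mapsto V^{U(P)}$, $V \mapsto V_{U(P^-)}$, and the left adjoint $\Ind$ are all additive, and the natural transformation in the lemma commutes with finite direct sums. Decomposing $V = \bigoplus_i V^{\check\lambda_i}$ with $\check\lambda_i \in \check{\Lambda}^+$, one reduces to the case $V = V^{\check\lambda}$ irreducible and $\cU = (V^{\check\lambda})^{U(P)}$. In this case I would first check $\Ind(\cU)\, \iso\, V^{\check\lambda}$ and that the unit $\cU \to \Ind(\cU)^{U(P)}$ is the identity: by the defining adjunction, this amounts to
$\Hom_G(V^{\check\lambda}, V^{\check\mu}) = \Hom_M(\cU, (V^{\check\mu})^{U(P)}) = \delta_{\check\lambda,\check\mu}$, which holds because for each dominant $\check\nu$ the $M$-module $(V^{\check\nu})^{U(P)}$ is irreducible of $T$-highest weight $\check\nu$ (a standard consequence of Kostant's theorem on $\mathfrak{u}(P)$-cohomology in degree zero). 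The lemma then reduces to proving that the canonical map $V^{U(P)} \to V_{U(P^-)}$ is an isomorphism of $M$-modules.

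For this last step I use the $Z(M)^0$-grading. The simple coroots $\check\alpha_i$ with $i \in \cI_M$ restrict to zero on $Z(M)^0$, while those with $i \in \cI_G - \cI_M$ restrict to strictly positive characters. Hence $\check\lambda|_{Z(M)^0}$ is the unique maximal $Z(M)^0$-weight of $V^{\check\lambda}$, and the top-weight subspace $V^{\check\lambda}_{\mathrm{top}} \subset V^{\check\lambda}$ is an $M$-submodule. Since $\mathfrak{u}(P)$ strictly raises the $Z(M)^0$-weight, one has $V^{\check\lambda}_{\mathrm{top}} \subseteq (V^{\check\lambda})^{U(P)}$; the reverse inclusion $(V^{\check\lambda})^{U(P)} \subseteq V^{\check\lambda}_{\mathrm{top}}$ is again Kostant's theorem (the $M$-module $(V^{\check\lambda})^{U(P)}$ is irreducible of highest weight $\check\lambda$, hence has pure $Z(M)^0$-weight $\check\lambda|_{Z(M)^0}$). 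Dually, $\mathfrak{u}(P^-)V^{\check\lambda}$ has strictly lower $Z(M)^0$-weights, so the quotient $V^{\check\lambda} \twoheadrightarrow V^{\check\lambda}_{U(P^-)}$ restricts to an isomorphism $V^{\check\lambda}_{\mathrm{top}}\, \iso\, V^{\check\lambda}_{U(P^-)}$. The composition $V^{\check\lambda, U(P)} = V^{\check\lambda}_{\mathrm{top}} \hookrightarrow V^{\check\lambda} \twoheadrightarrow V^{\check\lambda}_{U(P^-)}$ is therefore the canonical identification of both sides with $V^{\check\lambda}_{\mathrm{top}}$, proving the lemma.

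The only nontrivial input is the identification $(V^{\check\lambda})^{U(P)} = V^{\check\lambda}_{\mathrm{top}}$: one inclusion is immediate from the $Z(M)^0$-weight analysis, but the reverse requires that no $\mathfrak{u}(P)$-fixed vector has strictly lower $Z(M)^0$-weight than $\check\lambda|_{Z(M)^0}$. This is obtained either by invoking Kostant's cohomological description of $H^0(\mathfrak{u}(P), V^{\check\lambda})$, or directly via the PBW decomposition $V^{\check\lambda} = U(\mathfrak{u}(P^-)) \cdot (U(\mathfrak{m}) \cdot v^{\check\lambda})$ combined with the fact that each nontrivial monomial in $U(\mathfrak{u}(P^-))$ strictly lowers the $Z(M)^0$-weight while the action of $\mathfrak{u}(P)$ on $U(\mathfrak{m}) \cdot v^{\check\lambda}$ vanishes.
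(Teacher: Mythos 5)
Your argument has a genuine gap: it assumes $\mathrm{char}(k)=0$, while the paper only assumes $k$ algebraically closed with $\mathrm{char}(k)\ne 2$, and positive characteristic is genuinely in play (the sheaf theory may be the constructible one over such a $k$, and the lemma is used in the proof of Lemma~\ref{Lm_1.5.14_now}, which is a geometric statement over the ground field $k$). The paper itself offers no argument here: the lemma is quoted verbatim from (\cite{BFGM}, Lemma~2.7), which is a characteristic-free statement. Every step of your proof leans on semisimplicity of $\Rep(G)^{\heartsuit, fd}$ and on Kostant-type facts valid only in characteristic zero: the reduction to irreducible $V$, the irreducibility of $(V^{\check{\lambda}})^{U(P)}$ as an $M$-module, the multiplicity computation identifying $\Ind(\cU)$ with $V^{\check{\lambda}}$, and the identification of $V^{U(P)}$ with the top $Z(M)^0$-weight space. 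In positive characteristic none of this is available: $\Rep(G)$ is not semisimple, $\cV^{U(P)}$ need not be irreducible or even semisimple over $M$, and the map $V^{U(P)}\to V_{U(P^-)}$ need not be an isomorphism for an arbitrary finite-dimensional $V$ --- which is precisely why the lemma is formulated through $\Ind(\cU)$ rather than as a direct statement about $V$, and why the adjunction characterization of $\Ind$ is part of its content.

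The paper is explicit about this dividing line: the Remark immediately following the lemma says that \emph{if} $k$ has characteristic zero the argument simplifies, because then $V^{U(P)}\to V\to V_{U(P^-)}$ is an isomorphism for every finite-dimensional $V$. What you have written is, in essence, a correct proof of that characteristic-zero simplification (with some extra but sound bookkeeping identifying $\Ind(\cU)$ via the adjunction), not a proof of the lemma in the generality in which it is stated and used. To close the gap you would either have to restrict the ambient setting to $\mathrm{char}(k)=0$ (which the paper deliberately does not do) or reproduce a characteristic-free argument along the lines of (\cite{BFGM}, Lemma~2.7), which cannot proceed by decomposing $V$ into irreducibles or by invoking irreducibility of $\cV^{U(P)}$.
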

\begin{Rem} If $k$ is of characteristic zero, we may simplify the above argument as follows: for any $V\in\Rep(G)^{\heartsuit}$ finite-dimensional, the composition $V^{U(P)}\to V\to V_{U(P^-)}$ is an isomorphism.
\end{Rem}

\sssec{} 
\label{Sect_1.5.17}
Let $\theta\in -\Lambda^{pos}_{G,P}$. By the above, 
$$
(v^{\theta}_{S,\Ran})_*: \SI^{\theta}_{P,\Ran}(S)\to \SI^{\le 0}_{P,\Ran}(S)
$$ 
preserves the corresponding full subcategories of unital sheaves.

By construction, the functor (\ref{functor_(v^theta_S,Ran)^*}) identifies with 
$$
(\gt^{\theta}_{S,\Ran})^!(\gt^{\theta}_{P^-, \Ran})_*(v^{\theta}_{P^-,\Ran})^!
$$ 
for the diagram
\begin{equation}
\label{diag_for_Sect_1.5.17}
S^{\theta}_{P,\Ran}\toup{\gt^{\theta}_{S,\Ran}} \;\Mod^{-,\theta, \subset}_{M,\Ran}\;\getsup{\gt^{\theta}_{P^-,\Ran}} \;\Gr^{\theta}_{P^-,\Ran}\cap \bar S^0_{P,\Ran}\;\toup{v^{\theta}_{P^-,\Ran}}\; \bar S^0_{P,\Ran}.
\end{equation}
\begin{Lm} 
\label{Lm_1.4.27}
The functor (\ref{functor_(v^theta_S,Ran)^*}) preserves the corresponding unital subcategories, 
so we get an adjoint pair
$$
(v^{\theta}_{S,\Ran})^*: \SI^{\le 0}_{P,\Ran, untl}(S)\leftrightarrows \SI^{\theta}_{P, \Ran, untl}(S): (v^{\theta}_{S,\Ran})_*.
$$
\end{Lm}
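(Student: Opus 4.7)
\smallskip\noindent
\emph{Proof plan.} The strategy is to use the Braden-type formula for $(v^{\theta}_{S,\Ran})^*$ recorded in Section~\ref{Sect_1.5.17}, combined with the equivalences on unital subcategories from Corollary~\ref{Cor_1.5.13_now}. The formula reads
$$
(v^{\theta}_{S,\Ran})^* \,\iso\, (\gt^{\theta}_{S,\Ran})^! (\gt^{\theta}_{P^-, \Ran})_* (v^{\theta}_{P^-,\Ran})^!
$$
for the diagram (\ref{diag_for_Sect_1.5.17}), so it suffices to argue that each of the three functors preserves the relevant unital subcategory.

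First I would reduce the claim to a statement about the factor $(\gt^{\theta}_{P^-,\Ran})_*(v^{\theta}_{P^-,\Ran})^!$ alone. Indeed, by Corollary~\ref{Cor_1.5.13_now} the functor $(\gt^{\theta}_{S,\Ran})^!$ restricts to an equivalence $Shv(\Mod^{-,\theta,\subset}_{M,\Ran})^{\gL^+(M)_{\Ran}}_{untl}\,\iso\,\SI^{\theta}_{P,\Ran, untl}(S)$ with inverse $(i^{\theta}_{S,\Ran})^!$. Using $\gt^{\theta}_{S,\Ran}\comp i^{\theta}_{S,\Ran}=\id$ on $\Mod^{-,\theta,\subset}_{M,\Ran}$, applying $(i^{\theta}_{S,\Ran})^!$ to the formula gives $(i^{\theta}_{S,\Ran})^!(v^{\theta}_{S,\Ran})^* K \,\iso\, (\gt^{\theta}_{P^-, \Ran})_* (v^{\theta}_{P^-,\Ran})^! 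K$. So it is enough to show that for $K\in \SI^{\le 0}_{P,\Ran, untl}(S)$ the object $(\gt^{\theta}_{P^-, \Ran})_* (v^{\theta}_{P^-,\Ran})^! K$ lies in $Shv(\Mod^{-,\theta,\subset}_{M,\Ran})^{\gL^+(M)_{\Ran}}_{untl}$.

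Next, I would handle $(v^{\theta}_{P^-,\Ran})^!$ by Remark~\ref{Rem_A.1.7}: the inclusion $v^{\theta}_{P^-,\Ran}\colon \Gr^{\theta}_{P^-,\Ran}\cap \bar S^0_{P,\Ran}\hook{} \bar S^0_{P,\Ran}$ is a locally closed embedding over $\Ran$ that is equivariant for the $(\Ran\times\Ran)^{\subset}$-action (the unital structures on $\bar S^0_{P,\Ran}$ and on $\Gr^{\theta}_{P^-,\Ran}$ of Section~\ref{Sect_1.3.26} are compatible with intersection). Consequently, $!$-pullback preserves unital objects, so $(v^{\theta}_{P^-,\Ran})^! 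K$ is unital.

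The main and only technical step is to check that $(\gt^{\theta}_{P^-,\Ran})_*$ preserves unitality. The morphism $\gt^{\theta}_{P^-,\Ran}$ fits in a diagram over $\Ran$ in which both the source and target carry compatible unital structures, and $\gt^{\theta}_{P^-,\Ran}$ is $(\Ran\times\Ran)^{\subset}$-equivariant. Thus preservation of unitality reduces to the base change isomorphism
$$
\varphi_b^!\,(\gt^{\theta}_{P^-,\Ran})_* \,\iso\, (\gt^{\theta}_{P^-,\Ran}\times\id)_*\,\varphi_b^!
$$
(and the analogous one for $\varphi_s$) on the $(\Ran\times\Ran)^{\subset}$-pullback. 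I would establish these base change isomorphisms $I$-by-$I$ for $I\in fSets$: at each finite level the map $\gt^{\theta}_{P^-,I}\cap\bar S^0_{P,I}\to \Mod^{-,\theta,\subset}_{M,I}$ has the structure inherited from Lemma~\ref{Lm_1.3.21_now} (it is an affine fibration with fibres that are $U(P^-)$-orbits) and a $\Gm$-action contracting the source to the image of $i^{\theta}_{S,I}$, so that the base change holds by the hyperbolic restriction argument used in the proof of Lemma~\ref{Lm_1.4.6} (or directly by contraction). Passing to the limit over $fSets$ gives the desired base change on the $\Ran$ level. The main obstacle in this plan is precisely this verification, since $\gt^{\theta}_{P^-,\Ran}$ is not ind-proper; once established, however, unitality of the $*$-pushforward follows by applying the two base change isomorphisms to the unital descent datum $\varphi_b^! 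F \,\iso\, \varphi_s^! F'$ for $F=(v^{\theta}_{P^-,\Ran})^! K$.
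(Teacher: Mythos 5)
Your reduction coincides with the paper's: both arguments start from the identification $(v^{\theta}_{S,\Ran})^*\,\iso\,(\gt^{\theta}_{S,\Ran})^!(\gt^{\theta}_{P^-,\Ran})_*(v^{\theta}_{P^-,\Ran})^!$ of Section~\ref{Sect_1.5.17}, observe that the two $!$-pullbacks preserve unitality, and thereby reduce everything to showing that $(\gt^{\theta}_{P^-,\Ran})_*$ sends unital objects to unital objects. At exactly this point the paper's proof has one piece of genuine content which your sketch never supplies: it proves that the square formed by $\gt^{\theta}_{P^-,\Ran}$ and the action maps $\varphi_b$ is \emph{cartesian}, and the proof of cartesianness is the local geometric statement that at a point $x$ added to $\cI$ one has $\Gr_{U(P^-),x}\cap\bar S^0_{P,x}=\Spec k$, i.e.\ the Zastava space $Z^0$ of \cite{BFGM} is a point; once the square is cartesian, Remark~\ref{Rem_A.1.7}~ii) gives the claim for all sheaves. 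Your proposal attempts to bypass this, and that is where the gap lies.

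Concretely: (a) your assertion that the restriction of $\gt^{\theta}_{P^-,\Ran}$ to $\Gr^{\theta}_{P^-,\Ran}\cap\bar S^0_{P,\Ran}$ is ``an affine fibration with fibres that are $U(P^-)$-orbits'', with structure ``inherited from Lemma~\ref{Lm_1.3.21_now}'', is incorrect — that lemma concerns $\Gr^{-\theta}_{P,\Ran}$, and after intersecting with $\bar S^0_{P,\Ran}$ the fibres are no longer orbits but Zastava-type spaces; the statement that their contribution at the extra points of $\cI'$ is trivial is precisely the input the paper quotes from \cite{BFGM}. (b) Your claim that ``the base change $\varphi_b^!(\gt^{\theta}_{P^-,\Ran})_*\iso(\gt^{\theta}_{P^-,\Ran}\times\id)_*\varphi_b^!$ holds by the hyperbolic restriction argument'' is not an unconditional statement: without cartesianness such a $(\,^!,{}_*)$-base change can only be expected for $\Gm$-monodromic objects, by replacing $(\gt^{\theta}_{P^-,\Ran})_*$ with the $!$-restriction to the section $\Mod^{-,\theta,\subset}_{M,\Ran}\hookrightarrow\Gr^{\theta}_{P^-,\Ran}\cap\bar S^0_{P,\Ran}$ via the contraction principle of \cite{DG1}; that version in turn requires identifying the contraction locus with $\Mod^{-,\theta,\subset}_{M,\Ran}$ (this is Lemma~\ref{Lm_1.5.14_now}), checking that the $\AA^1$-extension of the $\Gm$-action preserves the closed sub-prestack $\Gr^{\theta}_{P^-,\Ran}\cap\bar S^0_{P,\Ran}$, and noting that $(v^{\theta}_{P^-,\Ran})^!K$ is equivariant for the constant $\Gm\subset\gL^+(M)_{\Ran}$ because $K$ is $H_{\Ran}$-equivariant. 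A contraction-based argument along these lines can indeed be completed and would constitute a genuinely different justification of the key step from the paper's cartesian-square argument; but as written, your key step rests on a wrong geometric description and an unproved base-change assertion, so the crux of the lemma is missing.
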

\begin{proof} Clearly, $(v^{\theta}_{P^-,\Ran})^!$ and $(\gt^{\theta}_{S,\Ran})^!$ preserve the corresponding unital subcategories of sheaves.
Let us show that $(\gt^{\theta}_{P^-,\Ran})_*: Shv(\Gr^{\theta}_{P^-,\Ran}\cap \bar S^0_{P,\Ran})\to Shv(\Mod^{-,\theta, \subset}_{M,\Ran})$ sends the corresponding unital subcategories of sheaves to one another. It suffices to show that the square is cartesian
$$
\begin{array}{ccc}
(\Gr^{\theta}_{P^-,\Ran}\cap \bar S^0_{P,\Ran})\times_{\Ran}(\Ran\times\Ran)^{\subset}& \toup{\phi_b} &\Gr^{\theta}_{P^-,\Ran}\cap \bar S^0_{P,\Ran}\\
\downarrow\lefteqn{\scriptstyle \gt^{\theta}_{P^-,\Ran}\times\id} && \downarrow\lefteqn{\scriptstyle \gt^{\theta}_{P^-,\Ran}}\\
\Mod^{-,\theta, \subset}_{M,\Ran}\times_{\Ran}(\Ran\times\Ran)^{\subset}&\toup{\phi_b} & \Mod^{-,\theta, \subset}_{M,\Ran}
\end{array}
$$
This is a local phenomenon. Namely, for $x\in X$ one has $\Gr_{U(P^-), x}\cap \bar S^0_{P, x}=\Spec k$, the unit section of the corresponding Grassmanian. Indeed, this is the claim that the Zastava space $Z^0$ attached to $0\in\Lambda^{pos}_{G,P}$ in the notations of \cite{BFGM} is a point, which is established in \select{loc.cit.}
\end{proof}

\begin{Cor} 
\label{Cor_1.5.19}
Let $\theta\in -\Lambda^{pos}_{G,P}$. Then functor (\ref{functor_(v^theta_S,Ran)_!}) preserves the corresponding full subcategories of unital sheaves.
\end{Cor}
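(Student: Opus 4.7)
The plan is to verify directly that for $K\in\SI^\theta_{P,\Ran, untl}(S)$, the object $(v^\theta_{S,\Ran})_!K$ satisfies the unitality criterion of Section~\ref{Sect_1.2.1_now}, namely that $\varphi_b^!\,(v^\theta_{S,\Ran})_!K$ lies in the essential image of $\varphi_s^!$ on $\bar S^0_{P,\Ran}$. The $H_\Ran$-equivariance of $(v^\theta_{S,\Ran})_!K$ is automatic from that of $K$ together with the $H_\Ran$-equivariance of $v^\theta_{S,\Ran}$, so only the unitality has to be checked.

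The key geometric point is that the equivariance square
$$
\begin{array}{ccc}
S^\theta_{P,\Ran}\times_{\Ran,\varphi_s}(\Ran\times\Ran)^\subset & \toup{\tilde v^\theta} & \bar S^0_{P,\Ran}\times_{\Ran,\varphi_s}(\Ran\times\Ran)^\subset\\
\downarrow\lefteqn{\scriptstyle \varphi_b^{S^\theta}} && \downarrow\lefteqn{\scriptstyle \varphi_b^{\bar S^0}}\\
S^\theta_{P,\Ran} & \toup{v^\theta_{S,\Ran}} & \bar S^0_{P,\Ran}
\end{array}
$$
supplied by Section~\ref{Sect_1.3.28_now} is cartesian. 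Indeed, $\varphi_b$ merely shrinks the domain of the trivialization $\beta$ without altering the underlying $G$-torsor $\cF_G$ or any of the global extensions $V^{U(P)}_{\cF^0_M}\to V_{\cF_G}$ across $X$; thus the $-\theta$-defect condition carving out $S^\theta_{P,\Ran}\subset \bar S^0_{P,\Ran}$ is both preserved and reflected by $\varphi_b$, which yields the cartesian property.

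Granting this and decomposing $v^\theta_{S,\Ran}=\bar v^\theta_{S,\Ran}\circ j^\theta_{S,\Ran}$ into a proper followed by an open immersion, standard base change for $!$-pushforward along the cartesian square above gives
$$
(\varphi_b^{\bar S^0})^!(v^\theta_{S,\Ran})_!\,\iso\, \tilde v^\theta_!\,(\varphi_b^{S^\theta})^!\qquad\text{and}\qquad (\varphi_s^{\bar S^0})^!(v^\theta_{S,\Ran})_!\,\iso\, \tilde v^\theta_!\,(\varphi_s^{S^\theta})^!.
$$
Since $K$ is unital on $S^\theta_{P,\Ran}$ and $(\varphi_s^{S^\theta})^!$ is fully faithful (Section~\ref{Sect_1.2.1_now}), there is a canonical identification $(\varphi_b^{S^\theta})^!K\,\iso\,(\varphi_s^{S^\theta})^!K$. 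Applying $\tilde v^\theta_!$ and combining the two base change identities yields $(\varphi_b^{\bar S^0})^!(v^\theta_{S,\Ran})_!K\,\iso\,(\varphi_s^{\bar S^0})^!(v^\theta_{S,\Ran})_!K$, which is precisely the unitality of $(v^\theta_{S,\Ran})_!K$.

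The only real work is in verifying the cartesian property of the equivariance square; once that is granted the rest is purely formal. A stratum-theoretic alternative via Corollary~\ref{Cor_1.5.10_now} would require analyzing $(v^{\theta'}_{S,\Ran})^!(v^\theta_{S,\Ran})_!K$ for each $\theta'$ in the closure of $\theta$, which involves the link geometry of deeper strata in $\bar S^\theta_{P,\Ran}$ and is considerably more delicate than the direct base change route proposed here.
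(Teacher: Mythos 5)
Your geometric observation about the $\varphi_b$-square being cartesian is plausible (the defect divisor of the Pl\"ucker maps depends only on the rational trivialization and is automatically supported on $\Gamma_{\cI}$), but it is not where the difficulty lies, and the step you call ``standard base change'' is a genuine gap. Write $v^{\theta}_{S,\Ran}=\bar v^{\theta}_{S,\Ran}\comp j^{\theta}_{S,\Ran}$ (open immersion first, then the ind-proper map --- note you stated the factorization in the wrong order). For the ind-proper part the $({}^!,{}_*)$-base change is fine, but for the open part the functor $j_!$ commutes with $*$-pullback, not with $!$-pullback, along a cartesian square: in general $\varphi_b^!\, j_!\not\iso j'_!\,\varphi_b^!$, because the $!$-restriction of $j_!K$ to the closed complement is nonzero and comparing it with the corresponding restriction upstairs requires a transversality/ULA-type statement about how $\varphi_b$ meets the stratification. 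Here $\varphi_b$, over a fixed $(\cI,\cI')$, is the closed embedding $\bar S^0_{P,\cI}\hook{}\bar S^0_{P,\cI'}$, which is not transversal to the strata in any evident sense, so cartesianness of the square alone does not give your two displayed isomorphisms (the one for $\varphi_s$ has the same problem). There is a further issue of formulation: $(v^{\theta}_{S,\Ran})_!$ is only the abstract left adjoint to $(v^{\theta}_{S,\Ran})^!$ inside the semi-infinite categories (Corollary~\ref{Cor_1.4.11_now_adjoints}); in the $\cD$-module context the geometric $!$-extension need not even be defined on all of $Shv$, so ``base change for the $!$-pushforward'' is not available without first identifying the abstract adjoint with a geometric operation.

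This is precisely why the paper takes a different route. It first proves Lemma~\ref{Lm_1.4.27}: $(v^{\theta}_{S,\Ran})^*$ preserves unitality, and this is done not by naive base change but by rewriting $(v^{\theta}_{S,\Ran})^*$ via hyperbolic localization as $(\gt^{\theta}_{S,\Ran})^!(\gt^{\theta}_{P^-,\Ran})_*(v^{\theta}_{P^-,\Ran})^!$ --- a composition of $!$-pullbacks and a pushforward which does satisfy base change against $\varphi_b^!$ --- and then checking that the relevant $\varphi_b$-square for $\gt^{\theta}_{P^-,\Ran}$ is cartesian, which is a nontrivial geometric input (reduced to the degree-zero Zastava space being a point). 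Corollary~\ref{Cor_1.5.19} is then deduced purely formally from Lemma~\ref{Lm_1.4.27} using the adjunction lemmas (\cite{Ly9}, 1.8.15, 1.8.16). If you want to salvage your direct approach, you would have to either supply the missing commutation of $(v^{\theta}_{S,\Ran})_!$ with $\varphi_b^!$ and $\varphi_s^!$ (which essentially forces you back to the Braden description), or follow the adjunction route.
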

\begin{proof}
This follows formally from Lemma~\ref{Lm_1.4.27} using (\cite{Ly9}, Lemma~1.8.15, 1.8.16).  
\end{proof}

\begin{Rem} In the case $P\ne B$ we do not know if the functor (\ref{functor_(v^theta_P,Ran)^*}) preserves the corresponding full subcategories of unital sheaves.
\end{Rem}

\begin{Cor} 
\label{Cor_1.5.21}
Let $K\in \SI^{\le 0}_{P,\Ran}(S)$. Then $K\in \SI^{\le 0}_{P,\Ran, untl}(S)$ iff for any $\theta\in -\Lambda_{G,P}^{pos}$ one has $(v^{\theta}_{S,\Ran})^*K\in \SI^{\theta}_{P,\Ran, untl}(S)$.
\end{Cor}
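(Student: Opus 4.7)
The forward direction is immediate from Lemma~\ref{Lm_1.4.27}: if $K \in \SI^{\le 0}_{P,\Ran,untl}(S)$, then $(v^\theta_{S,\Ran})^* K \in \SI^\theta_{P,\Ran,untl}(S)$ for every $\theta$.

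For the reverse direction, assume $(v^\theta_{S,\Ran})^* K \in \SI^\theta_{P,\Ran,untl}(S)$ for all $\theta \in -\Lambda^{pos}_{G,P}$. By Corollary~\ref{Cor_1.5.10_now}, it suffices to check that $(v^\theta_{S,\Ran})^! K \in \SI^\theta_{P,\Ran,untl}(S)$ for every such $\theta$. The plan is a downward induction on $\theta$ in the partial order of Section~\ref{Sect_1.4.12} (with $\theta = 0$ at the top). The base case is immediate: $v^0_{S,\Ran}$ is an open immersion, so $(v^0_{S,\Ran})^! = (v^0_{S,\Ran})^*$ and the hypothesis applies.

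For the inductive step at $\theta \in -\Lambda^{pos,*}_{G,P}$, assuming $(v^{\theta'}_{S,\Ran})^! K$ is unital for every $\theta' > \theta$, I would imitate the proof of Lemma~\ref{Lm_1.4.12}: pass to a cofinal system of open subprestacks $U^{\{\theta_i\}} \subset \bar S^0_{P,\Ran}$ on each of which only finitely many strata of depth at most $\theta$ are removed, and verify unitality separately on each $U^{\{\theta_i\}}$ before descending by the Zariski descent for $Shv$. Inside such an open $U$, the stratum $S^\theta_{P,\Ran} \cap U$ becomes a closed stratum, with open complement on which $K$ is unital by the inductive hypothesis together with the $!$-version Corollary~\ref{Cor_1.5.10_now}. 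The standard recollement distinguished triangle attached to this closed-open decomposition of $U$ expresses $(v^\theta_{S,\Ran})^! K$ as an extension whose two outer terms are $(v^\theta_{S,\Ran})^* K$ (unital by hypothesis) and a term built from $K$ restricted to the open complement (unital by induction and the preservation properties collected in Section~\ref{Sect_1.4.12}). Since unitality is characterized by the equalizer condition of Section~\ref{Sect_1.2.1_now} and is therefore preserved under extensions, we conclude $(v^\theta_{S,\Ran})^! K$ is unital.

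The main obstacle will be the careful bookkeeping of the $(\Ran \times \Ran)^\subset$-equivariance through the recollement distinguished triangle in the $H_{\Ran}$-equivariant semi-infinite setting, and the coherent organization of the induction over the non-Noetherian partial order on $-\Lambda^{pos}_{G,P}$. The Zariski descent reduction to finite stratifications, in parallel with the strategy of Lemma~\ref{Lm_1.4.12}, together with the unitality-preservation properties of the functors $(\bar v^\theta_{P,\Ran})^!, (j^\theta_{P,\Ran})_*, (\eta_\Ran)_*, (\eta^\theta_\Ran)^!$ recalled in Sections~\ref{Sect_1.4.12}--\ref{Sect_1.4.13}, are exactly what is needed to make the recollement argument go through.
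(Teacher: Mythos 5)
Your forward direction coincides with the paper's (Lemma~\ref{Lm_1.4.27}). For the reverse direction, however, the paper does not pass through the $!$-criterion of Corollary~\ref{Cor_1.5.10_now} at all: it invokes Corollary~\ref{Cor_1.5.19} (the functors $(v^{\theta}_{S,\Ran})_!$ preserve the unital subcategories) and then repeats the Zariski-descent/d\'evissage of the if direction of Lemma~\ref{Lm_1.4.12}, i.e.\ over each open with finitely many strata it exhibits $K$ as an iterated extension of $!$-extensions of the objects $(v^{\theta}_{S,\Ran})^*K$, which are unital by hypothesis and by Corollary~\ref{Cor_1.5.19}, and uses that the unital subcategory is a full stable subcategory, hence closed under extensions. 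Your route could in principle be made to work, but as written the inductive step has a genuine gap.

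Concretely, the triangle you use is $(v^{\theta}_{S,\Ran})^!K \to (v^{\theta}_{S,\Ran})^*K \to (v^{\theta}_{S,\Ran})^*\,j_*\,j^!K$ (with $j$ the open complement of the $\theta$-stratum inside $U^{\{\theta_i\}}$), and its third term is a $*$-restriction to the $\theta$-stratum of the object $j_*(K|_V)$. Its unitality is not covered by ``the preservation properties collected in Sections~\ref{Sect_1.4.12}--\ref{Sect_1.4.13}'': those concern only $!$-pullbacks and $*$-pushforwards along $j^{\theta}_{P,\Ran}, \bar v^{\theta}_{P,\Ran}, \eta_{\Ran}, \eta^{\theta}_{\Ran}$. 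The $*$-restriction to a stratum is exactly the functor whose preservation of unitality is the nontrivial content of Lemma~\ref{Lm_1.4.27} (resting on the Zastava computation), and your hypothesis only gives unitality of $(v^{\theta}_{S,\Ran})^*K$ itself, not of $(v^{\theta}_{S,\Ran})^*$ applied to $j_*j^!K$. There is also a conflation to avoid: the ``standard recollement triangle'' lives in the ambient sheaf categories with the naive $i^*$ (only partially defined for $\cD$-modules and never shown to preserve the semi-infinite or unital subcategories), whereas the hypothesis concerns the categorical left adjoint $(v^{\theta}_{S,\Ran})^*$ of the paper, computed by hyperbolic localization as in Section~\ref{Sect_1.5.17}; one must set the triangle up with the categorical adjoints (via the semiorthogonal decompositions of Section~\ref{Sect_1.6.9_now}), and then the third term has to be controlled as above. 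A repair that keeps your induction is to apply $(v^{\theta}_{S,\Ran})^!$ to the other triangle $j_!j^!K \to K \to (v^{\theta}_{S,\Ran})_*(v^{\theta}_{S,\Ran})^*K$, obtaining $(v^{\theta}_{S,\Ran})^!j_!(K|_V) \to (v^{\theta}_{S,\Ran})^!K \to (v^{\theta}_{S,\Ran})^*K$, whose outer terms are unital by the inductive hypothesis, by the fact that $!$-restrictions preserve unitality, and by Corollary~\ref{Cor_1.5.19} -- the ingredient your write-up never invokes and which is precisely the extra input the paper's proof uses.
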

\begin{proof} Let us show the if direction. Using Corollary~\ref{Cor_1.5.19}, this is proved as the if direction in Lemma~\ref{Lm_1.4.12}.

The only if direction follows from Lemma~\ref{Lm_1.4.27}.
\end{proof}

\sssec{} 
\label{Sect_1.5.22_now}
For $\theta\in -\Lambda^{pos}_{G,P}$ write 
$$
p^{\theta}_{\Ran}: \Mod^{-,\theta,\subset}_{M, \Ran}\to \Mod_M^{-,\theta}
$$ 
for the projection, it is obtained by base change from $\pr^{-\theta}_{\Ran}: (X^{-\theta}\times\Ran)^{\subset}\to X^{-\theta}$. 

\index{$p^{\theta}_{\Ran}$, Section~\ref{Sect_1.5.22_now}}

\begin{Lm} 
\label{Lm_1.4.31}
For $\theta\in -\Lambda^{pos}_{G,P}$ the functor $(p^{\theta}_{\Ran})^!: Shv(\Mod_M^{-,\theta})\to Shv(\Mod^{-,\theta,\subset}_{M, \Ran})$ is fully faithful  and defines an equivalence
$$
(p^{\theta}_{\Ran})^!: Shv(\Mod_M^{-,\theta})\,\iso\, Shv(\Mod^{-,\theta,\subset}_{M, \Ran})_{untl}.
$$
\end{Lm}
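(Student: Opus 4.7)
The plan is to reduce to the UHC formalism. From Section~\ref{Sect_1.5.22_now}, $p^{\theta}_{\Ran}$ arises by base change from $\pr_{\Ran}^{-\theta}: (X^{-\theta}\times\Ran)^{\subset}\to X^{-\theta}$, which is universally homologically contractible by Section~\ref{Sect_1.3.13_now}. Since the UHC property is stable under base change, $p^{\theta}_{\Ran}$ is UHC, and hence $(p^{\theta}_{\Ran})^!$ is fully faithful by the standard UHC formalism.

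To check that the image lies in the unital subcategory, observe that $p^{\theta}_{\Ran}$ forgets the $\Ran$-coordinate $\cI$. Consequently, the two compositions $p^{\theta}_{\Ran}\circ \varphi_s$ and $p^{\theta}_{\Ran}\circ\varphi_b$ from $\Mod^{-,\theta,\subset}_{M,\Ran}\times_{\Ran}(\Ran\times\Ran)^{\subset}$ to $\Mod_M^{-,\theta}$ coincide, so for $K\in Shv(\Mod_M^{-,\theta})$ the unitality isomorphism $\varphi_b^!(p^{\theta}_{\Ran})^! K\iso \varphi_s^!(p^{\theta}_{\Ran})^! K$ is tautological, showing $(p^{\theta}_{\Ran})^! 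K$ is unital.

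The main obstacle is essential surjectivity onto the unital subcategory. Given $K'\in Shv(\Mod^{-,\theta,\subset}_{M,\Ran})_{untl}$, I would produce !-descent data for $K'$ along $p^{\theta}_{\Ran}$ using the union operation on $\Ran$. On the fibre product $\Mod^{-,\theta,\subset}_{M,\Ran}\times_{\Mod_M^{-,\theta}}\Mod^{-,\theta,\subset}_{M,\Ran}$, a point has the shape $(D,\cF_M,\beta;\cI,\cI')$ with $\supp(D)\subset \Gamma_{\cI}\cap\Gamma_{\cI'}$; sending such a point to $(D,\cF_M,\beta,\cI\cup\cI')$ is well-defined and yields two morphisms to $\Mod^{-,\theta,\subset}_{M,\Ran}\times_{\Ran}(\Ran\times\Ran)^{\subset}$, one for each of the inclusions $\cI,\cI'\subset \cI\cup\cI'$. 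Pulling back the unitality isomorphism $\varphi_b^! K'\iso \varphi_s^! K'$ along these two morphisms produces a canonical identification of the two pullbacks of $K'$ to the fibre product, both factoring through $K'$ at $\cI\cup\cI'$.

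The higher simplicial coherences required for full !-descent along $p^{\theta}_{\Ran}$ come from the commutative and associative structure of $(\Ran,\cup)$ combined with the coherence of the $(\Ran\times\Ran)^{\subset}$-action. A cleaner packaging is to reduce the lemma to the universal case $(\pr_{\Ran}^{-\theta})^!: Shv(X^{-\theta})\iso Shv((X^{-\theta}\times\Ran)^{\subset})_{untl}$ by exploiting the Cartesian square of Section~\ref{Sect_1.5.22_now}: both UHC and the unitality condition are preserved under base change along $\Mod_M^{-,\theta}\to X^{-\theta}$, so the desired equivalence is pulled back from the corresponding equivalence at the level of configuration spaces, which is a standard computation in the Ran-space calculus.
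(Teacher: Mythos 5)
Your first two steps are fine: universal homological contractibility of $\pr^{-\theta}_{\Ran}$ (Section~\ref{Sect_1.3.13_now}) is stable under base change, so $(p^{\theta}_{\Ran})^!$ is fully faithful, and since $p^{\theta}_{\Ran}\circ\varphi_s=p^{\theta}_{\Ran}\circ\varphi_b$ the image consists of unital objects. The gap is in the essential surjectivity, which is the actual content of the lemma. Your descent argument only produces candidate descent data: an identification of the two pullbacks of $K'$ to $\Mod^{-,\theta,\subset}_{M,\Ran}\times_{\Mod_M^{-,\theta}}\Mod^{-,\theta,\subset}_{M,\Ran}$ plus coherences. But you never establish that $Shv$ satisfies effective $!$-descent along $p^{\theta}_{\Ran}$; UHC gives exactly the ``separated'' half (full faithfulness of $p^!$, i.e.\ uniqueness of a descent), not effectivity, and $p^{\theta}_{\Ran}$ is neither smooth nor proper (it is only pseudo-proper with infinite-type fibres), so no standard descent theorem applies. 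Without effectivity, having descent data for $K'$ does not produce an object of $Shv(\Mod_M^{-,\theta})$.

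The ``cleaner packaging'' does not repair this. The universal case $(\pr^{-\theta}_{\Ran})^!:Shv(X^{-\theta})\iso Shv((X^{-\theta}\times\Ran)^{\subset})_{untl}$ is not a ``standard computation'': it is precisely the statement of (\cite{Gai19Ran}, Proposition~4.2.7), whose proof is the nontrivial input — it constructs the inverse using pseudo-properness of $\pr^{-\theta}_{\Ran}$ (so that $\pr_!$ exists with base change), homological contractibility of the fibres $\Ran^{\supseteq\,\cI_0}$, and the unital structure to show the unit $K'\to \pr^!\pr_!K'$ is an isomorphism on unital objects. Moreover, the base-change transfer you invoke is not formal: for constructible sheaf theories $Shv(\Mod_M^{-,\theta}\times_{X^{-\theta}}(X^{-\theta}\times\Ran)^{\subset})$ is not determined by the categories over $X^{-\theta}$ (no K\"unneth/tensor decomposition of $Shv$ over fibre products), so the equivalence over $X^{-\theta}$ does not automatically pull back along $\Mod_M^{-,\theta}\to X^{-\theta}$; what is true is that the \emph{proof} of the universal case runs verbatim in the twisted family, which is exactly what the paper asserts (its proof of Lemma~\ref{Lm_1.4.31} is the one-line citation that the proof of \cite{Gai19Ran}, Proposition~4.2.7 applies). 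To complete your argument you would need either to prove effectivity of descent along $p^{\theta}_{\Ran}$, or to reproduce the Gaitsgory-style construction of the inverse functor in the present setting.
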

\begin{proof}
The proof of (\cite{Gai19Ran}, Proposition~4.2.7) applies in our situation also.
\end{proof}

\ssec{t-structure on the semi-infinite category}
\label{Sect_t-structure_on_SI_untl}

\sssec{} For each $\theta\in -\Lambda^{pos}_{G,P}$ we equip $Shv(\Mod^{-,\theta, \subset}_{M, \Ran})^{\gL^+(M)_{\Ran}}_{untl}$ with a t-structure as follows. Its object $K$ is declared connective iff the image of $K$ under the composition
\begin{equation}
\label{comp_for_Sect_1.6.1}
Shv(\Mod^{-,\theta, \subset}_{M, \Ran})^{\gL^+(M)_{\Ran}}_{untl}\;\toup{\oblv} \; Shv(\Mod^{-,\theta, \subset}_{M, \Ran})_{untl}\;\iso\; Shv(\Mod_M^{-,\theta})
\end{equation}
lies in perverse degrees $\le 0$. Here the second equivalence is that of Lemma~\ref{Lm_1.4.31}.

 By (\cite{HA}, 1.4.4.11(1)) this defines an accessible t-structure on 
$Shv(\Mod^{-,\theta, \subset}_{M, \Ran})^{\gL^+(M)_{\Ran}}_{untl}$.  

\sssec{} 
\label{Sect_1.6.2}
For $\theta\in -\Lambda^{pos}_{G,P}$ let us construct a map
$$
\tau^{\theta}: \Mod^{-,\theta, \subset}_{M, \Ran}/\gL^+(M)_{\Ran}
\to \Mod^{-,\theta}_M/\gL^+(M)_{\theta}
$$ 

 Consider a point of the source given by a collection as in Section~\ref{Sect_1.3.27}, namely, $(D,\cI)\in (X^{-\theta}\times\Ran)^{\subset}$, $M$-torsors $\cF_M, \cF'_M$ on $\cD_{\cI}$ with an isomorphism (\ref{iso_beta_extended_for_Sect_1.3.27}) whose reduction to $M/[M,M]$ extends to an isomorphism $\cF_{M/[M,M]}(D)\,\iso\,\cF'_{M/[M,M]}$ over $\cD_{\cI}$. Besides, for $V\in \Rep(G)^{\heartsuit}$ finite-dimensional the maps (\ref{map_U(P)_invariants_for_Sect_1.3.27}) are required to be regular over $\cD_{\cI}$. 
 
 For such a point we get a natural map $\cD_D\to \cD_{\cI}$, and the preimage of $\cD_{\cI}-\supp(D)$ under this map is $\oo{\cD}_D$. In terms of the description of the target in Remark~\ref{Rem_1.3.17}, $\tau^{\theta}$ sends the above collection to: $D, \cF_M\mid_{\cD_D}, \cF'_M\mid_{\cD_D}$, the restriction
$$
\beta_D: \cF_M\,\iso\, \cF'_M\mid_{\oo{\cD}_D}
$$
of $\beta$ together with the induced isomorphism $\cF_{M/[M,M]}(D)\,\iso\,\cF'_{M/[M,M]}$ over $\cD_D$.  

\index{$\tau^{\theta}$, Section~\ref{Sect_1.6.2}}

\begin{Lm} 
\label{Lm_1.6.3}
For $\theta\in -\Lambda^{pos}_{G,P}$ the map $\tau^{\theta}$ yields a functor 
$$
(\tau^{\theta})^!: Shv(\Mod^{-,\theta}_M)^{\gL^+(M)_{\theta}}\to Shv(\Mod^{-,\theta, \subset}_{M, \Ran})^{\gL^+(M)_{\Ran}}
$$
taking values in the full subcategory $Shv(\Mod^{-,\theta, \subset}_{M, \Ran})^{\gL^+(M)_{\Ran}}_{untl}$.
\end{Lm}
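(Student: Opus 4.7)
The plan is to identify the composition $\oblv \comp (\tau^{\theta})^!$ with the pullback along the projection $p^{\theta}_{\Ran}$, and then invoke Lemma~\ref{Lm_1.4.31} to conclude unitality. First I would verify that $\tau^{\theta}$ is well-defined as a morphism of stack quotients, i.e.\ that the $\gL^+(M)_{\Ran}$-action on $\Mod^{-,\theta,\subset}_{M,\Ran}$ is intertwined with the $\gL^+(M)_{\theta}$-action on $\Mod^{-,\theta}_M$ via a morphism of group schemes. This morphism is the natural one: base-changing both sides to $(X^{-\theta}\times\Ran)^{\subset}$, the restriction along $\cD_D\to \cD_{\cI}$ (which exists because $\supp(D)\subset \Gamma_{\cI}$ set-theoretically) induces
$$
\gL^+(M)_{\Ran}\times_{\Ran}(X^{-\theta}\times\Ran)^{\subset}\to \gL^+(M)_{\theta}\times_{X^{-\theta}}(X^{-\theta}\times\Ran)^{\subset},
$$
and by construction $\tau^{\theta}$ is equivariant for this homomorphism. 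So $(\tau^{\theta})^!$ is a well-defined functor between the equivariant categories.

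Next, observe that by construction we have a commutative square
$$
\begin{array}{ccc}
\Mod^{-,\theta,\subset}_{M,\Ran} & \toup{p^{\theta}_{\Ran}} & \Mod^{-,\theta}_M\\
\downarrow && \downarrow\\
\Mod^{-,\theta,\subset}_{M,\Ran}/\gL^+(M)_{\Ran} & \toup{\tau^{\theta}} & \Mod^{-,\theta}_M/\gL^+(M)_{\theta},
\end{array}
$$
where the vertical arrows are the stack quotient maps. Indeed, the top map sends $(\cI, D, \cF_M, \cF'_M, \beta)$ to the collection obtained by restricting the $M$-torsors and the trivialization from $\cD_{\cI}$ to $\cD_D$, which is precisely how $\tau^{\theta}$ acts on underlying prestacks after dividing by the groups. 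Hence the diagram
$$
\begin{array}{ccc}
Shv(\Mod^{-,\theta}_M)^{\gL^+(M)_{\theta}} & \toup{(\tau^{\theta})^!} & Shv(\Mod^{-,\theta,\subset}_{M,\Ran})^{\gL^+(M)_{\Ran}}\\
\downarrow\lefteqn{\scriptstyle\oblv} && \downarrow\lefteqn{\scriptstyle\oblv}\\
Shv(\Mod^{-,\theta}_M) & \toup{(p^{\theta}_{\Ran})^!} & Shv(\Mod^{-,\theta,\subset}_{M,\Ran})
\end{array}
$$
commutes.

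Finally, by Lemma~\ref{Lm_1.4.31} the functor $(p^{\theta}_{\Ran})^!$ factors through the full subcategory $Shv(\Mod^{-,\theta,\subset}_{M,\Ran})_{untl}$. Therefore, for any $K\in Shv(\Mod^{-,\theta}_M)^{\gL^+(M)_{\theta}}$, the underlying sheaf $\oblv((\tau^{\theta})^!K)\,\iso\,(p^{\theta}_{\Ran})^!\oblv(K)$ lies in $Shv(\Mod^{-,\theta,\subset}_{M,\Ran})_{untl}$. By the definition of the unital equivariant category in Section~\ref{Sect_1.5.5_now}, this is precisely the condition that $(\tau^{\theta})^!K\in Shv(\Mod^{-,\theta,\subset}_{M,\Ran})^{\gL^+(M)_{\Ran}}_{untl}$, which completes the argument. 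The only mildly delicate point is the first paragraph — verifying equivariance of $\tau^{\theta}$ under the restriction homomorphism of arc groups — but this is a formal consequence of the functoriality of $\gL^+$ applied to the map $\cD_D\to\cD_{\cI}$.
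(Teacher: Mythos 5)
Your endgame is the same as the paper's: once $(\tau^{\theta})^!$ is in hand, both you and the paper deduce unitality by observing that $\oblv\comp(\tau^{\theta})^!$ factors as $(p^{\theta}_{\Ran})^!\comp\oblv$ and invoking Lemma~\ref{Lm_1.4.31} together with the definition of the unital equivariant subcategory. That part is fine. The gap is in the step you dismiss as "a formal consequence of the functoriality of $\gL^+$": the existence of the functor $(\tau^{\theta})^!$ between these particular equivariant categories is the main content of the lemma, and it is not automatic from the equivariance of $\tau^{\theta}$ under the restriction homomorphism of arc groups. The quotient prestacks $\Mod^{-,\theta}_M/\gL^+(M)_{\theta}$ and $\Mod^{-,\theta,\subset}_{M,\Ran}/\gL^+(M)_{\Ran}$ are not locally of finite type, so one cannot simply apply the sheaf theory and $!$-pullback to the map of quotients; the categories in the statement are defined via invariants for placid prosmooth group schemes, and the Ran-side category is moreover a limit over $I\in fSets$ of the categories $Shv(\Mod^{-,\theta,\subset}_{M,I})^{\gL^+(M)_I}$.

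Concretely, the paper's proof consists of the finite-level approximation that your argument skips: one first chooses $n$ so that the $\gL^+(M)_{\theta}$-action on $\Mod^{-,\theta}_M$ factors through the finite-type quotient $\gL^+(M)_{n,\theta}$ (using $\Mod^{-,\theta}_M\in\Sch_{ft}$), identifying the source category with $Shv(\Mod^{-,\theta}_M/\gL^+(M)_{n,\theta})$; then, for each $I\in fSets$, one must produce $m=m(n,I)$ such that for every $S$-point $(D,\cI)$ of $(X^{-\theta}\times X^I)^{\subset}$ the composite $D_n\to\cD_D\to\cD_{\cI}$ factors through $m\Gamma_{\cI}$, uniformly in $S$. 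Only then does your restriction homomorphism descend to a homomorphism of finite-type group schemes and does $\tau_I$ become a map of algebraic stacks locally of finite type, where $b^!$ is defined (after identifying invariants with sheaves on the quotient by $\gL^+(M)_{m,I}$, whose kernel is prounipotent). Finally one must check these $\tau_I^!$ are compatible with the $!$-restrictions along $X^J\to X^I$ in order to land in the limit defining $Shv(\Mod^{-,\theta,\subset}_{M,\Ran})^{\gL^+(M)_{\Ran}}$. None of this is hard, but it is exactly where the work lies, and asserting "so $(\tau^{\theta})^!$ is a well-defined functor" without it leaves the lemma essentially unproved; your commutative square of categories likewise only holds "by construction" once the functor has been built this way.
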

\begin{proof} Pick $n\in\ZZ_+$ large enough so that the $\gL^+(M)_{\theta}$-action on $\Mod_M^{-,\theta}$ factors through the quotient group scheme $\gL^+(M)_{n,\theta}$ over $X^{-\theta}$, the latter being defined in Remark~\ref{Rem_1.3.17}. We used that $\Mod_M^{-,\theta}\in\Sch_{ft}$. By definitions, 
$$
Shv(\Mod^{-,\theta}_M)^{\gL^+(M)_{\theta}}\,\iso\, Shv(\Mod^{-,\theta}_M/\gL^+(M)_{n,\theta})
$$ 

 The map $\tau^{\theta}$ yields a morphism denoted for short
$$
\tau: \Mod^{-,\theta, \subset}_{M, \Ran}/\gL^+(M)_{\Ran}
\to \Mod^{-,\theta}_M/\gL^+(M)_{n, \theta} 
$$
Pick $I\in fSets$. Let $\tau_I$ denote the composition
$$
\Mod^{-,\theta, \subset}_{M, I}/\gL^+(M)_I\to \Mod^{-,\theta, \subset}_{M, \Ran}/\gL^+(M)_{\Ran}\toup{\tau} \Mod^{-,\theta}_M/\gL^+(M)_{n, \theta}  
$$
Let us construct the functors 
$$
\tau_I^!: Shv(\Mod^{-,\theta}_M/\gL^+(M)_{n, \theta})\to Shv(\Mod^{-,\theta, \subset}_{M, I}/\gL^+(M)_I).
$$  

 Given $I\in fSets$ there is $m>0$ large enough such that for any $S\in\Sch^{aff}$ and an $S$-point $(D,\cI)$ of $(X^{-\theta}\times X^I)^{\subset}$ the composition $D_n\to \cD_D\to \cD_{\cI}$ factors through $m\Gamma_{\cI}\hook{} \cD_{\cI}$. Recall the group scheme $\gL^+(M)_{m, I}$ over $X^I$ defined in Section~\ref{Sect_1.1.3}. The $\gL^+(M)_I$-action on $\Mod^{-,\theta, \subset}_{M, I}$ factors through $\gL^+(M)_{m, I}$. So, $\tau_I$ is the composition
$$
\Mod^{-,\theta, \subset}_{M, I}/\gL^+(M)_I \toup{a} 
\Mod^{-,\theta, \subset}_{M, I}/\gL^+(M)_{m,I}  \toup{b}  \Mod^{-,\theta}_M/\gL^+(M)_{n,\theta}.
$$ 

 Let us identify $Shv(\Mod^{-,\theta, \subset}_{M, I}/\gL^+(M)_{m,I})\,\iso\, Shv(\Mod^{-,\theta, \subset}_{M, I}/\gL^+(M)_I)$ via $a^!$. In these terms the desired functor $\tau_I^!$ is $b^!$. If $m'>m$ then we have a natural map 
$$
a_{m',m}:  \Mod^{-,\theta, \subset}_{M, I}/\gL^+(M)_{m',I}\to \Mod^{-,\theta, \subset}_{M, I}/\gL^+(M)_{m,I}
$$
It is understood that we identify here 
$$
Shv(\Mod^{-,\theta, \subset}_{M, I}/\gL^+(M)_{m,I})\iso\, Shv(\Mod^{-,\theta, \subset}_{M, I}/\gL^+(M)_{m',I})
$$
via $a_{m',m}^!$. 

 The so obtained functors $\tau_I^!$ are compatible with the !-restrictions along $X^J\to X^I$ for maps $I\to J$ in $fSets$, so give the desired functor $(\tau^{\theta})^!$. By construction and Lemma~\ref{Lm_1.4.31}, it takes values in the full subcategory of unital objects.
\end{proof}

\begin{Rem} i) For $\theta=0$ we have $\Mod_M^{-, 0}=\Spec k$, and
$$
(\tau^0)^!: \Vect\to Shv(\Ran)^{\gL^+(M)_{\Ran}}
$$ 
sends $e\in\Vect$ to the object $\omega\in Shv(\Ran)^{\gL^+(M)_{\Ran}}$. \\ 
ii) For $\theta=0$ the category of coconnective objects in $Shv(\Mod^{-,0, \subset}_{M, \Ran})^{\gL^+(M)_{\Ran}}_{untl}$ is the preimage of $\Vect^{\ge 0}$ under (\ref{comp_for_Sect_1.6.1}). 
\end{Rem}

\sssec{Question} Let us equip $Shv(\Mod^{-,\theta}_M)^{\gL^+(M)_{\theta}}$ with the perverse t-structure. Is it true that 
$$
(\tau^{\theta})^!: Shv(\Mod^{-,\theta}_M)^{\gL^+(M)_{\theta}}\to Shv(\Mod^{-,\theta, \subset}_{M, \Ran})^{\gL^+(M)_{\Ran}}_{untl}
$$ 
is t-exact? For $\theta=0$ the answer is yes. 

\sssec{} Let $\theta\in -\Lambda^{pos}_{G,P}$. We equip $\SI^{\theta}_{P,\Ran, untl}(S)$ with a t-structure as follows. We declare an object $K\in \SI^{\theta}_{P,\Ran, untl}(S)$ connective/coconnective iff 
$$
(i^{\theta}_{S,\Ran})^!K[\<\theta, 2\check{\rho}-2\check{\rho}_M\>]\in Shv(\Mod^{-,\theta,\subset}_{M,\Ran})^{\gL^+(M)_{\Ran}}_{untl}
$$
is connective/coconnective. Here we used Corollary~\ref{Cor_1.5.13_now}. 

\sssec{} We define a t-structure on $\SI^{\le 0}_{P, \Ran, untl}(S)$ as follows. The full subcategory of connective objects in $\SI^{\le 0}_{P, \Ran, untl}(S)$ is defined as the smallest full subcategory closed under colimits, closed under extensions and containing for each $\theta\in -\Lambda_{G,P}^{pos}$ the objects $(v^{\theta}_{S,\Ran})_!K$
for $K\in (\SI^{\theta}_{P,\Ran, untl}(S))^{\le 0}$. By (\cite{HA}, 1.4.4.11), this is an accessible t-structure on $\SI^{\le 0}_{P, \Ran, untl}(S)$. 

 Note that $K\in \SI^{\le 0}_{P, \Ran, untl}(S)$ is coconnective iff for any $\theta\in -\Lambda_{G,P}^{pos}$, $(v^{\theta}_{S,\Ran})^!K\in (\SI^{\theta}_{P,\Ran, untl}(S))^{\ge 0}$. 
 
\begin{Lm}
\label{Lm_1.6.8_now}
 An object $K\in \SI^{\le 0}_{P, \Ran, untl}(S)$ is connective iff for any $\theta\in -\Lambda_{G,P}^{pos}$, $(v^{\theta}_{S,\Ran})^*K\in \SI^{\theta}_{P,\Ran, untl}(S)$ is connective.
\end{Lm}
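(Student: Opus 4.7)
The plan is to prove the two implications separately, both relying on base change for the stratification $\{S^{\theta}_{P,\Ran}\}_{\theta \in -\Lambda^{pos}_{G,P}}$ of $\bar S^0_{P,\Ran}$ and on the adjunction $((v^{\theta}_{S,\Ran})^*, (v^{\theta}_{S,\Ran})_*)$ from Corollary~\ref{Cor_1.4.11_now_adjoints}.

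For the direction $(\Rightarrow)$, I will use that the connective subcategory $\SI^{\le 0}_{P,\Ran, untl}(S)^{\le 0}$ is, by construction, the smallest full subcategory closed under colimits and extensions and containing the objects $(v^{\theta'}_{S,\Ran})_! K'$ for $\theta' \in -\Lambda^{pos}_{G,P}$ and $K' \in (\SI^{\theta'}_{P,\Ran, untl}(S))^{\le 0}$. Since $(v^{\theta}_{S,\Ran})^*$ is a left adjoint (Corollary~\ref{Cor_1.4.11_now_adjoints}(i)), it preserves colimits and extensions, so it suffices to check that $(v^{\theta}_{S,\Ran})^*(v^{\theta'}_{S,\Ran})_! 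K'$ is connective. When $\theta = \theta'$ this equals $K'$ by the full faithfulness of $(v^{\theta}_{S,\Ran})_!$ in Corollary~\ref{Cor_1.4.11_now_adjoints}(ii); when $\theta \neq \theta'$ the strata $S^\theta_{P,\Ran}$ and $S^{\theta'}_{P,\Ran}$ are disjoint (Section~\ref{Sect_1.3.27_now}), so base change for locally closed immersions into $\bar S^0_{P,\Ran}$ gives $0$. Either way the result is connective.

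For $(\Leftarrow)$, I first note that the forward direction shows $(v^{\theta}_{S,\Ran})^*$ is right t-exact, hence $(v^{\theta}_{S,\Ran})_*$ is left t-exact and carries $(\SI^\theta_{P,\Ran, untl}(S))^{\ge 1}$ into $\SI^{\le 0}_{P,\Ran, untl}(S)^{\ge 1}$. Let $K \in \SI^{\le 0}_{P,\Ran, untl}(S)$ satisfy $(v^{\theta}_{S,\Ran})^* K \in (\SI^\theta_{P,\Ran, untl}(S))^{\le 0}$ for every $\theta$. For each such $\theta$ and each $G \in (\SI^\theta_{P,\Ran, untl}(S))^{\ge 1}$, adjunction yields
$$
\Hom(K, (v^{\theta}_{S,\Ran})_* G) \simeq \Hom((v^{\theta}_{S,\Ran})^* K, G) = 0,
$$
since connectives pair trivially with coconnectives of degree $\ge 1$. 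I then argue that every $F \in \SI^{\le 0}_{P,\Ran, untl}(S)^{\ge 1}$ lies in the closure under limits and extensions of the class $\{(v^{\theta}_{S,\Ran})_* G\}$. This is produced by a Cousin-type dévissage along downward-closed ideals $J \subset -\Lambda^{pos}_{G,P}$: the filtration associated to the closed substacks $\sqcup_{\theta \in J} S^\theta_{P,\Ran}$ has successive subquotients $(v^{\theta}_{S,\Ran})_*(v^{\theta}_{S,\Ran})^! F$, each in $\SI^{\le 0}_{P,\Ran, untl}(S)^{\ge 1}$ by the defining criterion of coconnectivity. Since $\Hom(K,-)$ vanishes on each subquotient and commutes with the limits in question, it vanishes on $F$, placing $K$ in $\SI^{\le 0}_{P,\Ran, untl}(S)^{\le 0}$ by the characterization of connectives via right orthogonality in the accessible t-structure of (HA, 1.4.4.11).

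The main obstacle will be executing the Cousin dévissage for this semi-infinite stratification. Since $-\Lambda^{pos}_{G,P}$ is an infinite poset, the filtration must be constructed as a limit over a cofinal family of finite downward-closed ideals, compatibly with the unitality structure (Section~\ref{Sect_1.3.28_now}) and the factorization. Locally, over any quasi-compact piece of $\bar S^0_{P,\Ran}$, only finitely many strata intervene, so this cofinal system is available; carrying the limit presentation through the unital invariants category and controlling the passage to limits so that the hypothesis on $K$ applies termwise is where the bulk of the technical work lies.
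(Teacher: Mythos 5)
Your overall strategy for the lemma (orthogonality between the $!$-generators of the connective part and $*$-pushforwards of strictly coconnective objects) is sound, and your forward direction is correct; note, however, that it is simply the right t-exactness of $(v^{\theta}_{S,\Ran})^*$, which the paper records as an immediate consequence of the adjunctions ($(v^{\theta}_{S,\Ran})_!$ right t-exact by the very definition of the t-structure, hence $(v^{\theta}_{S,\Ran})_*$ left t-exact, hence its left adjoint $(v^{\theta}_{S,\Ran})^*$ right t-exact), so the generator computation is not needed. Also, your justification for $\theta=\theta'$ is slightly off: full faithfulness of $(v^{\theta}_{S,\Ran})_!$ gives $(v^{\theta}_{S,\Ran})^!(v^{\theta}_{S,\Ran})_!\simeq\id$; the identification you actually use, $(v^{\theta}_{S,\Ran})^*(v^{\theta}_{S,\Ran})_!\simeq\id$, follows instead by adjunction from $(v^{\theta}_{S,\Ran})^!(v^{\theta}_{S,\Ran})_*\simeq\id$, and the vanishing for $\theta\neq\theta'$ needs a word of care since $(v^{\theta}_{S,\Ran})^*$ is not a priori the sheaf-level $*$-restriction but the abstract left adjoint (computed by hyperbolic localization as in Section~\ref{Sect_1.5.17}). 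For the record, the paper itself gives no independent argument: its proof is the one-line citation that the proof of (\cite{Gai19Ran}, Lemma~2.1.9) applies without changes.

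The genuine gap is in your converse direction: the entire weight of the argument rests on the claim that every strictly coconnective object of $\SI^{\le 0}_{P,\Ran, untl}(S)$ lies in the closure under limits and extensions of objects $(v^{\theta}_{S,\Ran})_*G$ with $G$ strictly coconnective, and you do not establish it — you yourself flag the convergence of the Cousin cofiltration over the infinite poset $-\Lambda_{G,P}^{pos}$, in the unital category, as ``where the bulk of the technical work lies''. As written this is an assertion, not a proof (one must produce the limit presentation of a strictly coconnective $F$ along the opens $U^{\{\theta_i\}}$ of Lemma~\ref{Lm_1.4.12} inside the unital semi-infinite category, check the finite recollement filtrations there, and verify the pieces are of the stated form), so the key step is missing. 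A cleaner route closes the argument without any convergence question: it suffices to show that if $K''$ satisfies $(v^{\theta}_{S,\Ran})^*K''$ connective and $(v^{\theta}_{S,\Ran})^!K''$ strictly coconnective for all $\theta$, then $K''=0$; apply this to $K''=\tau^{\ge 1}K$, using right t-exactness of $(v^{\theta}_{S,\Ran})^*$ and the fact that strict coconnectivity is detected by the $(v^{\theta}_{S,\Ran})^!$. Since $v^0_{S,\Ran}$ is an open immersion, $(v^{0}_{S,\Ran})^*\simeq(v^{0}_{S,\Ran})^!$, so the restriction of $K''$ to $S^0_{P,\Ran}$ vanishes; then $K''$ is pushed forward from the closed union of the deeper strata, on which the strata of minimal depth are open, and since every interval $[\theta,0]$ in $-\Lambda_{G,P}^{pos}$ is finite, an induction shows all $!$-restrictions of $K''$ to strata vanish, whence $K''=0$ by joint conservativity. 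This is the sort of argument the citation to \cite{Gai19Ran} is invoking, and it is what your write-up still needs to supply (or replace).
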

\begin{proof} The proof of (\cite{Gai19Ran}, Lemma~2.1.9) applies in our situation without changes.
\end{proof}

\sssec{} 
\label{Sect_1.6.9_now}
For $\theta\in -\Lambda_{G,P}^{pos}$ let 
$
\SI^{<\theta}_{P,\Ran, untl}(S)\subset \SI^{\le\theta}_{P,\Ran, untl}(S)
$ 
be the full $\DG$-subcategory generated by $(\bar v^{\theta-\alpha_i}_{S,\Ran})_!\SI^{\le \theta-\alpha_i}_{P,\Ran, untl}(S)$ for $i\in \cI_G-\cI_M$. The essential image of 
$$
(v^{\theta}_{S,\Ran})_*: \SI^{\theta}_{P,\Ran, untl}(S)\to \SI^{\le\theta}_{P,\Ran, untl}(S)
$$ 
is the right orthogonal to $\SI^{<\theta}_{P,\Ran, untl}(S)$. In fact,
$\SI^{<\theta}_{P,\Ran, untl}(S)$ is the kernel of
$$
(v^{\theta}_{S,\Ran})^*: \SI^{\le\theta}_{P,\Ran, untl}(S)\to \SI^{\theta}_{P,\Ran, untl}(S).
$$
\index{$\SI^{<\theta}_{P,\Ran, untl}(S)$, Section~\ref{Sect_1.6.9_now}}

\sssec{} For $\theta\in -\Lambda_{G,P}^{pos}$ the functor $(v^{\theta}_{S,\Ran})_*: \SI^{\theta}_{P,\Ran, untl}(S)\to \SI^{\le 0}_{P,\Ran, untl}(S)$ is left t-exact, and
$$
(v^{\theta}_{S,\Ran})_!: \SI^{\theta}_{P,\Ran, untl}(S)\to \SI^{\le 0}_{P,\Ran, untl}(S)
$$ 
is right t-exact. This implies immediately that 
$$
(v^{\theta}_{S,\Ran})^*: \SI^{\le 0}_{P,\Ran, untl}(S)\to  \SI^{\theta}_{P,\Ran, untl}(S)
$$ 
is right t-exact, and 
$$
(v^{\theta}_{S,\Ran})^!: \SI^{\le 0}_{P,\Ran, untl}(S)\to \SI^{\theta}_{P,\Ran, untl}(S)
$$ 
is left t-exact. 

\sssec{} 
\label{Sect_1.6.11_now}
We have $\omega\in \SI^0_{P,\Ran, untl}(S)$. Note that $(v^0_{S,\Ran})_!\omega$ is connective, $(v^0_{S,\Ran})_*\omega$ is coconnective. Let $\wt\IC_{P,\Ran}^{\frac{\infty}{2}}$ 
\index{$\wt\IC_{P,\Ran}^{\frac{\infty}{2}}$, Section~\ref{Sect_1.6.11_now}}
be the image in $\SI^{\le 0}_{P,\Ran, untl}(S)^{\heartsuit}$ of the natural map
$$
\H^0((v^0_{S,\Ran})_!\omega)\to \H^0((v^0_{S,\Ran})_*\omega).
$$
This is not an object we are looking for, as the t-structure on $\SI^0_{P,\Ran, untl}(S)$ is not completely satisfactory. We will study some version of $\wt\IC_{P,\Ran}^{\frac{\infty}{2}}$ given in Definition~\ref{Def_1.9.6}. 
 
\sssec{Example} If $P=G$ then $\SI^{\le 0}_{G,\Ran}(S)\,\iso\, Shv(\Ran)^{\gL^+(G)_{\Ran}}$ canonically. In this case $\wt\IC_{P,\Ran}^{\frac{\infty}{2}}$ is the dualizing sheaf on $\Ran$.  
 

\ssec{$\cS\cI$-versions of the semi-infinite categories}
\label{Sect_cScI-versions}

\sssec{} One may consider a version of the semi-infinite categories replacing everywhere the $H_I$ (resp., $H_{\Ran}$)-equivariance by $\gL(U(P))_I$ (resp., $\gL(U(P))_{\Ran}$)-equivariance. We convent to use for the corresponding versions the notation $\cS\cI$ instead of $\SI$. The purpose of Section~\ref{Sect_cScI-versions} is to explain that, essentially, all the results (and definitions) of Sections~\ref{Sect_Structure_of_SI^le0_P,Ran}-\ref{Sect_t-structure_on_SI_untl} hold for the $\cS\cI$-versions of the corresponding categories. 

\sssec{} 
\label{Sect_1.7.2_now}
In details, for $I\in fSets$ let 
$$
\cS\cI_{P, I}=Shv(\Gr_{G, I})^{\gL(U(P))_I},\;\;\; \cS\cI_{P,\Ran}=\underset{I\in fSets}{\lim} \cS\cI_{P, I}.
$$ 
We simply write $\cS\cI_{P,\Ran}=Shv(\Gr_{G, \Ran})^{\gL(U(P))_{\Ran}}$. If $\theta\in -\Lambda_{G,P}^{pos}$ we set
$$
\cS\cI_{P, I}^{\le \theta}=Shv(\ov{\Gr}_{P,I}^{\theta})^{\gL(U(P))_I}, \;\;\;\cS\cI^{\theta}_{P, I}=Shv(\Gr_{P,I}^{\theta})^{\gL(U(P))_I} 
$$
We also get their Ran versions
$$
\cS\cI_{P, \Ran}^{\le \theta}=\underset{I\in fSets}{\lim} \cS\cI_{P, I}^{\le \theta},\;\;\; \cS\cI^{\theta}_{P, \Ran}=\underset{I\in fSets}{\lim} \cS\cI^{\theta}_{P, I}.
$$
\index{$\cS\cI_{P,\Ran}, \cS\cI_{P, \Ran}^{\le \theta}, \cS\cI^{\theta}_{P, \Ran}$, Section~\ref{Sect_1.7.2_now}} 

\sssec{} For $I\in fSets, \und{\lambda}: I\to \Lambda^+_{M, ab}$ the functor $\oblv: \cS\cI_{P, I}\to Shv(\Gr_{G, I})$ is fully faithful. If $\und{\lambda}_i\in \Map(I, \Lambda^+_{M, ab})$ with
$\und{\lambda}_1\le \und{\lambda}_2$ then
$$
Shv(\Gr_{G, I})^{U(P)_{\und{\lambda}_2}}\subset Shv(\Gr_{G, I})^{U(P)_{\und{\lambda}_1}}
$$
is a full subcategory, and 
$$
\cS\cI_{P, I}\,\iso\, \underset{\und{\lambda}\in (\Map(I, \Lambda^+_{M, ab}), \le)^{op}}{\lim} Shv(\Gr_{G, I})^{U(P)_{\und{\lambda}}}
$$ 
coincides with 
$\underset{\und{\lambda}\in \Map(I, \Lambda^+_{M, ab})}{\cap} Shv(\Gr_{G, I})^{U(P)_{\und{\lambda}}}$ taken inside $Shv(\Gr_{G, I})$.

 For $\und{\lambda}:I\to \Lambda^+_{M, ab}$ the functor $\oblv: Shv(\Gr_{G, I})^{U(P)_{\und{\lambda}}}\to Shv(\Gr_{G, I})$ has a partially defined left adjoint $\Av^{U(P)_{\und{\lambda}}}_!$, which is everywhere defined in the constructible context by Lemma~\ref{Lm_A.1.3}. The partially defined left adjoint $\Av^{\gL(U(P))_I}_!$ to $\oblv: \cS\cI_{P, I}\to Shv(\Gr_{G, I})$ is then given by (\cite{Ly9}, 1.2.15) as 
$$
\underset{\und{\lambda}\in \Map(I, \Lambda^+_{M, ab})}{\colim} \Av^{\gL(U(P))_I}_!.
$$ 
\sssec{} 
\label{Sect_1.7.4_now}
For $\theta\in -\Lambda_{G,P}^{pos}, I\in fSets$ we set
$$
\cS\cI^{\le\theta}_{P, I}(S)=Shv(\bar S^{\theta}_{P,I})^{\gL(U(P))_I}, \;\;\; \cS\cI^{\theta}_{P, I}(S)=Shv(S^{\theta}_{P,I})^{\gL(U(P))_I}.
$$ 
We get also their Ran versions
$$
\cS\cI^{\le\theta}_{P, \Ran}(S)=\underset{I\in fSets}{\lim} Shv(\bar S^{\theta}_{P,I})^{\gL(U(P))_I}, \;\;\;  \cS\cI^{\theta}_{P, \Ran}(S)=\underset{I\in fSets}{\lim} Shv(S^{\theta}_{P,I})^{\gL(U(P))_I}.
$$
\index{$\cS\cI^{\le\theta}_{P, \Ran}(S), \cS\cI^{\theta}_{P, \Ran}(S)$, Section~\ref{Sect_1.7.4_now}}

\sssec{} We use the same notations for the functors between the $\cS\cI$-versions as for $\SI$-versions. So, for $I\in fSets$, $\theta\in -\Lambda_{G,P}^{pos}$ we have the adjoint pairs in $Shv(X^I)-mod$
$$
(j^{\theta}_{P, I})^*: \cS\cI^{\le\theta}_{P, I}\leftrightarrows \cS\cI^{\theta}_{P, I}: (j^{\theta}_{P, I})_*
$$
and
$$
(\bar v^{\theta}_{P, I})_!: \cS\cI^{\theta}_{P, I}\leftrightarrows\cS\cI^{\le 0}_{P, I}: (\bar v^{\theta}_{P, I})^!
$$
For a map $I\to J$ in $fSets$ these functors are compatible with the !-restricitons along $\vartriangle^{(I/J)}: X^J\to X^I$, so yields similar functors between the corresponding Ran versions.  

An analog of Lemma~\ref{Lm_1.4.3} holds:
\begin{Lm} Let $\theta\in -\Lambda_{G,P}^{pos}$, $I\in fSets$. \\
i) The functor
\begin{equation}
\label{(gt^theta_P,I)^!-pullback_for_cScI_version}
(\gt^{\theta}_{P, I})^!: Shv(\Gr_{M, I, +}^{\theta})\to Shv(\Gr_{P, I}^{\theta})
\end{equation}
is fully faithful, its essential image in the full subcategory $\cS\cI^{\theta}_{P, I}$. The composition
$$
\cS\cI^{\theta}_{P, I}\hook{} Shv(\Gr_{P, I}^{\theta})\toup{(i^{\theta}_{P, I})^!} Shv(\Gr_{M, I, +}^{\theta})
$$
is an equivalence.\\
ii)The partially defined left adjoint $(\gt^{\theta}_{P, I})_!$ to (\ref{(gt^theta_P,I)^!-pullback_for_cScI_version}) is defined on the full subcategory $\cS\cI^{\theta}_{P, I}$, and
$(\gt^{\theta}_{P, I})_!\,\iso\, (i^{\theta}_{P, I})^!$ canonically as functors
$$
\cS\cI^{\theta}_{P, I}\to Shv(\Gr_{M, I, +}^{\theta}).
$$
 
  The partially defined left adjoint $(i^{\theta}_{P, I})^*$ to 
$$
(i^{\theta}_{P, I})_*: Shv(\Gr_{M, I, +}^{\theta})\to Shv(\Gr_{P, I}^{\theta})
$$
is defined on the full subcategory $\cS\cI^{\theta}_{P, I}$, and one has canonically
$(i^{\theta}_{P, I})^*\,\iso\, (\gt^{\theta}_{P, I})_*$ as functors 
$$
\cS\cI^{\theta}_{P, I}\to Shv(\Gr_{M, I, +}^{\theta}).
$$
\end{Lm}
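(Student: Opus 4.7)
The plan is to mirror the proof of Lemma~\ref{Lm_1.4.3}, observing that the argument there uses $\gL^+(M)_I$-equivariance only in a formal way: the essential geometric input already concerns $\gL(U(P))_I$.

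For part (i), I would note that $\gt^{\theta}_{P,I}\colon \Gr^{\theta}_{P,I} \to \Gr^{\theta}_{M,I,+}$ admits the canonical section $i^{\theta}_{P,I}$, and its fibers are transitive $\gL(U(P))_I$-orbits through points of this section. Since $\gL(U(P))_I \to X^I$ is an ind-prounipotent group ind-scheme, the $!$-pullback along such a map is fully faithful with essential image the $\gL(U(P))_I$-invariants, by (\cite{DL}, Lemmas~A.4.4, A.4.5) in the constructible setting, and by (\cite{Chen}, Lemma~B.4.1) in the $\cD$-module setting. These are precisely the citations already invoked in the proof of Lemma~\ref{Lm_1.4.3} and they apply verbatim after dropping the $\gL^+(M)_I$-factor. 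The inverse equivalence is $(i^{\theta}_{P,I})^!$ because $\gt^{\theta}_{P,I} \comp i^{\theta}_{P,I} = \id$.

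For part (ii), I would invoke the $\Gm$-action on $\Gr^{\theta}_{P,I}$ through $\lambda_0$, which contracts $\Gr^{\theta}_{P,I}$ onto $\Gr^{\theta}_{M,I,+}$. This $\Gm$ normalizes $\gL(U(P))_I$, acting by conjugation with strictly positive weights on the root spaces of $U(P)$ since $\<\lambda_0, \check{\alpha}_i\> > 0$ for $i \in \cI_G - \cI_M$; hence the action preserves the full subcategory of $\gL(U(P))_I$-invariant sheaves. A version of Braden's theorem (\cite{DG1}, Proposition~3.2.2), in combination with Lemma~\ref{Lm_1.4.5}, then shows that the partially defined left adjoint of $(i^{\theta}_{P,I})_*$ is defined on $\cS\cI^{\theta}_{P,I}$ and agrees with $(i^{\theta}_{P,I})^!$, which part (i) further identifies with $(\gt^{\theta}_{P,I})_*$. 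Finally, the identification $(\gt^{\theta}_{P,I})_! \iso (i^{\theta}_{P,I})^!$ on $\cS\cI^{\theta}_{P,I}$ follows by passing to left adjoints in the equivalence of part~(i).

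The main subtlety to check is the compatibility of the Braden-type argument with $\gL(U(P))_I$-equivariance, given that $\Gm$ only normalizes rather than centralizes $\gL(U(P))_I$. This is, however, precisely the standard setup for a contracting cocharacter acting on a prounipotent group by automorphisms, so the categorical form of \cite{DG1} applies without essential modification; no new ingredient beyond what is already used in the proof of Lemma~\ref{Lm_1.4.3} is required.
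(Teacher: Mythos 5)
Your proposal is correct and is essentially the paper's own argument: the paper states this lemma as "an analog of Lemma~\ref{Lm_1.4.3}" with the same proof, namely (i) the ind-prounipotence of $\gL(U(P))_I$ acting transitively on the fibres of $\gt^{\theta}_{P,I}$ together with (\cite{DL}, A.4.4, A.4.5) (or \cite{Chen}, B.4.1 for $\cD$-modules), and (ii) the $\Gm$-contraction via $\lambda_0$ and Braden's theorem (\cite{DG1}, 3.2.2). Your added remark about $\Gm$ normalizing $\gL(U(P))_I$ is a fair observation but changes nothing essential relative to the paper's route.
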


\sssec{} The analog of Lemma~\ref{Lm_1.4.6} also holds with the same proof:

\begin{Lm} Let $\theta\in -\Lambda_{G,P}^{pos}$, $I\in fSets$. The partially defined left adjoint of $(v^{\theta}_{P, I})_*: \cS\cI^{\theta}_{P, I}\to \cS\cI^{\le 0}_{P, I}$ is defined and denoted $(v^{\theta}_{P, I})^*: \cS\cI^{\le 0}_{P, I}\to \cS\cI^{\theta}_{P, I}$. The natural left-lax $Shv(X^I)$-structure on $(v^{\theta}_{P, I})^*$ is struct. For a map $I\to J$ in $fSets$ the canonical natural transformation
$$
(v^{\theta}_{P,J})^*\vartriangle^!\to \vartriangle^! (v^{\theta}_{P,I})^*, \cS\cI^{\le 0}_{P, I}\to \cS\cI^{\theta}_{P, I}
$$
is an isomorphism, here $\vartriangle: X^J\to X^I$. 
\end{Lm}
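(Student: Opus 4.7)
The proof follows exactly the template of Lemma~\ref{Lm_1.4.6}, with $H_I$-equivariance replaced by $\gL(U(P))_I$-equivariance throughout; the key point is that Braden's theorem is insensitive to this change.

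First, by the preceding $\cS\cI$-version of Lemma~\ref{Lm_1.4.3}, the functor $(\gt^{\theta}_{P,I})^! : Shv(\Gr^{\theta}_{M,I,+}) \to Shv(\Gr^{\theta}_{P,I})$ is a fully faithful embedding whose essential image is $\cS\cI^{\theta}_{P,I}$. Hence, to show that the left adjoint of $(v^{\theta}_{P,I})_* : \cS\cI^{\theta}_{P,I} \to \cS\cI^{\le 0}_{P,I}$ exists, it suffices to show that the partially defined left adjoint to
$$
Shv(\Gr^{\theta}_{M,I,+}) \toup{(\gt^{\theta}_{P,I})^!} Shv(\Gr^{\theta}_{P,I}) \toup{(v^{\theta}_{P,I})_*} Shv(\ov{\Gr}^{0}_{P,I})
$$
is defined on the full subcategory $\cS\cI^{\le 0}_{P,I} \subset Shv(\ov{\Gr}^{0}_{P,I})$. (Note that $\cS\cI^{\le 0}_{P,I}\subset Shv(\ov\Gr^0_{P,I})$ is a full subcategory, unlike the $H_I$-equivariant case where one must pass through $\gL^+(M)_I$-invariants; but this only simplifies the reduction.)

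Next, consider the $\Gm$-action on $\ov{\Gr}^{0}_{P,I}$ via the cocharacter $\lambda_0$. By Lemma~\ref{Lm_1.4.5}, the attracting locus is the union $\sqcup_{\theta}\, \Gr^{\theta}_{P,I}$ and the repelling locus is $\sqcup_{\theta}\, \Gr^{\theta}_{P^-,I} \cap \ov{\Gr}^{0}_{P,I}$, for $\theta \in -\Lambda^{pos}_{G,P}$. Braden's theorem (\cite{DG1}, Proposition~3.2.2) applies in this setup and yields that the left adjoint to $(v^{\theta}_{P,I})_*$ on the $\Gm$-monodromic subcategory is given by $(v^{\theta}_{P^-,I})^!$ followed by the appropriate pushforward along the repelling locus. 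Combined with the identification $\cS\cI^{\theta}_{P,I} \simeq Shv(\Gr^{\theta}_{M,I,+})$ via $(\gt^{\theta}_{P^-,I})_*$ on the repelling side, we conclude that the desired left adjoint is
$$
(v^{\theta}_{P,I})^* = (\gt^{\theta}_{P^-,I})_*(v^{\theta}_{P^-,I})^! \circ (-)
$$
attached to the diagram
$$
\Gr^{\theta}_{M,I,+} \getsup{\gt^{\theta}_{P^-,I}} \Gr^{\theta}_{P^-,I} \cap \ov{\Gr}^{0}_{P,I} \toup{v^{\theta}_{P^-,I}} \ov{\Gr}^{0}_{P,I}.
$$

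For the strictness of the left-lax $Shv(X^I)$-module structure on $(v^{\theta}_{P,I})^*$, note that the formula above is built entirely from $!$-pullbacks and proper pushforwards along $X^I$-schemes, so it is manifestly $Shv(X^I)$-linear by base change; the left-lax structure coming from adjunction must then be strict. For the compatibility with $\vartriangle^! : \SI^{\le 0}_{P,I} \to \SI^{\le 0}_{P,J}$ under a map $I\to J$ in $fSets$, the cartesian square expressing $\Gr^{\theta}_{P^-,J} \cap \ov{\Gr}^{0}_{P,J} \simeq (\Gr^{\theta}_{P^-,I} \cap \ov{\Gr}^{0}_{P,I}) \times_{X^I} X^J$ together with proper base change along $\vartriangle$ gives the desired natural isomorphism $(v^{\theta}_{P,J})^*\vartriangle^! \iso \vartriangle^!(v^{\theta}_{P,I})^*$.

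The only potentially nontrivial point is checking that Braden's theorem applies to the relevant equivariant (here $\gL(U(P))_I$-equivariant) categories. This is fine because the $\Gm$-action on $\ov{\Gr}^{0}_{P,I}$ commutes with the $\gL(U(P))_I$-action (as $\lambda_0$ lies in $\Lambda^+_{M,ab}$ and normalizes $U(P)$ with nonnegative weights), so the construction descends to equivariant subcategories and there are no subtleties beyond those already handled in the proof of Lemma~\ref{Lm_1.4.6}.
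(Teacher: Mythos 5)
Your proposal is correct and follows essentially the same route as the paper: the paper proves the $\cS\cI$-version by declaring it has "the same proof" as Lemma~\ref{Lm_1.4.6}, namely reduce via the $\cS\cI$-analogue of Lemma~\ref{Lm_1.4.3} to the existence of a partially defined left adjoint on the full subcategory $\cS\cI^{\le 0}_{P,I}\subset Shv(\ov{\Gr}^0_{P,I})$, then apply Braden's theorem for the $\Gm$-action by $\lambda_0$ together with Lemma~\ref{Lm_1.4.5} to identify the left adjoint as $(\gt^{\theta}_{P^-,I})_*(v^{\theta}_{P^-,I})^!$, with strictness and compatibility with $\vartriangle^!$ following by base change. (Only cosmetic differences: the identification $\cS\cI^{\theta}_{P,I}\simeq Shv(\Gr^{\theta}_{M,I,+})$ is via $(\gt^{\theta}_{P,I})^!$ rather than "via $(\gt^{\theta}_{P^-,I})_*$", and the Braden reference is \cite{DG1}, 3.4.3 in the paper's Lemma~\ref{Lm_1.4.6}.)
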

\begin{Cor} Let $\theta\in -\Lambda_{G,P}^{pos}$, $I\in fSets$. The left adjoint of $(v^{\theta}_{P,I})^!: \cS\cI^{\le 0}_{P, I}\to \cS\cI^{\theta}_{P, I}$ is defined and denoted $(v^{\theta}_{P,I})_!: \cS\cI^{\theta}_{P, I}\to \cS\cI^{\le 0}_{P, I}$. The natural left-lax $Shv(X^I)$-structure on $(v^{\theta}_{P,I})_!$ is strict. For a map $I\to J$ in $fSets$ the canonical natural transformation
$$
(v^{\theta}_{P,J})_!\vartriangle^!\to \vartriangle^! (v^{\theta}_{P,I})_!, \cS\cI^{\theta}_{P, I}\to \cS\cI^{\le 0}_{P, I}
$$
is an isomorphism, here $\vartriangle: X^J\to X^I$. 
\end{Cor}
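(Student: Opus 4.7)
The plan is to deduce this corollary from the immediately preceding Lemma (the $\cS\cI$-analog of Lemma~\ref{Lm_1.4.6}) by a purely formal categorical manipulation, exactly mirroring the derivation of Corollary~\ref{Cor_1.4.7_left_adjoint} from Lemma~\ref{Lm_1.4.6} in the $H_I$-equivariant setting. First I would record what is already in hand: the preceding Lemma provides an adjoint pair
$$
(v^{\theta}_{P,I})^*: \cS\cI^{\le 0}_{P,I}\leftrightarrows \cS\cI^{\theta}_{P,I}: (v^{\theta}_{P,I})_*
$$
in which the left adjoint $(v^{\theta}_{P,I})^*$ is everywhere defined, carries a strict $Shv(X^I)$-linear structure, and is compatible with the diagonal $!$-restrictions $\vartriangle^!$ for $\vartriangle: X^J\to X^I$ induced by a map $I\to J$ in $fSets$.

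Next I would apply the general categorical input of (\cite{Ly9}, Lemma~1.8.15, 1.8.16), which takes an adjunction of $Shv(X^I)$-module categories satisfying suitable dualizability hypotheses, with strict $Shv(X^I)$-linearity of the left adjoint, and produces the adjunction on the opposite side: existence of a left adjoint to $f^!$ from the existence of the left adjoint to $f_*$, together with inheritance of the strict $Shv(X^I)$-linear structure and the base change compatibility. Applied to the pair $(v^{\theta}_{P,I})^*\dashv (v^{\theta}_{P,I})_*$, this yields the desired $(v^{\theta}_{P,I})_!$ left adjoint to $(v^{\theta}_{P,I})^!$, its strict $Shv(X^I)$-linearity, and the isomorphism
$$
(v^{\theta}_{P,J})_!\vartriangle^!\,\iso\,\vartriangle^!(v^{\theta}_{P,I})_!.
$$

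The only substantive point to check is the dualizability hypothesis of the Ly9 machinery for the categories $\cS\cI^{\theta}_{P,I}$ and $\cS\cI^{\le 0}_{P,I}$ as $Shv(X^I)$-module categories; this is where the argument for the $\cS\cI$-version is formally simpler than for $\SI$, because the group acting is $\gL(U(P))_I$, which is ind-prounipotent. Ind-prounipotence ensures that the forgetful functor $\oblv: Shv(Z)^{\gL(U(P))_I}\to Shv(Z)$ is fully faithful for $Z\in\{\Gr^{\theta}_{P,I},\ov{\Gr}^0_{P,I}\}$ (as already used in Section~\ref{Sect_1.7.2_now}), and that the corresponding averaging $\Av^{\gL(U(P))_I}_!$ is everywhere defined in the constructible context, so the dualizability inherited from $Shv(Z)$ is preserved. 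I do not anticipate a real obstacle here: the whole proof is a reprise of the argument already used in Corollary~\ref{Cor_1.4.7_left_adjoint}, with the placid group ind-scheme $H_I=\gL^+(M)_I\rtimes \gL(U(P))_I$ replaced by its more tractable unipotent piece $\gL(U(P))_I$.
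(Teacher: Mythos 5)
Your proposal matches the paper's own argument: the paper proves the $\SI$-analog (Corollary~\ref{Cor_1.4.7_left_adjoint}) precisely by invoking the preceding lemma on $(v^{\theta}_{P,I})^*$ together with (\cite{Ly9}, Lemma~1.8.15, 1.8.16), and the $\cS\cI$-version is stated as holding by the same formal deduction, which is exactly what you do. Your extra paragraph on ind-prounipotence of $\gL(U(P))_I$ is harmless supplementary checking rather than a divergence, so no further comment is needed.
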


\sssec{} The $\cS\cI$-analog of Corollary~\ref{Cor_1.4.8}, Lemma~\ref{Lm_1.4.10_now}, Corollary~\ref{Cor_1.4.11_now_adjoints} also holds. 

\begin{Rem} i) Let $I\in fSets$. The objects $(v^{\theta}_{P, I})_!K$ for $\theta\in -\Lambda_{G,P}^{pos}$ and $K\in \cS\cI^{\theta}_{P, I}$ generate $\cS\cI^{\le 0}_{P, I}$. 

\smallskip\noindent
ii) The objects of the form $(v^{\theta}_{S, I})_!K$ for $\theta\in -\Lambda_{G,P}^{pos}$ and $K\in \cS\cI^{\theta}_{P, I}(S)$ generate $\cS\cI^{\le 0}_{P, I}(S)$. 
\end{Rem}

\sssec{} The version of Lemma~\ref{Lm_1.4.13_existence} with $\SI$ replaced everywhere by $\cS\cI$ also holds with the same proof. 

\ssec{Relation with the unitality for $\cS\cI$-versions}

\sssec{} 
\label{Sect_1.8.1_now}
We define the unital versions of the $\cS\cI$-semi-infinite categories by intersecting with the full subcategories of unital objects. This way we get the categories
$$
\cS\cI_{P, Ran, untl},\; \cS\cI^{\le \theta}_{P, \Ran, untl}, \; \cS\cI^{\theta}_{P, \Ran, untl}, \;\cS\cI^{\le \theta}_{P, \Ran, untl}(S), \; \cS\cI^{\theta}_{P, \Ran, untl}(S).
$$
In more details,
$$
\cS\cI_{P, Ran, untl}=\cS\cI_{P,\Ran}\cap Shv(\Gr_{G,\Ran})_{untl},\; 
\cS\cI^{\le \theta}_{P, \Ran, untl}=\cS\cI^{\le \theta}_{P, \Ran}\cap Shv(\ov{\Gr}_{P,\Ran}^{\theta})_{untl},
$$
$$
\cS\cI^{\theta}_{P, \Ran, untl}=\cS\cI^{\theta}_{P, \Ran}\cap Shv(\Gr_{P,\Ran}^{\theta})_{untl},\; \cS\cI^{\le \theta}_{P, \Ran, untl}(S)=\cS\cI^{\le \theta}_{P, \Ran}(S)\cap Shv(\bar S^{\theta}_{P,\Ran})_{untl},
$$
$$
\cS\cI^{\theta}_{P, \Ran, untl}(S)=\cS\cI^{\theta}_{P, \Ran}(S)\cap Shv(S^{\theta}_{P,\Ran})_{untl}.
$$
\index{$\cS\cI_{P, Ran, untl}, \cS\cI^{\le \theta}_{P, \Ran, untl}, \; \cS\cI^{\theta}_{P, \Ran, untl}$, Section~\ref{Sect_1.8.1_now}}
\index{$\cS\cI^{\le \theta}_{P, \Ran, untl}(S), \; \cS\cI^{\theta}_{P, \Ran, untl}(S)$,  Section~\ref{Sect_1.8.1_now}}

\sssec{} For $\theta\in -\Lambda_{G,P}^{pos}$ the functors (\ref{functors_for_Sect_1.4.12}) and the functors $(\eta^{\theta}_{\Ran})^!, (\eta^{\theta}_{\Ran})_*$ 
preserve the corresponding $\cS\cI$-versions. We get the adjoint pairs
$$
(\eta_{\Ran})_!: \cS\cI^{\le 0}_{P,\Ran, untl}(S)
\leftrightarrows \cS\cI^{\le 0}_{P,\Ran, untl}: \eta_{\Ran}^!, 
$$
$$
(\eta^{\theta}_{\Ran})_!: \cS\cI^{\theta}_{P,\Ran, untl}(S)\leftrightarrows \cS\cI^{\theta}_{P,\Ran, untl}: (\eta^{\theta}_{\Ran})^!,
$$
and
$$
(j^{\theta}_{P,\Ran})^*: \cS\cI^{\le\theta}_{P,\Ran, untl}\leftrightarrows \cS\cI^{\theta}_{P,\Ran, untl}: (j^{\theta}_{P,\Ran})_*,
$$
$$
(\bar v^{\theta}_{P,\Ran})_!: \cS\cI^{\le \theta}_{P,\Ran, untl}\leftrightarrows \cS\cI^{\le 0}_{P,\Ran, untl}: (\bar v^{\theta}_{P,\Ran})^!.
$$
In the above $(j^{\theta}_{P,\Ran})_*$ is fully faithful.

\sssec{} The analogs of Corollaries~\ref{Cor_1.5.9_now}, \ref{Cor_1.5.10_now} hold with $\SI$ replaced by $\cS\cI$ everywhere.

 The analog of Corollary~\ref{Cor_1.5.11_now} is as follows (with the same proof).
\begin{Cor} 
Let $\theta\in -\Lambda^{pos}_{G,P}$. The functor
$$
(\gt^{\theta}_{P,\Ran})^!: Shv(\Gr^{\theta}_{M,\Ran, +})\to Shv(\Gr^{\theta}_{P,\Ran})
$$
is fully faithful, its essential image is the full subcategory $\cS\cI^{\theta}_{P,\Ran}$. It induces an equivalence between the full subcategories $Shv(\Gr^{\theta}_{M,\Ran, +})_{untl}\,\iso\, \cS\cI^{\theta}_{P,\Ran, untl}$. The composition
$$
\cS\cI^{\theta}_{P,\Ran}\hook{} Shv(\Gr_{P,\Ran}^{\theta})\toup{(i^{\theta}_{P,\Ran})^!} Shv(\Gr^{\theta}_{M,\Ran,+})
$$
is an equivalence. It induces an equivalence between the full subcategories 
$$
\cS\cI^{\theta}_{P,\Ran, untl}\,\iso\, Shv(\Gr^{\theta}_{M,\Ran,+})_{untl}.
$$
The functor 
$$
(\gt^{\theta}_{P,\Ran})_!: \cS\cI^{\theta}_{P,\Ran}\to Shv(\Gr^{\theta}_{M,\Ran, +})
$$ 
preserves the corresponding full subcategories of unital objects.
\end{Cor}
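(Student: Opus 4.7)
The plan is to mirror the proof of Corollary~\ref{Cor_1.5.11_now} step by step, using the $\cS\cI$-analog of Lemma~\ref{Lm_1.4.3} already established in Section~\ref{Sect_cScI-versions}. For each $I\in fSets$, that analog yields an equivalence
$$
(\gt^{\theta}_{P, I})^!: Shv(\Gr^{\theta}_{M, I, +})\,\iso\, \cS\cI^{\theta}_{P, I}
$$
with inverse given by $(i^{\theta}_{P, I})^!$, together with the identification $(\gt^{\theta}_{P, I})_!\,\iso\,(i^{\theta}_{P, I})^!$ on $\cS\cI^{\theta}_{P, I}$. All three functors are compatible with $!$-restrictions along the diagonals $\vartriangle^{(I/J)}: X^J\to X^I$ for maps $I\to J$ in $fSets$, since the formation of $\Gr^{\theta}_{P, I}$ and $\Gr^{\theta}_{M, I, +}$ is functorial in $I$ (cf. Lemma~\ref{Lm_1.3.21_now} and Section~\ref{Sect_1.3.7}). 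Passing to the limit over $fSets$ then gives the first two assertions at the Ran level.

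Next, I would address unitality. By the end of Section~\ref{Sect_1.3.28_now}, the maps $\gt^{\theta}_{P,\Ran}$, $i^{\theta}_{P,\Ran}$ are equivariant under the category object $(\Ran\times\Ran)^{\subset}$. It follows from Remark~\ref{Rem_A.1.7} that each of the functors $(\gt^{\theta}_{P,\Ran})^!$ and $(i^{\theta}_{P,\Ran})^!$ sends the full subcategory of unital sheaves to the full subcategory of unital sheaves. Combined with the equivalences already proved, this immediately gives the two induced equivalences
$$
Shv(\Gr^{\theta}_{M,\Ran, +})_{untl}\,\iso\,\cS\cI^{\theta}_{P,\Ran, untl},\qquad \cS\cI^{\theta}_{P,\Ran, untl}\,\iso\, Shv(\Gr^{\theta}_{M,\Ran,+})_{untl}.
$$

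Finally, for the assertion about $(\gt^{\theta}_{P,\Ran})_!$, I would note that at each finite level $I$ the identification $(\gt^{\theta}_{P, I})_!\,\iso\, (i^{\theta}_{P, I})^!$ on $\cS\cI^{\theta}_{P, I}$ glues to give $(\gt^{\theta}_{P,\Ran})_!\,\iso\, (i^{\theta}_{P,\Ran})^!$ on $\cS\cI^{\theta}_{P,\Ran}$; preservation of unital objects then follows from the previous paragraph. The only mildly delicate point, and the one most likely to require care, is the verification that passage to the limit over $I\in fSets$ of the identification $(\gt^{\theta}_{P, I})_!\,\iso\, (i^{\theta}_{P, I})^!$ is well-behaved; this uses the fact (inherited from Lemma~\ref{Lm_1.4.3}(ii)) that both sides are intertwined with $!$-restrictions, together with the general principle that equivalences in $Shv(X^I)-mod$ compatible with these restrictions glue to an equivalence at the Ran level, as invoked in the parallel proof of Corollary~\ref{Cor_1.5.11_now}.
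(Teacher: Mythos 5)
Your proposal is correct and follows essentially the same route as the paper: the paper derives this corollary "with the same proof" as Corollary~\ref{Cor_1.5.11_now}, i.e.\ immediately from the $\cS\cI$-analog of Lemma~\ref{Lm_1.4.3} at finite level, compatibility with $!$-restrictions along diagonals and passage to the limit over $fSets$, with unitality handled via the $(\Ran\times\Ran)^{\subset}$-equivariance of $\gt^{\theta}_{P,\Ran}$ and $i^{\theta}_{P,\Ran}$ exactly as you do.
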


\sssec{} The analog of Corollary~\ref{Cor_1.5.13_now} also holds, it is as follows.

\begin{Cor} 
\label{Cor_2.2.5_for_cScI-versions}
Let $\theta\in -\Lambda^{pos}_{G,P}$. The functor
$$
(\gt_{S,\Ran}^{\theta})^!: Shv(\Mod^{-,\theta,\subset}_{M,\Ran})\to Shv(S^{\theta}_{P,\Ran})
$$
is fully faithful, its essential image is the full subcategory $\cS\cI^{\theta}_{P,\Ran}(S)$. It induces an equivalence between the full subcategories
$$
Shv(\Mod^{-,\theta,\subset}_{M,\Ran})_{untl}\,\iso\,\cS\cI^{\theta}_{P,\Ran, untl}(S) 
$$
The composition
$$
\cS\cI^{\theta}_{P,\Ran}(S)\hook{} Shv(S^{\theta}_{P,\Ran})\toup{(i^{\theta}_{S,\Ran})^!} Shv(\Mod^{-,\theta,\subset}_{M,\Ran})
$$
is an equivalence. It induces an equivalence between the full subcategories
$$
\cS\cI^{\theta}_{P,\Ran, untl}(S)\,\iso\, Shv(\Mod^{-,\theta,\subset}_{M,\Ran})_{untl}.
$$
The functor
$$
(\gt^{\theta}_{S,\Ran})_!: \cS\cI^{\theta}_{P,\Ran}(S)\to Shv(\Mod^{-,\theta,\subset}_{M,\Ran})
$$
preserves the corresponding full subcategories of unital objects. 
\end{Cor}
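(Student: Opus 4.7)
The proof mirrors that of Corollary~\ref{Cor_1.5.13_now}, with $\gL^+(M)$-equivariance dropped throughout; the key observation is that $\gL(U(P))_{\Ran}$ acts trivially on $\Mod^{-,\theta,\subset}_{M,\Ran}$ (since the latter is defined on the $M$-side), so $Shv(\Mod^{-,\theta,\subset}_{M,\Ran})$ is automatically the target of the pullback to $\gL(U(P))_{\Ran}$-invariants. The first step is to establish the $\cS\cI$-analog of Lemma~\ref{Lm_1.4.10_now} at the level of a fixed $I\in fSets$: the functor
\[
(\gt^{\theta}_{S,I})^!: Shv(\Mod^{-,\theta,\subset}_{M,I}) \to Shv(S^{\theta}_{P,I})
\]
is fully faithful with essential image $\cS\cI^{\theta}_{P,I}(S)$, and the composition with the inclusion into $Shv(S^{\theta}_{P,I})$ of $(i^{\theta}_{S,I})^!$ is a quasi-inverse. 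The argument is identical to that of Lemma~\ref{Lm_1.4.3}(i): $\gL(U(P))_I\to X^I$ is ind-prounipotent and acts transitively on the fibres of $\gt^{\theta}_{S,I}$ (where the action on the target is trivial), so (\cite{DL}, Lemma~A.4.4, A.4.5) applies. For the identification $(\gt^{\theta}_{S,I})_! \iso (i^{\theta}_{S,I})^!$ on $\cS\cI^{\theta}_{P,I}(S)$, one invokes the contraction $\Gm$-action via $\lambda_0$ and Braden's theorem (\cite{DG1}, Proposition~3.2.2) as in Lemma~\ref{Lm_1.4.3}(ii).

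Second, pass to the limit over $I\in fSets$. Since $(\gt^{\theta}_{S,I})^!$ and $(i^{\theta}_{S,I})^!$ commute with $!$-restrictions along $\vartriangle^{(I/J)}: X^J\to X^I$, the $I$-local equivalences glue to an equivalence
\[
Shv(\Mod^{-,\theta,\subset}_{M,\Ran}) \iso \cS\cI^{\theta}_{P,\Ran}(S),
\]
using $Shv(\Mod^{-,\theta,\subset}_{M,\Ran}) \iso \underset{I}{\lim}\, Shv(\Mod^{-,\theta,\subset}_{M,I})$ (which follows from the 1-affineness of $\Ran$) and the definition $\cS\cI^{\theta}_{P,\Ran}(S) = \underset{I}{\lim}\, \cS\cI^{\theta}_{P,I}(S)$ from Section~\ref{Sect_1.7.4_now}. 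This yields the first three assertions of the corollary.

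Third, for the unitality claims, recall from the paragraph following Section~\ref{Sect_1.3.28_now} that $\gt^{\theta}_{S,\Ran}$ and $i^{\theta}_{S,\Ran}$ are equivariant under the category object $(\Ran\times\Ran)^{\subset}$. Hence $(\gt^{\theta}_{S,\Ran})^!$ and $(i^{\theta}_{S,\Ran})^!$ both preserve the subcategories of unital objects by Remark~\ref{Rem_A.1.7}, and the equivalence of the previous step restricts to an equivalence between the corresponding unital subcategories as defined in Section~\ref{Sect_1.8.1_now}. Finally, the identification $(\gt^{\theta}_{S,\Ran})_! \iso (i^{\theta}_{S,\Ran})^!$ on $\cS\cI^{\theta}_{P,\Ran}(S)$, obtained by passing to the limit in the $I$-local statement of step one, together with the unitality-preservation of $(i^{\theta}_{S,\Ran})^!$, gives the last assertion.

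There is no substantial obstacle: the statement is a direct transcription of Corollary~\ref{Cor_1.5.13_now}, with all inputs (Lemma~\ref{Lm_1.4.3}, Lemma~\ref{Lm_1.4.10_now}, Braden's theorem, Remark~\ref{Rem_A.1.7}) surviving verbatim in the $\cS\cI$-context. The only mild bookkeeping point is verifying that the limit of the $I$-local equivalences coincides with the $\Ran$-level categories on the nose, which is automatic from the functoriality of $!$-pullback under $\vartriangle^{(I/J)}$.
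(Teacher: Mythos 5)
Your proposal is correct and follows essentially the route the paper intends: the paper gives no separate proof, simply asserting that the $\cS\cI$-analog of Corollary~\ref{Cor_1.5.13_now} holds by the same argument, i.e.\ the fixed-$I$ statement via ind-prounipotence of $\gL(U(P))_I$ acting transitively on the fibres of $\gt^{\theta}_{S,I}$ together with the contraction principle, passage to the limit over $fSets$, and unitality via the $(\Ran\times\Ran)^{\subset}$-equivariance of $\gt^{\theta}_{S,\Ran}$ and $i^{\theta}_{S,\Ran}$. The only cosmetic point is that the identification $Shv(\Mod^{-,\theta,\subset}_{M,\Ran})\,\iso\,\underset{I}{\lim}\, Shv(\Mod^{-,\theta,\subset}_{M,I})$ is just descent for $Shv$ along $\Ran\,\iso\,\colim_I X^I$ (universality of colimits in $\PreStk$), not an instance of $1$-affineness.
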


\sssec{} From Lemma~\ref{Lm_1.4.27} we get an adjoint pair in $\DGCat_{cont}$
$$
(v^{\theta}_{S,\Ran})^*: \cS\cI^{\le 0}_{P,\Ran, untl}(S)\leftrightarrows \cS\cI^{\theta}_{P, \Ran, untl}(S): (v^{\theta}_{S,\Ran})_*.
$$
As in Corollary~\ref{Cor_1.5.19}, we get the adjoint pair in $\DGCat_{cont}$
$$
(v^{\theta}_{S,\Ran})_!: \cS\cI^{\theta}_{P,\Ran, untl}(S)\leftrightarrows \cS\cI^{\le 0}_{P,\Ran, untl}(S): (v^{\theta}_{S,\Ran})^!
$$

An analog of Corollary~\ref{Cor_1.5.21} holds with $\SI$ replaced by $\cS\cI$-versions. 

\ssec{t-structure on the $\cS\cI$-version of the semi-infinite category}

\sssec{} For $\theta\in -\Lambda_{G,P}^{pos}$ we equip $Shv(\Mod^{-,\theta, \subset}_{M, \Ran})_{untl}$ with a t-structure by transferring the perverse t-structure under the equivalence of Lemma~\ref{Lm_1.4.31}. 

\sssec{} For $\theta\in -\Lambda_{G,P}^{pos}$ we equip $\cS\cI^{\theta}_{P,\Ran, untl}(S)$ with a t-structure as follows. We declare an object $K\in \cS\cI^{\theta}_{P,\Ran, untl}(S)$  connective/coconnective iff
$$
(i^{\theta}_{S,\Ran})^!K[\<\theta, 2\check{\rho}-2\check{\rho}_M\>]\in Shv(\Mod^{-,\theta, \subset}_{M, \Ran})_{untl}
$$
is connective/coconnective. Here we used Corollary~\ref{Cor_2.2.5_for_cScI-versions}. 

\sssec{} 
\label{Sect_1.9.3_now}
Define a t-structure on $\cS\cI^{\le 0}_{P,\Ran, untl}(S)$ as follows. The full subcategory of connective objects is defined as the smallest full subcategory closed under colimits, under extensions and containing for each $\theta\in -\Lambda_{G,P}^{pos}$ the objects $(v^{\theta}_{S,\Ran})_!K$ for $K\in (\cS\cI^{\theta}_{P,\Ran, untl}(S))^{\le 0}$. This is an accessible t-structure on $\cS\cI^{\le 0}_{P,\Ran, untl}(S)$. 

 Note that $K\in \cS\cI^{\le 0}_{P,\Ran, untl}(S)$ is coconnective iff for any $\theta\in -\Lambda_{G,P}^{pos}$, $(v^{\theta}_{S,\Ran})^!K$ is coconnective in $\cS\cI^{\theta}_{P,\Ran, untl}(S)$. 
 
\sssec{} The $\cS\cI$-analog of Lemma~\ref{Lm_1.6.8_now} of the paper holds with the same proof.  

\sssec{} For $\theta\in -\Lambda_{G,P}^{pos}$ the functor $(v^{\theta}_{S,\Ran})_*: \cS\cI^{\theta}_{P,\Ran, untl}(S)\to \cS\cI^{\le 0}_{P,\Ran, untl}(S)$ is left t-exact, and
$$
(v^{\theta}_{S,\Ran})_!: \cS\cI^{\theta}_{P,\Ran, untl}(S)\to \cS\cI^{\le 0}_{P,\Ran, untl}(S)
$$ 
is right t-exact. If $\theta\in -\Lambda_{G,P}^{pos}$ then
$$
(v^{\theta}_{S,\Ran})^*: \cS\cI^{\le 0}_{P,\Ran, untl}(S)\to  \cS\cI^{\theta}_{P,\Ran, untl}(S)
$$ 
is right t-exact, and 
$$
(v^{\theta}_{S,\Ran})^!: \cS\cI^{\le 0}_{P,\Ran, untl}(S)\to \cS\cI^{\theta}_{P,\Ran, untl}(S)
$$ 
is left t-exact. 

\begin{Def} 
\label{Def_1.9.6}
One has $\omega\in \cS\cI^0_{P,\Ran, untl}(S)$. Let $\IC^{\frac{\infty}{2}}_{P,\Ran}$ be the image in $\cS\cI^0_{P,\Ran, untl}(S)^{\heartsuit}$ of the natural map
$$
\H^0((v^0_{S,\Ran})_!\omega)\to \H^0((v^0_{S,\Ran})_*\omega).
$$
\end{Def}
 This is one of the main objects of study of this paper. 
 
\sssec{} For $\theta\in -\Lambda_{G,P}^{pos}$ we have the oblivion functor 
$$
Shv(\Mod^{-,\theta, \subset}_{M,\Ran})^{\gL^+(M)_{\Ran}}_{untl}\to Shv(\Mod^{-,\theta, \subset}_{M,\Ran})_{untl},
$$ 
which is right t-exact. In general, there seems no reason for this functor to be left t-exact, though we have this property \select{after further stratification} of $X^{-\theta}$, cf. Lemma~\ref{Lm_3.3.11_coconnective}. 

 Similarly, we have the oblivion functor $\SI^{\le 0}_{P,\Ran, untl}(S)\to \cS\cI^{\le 0}_{P,\Ran, untl}(S)$, which is right t-exact. In general, there seems no reason for this functor to be left t-exact.
 
\section{Drinfeld-Plucker formalism: the Ran space version}
\label{Sect_Drinfeld-Plucker formalism: the Ran space version}

\sssec{} 
\label{Sect_2.0.1}
In this section we explain how to spread the Drinfeld-Plucker formalism in its form established in \cite{DL} to the Ran space. 

We freely use some notations of \cite{DL}. In particular, recall that for $\lambda\in\Lambda^+$ we have the irreducible $\check{G}$-module $V^{\lambda}$ with highest weight $\lambda$ and highest weight vector $v^{\lambda}$. For $\mu\in\Lambda^+_M$ we have the irreducible $\check{M}$-module $U^{\mu}$ with highest weight $\mu$ and a highest weight vector $v^{\mu}$. If $\mu\in\Lambda_{M, ab}$ then we also write $e^{\mu}$ for the 1-dimensional $\check{M}$-module $U^{\mu}$. 

 The results of this section are an application of (\cite{Ly10}, Section~6). We assume the reader is familiar with the construction from \select{loc.cit.}, we also use some notations from \select{loc.cit.}
 
 In particular, for $I\in fSets$ one has the category $\Tw(I)$ defined in (\cite{Ly10}, 2.1.8). Given $C\in CAlg(\DGCat_{cont})$ we consider $C(X)=C\otimes Shv(X)\in CAlg(Shv(X)-mod)$. One then has the object $(C_{X^I}, \otimes^!)\in CAlg(Shv(X^I)-mod)$ defined in (\cite{Ly10}, 2.5.1) and $(\Fact(C),\otimes^!)\in CAlg(Shv(\Ran)-mod)$ defined in (\cite{Ly10}, 2.5.3). We sometimes write $C_I=C_{X^I}$ for short.  
 
  Given $A\in CAlg^{nu}(C(X))$, one associates to it a commutative factorization algebra $\Fact(A)\in\Fact(C)$ as in (\cite{Ly10}, 2.4) as well as its $!$-restriction $A_{X^I}\in C_{X^I}$ under $X^I\to \Ran$. Recall that $A_{X^I}\in CAlg^{nu}(C_{X^I}, \otimes^!)$ by (\cite{Ly10}, 2.4.12). If moreover we have $A\in CAlg(C(X), \otimes^!)$ then $A_{X^I}\in CAlg(C_{X^I}, \otimes^!)$ by (\cite{Ly10}, 2.5.4). The situation for $\Fact(A)\in (\Fact(C),\otimes^!)$ is similar.  
 
\index{$V^{\lambda}, v^{\lambda}, U^{\mu}, e^{\mu}, \Tw(I), (C_{X^I}, \otimes^{^^21})$, Section~\ref{Sect_2.0.1}} 
\index{$(\Fact(C),\otimes^{^^21}), C_I, \Fact(A), A_{X^I}$, Section~\ref{Sect_2.0.1}} 
 
\sssec{} 
\label{Sect_2.0.2_now}
Write $\Fun^{rlax}(\Lambda^+, \Rep(\check{G}))$ for the category of right lax non-unital symmetric monoidal functors $\Lambda^+\to \Rep(\check{G})$. This category is naturally symmetric monoidal (\cite{Ly10}, 6.1.15).   
 
Let $\Irr: \Lambda^+\to \Rep(\check{G})$ be the functor $\lambda\mapsto V^{\lambda}$. We view it as right-lax symmetric monoidal via the maps $u^{\lambda_1,\lambda_2}: V^{\lambda_1}\otimes V^{\lambda_2}\to V^{\lambda_1+\lambda_2}$ defined in (\cite{DL}, 2.1.2). Let $\Irr^*: \Lambda^+\to  \Rep(\check{G})$ be the functor $\lambda\mapsto (V^{\lambda})^*$. We view it as right-lax symmetric monoidal via the maps $v^{\lambda_1,\lambda_2}: (V^{\lambda_1})^*\otimes (V^{\lambda_2})^*\to (V^{\lambda_1+\lambda_2})^*$ defined in \select{loc.cit.} It is easy to see that the functors $\Irr, \Irr^*$ are naurally dual to each other in the symmetric monoidal category $\Fun^{rlax}(\Lambda^+, \Rep(\check{G})$.  

\index{$\Fun^{rlax}, \Irr, \Irr^*$, Section~\ref{Sect_2.0.2_now}}

\ssec{Case of $\Bunb_P$} 
\label{Sect_case_of_Bunb_P}

\sssec{Version for $\check{P}^-$} 
\label{Sect_2.1.1_Bunb_P}
Let $I\in fSets$. Let $C=\Rep(\check{M}_{ab})\otimes \Rep(\check{G})\in CAlg(\DGCat_{cont})$. 

 As in (\cite{DL}, 2.2.3), consider the diagram
\begin{equation}
\label{diag_from_DL_for_Sect_2.1.1}
\begin{array}{ccccc}
\check{G}\backslash(\overline{\check{G}/[\check{P}^-, \check{P}^-]})/\check{M}_{ab} & \getsup{j_{M, ab}} &
B(\check{P}^-) & \getsup{\eta} & B(\check{M})\\
& \searrow\lefteqn{\scriptstyle \bar q_{ab}} &
\downarrow\lefteqn{\scriptstyle q_{ab}} & \swarrow\lefteqn{\scriptstyle q_{M, ab}}\\
&& B(\check{G}\times \check{M}_{ab})
\end{array}
\end{equation}
obtained from the diagonal map $\check{P}^-\to \check{G}\times \check{M}_{ab}$. Here $j_{M, ab}$ is obtained by passing to the stack quotient under $\check{G}\times \check{M}_{ab}$ in
$$
\check{G}/[\check{P}^-, \check{P}^-]\hook{} \overline{\check{G}/[\check{P}^-, \check{P}^-]}.
$$
In Sections~\ref{Sect_2.1.1_Bunb_P}-\ref{Pp_2.1.5} we explain how to spread over $X^I$ the colimit formula for $\eta^*j_{M, ab}^*$ from (\cite{DL}, 2.2.13). 

\index{$q_{ab}, \bar q_{ab}, j_{M, ab}, q_{M, ab}, \eta$, Section~\ref{Sect_2.1.1_Bunb_P}}

\sssec{}
\label{Sect_2.1.2_Bunb_P}
The base change of $\eta$ by $\Spec k\to B(\check{G}\times \check{M}_{ab})$ is the map 
$$
\bar\eta_{ab}: \check{G}/[\check{M}, \check{M}]\to \check{G}/[\check{P}^-, \check{P}^-].
$$ 
It induces a map $A\to B$ in $CAlg(C)$, here $A=\cO(\check{G}/[\check{P}^-, \check{P}^-]), B=\cO(\check{G}/[\check{M}, \check{M}])
 \in CAlg(C)$ are the corresponding spaces of regular functions.  Set $E=A-mod(C)$.
 
The diagram (\ref{diag_from_DL_for_Sect_2.1.1}) yields equivalences 
$$
\QCoh(\check{G}\backslash(\overline{\check{G}/[\check{P}^-, \check{P}^-]})/\check{M}_{ab})
\,\iso\, E\;\;\;\mbox{and}\;\;\;\; \Rep(\check{M})\,\iso\, B-mod(C)
$$ 
by (\cite{DL}, 2.2.5). By Lemma~\ref{Lm_A.5.2}, the maps $q_{ab}, q_{M, ab}$ yield canonical equivalences 
$$
\Rep(\check{P}^-)_I\,\iso\, (q_{ab, *}\cO)_{X^I}-mod(C_{X^I})\;\;\;\mbox{and}\;\;\;\;\Rep(\check{M})_I\,\iso\, B_{X^I}-mod(C_{X^I}).
$$ 
By Section~\ref{Sect_A.5.1_now}, we have canonically
$$
E_{X^I}\,\iso\, A_{X^I}-mod(C_{X^I}). 
$$

\index{$\bar\eta_{ab}$, Section~\ref{Sect_2.1.2_Bunb_P}}
  
\sssec{} 
\label{Sect_2.1.3_Bunb_P}
Let $\und{\lambda}\in \Map(I, \Lambda_{M, ab})$. The functor 
$$
\Lambda_{M, ab}\to \Rep(\check{M}_{ab}), \; \lambda\mapsto e^{-\lambda}
$$ 
is symmetric monoidal. So, the construction of (\cite{Ly10}, 6.1.2) associates to it the object $e^{-\und{\lambda}}\in \Rep(\check{M}_{ab})_I$. The latter objects is dualizable in $(\Rep(\check{M}_{ab})_I,  \otimes^!)$ by (\cite{Ly10}, 6.1.16), and the dual is denoted $e^{\und{\lambda}}$ as in (\cite{Ly10}, 6.2.2).   
 
 Assume $\und{\lambda}\in \Map(I, \Lambda^+_{M, ab})$. The functor 
$$
\Lambda^+_{M, ab}\to \Rep(\check{G}), \; \lambda\mapsto V^{\lambda}
$$ 
is right-lax symmetric monoidal. So, the construction of (\cite{Ly10}, 6.1.2) associates to it the object $V^{\und{\lambda}}\in \Rep(\check{G})_I$. The latter object is dualizable in $(\Rep(\check{G})_I,\otimes^!)$, and the dual is denoted $(V^{\und{\lambda}})^*\in \Rep(\check{G})_I$. 
 
 Consider also the functor 
$$
\Irr_C: \Lambda^+_{M, ab}\to C, \; \lambda\mapsto e^{-\lambda}\otimes V^{\lambda}.
$$ 
It is naturally right-lax symmetric monoidal. So, the construction of (\cite{Ly10}, 6.1.2) associates to it the object 
\begin{equation}
\label{object_for_2.1}
\underset{\Tw(I)}{\colim} \; \cF_{\und{\lambda}, \Irr_C}\in C_{X^I}\,\iso\, \Rep(\check{M}_{ab})_I\otimes_{Shv(X^I)} \Rep(\check{G})_I.
\end{equation} 
Here $\cF_{\und{\lambda}, \Irr_C}: \Tw(I)\to C_{X^I}$ is the functor defined in (\cite{Ly10}, 6.1.2). It sends $(I\to J\to K)\in \Tw(I)$ to the image of 
$$
\omega_{X^K}\otimes (\underset{j\in J}{\otimes} (e^{-\lambda_j}\otimes V^{\lambda_j}))
$$
under the structure map $Shv(X^K)\otimes C^{\otimes J}\to C_{X^I}$. Here for $j\in J$, $\lambda_j=\sum_{i\in I_j} \und{\lambda}(i)$. 

As in (\cite{Ly10}, 6.1.15), one shows that (\ref{object_for_2.1}) identifies canonically with 
$$
e^{-\und{\lambda}}\otimes V^{\und{\lambda}}\in \Rep(\check{M}_{ab})_I\otimes_{Shv(X^I)} \Rep(\check{G})_I.
$$ 
The latter object is dualizable in $(C_{X^I}, \otimes^!)$, and its dual is $ 
e^{\und{\lambda}}\otimes (V^{\und{\lambda}})^*$. 

\index{$e^{\und{\lambda}}, V^{\und{\lambda}}, (V^{\und{\lambda}})^*, \Irr_C, \cF_{\und{\lambda}, \Irr_C}$, Section~\ref{Sect_2.1.3_Bunb_P}}

\sssec{} 
\label{Sect_2.1.4}
Recall that 
$$
A\,\iso\, \underset{\lambda\in\Lambda^+_{M, ab}}{\oplus} e^{-\lambda}\otimes V^{\lambda},
$$
and the product in $A$ comes from the right-lax symmetric monoidal structure on the functor $\Irr_C$.

Equip $\Lambda^+_{M, ab}$ with the relation: $\lambda\le\mu$ iff $\mu-\lambda\in\Lambda^+_{M, ab}$. Then $(\Lambda^+_{M, ab}, \le)$ is a filtered category.

 Recall that (\cite{DL}, 2.2.13) gives a canonical isomorphism in $C$
$$
\underset{\lambda\in (\Lambda^+_{M, ab}, \le)}{\colim} A\otimes e^{\lambda}\otimes (V^{\lambda})^*\,\iso\, B,
$$ 
the corresponding functor being defined in \select{loc.cit.}. In particular, for $\und{\lambda}\in \Map(I, \Lambda^+_{M, ab})$ there is a natural map $e^{-\und{\lambda}}\otimes V^{\und{\lambda}}\to A_{X^I}$ in $C_{X^I}$. 

Equip $\Map(I, \Lambda^+_{M, ab})$ with the relation $\und{\lambda}_1\le\und{\lambda}_2$ iff there is $\und{\lambda}\in\Map(I, \Lambda^+_{M, ab})$ with $\und{\lambda}_2=\und{\lambda}_1+\und{\lambda}$. Then $(\Map(I, \Lambda^+_{M, ab}),\le)$ is a filtered category. 

\index{$\le$, Section~\ref{Sect_2.1.4}}

\sssec{} 
\label{Sect_2.1.5}
Let $D\in C_{X^I}-mod$. Now (\cite{Ly10}, 6.2.12) gives the following. For any $d\in A_{X^I}-mod(D)$ there is a well-defined functor 
$$
(\Map(I, \Lambda^+_{M, ab}),\le )\to A_{X^I}-mod(D), \; \und{\lambda}\mapsto d\ast (e^{\und{\lambda}}\otimes (V^{\und{\lambda}})^*).
$$
 
 If $\und{\lambda}, \und{\lambda}_i\in \Lambda^+_{M, ab}$ with $\und{\lambda}_2=\und{\lambda}_1+\und{\lambda}$ then the transition map
$$
d\ast (e^{\und{\lambda_1}}\otimes (V^{\und{\lambda_1}})^*)\to d\ast (e^{\und{\lambda_2}}\otimes (V^{\und{\lambda_2}})^*)
$$
is the composition
\begin{multline*}
d\ast (e^{\und{\lambda_1}}\otimes (V^{\und{\lambda_1}})^*)\to d\ast (e^{\und{\lambda_1}}\otimes (V^{\und{\lambda_1}})^*)\otimes (e^{\und{\lambda}}\otimes (V^{\und{\lambda}})^*)\otimes (e^{-\und{\lambda}}\otimes V^{\und{\lambda}})\to \\ d\ast (e^{\und{\lambda}}\otimes (V^{\und{\lambda}})^*)\otimes (e^{\und{\lambda_1}}\otimes (V^{\und{\lambda_1}})^*)\to 
d\ast (e^{\und{\lambda_2}}\otimes (V^{\und{\lambda_2}})^*)
\end{multline*}
Here the first map comes from the unit of the corresponding duality, and
the second map comes from the composition
$$
d\ast (e^{-\und{\lambda}}\otimes V^{\und{\lambda}})\to d\ast A_{X^I}\toup{\act} d,
$$
where $\act$ is the action map. The third map comes from the morphism
$$
(e^{\und{\lambda_1}}\otimes (V^{\und{\lambda_1}})^*)\otimes (e^{\und{\lambda}}\otimes (V^{\und{\lambda}})^*)\to (e^{\und{\lambda_2}}\otimes (V^{\und{\lambda_2}})^*),
$$
which manifests the right-lax symmetric monoidal structure on the functor 
$$
\Map(I, \Lambda^+_{M, ab})\to (C_{X^I}, \otimes^!), \; \und{\mu}\mapsto e^{\und{\mu}}\otimes (V^{\und{\mu}})^*
$$ 
described in (\cite{Ly10}, 6.1.12).  
 
\begin{Pp} 
\label{Pp_2.1.5}
The functor 
\begin{equation}
\label{functor_for_Pp_2.1.5}
A_{X^I}-mod(D)\to B_{X^I}-mod(D), \; d\mapsto B_{X^I}\otimes_{A_{X^I}} d
\end{equation}
identifies canonically with the functor $d\mapsto \underset{\und{\lambda}\in(\Map(I, \Lambda^+_{M, ab}), \le)}{\colim} d\ast (e^{\und{\lambda}}\otimes (V^{\und{\lambda}})^*)$.
\end{Pp}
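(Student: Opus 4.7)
The plan is to reduce the statement to the non-spread colimit formula (\cite{DL}, 2.2.13) by ``spreading'' it along the Ran space, using the formalism of (\cite{Ly10}, Section~6). Concretely, the right-lax symmetric monoidal functor
\[
\Lambda^+_{M, ab} \to C, \qquad \lambda \mapsto A \otimes (e^{\lambda}\otimes (V^{\lambda})^*),
\]
(with transition maps of the kind made explicit in Section~\ref{Sect_2.1.5}) admits, by (\cite{Ly10}, 6.1.2), a spread to a right-lax symmetric monoidal functor
\[
\Map(I, \Lambda^+_{M, ab}) \to C_{X^I}, \qquad \und{\lambda} \mapsto A_{X^I}\otimes^! (e^{\und{\lambda}}\otimes (V^{\und{\lambda}})^*).
\]
The key first step is to upgrade the $C$-level colimit formula of (\cite{DL}, 2.2.13) to the isomorphism
\[
\underset{\und{\lambda}\in (\Map(I, \Lambda^+_{M, ab}),\le)}{\colim} A_{X^I}\otimes^!(e^{\und{\lambda}}\otimes (V^{\und{\lambda}})^*) \;\iso\; B_{X^I}
\]
in $A_{X^I}\mhyphen mod(C_{X^I})$. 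By Section~\ref{Sect_A.5.1_now} and the general spreading results of (\cite{Ly10}, Section~6.2), since $!$-restriction along $X^I \to \Ran$ commutes with filtered colimits and the structural data $A \to B$ spreads to $A_{X^I} \to B_{X^I}$, this reduces to verifying that the spreading construction carries the filtered colimit on the LHS to the $!$-restriction of the filtered colimit defining $\Fact(B)$; this is one of the main inputs of \select{loc. cit.}

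Once this spread colimit formula is established in $A_{X^I}\mhyphen mod(C_{X^I})$, the second step is purely formal. The base change functor (\ref{functor_for_Pp_2.1.5}) is given by $d \mapsto B_{X^I}\otimes_{A_{X^I}} d$. Since the relative tensor product commutes with filtered colimits in each variable, we obtain
\[
B_{X^I}\otimes_{A_{X^I}} d \;\iso\; \underset{\und{\lambda}}{\colim} \bigl(A_{X^I}\otimes^! (e^{\und{\lambda}}\otimes (V^{\und{\lambda}})^*)\bigr)\otimes_{A_{X^I}} d \;\iso\; \underset{\und{\lambda}}{\colim}\, d\ast (e^{\und{\lambda}}\otimes (V^{\und{\lambda}})^*),
\]
where the last step uses that for $x \in C_{X^I}$ the $A_{X^I}$-module $A_{X^I}\otimes^! x$ is the free $A_{X^I}$-module on $x$, so that its base change along $A_{X^I}\to$ anything, tensored with $d$, gives $d \ast x$ in the sense of (\cite{Ly10}, 6.2.12).

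The third step, and the main technical obstacle, is identifying the transition maps in the colimit obtained from the spreading with the explicitly described transition maps of Section~\ref{Sect_2.1.5}. For $\und{\lambda}_1 \le \und{\lambda}_2$ with difference $\und{\lambda}$, the spread transition map is manufactured out of (a) the multiplication $A_{X^I}\otimes (e^{-\und{\lambda}}\otimes V^{\und{\lambda}}) \to A_{X^I}$ coming from the right-lax monoidal structure of $\Irr_C$, (b) the right-lax structure on $\und{\mu}\mapsto e^{\und{\mu}}\otimes (V^{\und{\mu}})^*$ described in (\cite{Ly10}, 6.1.12), and (c) the duality unit for $e^{-\und{\lambda}}\otimes V^{\und{\lambda}}$ in $(C_{X^I},\otimes^!)$. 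Matching these against the three arrows displayed in Section~\ref{Sect_2.1.5} amounts to checking that the unit/counit diagrams for the dualities in $C_{X^I}$ are compatible with the spreading of the same duality data in $C$; this is a formal consequence of (\cite{Ly10}, 6.1.15--6.1.16), but must be unwound carefully to match the description of the colimit diagram on the nose.
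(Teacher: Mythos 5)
Your argument is essentially the paper's: the paper's entire proof is the citation ``Apply (\cite{Ly10}, 6.2.12)'', and your Steps 1--3 merely re-derive that statement (via the special case $d=A_{X^I}$ followed by formal base change along $A_{X^I}\to B_{X^I}$) while deferring the two substantive points --- the spread colimit formula and the matching of the transition maps with those of Section~\ref{Sect_2.1.5} --- back to \cite{Ly10}, Sections~6.1--6.2, i.e.\ to the same source. Only note that your parenthetical justification of Step 1 (``$!$-restriction along $X^I\to\Ran$ commutes with filtered colimits'') would not suffice as a standalone argument, since $A\mapsto A_{X^I}$ involves tensor powers over $\Tw(I)$ and the index category changes from $\Lambda^+_{M, ab}$ to $\Map(I,\Lambda^+_{M, ab})$; that passage is exactly the non-formal content of (\cite{Ly10}, 6.2.12), which the paper invokes directly.
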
 
\begin{proof} Apply (\cite{Ly10}, 6.2.12).
\end{proof}
By Section~\ref{Sect_2.1.2_Bunb_P}, the functor (\ref{functor_for_Pp_2.1.5}) rewrites as 
$$
D\otimes_{C_{X^I}} E_{X^I}\to D\otimes_{C_{X^I}} \Rep(\check{M})_I
$$

\sssec{Version for $\check{P}$} 
\label{Sect_2.1.7_now}
We spell the version of Proposition~\ref{Pp_2.1.5} for $\check{P}^-$ replaced by $\check{P}$.

 Let $C, B$ be as in Section~\ref{Sect_2.1.2_Bunb_P}. Set $A=\cO(\check{G}/[\check{P},\check{P}])$. We are spreading over $X^I$ the colimit formula for the pullback under 
$$
 B(\check{M})\to B(\check{P})\to \check{G}\backslash(\overline{\check{G}/[\check{P}, \check{P}]})/\check{M}_{ab}.
$$
 
  Keep notations of Section~\ref{Sect_2.1.3_Bunb_P}. The functor $\Irr_C$ is dualizable in the symmetric monoidal category $\Fun^{rlax}(\Lambda^+_{M, ab}, C)$, and its dual is the functor
$$
\Irr_C^*: \Lambda^+_{M, ab}\to C, \; \lambda\mapsto e^{\lambda}\otimes (V^{\lambda})^*.
$$  
\index{$\Irr_C^*$, Section~\ref{Sect_2.1.7_now}}
Recall that 
$$
A\,\iso\, \underset{\lambda\in\Lambda^+_{M, ab}}{\oplus} e^{\lambda}\otimes (V^{\lambda})^*\in C, 
$$
and the product on $A$ comes from the right-lax symmetric monoidal structure on $\Irr_C^*$. For $\und{\lambda}\in\Map(I, \Lambda^+_{M, ab})$ we get a natural map $e^{\und{\lambda}}\otimes (V^{\und{\lambda}})^*\to A_{X^I}$.

 By (\cite{DL}, Section~2.2.18), one has canonically 
$$
\underset{\lambda\in (\Lambda^+_{M, ab}, \le)}{\colim} A\otimes e^{-\lambda}\otimes V^{\lambda}\,\iso\, B
$$ 
in $C$, the corresponding functor being defined in \select{loc.cit.}  
 
 Let $D\in C_{X^I}-mod$. Then for $d\in D$ there is a well-defined functor
\begin{equation}
\label{functor_for_version_checkP} 
(\Map(I, \Lambda^+_{M, ab}),\le)\to A_{X^I}-mod(D), \, \lambda\mapsto d\ast (e^{-\und{\lambda}}\otimes V^{\und{\lambda}})
\end{equation} 
 
The version of Proposition~\ref{Pp_2.1.5} in our situation claims that the functor 
\begin{equation}
\label{functor_for_version_checkP_final}
A_{X^I}-mod(D)\to B_{X^I}-mod(D), \, d\mapsto B_{X^I}\otimes_{A_{X^I}} d
\end{equation} 
identifies canonically with 
$$
d\mapsto  \underset{\und{\lambda}\in(\Map(I, \Lambda^+_{M, ab}), \le)}{\colim} d\ast (e^{-\und{\lambda}}\otimes V^{\und{\lambda}}).
$$

 For $E=A-mod(C)$ that (\ref{functor_for_version_checkP_final}) rewrites as 
$$
D\otimes_{C_{X^I}} E_I\to D\otimes_{C_{X^I}} \Rep(\check{M})_I.
$$

\ssec{Case of $\Bunt_P$} 
\label{Sect_2.2_Bunt_P}

\sssec{}
\label{Sect_2.2.1}
Let $I\in fSets$ and $C=\Rep(\check{M})\otimes \Rep(\check{G})\in CAlg(\DGCat_{cont})$. As in (\cite{DL}, 2.1.4), consider the diagram
\begin{equation}
\label{diag_for_Sect_2.2.1}
\begin{array}{ccccc}
\check{G}\backslash (\overline{\check{G}/U(\check{P}^-)})/\check{M} & \getsup{j_M} & 
B(\check{P}^-) & \getsup{\eta} & B(\check{M})\\
& \searrow\lefteqn{\scriptstyle \bar q} &
\downarrow\lefteqn{\scriptstyle q} & \swarrow\lefteqn{\scriptstyle q_M}  \\
&& B(\check{G}\times\check{M}),
\end{array} 
\end{equation}
where the maps come from $\check{M}\to \check{P}^-\to \check{G}\times\check{M}$, the second map being the diagonal one. 

 In Sections~\ref{Sect_2.2.1}-\ref{Pp_2.2.6} we explain how to spread over $X^I$ using  $C$-actions the colimit formula for the functor $\eta^*j_M^*$ from (\cite{DL}, 2.1.11). 
 
\index{$j_M, \bar q$, Section~\ref{Sect_2.2.1}} 

\sssec{} After the base change by $\Spec k\to B(\check{G}\times\check{M})$ the map $\eta$ becomes $\bar\eta: \check{G}\to \check{G}/U(\check{P}^-)$. It induces a map $\cA\to \cB$ in $CAlg(C)$, where $\cA=\cO(\check{G}/U(\check{P}^-)), \cB=\cO(\check{G})$ are the corresponding spaces of regular functions. Set $E=\cA-mod(C)$. 

 The diagram (\ref{diag_for_Sect_2.2.1}) yields equivalences
$$
\QCoh(\check{G}\backslash (\overline{\check{G}/U(\check{P}^-)})/\check{M})\,\iso\, E,\;\;\;\; \Rep(\check{M})\,\iso\, \cB-mod(C),
$$
and $\Rep(\check{P}^-)\,\iso\, (q_*\cO)-mod(C)$ by (\cite{DL}, 2.1.4). By Lemma~\ref{Lm_A.5.2}, the maps $q, q_M$ yield canonical equivalences
$$
\Rep(\check{P}^-)_I\,\iso\, (q_*\cO)_{X^I}-mod(C_{X^I}), \;\;\;\;\; \Rep(\check{M})_I\,\iso\, \cB_{X^I}-mod(C_{X^I}).
$$ 
By Section~\ref{Sect_A.5.1_now}, we have canonically $E_I\,\iso\, \cA_{X^I}-mod(C_{X^I})$. 

\sssec{} 
\label{Sect_2.2.3_now}
Let the functors $\Lambda^+_{M, ab}\to \Rep(\check{M}), \lambda\mapsto e^{-\lambda}$ and $\Lambda^+_{M, ab}\to \Rep(\check{G}), \lambda\mapsto V^{\lambda}$ be as in Section~\ref{Sect_2.1.3_Bunb_P}. For $\und{\lambda}\in \Map(I, \Lambda^+_{M, ab})$  they produce 
$$
e^{-\und{\lambda}}\in \Rep(\check{M})_I, V^{\und{\lambda}}\in \Rep(\check{G})_I.
$$

Write $\Irr_C: \Lambda^+_{M, ab}\to C$ for the functor $\lambda\mapsto e^{-\lambda}\otimes V^{\lambda}$. It is naturally right-lax symmetric monoidal. As in Section~\ref{Sect_2.1.3_Bunb_P} it gives the object $e^{-\und{\lambda}}\otimes V^{\und{\lambda}}\in C_{X^I}$, which is dualizable in $(C_{X^I}, \otimes^!)$ with the dual $e^{\und{\lambda}}\otimes (V^{\und{\lambda}})^*$. 

\index{$\Irr_C$, Section~\ref{Sect_2.2.3_now}}

\sssec{} Equip $\Lambda^+_{M, ab}$, $\Map(I, \Lambda^+_{M, ab})$ with the relations $\le$ as in Section~\ref{Sect_2.1.4}. Recall that by (\cite{DL}, 2.1.11), there is a canonical isomorphism in $C$
$$
\underset{\lambda\in (\Lambda^+_{M, ab}, \le)}{\colim} \cA\otimes e^{\lambda}\otimes (V^{\lambda})^*\,\iso\, \cB.
$$
It comes from the canonical isomorphism $\cA\otimes_A B\,\iso\, \cB$ in $\Rep(\check{G}\times\check{M}_{ab})$ obtained in  (\cite{DL}, 2.1.12), here $A,B$ are those of Section~\ref{Sect_2.1.2_Bunb_P}. 


\sssec{} Let $D\in C_{X^I}-mod$. Now (\cite{Ly10}, 6.2.12) gives the following. For any $d\in \cA_{X^I}-mod(D)$ there is a well-defined functor
\begin{equation}
\label{functor_for_Sect_2.2.5} 
(\Map(I, \Lambda^+_{M, ab}),\le)\to \cA_{X^I}-mod(D), \; \und{\lambda}\mapsto d\ast e^{\und{\lambda}}\otimes (V^{\und{\lambda}})^* 
\end{equation}
defined along the lines of Section~\ref{Sect_2.1.5}.

\begin{Pp} 
\label{Pp_2.2.6}
The functor
\begin{equation}
\label{functor_for_Bunt_P_case_final}
\cA_{X^I}-mod(D)\to \cB_{X^I}-mod(D), \; d\mapsto \cB_{X^I}\otimes_{\cA_{X^I}} d
\end{equation}
identifies canonically with the functor 
$$
d\mapsto \underset{\und{\lambda}\in (\Map(I, \Lambda^+_{M, ab}),\le)}{\colim}  d\ast e^{\und{\lambda}}\otimes (V^{\und{\lambda}})^*.
$$
\end{Pp}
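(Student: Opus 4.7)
The plan is to apply (\cite{Ly10}, Proposition~6.2.12) directly, in complete parallel with the proof of Proposition~\ref{Pp_2.1.5}. First I would promote the colimit formula $\underset{\lambda\in(\Lambda^+_{M, ab}, \le)}{\colim}\,\cA\otimes e^{\lambda}\otimes (V^{\lambda})^*\,\iso\,\cB$ in $C$ from (\cite{DL}, 2.1.11) to its $X^I$-version. Recall that this identity in $C$ is itself obtained by tensoring the colimit description of $B$ from (\cite{DL}, 2.2.13) over $A$ with $\cA$, using the base change isomorphism $\cA\otimes_A B\,\iso\,\cB$ of (\cite{DL}, 2.1.12). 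Following Section~\ref{Sect_2.1.3_Bunb_P}, the right-lax symmetric monoidal functor $\Irr_C^*: \Lambda^+_{M, ab}\to C$, $\lambda\mapsto e^{\lambda}\otimes(V^{\lambda})^*$, produces via the construction of (\cite{Ly10}, 6.1.2) the objects $e^{\und{\lambda}}\otimes(V^{\und{\lambda}})^*\in C_{X^I}$ for $\und{\lambda}\in\Map(I,\Lambda^+_{M, ab})$, together with canonical maps to $\cA_{X^I}$, assembled into a diagram indexed by the filtered category $(\Map(I,\Lambda^+_{M, ab}),\le)$.

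Next, for any $D\in C_{X^I}-mod$ and any $d\in\cA_{X^I}-mod(D)$, the recipe spelled out in Section~\ref{Sect_2.1.5} produces the filtered system (\ref{functor_for_Sect_2.2.5}) of objects $d\ast(e^{\und{\lambda}}\otimes(V^{\und{\lambda}})^*)$ in $\cA_{X^I}-mod(D)$, with transition maps built from the duality of $\Irr_C$ and $\Irr_C^*$ in $\Fun^{rlax}(\Lambda^+_{M, ab},C)$ combined with the action of $\cA_{X^I}$ on $d$. Feeding this data into (\cite{Ly10}, Proposition~6.2.12) then yields the desired canonical identification of the base change functor (\ref{functor_for_Bunt_P_case_final}) with the colimit $d\mapsto \underset{\und{\lambda}}{\colim}\,d\ast(e^{\und{\lambda}}\otimes(V^{\und{\lambda}})^*)$.

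No serious obstacle is anticipated: all of the substantive work has been absorbed into the general machinery of (\cite{Ly10}, Section~6.2), and the present result is just its specialization to the symmetric monoidal category $C=\Rep(\check{M})\otimes\Rep(\check{G})$ and the map of commutative algebras $\cA\to\cB$, in the same way that Proposition~\ref{Pp_2.1.5} was the specialization to the $\Bunb_P$ case. The only routine verification is that the transition maps in the resulting filtered system match those prescribed by the duality data, and this is automatic from the formalism.
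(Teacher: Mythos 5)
Your proposal is correct and follows essentially the same route as the paper: the preparatory material (the $X^I$-spreading of the colimit formula from (\cite{DL}, 2.1.11), the objects $e^{\und{\lambda}}\otimes(V^{\und{\lambda}})^*$ and the filtered system (\ref{functor_for_Sect_2.2.5})) is exactly what Sections~\ref{Sect_2.2.1}--\ref{Sect_2.2.3_now} set up, and the paper's proof is likewise a direct application of (\cite{Ly10}, 6.2.12), in parallel with Proposition~\ref{Pp_2.1.5}.
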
  
\begin{proof} Apply (\cite{Ly10}, 6.2.12). 
\end{proof}

 Note that (\ref{functor_for_Bunt_P_case_final}) rewrites as the functor
$$
D\otimes_{C_{X^I}} E_I \to D\otimes_{C_{X^I}} \Rep(\check{M})_I
$$  
\begin{Rem} Assume $\cD\in \Fact(C)-mod$ and $D=\Gamma(X^I, \cD)$ is the category of its sections over $X^I$. Then the results of Propositions~\ref{Pp_2.1.5}, \ref{Pp_2.2.6} are compatible with $!$-restrictions along $X^J\to X^I$ for a map $I\to J$ in $fSets$.
\end{Rem}

\begin{Rem} 
\label{Rem_2.2.8}
Write $\Fact(C)-mod^{\Fact}$ for the category of factorizable sheaves of $\Fact(C)$-module categories on $\Ran$. So, for $\cD\in \Fact(C)-mod^{\Fact}$, its sections over $X^I$ is $\Gamma(X^I, \cD)\in C_{X^I}-mod$. Then both $\Fact(\cA)-mod(\cD), \Fact(\cB)-mod(\cD)$ are factorization sheaves of categiories over $\Ran$, and the functors
$$
\cA_{X^I}-mod(\Gamma(X^I, \cD))\to \cB_{X^I}-mod(\Gamma(X^I, \cD)), \; d\mapsto \cB_{X^I}\otimes_{\cA_{X^I}} d
$$
are naturally compatible with factorization.
\index{$\Fact(C)-mod^{\Fact}$, Remark~\ref{Rem_2.2.8}}
\end{Rem}

\section{Properties of the semi-infinite $\IC$-sheaf}
\label{Sect_properties_semi-infinite_IC}

In Section~\ref{Sect_properties_semi-infinite_IC} we apply the Drinfeld-Pl\"ucker formalism (in its $\Bunt_P$-version) to construct our main object of study $'\!\IC^{\frac{\infty}{2}}_{P,\Ran}$. We also introduce
in Section~\ref{Sect_Graded Satake functor} one of the main new technical ingredients of this paper, the graded Satake functors. They are used in Section~\ref{Sect_Restrictions to strata} to describe the $*$-restrictions of $'\!\IC^{\frac{\infty}{2}}_{P,\Ran}$ to the strata. 
One of the main results of Section~\ref{Sect_properties_semi-infinite_IC} is Corollary~\ref{Cor_3.3.7}, which relates $'\!\IC^{\frac{\infty}{2}}_{P,\Ran}$ and $\IC^{\frac{\infty}{2}}_{P,\Ran}$. 

\ssec{Presentation of $\IC_{P,\Ran}^{\frac{\infty}{2}}$ as a colimit}

\sssec{} Let $I\in fSets$. Recall the action of $\Sph_{M, I}$ (resp., of $\Sph_{G, I}$) on $Shv(\Gr_{G, I})^{\gL^+(M)_I}$ by convolutions on the left (resp., on the right). These actions preserve the full subcategory $\SI_{P, I}$ (cf. Remark~\ref{Rem_1.2.11} and Section~\ref{Sect_1.2.15}).

 Restricting under $\Sat_{M, I}: \Rep(\check{M})_I\to \Sph_{M, I}$ and $\Sat_{G, I}: \Rep(\check{G})_I\to \Sph_{G, I}$ we get an action of $(\Rep(\check{M})\otimes\Rep(\check{G}))_I$ on $Shv(\Gr_{G, I})^{\gL^+(M)_I}$.
 
\sssec{} 
\label{Sect_3.1.2_now}
Let $\und{\lambda}: I\to \Lambda_{M, ab}$. Consider $e^{\und{\lambda}}\in \Rep(\check{M}_{ab})_I$ defined in Section~\ref{Sect_2.1.3_Bunb_P}. It is easy to see that for $K\in Shv(\Gr_{G, I})^{\gL^+(M)_I}$ one has $\Sat_{M, I}(e^{\und{\lambda}})\ast K\,\iso\, \und{\lambda}K$, the latter being defined in Section~\ref{Sect_1.3.30}.  

\sssec{Shifted Satake action} 
\label{Sect_3.1.3_now}
For $\theta\in\Lambda_{G,P}$, $V\in\Rep(\check{M})$ let $V_{\theta}\subset V$ be the direct summand on which $Z(\check{M})$ acts by $\theta$. 
Consider the functor 
$$
Sh: \Rep(\check{M})\to \Rep(\check{M}),\;\; \underset{\theta\in\Lambda_{G,P}}{\oplus} V_{\theta}\mapsto\underset{\theta\in\Lambda_{G,P}}{\oplus} V_{\theta}[-\<\theta, 2\check{\rho}-2\check{\rho}_M\>].
$$
This is a map in $CAlg(\DGCat_{cont})$. So, for $I\in fSets$ it induces a morphism $Sh_I: \Rep(\check{M})_I\to \Rep(\check{M})_I$ in $CAlg(Shv(X^I)-mod)$ by functoriality, as well as 
$$
\Fact(\Rep(\check{M}))\to \Fact(\Rep(\check{M}))
$$ 
in $CAlg(Shv(\Ran)-mod)$. 

 Let $\Sat'_{M, I}$ be the composition $\Rep(\check{M})_I\toup{Sh_I} \Rep(\check{M})_I\toup{\Sat_{M, I}} \Sph_{M, I}$. We refer to it as the shifted Satake functor for $M$.  
 
\index{$V_{\theta}, Sh, Sh_I, \Sat'_{M, I}$, Section~\ref{Sect_3.1.3_now}} 
 
\sssec{} We need the following generalization of (\cite{DL}, Lemma~4.1.15).

\begin{Lm} 
\label{Lm_3.1.5_now}
Let $\eta, \lambda\in\Lambda^+$ with $\ov{\Gr}_G^{\lambda}\subset \ov{\Gr}_G^{\eta}$. Let $i^{\eta}_M: \ov{\Gr}_M^{\eta}\hook{} \ov{\Gr}_G^{\eta}$ be the closed immersion. The complex $(i^{\eta}_M)^!\Sat_G(V^{\lambda})$ is placed in perverse degrees $\ge \<\eta, 2\check{\rho}-2\check{\rho}_M\>$. 
\end{Lm}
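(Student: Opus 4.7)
The plan is to stratify $\ov{\Gr}_M^{\eta}$ by its $M(\cO)$-orbits and, on each stratum, compute the $!$-restriction via Braden's contraction principle together with the parabolic Mirkovi\'c--Vilonen theorem. Concretely, stratify $\ov{\Gr}_M^{\eta}$ by the smooth locally closed pieces $\Gr_M^{\mu}$ for $\mu \in \Lambda_M^+$ with $\eta - \mu \in \Lambda_M^{pos}$, and denote by $i^{\mu}_M: \Gr_M^{\mu}\hook{}\Gr_G$ the corresponding locally closed immersion. By the standard gluing of the perverse t-structure along a stratification, to establish the bound on $\ov{\Gr}_M^{\eta}$ it suffices to show, for each such $\mu$, that $(i^{\mu}_M)^!\Sat_G(V^{\lambda})$ lies in perverse degrees $\geq \<\eta, 2\check{\rho} - 2\check{\rho}_M\>$.

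Fix such $\mu$ and pick a cocharacter $\check{\chi} \in \check{\Lambda}$ satisfying $\<\alpha_i, \check{\chi}\> = 0$ for $i \in \cI_M$ and $\<\alpha_i, \check{\chi}\> > 0$ for $i \in \cI_G - \cI_M$. The resulting $\Gm$-action on $\Gr_G$ has fixed locus $\Gr_M$, and the attracting cell of the connected component $\Gr_M^{\mu}$ is the semi-infinite orbit $S^{\mu}_P$, with retraction $p_P^{\mu}: S^{\mu}_P \to \Gr_M^{\mu}$ and locally closed inclusion $j_P^{\mu}: S^{\mu}_P \hook{} \Gr_G$. Since $\Sat_G(V^{\lambda})$ is $G(\cO)$-equivariant and hence $\Gm$-monodromic, Braden's theorem yields a canonical isomorphism
$$
(i^{\mu}_M)^!\Sat_G(V^{\lambda})\,\iso\,(p_P^{\mu})_*(j_P^{\mu})^!\Sat_G(V^{\lambda}).
$$

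The key input is now the parabolic Mirkovi\'c--Vilonen theorem, equivalently, the perverse t-exactness of hyperbolic localization on the spherical category: the shifted functor $(p_P^{\mu})_*(j_P^{\mu})^![-\<\mu, 2\check{\rho} - 2\check{\rho}_M\>]$ is perverse t-exact on $G(\cO)$-equivariant perverse sheaves on $\Gr_G$, and applied to $\Sat_G(V^{\lambda})$ yields the perverse sheaf on $\Gr_M^{\mu}$ corresponding under $\Sat_M$ to the $\mu$-isotypic component of $V^{\lambda}|_{\check{M}}$. In particular, $(p_P^{\mu})_*(j_P^{\mu})^!\Sat_G(V^{\lambda})$ is concentrated in the single perverse degree $\<\mu, 2\check{\rho} - 2\check{\rho}_M\>$.

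Finally, write $\eta - \mu = \sum_{i \in \cI_M} n_i \alpha_i$ with $n_i \geq 0$; since $\<\alpha_i, \check{\rho}\> = \<\alpha_i, \check{\rho}_M\> = 1$ for $i \in \cI_M$, one has $\<\eta - \mu, 2\check{\rho} - 2\check{\rho}_M\> = 0$, so $\<\mu, 2\check{\rho} - 2\check{\rho}_M\> = \<\eta, 2\check{\rho} - 2\check{\rho}_M\>$. Combined with the previous paragraph, this delivers the required coconnectivity on each stratum, proving the claim on $\ov{\Gr}_M^{\eta}$. The chief technical input is the parabolic MV theorem; the orthogonality of $2\check{\rho} - 2\check{\rho}_M$ to the simple coroots of $M$ is what makes the uniform bound across all strata in the closure hold.
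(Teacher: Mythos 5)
Your reduction to the $M(\cO)$-orbit strata of $\ov{\Gr}_M^{\eta}$ and the closing observation that $\<\eta-\mu,2\check{\rho}-2\check{\rho}_M\>=0$ are fine (the paper does the same), but the key step on each stratum is wrong. The contraction principle has the opposite variance from the one you use: if $p_P^{\mu}:S_P^{\mu}\to \Gr_M^{\mu}$ is the attracting retraction and $s:\Gr_M^{\mu}\hook{} S_P^{\mu}$ the inclusion of the fixed points, then on monodromic objects $(p_P^{\mu})_*\iso s^*$ and $(p_P^{\mu})_!\iso s^!$, so what the contraction principle gives is $(i^{\mu}_M)^!\iso (p_P^{\mu})_!(j_P^{\mu})^!$, not $(p_P^{\mu})_*(j_P^{\mu})^!$. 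More importantly, neither of these computes the Jacquet functor: the parabolic Mirkovi\'c--Vilonen/Braden statement concerns the mixed combinations $(p_P^{\mu})_!(j_P^{\mu})^*\iso (p_{P^-}^{\mu})_*(j_{P^-}^{\mu})^!$, and only those are t-exact up to shift and realize $\Res^{\check{G}}_{\check{M}}$ under Satake. The costalk functor $(i^{\mu}_M)^!$ is not t-exact up to shift, and its values are not concentrated in a single perverse degree: they are governed by Lusztig's $q$-analogues of weight multiplicities. Concretely, for $G=\SL_2$, $P=B$, $M=T$, $\lambda=\alpha$, $\mu=0$ one has $(i^{0}_T)^!\IC(\ov{\Gr}_G^{\alpha})\iso e[-2]$, concentrated in degree $2$, not in the degree $\<\mu,2\check{\rho}\>=0$ your argument predicts; so the single-degree concentration you invoke (which would be far stronger than the one-sided bound the lemma asserts) is simply false. (As a further sanity check, the functor you actually wrote, $(p_P^{\mu})_*(j_P^{\mu})^!$, is by Braden for the inverse cocharacter the Jacquet functor for $P^-$, and its concentration degree is $-\<\mu,2\check{\rho}-2\check{\rho}_M\>$, not $+\<\mu,2\check{\rho}-2\check{\rho}_M\>$; in any case it is not $(i^{\mu}_M)^!$, so the two errors do not cancel.)

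For comparison, the paper's per-stratum argument is elementary and avoids hyperbolic localization: for $\nu\in\Lambda^+_M$ with $\Gr_M^{\nu}\subset\ov{\Gr}_M^{\eta}$ meeting $\ov{\Gr}_G^{\lambda}$, it factors $\Gr_M^{\nu}\hook{}\Gr_G^{\nu}\hook{}\ov{\Gr}_G^{\lambda}$ through the $G(\cO)$-orbit, uses that $(j_{\Gr_G^{\nu}})^!\Sat_G(V^{\lambda})$ lies in perverse degrees $\ge 0$ with smooth perverse cohomology sheaves, and then that $!$-restriction of such a complex to the smooth closed subvariety $\Gr_M^{\nu}$ raises perverse degrees by at least $\codim(\Gr_M^{\nu},\Gr_G^{\nu})=\<\nu,2\check{\rho}-2\check{\rho}_M\>=\<\eta,2\check{\rho}-2\check{\rho}_M\>$; the case $\lambda=\eta$ is quoted from earlier work. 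If you insist on a hyperbolic-localization route, you would instead need lower bounds on the perverse degrees of $(j_P^{\mu})^!\Sat_G(V^{\lambda})$, which amounts to the MV dimension estimates for $S_P^{\theta}\cap\ov{\Gr}_G^{\lambda}$ --- a genuine additional input, not a formal consequence of t-exactness of the Jacquet functor.
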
 
\begin{proof}
For $\lambda=\eta$ this is (\cite{DL}, Lemma~4.1.15). Assume $\lambda\ne\eta$. 
Let $\nu\in\Lambda^+_M$ with $\Gr_M^{\nu}\subset \ov{\Gr}_M^{\eta}$, so $\eta-\nu\in\Lambda^{pos}_M$. Assume $\Gr^{\nu}_M\cap \ov{\Gr}_G^{\lambda}\ne\emptyset$ then $\eta\ne\nu$ and 
$$
\Gr_M^{\nu}\hook{} \Gr_G^{\nu}\hook{} \ov{\Gr}_G^{\lambda}.
$$

The $!$-restriction of $\Sat_G(V^{\lambda})$ to $\Gr_G^{\nu}$ is placed in perverse degrees $\ge 0$ and has smooth perverse cohomology sheaves. 

 For any bounded complex on $\Gr_G^{\nu}$ placed in perverse degrees $\ge 0$ and having smooth perverse cohomology sheaves, its $!$-restrictiion to $\Gr_M^{\nu}$ is placed in perverse degrees $\ge \codim(\Gr_M^{\nu}, \Gr_G^{\nu})=\<\nu, 2\check{\rho}-2\check{\rho}_M\>=\<\eta, 2\check{\rho}-2\check{\rho}_M\>$, because the image of $\eta-\nu$ in $\Lambda_{G,P}$ vanishes. 
\end{proof}

\sssec{} 
\label{Sect_3.1.6_now}
Let $\und{\lambda}\in \Map(I, \Lambda^+)$ and $\lambda=\sum_{i\in I} \lambda_i$. The fibre of $\Gr_{G,I}\to X^I$ over $(x_i)\in \oo{X}{}^I$ is $\prod_{i\in I} \Gr_{G, x_i}$ canonically. Consider the closed subscheme $\ov{\Gr}^{\und{\lambda}}_{\, G, I}\mid_{\oo{X}{}^I}\subset \Gr_{G,I}\mid_{\oo{X}{}^I}$ whose fibre over $(x_i)\in \oo{X}{}^I$ is $\prod_{i\in I} \ov{\Gr}^{\lambda_i}_{G, x_i}$. Write $\ov{\Gr}^{\und{\lambda}}_{\, G,I}$ for the closure of $\ov{\Gr}^{\und{\lambda}}_{\, G, I}\mid_{\oo{X}{}^I}$ in $\Gr_{G, I}$. 

\index{$\ov{\Gr}^{\und{\lambda}}_{\, G,I}, \ov{\Gr}^{\und{\lambda}}_{\, M, I}$, Section~\ref{Sect_3.1.6_now}}

 Define $\ov{\Gr}^{\und{\lambda}}_{\, M, I}$ similarly replacing $G$ by $M$. 
\begin{Lm} 
\label{Lm_3.1.7_nice_map}
Consider the closed immersion $i: \ov{\Gr}^{\und{\lambda}}_{\, M, I}\to \ov{\Gr}^{\und{\lambda}}_{\, G, I}$. \\
i) The complex $i^!\IC(\ov{\Gr}^{\und{\lambda}}_{\, G, I})$ is placed in perverse degrees $\ge \<\lambda, 2\check{\rho}-2\check{\rho}_M\>$.\\
ii) there is a canonical map 
\begin{equation}
\label{map_for_Lm_3.1.5_Step1}
\IC(\ov{\Gr}^{\und{\lambda}}_{\, M, I})[-\<\lambda, 2\check{\rho}-2\check{\rho}_M\>]\to 
\IC(\ov{\Gr}^{\und{\lambda}}_{\, G, I})
\end{equation}
in $Shv(\Gr_{G, I})^{\gL^+(M)_I}$, which after applying $i^!$ becomes an isomorphism in perverse degree $\<\lambda, 2\check{\rho}-2\check{\rho}_M\>$. 
\end{Lm}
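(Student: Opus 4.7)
Both parts will be proved together by analyzing $i^!\IC(\ov{\Gr}^{\und{\lambda}}_{\,G,I})$ stratum by stratum along the diagonal stratification of $X^I$ indexed by surjections $\phi:I\twoheadrightarrow J$ in $fSets$; on the stratum $\oo{X}{}^J\subset X^I$ one sets $\mu_j=\sum_{i\in I_j}\lambda_i\in\Lambda^+$.

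\textbf{Open stratum $\oo{X}{}^I$.} Factorizable Satake identifies $\IC(\ov{\Gr}^{\und{\lambda}}_{\,G,I})|_{\oo{X}{}^I}$ with the exterior product of the $\IC(\ov{\Gr}_G^{\lambda_i})$, and $\ov{\Gr}^{\und{\lambda}}_{\,M,I}|_{\oo{X}{}^I}$ with the relative product of the $\ov{\Gr}_M^{\lambda_i}$ over $\oo{X}{}^I$. Applying \lemref{Lm_3.1.5_now} with $\eta=\lambda_i$ to each factor and using K\"unneth, $i^!\IC(\ov{\Gr}^{\und{\lambda}}_{\,G,I})|_{\oo{X}{}^I}$ lies in perverse degrees $\ge\sum_i\<\lambda_i,2\check{\rho}-2\check{\rho}_M\>=\<\lambda,2\check{\rho}-2\check{\rho}_M\>$; the special case $\lambda=\eta$ of \lemref{Lm_3.1.5_now} (i.e.\ \cite{DL}, Lemma~4.1.15) then identifies its lowest perverse cohomology sheaf with $\IC(\ov{\Gr}^{\und{\lambda}}_{\,M,I})|_{\oo{X}{}^I}$.

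\textbf{Diagonal stratum $\oo{X}{}^J$.} The Mirkovi\'c--Vilonen fusion attached to factorizable Satake expresses $\IC(\ov{\Gr}^{\und{\lambda}}_{\,G,I})|_{\oo{X}{}^J}$, up to the standard shift by $|I|-|J|$, as a finite direct sum of terms $\IC(\ov{\Gr}^{\und{\nu}}_{\,G,J})|_{\oo{X}{}^J}$ indexed by $\und{\nu}\in\Map(J,\Lambda^+)$ with $\nu_j\le\mu_j$, with multiplicities from the $\check{G}$-decomposition of $\bigotimes_{i\in I_j}V^{\lambda_i}$ at each $j$. Meanwhile $\ov{\Gr}^{\und{\lambda}}_{\,M,I}|_{\oo{X}{}^J}\,\iso\,\prod_j\ov{\Gr}_M^{\mu_j}$. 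For each summand, \lemref{Lm_3.1.5_now} applied factorwise with $\eta_j=\mu_j\ge\nu_j$ together with K\"unneth gives the same bound $\ge\<\lambda,2\check{\rho}-2\check{\rho}_M\>$; only the top summand $\und{\nu}=\und{\mu}$ contributes in this lowest degree, and on it \cite{DL}, Lemma~4.1.15 identifies the bottom cohomology with $\prod_j\IC(\ov{\Gr}_M^{\mu_j})\,\iso\,\IC(\ov{\Gr}^{\und{\lambda}}_{\,M,I})|_{\oo{X}{}^J}$.

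\textbf{Assembly and construction of the map.} Collecting the bounds across all strata yields part (i). The same stratum-wise computation shows that ${}^p\!\mathcal{H}^{\<\lambda,2\check{\rho}-2\check{\rho}_M\>}(i^!\IC(\ov{\Gr}^{\und{\lambda}}_{\,G,I}))$ restricts on every stratum of $\ov{\Gr}^{\und{\lambda}}_{\,M,I}$ to the corresponding restriction of $\IC(\ov{\Gr}^{\und{\lambda}}_{\,M,I})$; the two perverse sheaves are therefore canonically isomorphic. Precomposing the natural truncation map ${}^p\!\mathcal{H}^{\<\lambda,2\check{\rho}-2\check{\rho}_M\>}(i^!\IC(\ov{\Gr}^{\und{\lambda}}_{\,G,I}))[-\<\lambda,2\check{\rho}-2\check{\rho}_M\>]\to i^!\IC(\ov{\Gr}^{\und{\lambda}}_{\,G,I})$ with this identification and using the $(i_*,i^!)$-adjunction produces the desired map \eqref{map_for_Lm_3.1.5_Step1}, and the statement about its $i^!$ in the bottom perverse degree is immediate by construction.

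\textbf{Main obstacle.} The delicate point is verifying that the ``lower'' Mirkovi\'c--Vilonen summands with $\und{\nu}<\und{\mu}$ on each diagonal stratum contribute only in perverse degrees \emph{strictly} above $\<\lambda,2\check{\rho}-2\check{\rho}_M\>$ --- this is precisely why \lemref{Lm_3.1.5_now} is formulated in the generality $\lambda\ne\eta$, going beyond \cite{DL}, Lemma~4.1.15 --- and accurately tracking the cohomological shift by the codimension $|I|-|J|$ coming from $\oo{X}{}^J\hookrightarrow X^I$ so that it is absorbed into the K\"unneth/fusion bookkeeping and does not alter the stated bound.
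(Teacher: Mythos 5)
Your overall strategy — stratify along the diagonals $\oo{X}{}^J\subset X^I$, use factorizable Satake/fusion to compute the restriction there, and feed the factors into \lemref{Lm_3.1.5_now} — is the same as the paper's, but two concrete steps of your execution fail. First, the treatment of the non-open strata is wrong: the $!$-restriction along $\oo{X}{}^J\hookrightarrow X^I$ carries a cohomological shift by $|I|-|J|$ (this is visible from $\Sat_{G,I}(\omega_{X^I}\otimes\bigotimes_i V^{\lambda_i})\simeq \IC(\ov{\Gr}^{\und{\lambda}}_{\,G,I})[\,|I|\,]$ versus the analogous formula over $X^J$), so on a stratum with $\phi:I\to J$ not an isomorphism the whole of $i^!\IC(\ov{\Gr}^{\und{\lambda}}_{\,G,I})$ — including the "top" fusion summand $\und{\nu}=\und{\mu}$ — sits in perverse degrees $\ge \<\lambda,2\check{\rho}-2\check{\rho}_M\>+|I|-|J|$, i.e.\ \emph{strictly} above the bound. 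There is no contribution in the lowest degree on those strata, so your identification of the degree-$\<\lambda,2\check{\rho}-2\check{\rho}_M\>$ part with $\IC(\ov{\Gr}^{\und{\lambda}}_{\,M,I})|_{\oo{X}{}^J}$ is false (note also that even the shapes would not match: the $G$-side involves $\check{G}$-tensor-product multiplicities, the $M$-side $\check{M}$-multiplicities). Your "main obstacle" paragraph asserts that the shift "does not alter the stated bound" and locates the strictness only in the summands $\und{\nu}<\und{\mu}$; in fact the strictness on deeper strata comes from the codimension shift itself, it is not an obstacle but exactly the input one needs, and the paper records it explicitly ("strict unless $\phi$ is an isomorphism").

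Second, the assembly step is a non sequitur: knowing the restrictions of two perverse sheaves to all strata does not produce an isomorphism between them, let alone a canonical one, so even granting your (incorrect) stratum-wise identification you could not conclude that the lowest perverse cohomology of $i^!\IC(\ov{\Gr}^{\und{\lambda}}_{\,G,I})$ is $\IC(\ov{\Gr}^{\und{\lambda}}_{\,M,I})$ this way. The correct route, and the one the paper takes, is to construct the map (\ref{map_for_Lm_3.1.5_Step1}) over the open stratum using (\cite{DL}, Lemma~4.1.15), and then to extend it to all of $\ov{\Gr}^{\und{\lambda}}_{\,M,I}$: because on the non-open strata $i^!\IC(\ov{\Gr}^{\und{\lambda}}_{\,G,I})$ lies in perverse degrees strictly bigger than $\<\lambda,2\check{\rho}-2\check{\rho}_M\>$, its lowest perverse cohomology has no subobjects supported on the boundary, so a map out of the intermediate extension $\IC(\ov{\Gr}^{\und{\lambda}}_{\,M,I})$ defined on the open stratum extends uniquely; one then applies the $(i_*,i^!)$-adjunction. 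So part (i) of your argument survives (your bound is weaker than the true one but still suffices), while the argument for part (ii) needs to be replaced by this extension argument driven by the strict inequality you discarded.
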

\begin{proof}
Recall the functor $\Loc: Shv(X^I)\otimes \Rep(\check{G})^{\otimes I}\to \Rep(\check{G})_I$ from (\cite{Ly10}, 2.2.1). View $\omega_{X^I}\otimes(\underset{i\in I}{\otimes} V^{\lambda_i})$ as an object of $\Rep(\check{G})_I$, namely the image of this object under $\Loc$. Then
$$
\Sat_{G,I}(\omega_{X^I}\otimes(\underset{i\in I}{\otimes} V^{\lambda_i}))\,\iso\, \IC(\ov{\Gr}^{\und{\lambda}}_{\, G,I})[\mid I\mid].
$$

 For each map $\phi: I\to J$ in $fSets$ we have the locally closed immersion $\oo{X}{}^J\hook{} X^I$. Consider similarly $\omega_{X^J}\otimes (\underset{j\in J}{\otimes} (\underset{i\in I_j}{\otimes} V^{\lambda_i}))\in \Rep(\check{G})_J$. We have
$$
\Sat_{G,I}(\omega_{X^I}\otimes(\underset{i\in I}{\otimes} V^{\lambda_i}))\mid_{\Gr_{G,I}\times_{X^I} \oo{X}{}^J}\,\iso\, \Sat_{G, J}(\omega_{X^J}\otimes (\underset{j\in J}{\otimes} (\underset{i\in I_j}{\otimes} V^{\lambda_i}))),
$$
where the LHS denotes the !-restriction. 

 Now (\cite{DL}, Lemma~4.1.15) combined with Lemma~\ref{Lm_3.1.5_now} show that the !-restriction of $i^!\IC(\ov{\Gr}^{\und{\lambda}}_{\, G, I})$ to each stratum $\ov{\Gr}^{\und{\lambda}}_{M,I}\times_{X^I} \oo{X}{}^J$ is placed in perverse degrees $\ge \<\lambda, 2\check{\rho}-2\check{\rho}_M\>$, and the inequality is strict unless $\phi: I\to J$ is an isomorphism. The construction of (\ref{map_for_Lm_3.1.5_Step1}) follows now from (\cite{DL}, Lemma~4.1.15) over the open stratum first, hence over the whole of $\ov{\Gr}^{\und{\lambda}}_{\, M, I}$.
\end{proof} 

\begin{Rem} 
\label{Rem_3.1.8}
Assume for a moment that $M=T$. In this case Lemma~\ref{Lm_3.1.7_nice_map} can be strengthened as follows. Let $\und{\lambda}\in\Map(I, \Lambda^+_{M, ab})$. Then the map $i: \ov{\Gr}^{\und{\lambda}}_{\, M, I}\to \ov{\Gr}^{\und{\lambda}}_{\, G, I}$ becomes the map $s^{\und{\lambda}}_G: X^I\to \ov{\Gr}^{\und{\lambda}}_{\, G, I}$ from Section~\ref{Sect_1.3.31}. In this case one has a canonical isomorphism 
$$
\IC(X^I)[-\<\lambda, 2\check{\rho}\>]\,\iso\,(s^{\und{\lambda}}_G)^!\IC(\ov{\Gr}^{\und{\lambda}}_{\, G, I})
$$
in $Shv(\Gr_{G, I})^{\gL^+(T)_I}$. Indeed, the morphism is already constructed in Lemma~\ref{Lm_3.1.7_nice_map}. One checks that its $!$-restriction to each stratum $\oo{X}{}^J\hook{} X^I$ is an isomorphism (for any map $I\to J$ in $fSets$). 
\end{Rem}

\sssec{} 
\label{Sect_3.1.4} Set $C=\Rep(\check{M})\otimes\Rep(\check{G})$. Consider the $C_{X^I}$-action on $Shv(\Gr_{G, I})^{\gL^+(M)_I}$, where the action of $\Rep(\check{G}))_I$ is as above, and the action of $\Rep(\check{M})_I$ is the shifted one. Write $\ast'$ for the action maps of the shifted action. 

 Let $\cA=\cO(\check{G}/U(\check{P}))\in CAlg(C)$. Recall the object $\delta_I\in Shv(\Gr_{G, I})^{\gL^+(M)_I}$ from Section~\ref{Sect_1.3.31}.
 
\index{$\ast'$, Section~\ref{Sect_3.1.4}} 
 
\begin{Lm} 
\label{Lm_3.1.5}
In the situation of Section~\ref{Sect_3.1.4}, $\delta_I$ is naturally promoted to an object of $\cA_{X^I}-mod(Shv(\Gr_{G, I})^{\gL^+(M)_I})$.
\end{Lm}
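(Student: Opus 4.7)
The plan is to apply the Drinfeld-Pl\"ucker formalism of Section~\ref{Sect_Drinfeld-Plucker formalism: the Ran space version} (in the variant analogous to Section~\ref{Sect_2.2_Bunt_P} but with $\check{P}^-$ replaced by $\check{P}$) to the module category $D := Shv(\Gr_{G, I})^{\gL^+(M)_I}$ equipped with the shifted $C_{X^I}$-action of Section~\ref{Sect_3.1.4}; the key geometric input will be the Pl\"ucker morphism of Lemma~\ref{Lm_3.1.7_nice_map}.

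First I would recall the decomposition $\cA\,\iso\,\bigoplus_{\lambda\in\Lambda^+}V^\lambda\otimes (V^{\lambda,U(\check{P})})^*$ in $C$, where $V^{\lambda, U(\check{P})}\,\iso\, U^\lambda$ as $\check{M}$-module. By the formalism of (\cite{Ly10}, Section~6) (cf. Proposition~\ref{Pp_2.2.6}), to promote $\delta_I$ to an object of $\cA_{X^I}-mod(D)$ it suffices to construct, for every $\und{\lambda}\in\Map(I,\Lambda^+)$, a canonical morphism
$$
(U^{\und{\lambda}})^*\otimes V^{\und{\lambda}}\ast' \delta_I\to \delta_I
$$
in $D$, coherently with respect to the right-lax symmetric monoidal structure on the functor $\und{\lambda}\mapsto V^{\und{\lambda}}\otimes (U^{\und{\lambda}})^*$ that encodes the algebra structure of $\cA$. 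Equivalently, dualizing the $\check{M}$-factor, one needs canonical maps
$$
\Sat'_{M, I}(U^{\und{\lambda}})\ast' \delta_I\to \delta_I\ast \Sat_{G, I}(V^{\und{\lambda}}).
$$

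Next, I would construct these morphisms from Lemma~\ref{Lm_3.1.7_nice_map}. Since $\delta_I$ is the monoidal unit in $\Sph_{G, I}$ for convolution on the right, $\delta_I\ast \Sat_{G, I}(V^{\und{\lambda}})\,\iso\,\Sat_{G, I}(V^{\und{\lambda}})$, which agrees up to shift with $\IC(\ov{\Gr}^{\und{\lambda}}_{\, G, I})$. Similarly, by construction of the shifted Satake functor $\Sat'_{M, I}$, the object $\Sat'_{M, I}(U^{\und{\lambda}})\ast' \delta_I$ identifies with $\IC(\ov{\Gr}^{\und{\lambda}}_{\, M, I})[-\<\lambda, 2\check{\rho} - 2\check{\rho}_M\>]$ sitting inside $\Gr_{G, I}$ along the closed immersion of the $M$-affine Grassmannian. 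The map (\ref{map_for_Lm_3.1.5_Step1}) of Lemma~\ref{Lm_3.1.7_nice_map}(ii) then supplies precisely the desired morphism; Pl\"ucker-theoretically, the inclusion $U^\lambda\,\iso\, V^{\lambda, U(\check{P})}\hook{} V^\lambda|_{\check{M}}$ corresponds to the closed immersion $\ov{\Gr}^{\und{\lambda}}_{\, M, I}\hook{}\ov{\Gr}^{\und{\lambda}}_{\, G, I}$.

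The hard part will be the coherence verification: showing that these morphisms assemble into an honest $\cA_{X^I}$-module structure, i.e., into a right-lax symmetric monoidal structure on the functor $(\Map(I,\Lambda^+),\le)\to D$, $\und{\lambda}\mapsto \delta_I\ast (V^{\und{\lambda}}\otimes (U^{\und{\lambda}})^*)$, in the sense of (\cite{Ly10}, 6.1.12). This reduces to showing that for $\und{\lambda}_1, \und{\lambda}_2$ summing to $\und{\lambda}$, the composition of the Pl\"ucker maps of Lemma~\ref{Lm_3.1.7_nice_map} factors correctly through the Weyl module multiplications $V^{\lambda_1}\otimes V^{\lambda_2}\to V^{\lambda_1+\lambda_2}$ and the dual maps $(U^{\lambda_1+\lambda_2})^*\to (U^{\lambda_1})^*\otimes (U^{\lambda_2})^*$. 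The $\Tw(I)$-machinery of Section~\ref{Sect_Drinfeld-Plucker formalism: the Ran space version} is designed precisely to reduce such coherence questions to the corresponding one-point statement, which is the local version of the lemma established in \cite{DL}.
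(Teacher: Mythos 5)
Your proposal is correct and follows essentially the same route as the paper: reduce the $\cA_{X^I}$-module structure, via the $\Tw(I)$/right-lax symmetric monoidal formalism of \cite{Ly10}, to a compatible family of maps indexed by $\und{\lambda}\in\Map(I,\Lambda^+)$, and produce each such map $\Sat'_{M,I}(U^{\und{\lambda}})\ast\delta_I\to\delta_I\ast\Sat_{G,I}(V^{\und{\lambda}})$ from the Pl\"ucker morphism (\ref{map_for_Lm_3.1.5_Step1}) of Lemma~\ref{Lm_3.1.7_nice_map}(ii), with the higher compatibilities handled (in the paper, frankly, left to the reader) by the same machinery and the one-point case from \cite{DL}. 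The only blemish is a bookkeeping slip in your decomposition of $\cA$ — the paper has $\cA\iso\underset{\lambda\in\Lambda^+}{\oplus}U^{\lambda}\otimes(V^{\lambda})^*$, with the dual on the $\Rep(\check{G})$-factor, not on the $\check{M}$-factor — but since the action maps you actually write down are the correct ones (dualizing the $\check{G}$-factor), this does not affect the argument.
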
 
\begin{proof}
Consider the functor $\cF_{I, \cA}: \Tw(I)\to C_{X^I}$ defined as in (\cite{Ly10}, 2.4.2). So, for $(I\to J\to K)\in \Tw(I)$, $\cF_{I, \cA}(I\to J\to K)$ is the image of $\cA^{\otimes J}\otimes \omega_{X^K}$ under the structure map 
$$
C^{\otimes J}\otimes Shv(X^K)\to C_{X^I}.
$$ 
By definition, $\cA_{X^I}\,\iso\,\underset{\Tw(I)}{\colim} \; \cF_{I, \cA}$. We have the natural map $(I\to J\to K)\to (I\to K\to K)$ in $\Tw(I)$. So, it suffices to construct a compatible system of maps 
$$
(\cA^{\otimes K}\otimes \omega_{X^K})\ast' \delta_I\to \delta_I
$$ 
in $Shv(\Gr_{G, I})^{\gL^+(M)_I}$ for morphisms $I\to K$ in $fSets$. The latter map rewrites as $(\cA^{\otimes K}\otimes \omega_{X^K})\ast' \delta_K\to \delta_K$, so we may assume $K=I$ up to verifying the corresponding compatibilities.

 Recall that 
$$
\cA\,\iso\, \underset{\lambda\in\Lambda^+}{\oplus} U^{\lambda}\otimes (V^{\lambda})^*.
$$
Let $\und{\lambda}\in \Map(I, \Lambda^+)$ and $\lambda=\sum_{i\in I} \lambda_i$. It suffices to define the corresponding maps
\begin{equation}
\label{map_for_Lm_3.1.5}
\Sat_{M, I}(\omega_{X^I}\otimes (\underset{i\in I}{\otimes} U^{\lambda_i}))\ast \delta_I[-\<\lambda, 2\check{\rho}-2\check{\rho}_M\>]\to \delta_I\ast \Sat_{G,I}(\omega_{X^I}\otimes(\underset{i\in I}{\otimes} V^{\lambda_i}))
\end{equation}
This is the map (\ref{map_for_Lm_3.1.5_Step1}) constructed in Lemma~\ref{Lm_3.1.7_nice_map}. The compatibilities are left to a reader. 
\end{proof}

\sssec{} So, in the situation of Section~\ref{Sect_3.1.4} we apply the Drinfeld-Plucker formalism of Section~\ref{Sect_2.2_Bunt_P} (with $\check{P}^-$ replaced by $\check{P}$) to the object $\delta_I$ and get the following. Set $\cB=\cO(\check{G})\in CAlg(C)$.   
 
\begin{Def} For $I\in fSets$ set 
$$
\IC^{\frac{\infty}{2}}_{P, I}=\underset{\und{\lambda}\in (\Map(I, \Lambda^+_{M, ab}),\le)}{\colim} 
\Sat_{M, I}(e^{-\und{\lambda}})\ast\delta_I\ast \Sat_{G, I}(V^{\und{\lambda}})[\<\lambda, 2\check{\rho}\>]
$$
in $Shv(\Gr_{G, I})^{\gL^+(M)_I}$. By Sectiion~\ref{Sect_3.1.2_now}, it rewrites as the colimit in $Shv(\Gr_{G, I})^{\gL^+(M)_I}$
\begin{equation}
\label{formula_Def_of_IC_infty/2_P_I}
\IC^{\frac{\infty}{2}}_{P, I}\,\iso\, \underset{\und{\lambda}\in (\Map(I, \Lambda^+_{M, ab}),\le)}{\colim} (-\und{\lambda}) \Sat_{G, I}(V^{\und{\lambda}})[\<\lambda, 2\check{\rho}\>]
\end{equation} 
\end{Def} 
 
\sssec{} From Proposition~\ref{Pp_2.2.6} we conclude that 
$$
\IC^{\frac{\infty}{2}}_{P, I}\,\iso\, \cB_{X^I}\otimes_{\cA_{X^I}} \delta_I\in (Shv(\Gr_{G, I})^{\gL^+(M)_I})\otimes_{C_{X^I}} \Rep(\check{M})_I,
$$ 
where we used the shifted action of $C_{X^I}$ on $Shv(\Gr_{G, I})^{\gL^+(M)_I}$. 

\begin{Pp}
\label{Pp_3.1.11}
i) $\IC^{\frac{\infty}{2}}_{P, I}$ belongs to $\SI_{P, I}\subset Shv(\Gr_{G, I})^{\gL^+(M)_I}$.\\
ii) $\IC^{\frac{\infty}{2}}_{P, I}$ is the extension by zero under $\bar S^0_{P, I}\hook{} \Gr_{G, I}$.\\
iii) One has $(v^0_{S, I})^! \IC^{\frac{\infty}{2}}_{P, I}\,\iso\, (v^0_{S, I})^* \IC^{\frac{\infty}{2}}_{P, I}\,\iso\, \omega$ over $S^0_{P, I}$. 
\end{Pp}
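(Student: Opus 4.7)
Write $K_{\und{\lambda}} := (-\und{\lambda})\,\Sat_{G,I}(V^{\und{\lambda}})[\<\lambda, 2\check{\rho}\>]$ for the terms appearing in the colimit (\ref{formula_Def_of_IC_infty/2_P_I}), so that $\IC^{\frac{\infty}{2}}_{P, I} = \underset{\und{\lambda}}{\colim}\, K_{\und{\lambda}}$ in $Shv(\Gr_{G, I})^{\gL^+(M)_I}$. All three claims will be deduced by analyzing each $K_{\und{\lambda}}$ and then passing to the filtered colimit, using the fact that the relevant full subcategories of $Shv(\Gr_{G, I})^{\gL^+(M)_I}$ are closed under colimits.

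For (i): I would show that for each $\und{\lambda}\in\Map(I,\Lambda^+_{M,ab})$ the object $K_{\und{\lambda}}$ lies in $Shv(\Gr_{G,I})^{H_{\und{\lambda}}}$. The point is that $\delta_I\ast \Sat_{G,I}(V^{\und{\lambda}})$ is $\gL^+(G)_I$-equivariant (convolution with a spherical object), and the action of $-\und{\lambda}$ coming from Section~\ref{Sect_1.3.32_now} conjugates $\gL^+(G)_I$ to a placid group ind-scheme containing $H_{\und{\lambda}}$, since $H_{\und{\lambda}}$ is by definition the $\gL(P)_I$-stabilizer of $s^{-\und{\lambda}}(X^I)\in\Gr_{P,I}$. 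Given the description $\SI_{P, I}\,\iso\, \underset{\und{\mu}\in \Map(I, \Lambda^+_{M, ab})}{\cap} Shv(\Gr_{G, I})^{H_{\und{\mu}}}$ from Section~\ref{Sect_1.2.9_now} and the fact that each $Shv(\Gr_{G, I})^{H_{\und{\mu}}}\subset Shv(\Gr_{G, I})^{\gL^+(M)_I}$ is closed under filtered colimits, it suffices to note that for fixed $\und{\mu}$ the set $\{\und{\lambda}\ge\und{\mu}\}$ is cofinal in $(\Map(I,\Lambda^+_{M,ab}),\le)$, and for those indices $H_{\und{\mu}}\subset H_{\und{\lambda}}$, so $K_{\und{\lambda}}\in Shv(\Gr_{G, I})^{H_{\und{\mu}}}$.

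For (ii): It is enough to show $\supp(K_{\und{\lambda}})\subset \bar S^0_{P, I}$ for each $\und{\lambda}$. By geometric Satake, $\supp\,\Sat_{G,I}(V^{\und{\lambda}})\subset \ov{\Gr}^{\und{\lambda}}_{\,G,I}$, so $\supp(K_{\und{\lambda}})\subset (-\und{\lambda})\,\ov{\Gr}^{\und{\lambda}}_{\,G,I}$. The containment $(-\und{\lambda})\,\ov{\Gr}^{\und{\lambda}}_{\,G,I}\subset \bar S^0_{P,I}$ is a local statement on $X^I$; using factorization together with the stratification by the diagonals $\oo{X}{}^J\hook{} X^I$, it reduces fiberwise to the corresponding statement $t^{-\lambda}\,\ov{\Gr}^{\lambda}_{G}\subset \bar S^0_P$ for $\lambda\in\Lambda^+_{M,ab}$. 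This is precisely the support property used in \cite{DL} to show that the local Drinfeld–Pl\"ucker construction produces an object supported on $\bar S^0_P$, and it is immediate from the Pl\"ucker characterization of $\bar S^0_P$ combined with the fact that $t^{-\lambda}$ is central in $M$.

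For (iii): Since $(v^0_{S,I})^!$ commutes with colimits, $(v^0_{S,I})^!\IC^{\frac{\infty}{2}}_{P,I}\,\iso\,\underset{\und{\lambda}}{\colim}\,(v^0_{S,I})^!K_{\und{\lambda}}$. Translating by $\und{\lambda}$ identifies this with the $!$-restriction of $\Sat_{G,I}(V^{\und{\lambda}})[\<\lambda,2\check{\rho}\>]$ along the closed substack $\und{\lambda}\cdot S^0_{P,I}\subset \ov{\Gr}^{\und{\lambda}}_{\,G,I}$, which contains the section $s^{\und{\lambda}}_G:X^I\to \Gr_{G,I}$ of Section~\ref{Sect_1.3.31}. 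The Mirkovi\'c–Vilonen identification (as in Remark~\ref{Rem_3.1.8}, extended from $T$ to $M$ using Lemma~\ref{Lm_3.1.7_nice_map}) gives $(s^{\und{\lambda}}_G)^!\Sat_{G,I}(V^{\und{\lambda}})[\<\lambda,2\check{\rho}\>]\,\iso\,\omega_{X^I}$ canonically, and a direct inspection of the transition morphisms in Section~\ref{Sect_2.1.5} — which are built from the canonical unit and Pl\"ucker pairings $V^{\und{\lambda}_2-\und{\lambda}_1}\otimes (V^{\und{\lambda}_2-\und{\lambda}_1})^*\to e$ and $V^{\und{\lambda}_1+\und{\lambda}_2}\to V^{\und{\lambda}_1}\otimes V^{\und{\lambda}_2}$ — shows that these transitions restrict to the identity on the top Mirkovi\'c–Vilonen component. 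The $*$-restriction is then treated either by a parallel calculation or, more conceptually, by the Braden contraction argument of Lemma~\ref{Lm_1.4.5}: the $\Gm$-action via $\lambda_0$ contracts a neighborhood of the section onto itself and identifies the $*$- and $!$-restrictions along it. The main obstacle is this last step: verifying that the transition maps become the identity after restriction to $S^0_{P,I}$, which boils down to the Pl\"ucker compatibility of top Mirkovi\'c–Vilonen cycles under tensor products in the geometric Satake.
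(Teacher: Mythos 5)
Your parts (i) and (ii) are essentially the paper's own proof: (i) is the cofinality-plus-termwise-$H_{\und{\lambda}}$-equivariance observation, and (ii) reduces, exactly as in the paper, over each diagonal stratum to the pointwise containment $t_x^{-\mu}\,\ov{\Gr}^{\mu}_{G,x}\subset \bar S^0_{P,x}$ for $\mu\in\Lambda^+_{M,ab}$. Two simplifications for (iii): since $v^0_{S,I}$ is an open immersion, $(v^0_{S,I})^*\iso (v^0_{S,I})^!$ tautologically, so no Braden-type argument is needed for the second isomorphism; and since $\und{\lambda}$ is $\Lambda^+_{M,ab}$-valued, the section $s^{\und{\lambda}}_G$ factors through $\Gr_{T,I}$, so Remark~\ref{Rem_3.1.8} applies as stated and no "extension from $T$ to $M$" via Lemma~\ref{Lm_3.1.7_nice_map} is involved. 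Also, to pass from the value on the section to the value on $S^0_{P,I}$ you should explicitly invoke Lemma~\ref{Lm_1.4.10_now} (licensed by part (i)); you use this only implicitly.

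The genuine gap is the step you yourself defer: the canonical isomorphism $(s^{\und{\lambda}}_G)^!\Sat_{G,I}(V^{\und{\lambda}})[\<\lambda,2\check{\rho}\>]\iso\omega_{X^I}$, compatible with the transition maps, does not follow from Remark~\ref{Rem_3.1.8} alone, because $\Sat_{G,I}(V^{\und{\lambda}})$ is not the $\IC$-sheaf of a single closure $\ov{\Gr}^{\und{\lambda}}_{\,G,I}$: by (\ref{V^und(lambda)_as_a_colimit}) it is a colimit over $\Tw(I)$ of Satake sheaves attached to $\omega_{X^K}\otimes(\underset{j\in J}{\otimes}V^{\lambda_j})$, supported over diagonals. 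The paper's proof of (iii) consists precisely in handling this colimit: each term is rewritten over the diagonal via $\Sat_{G,K}$; only the leading summand $V^{\mu_k}\subset \underset{j\in J_k}{\otimes}V^{\lambda_j}$ contributes to the $!$-restriction along $s^{\und{\mu}}_G$, because the fibre product of the section with $\ov{\Gr}^{\und{\mu}'}_{G,K}$ is empty unless $\und{\mu}'=\und{\mu}$; Remark~\ref{Rem_3.1.8} then gives $\omega_{X^K}[-\<\lambda,2\check{\rho}\>]$; and finally $\underset{\Tw(I)}{\colim}\,\omega_{X^K}\iso\omega_{X^I}$ because this is the diagram computing the factorization algebra $\omega_{\Ran}$ over $X^I$. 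The same leading-summand analysis is what makes the whole $\und{\lambda}$-diagram canonically constant after restriction to the section: the transition maps of Section~\ref{Sect_2.1.5} are built from the Pl\"ucker maps sending highest weight vectors to tensor products of highest weight vectors, and these induce the identity on the contributions just described. So the "Pl\"ucker compatibility of top Mirkovi\'c--Vilonen components" you list as the main obstacle is exactly the content of this computation; without it your argument for (iii) is incomplete.
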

\begin{proof}
i) Let $\und{\lambda}\in\Map(I,  \Lambda^+_{M, ab})$. It suffices to show that $\IC^{\frac{\infty}{2}}_{P, I}\in Shv(\Gr_{G, I})^{H_{\und{\lambda}}}$. If $\und{\mu}\in\Map(I, \Lambda^+_{M, ab})$ with $\und{\lambda}\le\und{\mu}$ then $(-\und{\mu})\Sat_{G, I}(V^{\und{\mu}})$ is $H_{\und{\lambda}}$-equivariant, and the inclusion 
$$
(\{\mu\in \Map(I, \Lambda^+_{M, ab})\mid \und{\lambda}\le \und{\mu}\}, \le )\subset (\Map(I,  \Lambda^+_{M, ab}), \le)
$$ 
is cofinal. The claim follows.

\medskip\noindent
ii) It suffices to show that for any $\und{\lambda}\in \Map(I,\Lambda^+_{M, ab})$,  $(-\und{\lambda})\Sat_{G, I}(V^{\und{\lambda}})$ is the extension by zero from $\bar S^0_{P, I}$. Recall that 
\begin{equation}
\label{V^und(lambda)_as_a_colimit}
V^{\und{\lambda}}\,\iso\, \underset{(I\to J\to K)\in\Tw(I)}{\colim} \omega_{X^K}\otimes (\underset{j\in J}{\otimes} V^{\lambda_j})
\end{equation} 
in $\Rep(\check{G})_I$ with $\lambda_j=\sum_{i\in I_j} \und{\lambda}(i)$. We claim that $(-\und{\lambda})\Sat_{G, I}(\omega_{X^K}\otimes (\underset{j\in J}{\otimes} V^{\lambda_j}))$ is the extension by zero from $\bar S^0_{P, I}$. The latter complex identifies with
\begin{equation}
\label{complex_one_for_Pp3.1.11}
(-\und{\lambda})\Sat_{G, I}(\omega_{X^K}\otimes (\underset{k\in K}{\otimes}(\underset{j\in J_k}{\otimes} V^{\lambda_j})))\,\iso\,(-\und{\mu})\Sat_{G, K}(\omega_{X^K}\otimes (\underset{k\in K}{\otimes}(\underset{j\in J_k}{\otimes} V^{\lambda_j})))
\end{equation}
with $\und{\mu}: K\to \Lambda^+_{M, ab}$ being the direct image of $\und{\lambda}$. 
Given any field-valued point $(x_k)\in \oo{X}{}^K(\cK)$ for some field $k\subset \cK$, after the base change by $\Spec \cK\to X^K$ given by this collection, (\ref{complex_one_for_Pp3.1.11}) is the extension by zero from 
$$
\prod_{k\in K} t_{x_k}^{-\mu_k}\ov{\Gr}_{G, x_k}^{\mu_k}\subset \prod_{k\in K} \bar S^0_{P, x_k}
$$ 
We can argue similarly for other diagonal strata in $X^K$. Our claim follows.

\medskip\noindent
iii) We know already that $(v^0_{S, I})^! \IC^{\frac{\infty}{2}}_{P, I}\in \SI^0_{P, I}(S)$. By Lemma~\ref{Lm_1.4.10_now}, it suffices to establsh an isomorphism 
$$
(i^0_{S, I})^!(v^0_{S, I})^! \IC^{\frac{\infty}{2}}_{P, I}\,\iso\omega
$$ 
in $Shv(X^I)^{\gL^+(M)_I}$ for the map $i^0_{S, I}: X^I\to S^0_{P, I}$. We claim that applying $(s^{\und{0}}_G)^!$ to the diagram (\ref{formula_Def_of_IC_infty/2_P_I}) one gets the constant diagram with value $\omega_{X^I}$.

  In particiular, for $\und{\lambda}\in\Map(I, \Lambda^+_{M, ab})$ we need to establish an isomorphism 
$$
(s^{\und{\lambda}}_G)^!\Sat_{G, I}(V^{\und{\lambda}})[\<\lambda, 2\check{\rho}\>]\,\iso\,\omega_{X^I}
$$

 Argue as in ii) using (\ref{V^und(lambda)_as_a_colimit}). We get
$$
(s^{\und{\lambda}}_G)^!\Sat_{G, I}(V^{\und{\lambda}})\,\iso\,
(s^{\und{0}}_G)^!(-\und{\lambda}) \underset{(I\to J\to K)\in\Tw(I)}{\colim} \Sat_{G, I}(\omega_{X^K}\otimes (\underset{k\in K}{\otimes} (\underset{j\in J_k}{\otimes} V^{\lambda_j})))
$$
Given $(I\to J\to K)\in\Tw(I)$ using (\ref{complex_one_for_Pp3.1.11}) we get the following. Let $\und{\mu}:K\to \Lambda^+_{M, ab}$ be the direct image of $\und{\lambda}$ then
\begin{equation}
\label{complex_two_for_Pp3.1.11}
(s^{\und{\lambda}}_G)^!\Sat_{G, I}(\omega_{X^K}\otimes (\underset{k\in K}{\otimes} (\underset{j\in J_k}{\otimes} V^{\lambda_j})))\,\iso\, (s^{\und{\mu}}_G)^!\Sat_{G, K}(\omega_{X^K}\otimes (\underset{k\in K}{\otimes} (\underset{j\in J_k}{\otimes} V^{\lambda_j})))
\end{equation}
For each $k\in K$ we have a canonical direct summand $V^{\mu_k}\subset \underset{j\in J_k}{\otimes} V^{\lambda_j}$, and (\ref{complex_two_for_Pp3.1.11}) identifies canonically with
\begin{equation}
\label{complex_30_for_Pp_3.1.14}
(s^{\und{\mu}}_G)^!\Sat_{G, K}(\omega_{X^K}\otimes (\underset{k\in K}{\otimes} V^{\mu_k})).
\end{equation}
Indeed, let $\und{\mu}': K\to \Lambda^+$ with $\und{\mu}'(k)\le \und{\mu}_k$ for all $k$. Then $\Sat_{G, K}(\omega_{X^K}\otimes (\underset{k\in K}{\otimes} V^{\mu'_k}))$ identifies with $\IC(\ov{\Gr}^{\und{\mu}'}_{G,K})$ up to a shift, and the limit of the diagram 
$$
X^K\toup{s^{\und{\mu}}_G}\Gr_{G, K}\gets \ov{\Gr}^{\und{\mu}'}_{G,K}
$$ 
is empty unless $\und{\mu}=\und{\mu}'$.

By Remark~\ref{Rem_3.1.8}, (\ref{complex_30_for_Pp_3.1.14}) identifies canonically with $\omega_{X^K}[-\<\lambda, 2\check{\rho}\>]$. Finally, 
$$
(s^{\und{\lambda}}_G)^!\Sat_{G, I}(V^{\und{\lambda}})[\<\lambda, 2\check{\rho}\>]\,\iso\, \underset{(I\to J\to K)\in\Tw(I)}{\colim} \omega_{X^K}\,\iso\,\omega_{X^I}
$$ 
in $Shv(X^I)$, the latter colimit diagram is the same as for the commutative factorization algebra $\omega_{\Ran}\in Shv(\Ran)$ over $X^I$. 
\end{proof}

\sssec{} 
\label{Sect_3.1.15_now}
Since 
$$
\oblv: (Shv(\Gr_{G, I})^{\gL^+(M)_I})\otimes_{C_{X^I}} \Rep(\check{M})_I\to (Shv(\Gr_{G, I})^{\gL^+(M)_I})
$$ 
is monadic, by (\cite{Ly}, 3.4.4), $\SI^{\le 0}_{P, I}(S)\otimes_{C_{X^I}} \Rep(\check{M})_I$ is 
the preimage of the full subcategory $\SI^{\le 0}_{P, I}(S)$ under the latter functor. So, 
$$
\IC^{\frac{\infty}{2}}_{P, I}\in \SI^{\le 0}_{P, I}(S)\otimes_{C_{X^I}} \Rep(\check{M})_I.
$$ 

\begin{Def}
\label{Def_'IC_P,Ran^infty/2}
By construction, for a map $I\to J$ in $fSets$ and the diagonal $\vartriangle^{(I/J)}: \Gr_{G, J}\to \Gr_{G, I}$ we have canonically
$$
(\vartriangle^{(I/J)})^! \IC^{\frac{\infty}{2}}_{P, I}\,\iso\, \IC^{\frac{\infty}{2}}_{P, J}.
$$
So, the collection $\{\IC^{\frac{\infty}{2}}_{P, I}\}_{I\in fSets}$ is naturally an object of $\underset{I\in fSets}{\lim} \SI^{\le 0}_{P, I}(S)\,\iso\, \SI^{\le 0}_{P, \Ran}(S)$ denoted $'\!\IC^{\frac{\infty}{2}}_{P, \Ran}$. This is the main object of study of this paper.
\end{Def}

 By Remark~\ref{Rem_2.2.8}, $'\!\IC^{\frac{\infty}{2}}_{P, \Ran}$ factorizes naturally over $\Ran$, as $Shv(\Gr_{G,\Ran})^{\gL^+(M)_{\Ran}}$ has a natural factorization structure.
 
\sssec{} 
\label{Sect_3.1.16_now}
If $x\in X$ is a $k$-point of $X$ then the $!$-restriction of $\IC^{\frac{\infty}{2}}_{P, I}$ under $\Gr_{G, x}\hook{} \Gr_{G, I}$ identifies canonically with the object $\IC^{\frac{\infty}{2}}_P$ in $\SI_{P, x}=Shv(\Gr_{G, x})^{H_x}$ from (\cite{DL}, 4.1.3). Here $H_x$ is the fibre of $H_I$ at the corresponding point $\Spec k\toup{x} X\hook{} X^I$.  

\index{$\IC^{\frac{\infty}{2}}_P, \SI_{P, x}$, Section~\ref{Sect_3.1.16_now}}

\begin{Lm} 
\label{Lm_3.1.17}
Let $K\in Shv(\Gr_{G, I})^{\gL^+(M)_I}$ such that $\Av_!^{\gL(U(P))_I}: Shv(\Gr_{G, I})^{\gL^+(M)_I}\to\SI_{P, I}$ is defined on $K$. Let $\und{\lambda}\in\Map(I, \Lambda^+_{M, ab})$. \\
i) $\Av_!^{\gL(U(P))_I}$ is defined on $K\ast \Sat_{G, I}(V^{\und{\lambda}})$, and 
$$
\Av_!^{\gL(U(P))_I}(K\ast \Sat_{G, I}(V^{\und{\lambda}}))\,\iso\, \Av_!^{\gL(U(P))_I}(K)\ast \Sat_{G, I}(V^{\und{\lambda}})
$$ 
canonically in $\SI_{P, I}$. \\
i) $\Av_!^{\gL(U(P))_I}$ is defined on $\und{\lambda}K$, and 
$
\Av_!^{\gL(U(P))_I}(\und{\lambda}K)\,\iso\, \und{\lambda}\Av_!^{\gL(U(P))_I}(K)
$
canonically in $\SI_{P, I}$.
\end{Lm}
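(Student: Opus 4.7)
The plan is to reduce both assertions to the fact that the relevant actions on $Shv(\Gr_{G,I})^{\gL^+(M)_I}$ commute with the $\gL(U(P))_I$-equivariance, combined with the dualizability/invertibility of the Satake objects in question.

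For part i), recall from Section~\ref{Sect_1.2.15} that the right action of $\Sph_{G,I}$ on $Shv(\Gr_{G,I})^{\gL^+(M)_I}$ preserves the full subcategory $\SI_{P,I}$, and that the oblivion functor $\oblv:\SI_{P,I}\to Shv(\Gr_{G,I})^{\gL^+(M)_I}$ is a morphism of right $\Sph_{G,I}$-modules. Since $V^{\und{\lambda}}$ is dualizable in $\Rep(\check G)_I$ (with dual $(V^{\und{\lambda}})^*$), the object $\Sat_{G,I}(V^{\und{\lambda}})$ is dualizable in $\Sph_{G,I}$; in particular, convolution with it admits a biadjoint given by convolution with $\Sat_{G,I}((V^{\und{\lambda}})^*)$. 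For any $F\in \SI_{P,I}$ one then computes
\begin{multline*}
\Hom_{Shv(\Gr_{G,I})^{\gL^+(M)_I}}(K\ast\Sat_{G,I}(V^{\und{\lambda}}),\oblv F)\iso\\
\Hom(K,\oblv(F)\ast\Sat_{G,I}((V^{\und{\lambda}})^*))\iso\Hom(K,\oblv(F\ast\Sat_{G,I}((V^{\und{\lambda}})^*))) \iso\\
\Hom_{\SI_{P,I}}(\Av_!^{\gL(U(P))_I}(K),F\ast\Sat_{G,I}((V^{\und{\lambda}})^*))\iso\Hom_{\SI_{P,I}}(\Av_!^{\gL(U(P))_I}(K)\ast\Sat_{G,I}(V^{\und{\lambda}}),F),
\end{multline*}
where the middle equivalence uses that $\oblv$ is $\Sph_{G,I}$-linear, and the third that $\Av_!^{\gL(U(P))_I}$ is defined on $K$. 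The representability of the leftmost Hom then yields both that $\Av_!^{\gL(U(P))_I}$ is defined on $K\ast\Sat_{G,I}(V^{\und{\lambda}})$ and the required canonical isomorphism.

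For part ii), by Section~\ref{Sect_3.1.2_now} we have $\und{\lambda}K\iso\Sat_{M,I}(e^{\und{\lambda}})\ast K$, so it is enough to carry out the same reasoning for the left $\Sph_{M,I}$-action on $Shv(\Gr_{G,I})^{\gL^+(M)_I}$. By Remark~\ref{Rem_1.2.11} this action preserves $\SI_{P,I}$ and $\oblv$ is $\Sph_{M,I}$-linear. Since $e^{\und{\lambda}}\in\Rep(\check M_{ab})_I$ is invertible (with inverse $e^{-\und{\lambda}}$), the object $\Sat_{M,I}(e^{\und{\lambda}})$ is invertible in $\Sph_{M,I}$, so $\Sat_{M,I}(e^{\und{\lambda}})\ast(-)$ is an equivalence with quasi-inverse $\Sat_{M,I}(e^{-\und{\lambda}})\ast(-)$. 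The identical adjunction computation as in part i) then yields the claim.

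The only potential obstacle is verifying compatibility of $\oblv$ with the left $\Sph_{M,I}$- and right $\Sph_{G,I}$-actions, but these are precisely the statements collected in Remark~\ref{Rem_1.2.11} and Section~\ref{Sect_1.2.15}; everything else is a formal consequence of adjunction together with dualizability/invertibility in the relevant spherical categories.
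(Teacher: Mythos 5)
Your proof is correct and follows essentially the same route as the paper: both rest on the dualizability of $\Sat_{G,I}(V^{\und{\lambda}})$ in $\Sph_{G,I}$ (resp.\ invertibility of $\Sat_{M,I}(e^{\und{\lambda}})$) together with the compatibility of $\oblv$ with the $\Sph_{G,I}$- and $\Sph_{M,I}$-actions, the only difference being that the paper compresses your explicit Hom/adjunction computation into a citation of (\cite{HA}, Lemma~4.6.1.6).
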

\begin{proof}
i) Since $V^{\und{\lambda}}$ is dualizable in $\Rep(\check{G})_I$, $\Sat_{G, I}(V^{\und{\lambda}})$ is dualizable in the monoidal category $\Sph_{G, I}$. Our claim is now an immediate application of (\cite{HA}, Lemma~4.6.1.6). \\
ii) is similar.
\end{proof}

\sssec{Another presentation of  $\IC^{\frac{\infty}{2}}_{P, I}$} 
\label{Sect_3.1.18_now}
Let $I\in fSets$. Set 
$$
\bvartriangle^0_I=\Av_!^{\gL(U(P))_I}(\delta_I)\in \SI^{\le 0}_{P, I}(S)
$$
By Lemma~\ref{Lm_1.4.13_existence}, the latter object is defined, and 
$\bvartriangle^0_I\,\iso\, (v^0_{S, I})_!(\gt_{S, I}^0)^!\omega_{X^I}$ canonically. 
For $\und{\lambda}: I\to \Lambda_{M, ab}$ set
$$
\bvartriangle^{\und{\lambda}}_I=(\und{\lambda})\bvartriangle^0_I[-\<\lambda, 2\check{\rho}\>]\in \SI_{P, I}.
$$
These objects are analogs of the objects $\bvartriangle^{\mu}\in \SI_{P, x}$ for $\mu\in\Lambda_{M, ab}$ from (\cite{DL}, 3.3.3). 

Applying Lemma~\ref{Lm_3.1.17} to the definition of $\IC^{\frac{\infty}{2}}_{P, I}$, one gets 
$$
\IC^{\frac{\infty}{2}}_{P, I}\,\iso\, \Av_!^{\gL(U(P))_I}(\IC^{\frac{\infty}{2}}_{P, I})\,\iso\, \underset{\und{\lambda}\in (\Map(I, \Lambda^+_{M, ab}),\le)}{\colim} \bvartriangle^{-\und{\lambda}}_I\ast \Sat_{G, I}(V^{\und{\lambda}}),
$$
where the colimit is calculated in $\SI_{P, I}$. 

\index{$\bvartriangle^0_I, \bvartriangle^{\und{\lambda}}_I$, Section~\ref{Sect_3.1.18_now}}

\ssec{Graded Satake functors}
\label{Sect_Graded Satake functor}

\sssec{} 
\label{Sect_3.2.1_now}
For $\theta\in\Lambda_{G,P}$ let $\Rep(\check{M})_{\theta}\subset \Rep(\check{M})$ be the full subcategory of those representations on which $Z(\check{M})$ acts by $\theta$. Set $\Rep(\check{M})_{neg}=\underset{\theta\in -\Lambda_{G,P}^{pos}}{\oplus} \Rep(\check{M})_{\theta}$.

 Apply the construction of (\cite{Ly10}, Appendix E) to the latter $-\Lambda_{G,P}^{pos}$-graded category. We get for each $\theta\in -\Lambda_{G,P}^{pos}$ the object $\Fact(\Rep(\check{M})_{neg})_{\theta}\in Shv((X^{-\theta}\times\Ran)^{\subset})-mod$ and the decomposition 
$$
\Fact(\Rep(\check{M})_{neg})\,\iso\, \underset{\theta\in -\Lambda_{G,P}^{pos}}{\oplus} \Fact(\Rep(\check{M})_{neg})_{\theta}
$$
in the notations of \select{loc.cit.} 

\index{$\Rep(\check{M})_{neg}, \Sat_{M,\Ran, +}$, Section~\ref{Sect_3.2.1_now}}

 For $\theta\in -\Lambda_{G,P}^{pos}$ recall the diagram
$$
X^{-\theta}\gets \Gr_{M,\Ran, +}^{\theta}\hook{} \Gr_{M,\Ran,\theta},
$$ 
where the second map is a closed immersion. By functoriality of $\Fact$, we have the natural functor $\Fact(\Rep(\check{M})_{neg})\to \Fact(\Rep(\check{M}))$. 

 In Section~\ref{Sect_append_Graded Satake functors} we construct a $-\Lambda_{G,P}^{pos}$-graded functor 
\begin{equation}
\label{functor_Sat_M_Ran_+_graded}
\Sat_{M,\Ran, +}: \Fact(\Rep(\check{M})_{neg})\to Shv(\Gr_{M,\Ran, +})^{\gL^+(M)_{\Ran}}
\end{equation}
and study its properties. It is given by a collection for $\theta\in -\Lambda_{G,P}^{pos}$ of $Shv((X^{-\theta}\times\Ran)^{\subset})$-linear functors
\begin{equation}
\label{Sat_functor_for_Pp_3.2.2}
\Sat_{M,\Ran,+}: \Fact(\Rep(\check{M})_{neg})_{\theta}\to Shv(\Gr^{\theta}_{M, \Ran, +})^{\gL^+(M)_{\Ran}}.
\end{equation}
The target here is a full subcategory of $\Sph_{M,\Ran}$ of objects extended by zero from the closed substack $\Gr_{M,\Ran, +}^{\theta}\hook{} \Gr_{M,\Ran}$. In Section~\ref{Sect_append_Graded Satake functors} we establish the following.
\begin{Pp} 
\label{Pp_3.2.2}
For each $\theta\in -\Lambda_{G,P}^{pos}$ the restriction of $\Sat_{M, \Ran}$ under 
$$
\Fact(\Rep(\check{M})_{neg})_{\theta}\hook{} \Fact(\Rep(\check{M})_{neg})\to \Fact(\Rep(\check{M}))
$$
is the functor (\ref{Sat_functor_for_Pp_3.2.2}).
\end{Pp}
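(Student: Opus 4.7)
The plan is to reduce to the $X^I$-level for $I \in fSets$ and then to verify the essential support condition on a stratification of $X^I$. Since both $\Sat_{M,\Ran,+}$ and $\Sat_{M,\Ran}$ are assembled from their $X^I$-components compatibly with $!$-restrictions along the diagonals $\vartriangle^{(I/J)}: X^J \to X^I$, it suffices to show, for each $I \in fSets$ and $\theta \in -\Lambda_{G,P}^{pos}$, that the restriction of $\Sat_{M,I}$ to the subcategory $\Rep(\check{M})_{neg,\theta,X^I} \subset \Rep(\check{M})_{X^I}$ lands in the full subcategory of sheaves extended by zero from $\Gr^\theta_{M,I,+} \hookrightarrow \Gr_{M,I}$, and agrees there with the construction of $\Sat_{M,I,+}$ from Appendix~\ref{Sect_append_Graded Satake functors}.

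The key content is the support condition, which I would verify on field-valued points stratified by maps $\phi: I \to J$ in $fSets$. Over a geometric point $(x_j)_{j \in J} \in \oo{X}{}^J$, the fiber of $\Gr_{M,I}$ identifies with $\prod_{j \in J} \Gr_{M,x_j}$, and factorization identifies the corresponding fiber of any object of $\Fact(\Rep(\check{M})_{neg})_\theta$ with a direct sum of external products $\boxtimes_{j \in J} V_j$. The crucial observation is that the $-\Lambda_{G,P}^{pos}$-grading on $\Rep(\check{M})_{neg}$ induces, via the construction of (\cite{Ly10}, Appendix~E), a grading on $\Fact(\Rep(\check{M})_{neg})$ compatible with the factorization structure, so that the pieces appearing in this decomposition have $V_j \in \Rep(\check{M})_{\theta_j}$ with $\theta_j \in -\Lambda_{G,P}^{pos}$ and $\sum_j \theta_j = \theta$. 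For each such $V_j$, the ordinary Satake sheaf $\Sat_M(V_j)$ is supported on the connected component $\Gr_{M,x_j,\theta_j}$, and this component coincides with $\Gr^{\theta_j}_{M,x_j,+}$: at a single point and for $\theta_j \in -\Lambda_{G,P}^{pos}$, the unique divisor $D = -\theta_j \cdot x_j \in X^{-\theta_j}$ automatically realizes the defining isomorphism $\cF_{M/[M,M]} \iso \cF^0_{M/[M,M]}(D)$ of Section~\ref{Sect_1.3.20_now}. The external product is therefore supported on $\prod_j \Gr^{\theta_j}_{M,x_j,+}$, which is contained in $\Gr^\theta_{M,I,+}$ over $\oo{X}{}^J$. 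Passing to closures over the stratification yields the claimed support on all of $\Gr^\theta_{M,I,+}$.

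The main obstacle is the factorization-grading compatibility: one must rigorously show that if some $\theta_j \notin -\Lambda_{G,P}^{pos}$, then the corresponding piece of $\Fact(\Rep(\check{M})_{neg})_\theta$ necessarily vanishes on the associated disjoint stratum. This is in essence the defining property of the inherited grading described in (\cite{Ly10}, Appendix~E), but it must be transported carefully through the $!$-pullback construction of $\Fact$ from (\cite{Ly10}, Section~2). Once this and the support condition are in hand, the identification of the restricted functor with $\Sat_{M,I,+}$ reduces to matching the two constructions on generators such as the irreducibles $U^\lambda$ for $\lambda \in \Lambda^+_M$ over $\theta$; on these, both functors produce the $\gL^+(M)_I$-equivariant IC-sheaf of $\ov{\Gr}^{\und\lambda}_{M,I}$ (in the notation of Section~\ref{Sect_3.1.6_now}) up to the appropriate cohomological shift prescribed by the Satake normalization, so they coincide, and the proposition follows.
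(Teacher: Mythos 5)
Your support analysis is, at its core, the same observation the paper makes: at a single point, for $\lambda\in\Lambda^+_M$ lying over $\theta\in-\Lambda_{G,P}^{pos}$ one has $\ov{\Gr}^{\lambda}_{M,x}\subset\Gr^{\theta}_{M,x,+}$ (the ``$+$''-condition is vacuous pointwise), and over disjoint points one concludes by multiplicativity. The paper, however, does not organize this by stratifying $X^I$; it uses the presentation
$\Fact(\Rep(\check{M})_{neg})_{\theta}\,\iso\,\underset{(J\to K,\und{\theta})\in\cTw(fSets)_{\theta}}{\colim}(\underset{j\in J}{\otimes}\Rep(\check{M})_{\und{\theta}(j)})\otimes Shv(X^K)$
from (\cite{Ly10}, Appendix~E), reduces to the structure maps, and for general $K$ invokes the compatibility of the chiral multiplication with the exterior convolution on $\Sph_{M,\Ran,+}$ as in (\cite{Ly10}, 7.3.16). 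This choice is not cosmetic: it is precisely what yields that the restricted functor is $Shv((X^{-\theta}\times\Ran)^{\subset})$-linear, which is part of what the functor (\ref{Sat_functor_for_Pp_3.2.2}) is and is used downstream (e.g.\ in Corollary~\ref{Cor_3.3.5} and in the proof of unitality via Proposition~\ref{Pp_3.2.5_Satake}). Your proposal never addresses this linearity; checking supports stratum by stratum over $\oo{X}{}^J$ does not produce it. (The factorization--grading compatibility you flag as the ``main obstacle'' is exactly what (\cite{Ly10}, Appendix~E) supplies, so citing it is fine; and ``passing to closures'' should rather be phrased as: the $!$-restriction to the open complement of the closed sub-ind-scheme $\Gr^{\theta}_{M,I,+}$ vanishes because its restriction to every stratum does.)

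The more serious gap is your final step. You propose to identify the restriction of $\Sat_{M,\Ran}$ with $\Sat_{M,\Ran,+}$ by ``matching the two constructions on generators such as the irreducibles $U^{\lambda}$'', noting both produce $\IC(\ov{\Gr}^{\und{\lambda}}_{M,I})$ up to shift. Object-wise agreement on a family of objects does not identify two continuous functors between DG categories: one needs an actual comparison map, compatible over all $I$, with the $!$-restrictions along diagonals, and with the grading and factorization structures. In the paper no such ad hoc matching is needed, because the identification is by construction: both functors are encoded by chiral Hecke kernels --- the Chevalley--Cousin complex $\cC(\cT_A)$ for $A=\cO(\check{M})_{neg}\otimes\omega_X$, respectively for $\cO(\check{M})\otimes\omega_X$ --- and restricting $\Sat_{M,\Ran}$ along $\Fact(\Rep(\check{M})_{neg})_{\theta}\hook{}\Fact(\Rep(\check{M}))$ corresponds, under the pairing of $\Rep(\check{M})_{neg}$ with $\Rep(\check{M})_{pos}$, to the evident projection of the kernel; what then genuinely requires proof is the support and linearity statement discussed above. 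As written, your last paragraph asserts the identification without providing any such comparison, so the proof is incomplete at exactly the point where the proposition's content lies.
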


\sssec{} 
\label{Sect_3.2.3_now}
Set 
$$
\Rep(\check{M})^{untl}_{neg}=\Vect\oplus\, (
\underset{\theta\in -\Lambda_{G,P}^{pos,*}}{\oplus} \Rep(\check{M})_{\theta}).
$$
We consider $\Rep(\check{M})^{untl}_{neg}\subset \Rep(\check{M})_{neg}$ as the subcategory of those representations in the RHS, whose zero's component is a trivial $\check{M}$-module. Applying the construction of (\cite{Ly10}, Appendix E) we get as above
$$
\Fact(\Rep(\check{M})^{untl}_{neg})=\underset{\theta\in -\Lambda_{G,P}^{pos}}{\oplus}  \Fact(\Rep(\check{M})^{untl}_{neg})_{\theta}
$$
with $\Fact(\Rep(\check{M})^{untl}_{neg})_{\theta}\in Shv((X^{-\theta}\times\Ran)^{\subset})-mod$.

\index{$\Rep(\check{M})^{untl}_{neg}$, Section~\ref{Sect_3.2.3_now}}

\sssec{} 
\label{Sect_3.2.4_now}
Let $\theta\in -\Lambda_{G,P}^{pos}$. Set 
$$
\Gr_{M, X^{-\theta}}^{\theta,\subset}=\Gr_{M, X^{-\theta}, +}^{\theta}\times_{X^{-\theta}} (X^{-\theta}\times\Ran)^{\subset}.
$$
Consider the projection on the first factor
$$
\Gr_{M, X^{-\theta}}^{\theta,\subset}\to \Gr_{M, X^{-\theta}, +}^{\theta}.
$$
As in Section~\ref{Sect_1.6.2}, we extend it to the map of stack quotients
$$
\tau^{\theta}_{untl}: \Gr_{M, X^{-\theta}}^{\theta,\subset}/\gL^+(M)_{\Ran}\to \Gr_{M, X^{-\theta}, +}^{\theta}/\gL^+(M)_{\theta}
$$
To be precise, for $S\in\Sch^{aff}$ an $S$-point of $\Gr_{M, X^{-\theta}}^{\theta,\subset}/\gL^+(M)_{\Ran}$ is given by an $S$-point $(D,\cI)\in (X^{-\theta}\times\Ran)^{\subset}$, $M$-torsors $\cF_M, \cF'_M$ on $\cD_{\cI}$ with an isomorphism $\beta: \cF_M\,\iso\,\cF'_M\mid_{\cD_{\cI}-\supp(D)}$ whose reduction to $M/[M,M]$ extends to an isomorphism  on $\cD_{\cI}$
$$
\cF_{M/[M, M]}(D)\,\iso\,\cF'_M.
$$
The map $\tau^{\theta}_{untl}$ is given by forgetting $\cI$ and restricting the above data to $\cD_D$. 

\index{$\Gr_{M, X^{-\theta}}^{\theta,\subset}, \tau^{\theta}_{untl}, i_{untl}^{\theta}, \Gr_{M,\Ran}^{\subset}, \Gr_{M, I}^{\subset}$, Section~\ref{Sect_3.2.4_now}}
 
As in Lemma~\ref{Lm_1.6.3}, one defines the functor
$$
(\tau^{\theta}_{untl})^!: Shv(\Gr_{M, X^{-\theta}, +}^{\theta})^{\gL^+(M)_{\theta}}\to 
Shv(\Gr_{M, X^{-\theta}}^{\theta,\subset})^{\gL^+(M)_{\Ran}}.
$$

 Write
$$
i_{untl}^{\theta}:  \Gr_{M, X^{-\theta}}^{\theta,\subset}\hook{} \Gr^{\theta}_{M, \Ran, +}
$$
for the natural closed immersion. For $I\in fSets$ set
$$
\Gr_{M,\Ran}^{\subset}=\underset{\theta\in -\Lambda_{G,P}^{pos}}{\sqcup} \Gr_{M, X^{-\theta}}^{\theta,\subset}\;\;\mbox{and}\,\;
\Gr_{M, I}^{\subset}=\Gr_{M,\Ran}^{\subset}\times_{\Ran} X^I.
$$ 

\sssec{} 
\label{Sect_3.2.5_now}
In Section~\ref{Sect_case_of_checkM_neg_untl} we construct a functor
\begin{equation}
\label{functor_Sat_M_Ran^untl}
\Sat_{M,\Ran}^{untl}:\Fact(\Rep(\check{M})^{untl}_{neg})\to Shv(\Gr_{M, \Ran}^{\subset})^{\gL^+(M)_{\Ran}}
\end{equation}
graded by $-\Lambda_{G,P}^{pos}$ and study its properties. It is a collection of $Shv((X^{-\theta}\times\Ran)^{\subset})$-linear functors for $\theta\in -\Lambda_{G,P}^{pos}$
\begin{equation}
\label{functor_Sat_M_Ran_theta^untl}
\Sat_{M,\Ran}^{untl}: \Fact(\Rep(\check{M})^{untl}_{neg})_{\theta}\to Shv(\Gr_{M, X^{-\theta}}^{\theta,\subset})^{\gL^+(M)_{\Ran}}.
\end{equation}
We also equip $Shv(\Gr_{M, \Ran}^{\subset})^{\gL^+(M)_{\Ran}}$ with the $\otimes^*$-monoidal and $\otimes^{ch}$-symmetric monoidal structures.  

\index{$\Sat_{M,\Ran}^{untl}$, Section~\ref{Sect_3.2.5_now}}

The following is established in Section~\ref{Sect_C.2.10}.
\begin{Pp}
\label{Pp_about_functor_Sat_M_Ran^untl}
For $\theta\in -\Lambda_{G,P}^{pos}$ the restriction of $\Sat_{M,\Ran, +}$ under 
$$
\Fact(\Rep(\check{M})_{neg}^{untl})_{\theta}\hook{} \Fact(\Rep(\check{M})_{neg})_{\theta}
$$
is the functor (\ref{functor_Sat_M_Ran_theta^untl}) composed with the full embedding 
$$
(i^{\theta}_{untl})_!: Shv(\Gr_{M, X^{-\theta}}^{\theta, \subset})^{\gL^+(M)_{\Ran}}
\hook{} Shv(\Gr^{\theta}_{M, \Ran, +})^{\gL^+(M)_{\Ran}}.
$$
\end{Pp}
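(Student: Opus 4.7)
The plan is to reduce the claim to a compatibility check on the level of graded factorization categories, exploiting that $\Fact(\Rep(\check{M})^{untl}_{neg})_{\theta}$ sits inside $\Fact(\Rep(\check{M})_{neg})_{\theta}$ as the full subcategory cut out by the condition that the zero-graded component is $\Vect$ rather than an arbitrary $\check{M}$-representation. The key geometric observation will be that this purely ``unital at zero'' condition forces the image of $\Sat_{M,\Ran,+}$ to be supported on the closed substack $\Gr_{M, X^{-\theta}}^{\theta,\subset}\hook{}\Gr^{\theta}_{M,\Ran,+}$, i.e. on the locus where $\Gamma_{\cI}$ is set-theoretically contained in $\supp(D)$.

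First I would unpack both constructions explicitly. The functor $\Sat_{M,\Ran,+}$ is built factorizationally from the ordinary Satake functor, and by Proposition~\ref{Pp_3.2.2} it agrees with the restriction of $\Sat_{M,\Ran}$ to $\Fact(\Rep(\check{M})_{neg})_{\theta}$. Since $\Rep(\check{M})^{untl}_{neg}\subset\Rep(\check{M})_{neg}$ is a morphism in $CAlg^{nu}$ compatible with the $-\Lambda_{G,P}^{pos}$-grading, the restriction of $\Sat_{M,\Ran,+}$ to $\Fact(\Rep(\check{M})^{untl}_{neg})_{\theta}$ is automatically a $Shv((X^{-\theta}\times\Ran)^{\subset})$-linear graded factorization functor, and hence a candidate for comparison with $\Sat_{M,\Ran}^{untl}$ post-composed with $(i^{\theta}_{untl})_!$.

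The next step is to verify the support property. Using the explicit colimit formula for $\Fact(C)$ from (\cite{Ly10}, Appendix~E), sections of $\Fact(\Rep(\check{M})^{untl}_{neg})_{\theta}$ over $X^I$ are assembled from partitions $I=\sqcup_k I_k$ with assignments $\theta_k\in -\Lambda_{G,P}^{pos}$ satisfying $\sum\theta_k=\theta$, where the cluster of grade $0$ contributes only from the trivial direct summand $\Vect\subset\Rep(\check{M})_0$. Under the usual Satake functor, a trivial factor at a cluster of the Ran point is sent to the dualizing sheaf on the unit section of $\Gr_{M}$ at that cluster; hence after tensoring along the factorization isomorphism, the image is supported over partitions for which every cluster with nontrivial $\theta_k\ne 0$ is contained in $\supp(D)$. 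Over the open locus where $\Gamma_{\cI}\cap\Gamma_{\cI'}=\emptyset$ this is precisely the definition of $\Gr^{\theta,\subset}_{M,X^{-\theta}}\subset \Gr^{\theta}_{M,\Ran,+}$, so the support condition holds on each factorization stratum.

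Finally I would extend this statement across the diagonals and identify the resulting functor with $(i^{\theta}_{untl})_!\circ\Sat_{M,\Ran}^{untl}$. Both functors are $-\Lambda_{G,P}^{pos}$-graded, $Shv((X^{-\theta}\times\Ran)^{\subset})$-linear, compatible with factorization, and characterized by their values on the generators $V_{\theta'}$ of $\Rep(\check{M})^{untl}_{neg}$, so the agreement on factorization strata combined with the universal property defining $\Sat_{M,\Ran}^{untl}$ in Section~\ref{Sect_case_of_checkM_neg_untl} will yield the desired canonical isomorphism. The main obstacle I anticipate is making the support argument rigorous across diagonals in $X^I$: the factorization isomorphisms only hold on the disjoint locus, so the extension to the full $X^I$ must be carried out via the colimit presentation of $\Fact(C)$ over $\Tw(I)$, computing $!$-restrictions to each diagonal stratum and matching them term by term with the values produced by $(i^{\theta}_{untl})_!\Sat_{M,\Ran}^{untl}$.
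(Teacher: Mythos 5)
Your geometric observation is the right one: for an object of $\Fact(\Rep(\check{M})^{untl}_{neg})_{\theta}$ the clusters of the Ran point carrying degree $0$ contribute only the unit, so under the Satake construction they produce the unit (delta) object on the corresponding piece of $\Gr_{M}$, and the image is therefore an extension by zero from $\Gr^{\theta,\subset}_{M,X^{-\theta}}\hook{}\Gr^{\theta}_{M,\Ran,+}$. This support statement is indeed part of the content of the proposition.

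However, the step that actually proves the proposition — identifying the restricted functor with $(i^{\theta}_{untl})_!\comp\Sat^{untl}_{M,\Ran}$ — is where your argument has a gap. There is no ``universal property defining $\Sat_{M,\Ran}^{untl}$ in Section~\ref{Sect_case_of_checkM_neg_untl}'' to appeal to: $\Sat^{untl}_{M,\Ran}$ is defined by an explicit construction, namely as the functor encoded by the Chevalley--Cousin complex $\cC(\cT_A)$ of the chiral algebra attached to $A=\cO(\check{M}^{untl}_{neg})\otimes\omega_X$, viewed as a kernel in $(\Rep(\check{M}^{untl}_{pos}))_{X^I}\otimes_{Shv(X^I)}\Sph^{untl}_{M,I,+}$ via the perfect pairing, exactly parallel to the construction of $\Sat_{M,\Ran,+}$ from $\cO(\check{M})_{neg}$. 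This is why the paper's proof is a one-liner (``it follows from the construction''): restricting $\Sat_{M,I,+}$ along the fully faithful embedding $\Rep(\check{M}^{untl}_{neg})_{X^I}\hook{}(\Rep(\check{M})_{neg})_{X^I}$ corresponds, under the dualities used to define both functors, to applying the dual projection to the kernel $\cC(\cT_{\cO(\check{M})_{neg}})$, and this projection (taking invariants of the degree-zero part $\cO(\check{M})_0$ in the multiplicity variable, which returns the constants) produces precisely $\cC(\cT_{\cO(\check{M}^{untl}_{neg})})$, extended by zero under $(i^{\theta}_{untl})_!$. Your alternative route — agreement of values on generators over the disjoint locus plus a ``term by term'' matching over the diagonals — does not by itself yield a coherent isomorphism of $Shv((X^{-\theta}\times\Ran)^{\subset})$-linear factorization functors: values on objects do not determine a functor, and the coherence across the $\Tw(I)$-colimit presentation is exactly the work you are deferring. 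If you want to avoid the kernel comparison, you would need a genuine descent-type statement for factorization functors (in the spirit of the argument used for Lemma~\ref{Lm_C.3.5_now}), which you neither state nor verify; as written, the concluding step does not go through.
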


\sssec{} 
\label{Sect_3.2.7_now}
Let $\Conf^*=\underset{\theta\in \Lambda_{G,P}^{pos,*}}{\sqcup} X^{\theta}$ and $\Conf=\Spec k\sqcup \Conf^*$, here $\Spec k=X^0$. Write $\gL^+(M)_{\Conf}$ for the group scheme on $\Conf$ whose restriction to $X^{\theta}$ is $\gL^+(M)_{-\theta}$. Set 
$$
\Gr_{M, \Conf, +}=\underset{\theta\in \Lambda_{G,P}^{pos}}{\sqcup} \Gr_{M, X^{\theta}, +}^{-\theta}\;\;\;\mbox{and}\;\;\: \Gr_{M, \Conf^*, +}=\underset{\theta\in \Lambda_{G,P}^{pos,*}}{\sqcup} \Gr_{M, X^{\theta}, +}^{-\theta},
$$ 
here $\Gr_{M, X^0, +}^0=\Spec k$. For $\theta\in\Lambda_{G,P}^{pos}$ set 
$$
\Sph_{M, X^{\theta}, +}=Shv(\Gr_{M, X^{\theta}, +}^{-\theta})^{\gL^+(M)_{-\theta}}\;\;\;\mbox{and}\;\;\; \Sph_{M, \Conf, +}=Shv(\Gr_{M, \Conf, +})^{\gL^+(M)_{\Conf}}.
$$
viewed as sheaf on categories on $X^{\theta}$ and $\Conf$ respectively.
Note that $\Sph_{M, X^0, +}=\Vect$ canonically. The restriction of $\Sph_{M, \Conf, +}$ to $\Conf^*$ is denoted $\Sph_{M, \Conf^*, +}$. 

\index{$\Conf^*, \Conf, \Gr_{M, \Conf, +}, \Gr_{M, \Conf^*, +}$, Section~\ref{Sect_3.2.7_now}}
\index{$\Sph_{M, X^{\theta}, +}, \Sph_{M, \Conf, +}, \Sph_{M, \Conf^*, +}$, Section~\ref{Sect_3.2.7_now}}

\sssec{} 
\label{Sect_3.2.8_now}
Set 
$$
\Rep(\check{M})_{<0}=\underset{\theta\in -\Lambda_{G,P}^{pos,*}}{\oplus} \Rep(\check{M})_{\theta}
$$ 
viewed as an object of $CAlg^{nu}(\DGCat_{cont})$.

In Section~\ref{Sect_case_Rep(checkM)_<0} we define the version of the Satake functor
\begin{equation}
\label{funct_Sat_M_Conf^*}
\Sat_{M, \Conf^*}: \Fact(\Rep(\check{M})_{<0})\to \Sph_{M, \Conf^*,+}
\end{equation}
compatible with the $-\Lambda_{G,P}^{pos,*}$-grading and study its basic properties. This is a collection of $Shv(X^{-\theta})$-linear functors 
\begin{equation}
\label{funct_Sat_M_X^-theta}
\Sat_{M, X^{-\theta}}: \Fact(\Rep(\check{M})_{<0})_{\theta}\to \Sph_{M, X^{-\theta},+}
\end{equation}
for $\theta\in -\Lambda_{G,P}^{pos,*}$. 

\index{$\Rep(\check{M})_{<0}, \Sat_{M, \Conf^*}, \Sat_{M, X^{\theta}}$, Section~\ref{Sect_3.2.8_now}}

 We equip the categories $\Sph_{M, \Conf^*,+}$ and $\Sph_{M,\Conf, +}$ with the $\otimes^*$-monoidal structure in Sections~\ref{Sect_C.3.1_now}-\ref{Sect_C.3.3_now}.
We equip $\Sph_{M, \Conf^*,+}$ and $\Sph_{M,\Conf, +}$ with the $\otimes^{ch}$-symmetric monoidal structure in Sections~\ref{Sect_C.3.4_now} and \ref{Sect_C.3.14_now}. The proof of the following is postponed to Section~\ref{Sect_C.4.12}. 
 
\begin{Pp} 
\label{Pp_3.2.5_Satake}
Let $B\in \Rep(\check{M})_{<0}$. Set 
$
A_{<0}=\underset{m>0}{\oplus}\Sym^m(B)\in CAlg^{nu}(\Rep(\check{M})_{<0})
$
and 
$$
A=\underset{m\ge 0}{\oplus}\Sym^m(B)\in CAlg(\Rep(\check{M})_{neg}^{untl}).
$$ 
 For $\theta\in -\Lambda_{G,P}^{pos,*}$ there is a canonical isomorphism in $Shv(\Gr_{M, X^{-\theta}}^{\theta,\subset})^{\gL^+(M)_{\Ran}}$
$$
(\tau_{untl}^{\theta})^! \Sat_{M, X^{-\theta}}(\Fact(A_{<0})_{\theta})\,\iso\, \Sat^{untl}_{M,\Ran}(\Fact(A)_{\theta}).
$$ 
\end{Pp}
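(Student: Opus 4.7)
The plan is to exhibit both sides of the proposition as explicit colimits whose indexing data can be matched via a cofinality argument, leveraging the free commutative algebra structure of $A$ together with the universal contractibility of $\pr^{-\theta}_{\Ran}$ from Section~\ref{Sect_1.3.13_now}. The overall strategy mimics Raskin's argument for the torus in \cite{Ras2}, Proposition-Construction~4.10.1, but one must carry along the extra $\check{M}$-equivariance encoded in $B \in \Rep(\check{M})_{<0}$.

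First, I would unpack both factorization algebras using freeness. Since $A = e \oplus A_{<0}$ with $e$ the unit in degree $0$, the $\theta$-component $\Fact(A)_{\theta}$ on $(X^{-\theta}\times\Ran)^{\subset}$ admits a presentation as a colimit indexed by pairs $(I,\und{\eta})$, where $I\in fSets$ labels the Ran-point and $\und{\eta} : I \to -\Lambda_{G,P}^{pos}$ is a decoration summing to $\theta$ (with the value $0$ permitted, encoding a ``unit insertion''). The analogous presentation of $\Fact(A_{<0})_{\theta}$ on $X^{-\theta}$ only allows decorations $\und{\eta} : J \to -\Lambda_{G,P}^{pos,*}$ with no zeros. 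The restriction of the first diagram along the subcategory where all values lie in $-\Lambda_{G,P}^{pos,*}$ gives back the second diagram, with the remaining "unit-inserted" terms parametrised by sub-finite sets $I \setminus J$ that are genuinely free Ran-points.

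Second, I would match the Satake transforms term-by-term. Applying $\Sat_{M,\Ran}^{untl}$ (from Section~\ref{Sect_3.2.5_now}) to a term $\omega_{X^K} \otimes (\otimes_j \Sym^{m_j}(B))$ with some factors labelled $0$ produces, by Proposition~\ref{Pp_about_functor_Sat_M_Ran^untl} and the standard behaviour of Satake on the unit, the external product of a Satake image on $\Gr_{M,X^{-\theta}}^{\theta,\subset}$ (for the nonzero labels, agreeing with the corresponding summand of $\Sat_{M,X^{-\theta}}(\Fact(A_{<0})_{\theta})$ pulled back along $\tau^{\theta}_{untl}$) with the dualising sheaf along the extra Ran directions coming from the zero labels. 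This is where the free commutative structure is essential: it guarantees the Satake functor is computed by a plain colimit of external products, without higher-order corrections.

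Third, I would collapse the extra Ran directions. The forgetful projection $(X^{-\theta}\times\Ran)^{\subset}\to X^{-\theta}$ is the same as $\pr^{-\theta}_{\Ran}$, which is universally homologically contractible by Section~\ref{Sect_1.3.13_now}. Consequently, $!$-pulling back along $\tau^{\theta}_{untl}$ turns the extra unit insertions into $!$-inverse images of the base, and the resulting colimit over unit-extended decorations is cofinally computed by the sub-diagram with no unit insertions. This identifies the pulled-back colimit with the one defining $\Sat_{M,X^{-\theta}}(\Fact(A_{<0})_{\theta})$, proving the claim. The main obstacle I anticipate is verifying the cofinality in a way compatible with $\gL^+(M)_{\Ran}$-equivariance and the factorisation structure, in particular checking that the bookkeeping of $\Tw(I)$-indexing and symmetric powers of $B$ is consistent across the two presentations; the Raskin torus case of \cite{Ras2} provides the template for this combinatorial argument.
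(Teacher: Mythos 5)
Your third step contains the decisive gap: the sub-diagram of decorations with no unit insertions is \emph{not} cofinal in the index category computing $\Fact(A)_{\theta}$, and the isomorphism you want cannot be obtained this way. To see the failure concretely, take $\theta$ to be a single generator of $-\Lambda_{G,P}^{pos,*}$. The terms of the colimit presentation indexed by zero-free decorations are pushed into $\Fact(\Rep(\check{M})_{neg}^{untl})_{\theta}$ from the closed locus of $(X^{-\theta}\times\Ran)^{\subset}$ where the Ran point is contained in the support of the divisor; their colimit is therefore of the form $\vartriangle_!(B_{\theta}\otimes\omega_X)$-type, supported on that closed locus. By contrast $\Fact(A)_{\theta}$ itself (and hence $\Sat^{untl}_{M,\Ran}(\Fact(A)_{\theta})$) is genuinely spread over all of $(X^{-\theta}\times\Ran)^{\subset}$, precisely because the unit insertions at the extra Ran points contribute nontrivially; this is what unitality means. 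Since restricting along a cofinal subcategory cannot change a colimit, and the two colimits visibly differ, the inclusion is not cofinal. The unit insertions are not to be discarded by cofinality: showing that they assemble to exactly the $!$-pullback along $\pr^{-\theta}_{\Ran}$ is the nontrivial homological content of the statement, and universal homological contractibility of $\pr^{-\theta}_{\Ran}$ by itself (which only gives full faithfulness of the pullback) does not produce it. A secondary issue is your second step: the compatibility of $\Sat^{untl}_{M,\Ran}$ with the structure maps of the colimit presentation is not automatic from freeness; it is exactly the content of the factorization/exterior-convolution compatibilities (Lemma~\ref{Lm_C.3.38_comm_diagram} and its Ran analogue), and over diagonal strata the functor is governed by Chevalley--Cousin complexes rather than plain external products.

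The paper's proof runs through a different mechanism, and it is also the actual content of the Raskin argument you cite as a template. One first identifies $\Fact(A)$ and $\Fact(A_{<0})$ with chiral homology (Chevalley) complexes via chiral Koszul duality, $\Fact(\cE)\,\iso\,C^{ch}(\cE[-1])$; the freeness $A=\Sym(B)$ enters not through a colimit bookkeeping but through the PBW splitting for the chiral envelope of the \emph{trivial} $\ast$-Lie algebra on $B\otimes\omega_X[-1]$ (Section~\ref{Sect_C.4.11}), which exhibits $A_{<0}[-1]$ as $\Ind^{*\to ch}_{\Lie}$ of a trivial Lie algebra. Because the graded Satake functors are factorization functors, hence symmetric monoidal for $\otimes^{ch}$, they transport these Lie/coalgebra structures to $\Sph_{M,\Conf^*,+}$ and $\Sph^{untl}_{M,\Ran,+}$. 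The comparison between the two sides is then Proposition~\ref{Pp_C.4.10}: the ranification functor $\cR=(\tau^{\theta}_{untl})^!$ intertwines the Chevalley complex on $\Sph_{M,\Conf,+}$ (after adding a unit) with the Chevalley complex on $\Sph^{untl}_{M,\Ran,+}$ (after $Add$-$unit_X$ on the Lie algebra side), its proof resting on the lax monoidal structures of $\cR$ (Proposition~\ref{Pp_from_Raskin_rlax}) and a geometric comparison over the square of prestacks relating $\Gr_{M,X^{pos}}$, $\Gr^{\subset}_{M,\Ran}$ and $\Gr_{M,\Conf,+}$. If you want to salvage a direct colimit comparison, you would in effect have to reprove this Chevalley-complex statement, i.e.\ show that the contributions of the unit insertions compute the dualizing sheaf along the Ran directions \emph{relative to} the configuration, which is where the contractibility is genuinely used; it is not a property of the index categories.
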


\begin{Rem} Under the assumptions of Proposition~\ref{Pp_3.2.5_Satake} applying (\cite{Ly10}, Corollary~2.4.16) we get $\Fact(A_{<0})\,\iso\, \underset{m>0}{\oplus} \Sym^{m, \star}(\vartriangle_!(B\otimes\omega_X))$, where $\vartriangle: X\to \Ran$ is the natural map, and the symmetric algebra is understood for the non-unital $\star$-symmetric monoidal structure on $\Fact(\Rep(\check{M})_{<0})$ defined in (\cite{Ly10}, 2.2.6). Here $B\otimes\omega_X\in \Rep(\check{M})_{<0}\otimes Shv(X)$. 
\end{Rem}

\sssec{} 
\label{Sect_3.2.11_now}
Set
$$
\Rep(\check{M})^-_{neg}=\underset{\lambda\in \Lambda^-_{M,G}}{\oplus} \Vect\otimes U^{\lambda},
$$
here $U^{\lambda}$ is the irreducible $\check{M}$-module with highest weight $\lambda$, and $\Lambda^-_{M,G}$ was defined in Section~\ref{Sect_1.3.10}. By (\cite{DL}, Remark~4.2.16), we have a full embedding 
$$
\Rep(\check{M})^-_{neg}\subset \Rep(\check{M})^{untl}_{neg}
$$
in $\DGCat_{cont}$. The latter is a map in $CAlg(\DGCat_{cont})$. We view $\Rep(\check{M})^-_{neg}$ as $-\Lambda_{G,P}^{pos}$-graded with the grading induced by that of $\Rep(\check{M})^{untl}_{neg}$. Set
$$
\Rep(\check{M})^-_{<0}=\underset{0\ne \lambda\in \Lambda^-_{M,G}}{\oplus} \Vect\otimes U^{\lambda}.
$$
Then $\Rep(\check{M})^-_{<0}\subset \Rep(\check{M})_{<0}$ is a full embedding, this is a map in $CAlg^{nu}(\DGCat_{cont})$.

\index{$\Rep(\check{M})^-_{neg}, \Rep(\check{M})^-_{<0}, i_{\Mod}^{\theta}$, Section~\ref{Sect_3.2.11_now}}

 We have the closed immersion for $\theta\in -\Lambda_{G,P}^{pos}$ denoted
$$
i_{\Mod}^{\theta}: \Mod_M^{-,\theta}\hook{} \Gr_{M, X^{-\theta}, +}^{\theta}.
$$ 

\begin{Pp} 
\label{Pp_3.2.7_Satake}
For $\theta\in -\Lambda_{G,P}^{pos,*}$ the restriction of $\Sat_{M, X^{-\theta}}$ under the natural map 
$$
\Fact(\Rep(\check{M})^-_{<0})_{\theta}\to \Fact(\Rep(\check{M})_{<0})_{\theta}
$$ 
fits into the commutative diagram
$$
\begin{array}{ccc}
\Fact(\Rep(\check{M})_{<0})_{\theta}&\toup{\Sat_{M, X^{-\theta}}} &\Sph_{M, X^{-\theta},+}\\
\uparrow && \uparrow\lefteqn{\scriptstyle (i^{\theta}_{\Mod})_*}\\
\Fact(\Rep(\check{M})^-_{<0})_{\theta} & \toup{\Sat_{M, X^{-\theta}}} & Shv(\Mod_M^{-,\theta})^{\gL^+(M)_{\theta}}
\end{array}
$$
\end{Pp}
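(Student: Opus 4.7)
The plan is to prove the statement by showing that the image of $\Sat_{M, X^{-\theta}}$ restricted to $\Fact(\Rep(\check{M})^-_{<0})_{\theta}$ is supported on the closed subscheme $\Mod_M^{-,\theta}\hook{}\Gr_{M, X^{-\theta}, +}^{\theta}$. Since $(i^{\theta}_{\Mod})_*$ is fully faithful (as $i^{\theta}_{\Mod}$ is a closed immersion), this support containment immediately produces the unique bottom horizontal arrow rendering the diagram commutative, and identifies it with the $!$-corestriction of the top one.

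First, I would reduce to generators. By construction, $\Rep(\check{M})^-_{<0}$ is generated under colimits by the irreducible $\check{M}$-modules $U^{\lambda}$ with $0\ne\lambda\in\Lambda^-_{M,G}$; hence $\Fact(\Rep(\check{M})^-_{<0})_{\theta}$ is generated (as a $Shv(X^{-\theta})$-module category) by factorization objects of the form $\Fact(A)_{\theta}$ with $A$ a free non-unital commutative algebra built from these $U^{\lambda}$, together with colimits, extensions, and shifts. Since $\Sat_{M, X^{-\theta}}$ is continuous and the support property is closed under colimits and extensions, it suffices to verify it on such generators.

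Next, I would analyze the Satake image on the open stratum. Fix a decomposition $-\theta=\sum_i(-\lambda_i)$ with $\lambda_i\in\Lambda^-_{M,G}\setminus\{0\}$, and consider the open locus $\oo{X}{}^I\subset X^{-\theta}$ classifying collections of distinct points. By Proposition~\ref{Pp_3.2.2} together with the factorization structure on $\Sat$, the $!$-restriction of $\Sat_{M, X^{-\theta}}(\Fact(A)_{\theta})$ to this locus is, up to a cohomological shift, the external product of the $\IC$-sheaves of the $\ov{\Gr}_M^{\lambda_i}$. By (\cite{BG}, 6.2.3), recalled in Section~\ref{Sect_1.3.10}, the hypothesis $\lambda_i\in\Lambda^+_M\cap(-\Lambda^{pos})=\Lambda^-_{M,G}$ is exactly equivalent to $\ov{\Gr}_M^{\lambda_i}\subset \Gr_M^{-,\theta_i}$ where $\theta_i$ is the image of $\lambda_i$ in $\Lambda_{G,P}$. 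Therefore the Satake sheaf is supported in the corresponding open stratum of $\Mod_M^{-,\theta}$ over $\oo{X}{}^I$. As $\Mod_M^{-,\theta}\hook{}\Gr_{M, X^{-\theta}, +}^{\theta}$ is closed (Section~\ref{Sect_1.3.17}) and the Satake sheaf is obtained by $!$-extension compatible with factorization, its total support must lie in $\Mod_M^{-,\theta}$.

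The main obstacle will be the careful bookkeeping needed to match the two factorization structures: one must verify that the factorization of $\Mod_M^{-,\theta}\to X^{-\theta}$ (inherited from its Zastava-type model in \cite{BFGM}) agrees with the factorization on the Satake side, so that support containment over the disjoint locus propagates to all of $X^{-\theta}$. This is essentially a configuration-space globalization of the local fact $\ov{\Gr}_M^{\lambda}\subset \Gr_M^{-,\theta}$ used above, and should reduce to it via the standard factorization-algebra argument combined with the closedness of $\Mod_M^{-,\theta}$ in $\Gr_{M, X^{-\theta},+}^{\theta}$.
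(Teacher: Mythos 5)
Your proposal is correct and takes essentially the same route as the paper: the paper's entire proof is the one-line observation that, by (\cite{BG}, 6.2.3), $\Gr_M^{\lambda}\subset \Gr_M^{-,\eta}$ for $\lambda\in\Lambda^+_M$ over $\eta\in-\Lambda_{G,P}^{pos}$ exactly when $\lambda\in\Lambda^-_{M,G}$, which is the same key input you use. The reduction to generators and the propagation of the support containment from the disjoint locus via factorization and the closedness of $\Mod_M^{-,\theta}\hookrightarrow\Gr^{\theta}_{M,X^{-\theta},+}$ is just the implicit bookkeeping behind that observation, so no further comparison is needed.
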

\begin{proof}
This follows from the fact that for $\lambda\in\Lambda^+_M$ over $\eta\in -\Lambda_{G,P}^{pos}$ the property $\Gr_M^{\lambda}\subset \Gr_M^{-, \eta}$ is equivalent to $\lambda\in \Lambda^-_{M, G}$ by (\cite{BG}, 6.2.3). 
\end{proof}

\ssec{Restrictions to strata}
\label{Sect_Restrictions to strata}

\sssec{} 
\label{Sect_3.3.1_now}
Consider the space $\cO(U(\check{P}))\in CAlg(\Rep(\check{M}))$ of regular functions on $U(\check{P})$. It is graded by $-\Lambda_{G,P}^{pos}$. The component $\cO(U(\check{P}))_{\theta}$ is given by requiring that $Z(\check{M})$ acts by $\theta$. We denote by 
$$
\Fact(\cO(U(\check{P})))\in \Fact(\Rep(\check{M}))
$$ 
the corresponding commutative factorization algebra defined in (\cite{Ly10}, 2.4), and by $\cO(U(\check{P}))_{X^I}$ its !-restriction under $X^I\to\Ran$ for $I\in fSets$. By (\cite{Ly10}, 2.5.4), 
$$
\Fact(\cO(U(\check{P})))\in CAlg(\Fact(\Rep(\check{M})), \otimes^!)
$$ 
and $\cO(U(\check{P}))_{X^I}\in CAlg(\Rep(\check{M})_{X^I}, \otimes^!)$. 
When we need the fact that 
$$
\Sat_{M,\Ran}: \Fact(M)^{\epsilon}\to \Sph_{M, \Ran}
$$ 
is compatible with the factorization structures (cf. Remark~\ref{Rem_Sat_and_factorization}), by abuse of notations we still denote by
$$
\Fact(\cO(U(\check{P})))\in CAlg(\Fact(\Rep(\check{M})^{\epsilon}), \otimes^!)
$$
the corresponding object. So, 
$$
\Sat_{M, \Ran}(\Fact(\cO(U(\check{P}))))\in \Sph_{M,\Ran}
$$
has a factorization structure. Besides, for each $I\in fSets$, $\Sat_{M, I}(\cO(U(\check{P}))_{X^I})\in Alg(\Sph_{M, I})$ is an algebra with respect to the inner convolution on $\Sph_{M, I}$. 

\index{$\cO(U(\check{P})), \cO(U(\check{P}))_{\theta}, \Fact(\cO(U(\check{P}))), (\cO(U(\check{P}))_{X^I}$, Section~\ref{Sect_3.3.1_now}}

  Write $U(\gu(\check{P}^-))$ for the universal enveloping algebra of $\gu(\check{P}^-)$. Then $U(\gu(\check{P}^-))$ identifies with the graded dual of $\cO(U(\check{P}))$ as Hopf algebras (for $P=B$ this is proved in (\cite{GLS}, Proposition 5.1)). So, for $\theta\in\Lambda_{G,P}$ one has $(\cO(U(\check{P}))_{\theta})^*\;\iso\; U(\gu(\check{P}^-))_{\theta}$.    
  
\index{$U(\gu(\check{P}^-))$, Section~\ref{Sect_3.3.1_now}}  
 
\begin{Rem} One has an isomorphism $\cO(U(\check{P}))\,\iso\,\Sym(\gu(\check{P})^*)$ in $CAlg(\Rep(\check{M}))$. It is the Hopf algebra structure on $\cO(U(\check{P}))$, which allows to distinguish it from the algebra $\Sym(\gu(\check{P})^*)$. This Hopf algebra structure is not used in this paper. 
\end{Rem} 
 
\sssec{} Note that $\cO(U(\check{P}))\in CAlg(\Rep(\check{M})^-_{neg})$. Set 
$$
\cO(U(\check{P}))_{<0}=\underset{m>0}{\oplus} \Sym^m((\gu(\check{P})^*)\in CAlg^{nu}(\Rep(\check{M})_{<0}),
$$
so $e\oplus \cO(U(\check{P}))_{<0}=\cO(U(\check{P}))$. We get 
$$
\Fact(\cO(U(\check{P}))_{<0})\in\Fact(\Rep(\check{M})_{<0})
$$ 
and for $\theta\in -\Lambda_{G,P}^{pos, *}$ the object
$$
\Sat_{M, X^{-\theta}}(\Fact(\cO(U(\check{P}))_{<0})_{\theta})\in \Sph_{M, X^{-\theta},+}.
$$

For $I\in fSets$ we equip $\Rep(\check{M})_{X^I}$ with the t-structure defined in (\cite{Ly10}, 7.2.5). By (\cite{Ly10}, 7.2.6), $\cO(U(\check{P}))_{X^I}\in \Rep(\check{M})_{X^I}$ is placed in degree $-\mid I\mid$. Since $\Sat_{M, I}$ is t-exact by (\cite{Ly10}, 7.3.15), 
$\Sat_{M, I}(\cO(U(\check{P}))_{X^I})$ is placed in perverse degree $-\mid I\mid$ in $\Sph_{M, I}$. 

 From Propositions~\ref{Pp_3.2.2}, \ref{Pp_3.2.5_Satake}, \ref{Pp_3.2.7_Satake} we learn the following. Given $\theta\in\Lambda_{G,P}$, over the component $\Gr_{M,I,\theta}$ of $\Gr_{M, I}$ the object $\Sat_{M, I}(\cO(U(\check{P}))_{X^I})$ vanishes unless $\theta\in -\Lambda_{G,P}^{pos}$, and in the latter case it is the extension by zero under the closed immersion 
\begin{equation}
\label{closed_imm_for_Sect_3.1.21}
 \Mod^{-,\theta,\subset}_{M, I}\hook{}\Gr_{M, I,\theta}.
\end{equation} 
The resulting object on $\Mod^{-,\theta,\subset}_{M, I}$ is the !-restriction under $\Mod^{-,\theta,\subset}_{M, I}\hook{} \Mod^{-,\theta,\subset}_{M, \Ran}$ of
$$
(\tau^{\theta}_{untl})^!\Sat_{M, X^{-\theta}}(\Fact(\cO(U(\check{P}))_{<0})_{\theta}).
$$

\begin{Thm}
\label{Thm_*-fibres_first} 
For $\theta\in -\Lambda_{G,P}^{pos}$, one has canonically 
$$
(v_{S, I}^{\theta})^*\IC^{\frac{\infty}{2}}_{P, I}[\<\theta, 2\check{\rho}-2\check{\rho}_M\>]
\,\iso\, (\gt_{S, I}^{\theta})^!\Sat_{M, I}(\cO(U(\check{P}))_{X^I})
$$ 
in $\SI^{\theta}_{P, I}(S)$.
Here we used the closed immersion (\ref{closed_imm_for_Sect_3.1.21}), and $\gt_{S, I}^{\theta}: S^{\theta}_{P, I}\to \Mod^{-,\theta,\subset}_{M, I}$.
\end{Thm}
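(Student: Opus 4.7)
My plan is to apply $(v^{\theta}_{S,I})^{*}$ to the colimit presentation~\eqref{formula_Def_of_IC_infty/2_P_I} of $\IC^{\frac{\infty}{2}}_{P,I}$, compute each term via hyperbolic restriction inside geometric Satake, and recognize the resulting colimit as $\Sat_{M,I}$ applied to the factorization algebra of $\cO(U(\check{P}))$. The structural reason for the match is that the Drinfeld--Pl\"ucker presentation $\cA = \cO(\check{G}/U(\check{P})) \to \cB = \cO(\check{G})$ that defines $'\!\IC^{\frac{\infty}{2}}_{P,\Ran}$ becomes, after restriction to the basepoint of $\check{G}/U(\check{P})$, precisely the $\check{M}$-representation $\cO(U(\check{P}))$.

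Since $(v^{\theta}_{S,I})^{*}$ is a left adjoint by~\corref{Cor_1.4.7_left_adjoint}, it commutes with colimits, reducing the problem to computing $(v^{\theta}_{S,I})^{*}\bigl[(-\und{\lambda})\Sat_{G,I}(V^{\und{\lambda}})\bigr]$ for each $\und{\lambda} \in \Map(I, \Lambda^{+}_{M,ab})$. For this I would use the Braden-type formula $(v^{\theta}_{S,I})^{*} = (\gt^{\theta}_{S,I})^{!}(\gt^{\theta}_{P^{-},I})_{*}(v^{\theta}_{P^{-},I})^{!}$ of Section~\ref{Sect_1.5.17}, reducing each term to a hyperbolic restriction along the $\lambda_{0}$-attracting stratification of~\lemref{Lm_1.4.5}. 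A Ran-parametrized version of the geometric Satake Jacquet identification---the Ran analogue of the computation underlying~\cite{BFGM}, Theorem~1.12, carried out for $P=B$ in~\cite{Gai19Ran}---should give, on $\Mod^{-,\theta,\subset}_{M,I}$, a canonical isomorphism
\begin{equation*}
(\gt^{\theta}_{P^{-},I})_{*}(v^{\theta}_{P^{-},I})^{!}\bigl[(-\und{\lambda})\Sat_{G,I}(V^{\und{\lambda}})\bigr] \,\iso\, \Sat'_{M,I}\bigl(e^{-\und{\lambda}} \otimes (V^{\und{\lambda}})^{U(\check{P})}\bigr)_{\theta},
\end{equation*}
where $(\cdot)_{\theta}$ picks out the $Z(\check{M})$-graded component and $\Sat'_{M,I}$ is the shifted Satake functor of Section~\ref{Sect_3.1.3_now}. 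The shift by $[\<\theta, 2\check{\rho} - 2\check{\rho}_{M}\>]$ on the left-hand side of the theorem is exactly what converts $\Sat'_{M,I}$ into $\Sat_{M,I}$ on the $\theta$-graded piece.

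Finally, I assemble the colimit over $\und{\lambda}$: with the $[\<\lambda, 2\check{\rho}\>]$ shift coming from~\eqref{formula_Def_of_IC_infty/2_P_I}, we need to recognize
\begin{equation*}
\underset{\und{\lambda}\in (\Map(I,\Lambda^+_{M,ab}),\le)}{\colim}\; \Sat_{M,I}\bigl(e^{-\und{\lambda}} \otimes (V^{\und{\lambda}})^{U(\check{P})}\bigr)_{\theta}[\<\lambda, 2\check{\rho}\>]
\end{equation*}
as $\Sat_{M,I}$ of the $\theta$-component of $\cO(U(\check{P}))_{X^{I}}$. This uses the Drinfeld--Pl\"ucker colimit identity $\cO(U(\check{P})) \,\iso\, \underset{\lambda \in \Lambda^{+}_{M,ab}}{\colim}\, e^{-\lambda} \otimes (V^{\lambda})^{U(\check{P})}$ in $\Rep(\check{M})$---which is the fibre-at-basepoint incarnation of the formalism of Section~\ref{Sect_2.1.7_now} applied to the $M$-algebra $\cO(\check{M}) \subset \cB$---spread to $X^{I}$ via the construction of Section~\ref{Sect_2.0.1}. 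Combined with Propositions~\ref{Pp_3.2.2} and~\ref{Pp_3.2.7_Satake}, which identify the $\theta$-component of $\Sat_{M,I}(\cO(U(\check{P}))_{X^{I}})$ with the extension by zero from $\Mod^{-,\theta,\subset}_{M,I}$, this yields the desired isomorphism after applying $(\gt^{\theta}_{S,I})^{!}$. The main obstacle is the second step: executing the Ran-parametrized hyperbolic restriction with complete control over the component translation by $(-\und{\lambda})$ and the attendant cohomological shifts. This globalizes the pointwise computations of~\cite{DL} and~\cite{BFGM}, and in the case $P=B$ is the technical heart of~\cite{Gai19Ran}.
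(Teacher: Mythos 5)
Your outline follows the same route as the paper's proof: apply $(v^{\theta}_{S,I})^*$ to the colimit presentation (\ref{formula_Def_of_IC_infty/2_P_I}), compute each term by the Braden-type formula $(\gt^{\theta}_{S,I})^!(\gt^{\theta}_{P^-,I})_*(v^{\theta}_{P^-,I})^!$ of Section~\ref{Sect_1.5.17} together with the compatibility of Satake with Jacquet functors, and then identify the resulting colimit in $\Rep(\check{M})_{X^I}$ with $\cO(U(\check{P}))_{X^I}$. The input you flag as the "main obstacle" is not left open in the paper: the Ran-level hyperbolic restriction of $\Sat_{G,I}$ is exactly the compatibility of the factorizable Satake functors with the \emph{corrected} Jacquet functor $J^G_M$ from (\cite{GLys}, 9.4.3), including the metaplectic twist $\cG^{\epsilon_P}$ that accounts for the shift $[\<2\check{\rho}-2\check{\rho}_M,\theta\>]$.

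However, there is a genuine error in your key identification. The hyperbolic restriction along $\Gr_{P^-}$ of $\Sat_{G,I}(V^{\und{\lambda}})$ computes the Satake image of the \emph{full restriction} $\Res V^{\und{\lambda}}$ of $V^{\und{\lambda}}$ to $\check{M}$ (with the $\theta$-component of $\Gr_{M,I}$ carrying the $Z(\check{M})$-graded piece of $\Res V^{\und{\lambda}}$), not of the Jacquet module $(V^{\und{\lambda}})^{U(\check{P})}$. This matters: for $\und{\lambda}$ valued in $\Lambda^+_{M,ab}$ the $\check{M}$-module $(V^{\lambda})^{U(\check{P})}$ is the irreducible of highest weight $\lambda$, which is the one-dimensional character $e^{\lambda}$; hence $e^{-\und{\lambda}}\otimes (V^{\und{\lambda}})^{U(\check{P})}$ is trivial, and your proposed colimit identity
$$
\cO(U(\check{P}))\,\iso\,\underset{\lambda\in\Lambda^+_{M,ab}}{\colim}\; e^{-\lambda}\otimes (V^{\lambda})^{U(\check{P})}
$$
fails (the right-hand side is trivial). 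The correct identity, which drives the whole computation, is
$$
\underset{\mu\in\Lambda^+_{M,ab}}{\colim}\; e^{-\mu}\otimes \Res V^{\mu}\,\iso\,\cO(U(\check{P}))
$$
in $\Rep(\check{M})$, i.e. (\cite{DL}, 2.3.10, 2.3.12); the approximation of $\cO(U(\check{P}))$ uses all the $\check{M}$-isotypic components of $\Res V^{\mu}$, not just the extreme one. After this correction one still has to spread the identity over $X^I$: the paper does this by writing $e^{-\und{\lambda}}\otimes^!\Res V^{\und{\lambda}}$ as a colimit over $\Tw(I)$ and permuting it with the colimit over $\und{\lambda}$, which produces $\cO(U(\check{P}))_{X^I}$; your appeal to Section~\ref{Sect_2.1.7_now} applied to "$\cO(\check{M})\subset\cB$" does not by itself yield this step. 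With these two repairs (full restriction in place of $U(\check{P})$-invariants, and the $\Tw(I)$ colimit interchange), your argument coincides with the paper's proof.
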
 
\begin{proof} {\bf Step 1}. Since $(v_{S, I}^{\theta})^*\IC^{\frac{\infty}{2}}_{P, I}\in \SI^{\theta}_{P,I}(S)$, it suffices to obtain an isomorphism
\begin{equation}
\label{iso_first_for_Thm_3.1.22}
(\gt_{S, I}^{\theta})_!(v_{S, I}^{\theta})^*\IC^{\frac{\infty}{2}}_{P, I}[\<\theta, 2\check{\rho}-2\check{\rho}_M\>]\,\iso\, \Sat_{M, I}(\cO(U(\check{P}))_{X^I})
\end{equation}
in $Shv(\Mod_{M, I}^{-,\theta, \subset})^{\gL^+(M)_I}$. By Section~\ref{Sect_1.5.17}, 
$$
(\gt_{S, I}^{\theta})_!(v_{S, I}^{\theta})^*\IC^{\frac{\infty}{2}}_{P, I}\,\iso\,  (\gt^{\theta}_{P^-, I})_* (v^{\theta}_{P^-, I})^!\IC^{\frac{\infty}{2}}_{P, I},
$$
here 
$$
\Mod^{-,\theta,\subset}_{M, I}\;\getsup{\gt^{\theta}_{P^-, I}}\; \Gr^{\theta}_{P^-, I}\cap \,\bar S^0_{P, I}\;\toup{v^{\theta}_{P^-, I}} \;\bar S^0_{P, I}.
$$

For $\theta\in\Lambda_{G,P}$ consider the diagram of natural maps
$$
\Gr_{M, I,\theta}\getsup{\gt_{P^-,\theta}} \Gr_{P^-, I,\theta} \toup{v_{P^-,\theta}} \Gr_{G, I}.
$$
 
The compatibility of the Satake functors with the corrected Jacquet functors is given by (\cite{GLys}, 9.4.3). It says the following. Let $\epsilon\in Z(\check{G})$ be the image of $-1$ under $2\check{\rho}: \Gm\to \check{T}$. Note that $Z(\check{G})\subset Z(\check{M})$, so $\epsilon$ also acts naturally by automorphisms of the identity functor on $\Rep(\check{M})$. Let $\cG^{\epsilon_{P}}$ be the multiplicative $\ZZ/2\ZZ$-gerbe on $\Gr_{M,\Ran}$ attached to $\epsilon_{P}: \Lambda\to \ZZ/2\ZZ$, $z\mapsto \<z, 2\check{\rho}-2\check{\rho}_M\>\!\mod 2$ (cf. \cite{GLys}, Section~5.2). Then
the diagram commutes naturally
$$
\begin{array}{ccc}
(\Rep(\check{G})^{\epsilon})_{X^I} & \toup{\Sat_{G,I}} & \Sph_{G,I}\\
\downarrow && \downarrow\lefteqn{\scriptstyle J^G_M}\\
(\Rep(\check{M})^{\epsilon})_{X^I} & \toup{\Sat_{M, I}} & Shv_{\cG^{\epsilon_P}}(\Gr_{M,I})^{\gL^+(M)_I}
\end{array}
$$
in the notations of (\cite{GLys}, 9.4.3). Here in the low row $\Sat_{M, I}$ denotes the corresponding \select{metaplectic} Satake functor, and $J^G_M$ is the corrected Jacquet functor. Up to ignoring the twist of the category of sheaves by $\cG^{\epsilon_P}$, our $J^G_M$ is given by 
$$
J^G_M(K)=(\gt_{P^-,\theta})_* (v_{P^-, \theta})^!K[\<2\check{\rho}-2\check{\rho}_M,\theta\>]
$$ 
over $\Gr_{M, I,\theta}$. Recall that $Shv_{\cG^{\epsilon_P}}(\Gr_{M,\Ran})^{\gL^+(M)_{\Ran}}\,\iso\, \Sph_{M, \Ran}$ as sheaves of categories on $\Ran$, but the latter isomorphism does not respect the factorization structures. 

\index{$\gt_{P^-,\theta}$, $v_{P^-,\theta}$, Proof of Theorem~\ref{Thm_*-fibres_first}}

\medskip\noindent
{\bf Step 2}. Let $\und{\lambda}: I\to \Lambda^+_{M, ab}$ and $\lambda=\sum_{i\in I} \und{\lambda}(i)$. Let us calculate 
$$
(\gt_{P^-, \theta})_* (v_{P^-, \theta})^!(-\und{\lambda}) \Sat_{G, I}(V^{\und{\lambda}}).
$$ 
By Remark~\ref{Rem_action_on_conn_components}, the latter identifies with
$$
(-\und{\lambda})(\gt_{P^-, \theta+\lambda})_* (v_{P^-, \theta+\lambda})^! \Sat_{G, I}(V^{\und{\lambda}}).
$$ 
By Step 1,
$$
(\gt_{P^-, \theta+\lambda})_* (v_{P^-, \theta+\lambda})^! \Sat_{G, I}(V^{\und{\lambda}})[\<2\check{\rho}-2\check{\rho}_M,\theta+\lambda\>]\,\iso\, \Sat_{M, I}(\Res V^{\und{\lambda}}).
$$  
Here we have denoted by $\Res V^{\und{\lambda}}\in (\Rep(\check{M})^{\epsilon})_{X^I}$ the object attached to the right-lax monoidal functor 
$$
\Lambda^+_{M, ab}\to \Rep(\check{M})^{\epsilon}, \lambda\mapsto \Res(V^{\lambda}) 
$$
as in (\cite{Ly10}, 6.1.2). So, 
$$
(\gt_{P^-, \theta})_* (v_{P^-, \theta})^!(-\und{\lambda}) \Sat_{G, I}(V^{\und{\lambda}})[\<\lambda, 2\check{\rho}\>]\,\iso\, (-\und{\lambda})\Sat_{M, I}(\Res V^{\und{\lambda}})[\<2\check{\rho}-2\check{\rho}_M, -\theta\>]
$$
over $\Gr_{M, I,\theta}$. One has
$$
(-\und{\lambda})\Sat_{M, I}(\Res V^{\und{\lambda}})\,\iso\, \Sat_{M, I}(e^{-\und{\lambda}})\ast \Sat_{M, I}(\Res V^{\und{\lambda}})\,\iso\, \Sat_{M, I}(e^{-\und{\lambda}}\otimes^! (\Res V^{\und{\lambda}})),
$$ 
where $\ast$ denotes the inner convolution on $\Sph_{M, I}$, and $e^{-\und{\lambda}}\in \Rep(\check{M})^{\epsilon}_{X^I}$ is as in Section~\ref{Sect_2.2.3_now}. Here $e^{-\und{\lambda}}\otimes^! (\Res V^{\und{\lambda}})$ denotes the product in the symmetric monoidal category $((\Rep(\check{M})^{\epsilon})_{X^I}, \otimes^!)$, see Section~\ref{Sect_2.0.1}. 

\medskip\noindent
{\bf Step 3} It remains to establish an isomorphism
\begin{equation}
\label{iso_Step3_Thm_*-fibres_first}
\underset{\und{\lambda}\in(\Map(I,\Lambda^+_{M,ab}),\le)}{\colim} e^{-\und{\lambda}}\otimes^! (\Res V^{\und{\lambda}})\,\iso\, \cO(U(\check{P}))_{X^I}
\end{equation}
in $((\Rep(\check{M})^{\epsilon})_{X^I}$. 

 Let $Prod: \Lambda^+_{M, ab}\to \Rep(\check{M})^{\epsilon}$ be the functor $\lambda\mapsto e^{-\lambda}\otimes (\Res V^{\lambda})$. It is right-lax symmetric monoidal (as the tensor product of two such). So, we have the functor 
$$
\cF_{\und{\lambda}, Prod}: \Tw(I)\to ((\Rep(\check{M})^{\epsilon})_{X^I}
$$ 
attached to our data as in (\cite{Ly10}, 6.1.2). 
It sends $(I\to J\to K)\in \Tw(I)$ to the image of 
$$
\omega_{X^K}\otimes (\underset{j\in J}{\otimes} (e^{-\lambda_j}\otimes \Res V^{\lambda_j}))\in Shv(X^K)\otimes (\Rep(\check{M})^{\epsilon})^{\otimes J}
$$
in $((\Rep(\check{M})^{\epsilon})_{X^I}$ with $\lambda_j=\sum_{i\in I_j} \und{\lambda}(i)$.
By (\cite{Ly10}, 6.1.16) we get
$$
e^{-\und{\lambda}}\otimes^! (\Res V^{\und{\lambda}})\,\iso\, \underset{\Tw(I)}{\colim} \; \cF_{\und{\lambda}, Prod}.
$$ 

 Permuting the two colimits, the LHS of (\ref{iso_Step3_Thm_*-fibres_first}) identifies with
\begin{equation}
\label{object_Step3_Thm_*-fibres_first}
\underset{(I\to J\to K)\in\Tw(I)}{\colim} \;\; \underset{\und{\lambda}\in(\Map(I,\Lambda^+_{M,ab}),\le)}{\colim} \; \omega_{X^K}\otimes (\underset{j\in J}{\otimes} (e^{-\lambda_j}\otimes \Res V^{\lambda_j}))
\end{equation}
 
 Recall that by (\cite{DL}, 2.3.10 and 2.3.12), 
$$
\underset{\mu\in\Lambda^+_{M, ab}}{\colim} e^{-\mu}\otimes \Res V^{\mu}\,\iso\, \cO(U(\check{P})) \in \Rep(\check{M}). 
$$
After passing to the colimit over $\und{\lambda}$, (\ref{object_Step3_Thm_*-fibres_first}) identifies with
$$
\underset{(I\to J\to K)\in\Tw(I)}{\colim} \;\; \omega_{X^K}\otimes \cO(U(\check{P}))^{\otimes J}\,\iso\, \cO(U(\check{P}))_{X^I}
$$
in $((\Rep(\check{M})^{\epsilon})_{X^I}$ as desired.
\end{proof}

\begin{Cor} 
\label{Cor_3.3.5}
i) For $\theta\in -\Lambda_{G,P}^{pos}$, one has canonically 
$$
(v_{S, \Ran}^{\theta})^*\, {'\! \IC^{\frac{\infty}{2}}_{P, \Ran}}[\<\theta, 2\check{\rho}-2\check{\rho}_M\>]
\,\iso\, (\gt_{S, \Ran}^{\theta})^!\Sat_{M, \Ran}(\Fact(\cO(U(\check{P})))_{\theta})
$$ 
in $\SI^{\theta}_{P, \Ran}(S)$.
Here we used the closed immersion $\Mod_{M, \Ran}^{-,\theta,\subset}\hook{} \Gr_{M,\Ran, \theta}$, and $\gt_{S, \Ran}^{\theta}: S^{\theta}_{P, \Ran}\to \Mod^{-,\theta,\subset}_{M, \Ran}$. 

\smallskip\noindent
ii) One has canonically in $\Sph_{M, \Ran}$
$$
(\tau^{\theta}_{untl})^!\Sat_{M, X^{-\theta}}(\Fact(\cO(U(\check{P}))_{<0})_{\theta})\,\iso\, \Sat_{M, \Ran}(\Fact(\cO(U(\check{P})))_{\theta}),
$$
here $\tau^{\theta}_{untl}$ is defined in Section~\ref{Sect_3.2.4_now}. 

\smallskip\noindent
iii) We have $'\!\IC^{\frac{\infty}{2}}_{P, \Ran}\in \SI^{\le 0}_{P,\Ran, untl}(S)$.
\end{Cor}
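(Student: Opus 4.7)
\medskip
\noindent\emph{Proof proposal.}
My plan is to prove (ii) first, then derive (i) by passing to a limit, then combine both with the stratum-by-stratum characterization of unitality to obtain (iii).

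For (ii), the idea is to apply Proposition~\ref{Pp_3.2.5_Satake} with $B=\gu(\check{P})^*\in\Rep(\check{M})_{<0}$, so that $A=\Sym(B)=\cO(U(\check{P}))\in CAlg(\Rep(\check{M})^{untl}_{neg})$ and $A_{<0}=\cO(U(\check{P}))_{<0}$ (using that the $Z(\check{M})$-weights on $\gu(\check{P})^*$ lie in $-\Lambda_{G,P}^{pos,*}$, since $\gu(\check{P})$ has weights coming from positive roots outside $\check{M}$). This yields, for each $\theta\in -\Lambda_{G,P}^{pos,*}$, a canonical isomorphism
$$
(\tau^{\theta}_{untl})^!\Sat_{M, X^{-\theta}}(\Fact(\cO(U(\check{P}))_{<0})_{\theta})\;\iso\;\Sat^{untl}_{M,\Ran}(\Fact(\cO(U(\check{P})))_{\theta})
$$
in $Shv(\Gr^{\theta,\subset}_{M,X^{-\theta}})^{\gL^+(M)_{\Ran}}$. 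Next I would apply Proposition~\ref{Pp_about_functor_Sat_M_Ran^untl} (postcomposing with $(i^{\theta}_{untl})_!$) followed by Proposition~\ref{Pp_3.2.2} to identify the RHS with $\Sat_{M,\Ran}(\Fact(\cO(U(\check{P})))_{\theta})$ inside $\Sph_{M,\Ran}$. The case $\theta=0$ is degenerate but trivial, since both sides reduce to the dualizing sheaf on the unit section of $\Gr_{M,\Ran}$.

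For (i), the plan is to obtain the statement from Theorem~\ref{Thm_*-fibres_first} by passing to the limit over $I\in fSets$. I must check that the isomorphism of the Theorem is natural in $\vartriangle^{(I/J)}: X^J\to X^I$ for each $I\to J$ in $fSets$. The required compatibilities are: for $(v^{\theta}_{S,I})^*$, by Corollary~\ref{Cor_1.4.7_left_adjoint} (strictness of the left-lax structure); for $\IC^{\frac{\infty}{2}}_{P,I}$, by the very construction of $'\!\IC^{\frac{\infty}{2}}_{P,\Ran}$ in Definition~\ref{Def_'IC_P,Ran^infty/2}; for $\Sat_{M,I}$ and its source $\Fact(\cO(U(\check{P})))$, from the construction of $\Sat_{M,\Ran}$ as a functor of sheaves of categories over $\Ran$; and for $(\gt^{\theta}_{S,I})^!$, from the base change (cf.\ the cartesian square defining $\gt^{\theta}_{S,\Ran}$). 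Passage to $\underset{I}{\lim}$ gives the isomorphism in $\SI^{\theta}_{P,\Ran}(S)$.

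For (iii), my approach is to combine (i) and (ii) with Corollary~\ref{Cor_1.5.21}: it suffices to show that for every $\theta\in -\Lambda_{G,P}^{pos}$ the object $(v^{\theta}_{S,\Ran})^*\,{'\!\IC^{\frac{\infty}{2}}_{P,\Ran}}$ belongs to $\SI^{\theta}_{P,\Ran, untl}(S)$. By (i) and (ii), up to a cohomological shift this object equals
$$
(\gt^{\theta}_{S,\Ran})^!(\tau^{\theta}_{untl})^!\Sat_{M, X^{-\theta}}(\Fact(\cO(U(\check{P}))_{<0})_{\theta}).
$$
Since $\cO(U(\check{P}))_{<0}\in \Rep(\check{M})^-_{<0}$ (each irreducible $\check{M}$-summand has highest weight in $\Lambda^-_{M,G}$), Proposition~\ref{Pp_3.2.7_Satake} implies that $\Sat_{M,X^{-\theta}}$ of this factorization algebra takes values in the full subcategory $Shv(\Mod^{-,\theta}_M)^{\gL^+(M)_{\theta}}$, so $\tau^{\theta}_{untl}$ restricts to $\tau^{\theta}$ of Section~\ref{Sect_1.6.2}. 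Lemma~\ref{Lm_1.6.3} then places $(\tau^{\theta})^!$ of the above object in $Shv(\Mod^{-,\theta,\subset}_{M,\Ran})^{\gL^+(M)_{\Ran}}_{untl}$, and Corollary~\ref{Cor_1.5.13_now} identifies this subcategory with $\SI^{\theta}_{P,\Ran, untl}(S)$ via $(\gt^{\theta}_{S,\Ran})^!$. For $\theta=0$ unitality is immediate from Proposition~\ref{Pp_3.1.11}(iii), which is the $\theta=0$ case of (i) taken at the level of each $I$.

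The main obstacle I expect is in (i): carefully tracking that the Drinfeld--Pl\"ucker colimit presentation $\underset{\und{\lambda}}{\colim}\;(-\und{\lambda})\Sat_{G,I}(V^{\und{\lambda}})[\<\lambda,2\check{\rho}\>]$ of $\IC^{\frac{\infty}{2}}_{P,I}$ behaves correctly under the two kinds of functoriality simultaneously (the $*$-restriction $(v^{\theta}_{S,I})^*$, whose realization via Braden's theorem uses the diagram~(\ref{diag_for_Sect_1.5.17}) involving $P^-$, and the diagonal !-restriction $(\vartriangle^{(I/J)})^!$). Once this base-change bookkeeping is clean, (ii) and (iii) are essentially formal consequences of the graded Satake apparatus developed in Section~\ref{Sect_Graded Satake functor} and the unitality criteria of Section~\ref{Sect_t-structure_on_SI_untl}.
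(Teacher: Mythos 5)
Your proposal is correct and follows essentially the same route as the paper: part (i) is Theorem~\ref{Thm_*-fibres_first} passed to the limit over $I\in fSets$, part (ii) is the graded Satake comparison via Propositions~\ref{Pp_3.2.2}, \ref{Pp_3.2.5_Satake} (together with \ref{Pp_about_functor_Sat_M_Ran^untl}/\ref{Pp_3.2.7_Satake}), and part (iii) is the unitality of $(\tau^{\theta})^!$ from Lemma~\ref{Lm_1.6.3} combined with the stratumwise criterion of Corollary~\ref{Cor_1.5.21}. Your treatment is only slightly more detailed in spelling out the compatibilities with $\vartriangle^{(I/J)}$ and in where Proposition~\ref{Pp_3.2.7_Satake} enters, but the argument is the paper's.
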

\begin{proof} i) The isomorphisms of Theorem~\ref{Thm_*-fibres_first} are compatible with !-restrictions along $X^J\hook{} X^I$ for a map $I\to J$ in $fSets$. 

\medskip\noindent
ii) This follows from Propositions~\ref{Pp_3.2.2}, \ref{Pp_3.2.5_Satake}, \ref{Pp_3.2.7_Satake}. 

\medskip\noindent
iii) For any $\theta\in -\Lambda_{G,P}^{pos}$ the functor $(\tau^{\theta})^!$ of Lemma~\ref{Lm_1.6.3} 
takes values in the unital subcategory. Our claim follows now from Corollary~\ref{Cor_1.5.21}.
\end{proof}

\begin{Pp} 
\label{Pp_3.3.6}
For $\theta\in -\Lambda_{G,P}^{pos,*}$ the image of
$$
(i_{S, \Ran}^{\theta})^!(v_{S,\Ran}^{\theta})^!('\!\IC^{\frac{\infty}{2}}_{P,\Ran})[\<\theta, 2\check{\rho}-2\check{\rho}_M\>]
$$
under
$$
\oblv: Shv(\Mod_{M, \Ran}^{-,\theta,\subset})^{\gL^+(M)_{\Ran}}_{untl}\to Shv(\Mod_{M, \Ran}^{-,\theta,\subset})_{untl}
$$
is strictly coconnective.
\end{Pp}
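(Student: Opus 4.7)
The plan is to combine the $*$-restriction description of Corollary~\ref{Cor_3.3.5} with the ULA property promised by Proposition~\ref{Pp_ULA_and_perverse_appendix} of Appendix~\ref{Sect_The ULA property and perversity}. First I would use the unitality equivalence $(p^\theta_{\Ran})^!\colon Shv(\Mod_M^{-,\theta})\,\iso\, Shv(\Mod_{M,\Ran}^{-,\theta,\subset})_{untl}$ of Lemma~\ref{Lm_1.4.31} to transport the question to $\Mod_M^{-,\theta}$: the goal becomes to show that the complex $\cK\in Shv(\Mod_M^{-,\theta})$ satisfying $(p^\theta_{\Ran})^!\cK\,\iso\,\oblv\bigl((i^\theta_{S,\Ran})^!(v^\theta_{S,\Ran})^!('\!\IC^{\frac{\infty}{2}}_{P,\Ran})\bigr)[\<\theta,2\check\rho-2\check\rho_M\>]$ lies in strictly positive perverse degrees.

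Next I would contrast the $*$- and $!$-restrictions of $'\!\IC^{\frac{\infty}{2}}_{P,\Ran}$. By Corollary~\ref{Cor_3.3.5}(i)-(ii), the $*$-restriction transports under the unitality equivalence to $\Sat_{M,X^{-\theta}}(\Fact(\cO(U(\check P))_{<0})_{\theta})$ on $\Mod_M^{-,\theta}$, which sits in perverse degree zero by the $t$-exactness of the graded Satake functor (recalling that $\cO(U(\check P))_{X^I}$ is placed in perverse degree $-|I|$ and that the shifts in the t-structure are arranged accordingly). The two restrictions are linked through the attracting-repelling diagram \ref{diag_for_Sect_1.5.17} of Section~\ref{Sect_1.5.17}: the $*$-version passes through the repelling $P^-$-locus via Braden's theorem, whereas the $!$-version corresponds directly to the attracting $P$-locus. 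Inserting the colimit presentation of $\IC^{\frac{\infty}{2}}_{P,I}$ from Section~\ref{Sect_3.1.18_now} and invoking the compatibility of Satake with the corrected Jacquet functor (as used in the proof of Theorem~\ref{Thm_*-fibres_first}), one rewrites the $!$-restriction in question as the $!$-pullback of a Satake sheaf built from $\Fact(\cO(U(\check P))_{<0})_{\theta}$ along a natural morphism of configuration spaces.

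The crucial input is then Proposition~\ref{Pp_ULA_and_perverse_appendix}, which asserts the ULA property of exactly this Satake sheaf along the relevant configuration-space morphism. The general ULA/perversity dictionary developed in Appendix~\ref{Sect_The ULA property and perversity} then converts the $!$-pullback into the $*$-pullback up to a shift by the relative dimension of the morphism; since that relative dimension is strictly positive, the resulting shift carries the perverse-degree-zero sheaf of Corollary~\ref{Cor_3.3.5} into strictly positive perverse degrees, yielding the desired strict coconnectivity. The hard step is Proposition~\ref{Pp_ULA_and_perverse_appendix} itself --- identifying the correct configuration-space morphism and establishing the (``unexpected'') ULA property for the Satake sheaf along it; once that is in hand, the present proposition follows by a shift count.
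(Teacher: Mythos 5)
Your opening step (using the unitality equivalence of Lemma~\ref{Lm_1.4.31} to descend to a complex $\cK$ on $\Mod_M^{-,\theta}$ whose strict positivity in the perverse t-structure is to be shown) coincides with Step 1 of the paper's proof. After that the argument has a genuine gap. Your key move is to rewrite the $!$-restriction of $'\!\IC^{\frac{\infty}{2}}_{P,\Ran}$ to the stratum as a $!$-pullback of a Satake sheaf built from $\Fact(\cO(U(\check{P}))_{<0})_{\theta}$ along a morphism of configuration spaces. But such an identification is exactly the content of Conjecture~\ref{Con_3.3.8}, which the paper does not prove here: it is only ``almost proved'' later (Remark~\ref{Rem_4.3.10_now}) via Theorem~\ref{Th_5.5.2}, whose proof runs through Theorem~\ref{Th_4.2.12_global_compatibility}, which in Step~2 argues \emph{as in} Proposition~\ref{Pp_3.3.6} --- so invoking that identification at this stage would be circular. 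Braden's theorem and the Jacquet-functor compatibility used in Theorem~\ref{Thm_*-fibres_first} compute the $*$-restriction (through the repelling $P^-$-locus); they do not yield a closed formula for the $!$-restriction.

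Second, Proposition~\ref{Pp_ULA_and_perverse_appendix} does not say what you need: it asserts that the dualizing sheaf on the incidence scheme $(\oo{X}{}^{(d)}\times X^I)^{\subset}$ is ULA over $\oo{X}{}^{(d)}$ and perverse up to shift. In the paper this enters (via Corollary~\ref{Cor_B.2.9}) into Lemma~\ref{Lm_3.3.11_coconnective}, i.e.\ into the t-structure on $Shv(\Mod^{-,\gU(-\theta),\subset}_{M,\Ran})^{\gL^+(M)_{\Ran}}_{untl}$ needed for Corollary~\ref{Cor_3.3.7}; it is not a ULA statement for a Satake sheaf along a configuration-space map, and the ULA/perversity dictionary gives t-exactness of a shifted $!$-pullback (Proposition~\ref{Pp_B.1.4_t-exactness}, Corollary~\ref{Cor_B.2.9}), not a conversion ``$!$-pullback equals $*$-pullback up to a strictly positive shift'' that would promote degree $0$ to strictly positive degrees. (A side error: the $*$-restriction Satake sheaf is not in perverse degree $0$; t-exactness of the graded Satake functor is an open question, Section~\ref{Sect_C.3.42_Questions}, and what is proved is strict connectivity, Lemma~\ref{Lm_C.3.44}, which is what the proof of Corollary~\ref{Cor_3.3.7} actually uses.) The paper's proof instead stratifies $X^{-\theta}$ by the $\oo{X}{}^{\gU(-\theta)}$, pulls back along the \'etale symmetrization $\oo{X}{}^I\to \oo{X}{}^{\gU(-\theta)}$ and the map $\nu$ into $(X^{-\theta}\times\Ran)^{\subset}$, uses the factorization structure of $'\!\IC^{\frac{\infty}{2}}_{P,\Ran}$ to reduce to the one-term decomposition of $-\theta$, and then invokes the local cohomological estimate of (\cite{DL}, 4.1.7(d)). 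Your proposal contains no substitute for that local estimate, and a shift count cannot replace it.
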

The proof of Proposition~\ref{Pp_3.3.6} is given in Sections~\ref{Sect_3.3.8_stratification}-\ref{Sect_3.3.11}. 

\begin{Cor} 
\label{Cor_3.3.7}
There is a unique isomorphism 
$$
\oblv('\!\IC^{\frac{\infty}{2}}_{P,\Ran})\,\iso\, \IC^{\frac{\infty}{2}}_{P,\Ran}
$$ 
in $\cS\cI^{\le 0}_{P,\Ran, untl}(S)$ extending the evident one over $S^0_{P,\Ran}$. 
\end{Cor}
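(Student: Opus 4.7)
The plan is to check that $\oblv('\!\IC^{\frac{\infty}{2}}_{P,\Ran})$ satisfies the defining properties of the intermediate extension of $\omega$ from $S^0_{P,\Ran}$ in the heart of $\cS\cI^{\le 0}_{P,\Ran, untl}(S)$; uniqueness of the isomorphism extending the evident one over $S^0_{P,\Ran}$ will then follow automatically, since an intermediate extension is determined up to unique isomorphism by its restriction to the open stratum. To apply this characterization one must verify three things: (a) the restriction of $\oblv('\!\IC^{\frac{\infty}{2}}_{P,\Ran})$ to $S^0_{P,\Ran}$ is canonically $\omega$; (b) on every boundary stratum $S^\theta_{P,\Ran}$ with $\theta\in -\Lambda_{G,P}^{pos,*}$, the $*$-restriction is strictly connective; and (c) on the same boundary strata, the $!$-restriction is strictly coconnective.

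For (a), Proposition~\ref{Pp_3.1.11}(iii) already gives $(v^0_{S,I})^*\IC^{\frac{\infty}{2}}_{P,I}\iso\omega$ compatibly in $I\in fSets$, so after passing to the limit one obtains the desired canonical identification over $S^0_{P,\Ran}$. For (b), I would combine Corollary~\ref{Cor_3.3.5}(i) with the t-structure description of $\cS\cI^{\theta}_{P,\Ran, untl}(S)$ in Section~\ref{Sect_1.9.3_now}. Concretely, applying $(i^\theta_{S,\Ran})^!$ to the isomorphism of Corollary~\ref{Cor_3.3.5}(i), and using that on the essential image of the fully faithful $(\gt^\theta_{S,\Ran})^!$ one has $(i^\theta_{S,\Ran})^!(\gt^\theta_{S,\Ran})^!\iso (\gt^\theta_{S,\Ran})_!(\gt^\theta_{S,\Ran})^!\iso\id$ (the counit of the adjunction being an isomorphism because the right adjoint is fully faithful, cf.\ the $\cS\cI$-analog of Lemma~\ref{Lm_1.4.10_now}), one reduces to showing that $\Sat_{M,\Ran}(\Fact(\cO(U(\check P)))_\theta)$ lies in strictly negative perverse degrees on $\Mod^{-,\theta,\subset}_{M,\Ran}$. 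This follows stratum-by-stratum over $X^I\to\Ran$ from the facts recalled in Section~\ref{Sect_3.3.1_now}: $\cO(U(\check P))_{X^I}$ lies in degree $-|I|$ of the t-structure on $\Rep(\check M)_{X^I}$, the functor $\Sat_{M,I}$ is t-exact, and $|I|\ge 1$ for $\theta\ne 0$.

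Condition (c) is precisely the content of Proposition~\ref{Pp_3.3.6}, which asserts that the shifted $!$-restriction to the stratum $\theta$ is strictly coconnective. Putting (b) and (c) together with the $\cS\cI$-analogs of Lemma~\ref{Lm_1.6.8_now} and Section~\ref{Sect_1.9.3_now} shows that $\oblv('\!\IC^{\frac{\infty}{2}}_{P,\Ran})$ is both connective and coconnective, hence lies in the heart; the strict inequalities on boundary strata then identify it as the image of $\H^0((v^0_{S,\Ran})_!\omega)\to \H^0((v^0_{S,\Ran})_*\omega)$, i.e.\ as $\IC^{\frac{\infty}{2}}_{P,\Ran}$. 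The extension over $S^0_{P,\Ran}$ of the isomorphism from step (a) is unique because a morphism into the intermediate extension is determined by its restriction to the open stratum.

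The main obstacle is really packaged into Proposition~\ref{Pp_3.3.6}, whose proof is routed through the ULA/perversity results of the appendix; by comparison the $*$-side is clean, resting only on the explicit formula of Corollary~\ref{Cor_3.3.5}(i) together with t-exactness of the Satake functor and the negativity of the Chevalley/cohomological degree of $\cO(U(\check P))_{X^I}$. A minor technical point to handle carefully is the book-keeping of shifts by $\<\theta, 2\check\rho-2\check\rho_M\>$ so that the t-structure conventions on $\cS\cI^\theta_{P,\Ran, untl}(S)$ match those induced from $Shv(\Mod^{-,\theta,\subset}_{M,\Ran})_{untl}$ via Corollary~\ref{Cor_2.2.5_for_cScI-versions}; this is exactly the shift that has been built into the definitions throughout Section~\ref{Sect_cScI-versions}.
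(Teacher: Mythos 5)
Your overall strategy coincides with the paper's: show that $\oblv('\!\IC^{\frac{\infty}{2}}_{P,\Ran})$ lies in the heart of $\cS\cI^{\le 0}_{P,\Ran, untl}(S)$, with strictly connective $*$-restrictions and strictly coconnective $!$-restrictions on the strata $S^{\theta}_{P,\Ran}$ with $\theta\ne 0$, and conclude by the intermediate-extension characterization; your steps (a) and (c) are exactly the paper's inputs (Proposition~\ref{Pp_3.1.11}(iii), Corollary~\ref{Cor_3.3.5}, Proposition~\ref{Pp_3.3.6}, the $\cS\cI$-analog of Lemma~\ref{Lm_1.6.8_now} and Section~\ref{Sect_1.9.3_now}).

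The gap is in your step (b). Connectivity in $\cS\cI^{\theta}_{P,\Ran, untl}(S)$ is not measured by perverse degrees over the presentations $X^I\to\Ran$: by Corollary~\ref{Cor_2.2.5_for_cScI-versions} and Lemma~\ref{Lm_1.4.31} it is measured by the perverse degree, on the configuration space $\Mod_M^{-,\theta}$, of the descended sheaf $K$ with $(p^{\theta}_{\Ran})^!K$ isomorphic to the (shifted) restriction of your object. Your reduction to ``$\Sat_{M,\Ran}(\Fact(\cO(U(\check P)))_{\theta})$ lies in strictly negative perverse degrees on $\Mod^{-,\theta,\subset}_{M,\Ran}$'', justified by the degree $-\mid I\mid$ placement of $\cO(U(\check P))_{X^I}$ and the t-exactness of $\Sat_{M,I}$, therefore addresses the wrong t-structure. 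The two are not formally interchangeable: the unital pullback $(p^{\theta}_{\Ran})^!$ (equivalently $(\tau^{\theta}_{untl})^!$ appearing in Corollary~\ref{Cor_3.3.5}(ii)) shifts perverse degrees by relative dimensions that vary with the cardinality of $\supp(D)$ for $D\in X^{-\theta}$; controlling exactly this kind of Ran-to-configuration comparison is why the paper, on the $!$-side, needs the stratification by $\oo{X}{}^{\gU(-\theta)}$, Lemma~\ref{Lm_3.3.11_coconnective} and the ULA results of Appendix~\ref{Sect_The ULA property and perversity}. What the paper actually uses on the $*$-side is Corollary~\ref{Cor_3.3.5}(ii) as a bridge together with the purely configuration-space statement Lemma~\ref{Lm_C.3.44} (strict connectivity of $\Sat_{M,X^{-\theta}}(\Fact(\cO(U(\check P))_{<0})_{\theta})$ in $\Sph_{M,X^{-\theta},+}$, proved via Lemmas~\ref{Lm_C.3.19_now} and \ref{Lm_C.3.38_comm_diagram}). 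The fact you need is true, and could alternatively be obtained by a factorization argument carried out stratum by stratum on $X^{-\theta}$ itself, but the $X^I$-degree bound you cite does not yield it by itself; as written, step (b) is missing its key ingredient.
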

\begin{proof} 
Combine Proposition~\ref{Pp_3.3.6} with Corollary~\ref{Cor_3.3.5}, Lemma~\ref{Lm_1.6.8_now}, and Section~\ref{Sect_1.9.3_now}. Our claim reduces to the fact that for $\theta\in -\Lambda_{G,P}^{pos,*}$ the object
$$
\Sat_{M, X^{-\theta}}(\Fact(\cO(U(\check{P}))_{<0})_{\theta})
$$
on $\Mod_M^{-,\theta}$ is placed in perverse degrees $<0$. This follows from Lemma~\ref{Lm_C.3.44}. 
\end{proof}

\begin{Rem} We do not know if $'\!\IC^{\frac{\infty}{2}}_{P,\Ran}$ is the intermediate extension of its restriction to $S^0_{P,\Ran}$ in the t-structure of the category $\SI^{\le 0}_{P,\Ran, untl}(S)$.
\end{Rem}

\begin{Con} 
\label{Con_3.3.8}
For $\theta\in -\Lambda_{G,P}^{pos}$ we have canonically
$$
(v_{S,\Ran}^{\theta})^!\, '\!\IC^{\frac{\infty}{2}}_{P,\Ran}[\<\theta, 2\check{\rho}-2\check{\rho}_M\>]\,\iso\, (\gt^{\theta}_{S,\Ran})^!(\tau^{\theta})^! \DD \Sat_{M, X^{-\theta}}(\Fact(\cO(U(\check{P}))_{<0})_{\theta})
$$
in $\SI^{\theta}_{P,\Ran}(S)$. Here $\DD$ is the Verdier duality functor. 
\end{Con}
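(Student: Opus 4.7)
\medskip

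\noindent\textbf{Proof plan for Conjecture~\ref{Con_3.3.8}.}
The strategy is to combine the colimit presentation of $\IC^{\frac{\infty}{2}}_{P,I}$ with Braden's theorem and the compatibility of Satake with corrected Jacquet functors, in a manner formally dual to the proof of Theorem~\ref{Thm_*-fibres_first}. Since the conjectural answer differs from the $*$-restriction computed in Corollary~\ref{Cor_3.3.5} only by replacing
$\Sat_{M, X^{-\theta}}(\Fact(\cO(U(\check{P}))_{<0})_{\theta})$
by its Verdier dual, it is natural to obtain the $!$-restriction from the $*$-restriction via a self-duality of $'\!\IC^{\frac{\infty}{2}}_{P,\Ran}$, interpreted appropriately in the unital/Ran setting.

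First, I would work level-wise over $X^I$ using the presentation
$$
\IC^{\frac{\infty}{2}}_{P, I}\,\iso\, \underset{\und{\lambda}\in (\Map(I, \Lambda^+_{M, ab}),\le)}{\colim} (-\und{\lambda}) \Sat_{G, I}(V^{\und{\lambda}})[\<\lambda, 2\check{\rho}\>].
$$
Each term is an IC sheaf on $\ov{\Gr}^{\und{\lambda}}_{\,G,I}$ shifted by the twist $(-\und{\lambda})[\<\lambda,2\check{\rho}\>]$, hence Verdier self-dual in $Shv(\Gr_{G,I})$. I would compute $(v^{\theta}_{S,I})^!$ of each term by applying Braden's theorem (as in the proof of Lemma~\ref{Lm_1.4.6}) to the $\Gm$-action via $\lambda_0$: Lemma~\ref{Lm_1.4.5} identifies the attracting stratum with $\Gr^{\theta}_{P,I}$ and the repelling stratum intersected with $\bar S^0_{P,I}$ with $\Gr^{\theta}_{P^-,I}\cap \bar S^0_{P,I}$. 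This converts the $!$-restriction to the attractor into a $*$-pushforward from the repeller, which by Lemma~\ref{Lm_1.5.14_now} lands in $\Mod^{-,\theta,\subset}_{M,I}$.

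Second, I would invoke the compatibility of $\Sat_{G,I}$ with the corrected Jacquet functors recalled in Step~1 of the proof of Theorem~\ref{Thm_*-fibres_first}, but now in the form governing $(v^{\theta}_{P^-,I})^!(-)$ composed with $(\gt^{\theta}_{P^-,I})_*$. Together with Remark~\ref{Rem_action_on_conn_components} for the $(-\und{\lambda})$-action, this expresses the $!$-restriction of the individual term as $\Sat_{M,I}$ applied to a certain factorizable object in $(\Rep(\check M)^{\epsilon})_{X^I}$. Arguing as in Step~3 of the proof of Theorem~\ref{Thm_*-fibres_first}, I would compute the filtered colimit of these objects over $\und{\lambda}$; at the level of the dual Hopf algebra this colimit is precisely $U(\gu(\check P^-))_{X^I}$, whose Satake image matches $\DD\Sat_{M,X^{-\theta}}(\Fact(\cO(U(\check P))_{<0})_{\theta})$ after $(\tau^{\theta})^!$-pullback, using the duality between $\cO(U(\check P))$ and $U(\gu(\check P^-))$ noted in Section~\ref{Sect_3.3.1_now} and the t-exactness of $\Sat_{M,I}$.

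The hard part will be interchanging the filtered colimit with $(v^{\theta}_{S,I})^!$, since Verdier duality turns colimits into limits and the individual Braden computations give terms with growing cohomological amplitude a priori. For a single $X^I$ one should be able to close this gap: the Zastava space $\Gr^{\theta}_{P^-,I}\cap \bar S^0_{P,I}$ is of finite type over $X^{-\theta}\times_{\Ran} X^I$ (by the proof of Lemma~\ref{Lm_1.4.27}), so $(\gt^{\theta}_{P^-,I})_*(v^{\theta}_{P^-,I})^!$ has bounded cohomological amplitude on each term, and the weight estimates on $\Sat_{G,I}(V^{\und{\lambda}})$ should force the transition maps in the colimit to stabilize in a fixed cohomological range on $\Mod^{-,\theta,\subset}_{M,I}$. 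The genuine obstruction—which is, I expect, the reason this is stated as a conjecture and only ``almost proved" in Remark~\ref{Rem_4.3.10_now}—is passing to the Ran limit while controlling unitality: one needs to show that the resulting family over $I\in fSets$ lies in $\SI^{\theta}_{P,\Ran, untl}(S)$ and that the isomorphism glues, which is delicate because the oblivion functor $\SI^{\le 0}_{P,\Ran, untl}(S)\to \cS\cI^{\le 0}_{P,\Ran, untl}(S)$ is not known to be left t-exact and the t-structure on $\SI^{\le 0}_{P,\Ran, untl}(S)$ may be pathological. A likely route around this, parallel to Lemma~\ref{Lm_3.3.11_coconnective}, is to prove the statement after further stratification of $X^{-\theta}$ and glue using the factorization structure on $'\!\IC^{\frac{\infty}{2}}_{P,\Ran}$ inherited from Definition~\ref{Def_'IC_P,Ran^infty/2}.
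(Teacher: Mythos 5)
You are attempting to prove a statement that the paper itself does not prove: Conjecture~\ref{Con_3.3.8} is left open, and the only evidence offered is Remark~\ref{Rem_4.3.10_now}, namely that after applying $(\pi^{\theta}_{loc})^!$ both sides become canonically isomorphic, as a consequence of Theorem~\ref{Th_5.5.2}. That partial result is obtained by a route quite different from yours: one passes to the global object via Theorems~\ref{Thm_loc_glob_equiv} and \ref{Th_4.2.12_global_compatibility}, computes the $*$-restriction of $\IC_{\Bunt_P}$ from Corollary~\ref{Cor_3.3.5}, and then gets the $!$-restriction by applying Verdier duality to $\IC_{\Bunt_P}$ on the finite-type stack $\Bunt_P$, where duality is actually available; the sole missing step is descending the resulting isomorphism along $(\pi^{\theta}_{loc})^!$ back to $\SI^{\theta}_{P,\Ran}(S)$. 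So your proposal should be judged as an independent attempt, and as such it does not close the conjecture.

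Concretely, two of your steps fail or beg the question. First, the Braden mechanism of Lemmas~\ref{Lm_1.4.5}, \ref{Lm_1.4.6} computes $(v^{\theta}_{S,I})^*$ (equivalently $(\gt^{\theta}_{S,I})_!(v^{\theta}_{S,I})^*\simeq(\gt^{\theta}_{P^-,I})_*(v^{\theta}_{P^-,I})^!$); that is exactly what is used in Theorem~\ref{Thm_*-fibres_first} and Corollary~\ref{Cor_3.3.5}, and it does \emph{not} convert $(v^{\theta}_{S,I})^!$ into a $*$-pushforward from the repeller. For the $!$-restriction you would need the dual hyperbolic-localization identity, which involves the $*$-restriction to $\Gr^{\theta}_{P^-,I}\cap\bar S^0_{P,I}$ and a partially defined $!$-pushforward along $\gt^{\theta}_{P^-,I}$, neither of which is controlled anywhere in the paper. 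Second, your plan is organized around ``a self-duality of $'\!\IC^{\frac{\infty}{2}}_{P,\Ran}$'', but no Verdier duality functor is available on $\SI^{\le 0}_{P,\Ran}(S)$: the object is a filtered colimit of (self-dual) shifted IC-sheaves with growing support, duality takes colimits to limits, and the claimed ``stabilization in a fixed cohomological range'' of the transition maps after applying $(\gt^{\theta}_{P^-,I})_*(v^{\theta}_{P^-,I})^!$ is asserted without argument — it is essentially equivalent to the conjecture itself (indeed, term-wise self-duality manifestly does not persist, since the known $*$-restriction and the conjectured $!$-restriction differ precisely by $\DD$). The Ran-limit/unitality issue you flag is real but secondary; the gap is already present over a fixed $X^I$. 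If you want to make progress, the more promising direction is the paper's: strengthen the descent along $(\pi^{\theta}_{loc})^!$ (e.g.\ show it detects the relevant isomorphism on the image of the unital subcategory), rather than attempting a local duality argument.
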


 Here, as in (\cite{DL}, A.5.5), $\Sph_{M, X^{-\theta}, +}^{constr}\subset \Sph_{M, X^{-\theta}, +}$ is the full subcategory of those objects whose image under $\oblv: \Sph_{M, X^{-\theta}, +}\to Shv(\Gr^{\theta}_{M, X^{-\theta},+})$ is compact. The Verdier duality in Conjecture~\ref{Con_3.3.8} is the equivalence from \select{loc.cit.}
$$
\DD: (\Sph_{M, X^{-\theta}, +}^{constr})^{op}\,\iso\, \Sph_{M, X^{-\theta}, +}^{constr}.
$$ 

\begin{Rem} 
\label{Rem_4.3.10_now}
Conjecture~\ref{Con_3.3.8} almost follows from Theorem~\ref{Th_5.5.2} below. Namely, applying $(\pi^{\theta}_{loc})^!$ to both sides, they become canonically isomorphic.
\end{Rem}

\sssec{} 
\label{Sect_3.3.8_stratification}
Let $\theta\in\Lambda_{G,P}^{pos,*}$. Recall that $X^{\theta}$ is stratified by locally closed subschemes $\oo{X}{}^{\gU(\theta)}$. Here $\gU(\theta)$ is a way to write $\theta=\sum_k n_k\theta_k$, where $n_k>0$, $\theta_k\in \Lambda_{G,P}^{pos,*}$, and $\theta_k$ are pairwise distinct. Here 
$$
X^{\gU(\theta)}=\prod_k X^{(n_k)},
$$ 
and $\oo{X}{}^{\gU(\theta)}\subset X^{\gU(\theta)}$ is the complement to all the diagonals. For $(D_k)\in \oo{X}{}^{\gU(\theta)}$ its image in $X^{\theta}$ is $\sum_k \theta_k D_k$. 

\sssec{} Let $\theta\in -\Lambda_{G,P}^{pos}$, pick $\gU(-\theta)$ as above. Write $\Mod_M^{-, \gU(-\theta)}=\Mod_M^{-,\theta}\times_{X^{-\theta}} \oo{X}{}^{\gU(-\theta)}$. Set also
$$
\Mod^{-, \gU(-\theta),\subset}_{M, \Ran}=\Mod_M^{-, \gU(-\theta)}\times_{X^{-\theta}}(X^{-\theta}\times\Ran)^{\subset}.
$$
The latter prestack has a natural unital structure, so one has $Shv(\Mod^{-, \gU(-\theta),\subset}_{M, \Ran})_{untl}$. Write
$$
p^{\gU(-\theta)}_{\Ran}: \Mod^{-, \gU(-\theta),\subset}_{M, \Ran}\to \Mod_M^{-, \gU(-\theta)}
$$
for the projection. As in Lemma~\ref{Lm_1.4.31}, the functor
\begin{equation}
\label{iso_untl_over_gU(theta)-stratum}
(p^{\gU(-\theta)}_{\Ran})^!: Shv(\Mod_M^{-, \gU(-\theta)})\to Shv(\Mod^{-, \gU(-\theta),\subset}_{M, \Ran})_{untl}
\end{equation}
is an equivalence.


  Define $Shv(\Mod^{-, \gU(-\theta),\subset}_{M, \Ran})^{\gL^+(M)_{\Ran}}_{untl}$ as the preimage of $Shv(\Mod^{-, \gU(-\theta),\subset}_{M, \Ran})_{untl}$ under
$$
\oblv: Shv(\Mod^{-, \gU(-\theta),\subset}_{M, \Ran})^{\gL^+(M)_{\Ran}}\to Shv(\Mod^{-, \gU(-\theta),\subset}_{M, \Ran}).
$$  

 We define a t-structure on $Shv(\Mod^{-, \gU(-\theta),\subset}_{M, \Ran})^{\gL^+(M)_{\Ran}}_{untl}$ by requiring that its object $K$ is connective iff its image under
\begin{equation}
\label{compostion_for_3.1.10}
Shv(\Mod^{-, \gU(-\theta),\subset}_{M, \Ran})^{\gL^+(M)_{\Ran}}_{untl}\to Shv(\Mod^{-, \gU(-\theta),\subset}_{M, \Ran})_{untl}\toup{(\ref{iso_untl_over_gU(theta)-stratum})}Shv(\Mod_M^{-, \gU(-\theta)})
\end{equation}
lies in perverse degrees $\le 0$. We equip $Shv(\Mod_M^{-, \gU(-\theta)})$ with the perverse t-structure.
 
The following is a new ingredient used for our proof of Corollary~\ref{Cor_3.3.7}, which was absent in Gaitsgory's paper \cite{Gai19Ran}.

\begin{Lm} 
\label{Lm_3.3.11_coconnective}
i) The category of coconnective objects in $Shv(\Mod^{-, \gU(-\theta),\subset}_{M, \Ran})^{\gL^+(M)_{\Ran}}_{untl}$ is the preimage of $Shv(\Mod_M^{-, \gU(-\theta)})^{\ge 0}$ under the composition (\ref{compostion_for_3.1.10}). 

\smallskip\noindent
ii)The t-structure on $Shv(\Mod^{-, \gU(-\theta),\subset}_{M, \Ran})^{\gL^+(M)_{\Ran}}_{untl}$ is compatible with filtered colimits.
\end{Lm}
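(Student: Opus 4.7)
The plan is to trivialize the situation on the stratum by exhibiting the unital equivariant category as sheaves on a classical quotient stack. The global $\gL^+(M)_{\Ran}$-action on $\Mod^{-,\gU(-\theta),\subset}_{M,\Ran}$ factors in a controlled way through a finite-type pro-smooth group scheme on the locally closed base $\oo{X}{}^{\gU(-\theta)}$, and this is what forces the t-structure to be well-behaved on the stratum (whereas on the full $X^{-\theta}$ the orbits along the diagonals collapse in a manner that would obstruct t-exactness).

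First I would build, following verbatim the construction of \secref{Sect_1.6.2} and \lemref{Lm_1.6.3}, a map
\[
\tau^{\gU(-\theta)}:\, \Mod^{-,\gU(-\theta),\subset}_{M,\Ran}/\gL^+(M)_{\Ran} \;\longrightarrow\; \Mod^{-,\gU(-\theta)}_M/\gL^+(M)_{n,\gU(-\theta)}
\]
for $n\gg 0$, where $\gL^+(M)_{n,\gU(-\theta)}\to \oo{X}{}^{\gU(-\theta)}$ is the obvious smooth finite-type group scheme of $M$-valued maps on the $n$-th thickening $D_n$. The argument of \lemref{Lm_1.6.3}, combined with the $\gU(-\theta)$-analog of \lemref{Lm_1.4.31} (itself based on the universal homological contractibility of $(X^{-\theta}\times\Ran)^{\subset}\to X^{-\theta}$ from \secref{Sect_1.3.13_now}, which restricts nicely to the stratum), then shows that $(\tau^{\gU(-\theta)})^!$ is an equivalence
\[
Shv(\Mod^{-,\gU(-\theta)}_M/\gL^+(M)_{n,\gU(-\theta)}) \;\iso\; Shv(\Mod^{-,\gU(-\theta),\subset}_{M,\Ran})^{\gL^+(M)_{\Ran}}_{untl}.
\]

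Next, since $\Mod^{-,\gU(-\theta)}_M$ is of finite type over $\oo{X}{}^{\gU(-\theta)}$ and $\gL^+(M)_{n,\gU(-\theta)}$ is smooth with fibres of bounded dimension, the classifying stack on the right is a smooth algebraic stack. Under the equivalence above, the composition appearing in the statement of the lemma gets identified with the $!$-pullback along the smooth projection $\Mod^{-,\gU(-\theta)}_M \to \Mod^{-,\gU(-\theta)}_M/\gL^+(M)_{n,\gU(-\theta)}$, which is t-exact up to a constant shift by the relative dimension (and this relative dimension is locally constant on $\oo{X}{}^{\gU(-\theta)}$). With the convention built into the definition of the t-structure in \secref{Sect_3.3.8_stratification}, this shift is already absorbed, and conservativity of the pullback then yields part (i): an object is coconnective iff its image under the composition lies in $Shv(\Mod^{-,\gU(-\theta)}_M)^{\ge 0}$. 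Part (ii) follows formally: the equivalence $(\tau^{\gU(-\theta)})^!$ preserves filtered colimits, the oblivion is conservative and t-exact, and the perverse t-structure on the finite-type scheme $\Mod^{-,\gU(-\theta)}_M$ is compatible with filtered colimits; hence by part (i) a filtered colimit of coconnective objects stays coconnective.

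The main obstacle I expect is the first step: carefully constructing $\tau^{\gU(-\theta)}$ and proving that $(\tau^{\gU(-\theta)})^!$ lands in and surjects onto the unital equivariant subcategory. One must combine the descent along a universally homologically contractible map (as in \lemref{Lm_1.4.31}, following Gaitsgory's Proposition~4.2.7 of \cite{Gai19Ran}) with the jet-quotient argument of \lemref{Lm_1.6.3}, and verify that the two descents can be performed compatibly on the stratified base $\oo{X}{}^{\gU(-\theta)}$ rather than the full $X^{-\theta}$.
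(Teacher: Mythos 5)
The crux of your argument --- the claimed equivalence $(\tau^{\gU(-\theta)})^!\colon Shv(\Mod^{-,\gU(-\theta)}_M/\gL^+(M)_{n,\gU(-\theta)})\,\iso\, Shv(\Mod^{-,\gU(-\theta),\subset}_{M,\Ran})^{\gL^+(M)_{\Ran}}_{untl}$ --- is a genuine gap. The $\gU(-\theta)$-analog of \lemref{Lm_1.4.31} identifies only the \emph{non-equivariant} unital category with $Shv(\Mod_M^{-,\gU(-\theta)})$, and the argument of \lemref{Lm_1.6.3} only produces a functor landing in the unital equivariant subcategory; neither gives full faithfulness or essential surjectivity at the equivariant level. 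The obstruction is concrete: an object of the unital equivariant category carries $\gL^+(M)_{\Ran}$-equivariance over each point $(D,\cI)$, and the arc groups at the points of $\cI$ away from $\supp(D)$ act trivially on $\Mod^{-,\gU(-\theta),\subset}_{M,\Ran}$ but are not prounipotent, so equivariance against them is genuine extra structure invisible to the underlying sheaf, hence not controlled by unitality of that sheaf. Whether this extra structure is automatically trivial is exactly the kind of statement the paper does not have (compare the open Question in Section~\ref{Sect_t-structure_on_SI_untl} about $(\tau^{\theta})^!$), so your first step cannot be obtained ``verbatim'' from the cited lemmas, and the rest of your argument rests on it.

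The paper's proof is structured precisely to avoid such an identification. It writes $C=Shv(\Mod^{-,\gU(-\theta),\subset}_{M,\Ran})^{\gL^+(M)_{\Ran}}_{untl}$ as $\underset{I\in fSets}{\lim}\, C_I$, where $C_I$ is the limit of the diagram $Shv(\Mod^{-,\gU(-\theta),\subset}_{M,I})^{\gL^+(M)_I}\toup{\oblv} Shv(\Mod^{-,\gU(-\theta),\subset}_{M,I})\getsup{p_I^!} Shv(\Mod_M^{-,\gU(-\theta)})$; that is, unitality is encoded as a fibre-product datum over each finite power $X^I$ rather than as descent of equivariant sheaves to the configuration space. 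The essential input is then not smoothness of the group action (which only handles the equivariant direction, and is dealt with by choosing a finite-type quotient $\cG_I$ as you also do) but the ULA/perversity result of Appendix~\ref{Sect_The ULA property and perversity}: by Proposition~\ref{Pp_ULA_and_perverse_appendix} and Corollary~\ref{Cor_B.2.9}, $\omega$ is ULA on $(\oo{X}{}^{(d)}\times X^I)^{\subset}$ over $\oo{X}{}^{(d)}$, $\omega[-r]$ is perverse there, and consequently $p_I^![d-r]$ is t-exact; \lemref{Lm_B.3.2} then pins down the t-structure on each $C_I$ and \lemref{Lm_B.3.3} handles the limit, giving both parts of the statement. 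Your proposal contains no substitute for this ULA input over the finite powers $X^I$, and note that this is also where the restriction to the stratum genuinely enters (the divisor must be reduced for the incidence-variety argument), not the ``collapsing of orbits along diagonals'' you invoke.
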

\begin{proof} Set for brevity $C=Shv(\Mod^{-, \gU(-\theta),\subset}_{M, \Ran})^{\gL^+(M)_{\Ran}}_{untl}$. For $I\in fSets$ let 
$$
p_I: \Mod^{-, \gU(-\theta),\subset}_{M, I}\to \Mod_M^{-, \gU(-\theta)}
$$ 
be the projection. Recall that 
$$
\Mod^{-, \gU(-\theta),\subset}_{M, I}=\Mod^{-, \gU(-\theta),\subset}_{M, \Ran}\times_{\Ran} X^I
$$ 
according to our convention from Section~\ref{Sect_1.1.3}. Define $C_I$ as the limit in $\DGCat_{cont}$ of the diagram
$$
Shv(\Mod^{-, \gU(-\theta),\subset}_{M, I})^{\gL^+(M)_I}\,\toup{\oblv}\, Shv(\Mod^{-, \gU(-\theta),\subset}_{M, I})\;\getsup{p_I^!}\; Shv(\Mod_M^{-, \gU(-\theta)})
$$
Given a map $I\to J$ in $fSets$, the $!$-restriction along the corresponding diagonal $X^J\to X^I$ defines a functor $C_I\to C_J$. They organize into a functor $fSets\to \DGCat_{cont}, I\mapsto C_I$. By construction, 
$$
\underset{I\in fSets}{\lim} C_I\,\iso\, C. 
$$

 Using $\Mod^{-, \gU(-\theta)}_M\in \Sch_{ft}$, pick a quotient group scheme $\gL^+(M)_I\to \cG_I$ over $X^I$ such that the $\gL^+(M)_I$-action on $\Mod^{-, \gU(-\theta),\subset}_{M, I}$ factors through $\cG_I$, where $\cG_I$ is a smooth group scheme of finite type over $X^I$, and the kernel of $\gL^+(M)_I\to \cG_I$ is prounipotent. We get the diagram
$$
(\Mod^{-, \gU(-\theta),\subset}_{M, I})/\cG_I\;\getsup{h}\;\Mod^{-, \gU(-\theta),\subset}_{M, I}\;\toup{p_I}\;\Mod_M^{-, \gU(-\theta)},  
$$
where we denoted by $h$ the stack quotient.

Let $\gU(-\theta)$ be given by $-\theta=\sum_k n_k\theta_k$, where $n_k>0$, and $\theta_k\in \Lambda^{pos,*}_{G,P}$ are pairwise distinct. Set $d=\sum_k n_k$. We have the etale map $\oo{X}{}^{\gU(-\theta)}\to \oo{X}{}^{(d)}$ obtained by restricting the sum of divisors map $\prod_k X^{(n_k)}\to X^{(d)}$. Let $r=\mid I\mid$.

 By Proposition~\ref{Pp_ULA_and_perverse_appendix}, the dualizing object $\omega$ is ULA under the projection 
$$
\oo{X}{}^{\gU(-\theta)}\times_{X^{-\theta}}(X^{-\theta}\times X^I)^{\subset}\to \oo{X}{}^{\gU(-\theta)},
$$ 
and $\omega[-r]$ is perverse on $\oo{X}{}^{\gU(-\theta)}\times_{X^{-\theta}}(X^{-\theta}\times X^I)^{\subset}$.

 By Proposition~\ref{Cor_B.2.9}, $p_I^![d-r]$ is t-exact for the perverse t-structures. This is our key observation, which makes the argument work. Equip $C_I$ with a t-structure as in Section~\ref{Sect_B.3.1_now}. From Lemma~\ref{Lm_B.3.2} we see that $C_I^{\ge 0}$ is the preimage of $Shv(\Mod_M^{-, \gU(-\theta)})^{\ge 0}$ under the projection $C_I\to Shv(\Mod_M^{-, \gU(-\theta)})$,  and the t-structure on $C_I$ is compatible with filtered colimits. 
 
 It follows that each transition functor $C_I\to C_J$ in the above is t-exact, and $C^{\le 0}\subset C$ is the full subcategory of those $K\in C$ whose image in each $C_I$ lies in $C_I^{\le 0}$. So, the t-structure on $C$ is obtained as in Lemma~\ref{Lm_B.3.3}. This implies our claim.  
\end{proof}

\sssec{} For $\gU(-\theta)$ as above we have the !-restriction functor
$$
Shv(\Mod_{M, \Ran}^{-,\theta,\subset})^{\gL^+(M)_{\Ran}}_{untl}\to Shv(\Mod_{M, \Ran}^{-,\gU(-\theta),\subset})^{\gL^+(M)_{\Ran}}_{untl}.
$$
Using the $(^!, *)$-base change for the cartesian square
$$
\begin{array}{ccc}
\Mod_{M, \Ran}^{-,\gU(-\theta),\subset} & \to & \Mod^{-, \theta, \subset}_{M,\Ran}\\
\downarrow && \downarrow\\ 
 \Mod_M^{-,\gU(-\theta)} & \to & \Mod^{-,\theta}_M.
 \end{array}
$$ 
we see that there is the $*$-extension functor
$$
Shv(\Mod_{M, \Ran}^{-,\gU(-\theta),\subset})^{\gL^+(M)_{\Ran}}_{untl}\to Shv(\Mod_{M, \Ran}^{-,\theta,\subset})^{\gL^+(M)_{\Ran}}_{untl}
$$
However, there seems no reason for the latter functor to be left t-exact. This is why in Corollary~\ref{Cor_3.3.7} we will get the desired isomorphism only in $\cS\cI^{\le 0}_{P,\Ran, untl}(S)$ and not in $\SI^{\le 0}_{P,\Ran, untl}(S)$.

\sssec{Proof of Proposition~\ref{Pp_3.3.6}}
\label{Sect_3.3.11} {\bf Step 1.} Since we know already the unitality of our object by Corollary~\ref{Cor_3.3.5}, there is $K\in Shv(\Mod^{-,\theta}_M)$ equipped with an isomorphism
\begin{equation}
\label{iso_inside_proof_Pp_3.3.6}
(i_{S, \Ran}^{\theta})^!(v_{S,\Ran}^{\theta})^!('\!\IC^{\frac{\infty}{2}}_{P,\Ran})[\<\theta, 2\check{\rho}-2\check{\rho}_M\>]\,\iso\, (p^{\theta}_{\Ran})^!K
\end{equation}
in $Shv(\Mod^{-, \theta, \subset}_{M,\Ran})$. Write $K_{\gU(-\theta)}$ for the $!$-restriction of $K$ to $\Mod^{-, \gU(-\theta)}_M$. It suffices to show that for each $\gU(-\theta)$, $K_{\gU(-\theta)}$ is placed in perverse degrees $>0$. 

\medskip\noindent
{\bf Step 2.} Assume $\gU(-\theta)$ is given by the decomposition
$-\theta=\sum_k n_k\theta_k$, where $n_k>0$, $\theta_k\in \Lambda_{G,P}^{pos,*}$, and $\theta_k$ are pairwise distinct. Set $I=\sqcup_k I_k$, where $I_k=\{1,\ldots, n_k\}$. Then $X^I=\prod_k X^{n_k}$. One has the symmetrization map $X^I\to X^{\gU(\theta)}$. Let $\oo{X}{}^I\to \oo{X}{}^{\gU(\theta)}$ be its restriction to this open part, the latter map is \'etale. 

 Consider the map $\nu: \oo{X}{}^I\to (X^{-\theta}\times\Ran)^{\subset}$ sending $(x_i)_{i\in I}$ to the pair $(D,\cI)$ with $D=\sum_k \theta_k\sum_{i\in I_k} x_i$, $\cI=(x_i)_{i\in I}$. It factors through $(X^{-\theta}\times X^I)^{\subset}$. It gives rise to the map 
$$
\Mod^{-, \gU(-\theta)}_M\times_{\oo{X}{}^{\gU(-\theta)}} \oo{X}{}^I\to \Mod^{-, \gU(-\theta)}_M\times_{X^{-\theta}} (X^{-\theta}\times\Ran)^{\subset}
$$ 
It suffices to show that the $!$-restriction of $(p^{\gU(-\theta)}_{\Ran})^!K$ under the latter map is placed in perverse degrees $>0$. For this by the factorization property of $'\!\IC^{\frac{\infty}{2}}_{P,\Ran}$, we may assume that $\gU(-\theta)$ is the trivial decomposition $-\theta=\theta_1$, that is, consists of one element. 
 In the latter case we argue exactly as in (\cite{DL}, 4.1.7(d)). We are done.
\QED 

\section{Relation between local and global objects}
\label{Sect_Relation between local and global objects}

In this section we define the global versions of the semi-infinite categories $\Inv(\Bunt_P)$ and $\Inv(_{\Ran, \infty}\Bunt_P)$, the former one is equipped with a natural t-structure. We introduce a stack $\cY_{\Ran}$, which is "less global" than $\Bunt_P$ (namely, its global aspects are related only to the Levi subgroup $M$, not to $G$). This allows to define a "less global" version of the semi-infinite category 
\begin{equation}
\label{SI_category_intro_to_Sect_Relation between local and global objects}
\Inv(\cG_{\Ran, M}\times^{\gL^+(M)_{\Ran}}\bar S^0_{P,\Ran})_{untl}.
\end{equation}
We then prove our main results. Namely, Theorem~\ref{Thm_loc_glob_equiv} claims that the global and "less global" semi-infinite categories are naturally equivalent. Theorem~\ref{Th_4.2.12_global_compatibility} provides an explicit relation between $\IC_{\Bunt_P}$ and $'\!\IC^{\frac{\infty}{2}}_{P,\Ran}$ in (\ref{SI_category_intro_to_Sect_Relation between local and global objects}). The latter is stronger than the main result of Gaitsgory's paper (\cite{Gai19Ran}, Theorem~3.3.3).   

\ssec{Global category of invariants} 
\sssec{} Let $I\in fSets$. We define a full subcategory $\Inv(_{I,\infty}\Bunt_P)\subset Shv(_{I,\infty}\Bunt_P)$ by imposing the equivariance condition under the action of some groupoid. This follows the ideas of \cite{Gai08}, \cite{GaLocWhit}, \cite{Gai19Ran}.  

\sssec{} For a collection of distinct $k$-points $\bar y=\{y_1,\ldots, y_m\}$ let $\cD_{\bar y}=\prod_{j=1}^m \cD_{y_j}$ and $\cD_{\bar y}^*=\prod_{j=1}^m \cD^*_{y_j}$. 
Let 
$$
_{I,\infty}\Bunt_{P, \,\goodat\,\bar y}\subset {_{I,\infty}\Bunt_P}
$$ 
be the open substack classifying $(\cI, \cF_G,\cF_M, \kappa)$ as in Section~\ref{Sect_1.3.5_now} given by the properties: 1) if $i\in I$ then $\Gamma_i$ is disjoint from each $y_j$; 2) for $V\in\Rep(G)^{\heartsuit}$ the maps
$$
\kappa^V: V^{U(P)}_{\cF_M}\to V_{\cF_G}
$$
over $X-\Gamma_{\cI}$ have no zeros at each $y_j$. 

 A point of $_{I,\infty}\Bunt_{P, \,\goodat\,\bar y}$ defines a $P$-torsor $\cF_P$ over $\cD_{\bar y}$ equipped with an isomorphism 
$$
\gamma: \cF_P\times_P M\,\iso\, \cF_M\mid_{\cD_{\bar y}}.
$$

 Write $\cG_{\bar y}^{reg}$ (resp., $\cG_{\bar y}^{mer}$) for the group scheme over $_{I,\infty}\Bunt_{P, \,\goodat\,\bar y}$ whose fibre over a point as above is the group of automorphisms of the $P$-torsor $(\cF_M\times_M P)\mid_{\cD_{\bar y}}\to \cD_{\bar y}$  (resp., of the $P$-torsor
$$
(\cF_M\times_M P)\mid_{\cD^*_{\bar y}}\to \cD^*_{\bar y}),
$$ 
which act trivially on the induced $M$-torsor. 
 
Let $_{I,\infty}\Bunt_{P, \bar y}$ be the stack classifying a point of $_{I,\infty}\Bunt_{P, \,\goodat\,\bar y}$ as above together with a isomorphism $\cF_P\,\iso\, \cF_M\times_M P
\mid_{D_{\bar y}}$ of $P$-torsors. This is a $\cG_{\bar y}^{reg}$-torsor over $_{I,\infty}\Bunt_{P, \,\goodat\,\bar y}$. The $\cG_{\bar y}^{reg}$-action on $_{I,\infty}\Bunt_{P, \bar y}$ naturally extends to an action of $\cG_{\bar y}^{mer}$.
 
 Indeed, view $_{I,\infty}\Bunt_{P, \bar y}$ as the stack stack classifying collections:  $\cI\in (X-\bar y)^I$, a $G$-torsor $\cF_G$ over $X-\bar y$, a $M$-torsor $\cF_M$ on $X$ equipped with an isomorphism $\epsilon_{\cF}: \cF_G\,\iso\, \cF_M\times_M G\mid_{\cD_{\bar y}^*}$, for $V\in \Rep(G)^{\heartsuit}$ finite-dimensional the maps
$$ 
\kappa^V: V^{U(P)}_{\cF_M}\to V_{\cF_G}
$$
over $X-(\Gamma_{\cI}\cup \bar y)$ satisfying the Pl\"ucker relations and compatible with $\epsilon_{\cF}$. 

 The group ind-scheme $\cG_{\bar y}^{mer}$ acts on $_{I,\infty}\Bunt_{P, \bar y}$ by changing $\epsilon_{\cF}$. 
  
\sssec{} Define $\Inv(_{I,\infty}\Bunt_{P, \,\goodat\,\bar y})$ as the full $\DG$-subcategory
\begin{equation}
\label{SI_good_at_bar_y}
Shv(_{I,\infty}\Bunt_{P, \bar y})^{\cG_{\bar y}^{mer}}\subset Shv(_{I,\infty}\Bunt_{P, \bar y})^{\cG_{\bar y}^{reg}}\,\iso\, Shv(_{I,\infty}\Bunt_{P, \,\goodat\,\bar y}).
\end{equation}
It is a full subcategory, because the group ind-scheme $\cG_{\bar y}^{mer}$ is ind-prounipotent. 

 As in (\cite{GaLocWhit}, 4.6.5), the inclusion (\ref{SI_good_at_bar_y}) admits a continuous right adjoint $\Av^{\Inv}_*$. 
 
\sssec{} If $\bar y', \bar y''$ are two collections of points, let $\bar y=\bar y'\cup \bar y''$. Then as in (\cite{GaLocWhit}, 4.6.7) one gets the following.

\begin{Lm} 
\label{Lm_4.1.5}
i) The restriction functor under $_{I,\infty}\Bunt_{P, \,\goodat\,\bar y}\to {_{I,\infty}\Bunt_{P, \,\goodat\,\bar y'}}$ sends 
$$
\Inv(_{I,\infty}\Bunt_{P, \,\goodat\,\bar y'})\to \Inv(_{I,\infty}\Bunt_{P, \,\goodat\,\bar y})
$$
ii) The diagram commutes
$$
\begin{array}{ccc}
Shv(_{I,\infty}\Bunt_{P, \,\goodat\,\bar y'}) & \to & Shv(_{I,\infty}\Bunt_{P, \,\goodat\,\bar y})\\
\downarrow && \downarrow\\
\Inv(_{I,\infty}\Bunt_{P, \,\goodat\,\bar y'}) & \to & \Inv(_{I,\infty}\Bunt_{P, \,\goodat\,\bar y}),
\end{array}
$$
where the horizontal arrows are restrictions, and the vertical arrows are the right adjoints to the corresponding inclusions. \QED
\end{Lm}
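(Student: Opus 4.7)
The proof follows the pattern of \cite[Lemma 4.6.7]{GaLocWhit}. The geometric input is the following pair of observations, both consequences of the disjointness $\bar y' \cap \bar y'' = \emptyset$. First, $\cD_{\bar y} = \cD_{\bar y'} \sqcup \cD_{\bar y''}$ as formal schemes over the relevant bases, so one gets product decompositions
$$
\cG_{\bar y}^{reg} \,\iso\, \cG_{\bar y'}^{reg} \times \cG_{\bar y''}^{reg}, \qquad \cG_{\bar y}^{mer} \,\iso\, \cG_{\bar y'}^{mer} \times \cG_{\bar y''}^{mer}
$$
as group (ind-)schemes over $_{I,\infty}\Bunt_{P, \goodat \bar y}$. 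Second, on the open substack $_{I,\infty}\Bunt_{P, \goodat \bar y}$, the $\bar y''$-good condition provides a canonical $P$-structure $\cF_M \times_M P$ over $\cD_{\bar y''}$ compatible with $\cF_G|_{\cD^*_{\bar y''}}$ via $\kappa$. This canonical $P$-structure at $\bar y''$ yields a canonical section of the $\cG_{\bar y''}^{reg}$-torsor
$$
_{I,\infty}\Bunt_{P, \bar y}|_{\goodat \bar y} \;\to\; {_{I,\infty}\Bunt_{P, \bar y'}}|_{\goodat \bar y},
$$
trivializing it as a product $_{I,\infty}\Bunt_{P, \bar y'}|_{\goodat \bar y} \times \cG_{\bar y''}^{reg}$.

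For part (i), let $K' \in \Inv(_{I,\infty}\Bunt_{P, \goodat \bar y'})$ and let $\tilde K'$ be its canonical $\cG_{\bar y'}^{mer}$-equivariant lift to $_{I,\infty}\Bunt_{P, \bar y'}$. Restricting to the $\bar y$-good locus and pulling back along the above projection identifies the pullback of $\tilde K'$ to $_{I,\infty}\Bunt_{P, \bar y}|_{\goodat \bar y}$ with $\tilde K'|_{\goodat \bar y} \boxtimes \omega_{\cG_{\bar y''}^{reg}}$ under the product decomposition. I will then check equivariance under the two factors of $\cG_{\bar y}^{mer}$ separately. The $\cG_{\bar y'}^{mer}$-equivariance is inherited from $\tilde K'$ by formal compatibility of pullback. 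The $\cG_{\bar y''}^{mer}$-equivariance reduces, via the canonical section, to the tautological left-translation equivariance of $\omega_{\cG_{\bar y''}^{reg}}$ extended to $\cG_{\bar y''}^{mer}$, using the ind-prounipotence of $\cG_{\bar y''}^{mer}$ to turn the required equivariance datum into a property.

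For part (ii), I will show that the diagram commutes by combining part (i) with a formal adjunction argument. Denote the open embedding by $j \colon {_{I,\infty}\Bunt_{P, \goodat \bar y}} \hookrightarrow {_{I,\infty}\Bunt_{P, \goodat \bar y'}}$. Both the restriction $j^!$ and the averaging functors $\Av^{\Inv}_*$ on either side admit left adjoints (the inclusions of the invariant subcategories, and $j_!$ respectively), so it suffices to verify the mate natural transformation is an isomorphism after precomposing with the fully faithful inclusions $\Inv \hookrightarrow Shv$. By part (i), the restriction $j^!$ preserves the invariant subcategories, hence intertwines the two inclusions. Then the commutativity of the diagram of right adjoints is the Beck--Chevalley compatibility, which here reduces to the $(!,*)$-base change for the Cartesian square relating $_{I,\infty}\Bunt_{P, \bar y}$ and $_{I,\infty}\Bunt_{P, \bar y'}$ along $j$, combined with the factorization of $\cG_{\bar y}^{mer}$ above.

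The main obstacle will be the careful bookkeeping in part (i) of the $\cG_{\bar y''}^{mer}$-equivariance: the meromorphic action at $\bar y''$ a priori changes both $\cF_G$ and the trivialization in a correlated way, but the canonical section over the $\bar y''$-good locus is precisely what decouples these, reducing the check to a statement about ind-prounipotent group actions on a torsor with a section. Once this is set up cleanly, the rest is formal manipulation with adjunctions and base change.
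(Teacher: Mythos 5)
The decisive step of your part (i) fails. Goodness at $\bar y''$ gives a $P$-subtorsor $\cF_P\subset \cF_G\mid_{\cD_{\bar y''}}$ together with an identification $\gamma\colon \cF_P\times_P M\,\iso\,\cF_M$, but it does \emph{not} give an identification of $\cF_P$ with the induced torsor $\cF_M\times_M P$; that identification is exactly the extra datum parametrized by the $\cG^{reg}_{\bar y''}$-torsor $_{I,\infty}\Bunt_{P,\bar y}\to {_{I,\infty}\Bunt_{P,\,\goodat\,\bar y}}$, so there is no canonical section and no canonical product decomposition. Worse, even after choosing a local trivialization, the $\cG^{mer}_{\bar y''}$-action is not "along the $\cG^{reg}_{\bar y''}$-factor": an element of $\cG^{mer}_{\bar y''}$ re-glues $\cF_G$ along $\cD^*_{\bar y''}$, hence changes the underlying point of $_{I,\infty}\Bunt_{P,\,\goodat\,\bar y'}$ (it can even move it out of the $\bar y''$-good locus, as the semi-infinite orbits do), and $\omega_{\cG^{reg}_{\bar y''}}$ is in any case not $\cG^{mer}_{\bar y''}$-equivariant, its support not being stable. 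If your reduction were valid, it would show that $\cG^{mer}_{\bar y''}$-equivariance is automatic, by a purely local computation at $\bar y''$, for any sheaf on the good locus — which is false: the condition at a point is a genuine condition, and the content of the lemma is precisely that it is implied by the condition at the \emph{other} points $\bar y'$ for a non-local reason.

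For comparison: the paper gives no argument at all here, only the reference to (\cite{GaLocWhit}, 4.6.7), and the mechanism there is global rather than local. One uses that $U(P)$ is unipotent: by strong approximation on the curve punctured at $\bar y'$, every element of $\cG^{mer}_{\bar y''}$ is, modulo $\cG^{reg}_{\bar y''}$, realized by a section of $U(P)_{\cF_M}$ regular on $X\setminus\bar y'$, and the action of such a global section is controlled through its restriction to $\cD^*_{\bar y'}$; this (together with contractibility of the relevant quotients) is how $\cG^{mer}_{\bar y'}$-equivariance forces $\cG^{mer}_{\bar y''}$-equivariance on the $\bar y$-good locus. Equivalently, one identifies $\Inv$ with descent along a generic-reduction groupoid as in (\cite{Gai19Ran}, A.1--A.3) — the device this paper itself exploits later through $\zeta_{gen}$ — which is manifestly independent of the chosen points. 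Your part (ii) is acceptable in outline (the $\bar y$-good locus is stable under $\cG^{mer}_{\bar y'}$, so restriction commutes with $\Av^{\Inv}_*$ taken at $\bar y'$ by base change, and part (i) makes the residual averaging at $\bar y''$ the identity on the relevant objects), but it rests on (i), so the gap above must be filled first.
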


\begin{Def} We let 
\begin{equation}
\label{Inv_I_infty_Bunt_P}
\Inv(_{I,\infty}\Bunt_P)\subset Shv(_{I,\infty}\Bunt_P)
\end{equation} 
be the full subcategory spanned by those objects $K$ whose restriction under 
$$
_{I,\infty}\Bunt_{P, \,\goodat\,\bar y}\to {_{I,\infty}\Bunt_{P, \,\goodat\,\bar y'}}
$$ 
lies in 
$\Inv(_{I,\infty}\Bunt_{P, \,\goodat\,\bar y})$ for any finite collection of points $\bar y$. By Lemma~\ref{Lm_4.1.5}, it suffices to check this condition for $\bar y$ being singletons $\{y\}$. 
\end{Def}

\sssec{} As in (\cite{GaLocWhit}, 4.7.4), we see that the inclusion (\ref{Inv_I_infty_Bunt_P}) admits a continuous right adjoint denoted $\Av_*^{\Inv}$.  

 Replacing $X^I$ by $\Ran$, one similarly gets the Ran versions of the above categories of invariants, in particular $\Inv(_{\Ran,\infty}\Bunt_P)$. 

\sssec{} Proceeding in a similar way, one defines for $\theta\in -\Lambda^{pos}_{G,P}$ the categories $\Inv(\Bunt_P)$, $\Inv(_{\theta}\Bunt_P)$ by equivariance property under the corresponding groupoid. Note that $\Inv(\Bunt_P)\subset Shv(\Bunt_P)$ is the full subcategory of those $K\in Shv(\Bunt_P)$ for which $\omega_{\Ran}\boxtimes K\in \Inv(_{\Ran,\infty}\Bunt_P)$. Here $\omega_{\Ran}\boxtimes K$ is viewed as extended by zero under $\Ran\times\Bunt_P\hook{} {_{\Ran,\infty}\Bunt_P}$. 

 As in (\cite{Gai19Ran}, Section~3.2), one has the following.
\begin{Lm} 
\label{Lm_4.1.9_now}
i) An object of $Shv(\Bunt_P)$ lies in $\Inv(\Bunt_P)$ iff for any $\theta\in -\Lambda^{pos}_{G,P}$ its !-restriciton under $v^{\theta}_{glob}: {_{\theta}\Bunt_P}\to \Bunt_P$ lies in $\Inv(_{\theta}\Bunt_P)$.

\smallskip\noindent
ii) The $!$-pullback under 
$$
p^{\theta}_{glob}:
{_{\theta}\Bunt_P}\to \Mod_{\Bun_M}^{+, -\theta}
$$ 
is fully faithful and yields an equivalence $Shv(\Mod_{\Bun_M}^{+, -\theta})\,\iso\, \Inv(_{\theta}\Bunt_P)$. \QED
\end{Lm}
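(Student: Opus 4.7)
The proof mirrors Gaitsgory's argument for $P=B$ from (\cite{Gai19Ran}, Section 3.2), with the necessary modifications for general parabolic $P$. Let me describe each part in turn.

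For part i), the ``only if'' direction proceeds by observing that the stratification $\{{_{\theta}}\Bunt_P\}_{\theta \in -\Lambda^{pos}_{G,P}}$ is preserved by the action of the groupoid defining $\Inv$. Indeed, a $\cG_{\bar y}^{mer}$-action modifies a $P$-torsor at the points $\bar y$ without altering the induced $M$-torsor, and hence preserves the defect divisor of $\kappa^V: V^{U(P)}_{\cF_M} \to V_{\cF_G}$ away from $\bar y$. Since the stratum $_{\theta}\Bunt_P$ is cut out by this defect being of total degree $-\theta$, the groupoid preserves each stratum, and the !-restriction functor $(v^{\theta}_{glob})^!$ sends invariant objects to invariant objects. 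For the ``if'' direction, I would proceed by induction on the closure order on $-\Lambda^{pos}_{G,P}$: given the strata-wise invariance, one uses the standard recollement for a stratified stack to glue an object whose stratum-restrictions all lie in the corresponding $\Inv$ categories, and then the groupoid-invariance, being a condition local in the stratification, transfers to the global object.

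For part ii), use the isomorphism $_{\theta}\Bunt_P \iso \Bun_P \times_{\Bun_M} \Mod^{+,-\theta}_{\Bun_M}$ from Section~\ref{Sect_1.3.19_now}, under which $p^{\theta}_{glob}$ becomes (up to a relabeling) the projection to $\Mod^{+,-\theta}_{\Bun_M}$. Its fibres over $(D, \cF_M, \cF'_M, \beta)$ are the fibres of $\Bun_P \to \Bun_M$ over $\cF_M$; these are quotient stacks of the affine space $H^1(X, \mathfrak{u}(P)_{\cF_M})$ by the action of $H^0(X, U(P)_{\cF_M})$. The plan is to show that the defining groupoid action on $_{\theta}\Bunt_P$ for a sufficiently large collection $\bar y$ (depending on $\theta$) acts transitively on these fibres, and moreover acts through a sub-ind-group that is ind-prounipotent. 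Fully faithfulness of $(p^{\theta}_{glob})^!$ then follows by the standard fact that !-pullback along a morphism whose fibres are classifying stacks of ind-prounipotent groups is fully faithful; the essential image is precisely the subcategory of objects invariant under these fibre actions, which by construction of $\Inv(_{\theta}\Bunt_P)$ coincides with the latter.

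The main obstacle is the bookkeeping in part ii): one must translate the local groupoid action $\cG_{\bar y}^{mer}/\cG_{\bar y}^{reg}$, defined via automorphisms of a $P$-torsor at $\bar y$ trivial on the induced $M$-torsor, into the statement that it exhausts (as $\bar y$ grows) the fibres of the projection $\Bun_P \to \Bun_M$ over $\cF_M$. Concretely, this is the assertion that any two $P$-reductions of $\cF_M$ compatible with a given $(\cF_G, \kappa)$ differ by a section of $U(P)_{\cF_M}$ over some $\cD_{\bar y}^*$, i.e.\ a global analogue of the Pl\"ucker presentation. Having verified this for a cofinal system of collections $\bar y$ and passed to the colimit, the fully faithfulness and essential surjectivity of $(p^{\theta}_{glob})^!$ both follow formally.
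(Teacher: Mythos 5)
Your proposal is correct in outline and follows essentially the same route as the paper, which gives no written proof at all: Lemma~\ref{Lm_4.1.9_now} is stated with a bare \QED and the remark that it holds ``as in (\cite{Gai19Ran}, Section~3.2)'', i.e.\ by transporting Gaitsgory's argument for $P=B$ — preservation of the strata by the groupoid plus a recollement argument for i), and unipotent transitivity of $\cG^{mer}_{\bar y}$ (for varying $\bar y$) along the fibres of $p^{\theta}_{glob}$ for ii). Your sketch is a faithful reconstruction of that referenced argument; the only imprecisions are cosmetic (the fibre of $p^{\theta}_{glob}$ is the stack of $U(P)$-torsors twisted by the torsor $\cF'_M$, not $\cF_M$, and it is only an iterated affine fibration over quotients $H^1/H^0$ when $U(P)$ is non-abelian), and they do not affect the mechanism.
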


\sssec{} The right adjoints to $\Inv(_{\theta}\Bunt_P)\hook{} Shv(_{\theta}\Bunt_P)$ and $\Inv(\Bunt_P)\hook{} Shv(\Bunt_P)$ are still denoted $\Av_*^{\Inv}$, they are continuous. For $\theta\in -\Lambda_{G,P}^{pos}$ the functors $(v^{\theta}_{glob})^!, (v^{\theta}_{glob})_*$ commute with $\Av_*^{\Inv}$ (as in \cite{Gai19Ran}, Section~3.2.3). So, for $\theta\in -\Lambda_{G,P}^{pos}$ the diagram commutes
\begin{equation}
\label{diag_for_Sect_4.1.10}
\begin{array}{ccc}
\Inv(\Bunt_P) & \getsup{\Av_*^{\Inv}} & Shv(\Bunt_P)\\
\downarrow\lefteqn{\scriptstyle (v^{\theta}_{glob})^!} && \downarrow\lefteqn{\scriptstyle (v^{\theta}_{glob})^!}\\
\Inv(_{\theta}\Bunt_P) & \getsup{\Av_*^{\Inv}} & Shv(_{\theta}\Bunt_P). 
\end{array}
\end{equation}
The composition $Shv(_{\theta}\Bunt_P)\toup{\Av_*^{\Inv}} \Inv(_{\theta}\Bunt_P)\hook{} Shv(_{\theta}\Bunt_P)$ identifies with the functor $(p^{\theta}_{glob})^*(p^{\theta}_{glob})_*$. 

\sssec{} Write $j_{glob}: \Bun_P\hook{}\Bunt_P$ for the open immersion. Let $\tilde\gq_P: \Bunt_P\to \Bun_M$ be the map sending $(\cF_M, \cF_G,\kappa)$ to $\cF_M$. Write $\tilde p^{\theta}_{glob}: \Bunt_P\times_{\Bun_M} \Mod_{\Bun_M}^{+, -\theta}\to  \Mod_{\Bun_M}^{+, -\theta}$ for the projection on the second factor, where we used $h^{\ra}$ to form the fibred product.   

\begin{Lm}
\label{Lm_4.1.12_now}
 i) For $\theta\in -\Lambda_{G,P}^{pos}$ the partially defined left adjoint to the functor $(v^{\theta}_{glob})^!: Shv(\Bunt_P)\to Shv(_{\theta}\Bunt_P)$ is defined on $\Inv(_{\theta}\Bunt_P)$.

\smallskip\noindent
ii) The resulting functor $\Inv(_{\theta}\Bunt_P)\to Shv(\Bunt_P)$ takes values in $\Inv(\Bunt_P)$. So, we get the adjoint pair $(v^{\theta}_{glob})_!: \Inv(_{\theta}\Bunt_P)\leftrightarrows \Inv(\Bunt_P): (v^{\theta}_{glob})^!$ in $\DGCat_{cont}$. 
\end{Lm}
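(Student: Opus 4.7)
My plan is to deduce (i) and (ii) simultaneously by first constructing the desired left adjoint at the level of invariants categories, and then transporting it to the $Shv$-level via the adjunction $(\iota, \Av_*^{\Inv})$ between the inclusion $\iota: \Inv \hookrightarrow Shv$ and its continuous right adjoint.

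First, $(v^{\theta}_{glob})^!$ restricts to a functor $\Inv(\Bunt_P) \to \Inv(_{\theta}\Bunt_P)$ by Lemma~\ref{Lm_4.1.9_now}(i), and I claim this restriction preserves small limits. Indeed, since $\Av_*^{\Inv}$ is a right adjoint to inclusion, limits in either invariants category are computed as $\Av_*^{\Inv}$ applied to ambient $Shv$-limits; on $Shv$, the functor $(v^{\theta}_{glob})^!$ preserves limits; and $(v^{\theta}_{glob})^!$ commutes with $\Av_*^{\Inv}$ by diagram~\ref{diag_for_Sect_4.1.10}. Combining these three inputs yields limit preservation at the invariants level. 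Granting presentability of both invariants categories, the adjoint functor theorem produces a left adjoint
$$F : \Inv(_{\theta}\Bunt_P) \to \Inv(\Bunt_P)$$
to the restriction of $(v^{\theta}_{glob})^!$. Here $\Inv(_{\theta}\Bunt_P)$ is presentable by Lemma~\ref{Lm_4.1.9_now}(ii), which identifies it with $Shv(\Mod_{\Bun_M}^{+, -\theta})$; presentability of $\Inv(\Bunt_P)$ follows from exhibiting it as an accessible coreflective subcategory of the presentable $Shv(\Bunt_P)$ cut out by the continuous right adjoint $\Av_*^{\Inv}$.

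To finish, I verify that $\iota \circ F$ corepresents the relevant functor on all of $Shv(\Bunt_P)$. For $K \in \Inv(_{\theta}\Bunt_P)$ and $L \in Shv(\Bunt_P)$, the computation is a short chain of adjunctions:
\begin{align*}
\Hom_{Shv(\Bunt_P)}(\iota F(K), L)
&\simeq \Hom_{\Inv(\Bunt_P)}(F(K), \Av_*^{\Inv} L)\\
&\simeq \Hom_{\Inv(_{\theta}\Bunt_P)}(K, (v^{\theta}_{glob})^!\Av_*^{\Inv} L)\\
&\simeq \Hom_{\Inv(_{\theta}\Bunt_P)}(K, \Av_*^{\Inv}(v^{\theta}_{glob})^! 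L)\\
&\simeq \Hom_{Shv(_{\theta}\Bunt_P)}(K, (v^{\theta}_{glob})^! L),
\end{align*}
using the $(\iota, \Av_*^{\Inv})$-adjunction on the outside, the $(F, (v^{\theta}_{glob})^!)$-adjunction at the invariants level in the second step, and the commutation \ref{diag_for_Sect_4.1.10} in the middle. This simultaneously establishes (i), since $\iota F(K)$ is the desired corepresenting object in $Shv(\Bunt_P)$, and (ii), since by construction it lies in $\iota(\Inv(\Bunt_P))$.

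The main obstacle is the careful verification of presentability of $\Inv(\Bunt_P)$ together with the limit-preservation claim for the $\Inv$-level functor. The presentability reduces, via Lemma~\ref{Lm_4.1.5} and the definition preceding \ref{Inv_I_infty_Bunt_P}, to exhibiting $\Inv(\Bunt_P)$ as an intersection of accessible subcategories indexed by finite collections $\bar y$; the limit-preservation reduces to the compatibility encoded in diagram \ref{diag_for_Sect_4.1.10}, which is the key global ingredient.
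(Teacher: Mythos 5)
There is a genuine gap, and it sits exactly where the real content of the lemma lies. Your construction of the invariants-level left adjoint $F$ rests on the claim that ``on $Shv$, the functor $(v^{\theta}_{glob})^!$ preserves limits''. In the constructible context this is true (there $(v^{\theta}_{glob})_!$ is everywhere defined), and your argument then goes through; but the paper's sheaf theories include $\cD$-modules, and in that context the $*$-extension off the open part of the locally closed embedding $v^{\theta}_{glob}$ is only a \emph{partially} defined left adjoint, equivalently $(v^{\theta}_{glob})^!$ does \emph{not} preserve arbitrary limits on $Shv(\Bunt_P)$. So the adjoint functor theorem cannot be invoked as you do, and limit preservation of the restricted functor $\Inv(\Bunt_P)\to\Inv(_{\theta}\Bunt_P)$ is not something you can deduce from the ambient categories plus diagram~(\ref{diag_for_Sect_4.1.10}); it is equivalent to the statement being proved. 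The missing ingredient is geometric: by Lemma~\ref{Lm_4.1.9_now}(ii) every object of $\Inv(_{\theta}\Bunt_P)$ is of the form $(p^{\theta}_{glob})^!K$ with $K\in Shv(\Mod_{\Bun_M}^{+,-\theta})$, and the paper shows $(v^{\theta}_{glob})_!$ is defined on such objects by reducing to the definedness of $(j_{glob}\times\id)_!(p^{\theta}_{glob})^!K$ and using that $(j_{glob})_!\omega$ is ULA with respect to $\tilde\gq_P:\Bunt_P\to\Bun_M$ (\cite{BG}, 5.1.5), via Proposition~\ref{Pp_B.1.3_ULA} and Lemma~\ref{Lm_4.1.13}, which produce the explicit answer $((j_{glob}\times\id)_!\omega)\otimes^!(\tilde p^{\theta}_{glob})^!K$. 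Without some such input your $F$ has no reason to exist in the $\cD$-module setting.

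The second half of your argument is fine and is essentially the paper's own proof of part (ii): once one knows the left adjoint is defined on $\Inv(_{\theta}\Bunt_P)$, the factorization through the coreflection $\Av_*^{\Inv}$ together with the commutation~(\ref{diag_for_Sect_4.1.10}) (the paper invokes \cite{Ly}, 9.2.35 for exactly this) shows the value lands in $\Inv(\Bunt_P)$ and corepresents $L\mapsto\Hom(K,(v^{\theta}_{glob})^!L)$ on all of $Shv(\Bunt_P)$. So the fix is to replace your abstract existence step for $F$ by the ULA computation above (or restrict your claim to the constructible context, where it is correct but much weaker than the lemma).
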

\begin{proof} i) It suffices to show that for $K\in Shv(\Mod_{\Bun_M}^{+, -\theta})$
the object $(j_{glob}\times\id)_!(p^{\theta}_{glob})^! K$ is defined for the diagram
$$
\Bun_P\times_{\Bun_M} \Mod_{\Bun_M}^{+, -\theta}\;\toup{j_{glob}\times\id} \;\Bunt_P\times_{\Bun_M} \Mod_{\Bun_M}^{+, -\theta}\;\toup{\pr_1}\;\Bunt_P.
$$ 
By (\cite{BG}, 5.1.5), $(j_{glob})_!\omega$ is ULA under $\tilde\gq_P: \Bunt_P\to\Bun_M$. 
By Proposition~\ref{Pp_B.1.3_ULA}, this gives $\pr_1^!(j_{glob})_!\omega,\iso\, (j_{glob}\times\id)_!\omega$. Now Lemma~\ref{Lm_4.1.13} gives for $K\in Shv(\Mod_{\Bun_M}^{+, -\theta})$ a canonical isomorphism
\begin{equation}
\label{iso_for_Lm_4.1.12}
(j_{glob}\times\id)_!(p^{\theta}_{glob})^! K\,\iso\, ((j_{glob}\times\id)_!\omega)\otimes^! (\tilde p^{\theta}_{glob})^!K.
\end{equation}

\smallskip\noindent
ii) By i) in the diagram (\ref{diag_for_Sect_4.1.10}) both functors $\Av_*^{\Inv}$ and $(v^{\theta}_{glob})^!\Av_*^{\Inv}$ admit fully faithful left adjoints. Now by (\cite{Ly}, 9.2.35), 
the left adjoint to $(v^{\theta}_{glob})^!\Av_*^{\Inv}$ factors through the full subcategory $\Inv(\Bunt_P)\hook{} Shv(\Bunt_P)$, and the resulting functor $\Inv(_{\theta}\Bunt_P)\to \Inv(\Bunt_P)$ is the left adjoint to $(v^{\theta}_{glob})^!: \Inv(\Bunt_P)\to\Inv(_{\theta}\Bunt_P)$.
\end{proof}

\begin{Lm} 
\label{Lm_4.1.13}
i) Let $j: Y_0\hook{} Y$ be an open immersion of algebraic stacks locally of finite type, $f: Y\to S$ be a map with $S\in \Sch_{ft}$ smooth such that $f_0=f\comp j$ is smooth. Assume that $j_!\omega$ is ULA over $S$. Then for any $K\in Shv(S)$, $(j_!\omega)\otimes^! f^!K\,\iso\, j_!f_0^! K$ naturally. 

\smallskip\noindent
ii) Let in addition $T\to S$ be a map in $\Sch_{ft}$. Consider the diagram
$$
\begin{array}{ccccc}
T\getsup{\pr_T^0} &Y_0\times_S T &\toup{j_T} &Y\times_S T& \toup{\pr_T} T\\
&\downarrow\lefteqn{\scriptstyle \alpha_0} && \downarrow\lefteqn{\scriptstyle \alpha}\\
& Y_0 & \toup{j} & Y,
\end{array}
$$
where $\pr_T^0, \pr_T,\alpha$ are the projections, and the square is cartesian. For any $K\in Shv(T)$ one has canonically
$$
(j_T)_!(\pr_T^0)^!K\,\iso\, (\alpha^!j_!\omega)\otimes^! \pr_T^!K.
$$
\end{Lm}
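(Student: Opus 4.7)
For part i), I construct the natural comparison map and verify it is an isomorphism via the ULA hypothesis. By the $(j_! \dashv j^!)$ adjunction there is a canonical morphism
\[
j_! f_0^! K \;\longrightarrow\; (j_!\omega) \otimes^! f^! K,
\]
adjoint to the identification
\[
j^!\bigl((j_!\omega) \otimes^! f^! K\bigr) \;\cong\; \omega \otimes^! j^!f^!K \;=\; f_0^! K,
\]
which uses that $j$ is an open immersion (so $j^! = j^*$ and $j^!$ is symmetric monoidal for $\otimes^!$), $j^!j_! \cong \id$, $f \circ j = f_0$, and $\omega = \omega_{Y_0}$ is the monoidal unit for $\otimes^!$ on $Y_0$. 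To show this map is an iso, the ULA hypothesis on $j_!\omega$ over $f$ yields the $\otimes^!$-projection formula
\[
(j_!\omega) \otimes^! f^! K \;\iso\; j_!\bigl(\omega \otimes^! j^! f^! K\bigr) \;=\; j_! f_0^! K;
\]
dually, this is the classical $\otimes^*$-projection formula for the $*$-pushforward of the ULA sheaf $\DD(j_!\omega) \cong j_*(k_{Y_0})$ against an arbitrary input from $S$.

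For part ii), the reduction is to part i) after base change, combined with the ULA base-change identity. The open immersion $j_T$, the smooth composition $\pr_T \circ j_T = \pr_T^0$ (smooth as a base change of $f_0$), and the map $\pr_T$ satisfy the hypotheses of part i), provided $(j_T)_!\omega$ is ULA over $\pr_T$. Both this inherited ULA property and the identification $\alpha^! j_!\omega \iso (j_T)_! \omega$ are supplied by Proposition~\ref{Pp_B.1.3_ULA}. Applying part i) in this base-changed configuration yields
\[
(j_T)_! (\pr_T^0)^! K \;\iso\; \bigl((j_T)_! \omega\bigr) \otimes^! \pr_T^! K \;\iso\; (\alpha^! j_! \omega) \otimes^! \pr_T^! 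K,
\]
the last iso being immediate from $\alpha^! j_! \omega \iso (j_T)_! \omega$.

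The one delicate step is the ULA-driven projection formula in part i). Since $j_! \omega$ is typically not compact, one cannot simply invoke dualizability; instead, the projection formula must be extracted from the ULA condition itself, most cleanly via Verdier duality where it becomes the $\otimes^*$-projection formula for the $*$-pushforward $j_*(k_{Y_0})$ of a ULA sheaf. All other verifications — that open immersions and the ULA property are stable under base change, that $\pr_T^0$ is smooth, that $\omega$ is the $\otimes^!$-unit, etc. — are formal and routine.
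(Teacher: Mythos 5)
Your part i) is sound in substance: the only non-formal input is that the ULA hypothesis upgrades the (unconditional) $*$-projection formula for $j_!$ to the $\otimes^!$-statement, equivalently, by Verdier duality, gives the projection formula for $j_*$ of the dual ULA sheaf against pullbacks from $S$ -- this is exactly the consequence of ULA that the paper extracts from (BG, 5.1.2), namely the comparison $F\otimes^! f^!K\,\iso\, F\otimes f^*K[-2\dim S]$ for $F$ ULA over the \emph{smooth} base $S$. Two small caveats: that projection formula for $j_*$ is not ``classical'' independently of ULA (it is exactly equivalent to the statement you are proving), and the duality argument only applies verbatim to constructible $K$, so you should add a word about extending to all of $Shv(S)$ by continuity.

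Part ii) has a genuine gap. You reduce to part i) applied over the base $T$, but part i) assumes the base is smooth, whereas $T$ is an arbitrary scheme of finite type -- handling a possibly singular $T$ is the whole point of part ii). Moreover, neither input you attribute to Proposition~\ref{Pp_B.1.3_ULA} is what that proposition says: it only asserts that ULA of $j_!\omega$ with respect to $f:Y\to S$ implies universal local acyclicity with respect to the diagram $T\gets Y\times_S T\to Y$, i.e.\ the comparison $\alpha^*(j_!\omega)\otimes \pr_T^*K[-2\dim S]\,\iso\,\alpha^!(j_!\omega)\otimes^!\pr_T^!K$. It does not give that $(j_T)_!\omega_{Y_0\times_S T}$ is ULA over $\pr_T$: base-change stability of ULA yields that $\alpha^*(j_!\omega)\,\iso\,(j_T)_!e[2\dim Y_0]$ is ULA over $T$, and this differs from $(j_T)_!\omega_{Y_0\times_S T}$ by $\otimes\,\pr_T^*\omega_T$, an operation that does not preserve ULA over a singular base. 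Nor does it give the identification $\alpha^!j_!\omega\,\iso\,(j_T)_!\omega$: this is not a formal base-change fact ($\alpha^!$ does not commute with $j_!$ in general -- e.g.\ $Y=\AA^1$, $Y_0=\Gm$, $f=\id$, $T=\{0\}$, where the ULA hypothesis fails and the two sides differ), and under the ULA hypothesis it is precisely the case $K=\omega_T$ of the statement you are proving, so invoking it as an input is circular. The correct route (the paper's) is to use Proposition~\ref{Pp_B.1.3_ULA} to get the displayed $*/!$-comparison over $T$, then compute the $*$-side by the base change $\alpha^*j_!\,\iso\,(j_T)_!\alpha_0^*$, the unconditional projection formula for $(j_T)_!$, the identity $\omega_{Y_0}\,\iso\,e[2\dim Y_0]$, and smoothness of $\pr_T^0$ of relative dimension $\dim Y_0-\dim S$, which turns the left-hand side into $(j_T)_!(\pr_T^0)^!K$.
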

\begin{proof} i) This follows from the consequences of the ULA property stated in (\cite{BG}, 5.1.2).  

\smallskip\noindent
ii) By Proposition~\ref{Pp_B.1.3_ULA},
$$
\alpha^*(j_!\omega)\otimes \pr_T^*K[-2\dim S]\,\iso\, \alpha^!(j_!\omega)\otimes^! \pr_T^!K
$$
Using $\omega_{Y_0}\,\iso\, e[2\dim Y_0]$ and the base change, the LHS rewrites as
$$
(j_T)_!(\pr_T^0)^*K[2\dim Y_0-2\dim S]\,\iso\, (j_T)_!(\pr_T^0)^!K,
$$
because $\pr_T^0$ is smooth of relative dimension $\dim Y_0-\dim S$.
\end{proof}

\begin{Cor} For $\theta\in -\Lambda_{G,P}^{pos}$ the partially defined left adjoint $(v^{\theta}_{glob})^*$ to $(v^{\theta}_{glob})_*: Shv(_{\theta}\Bunt_P)\to Shv(\Bunt_P)$ is defined on $\Inv(\Bunt_P)$ and takes values in $\Inv(_{\theta}\Bunt_P)$. This yields an adjoint pair $(v^{\theta}_{glob})^*: \Inv(\Bunt_P)\leftrightarrows \Inv(_{\theta}\Bunt_P): (v^{\theta}_{glob})_*$.
\end{Cor}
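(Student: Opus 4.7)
The plan is to dualize the argument of Lemma~\ref{Lm_4.1.12_now}: that lemma produced the $(v^{\theta}_{glob})_!$--$(v^{\theta}_{glob})^!$ adjunction on the invariant categories by exploiting the ULA property of $(j_{glob})_!\omega$ from (\cite{BG}, 5.1.5), and here I intend to run the analogous argument for the $(v^{\theta}_{glob})^*$--$(v^{\theta}_{glob})_*$ adjunction. The first step is to verify that $(v^{\theta}_{glob})_*$ sends $\Inv(_{\theta}\Bunt_P)$ into $\Inv(\Bunt_P)$. By Lemma~\ref{Lm_4.1.9_now}(i) this reduces to computing $(v^{\eta}_{glob})^!(v^{\theta}_{glob})_*L$ for each $\eta\in -\Lambda_{G,P}^{pos}$ and $L\in \Inv(_{\theta}\Bunt_P)$: the case $\eta=\theta$ is immediate, the case when $\eta$ does not lie in the closure of $_{\theta}\Bunt_P$ vanishes, and in the remaining boundary case one applies $(!,*)$-base change against the groupoid defining the invariance condition, combined with the already-established compatibility of $\Av^{\Inv}_*$ with $(v^{\eta}_{glob})^!$ in diagram~(\ref{diag_for_Sect_4.1.10}).

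Once this is in hand, the second step is purely formal and runs exactly as in Lemma~\ref{Lm_4.1.12_now}(ii). Both the inclusion $\Inv(\Bunt_P)\hook{} Shv(\Bunt_P)$ and the composition $\Av^{\Inv}_*\comp (v^{\theta}_{glob})^*: Shv(\Bunt_P)\to \Inv(_{\theta}\Bunt_P)$ are continuous right adjoints to fully faithful embeddings. Provided the second of these is well-defined on all of $\Inv(\Bunt_P)$, the invocation of (\cite{Ly}, 9.2.35) upgrades the partial adjunction on $Shv$ to a genuine adjunction $(v^{\theta}_{glob})^*:\Inv(\Bunt_P)\leftrightarrows\Inv(_{\theta}\Bunt_P):(v^{\theta}_{glob})_*$ and automatically forces the left adjoint to factor through $\Inv(_{\theta}\Bunt_P)$.

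The main obstacle is therefore the third step: showing that the partially defined functor $(v^{\theta}_{glob})^*$ is everywhere defined on $\Inv(\Bunt_P)$. My plan is to use Lemma~\ref{Lm_4.1.12_now}(ii) to reduce to objects of the form $(v^{\eta}_{glob})_!M$ with $\eta\in -\Lambda_{G,P}^{pos}$ and $M\in \Inv(_{\eta}\Bunt_P)$, which by Lemma~\ref{Lm_4.1.9_now}(ii) come from sheaves on $\Mod^{+,-\eta}_{\Bun_M}$, and then verify that $(v^{\theta}_{glob})^*$ is defined on each such generator. For this I would invoke the $*$-variant of Lemma~\ref{Lm_4.1.13}: the ULA property of $(j_{glob})_!\omega$ over $\Bun_M$ from (\cite{BG}, 5.1.5), applied to the cartesian square involving $\Bunt_P\times_{\Bun_M}\Mod^{+,-\eta}_{\Bun_M}$, should express $(v^{\theta}_{glob})^*(v^{\eta}_{glob})_!M$ as the tensor product of $(v^{\theta}_{glob})^*(j_{glob})_!\omega$ with a $!$-pullback from $\Mod^{+,-\eta}_{\Bun_M}$. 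The delicate point is the existence of $(v^{\theta}_{glob})^*(j_{glob})_!\omega$, which I expect to extract from the ULA hypothesis together with the smoothness of $\tilde\gq_P$ along the stratum; this is where the argument genuinely uses the structural input, and the rest reduces to bookkeeping.
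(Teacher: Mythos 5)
Your plan stands or falls on Step 3, and that is precisely the step you do not prove — and the mechanism you sketch for it is aimed at the wrong target. You propose to express $(v^{\theta}_{glob})^*(v^{\eta}_{glob})_!M$ as a tensor product of $(v^{\theta}_{glob})^*(j_{glob})_!\omega$ with a $!$-pullback from $\Mod^{+,-\eta}_{\Bun_M}$, and to extract the existence of $(v^{\theta}_{glob})^*(j_{glob})_!\omega$ from the ULA property of (\cite{BG}, 5.1.5). For $\eta\ne 0$ the ULA object relevant to the generator is $(j_{glob}\times\id)_!\omega$ on $\Bunt_P\times_{\Bun_M}\Mod^{+,-\eta}_{\Bun_M}$, pushed forward along the proper map $\bar v^{\eta}_{glob}$, so your formula does not even live on the right space; more importantly, no geometric input is needed at this point. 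Since by Lemma~\ref{Lm_4.1.12_now} the functor $(v^{\eta}_{glob})_!$ is left adjoint to $(v^{\eta}_{glob})^!$ already at the level of $Shv$, for any $M\in Shv(_{\theta}\Bunt_P)$ one has
$$
\Hom\bigl((v^{\eta}_{glob})_!L,\,(v^{\theta}_{glob})_*M\bigr)\,\iso\,\Hom\bigl(L,\,(v^{\eta}_{glob})^!(v^{\theta}_{glob})_*M\bigr),
$$
and $(v^{\eta}_{glob})^!(v^{\theta}_{glob})_*=0$ for $\eta\ne\theta$ (either $_{\eta}\Bunt_P$ is disjoint from the closure of $_{\theta}\Bunt_P$, or it lies in the closed complement of $_{\theta}\Bunt_P$ inside that closure and one uses $i^!j_*=0$ for a closed/open decomposition), while $(v^{\theta}_{glob})^!(v^{\theta}_{glob})_*\,\iso\,\id$. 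Hence $(v^{\theta}_{glob})^*$ is defined on every generator $(v^{\eta}_{glob})_!(p^{\eta}_{glob})^!K$ with value $0$ or $(p^{\theta}_{glob})^!K$, manifestly in $\Inv(_{\theta}\Bunt_P)$; as the locus of definition of a partially defined left adjoint is closed under colimits and these objects generate $\Inv(\Bunt_P)$, the corollary follows. The "delicate point" you flag dissolves: the ULA input was already consumed in Lemma~\ref{Lm_4.1.12_now} to construct $(v^{\eta}_{glob})_!$, and invoking a $*$-variant of Lemma~\ref{Lm_4.1.13} here is a detour that, as written, would not assemble.

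For comparison, the paper's own proof is exactly this kind of formal devissage: it deduces the corollary from Lemma~\ref{Lm_4.1.12_now} by a Cousin argument over the strata, using (\cite{Ly9}, Lemma~1.8.15, 1.8.16), with no further geometric input; so your overall architecture (reduce to $!$-extension generators supplied by Lemma~\ref{Lm_4.1.12_now}) is the right one, only the execution of the crux must be replaced by the orthogonality argument above. Two smaller points: your Step 1 re-proves that $(v^{\theta}_{glob})_*$ preserves the invariant subcategories, which is already recorded in the discussion around the diagram (\ref{diag_for_Sect_4.1.10}) (commutation of $(v^{\theta}_{glob})_*$ with $\Av_*^{\Inv}$); and in Step 2 the description of the inclusion $\Inv(\Bunt_P)\hook{} Shv(\Bunt_P)$ and of $\Av^{\Inv}_*\comp(v^{\theta}_{glob})^*$ as "right adjoints to fully faithful embeddings" is garbled — the inclusion is the fully faithful left adjoint of $\Av_*^{\Inv}$, and the second composite is not known to be any adjoint before the left adjoint has been constructed. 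Once the generator computation is in place, the adjunction on the invariant categories is immediate from full faithfulness of the inclusions together with the fact that $(v^{\theta}_{glob})_*$ preserves $\Inv$, so the appeal to (\cite{Ly}, 9.2.35) can be dispensed with.
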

\begin{proof} This follows from Lemma~\ref{Lm_4.1.12_now} by a Cousin argument using  (\cite{Ly9}, Lemma~1.8.15, 1.8.16).  
\end{proof}
 
\sssec{} The inclusion $\Inv(_{\theta}\Bunt_P)\hook{} Shv(_{\theta}\Bunt_P)$ is compatible with the perverse t-structure on $Shv(_{\theta}\Bunt_P)$, so $\Inv(_{\theta}\Bunt_P)$ inherits a t-structure, and the latter inclusion is t-exact. The t-structure on $\Inv(_{\theta}\Bunt_P)$ is compatible with filtered colimits.
 
\sssec{} Define the t-structure on $\Inv(\Bunt_P)$ by declaring $K\in \Inv(\Bunt_P)$ connective if it is connective in $Shv(\Bunt_P)$. This is an accessible t-structure by (\cite{HA}, 1.4.4.11). Since $\Inv(\Bunt_P)\hook{} Shv(\Bun_P)$ is right t-exact, its right adjoint $\Av_*^{\Inv}$ is left t-exact. 

 Note that $\Inv(\Bunt_P)^{\le 0}\subset \Inv(\Bunt_P)$ is the smallest full subcategory containing $(v^{\theta}_{glob})_!K$ for $K\in\Inv(_{\theta}\Bunt_P)^{\le 0}$, $\theta\in -\Lambda_{G,P}^{pos}$, stable under colimits and extensions. This implies that given $L\in \Inv(\Bunt_P)$ one has $L\in \Inv(\Bunt_P)^{> 0}$ iff for any $\theta\in -\Lambda_{G,P}^{pos}$, $(v^{\theta}_{glob})^!K\in \Inv(_{\theta}\Bunt_P)^{> 0}$. The t-structure on $\Inv(\Bunt_P)$ is compatible with filtered colimits. 
 
\begin{Lm} One has $\Inv(\Bunt_P)^{>0}=\Inv(\Bunt_P)\cap Shv(\Bunt_P)^{>0}$.
In particular, $\Inv(\Bunt_P)\hook{} Shv(\Bunt_P)$ is t-exact.
\end{Lm}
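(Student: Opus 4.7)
The strategy is to compare the two characterizations of coconnectivity via the stratification $\{{_{\theta}\Bunt_P}\}_{\theta \in -\Lambda_{G,P}^{pos}}$ of $\Bunt_P$ from Section~\ref{Sect_1.3.19_now}, and reduce to the already established fact that $\Inv(_{\theta}\Bunt_P) \hookrightarrow Shv(_{\theta}\Bunt_P)$ is t-exact.

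First I would recall the two separate criteria. By the definition of the t-structure on $\Inv(\Bunt_P)$ just given and the description of coconnective objects stated before the lemma, one has for $L \in \Inv(\Bunt_P)$:
$$
L \in \Inv(\Bunt_P)^{>0} \;\Longleftrightarrow\; (v^{\theta}_{glob})^! L \in \Inv(_{\theta}\Bunt_P)^{>0} \text{ for all } \theta \in -\Lambda_{G,P}^{pos}.
$$
On the other hand, since $\{{_{\theta}\Bunt_P}\}$ is a stratification of $\Bunt_P$, a general object $L \in Shv(\Bunt_P)$ lies in $Shv(\Bunt_P)^{>0}$ (for the perverse t-structure) iff $(v^{\theta}_{glob})^! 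L \in Shv(_{\theta}\Bunt_P)^{>0}$ for every $\theta \in -\Lambda_{G,P}^{pos}$. This is the standard stratum-wise characterization of $!$-coconnectivity for the perverse t-structure.

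Next, for any such $L \in \Inv(\Bunt_P)$ the restriction $(v^{\theta}_{glob})^! L$ lies in $\Inv(_{\theta}\Bunt_P)$ by Lemma~\ref{Lm_4.1.9_now}(i). Using the t-exactness of the inclusion $\Inv(_{\theta}\Bunt_P) \hookrightarrow Shv(_{\theta}\Bunt_P)$ (which is built into the definition of the t-structure on $\Inv(_{\theta}\Bunt_P)$, since it is the perverse t-structure), the condition $(v^{\theta}_{glob})^! L \in \Inv(_{\theta}\Bunt_P)^{>0}$ is equivalent to $(v^{\theta}_{glob})^! L \in Shv(_{\theta}\Bunt_P)^{>0}$. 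Chaining the three equivalences above then yields
$$
L \in \Inv(\Bunt_P)^{>0} \;\Longleftrightarrow\; L \in Shv(\Bunt_P)^{>0},
$$
for $L \in \Inv(\Bunt_P)$, which is the claim. Combined with the already-noted right t-exactness of $\Inv(\Bunt_P) \hookrightarrow Shv(\Bunt_P)$ (immediate from the definition of connective objects), this gives t-exactness.

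The proof is essentially bookkeeping, so there is no serious obstacle; the only point deserving care is the stratum-wise characterization of $Shv(\Bunt_P)^{>0}$, which I would explicitly cite from the standard perverse-sheaf formalism, noting that the index set $-\Lambda_{G,P}^{pos}$ can be exhausted by finite subsets $\{\theta \mid \theta \ge \theta_0\}$ as in the proof of Lemma~\ref{Lm_1.4.12}, so that no completeness issue arises from the infinitude of strata.
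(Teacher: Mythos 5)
Your proof is correct and follows essentially the same route as the paper: both directions are handled by the stratum-wise characterization of coconnectivity in $\Inv(\Bunt_P)$ and in $Shv(\Bunt_P)$, together with the fact that the t-structure on $\Inv(_{\theta}\Bunt_P)$ is the one inherited from the perverse t-structure, so $\Inv(_{\theta}\Bunt_P)^{>0}=\Inv(_{\theta}\Bunt_P)\cap Shv(_{\theta}\Bunt_P)^{>0}$. The paper phrases it as two separate implications rather than a chain of equivalences, but the content is identical.
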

\begin{proof} Let $K\in \Inv(\Bunt_P)\cap Shv(\Bunt_P)^{>0}$. Then for any $\theta\in -\Lambda_{G,P}^{pos}$, $(v^{\theta}_{glob})^!\in Shv(_{\theta}\Bunt_P)^{>0}\cap \Inv(_{\theta}\Bunt_P)=\Inv(_{\theta}\Bunt_P)^{>0}$. So, $K\in \Inv(\Bunt_P)^{>0}$. 

 Conversely, let $M\in \Inv(\Bunt_P)^{>0}$. Then for $\theta\in -\Lambda_{G,P}^{pos}$, $(v^{\theta}_{glob})^!M\in \Inv(_{\theta}\Bunt_P)^{>0}$, hence $(v^{\theta}_{glob})^!M\in Shv(_{\theta}\Bunt_P)^{>0}$. This gives $M\in Shv(\Bunt_P)^{>0}$. 
\end{proof}
\begin{Lm} Given $K\in \Inv(\Bunt_P)$, one has $K\in \Inv(\Bunt_P)^{\le 0}$ iff for any $\theta\in -\Lambda_{G,P}^{pos}$, $(v^{\theta}_{G,P})^*K\in  \Inv(_{\theta}\Bunt_P)^{\le 0}$.
\end{Lm}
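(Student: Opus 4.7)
The statement is a global analog of Lemma~\ref{Lm_1.6.8_now}, so my plan is to adapt the proof of (\cite{Gai19Ran}, Lemma~2.1.9) cited there. Throughout, equip $-\Lambda_{G,P}^{pos}$ with the partial order $\theta_1 \le \theta_2$ iff $\theta_2 - \theta_1 \in \Lambda_{G,P}^{pos}$; the closure relations among the strata $_\eta\Bunt_P$ (inherited from those among the strata of $\Bunb_P$ via the smooth map $\gr$) read $\overline{{_\eta\Bunt_P}} = \bigsqcup_{\theta' \le \eta} {_{\theta'}\Bunt_P}$.

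For the ``only if'' direction I would verify that
$$
(v^\theta_{glob})^* : \Inv(\Bunt_P) \to \Inv({_\theta\Bunt_P})
$$
is right t-exact. By construction $\Inv(\Bunt_P)^{\le 0}$ is the smallest full subcategory stable under colimits and extensions containing the objects $(v^\eta_{glob})_! L$ for $\eta \in -\Lambda_{G,P}^{pos}$ and $L \in \Inv({_\eta\Bunt_P})^{\le 0}$. Since $(v^\theta_{glob})^*$ preserves colimits and extensions, it suffices to check that $(v^\theta_{glob})^*(v^\eta_{glob})_! L$ is connective in $\Inv({_\theta\Bunt_P})$. For $\theta \not\le \eta$ this object vanishes; for $\theta = \eta$ it is $L$ itself; for $\theta < \eta$ one proceeds by induction on $\eta - \theta \in \Lambda_{G,P}^{pos}$, trivialising the computation via the identification $\Inv({_\eta\Bunt_P}) \iso Shv(\Mod_{\Bun_M}^{+,-\eta})$ from Lemma~\ref{Lm_4.1.9_now} and the product structure of the relevant closed/open pieces of $\Mod_{\Bun_M}^{+,-\eta}$.

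For the ``if'' direction, suppose $(v^\theta_{glob})^* K \in \Inv({_\theta\Bunt_P})^{\le 0}$ for every $\theta$. I would choose a well-ordering of $-\Lambda_{G,P}^{pos}$ refining $\le$, so that every initial segment $\Theta$ defines an open substack $_\Theta\Bunt_P \subset \Bunt_P$ and we obtain a transfinite filtration of $K$ with associated graded pieces of the form $(v^\theta_{glob})_! (v^\theta_{glob})^* K$. Each such piece lies in $(v^\theta_{glob})_! \Inv({_\theta\Bunt_P})^{\le 0} \subset \Inv(\Bunt_P)^{\le 0}$ by hypothesis and the very definition of the t-structure, and since $\Inv(\Bunt_P)^{\le 0}$ is stable under colimits and extensions this forces $K \in \Inv(\Bunt_P)^{\le 0}$.

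The main obstacle is the inductive off-diagonal estimate in the ``only if'' direction, i.e.\ connectivity of $(v^\theta_{glob})^*(v^\eta_{glob})_! L$ for $\theta < \eta$. Fortunately this is a local geometric statement on a neighbourhood of $_\theta\Bunt_P$ inside $\overline{{_\eta\Bunt_P}}$, and reduces via Lemma~\ref{Lm_4.1.9_now} to a parallel calculation on $\Mod_{\Bun_M}^{+,-\eta}$, which is exactly where Gaitsgory's argument applies with only cosmetic modifications.
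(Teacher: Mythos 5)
In your ``only if'' direction, the step you single out as the main obstacle --- connectivity of $(v^{\theta}_{glob})^*(v^{\eta}_{glob})_!L$ for $\theta<\eta$ --- is misconceived on two counts. First, the justification you propose (induction on $\eta-\theta$, ``trivialised'' via $\Inv({_\eta\Bunt_P})\simeq Shv(\Mod_{\Bun_M}^{+,-\eta})$ and a ``product structure of the relevant closed/open pieces of $\Mod_{\Bun_M}^{+,-\eta}$'') is not a proof: there is no product structure describing how ${_\theta\Bunt_P}$ attaches to the closure of ${_\eta\Bunt_P}$ (that attachment is governed by Zastava-type geometry), and the argument of (\cite{Gai19Ran}, 2.1.9) contains no such off-diagonal computation to transplant. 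Second, and more to the point, the object in question vanishes for \emph{every} $\theta\neq\eta$, not only for $\theta\not\le\eta$: factoring $v^{\eta}_{glob}$ as an open immersion into the closure of ${_\eta\Bunt_P}$ followed by a closed immersion, the recollement identity $i^*j_!=0$ kills the $*$-restriction to any other stratum, so the inductive step you plan for is vacuous. In fact the whole reduction to generators is unnecessary here: by the lemma proved immediately before this one, $\Inv(\Bunt_P)\hookrightarrow Shv(\Bunt_P)$ is t-exact, the t-structure on $\Inv({_\theta\Bunt_P})$ is restricted from the perverse one on $Shv({_\theta\Bunt_P})$, and on $\Inv(\Bunt_P)$ the functor $(v^{\theta}_{glob})^*$ agrees with the sheaf-theoretic $*$-restriction; hence ``only if'' is simply the right t-exactness, for perverse t-structures, of $*$-restriction to a locally closed substack.

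Your ``if'' direction is essentially the intended devissage from (\cite{Gai19Ran}, 2.1.9): exhaust $\Bunt_P$ by open unions of strata, identify the successive cofibres with $(v^{\theta}_{glob})_!(v^{\theta}_{glob})^*K$, and use stability of the connective subcategory under colimits and extensions. Two corrections there: a union of strata is \emph{open} exactly when the indexing subset of $-\Lambda_{G,P}^{pos}$ is closed under $\ge$, so you must enumerate starting from $\theta=0$ and going downwards (a linear extension of $\le$ gives closed, not open, initial segments); since each $\theta$ has only finitely many $\theta'\ge\theta$, an $\omega$-indexed exhaustion suffices and no transfinite filtration is needed. Also, either check that the partial $!$-extensions remain in $\Inv(\Bunt_P)$, or, more simply, run the excision argument entirely inside $Shv(\Bunt_P)$: this is legitimate because connectivity in $\Inv(\Bunt_P)$ is, by definition, connectivity in $Shv(\Bunt_P)$.
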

\begin{proof} The argument from (\cite{Gai19Ran}, 2.1.9) applies in this situation also.
\end{proof}
\begin{Rem} The object $\IC_{\Bunt_P}\in \Inv(\Bunt_P)$ is the intermediate extension under $j_{glob}: \Bun_P\hook{}\Bunt_P$ in the sense of the t-structure on $\Inv(\Bunt_P)$.
\end{Rem}

\ssec{The stack $\cY_{\Ran}$}
\label{Sect_The stack cY_Ran}

\sssec{}  Write $\cY_{\Ran}$ for the stack classifying $\cF_M\in\Bun_M, \cI\in\Ran, \cF_G\in\Bun_G$ and an isomorphism $\xi: \cF_M\times_M G\,\iso\, \cF_G\mid_{X-\Gamma_{\cI}}$. For us $\cY_{\Ran}$ is a tool relating $\SI_{P,\Ran}$ with global objects on $\Bunt_P$. Note that $\cY_{\Ran}\in\PreStk_{lft}$. 

 As in Section~\ref{Sect_1.3.6_Hecke_action} one defines the right action of $\Sph_{G,\Ran}$ on $Shv(\cY_{\Ran})$. One defines a left action of $\Sph_{M,\Ran}$ on $Shv(\cY_{\Ran})$ as in Section~\ref{Sect_1.3.7_Hecke_action}. These actions commute.  
 
\sssec{} Recall that $_{\Ran,\infty}\Bunt_P$ classifies: $\cI\in\Ran$, $\cF_G\in\Bun_G, \cF_M\in\Bun_M$ and a collection of injective maps for $V\in\Rep(G)^{\heartsuit}$ finite-dimensional
\begin{equation}
\label{map_kappa^V_Bunt_P}
\kappa^V: V^{U(P)}_{\cF_M}\hook{} V_{\cF_G}
\end{equation}
over $X-\Gamma_{\cI}$ satisfying the Pl\"ucker relations. 

 Let $\pi_{glob,\Ran}: \cY_{\Ran}\to {_{\Ran,\infty}\Bunt_P}$ be the map sending the above point to $(\cF_M, \cF_G,  \cI, \kappa)$, where the maps (\ref{map_kappa^V_Bunt_P}) are obtained from $\xi$. 
 
 Let $\pi_{loc,\Ran}: \cY_{\Ran}\to \gL^+(M)_{\Ran}\backslash \Gr_{G,\Ran}$ be the map sending the above collection to its restriction to $\cD_{\cI}$. 
 
  It is easy to see that both functors in the diagram
$$
Shv(_{\Ran,\infty}\Bunt_P)\,\toup{\pi_{glob,\Ran}^!}\, Shv(\cY_{\Ran})\,\getsup{\pi_{loc,\Ran}^!} \, Shv(\Gr_{G,\Ran})^{\gL^+(M)_{\Ran}}
$$
commute with the actions of $\Sph_{M,\Ran}$ and of $\Sph_{G,\Ran}$. 

\begin{Rem} In the constructible context the functor $(\pi_{glob,\Ran})_!: Shv(\cY_{\Ran})\to Shv(_{\Ran,\infty}\Bunt_P)$ is defined and commutes with the $\Sph_{M,\Ran}$ and $\Sph_{G,\Ran}$-actions.
\end{Rem}   

\sssec{} The prestacks $_{\Ran,\infty}\Bunt_P$ and $\cY_{\Ran}$ have natural unital structures. The map $\pi_{glob, \Ran}$ is equivariant under the actions of the category object $(\Ran\times\Ran)^{\subset}$. So, $\pi_{glob,\Ran}^!$ preserves the corresponding unital categories of sheaves.
 
\sssec{} Recall the $\gL^+(M)_{\Ran}$-torsor $\cG_{\Ran, M}\to \Ran\times\Bun_M$ from Section~\ref{Sect_1.2.14}. We have the $\gL^+(M)_{\Ran}$-torsor $\cG_{\Ran, M}\times_{\Ran} \Gr_{G,\Ran}\to \cY_{\Ran}$, where $\gL^+(M)_{\Ran}$ acts diagonally on the source. 

Write $\Inv(\cY_{\Ran})\subset Shv(\cY_{\Ran})$ for the full subcategory
$$
Shv(\cG_{\Ran, M}\times_{\Ran} \Gr_{G,\Ran})^{H_{\Ran}}\hook{} Shv(\cG_{\Ran, M}\times_{\Ran} \Gr_{G,\Ran})^{\gL^+(M)_{\Ran}}\,\iso\, Shv(\cY_{\Ran}).
$$
Here $H_{\Ran}$ acts diagonally on $\cG_{\Ran, M}\times_{\Ran} \Gr_{G,\Ran}$, and its action on $\cG_{\Ran, M}$ factors through $\gL^+(M)_{\Ran}$. 

 For a $\gL^+(M)_{\Ran}$-invariant locally closed subprestack $V\subset \Gr_{G,\Ran}$ we get the embedding 
$$
\cG_{\Ran, M}\times^{\gL^+(M)_{\Ran}} V\hook{} \cY_{\Ran}.
$$
In particular, we get $\cG_{\Ran, M}\times^{\gL^+(M)_{\Ran}} \bar S^0_{P,\Ran}$ in $\cY_{\Ran}$. 
If moreover $V$ is $\H_{\Ran}$-invariant 
one similarly gets the full subcategory 
$$
\Inv(\cG_{\Ran, M}\times^{\gL^+(M)_{\Ran}} V)\subset Shv(\cG_{\Ran, M}\times^{\gL^+(M)_{\Ran}} V).
$$

 The square is cartesian
$$
\begin{array}{ccc}
\cG_{\Ran, M}\times^{\gL^+(M)_{\Ran}} \bar S^0_{P,\Ran}& \hook{} & \cY_{\Ran}\\ 
\downarrow && \downarrow\lefteqn{\scriptstyle \pi_{glob, \Ran}}\\ 
\Ran\times \Bunt_P & \hook{} & {_{\Ran,\infty}\Bunt_P}
\end{array}
$$ 

\sssec{} For a smooth algebraic group $H$ of finite type let $\Bun_{H-gen}$ be defined as in (\cite{Gai19Ran}, A.1.2). We get the natural map 
$$
\zeta_{gen}: {_{\Ran,\infty}\Bunt_P}\to \Bun_M\times_{\Bun_{M-gen}} \Bun_{P-gen}\times_{\Bun_{G-gen}}\Bun_G.
$$ 
The following square is cartesian
$$
\begin{array}{ccc}
\cG_{\Ran, M}\times^{\gL^+(M)_{\Ran}} \bar S^0_{P,\Ran}& \hook{} & \cY_{\Ran}\\ 
\downarrow\lefteqn{\scriptstyle \bar\pi^0_{glob}} && \downarrow\lefteqn{\scriptstyle \zeta_{gen}\pi_{glob, \Ran}}\\
\Bunt_P & \to & \Bun_M\times_{\Bun_{M-gen}} \Bun_{P-gen}\times_{\Bun_{G-gen}}\Bun_G,
\end{array}
$$
so defining the map $\bar\pi^0_{glob}$. 
\begin{Pp} The map $\zeta_{gen}\pi_{glob, \Ran}$ is universally homologically cointractible.
\end{Pp}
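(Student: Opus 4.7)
The plan is to reduce the statement to the universal contractibility of a variant of the Ran space, namely the one of configurations set-theoretically containing a prescribed closed subscheme. This is the parabolic analog (and Ran-space version) of the contractibility results used implicitly in \cite{Gai19Ran} for the case $P=B$, and it relies on the same circle of ideas as the fact, cited in Section~\ref{Sect_1.2.1_now}, that $\varphi_s: (\Ran\times\Ran)^{\subset}\to \Ran$ is universally homologically contractible.

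First, I would unwind the fibers. A test $S$-point $\sigma$ of $\Bun_M\times_{\Bun_{M-gen}} \Bun_{P-gen}\times_{\Bun_{G-gen}}\Bun_G$ is the datum of $\cF_M\in\Bun_M(S)$, $\cF_G\in\Bun_G(S)$, together with a $P$-bundle $\cF_P$ on some generic open $U\subset S\times X$ equipped with compatible isomorphisms $\cF_P\times_P M\,\iso\, \cF_M\!\mid_U$ and $\cF_P\times_P G\,\iso\, \cF_G\!\mid_U$. Such a $\sigma$ induces a generically defined isomorphism $\cF_M\times_M G\,\iso\, \cF_G$; let $W_\sigma\subset S\times X$ be the maximal open over which it extends and $Z_\sigma=(S\times X)\setminus W_\sigma$ its closed complement. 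Because $X$ is a projective curve, $Z_\sigma\to S$ is proper and quasi-finite. An $S$-point in the fiber of $\zeta_{gen}\pi_{glob,\Ran}$ over $\sigma$ is a pair $(\cI,\xi)$ with $\cI\in\Map(S,\Ran)$ and $\xi: \cF_M\times_M G\,\iso\, \cF_G\!\mid_{S\times X-\Gamma_\cI}$ compatible with $\sigma$. Since the generic datum determines $\xi$ uniquely on any open on which it extends, the fiber identifies with the sub-prestack
\[
\Ran^{\supset Z_\sigma}\;\subset\;\Ran_S\;:=\;S\times\Ran
\]
consisting of those $\cI$ for which $\Gamma_\cI\supset Z_\sigma$ set-theoretically.

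Second, I would show that the projection $\Ran^{\supset Z_\sigma}\to S$ is universally homologically contractible. This is the relative version of the classical Beilinson--Drinfeld assertion that the Ran space of $X$ with a marked finite subset is contractible. Concretely, $\Ran^{\supset Z_\sigma}$ fits into a filtered colimit over closed subschemes $Z'\subset S\times X$ finite over $S$ set-theoretically containing $Z_\sigma$, each term being a product of $S$ with a Ran space of the open $(S\times X)\setminus Z'$ with $Z'$ added to each configuration; each such factor is UHC over $S$ (cf.\ \cite{Gai19Ran}, A.3.4 and the proof of the analogous statement 4.1.2 in \emph{loc.\ cit.}), and UHC is stable under filtered colimits with continuous transition maps and under arbitrary base change. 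Formal consequences of UHC (in particular two-out-of-three, \cite{Gai19Ran}, A.1.10) then upgrade the fiberwise statement to the universal statement for $\zeta_{gen}\pi_{glob,\Ran}$.

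The main obstacle will be that the formation of $Z_\sigma$ does not a priori commute with base change and $Z_\sigma$ need not be flat over $S$. To circumvent this, I would stratify the target by the "degree" of the defect of the generic $P$-structure: along each such stratum $Z_\sigma$ can be arranged to be a relative effective Cartier divisor of fixed degree, pulled back from a map $S\to X^{(d)}$, and then $\Ran^{\supset Z_\sigma}\to S$ is base-changed from the UHC map $\Ran^{\supset}_{X^{(d)}}\to X^{(d)}$ classifying pairs $(D,\cI)$ with $D\subset\Gamma_\cI$ (an instance of the map $\varphi_s$ already known to be UHC). Since UHC is both stratum-local on the target (being detectable after any base change, in particular after pulling back along the open strata and their closed complements) and stable under this base change, verifying it stratum-by-stratum suffices.
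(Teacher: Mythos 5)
There is a genuine gap, and it occurs at the very first step, in your identification of the fibers. A point $\sigma$ of $\Bun_M\times_{\Bun_{M-gen}} \Bun_{P-gen}\times_{\Bun_{G-gen}}\Bun_G$ does \emph{not} induce a generically defined isomorphism $\cF_M\times_M G\,\iso\,\cF_G$: it only records a generic $P$-reduction $\cF_P$ of $\cF_G$ together with an identification $\cF_P\times_P M\,\iso\,\cF_M$ of the induced Levi bundle. An isomorphism $\xi: \cF_M\times_M G\,\iso\,\cF_G\mid_{S\times X-\Gamma_{\cI}}$ is strictly more data: it amounts, over the locus where it is defined, to a reduction of $\cF_P$ back to $M$ compatible with the given identification, i.e.\ to a (generic) trivialization of the torsor under the unipotent group scheme $U(P)_{\cF_M}$ underlying $\cF_P$. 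Consequently the fiber of $\zeta_{gen}\pi_{glob,\Ran}$ over $\sigma$ is not the prestack $\Ran^{\supset Z_\sigma}$ you describe; it fibers over such a Ran-type space with nontrivial fibers parametrizing these unipotent trivializations, and your sentence ``the generic datum determines $\xi$ uniquely on any open on which it extends'' is exactly where this extra data is silently discarded. The contractibility of this unipotent part is the essential second ingredient: the paper isolates it by factoring the map through $\Bun_M\times_{\Bun_{G-gen}}\Bun_G$ and proving that $\zeta_2$ is universally homologically contractible, which reduces (by base change, Drinfeld--Simpson, and fppf descent for $Shv$) to the statement that $\Spec k\to \Bun_{U(P)-gen,\triv}$ is UHC (\cite{Gai19Ran}, A.3.3). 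Your proposal as written only addresses the ``Ran-space'' part of the fiber (the paper's $\zeta_1$), so it proves at best half of the statement.

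Two further remarks on the part you do treat. First, the paper avoids all of your worries about $Z_\sigma$ failing to be flat or to commute with base change: since $\zeta_1$ is pseudo-proper, one may invoke (\cite{Gai19Ran}, A.2.5) and check homological contractibility only on fibers over field-valued points, where the fiber is literally $\Ran^{\supset \cI_0}$ and (\cite{Gai19Ran}, A.2.7) applies; no stratification of the target is needed. Second, your proposed workaround relies on UHC being ``stratum-local on the target,'' which is not among the formal stability properties you cite (fully faithfulness of pullback on the preimages of strata does not formally glue), so even for the $\zeta_1$-part your argument would need an additional justification that the paper's route renders unnecessary.
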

\begin{proof} We decompose $\zeta_{gen}\pi_{glob, \Ran}$ as 
$$
\cY_{\Ran}\toup{\zeta_1} \Bun_M\times_{\Bun_{G-gen}} \Bun_G\toup{\zeta_2} \Bun_M\times_{\Bun_{M-gen}} \Bun_{P-gen}\times_{\Bun_{G-gen}}\Bun_G
$$
We are reduced to Proposition~\ref{Pp_zeta_i_is_UHC} below.
\end{proof}

\begin{Pp} 
\label{Pp_zeta_i_is_UHC}
The map $\zeta_i$ is universally homologically cointractible for $i=1,2$.
\end{Pp}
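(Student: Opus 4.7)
The plan is to treat $\zeta_1$ and $\zeta_2$ separately, reducing each to a universal homological contractibility (UHC) statement already present in the Ran-space and unipotent-group literature.

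For $\zeta_1$, I will analyse the fibre over an affine test scheme $T$ equipped with a $T$-point of $\Bun_M\times_{\Bun_{G-gen}}\Bun_G$. Such a $T$-point gives $\cF_M,\cF_G$ on $T\times X$ together with an isomorphism $\xi$ of the induced $G$-torsors defined on the complement of some closed subscheme $Z\subset T\times X$ that is finite over $T$. The fibre product $T\times_{\text{target}}\cY_{\Ran}$ then identifies with the prestack of $\cI\in\Map(T,\Ran)$ such that $Z\subset\Gamma_{\cI}$ set-theoretically, with the isomorphism $\xi$ being determined by restriction. Via the $T$-point of $\Ran$ defined by $Z$ (after stratifying $T$ by the combinatorial type of $Z$ if needed), this fibre sits in a cartesian square over $\varphi_s:(\Ran\times\Ran)^{\subset}\to\Ran$, which is UHC by \cite{GLys2}, 1.6.4 (equivalently \cite{Gai19Ran}, 4.1.2). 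Since UHC is stable under base change, each such fibre is UHC, and a descent argument along the presentation of $\Bun_{G-gen}$ as a colimit over opens of $X$ upgrades this to UHC of $\zeta_1$.

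For $\zeta_2$, I will exploit the semidirect product decomposition $P\,\iso\,M\ltimes U(P)$: the canonical morphism $\Bun_{M-gen}\to\Bun_{P-gen}$ is a section of the projection $\Bun_{P-gen}\to\Bun_{M-gen}$. Then $\zeta_2$ identifies with the base change along $\Bun_G\to\Bun_{G-gen}$ of the map
$$
\alpha:\Bun_M\to\Bun_M\times_{\Bun_{M-gen}}\Bun_{P-gen},\qquad \cF_M\mapsto(\cF_M,\cF_M\times_M P,\mathrm{id}).
$$
The fibre of $\alpha$ over a test point $(\cF_M,\cF_{P,gen},\tau)$ parametrises generic $M$-reductions of $\cF_{P,gen}$, equivalently generic sections of the twisted quotient bundle $\cF_{P,gen}/M$, which has unipotent fibres isomorphic to $U(P)$ via the semidirect decomposition. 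The stack of generic sections of a unipotent group scheme on $X$ is UHC: by devissage along the lower central series of $U(P)$ one reduces to the case of $\Ga$, where the contractibility of generic sections is the standard Ran-space computation used in \cite{Gai19Ran}, Appendix~A. Base-changing along $\Bun_G\to\Bun_{G-gen}$ preserves UHC, giving the claim for $\zeta_2$.

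The main technical obstacle will be the bookkeeping that promotes fibrewise UHC to UHC of the whole morphism. For $\zeta_1$ this requires presenting $\cY_{\Ran}$ as a Ran-parametrised family of fibres of $\varphi_s$ compatibly with the colimit description of $\Bun_{G-gen}$; for $\zeta_2$, it requires packaging the family of twisted-$U(P)$ generic-section spaces into a base-change statement, in particular handling the non-schematic prestacks $\Bun_{G-gen}$ and $\Bun_{P-gen}$. In both cases the core inputs are classical, and the argument of \cite{Gai19Ran}, Appendix~A, serves as a direct template.
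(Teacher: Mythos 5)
There is a genuine gap, and it sits mainly in your treatment of $\zeta_1$. Your plan is to base change by an arbitrary affine test scheme $T$, identify the non-extension locus of $\xi$ with a closed subscheme $Z\subset T\times X$ finite over $T$, and then realize the fibre as a pullback of $\varphi_s:(\Ran\times\Ran)^{\subset}\to\Ran$. But $Z$ is in general only a finite \emph{multisection} of $T\times X\to T$ (e.g.\ $\{x^2=t\}$ over $T=\AA^1$): it need not be a union of graphs of maps $T\to X$, so there is no $T$-point $\cI_0$ of $\Ran$ with $\Gamma_{\cI_0}=Z$, and the asserted cartesian square over $\varphi_s$ simply does not exist. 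Stratifying $T$ "by the combinatorial type of $Z$" does not repair this (on a stratum $Z$ is still only étale-locally a union of graphs), and even if it did, full faithfulness of the pullback over $T$ does not follow formally from full faithfulness over strata; likewise "descent along the colimit presentation of $\Bun_{G-gen}$" is not an argument for UHC. The missing idea is the reduction that makes the fibrewise picture legitimate: $\zeta_1$ is \emph{pseudo-proper}, so by (\cite{Gai19Ran}, A.2.5) universal homological contractibility may be checked on fibres over field-valued points only; there the fibre is $\Ran^{\supset\cI_0}$ for a genuine finite subset $\cI_0$, which is contractible by (\cite{Gai19Ran}, A.2.7). Without invoking pseudo-properness (or an equivalent device) your reduction to "nice" points is unjustified.

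For $\zeta_2$ your reduction to $\bar\zeta_2:\Bun_M\to\Bun_M\times_{\Bun_{M-gen}}\Bun_{P-gen}$ and the description of its fibres as generic trivializations of a torsor under the twisted unipotent group scheme $U(P)_{\cF_M}$ are essentially right, but the devissage step is stated incorrectly: the graded pieces of the lower central series, twisted by $\cF_M$, are vector bundles on $T\times X$, not the constant group $\Ga$, so "the standard computation for $\Ga$" is not literally what you need — you would have to prove (or cite) universal homological contractibility of spaces of generic sections of vector bundles, and also deal with the fact that at each stage you have a torsor rather than the group itself. This is fixable, but note the cleaner route taken in the text: by Drinfeld--Simpson \cite{DS} the map $\Spec k\times_{\Bun_{M-gen}}\Bun_M\to\Bun_M$ is an fppf surjection, and since $Shv$ satisfies fppf descent one may assume the test point factors through it, i.e.\ that $\cF_M$ is generically trivialized; the twisted torsor then becomes a torsor under the constant group $U(P)$ (trivial over an affine domain by unipotence), and the statement reduces directly to the UHC of $\Spec k\to\Bun_{U(P)-gen,triv}$, which is (\cite{Gai19Ran}, A.3.3). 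Either you add that fppf-descent reduction, or you supply the twisted-vector-bundle contractibility statement; as written the step is a gap, albeit a much smaller one than in the $\zeta_1$ case.
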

\begin{proof}
1) The map $\zeta_1$ is pseudo-proper. So, by (\cite{Gai19Ran}, A.2.5) it suffices to show that the fibre of $\zeta_1$ over any field-valued point is homologically constractible. This fibre identifies with $\Ran^{\supset \cI_0}$ for some field-valued point $\cI_0$ of $\Ran$. The claim follows then from (\cite{Gai19Ran}, A.2.7).

\medskip\noindent
2) The map $\zeta_2$ is obtained by base change from $\bar\zeta_2: \Bun_M\to \Bun_M\times_{\Bun_{M-gen}} \Bun_{P-gen}$. So, it suffices to show that the latter map is universally homologically cointractible.

 For $S\in\Sch^{aff}$, an $S$-point of the targer is a collection: $M$-torsor $\cF_M$ on $S\times X$, and a $U(P)_{\cF_M}$-torsor $\cL$ on some domain $\cU\subset S\times X$. 
An isomorphism of $(\cF^1_M, \cL^1, \cU^1)$ with $(\cF^2_M, \cL^2,\cU^2)$ is here an isomorphism $\cF^1_M\,\iso\, \cF^2_M$ on $S\times X$, and a compatible isomorphism $\cL^1\,\iso\, \cL^2$ over some subdomain of $\cU^1\cap\cU^2$. In particular, we may assume $\cU$ affine, and the torsor $\cL$ trivial. Indeed, the group scheme $U(P)_{\cF_M}$ over $S\times X$ is unipotent.  

 Let $a: S\to \Bun_M\times_{\Bun_{M-gen}} \Bun_{P-gen}$ be given with $S\in\Sch^{aff}$. We must show that $\bar\zeta_2$ becomes universally homologically cointractible after base change by $a$. By \cite{DS}, the map $\Spec k\times_{\Bun_{M-gen}}\Bun_M\to \Bun_M$ is a fppf surjection. Since $Shv: \PreStk_{lft}^{op}\to \DGCat_{cont}$ satisfies the fppf descent, we may assume the map $a$ is a composition 
$$
S\to \Spec k\times_{\Bun_{M-gen}}\Bun_M\to \Bun_M\times_{\Bun_{M-gen}} \Bun_{P-gen}.
$$ 
Our claim follows now from the fact that $\Spec k\to \Bun_{U(P)-gen, triv}$ is universally homologically cointractible by (\cite{Gai19Ran}, A.3.3). Here $\Bun_{U(P)-gen, triv}$ is defined in (\cite{Gai19Ran}, A.3.1). Namely, for $S\in\Sch^{aff}$ the groupoid of $S$-points  of $\Bun_{U(P)-gen, triv}$ is the full subgroupoid of those $U(P)$-torsors on some domain $\cU\subset S\times X$, which are isomorphic to the trivial one. 
\end{proof}

\begin{Cor} 
\label{Cor_4.2.7_now}
The map $\bar\pi^0_{glob}$ is universally homologically contractible, so 
\begin{equation}
\label{functor_bar_pi_0_glob_shrick_pullback}
(\bar\pi^0_{glob})^!: Shv(\Bunt_P)\to Shv(\cG_{\Ran, M}\times^{\gL^+(M)_{\Ran}} \bar S^0_{P,\Ran})
\end{equation}
is fully faithful. \QED
\end{Cor}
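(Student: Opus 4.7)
The plan is that this corollary is a short formal consequence of the immediately preceding Proposition~\ref{Pp_zeta_i_is_UHC} combined with the cartesian square displayed just above. First, I would observe that this cartesian square exhibits $\bar\pi^0_{glob}$ as the base change of the composition $\zeta_{gen}\comp\pi_{glob,\Ran}=\zeta_2\comp\zeta_1$ along the morphism
$$
\Bunt_P\to \Bun_M\times_{\Bun_{M-gen}}\Bun_{P-gen}\times_{\Bun_{G-gen}}\Bun_G.
$$
By Proposition~\ref{Pp_zeta_i_is_UHC}, each $\zeta_i$ is universally homologically contractible (UHC), so their composition is UHC as well. Since by construction UHC is stable under arbitrary base change (cf. \cite{Gai19Ran}, A.1.8), the map $\bar\pi^0_{glob}$ is UHC.

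For the fully faithfulness of (\ref{functor_bar_pi_0_glob_shrick_pullback}), I would invoke the standard fact (again, \cite{Gai19Ran}, A.1.8) that if $f\colon Y\to Z$ in $\PreStk_{lft}$ is UHC, then $f^!\colon Shv(Z)\to Shv(Y)$ is fully faithful. The point is that the definition of UHC says precisely that the canonical map $e_Z\to f_*e_Y$ is an isomorphism after arbitrary base change; combining this with the projection formula and base change yields that the unit $\id\to f_*f^!$ is an isomorphism, which is equivalent to $f^!$ being fully faithful. Applying this to $f=\bar\pi^0_{glob}$ gives the second assertion.

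The main work, namely verifying that $\zeta_1$ and $\zeta_2$ are UHC, has already been carried out in Proposition~\ref{Pp_zeta_i_is_UHC}; consequently the corollary itself presents no further obstacle, and the argument is complete.
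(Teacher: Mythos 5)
Your proposal is correct and coincides with the paper's intended argument: the corollary is the formal consequence of the cartesian square (exhibiting $\bar\pi^0_{glob}$ as a base change of $\zeta_{gen}\comp\pi_{glob,\Ran}=\zeta_2\comp\zeta_1$), Proposition~\ref{Pp_zeta_i_is_UHC}, stability of universal homological contractibility under composition and base change, and the fact that the $!$-pullback along a UHC map is fully faithful (\cite{Gai19Ran}, A.1.8). This is exactly why the paper states it with no further proof.
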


\sssec{} For $\theta\in -\Lambda_{G,P}^{pos}$ we have the cartesian square
\begin{equation}
\label{square_for_Sect_4.2.8}
\begin{array}{ccc}
\cG_{\Ran, M}\times^{\gL^+(M)_{\Ran}}S^{\theta}_{P, \Ran} & \toup{\id\times v^{\theta}_{S,\Ran}} & \cG_{\Ran, M}\times^{\gL^+(M)_{\Ran}}\bar S^0_{P,\Ran}\\
\downarrow\lefteqn{\scriptstyle \pi^{\theta}_{glob}} && \downarrow\lefteqn{\scriptstyle \bar\pi^0_{glob}}\\
_{\theta}\Bunt_P& \hook{v^{\theta}_{glob}} & \Bunt_P,
\end{array}
\end{equation}
this defines the map $\pi^{\theta}_{glob}$. Thus, $\pi^{\theta}_{glob}$ is also universally homologically contractible. The map $\pi^{\theta}_{glob}$ is the composition
\begin{equation}
\label{diag_decomp_of_pi^theta_glob}
\cG_{\Ran, M}\times^{\gL^+(M)_{\Ran}}S^{\theta}_{P, \Ran} \toup{\pi^{\theta,\subset}_{glob}} {_{\theta}\Bunt_P}\times_{X^{-\theta}} (X^{-\theta}\times\Ran)^{\subset}\;\toup{\id\times\pr^{-\theta}_{\Ran}} \;{_{\theta}\Bunt_P},
\end{equation}
this defines our map $\pi^{\theta,\subset}_{glob}$. 
\begin{Pp} 
\label{Pp_4.2.9_UHC_property}
Both maps in the diagram (\ref{diag_decomp_of_pi^theta_glob}) are universally homologically contractible. 
\end{Pp}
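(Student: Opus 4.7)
My plan proceeds by treating the two maps separately. For the second map $\id\times\pr^{-\theta}_{\Ran}$, I will simply observe that it is obtained by base change along the canonical projection $_{\theta}\Bunt_P\to X^{-\theta}$ from the morphism $\pr^{-\theta}_{\Ran}:(X^{-\theta}\times\Ran)^{\subset}\to X^{-\theta}$, which is universally homologically contractible by (\cite{Gai19Ran}, Lemma~1.3.3) as recalled in Section~\ref{Sect_1.3.13_now}. Since this class of morphisms is stable under base change (\cite{Gai19Ran}, A.1.8), nothing further is required for the second factor.

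For the first map $\pi^{\theta,\subset}_{glob}$, the strategy will be to show the much stronger statement that it is in fact an isomorphism, by exhibiting a two-sided inverse on the level of functors of points. A point of the source, viewed inside $\cY_{\Ran}$, is a datum $(\cI,\cF_M,\cF_G,\xi)$ with $\xi:\cF_M\times_M G\,\iso\,\cF_G\mid_{X-\Gamma_{\cI}}$ whose restriction to $\cD_{\cI}$ defines a point of $S^{\theta}_{P,\Ran}$; this yields an attached divisor $D\in X^{-\theta}$ with $\supp(D)\subset\Gamma_{\cI}$. A point of the target is $(\cF_M,\cF_G,\kappa,D,\cI)$ with $(\cF_M,\cF_G,\kappa)\in{_{\theta}\Bunt_P}$ equipped with its canonical divisor $D\in X^{-\theta}$, and with $\cI\in\Ran$ containing $\supp(D)$ set-theoretically.

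The forward map sends $\xi$ to the Pl\"ucker maps $\kappa^V:V^{U(P)}_{\cF_M}\to V_{\cF_G}$, defined via $\xi$ on $X-\Gamma_{\cI}$ and glued with the regular extensions on $\cD_{\cI}$ furnished by the $\bar S^0_{P,\Ran}$-condition (they agree on $\oo{\cD}_{\cI}$); the two candidate divisors agree automatically because outside $\Gamma_{\cI}$ the maps $\kappa^V$ are injections of vector bundles. For the reverse direction, given $(\cF_M,\cF_G,\kappa,D,\cI)$ I would recover $\xi$ from $\kappa\mid_{X-\Gamma_{\cI}}$: since $\supp(D)\subset\Gamma_{\cI}$ these are injective maps of vector bundles satisfying the Pl\"ucker relations, so by the very definition of $\Bunt_P$ (Section~\ref{Sect_1.3.5_now}) they determine a $P$-reduction of $\cF_G\mid_{X-\Gamma_{\cI}}$ whose induced $M$-torsor is canonically $\cF_M\mid_{X-\Gamma_{\cI}}$, producing the desired $\xi$.

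The two constructions are mutually inverse by inspection, and the main step requiring attention is the Pl\"ucker compatibility underlying the inverse---that the $P$-reduction extracted from $\kappa$ has induced $M$-torsor canonically $\cF_M\mid_{X-\Gamma_{\cI}}$---which is however built into the moduli functor of $\Bunt_P$ rather than being an additional assumption. Once the isomorphism is established, the universal homological contractibility of $\pi^{\theta,\subset}_{glob}$ is automatic.
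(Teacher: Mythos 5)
Your treatment of the second map is fine and coincides with the paper's: $\id\times\pr^{-\theta}_{\Ran}$ is a base change of $\pr^{-\theta}_{\Ran}$, which is universally homologically contractible, and this class of maps is stable under base change.

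For the first map, however, there is a genuine error: $\pi^{\theta,\subset}_{glob}$ is \emph{not} an isomorphism, and the inverse you propose does not exist. The target only records the Pl\"ucker data $\kappa$, which over the locus where the $\kappa^V$ are bundle maps is equivalent to a $P$-reduction $\cF_P\subset\cF_G$ \emph{together with an identification} $\cF_P\times_P M\,\iso\,\cF_M$. The source records the isomorphism $\xi:\cF_M\times_M G\,\iso\,\cF_G\mid_{X-\Gamma_{\cI}}$, i.e.\ a reduction of $\cF_G$ over $X-\Gamma_{\cI}$ to the group $M$ itself; this is strictly more data than a $P$-reduction with identified Levi quotient, since it amounts to a splitting $\cF_M\to\cF_P$ of the projection $\cF_P\to\cF_P/U(P)\,\iso\,\cF_M$. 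Thus the fibre of $\pi^{\theta,\subset}_{glob}$ over a point of the target is the space of such splittings over the affine curve $X-\Gamma_{\cI}$, a torsor under $\Map(X-\Gamma_{\cI}, U(P)_{\cF_M})$ (nonempty because $U(P)$ is unipotent and $H^1$ of coherent sheaves on an affine curve vanishes), which is an infinite-dimensional ind-affine space rather than a point. Already for $G=\SL_2$, $P=B$, $\theta=0$ the fibre is the space of splittings of an extension $0\to L\to E\to L'\to 0$ over $X-\Gamma_{\cI}$, i.e.\ a torsor under $\Gamma(X-\Gamma_{\cI}, L\otimes (L')^{-1})$.

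So the actual content of the proposition for the first map is precisely the homological contractibility of these fibres of generic splittings/reductions; it cannot be obtained by exhibiting an inverse. The paper handles this by invoking (\cite{Gai19Ran}, 3.4.5); the mechanism is the same as in the proof of Proposition~\ref{Pp_zeta_i_is_UHC}(2) in this paper, where universal homological contractibility is deduced from the contractibility of $\Bun_{U(P)-gen, triv}$, i.e.\ from (\cite{Gai19Ran}, A.3.3), rather than from any identification of moduli problems. Your argument needs to be replaced by an argument of this kind.
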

\begin{proof}
For the first map this is done as in (\cite{Gai19Ran}, 3.4.5). For the second map the claim follows from the universal homological contractibility of $\pr^{-\theta}_{\Ran}$. 
\end{proof}

\begin{Pp} 
\label{Pp_4.2.10_now}
The functor (\ref{functor_bar_pi_0_glob_shrick_pullback}) restricts to a functor between full subcategories
$$
\Inv(\Bunt_P)\to \Inv(\cG_{\Ran, M}\times^{\gL^+(M)_{\Ran}}\bar S^0_{P,\Ran}),
$$
which is also fully faithful.
\end{Pp}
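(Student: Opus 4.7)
Full faithfulness of the restricted functor is automatic once preservation of invariants is proved: any fully faithful functor, when restricted to full subcategories where it takes values, remains fully faithful, so the claim of full faithfulness in this Proposition reduces immediately to Corollary~\ref{Cor_4.2.7_now}. The substance is therefore to show that $(\bar\pi^0_{glob})^!$ sends $\Inv(\Bunt_P)$ into $\Inv(\cG_{\Ran, M}\times^{\gL^+(M)_{\Ran}}\bar S^0_{P,\Ran})$.

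I plan to verify preservation of invariants stratum by stratum. By Lemma~\ref{Lm_4.1.9_now}(i), $K\in\Inv(\Bunt_P)$ if and only if $(v^\theta_{glob})^!K\in\Inv(_\theta\Bunt_P)$ for every $\theta\in -\Lambda^{pos}_{G,P}$; on the local side an analogous characterization is obtained from Corollaries~\ref{Cor_1.5.9_now}--\ref{Cor_1.5.13_now} after transferring via the $\cG_{\Ran,M}$-twist. The cartesian square~(\ref{square_for_Sect_4.2.8}) together with base change yields a canonical isomorphism
\[
(\id\times v^\theta_{S,\Ran})^!(\bar\pi^0_{glob})^!K\;\iso\;(\pi^\theta_{glob})^!(v^\theta_{glob})^!K.
\]
It therefore suffices to show, for each $\theta$, that $(\pi^\theta_{glob})^!$ carries $\Inv(_\theta\Bunt_P)$ into the corresponding invariant subcategory on $\cG_{\Ran, M}\times^{\gL^+(M)_{\Ran}} S^\theta_{P,\Ran}$.

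To establish the latter, I will use Lemma~\ref{Lm_4.1.9_now}(ii) to identify $\Inv(_\theta\Bunt_P)$ with $Shv(\Mod^{+,-\theta}_{\Bun_M})$ via $(p^\theta_{glob})^!$. The factorization~(\ref{diag_decomp_of_pi^theta_glob}) of $\pi^\theta_{glob}$, combined with Lemma~\ref{Lm_1.3.21_now} and the defining cartesian square of $\gt^\theta_{S,\Ran}$ in Section~\ref{Sect_1.3.27_now}, shows that the composition $p^\theta_{glob}\circ\pi^\theta_{glob}$ naturally factors through a local-to-global morphism to the $\cG_{\Ran, M}$-twisted analog of $\Mod^{-,\theta,\subset}_{M,\Ran}$. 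The twisted version of Corollary~\ref{Cor_1.5.13_now} then identifies $!$-pullbacks along such factoring morphisms as automatically landing in the appropriate invariant subcategory, completing the stratum-level argument.

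\textbf{Main obstacle.} The principal technical difficulty is matching the two incarnations of invariance: globally, $\Inv(\Bunt_P)$ is defined by equivariance for the groupoids $\cG^{mer}_{\bar y}$ at every finite collection of $k$-points $\bar y$, whereas locally the equivariance is for the factorization ind-group $H_{\Ran}$ over $\Ran$. The compatibility one must articulate is that any $\gL(U(P))_{\Ran}$-movement on $\cG_{\Ran, M}\times^{\gL^+(M)_{\Ran}}\bar S^0_{P,\Ran}$ is, via $\bar\pi^0_{glob}$, absorbed into a $\cG^{mer}_{\bar y}$-orbit for any $\bar y$ containing $\supp\Gamma_{\cI}$. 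The universal homological contractibility statements of Proposition~\ref{Pp_4.2.9_UHC_property} will be essential in making this matching precise, ensuring no information is lost when translating between the $\bar y$-localized picture and the $\Ran$-based one; once the matching is in place, preservation of invariance follows from standard functoriality of the invariants construction recalled in Appendix~\ref{Sect_Invariants_under category objects}.
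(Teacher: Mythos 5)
Your plan is correct and essentially coincides with the paper's proof: reduce via the cartesian square (\ref{square_for_Sect_4.2.8}) to showing $(\pi^{\theta}_{glob})^!$ preserves invariants, use Lemma~\ref{Lm_4.1.9_now}(ii) to write objects of $\Inv(_{\theta}\Bunt_P)$ as $(p^{\theta}_{glob})^!L$, and conclude from the commutative diagram identifying $p^{\theta}_{glob}\comp\pi^{\theta}_{glob}$ with $(\id\times \pr^{-\theta}_{\Ran})\comp(\id\times\gt^{\theta}_{S,\Ran})$ together with the twisted version of Corollary~\ref{Cor_1.5.13_now}, full faithfulness being Corollary~\ref{Cor_4.2.7_now}. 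The concern in your last paragraph about directly matching $\cG^{mer}_{\bar y}$-equivariance with $H_{\Ran}$-equivariance (and the appeal to Proposition~\ref{Pp_4.2.9_UHC_property}) is unnecessary: since the essential image of $(\id\times\gt^{\theta}_{S,\Ran})^!$ is exactly the invariant subcategory on the stratum, the factorization already settles the stratum-level claim, just as in the paper.
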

\begin{proof} For $K\in \Inv(\Bunt_P)$ it suffices to check that for any $\theta\in -\Lambda_{G,P}^{pos}$, 
$$
(\id\times v^{\theta}_{S,\Ran})^!(\bar\pi^0_{glob})^!\in \Inv(\cG_{\Ran, M}\times^{\gL^+(M)_{\Ran}}S^{\theta}_{P, \Ran})
$$ 
with the notations of (\ref{square_for_Sect_4.2.8}). So, it suffices to show that $(\pi^{\theta}_{glob})^!$ sends $\Inv(_{\theta}\Bunt_P)$ to $\Inv(\cG_{\Ran, M}\times^{\gL^+(M)_{\Ran}}S^{\theta}_{P, \Ran})$. We have the commutative diagram
\begin{equation}
\label{diag_for_proof_of_4.2.10}
\begin{array}{ccc}
\cG_{\Ran, M}\times^{\gL^+(M)_{\Ran}}S^{\theta}_{P, \Ran} & \toup{\id\times\gt^{\theta}_{S,\Ran}} & \cG_{\Ran, M}\times^{\gL^+(M)_{\Ran}} \Mod^{-,\theta, \subset}_{M,\Ran}\\
\downarrow\lefteqn{\scriptstyle \pi^{\theta}_{glob}} && \downarrow\lefteqn{\scriptstyle \id\times \pr^{-\theta}_{\Ran}}\\
_{\theta}\Bunt_P & \toup{p^{\theta}_{glob}} & \Mod_{\Bun_M}^{+, -\theta}
\end{array}
\end{equation}
Our claim follows now from Lemma~\ref{Lm_4.1.9_now} ii) combined with Corollary~\ref{Cor_1.5.13_now}. The fully faithfulness follows from Corollary~\ref{Cor_4.2.7_now}. 
\end{proof}

\begin{Pp} If $K\in Shv(\Bunt_P)$ and $(\bar\pi^0_{glob})^!K\in \Inv(\cG_{\Ran, M}\times^{\gL^+(M)_{\Ran}}\bar S^0_{P,\Ran})$ then $K\in \Inv(\Bunt_P)$.
\end{Pp}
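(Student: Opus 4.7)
The plan is to invoke the stratumwise criterion of Lemma~\ref{Lm_4.1.9_now}(i): for every $\theta\in -\Lambda_{G,P}^{pos}$ I will verify that $(v^{\theta}_{glob})^!K$ lies in $\Inv(_{\theta}\Bunt_P)$. Using the cartesian square~(\ref{square_for_Sect_4.2.8}) and $(^!,^!)$-base change, the hypothesis yields
\[
F^{\theta}\;:=\;(\pi^{\theta}_{glob})^!(v^{\theta}_{glob})^!K\;\iso\;(\id\times v^{\theta}_{S,\Ran})^!(\bar\pi^0_{glob})^!K\;\in\;\Inv(\cG_{\Ran,M}\times^{\gL^+(M)_{\Ran}}S^{\theta}_{P,\Ran}),
\]
and unitality of $F^{\theta}$ with respect to the residual right action of $(\Ran\times\Ran)^{\subset}$ is automatic because $(\bar\pi^0_{glob})^!K$ is $!$-pulled back from $\Bunt_P$, on which that action is trivial.

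Next I would convert the $H_{\Ran}$-equivariance and unitality of $F^{\theta}$ into descent data along the commutative diagram~(\ref{diag_for_proof_of_4.2.10}). The $\cG_{\Ran,M}$-twisted form of Corollary~\ref{Cor_1.5.13_now} identifies the unital subcategory of $\Inv(\cG_{\Ran,M}\times^{\gL^+(M)_{\Ran}}S^{\theta}_{P,\Ran})$, via $!$-pullback along $\id\times\gt^{\theta}_{S,\Ran}$, with unital $\gL^+(M)_{\Ran}$-equivariant sheaves on $\cG_{\Ran,M}\times^{\gL^+(M)_{\Ran}}\Mod^{-,\theta,\subset}_{M,\Ran}$; the $\cG_{\Ran,M}$-twisted form of Lemma~\ref{Lm_1.4.31}, valid because $\pr^{-\theta}_{\Ran}$ is universally homologically contractible and the twisting is by the placid pro-smooth group scheme $\gL^+(M)_{\Ran}$, then identifies the latter, via $!$-pullback along $\id\times\pr^{-\theta}_{\Ran}$, with $Shv(\Mod^{+,-\theta}_{\Bun_M})$. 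Chaining these two equivalences produces a canonical $L\in Shv(\Mod^{+,-\theta}_{\Bun_M})$ such that
\[
F^{\theta}\;\iso\;(\id\times\gt^{\theta}_{S,\Ran})^!(\id\times\pr^{-\theta}_{\Ran})^!L\;\iso\;(\pi^{\theta}_{glob})^!(p^{\theta}_{glob})^!L.
\]

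By Proposition~\ref{Pp_4.2.9_UHC_property} the map $\pi^{\theta}_{glob}$ is universally homologically contractible, so $(\pi^{\theta}_{glob})^!$ is fully faithful. Cancelling it on both sides of the displayed isomorphism gives $(v^{\theta}_{glob})^!K\,\iso\,(p^{\theta}_{glob})^!L$, which by Lemma~\ref{Lm_4.1.9_now}(ii) lies in $\Inv(_{\theta}\Bunt_P)$. Varying $\theta$ and appealing once more to Lemma~\ref{Lm_4.1.9_now}(i) then gives $K\in\Inv(\Bunt_P)$, as desired.

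The main obstacle I anticipate is to verify rigorously the $\cG_{\Ran,M}$-twisted analogs of Corollary~\ref{Cor_1.5.13_now} and Lemma~\ref{Lm_1.4.31} used in the middle paragraph. Morally these should be automatic since $\cG_{\Ran,M}\to\Ran\times\Bun_M$ is a $\gL^+(M)_{\Ran}$-torsor on which $\gL(U(P))_{\Ran}$ acts trivially, so the full faithfulness statements of the untwisted versions transport by smooth descent; but one has to track carefully the $H_{\Ran}$-equivariant structure and the unital $(\Ran\times\Ran)^{\subset}$-action through the twisted quotients to make sure no extra data is lost when identifying $\cG_{\Ran,M}\times^{\gL^+(M)_{\Ran}}\Mod^{-,\theta,\subset}_{M,\Ran}\to \Mod^{+,-\theta}_{\Bun_M}$.
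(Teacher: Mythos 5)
Your argument is correct, but it takes a different route from the paper's own proof of this proposition, and it is worth noting how. After the common reduction via Lemma~\ref{Lm_4.1.9_now}(i) and the cartesian square~(\ref{square_for_Sect_4.2.8}), you exploit the fact that $(\bar\pi^0_{glob})^!K$ is automatically \emph{unital} (the paper indeed records that $(\bar\pi^0_{glob})^!$ lands in the unital subcategory, since $\bar\pi^0_{glob}\circ\varphi_b=\bar\pi^0_{glob}\circ\varphi_s$), and then you feed $F^\theta$ into the twisted unital equivalences $(\id\times\gt^{\theta}_{S,\Ran})^!$ and $(\id\times\pr^{-\theta}_{\Ran})^!$ — these are exactly Corollary~\ref{Cor_4.3.9_now} and Lemma~\ref{Lm_4.3.11}, which the paper proves later and independently (so there is no circularity, and the "main obstacle" you flag is already handled there); finally you cancel the fully faithful $(\pi^{\theta}_{glob})^!$. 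This is in effect the same maneuver the paper uses for the essential surjectivity in Theorem~\ref{Thm_loc_glob_equiv}. The paper's proof of the present proposition, by contrast, deliberately avoids unitality altogether: it only uses the non-unital twisted statement (Lemma~\ref{Lm_4.4.2}) to write $(\pi^{\theta}_{glob})^!K\iso(\id\times\gt^{\theta}_{S,\Ran})^!L$ with $L$ living on $\cG_{\Ran,M}\times^{\gL^+(M)_{\Ran}}\Mod^{-,\theta,\subset}_{M,\Ran}$, and then descends by applying $(\pi^{\theta}_{glob})_!$ (available by Remark~\ref{Rem_4.2.12_now} and Proposition~\ref{Pp_4.2.9_UHC_property}) together with base change along the pseudo-proper map $\pr^{-\theta}_{\Ran}$, producing $K\iso(p^{\theta}_{glob})^!\bigl((\id\times\pr^{-\theta}_{\Ran})_!L\bigr)$ directly. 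What your version buys is a cleaner "identify the object in the unital subcategory and cancel a fully faithful pullback" argument, at the cost of importing the unitality formalism; what the paper's version buys is a proof that works verbatim from the bare $\Inv$-hypothesis, with the descent object exhibited explicitly as a $!$-pushforward.
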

\begin{proof} Let $\theta\in -\Lambda_{G,P}^{pos}$, $K\in Shv(_{\theta}\Bunt_P)$. 
In view of Lemma~\ref{Lm_4.1.9_now}i), it suffices to show that the condition $(\pi^{\theta}_{glob})^!K\in \Inv(\cG_{\Ran, M}\times^{\gL^+(M)_{\Ran}}S^{\theta}_{P, \Ran})$ implies that $K\in \Inv(\Bunt_P)$. Let 
$$
L\in Shv(\cG_{\Ran, M}\times^{\gL^+(M)_{\Ran}} \Mod^{-,\theta, \subset}_{M,\Ran})
$$ 
be equipped with an isomorphism $(\id\times\gt^{\theta}_{S,\Ran})^!L\,\iso\, K$, the notations from the diagram (\ref{diag_for_proof_of_4.2.10}). One has canonically
$$
\cG_{\Ran, M}\times^{\gL^+(M)_{\Ran}} \Mod^{-,\theta, \subset}_{M,\Ran}\,\iso\, \Mod_{\Bun_M}^{+, -\theta}\times_{X^{-\theta}} (X^{-\theta}\times\Ran)^{\subset}.
$$ 
Consider the commutative diagram
$$
\begin{array}{ccc}
\cG_{\Ran, M}\times^{\gL^+(M)_{\Ran}}S^{\theta}_{P, \Ran}\\
\downarrow\lefteqn{\scriptstyle \pi^{\theta,\subset}_{glob}}\\
_{\theta}\Bunt_P\times_{X^{-\theta}} (X^{-\theta}\times\Ran)^{\subset} & \toup{p^{\theta}_{glob}\times\id} & \Mod_{\Bun_M}^{+, -\theta}\times_{X^{-\theta}} (X^{-\theta}\times\Ran)^{\subset}\\
\downarrow\lefteqn{\scriptstyle \id\times \pr^{-\theta}_{\Ran}} && \downarrow\lefteqn{\scriptstyle \id\times \pr^{-\theta}_{\Ran}}\\
_{\theta}\Bunt_P & \toup{p^{\theta}_{glob}} & \Mod_{\Bun_M}^{+, -\theta}
\end{array}
$$
The composition of the left most vertical arrows is $\pi^{\theta}_{glob}$. Using Remark~\ref{Rem_4.2.12_now} below and Proposition~\ref{Pp_4.2.9_UHC_property} we get
\begin{multline*}
K\,\iso\, (\pi^{\theta}_{glob})_!(\pi^{\theta}_{glob})^!K\,\iso\,(\pi^{\theta}_{glob})_!(\pi^{\theta,\subset}_{glob})^!(p^{\theta}_{glob}\times\id)^!L\xrightarrow[\sim]{(\ref{Pp_4.2.9_UHC_property})} \\ (\id\times \pr^{-\theta}_{\Ran})_! (p^{\theta}_{glob}\times\id)^!L
\,\iso\, (p^{\theta}_{glob}\times\id)^!(\id\times \pr^{-\theta}_{\Ran})_!L.
\end{multline*}
Here the last isomorphism follows from the base change, as $\pr^{-\theta}_{\Ran}$ is pseudo-proper by (\cite{G3}, 1.5.4). The result follows now from Lemma~\ref{Lm_4.1.9_now} ii).
\end{proof}

\begin{Rem} 
\label{Rem_4.2.12_now}
Let $f^!: C\to D$ be a fully faithful morphism in $\DGCat_{cont}$. Then for any $K$ in the essential image of $f^!$ the partially defined left adjoint $f_!$ of $f^!$ is defined on $K$.
\end{Rem}

\sssec{} Note that $\pi_{loc, \Ran}^!: Shv(\Gr_{G,\Ran})^{\gL^+(M)_{\Ran}}\to Shv(\cY_{\Ran})$ restricts to a morphism between full subcategories $\SI_{P,\Ran}\to \Inv(\cY_{\Ran})$, which in turn restricts to a morphism between full subcategories
\begin{equation}
\label{functor_pi_loc_Ran^!_first}
(\bar \pi^0_{loc})^!: \SI_{P,\Ran}^{\le 0}(S)\to \Inv(\cG_{\Ran, M}\times^{\gL^+(M)_{\Ran}}\bar S^0_{P,\Ran}).
\end{equation}   
Here we have denoted by 
$$
\bar \pi^0_{loc}: \cG_{\Ran, M}\times^{\gL^+(M)_{\Ran}}\bar S^0_{P,\Ran}\to \gL^+(M)_{\Ran}\backslash \bar S^0_{P,\Ran}
$$ 
the restriction of $\pi_{loc,\Ran}$.

\sssec{} The prestack $\cG_{\Ran, M}\times^{\gL^+(M)_{\Ran}}\bar S^0_{P,\Ran}$ has an evident unital structure, so we have the category $Shv(\cG_{\Ran, M}\times^{\gL^+(M)_{\Ran}}\bar S^0_{P,\Ran})_{untl}$. Note that 
$$
(\bar\pi^0_{glob})^!: Shv(\Bunt_P)\to Shv(Shv(\cG_{\Ran, M}\times^{\gL^+(M)_{\Ran}}\bar S^0_{P,\Ran})
$$ 
takes values in the full subcategory $Shv(\cG_{\Ran, M}\times^{\gL^+(M)_{\Ran}}\bar S^0_{P,\Ran})_{untl}$. 
 
Define $\Inv(\cG_{\Ran, M}\times^{\gL^+(M)_{\Ran}}\bar S^0_{P,\Ran})_{untl}$ as the intersection
$$
\Inv(\cG_{\Ran, M}\times^{\gL^+(M)_{\Ran}}\bar S^0_{P,\Ran})\cap Shv(\cG_{\Ran, M}\times^{\gL^+(M)_{\Ran}}\bar S^0_{P,\Ran})_{untl}
$$
inside $Shv(\cG_{\Ran, M}\times^{\gL^+(M)_{\Ran}}\bar S^0_{P,\Ran})$. This gives the fully faithful functor still denoted
\begin{equation}
\label{functor_bar_pi^0_glob^!_unital}
(\bar\pi^0_{glob})^!: \Inv(\Bunt_P)\to \Inv(\cG_{\Ran, M}\times^{\gL^+(M)_{\Ran}}\bar S^0_{P,\Ran})_{untl}
\end{equation}

\sssec{} Our main results on the local to global compatibility are as follows.
\begin{Thm}
\label{Thm_loc_glob_equiv}
The functor (\ref{functor_bar_pi^0_glob^!_unital}) is an equivalence.
\end{Thm}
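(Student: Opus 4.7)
The plan is to establish essential surjectivity of $(\bar\pi^0_{glob})^!$, since full faithfulness was already obtained in Proposition~\ref{Pp_4.2.10_now} and the subsequent lift to the unital target category. The strategy is to verify a stratum-wise version of the statement and then glue by Cousin-type arguments along the stratifications of $\bar S^0_{P,\Ran}$ by $S^{\theta}_{P,\Ran}$ and of $\Bunt_P$ by ${_{\theta}\Bunt_P}$, both indexed by $\theta\in -\Lambda^{pos}_{G,P}$.

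For the stratum-wise statement, I would show that for each $\theta\in -\Lambda^{pos}_{G,P}$ the functor
$$
(\pi^{\theta}_{glob})^!: \Inv(_{\theta}\Bunt_P)\to \Inv(\cG_{\Ran,M}\times^{\gL^+(M)_{\Ran}} S^{\theta}_{P,\Ran})_{untl}
$$
is an equivalence, by fitting it into the commutative diagram (\ref{diag_for_proof_of_4.2.10}) and recognizing the other three edges as equivalences. The bottom edge $(p^{\theta}_{glob})^!$ is the equivalence of Lemma~\ref{Lm_4.1.9_now}(ii). The right vertical edge, using the identification $\cG_{\Ran,M}\times^{\gL^+(M)_{\Ran}}\Mod^{-,\theta,\subset}_{M,\Ran}\,\iso\, \Mod^{+,-\theta}_{\Bun_M}\times_{X^{-\theta}}(X^{-\theta}\times\Ran)^{\subset}$, becomes base change along $\pr^{-\theta}_{\Ran}$; since this projection is universally homologically contractible (Section~\ref{Sect_1.3.13_now}), the base-changed statement of Lemma~\ref{Lm_1.4.31} gives an equivalence onto the unital subcategory. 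The top horizontal edge is an equivalence by Corollary~\ref{Cor_1.5.13_now}, twisted by the $\gL^+(M)_{\Ran}$-torsor $\cG_{\Ran,M}$; the $H_{\Ran}$-equivariance that distinguishes $\Inv$ from the $\gL^+(M)_{\Ran}$-equivariant $\cS\cI$ version is transported correctly under this twist.

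Granted the stratum-wise equivalence, I would glue as follows. On the global side, Lemma~\ref{Lm_4.1.12_now} provides the adjoint pair $(v^{\theta}_{glob})_!\dashv (v^{\theta}_{glob})^!$ internal to $\Inv(\Bunt_P)$; on the local side, Corollary~\ref{Cor_1.4.11_now_adjoints} combined with Corollary~\ref{Cor_1.5.19} provides the corresponding pair $(\id\times v^{\theta}_{S,\Ran})_!\dashv (\id\times v^{\theta}_{S,\Ran})^!$ internal to the unital $\Inv$-category. The cartesian square (\ref{square_for_Sect_4.2.8}) together with the universal homological contractibility established in Proposition~\ref{Pp_4.2.9_UHC_property} provides a base change isomorphism matching these two pairs under $(\bar\pi^0_{glob})^!$. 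Given $L$ in the target, its stratum pieces $(\id\times v^{\theta}_{S,\Ran})^!L$ correspond via the stratum equivalence to objects $K_{\theta}\in\Inv(_{\theta}\Bunt_P)$, and the compatibility of the $(v^{\theta}_{glob})_!$'s with the $(\id\times v^{\theta}_{S,\Ran})_!$'s assembles these into a compatible family; combined with Corollary~\ref{Cor_1.5.21} (equivalently, the criterion of Lemma~\ref{Lm_4.1.9_now}(i)) this produces $K\in\Inv(\Bunt_P)$ with $(\bar\pi^0_{glob})^!K\,\iso\, L$.

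The hardest step, I expect, is the gluing argument in the last paragraph, specifically the verification that the continuous right adjoint to $(\bar\pi^0_{glob})^!$ does preserve the $\Inv$-subcategories and intertwines the two $!$-restriction-and-extension formalisms. The computation is formal given the base changes of Proposition~\ref{Pp_4.2.9_UHC_property} and the commutation of $\Av^{\Inv}_*$ with $(v^{\theta}_{glob})^!$ from diagram (\ref{diag_for_Sect_4.1.10}), but organising these into a single clean argument --- and confirming that the unitality constraint on the semi-infinite side is respected throughout --- is where most of the care is required.
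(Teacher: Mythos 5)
Your reduction to essential surjectivity and your stratum-wise statement are essentially the paper's: full faithfulness is indeed already in place, and the stratum equivalence is obtained exactly as you describe, from diagram (\ref{diag_for_proof_of_4.2.10}) with $(p^{\theta}_{glob})^!$ an equivalence by Lemma~\ref{Lm_4.1.9_now}(ii), the unitality statement of Lemma~\ref{Lm_4.3.11} along $\id\times\pr^{-\theta}_{\Ran}$, and the twisted form of Corollary~\ref{Cor_1.5.13_now} (that is, Corollary~\ref{Cor_4.3.9_now}). The genuine gap is in your final assembly. Restricting $L$ to the strata and lifting each piece to some $K_{\theta}\in\Inv(_{\theta}\Bunt_P)$ does not produce a global preimage: with infinitely many strata indexed by $-\Lambda^{pos}_{G,P}$, an object is neither determined by nor reconstructible from its stratum restrictions without the connecting (gluing) data, and a ``compatible family'' together with Corollary~\ref{Cor_1.5.21} or Lemma~\ref{Lm_4.1.9_now}(i) --- which are membership criteria, not gluing devices --- does not supply that data. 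The appeal to a continuous right adjoint of $(\bar\pi^0_{glob})^!$ preserving the $\Inv$-subcategories is likewise not something established in the text, and it is not needed.

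The paper closes the argument without any gluing: the target $\Inv(\cG_{\Ran, M}\times^{\gL^+(M)_{\Ran}}\bar S^0_{P,\Ran})_{untl}$ is generated under colimits by the $*$-pushforwards $(\id\times v^{\theta}_{S,\Ran})_*K$ with $K\in\Inv(\cG_{\Ran, M}\times^{\gL^+(M)_{\Ran}}S^{\theta}_{P,\Ran})_{untl}$. Writing $K\,\iso\,(\pi^{\theta}_{glob})^!(p^{\theta}_{glob})^!L$ for some $L\in Shv(\Mod^{+,-\theta}_{\Bun_M})$ by your stratum-wise step, and using base change for the cartesian square (\ref{square_for_Sect_4.2.8}), one gets
$$
(\id\times v^{\theta}_{S,\Ran})_*K\,\iso\,(\bar\pi^0_{glob})^!\,(v^{\theta}_{glob})_*(p^{\theta}_{glob})^!L,
$$
so every generator lies in the essential image; since $(\bar\pi^0_{glob})^!$ is fully faithful and continuous, its essential image is closed under colimits, and essential surjectivity follows. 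If you prefer to keep your $!$-extension formalism and Corollary~\ref{Cor_4.3.5_now}, you must still replace the ``assemble the $K_{\theta}$'' step by such a generation argument (or by a genuine devissage over finite unions of strata in the spirit of the proof of Lemma~\ref{Lm_1.4.12}); as written, that step does not go through.
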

\begin{Thm}
\label{Th_4.2.12_global_compatibility} In $\Inv(\cG_{\Ran, M}\times^{\gL^+(M)_{\Ran}}\bar S^0_{P,\Ran})_{untl}$ there is a unique isomorphism 
$$
(\bar \pi^0_{loc})^!('\IC^{\frac{\infty}{2}}_{P,\Ran})\,\iso\, (\bar\pi^0_{glob})^!\IC_{\Bunt_P}[\dim\Bun_P]
$$ 
extending the tautological identification over $\cG_{\Ran, M}\times^{\gL^+(M)_{\Ran}}S^0_{P,\Ran}$. After the shift $[-\dim\Bun_M]$ both sides are the intermediate extensions from $\cG_{\Ran, M}\times^{\gL^+(M)_{\Ran}}S^0_{P,\Ran}$.
\end{Thm}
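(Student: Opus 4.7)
By Theorem~\ref{Thm_loc_glob_equiv} the functor $(\bar\pi^0_{glob})^!$ is an equivalence
$\Inv(\Bunt_P)\iso\Inv(\cG_{\Ran,M}\times^{\gL^+(M)_{\Ran}}\bar S^0_{P,\Ran})_{untl}$, through which we transfer the perverse t-structure from $\Inv(\Bunt_P)$ to the target. The plan is to show that both
$\cF_{loc}:=(\bar\pi^0_{loc})^!('\!\IC^{\frac{\infty}{2}}_{P,\Ran})[-\dim\Bun_M]$ and
$\cF_{glob}:=(\bar\pi^0_{glob})^!\IC_{\Bunt_P}[\dim\Bun_P-\dim\Bun_M]$ are intermediate extensions of the \emph{same} object from the open stratum $\cG_{\Ran,M}\times^{\gL^+(M)_{\Ran}}S^0_{P,\Ran}$ in this transferred t-structure. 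Together with a tautological identification over the open locus, the uniqueness of intermediate extension will force the isomorphism.

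\textbf{Step 1: comparison on the open stratum.} By Proposition~\ref{Pp_3.1.11}(iii), $(v^0_{S,\Ran})^!\,'\!\IC^{\frac{\infty}{2}}_{P,\Ran}\iso\omega$, while globally $\IC_{\Bunt_P}[\dim\Bun_P]$ restricts on $\Bun_P$ to $\omega_{\Bun_P}$. The restriction of $\bar\pi^0_{glob}$ to the preimage of $\Bun_P$ factors through a smooth projection whose relative dimension is controlled by the $\Bun_M$-direction (the Ran-direction being of relative dimension $0$). Thus $(\bar\pi^0_{glob})^!\omega_{\Bun_P}$ and $(\bar\pi^0_{loc})^!\omega$ both identify canonically with $\omega[-\dim\Bun_M]$ on the open stratum, supplying the tautological isomorphism between $\cF_{loc}$ and $\cF_{glob}$ there.

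\textbf{Step 2: intermediate extension characterization.} For $\cF_{glob}$ this is automatic, since $\IC_{\Bunt_P}$ is by definition the intermediate extension from $\Bun_P\hook{}\Bunt_P$ in the perverse t-structure. For $\cF_{loc}$ we verify, for each $\theta\in-\Lambda^{pos,*}_{G,P}$, that the $*$-restriction to the stratum $\cG_{\Ran,M}\times^{\gL^+(M)_{\Ran}}S^\theta_{P,\Ran}$ lies in strictly negative perverse degrees, and the $!$-restriction in strictly positive ones. For the $*$-side, Corollary~\ref{Cor_3.3.5}(i)-(ii) exhibit $(v^\theta_{S,\Ran})^*\,'\!\IC^{\frac{\infty}{2}}_{P,\Ran}[\langle\theta,2\check\rho-2\check\rho_M\rangle]$ as the $(\tau^\theta_{untl})^!$-pullback of $\Sat_{M,X^{-\theta}}(\Fact(\cO(U(\check P))_{<0})_\theta)$, which by Lemma~\ref{Lm_C.3.44} is placed in strictly negative perverse degrees on $\Mod_M^{-,\theta}$. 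For the $!$-side, this is precisely Proposition~\ref{Pp_3.3.6}. Pushing these inequalities through the cartesian square~(\ref{square_for_Sect_4.2.8}) and the description of $\Inv(_{\theta}\Bunt_P)$ via $p^\theta_{glob}$ in Lemma~\ref{Lm_4.1.9_now}(ii), we obtain the required perverse-degree bounds on the strata of $\Bunt_P$ for the image of $\cF_{loc}$ under the equivalence.

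\textbf{Main obstacle.} The hard part is the perverse-degree bookkeeping: one must verify that the single shift $[-\dim\Bun_M]$ simultaneously compensates the relative dimension of the universally homologically contractible maps $\bar\pi^0_{glob}$ and $\bar\pi^0_{loc}$, so that both become t-exact for the relevant perverse t-structures and land the respective objects in the heart on the open stratum. Because $\bar\pi^0_{glob}$ is not globally smooth (only stratum-wise, via the decomposition~(\ref{diag_decomp_of_pi^theta_glob}) and Proposition~\ref{Pp_4.2.9_UHC_property}), this compatibility must be checked one stratum at a time, matching $*$- and $!$-restrictions on both sides through the cartesian squares~(\ref{square_for_Sect_4.2.8}) and~(\ref{diag_for_proof_of_4.2.10}). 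Once the t-exactness is established, the intermediate extension characterizations from Step 2 combine with Step 1 to complete the proof.
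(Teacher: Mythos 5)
Your overall strategy is the paper's: exhibit both sides, after the shift $[-\dim\Bun_M]$, as intermediate extensions from the open stratum in $\Inv(\cG_{\Ran, M}\times^{\gL^+(M)_{\Ran}}\bar S^0_{P,\Ran})_{untl}$, handling the global side through t-exactness of the shifted equivalence and the local side through $*$- and $!$-degree estimates on the strata (Corollary~\ref{Cor_3.3.5} plus Lemma~\ref{Lm_C.3.44} for the $*$-side). However, the two steps you treat as formal are where the substance lies. The compatibility you file under ``perverse-degree bookkeeping'' is exactly Proposition~\ref{Pp_4.3.3_key} (with Corollary~\ref{Cor_4.3.5_now} and Proposition~\ref{Pp_4.3.16_now}): the functors $(v^{\theta}_{glob})^*$ and $(\id\times v^{\theta}_{S,\Ran})^*$ (and the corresponding $(\,\cdot\,)_!$) are hyperbolic-localization-type left adjoints, not $!$-pullbacks, so they do \emph{not} commute with $(\bar\pi^0_{glob})^!$ by base change through the cartesian squares~(\ref{square_for_Sect_4.2.8}) and~(\ref{diag_for_proof_of_4.2.10}); the paper proves the commutation by reducing to the generators $(v^{\theta'}_{glob})_!(p^{\theta'}_{glob})^!K$ and a Zastava-space computation with the contraction principle and the ULA property of $(j_{glob})_!\omega$ (Section~\ref{Sect_proof_of_Pp_4.3.3_key}). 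Without this input neither the t-exactness of $(\bar\pi^0_{glob})^![d]$ nor the stratum-wise description of the transferred t-structure is available, so neither of your two intermediate-extension characterizations gets off the ground. (On the local side you also need the analogous, easier, commutation, Lemma~\ref{Lm_4.3.18}, which you do not mention.)

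On the $!$-side, ``this is precisely Proposition~\ref{Pp_3.3.6}'' is not enough. That proposition gives strict coconnectivity over $\Mod_M^{-,\theta}$ in the purely local, untwisted setting, whereas the t-structure on $\Inv(\cG_{\Ran, M}\times^{\gL^+(M)_{\Ran}}S^{\theta}_{P,\Ran})_{untl}$ is measured on $\Mod^{+,-\theta}_{\Bun_M}$, i.e.\ over $\Bun_M$. Since the $!$-restriction of $'\IC^{\frac{\infty}{2}}_{P,\Ran}$ to strata is not known to be pulled back along $\tau^{\theta}$ (this is Conjecture~\ref{Con_3.3.8}, which is left open), one cannot formally ``push'' the local estimate through the squares you cite. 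The paper instead uses unitality in the twisted setting (Lemmas~\ref{Lm_4.3.20} and~\ref{Lm_4.3.11}) to produce an object $\cK\in Shv(\Mod^{+,-\theta}_{\Bun_M})$, and then re-runs the stratification of $X^{-\theta}$ and the \'etale symmetrization from the proof of Proposition~\ref{Pp_3.3.6}, transferring the estimate via the restriction-to-the-disc map $a$ and the t-exactness of $a^![-\dim\Bun_M]$; this extra layer is needed and is missing from your sketch. A minor slip in your Step 1: the pullbacks of $\omega_{\Bun_P}$ and of $\omega$ to the open stratum are $\omega$, not $\omega[-\dim\Bun_M]$; the shift only enters through your definition of $\cF_{loc}$ and $\cF_{glob}$.
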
 

\begin{Rem} We don't know if the restriction of $(\bar \pi^0_{loc})^!$ to the full subcategory 
$\SI_{P,\Ran, untl}^{\le 0}(S)$ takes values in $\Inv(\cG_{\Ran, M}\times^{\gL^+(M)_{\Ran}}\bar S^0_{P,\Ran})_{untl}$.
\end{Rem}

\sssec{} We have the cartesian square
$$
\begin{array}{ccc}
\bar S^0_{P,\Ran} & \toup{\iota_{triv}} & \cG_{\Ran, M}\times^{\gL^+(M)_{\Ran}}\bar S^0_{P,\Ran} \\
\downarrow && \downarrow\\
\Spec k & \toup{\cF^0_M} & \Bun_M,
\end{array}
$$
where the right vertical arrow is the projection. The map $\iota_{triv}$ commutes with the actions of $(\Ran\times\Ran)^{\subset}$, so yields the functor 
$$
\iota_{triv}^!: \Inv(\cG_{\Ran, M}\times^{\gL^+(M)_{\Ran}}\bar S^0_{P,\Ran})_{untl}\to \cS\cI^{\le 0}_{P,\Ran, untl}(S).
$$ 
From Theorem~\ref{Th_4.2.12_global_compatibility} one may derive  Corollary~\ref{Cor_3.3.7}. It says in other words that $\iota_{glob}^!(\bar \pi^0_{loc})^!('\IC^{\frac{\infty}{2}}_{P,\Ran})$ identifies with $\IC^{\frac{\infty}{2}}_{P,\Ran}$ up to a shift. 

\ssec{Proof of Theorems~\ref{Thm_loc_glob_equiv} and \ref{Th_4.2.12_global_compatibility}}
 
\sssec{} For $\theta\in -\Lambda_{G,P}^{pos}$ consider the map $\id\times v^{\theta}_{S,\Ran}$ from the diagram (\ref{square_for_Sect_4.2.8}). As in Corollary~\ref{Cor_1.4.11_now_adjoints}, one has the following.
\begin{Lm} 
\label{Lm_4.3.2_now}
i) The left adjoint to 
$$
(\id\times v^{\theta}_{S,\Ran})_*: \Inv(\cG_{\Ran, M}\times^{\gL^+(M)_{\Ran}}S^{\theta}_{P, \Ran})\to \Inv(\cG_{\Ran, M}\times^{\gL^+(M)_{\Ran}}\bar S^0_{P,\Ran})
$$ 
is defined and denoted $(\id\times v^{\theta}_{S,\Ran})^*$. The natural left-lax $Shv(\Bun_M\times\Ran)$-structure on $(\id\times v^{\theta}_{S,\Ran})^*$ is strict.

\smallskip\noindent
ii) The left adjoint to
$$
(\id\times v^{\theta}_{S,\Ran})^!: \Inv(\cG_{\Ran, M}\times^{\gL^+(M)_{\Ran}}\bar S^0_{P,\Ran})\to  \Inv(\cG_{\Ran, M}\times^{\gL^+(M)_{\Ran}}S^{\theta}_{P, \Ran})
$$
is defined and denoted $(\id\times v^{\theta}_{S,\Ran})_!$. The natural left-lax $Shv(\Bun_M\times\Ran)$-structure on $(\id\times v^{\theta}_{S,\Ran})_!$ is strict.
\QED
\end{Lm}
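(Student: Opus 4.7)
The plan is to follow the template of Corollary~\ref{Cor_1.4.11_now_adjoints} (and in particular Lemma~\ref{Lm_1.4.6}), spread over $\Bun_M\times\Ran$ via the twisted form $\cG_{\Ran,M}\times^{\gL^+(M)_{\Ran}}(-)$. First I would reduce to finite level $I\in fSets$, by checking that the adjoints, once constructed on each $I$-level, commute with the $!$-restrictions along the diagonals $\vartriangle^{(I/J)}: X^J\to X^I$ in a strict way, and hence glue into a morphism of sheaves of categories on $\Ran$ whose limit gives the desired adjunction. This reduction is formally identical to the one used to deduce Corollary~\ref{Cor_1.4.8} from Lemma~\ref{Lm_1.4.6} and Corollary~\ref{Cor_1.4.7_left_adjoint}.

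For a fixed $I$, the $\Gm$-action on $\bar S^0_{P,I}$ through $\lambda_0\in\Lambda^+_{M,ab}$ commutes with the $\gL^+(M)_I$-action (because $\lambda_0$ factors through $Z(M)$), so it descends to a $\Gm$-action on $\cG_{I,M}\times^{\gL^+(M)_I}\bar S^0_{P,I}$ relative to $\Bun_M\times X^I$. Its attracting locus, by Lemma~\ref{Lm_1.4.5} applied fibrewise over $\Bun_M\times X^I$, is $\sqcup_{\theta}\cG_{I,M}\times^{\gL^+(M)_I} S^\theta_{P,I}$, and its repelling locus is $\sqcup_\theta \cG_{I,M}\times^{\gL^+(M)_I}(\Gr^\theta_{P^-,I}\cap\bar S^0_{P,I})$. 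Invoking Braden's theorem for placid (ind-)schemes (\cite{DG1}, Propositions 3.2.2 and 3.4.3) as in the proof of Lemma~\ref{Lm_1.4.6}, one obtains the partially defined left adjoint to $(\id\times v^\theta_{S,I})_*$ on $\Inv(\cG_{I,M}\times^{\gL^+(M)_I} S^\theta_{P,I})$, given by a formula of the shape
$$
(\id\times\gt^\theta_{S,I})^!\,(\id\times\gt^\theta_{P^-,I})_*\,(\id\times v^\theta_{P^-,I})^!,
$$
parallel to the description recalled in Section~\ref{Sect_1.5.17}. That the output indeed lies in the $H_I$-equivariant category follows from the fact that all the operations above are equivariant with respect to the $H_I$-action that commutes with the contracting $\Gm$. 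Strictness of the $Shv(\Bun_M\times X^I)$-linear structure is inherited from the strictness of the $Shv(X^I)$-linear structure of Lemma~\ref{Lm_1.4.6}, using that the additional $\cG_{I,M}$-direction is a $\gL^+(M)_I$-torsor (in particular, smooth and flat), so that $!$-pullback is $Shv(\Bun_M)$-linear along it.

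Part~(ii) is then obtained from part~(i) by the Cousin-type formal manipulation used in the proof of Corollary~\ref{Cor_1.4.7_left_adjoint}, namely an application of (\cite{Ly9}, Lemma~1.8.15, 1.8.16), which converts the $*$-pullback left adjoint into a $!$-pullback left adjoint without any new geometric input. The main technical point to watch is the interaction of Braden's theorem with the ind-direction of $\bar S^0_{P,I}$ (via the filtering colimit over $\und{\lambda}\in\Map(I,\Lambda^+_{M,ab})$ in Section~\ref{Sect_1.2.8_now}); however, since the contracting $\Gm$-action commutes with this filtration and with the $\cG_{I,M}$-direction, the passage to the colimit is mechanical once the finite-level Braden input is in place.
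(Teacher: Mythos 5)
Your proposal is correct and follows essentially the same route as the paper: the paper deduces Lemma~\ref{Lm_4.3.2_now} "as in Corollary~\ref{Cor_1.4.11_now_adjoints}", i.e.\ by the twisted form of Lemma~\ref{Lm_1.4.6} (Braden's theorem for the $\Gm$-action via $\lambda_0$, giving the formula $(\id\times\gt^{\theta}_{S,\Ran})^!(\id\times\gt^{\theta}_{P^-,\Ran})_*(\id\times v^{\theta}_{P^-,\Ran})^!$ recorded in Section~\ref{Sect_4.3.3_now}) together with the passage from (i) to (ii) via (\cite{Ly9}, Lemma~1.8.15, 1.8.16), exactly as you describe.
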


\sssec{}
\label{Sect_4.3.3_now}
 By construction, the functor $(\id\times v^{\theta}_{S,\Ran})^*$ from Lemma~\ref{Lm_4.3.2_now} is the composition
$$
(\id\times\gt^{\theta}_{S,\Ran})^!(\id\times\gt^{\theta}_{P^-,\Ran})_*(\id\times v^{\theta}_{P^-,\Ran})^!
$$
for the diagram obtained from (\ref{diag_for_Sect_1.5.17}) by applying the twist $\cG_{\Ran, M}\times^{\gL^+(M)_{\Ran}}\cdot$. 

\sssec{} The base change isomorphism $(\bar\pi^0_{glob})^!(v^{\theta}_{glob})_*\,\iso\, (\id\times v^{\theta}_{S,\Ran})_*(\pi^{\theta}_{glob})^!$ for the diagram (\ref{square_for_Sect_4.2.8}) yields a natural transformation
\begin{equation}
\label{natural_transformation_for_Pp_4.3.4}
(\id\times v^{\theta}_{S,\Ran})^*(\bar\pi^0_{glob})^!\to (\pi^{\theta}_{glob})^!(v^{\theta}_{glob})^*
\end{equation}
as functors $\Inv(\Bunt_P)\to \Inv(\cG_{\Ran, M}\times^{\gL^+(M)_{\Ran}}S^{\theta}_{P, \Ran})$. 

\begin{Pp} 
\label{Pp_4.3.3_key}
The natural transformation (\ref{natural_transformation_for_Pp_4.3.4}) is an isomorphism.
\end{Pp}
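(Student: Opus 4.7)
The plan is to exhibit~(\ref{natural_transformation_for_Pp_4.3.4}) as a composition of three maps, two of which are manifestly isomorphisms and the third of which reduces to a support argument.

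By construction,~(\ref{natural_transformation_for_Pp_4.3.4}) is the composite
\begin{align*}
(\id\times v^{\theta}_{S,\Ran})^*(\bar\pi^0_{glob})^!
&\toup{\eta}(\id\times v^{\theta}_{S,\Ran})^*(\bar\pi^0_{glob})^!(v^{\theta}_{glob})_*(v^{\theta}_{glob})^*\\
&\iso (\id\times v^{\theta}_{S,\Ran})^*(\id\times v^{\theta}_{S,\Ran})_*(\pi^{\theta}_{glob})^!(v^{\theta}_{glob})^*\\
&\toup{\varepsilon}(\pi^{\theta}_{glob})^!(v^{\theta}_{glob})^*,
\end{align*}
where $\eta$ comes from the unit of $(v^{\theta}_{glob})^*\dashv(v^{\theta}_{glob})_*$, the middle map is the base change isomorphism stated just before the proposition, and $\varepsilon$ is the counit of $(\id\times v^{\theta}_{S,\Ran})^*\dashv(\id\times v^{\theta}_{S,\Ran})_*$. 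Since $\id\times v^{\theta}_{S,\Ran}$ is a locally closed embedding, $(\id\times v^{\theta}_{S,\Ran})_*$ is fully faithful, so $\varepsilon$ is an isomorphism; the middle map is an isomorphism by hypothesis.

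It therefore suffices to show that $(\id\times v^{\theta}_{S,\Ran})^*(\bar\pi^0_{glob})^!$ transforms $\eta$ into an isomorphism. For $K\in\Inv(\Bunt_P)$, factoring $v^{\theta}_{glob}$ through the closure ${_{\le\theta}\Bunt_P}$ and applying the standard recollement triangles shows that the fibre of $\eta_K\colon K\to(v^{\theta}_{glob})_*(v^{\theta}_{glob})^*K$ is set-theoretically supported on $\Bunt_P\setminus{_{\theta}\Bunt_P}=\bigsqcup_{\theta'\ne\theta}{_{\theta'}\Bunt_P}$. Since~(\ref{square_for_Sect_4.2.8}) is cartesian and the preimage of ${_{\theta'}\Bunt_P}$ under $\bar\pi^0_{glob}$ is $\cG_{\Ran,M}\times^{\gL^+(M)_{\Ran}}S^{\theta'}_{P,\Ran}$, the $!$-pullback of this fibre along $\bar\pi^0_{glob}$ is supported on $\bigsqcup_{\theta'\ne\theta}\cG_{\Ran,M}\times^{\gL^+(M)_{\Ran}}S^{\theta'}_{P,\Ran}$. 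This locus is disjoint from the image of $\id\times v^{\theta}_{S,\Ran}$, so $(\id\times v^{\theta}_{S,\Ran})^*$ annihilates it, giving the required isomorphism.

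The main obstacle is to make the support argument and the underlying base change precise in the ind-pro setting of the $\Ran$-based prestacks; this is handled via the colimit presentations over $I\in fSets$ and $\und{\lambda}\in\Map(I,\Lambda^+_{M,ab})$, reducing to $!$-base change and proper base change for cartesian squares of finite-type schemes/stacks.
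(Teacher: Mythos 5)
Your formal reduction is fine: since $(\id\times v^{\theta}_{S,\Ran})_*$ is fully faithful, the counit is invertible, and the transformation (\ref{natural_transformation_for_Pp_4.3.4}) is an isomorphism if and only if $(\id\times v^{\theta}_{S,\Ran})^*(\bar\pi^0_{glob})^!$ kills the fibre of the unit $\eta_K\colon K\to (v^{\theta}_{glob})_*(v^{\theta}_{glob})^*K$. The gap is in the final support argument. The functor $(\id\times v^{\theta}_{S,\Ran})^*$ is \emph{not} the naive $*$-restriction along the locally closed embedding of the stratum: it is defined (Lemma~\ref{Lm_4.3.2_now}) as the left adjoint to $(\id\times v^{\theta}_{S,\Ran})_*$ inside the categories $\Inv(-)$, and by construction (Section~\ref{Sect_4.3.3_now}, going back to Lemma~\ref{Lm_1.4.6} and Braden's theorem) it is computed by the hyperbolic-localization formula $(\id\times\gt^{\theta}_{S,\Ran})^!(\id\times\gt^{\theta}_{P^-,\Ran})_*(\id\times v^{\theta}_{P^-,\Ran})^!$, i.e.\ by $!$-restriction to the \emph{repelling} locus $\cG_{\Ran,M}\times^{\gL^+(M)_{\Ran}}(\Gr^{\theta}_{P^-,\Ran}\cap\bar S^0_{P,\Ran})$ followed by a pushforward. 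That repelling locus meets the strata $S^{\theta'}_{P,\Ran}$ with $\theta'\ne\theta$, so disjointness of the set-theoretic support of $(\bar\pi^0_{glob})^!\mathrm{fib}(\eta_K)$ from the image of $\id\times v^{\theta}_{S,\Ran}$ gives no vanishing. (A secondary imprecision: from the triangle identity you only get that the $*$-restriction of $\mathrm{fib}(\eta_K)$ to $_{\theta}\Bunt_P$ vanishes; since the complementary locus is not closed, this is not a "set-theoretic support" statement of a kind that would be inherited in a useful form by $!$-pullback and then by the adjointly defined functor.)

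What you would need is precisely the nontrivial vanishing: that $(\id\times v^{\theta}_{S,\Ran})^*(\bar\pi^0_{glob})^!$ annihilates the kernel of $(v^{\theta}_{glob})^*$ — which is essentially equivalent to the proposition itself, so the argument as written is close to circular. Closing it would require either a Braden-type identification of the adjoint functor with the attracting-side (naive $*$-restriction) functor on the relevant monodromic objects in the twisted Ran setting (including showing the partially defined attracting functors are defined there), or the explicit computation the paper carries out (Section~\ref{Sect_proof_of_Pp_4.3.3_key}): one checks the statement on the generators $(v^{\theta'}_{glob})_!(p^{\theta'}_{glob})^!K$ of $\Inv(\Bunt_P)$, rewriting the left-hand side via the Zastava space $\cZ^{\theta}$, the contraction principle of \cite{DG1}, and the ULA property of $(j_{glob})_!\omega$ over $\Bun_M$, and concludes by a support analysis on $\oo{\cZ}{}^{\theta''}$. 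None of that machinery is dispensable by the disjointness observation alone.
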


 The proof of Proposition~\ref{Pp_4.3.3_key} is found in Section~\ref{Sect_proof_of_Pp_4.3.3_key}. By a formal Cousin argument Proposition~\ref{Pp_4.3.3_key} gives the following.
\begin{Cor} 
\label{Cor_4.3.5_now}
For $\theta\in -\Lambda_{G,P}^{pos}$ the natural transformation
$$
(\id\times v^{\theta}_{S,\Ran})_!(\pi^{\theta}_{glob})^!\to
(\bar\pi^0_{glob})^!(v^{\theta}_{glob})_!
$$
is an isomorphism of functors from $\Inv(_{\theta}\Bunt_P)$ to $\Inv(\cG_{\Ran, M}\times^{\gL^+(M)_{\Ran}}\bar S^0_{P,\Ran})$. \QED
\end{Cor}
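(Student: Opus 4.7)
\medskip

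The plan is to derive this corollary from Proposition~\ref{Pp_4.3.3_key} by the standard mate/Beck-Chevalley formalism attached to the cartesian square (\ref{square_for_Sect_4.2.8}), exactly as Corollary~\ref{Cor_1.4.7_left_adjoint} is extracted from Lemma~\ref{Lm_1.4.6}, and Corollary~\ref{Cor_1.5.19} from Lemma~\ref{Lm_1.4.27}, via the adjoint-transfer mechanism of (\cite{Ly9}, Lemma~1.8.15, 1.8.16). The name ``Cousin'' is a bit of a misnomer here: the argument is simply the passage between the Beck-Chevalley isos for the two adjoint pairs $((v^{\theta})^*, (v^{\theta})_*)$ and $((v^{\theta})_!, (v^{\theta})^!)$ associated to the same cartesian square.

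First, I would verify that both functors in the statement are continuous and land in the correct $\Inv$-category. On the local side, the left adjoint $(\id\times v^{\theta}_{S,\Ran})_!$ exists by Lemma~\ref{Lm_4.3.2_now}.ii (itself obtained from Lemma~\ref{Lm_4.3.2_now}.i by the same formal procedure we are about to apply globally); the global adjoint $(v^{\theta}_{glob})_!$ is provided by Lemma~\ref{Lm_4.1.12_now}. The natural transformation displayed in the corollary is then the canonical Beck-Chevalley mate for the $((v)_!,(v)^!)$-adjoint pair, built by composing the unit of $((v^{\theta}_{glob})_!,(v^{\theta}_{glob})^!)$, the base-change isomorphism
$$
(\id\times v^{\theta}_{S,\Ran})^!(\bar\pi^0_{glob})^!\;\iso\;(\pi^{\theta}_{glob})^!(v^{\theta}_{glob})^!,
$$
(which is pure functoriality of $^!$-pullback along the cartesian square), and the counit of $((\id\times v^{\theta}_{S,\Ran})_!,(\id\times v^{\theta}_{S,\Ran})^!)$.

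Second, I would invoke the adjoint-transfer machinery to convert the input isomorphism of Proposition~\ref{Pp_4.3.3_key}, which is the Beck-Chevalley iso for the $((v)^*,(v)_*)$-pairs attached to the same square, into the $((v)_!,(v)^!)$-Beck-Chevalley iso we are after. Concretely, the mates formalism provides a bijection between the space of Beck-Chevalley natural transformations for the two paired adjunctions associated to a cartesian square, and this bijection preserves the property of being an isomorphism once the relevant left adjoints are continuously defined on the full $\Inv$-subcategories. Since Proposition~\ref{Pp_4.3.3_key} provides the iso on the $^*$-side and Step 1 provides the existence of continuous $(v)_!$'s on the $\Inv$-subcategories, the conclusion follows.

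The main obstacle is purely bookkeeping: checking that the abstract hypotheses of (\cite{Ly9}, Lemma~1.8.15, 1.8.16) are met in the present Ran-space $\Inv$-setting, in particular the strictness of the $Shv(\Bun_M\times\Ran)$-linear structures on the left adjoints and the continuity of the resulting functors. These conditions are the analogues in the global--local $\Inv$-setup of what is verified in Sections~\ref{Sect_Structure_of_SI^le0_P,Ran}--\ref{Sect_t-structure_on_SI_untl} for the local semi-infinite categories, and they follow from Proposition~\ref{Pp_4.2.10_now} together with Lemmas~\ref{Lm_4.1.12_now} and \ref{Lm_4.3.2_now}; no new geometric input is needed beyond Proposition~\ref{Pp_4.3.3_key}.
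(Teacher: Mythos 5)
Your high-level plan (deduce the corollary formally from Proposition~\ref{Pp_4.3.3_key}, with the relevant left adjoints supplied by Lemmas~\ref{Lm_4.1.12_now} and \ref{Lm_4.3.2_now}) is the paper's route, but the mechanism you propose does not work, and dismissing the ``Cousin'' framing is exactly where the gap lies. The transformation in the corollary is \emph{not} the mate of the one in Proposition~\ref{Pp_4.3.3_key}: the $({}^*,{}_*)$-Beck--Chevalley map is the mate of the base-change isomorphism $(\bar\pi^0_{glob})^!(v^{\theta}_{glob})_*\iso(\id\times v^{\theta}_{S,\Ran})_*(\pi^{\theta}_{glob})^!$, while the $({}_!,{}^!)$-map is the mate of the tautological isomorphism $(\id\times v^{\theta}_{S,\Ran})^!(\bar\pi^0_{glob})^!\iso(\pi^{\theta}_{glob})^!(v^{\theta}_{glob})^!$; these are mates of two different squares. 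Moreover, the principle ``the mate bijection preserves invertibility once the left adjoints are continuous'' is false: one can write down a toy square with fully faithful vertical functors in which the $({}^*,{}_*)$-compatibility and the $({}_*,{}^!)$-base change hold, yet the canonical map $u_!\pi^!\to\bar\pi^!v_!$ fails to be an isomorphism (take $A=B=D=\Vect$, $C=\Vect\oplus\Vect$, $\bar\pi^!V=(V,0)$, $u^!(V,W)=V\oplus W$, $u_*K=(K,0)$: then $u_!\pi^!K=(K,K)\to(K,0)=\bar\pi^!v_!K$ is the canonical map and is not invertible, although the ${}^*$-side map is the identity). Also note that (\cite{Ly9}, 1.8.15--1.8.16) is invoked in this paper for adjoint-existence/strictness transfer (Corollaries~\ref{Cor_1.4.7_left_adjoint}, \ref{Cor_1.5.19}), not for converting a ${}^*$-side Beck--Chevalley isomorphism into a ${}_!$-side one across a square whose vertical maps ($\bar\pi^0_{glob}$, $\pi^{\theta}_{glob}$) are not (pseudo-)proper.

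What is actually needed, and what ``formal Cousin argument'' refers to, is the stratification of $\cG_{\Ran, M}\times^{\gL^+(M)_{\Ran}}\bar S^0_{P,\Ran}$ by the substacks $\cG_{\Ran, M}\times^{\gL^+(M)_{\Ran}} S^{\theta'}_{P,\Ran}$: the functors $(\id\times v^{\theta'}_{S,\Ran})^*$, $\theta'\in-\Lambda_{G,P}^{pos}$, are jointly conservative on $\Inv(\cG_{\Ran, M}\times^{\gL^+(M)_{\Ran}}\bar S^0_{P,\Ran})$ (the $(\id\times v^{\theta'}_{S,\Ran})_!$-extensions generate), and one checks the map after applying each of them. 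Using Proposition~\ref{Pp_4.3.3_key} for \emph{every} $\theta'$ (not just the fixed $\theta$), together with the standard identities $(\id\times v^{\theta'}_{S,\Ran})^*(\id\times v^{\theta}_{S,\Ran})_!\simeq 0$ for $\theta'\neq\theta$ and $\simeq\id$ for $\theta'=\theta$, and their global counterparts $(v^{\theta'}_{glob})^*(v^{\theta}_{glob})_!$, both sides become either zero or canonically identified, which proves the corollary. So the reduction to Proposition~\ref{Pp_4.3.3_key} is correct in spirit, but you must run it through the strata rather than through a mate-transfer.
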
 

\sssec{} For each $\theta\in -\Lambda_{G,P}^{pos}$ the prestack $\cG_{\Ran, M}\times^{\gL^+(M)_{\Ran}}S^{\theta}_{P, \Ran}$ also has a natural unital structure. The functors $(\id\times v^{\theta}_{S,\Ran})_*, (\id\times v^{\theta}_{S,\Ran})^!$ preserve the corresponding unital categories. As in Corollary~\ref{Cor_1.4.15_now} one gets the following.

\begin{Cor} An object $K\in Shv(\cG_{\Ran, M}\times^{\gL^+(M)_{\Ran}}\bar S^0_{P,\Ran})$ lies in 
$$
Shv(\cG_{\Ran, M}\times^{\gL^+(M)_{\Ran}}\bar S^0_{P,\Ran})_{untl}
$$ iff for any $\theta\in -\Lambda_{G,P}^{pos}$, $(\id\times v^{\theta}_{S,\Ran})^! K\in Shv(\cG_{\Ran, M}\times^{\gL^+(M)_{\Ran}}S^{\theta}_{P, \Ran})_{untl}$. \QED
\end{Cor}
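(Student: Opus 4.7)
The \textbf{only if} direction is essentially formal. The locally closed immersion $v^{\theta}_{S,\Ran}$ is equivariant under the category object $(\Ran\times\Ran)^{\subset}$ (noted just before Section~\ref{Sect_1.4.9_now}), and this equivariance is preserved after forming the contracted product $\cG_{\Ran, M}\times^{\gL^+(M)_{\Ran}}(\cdot)$, since the $\gL^+(M)_{\Ran}$-action and the $(\Ran\times\Ran)^{\subset}$-action commute in the appropriate sense. Therefore the analogue of Remark~\ref{Rem_A.1.7} gives that $(\id\times v^{\theta}_{S,\Ran})^!$ preserves the corresponding full subcategories of unital sheaves.

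For the \textbf{if} direction, I will imitate the proof of Lemma~\ref{Lm_1.4.12} after applying the twist. Equip $-\Lambda_{G,P}^{pos}$ with the order $\theta_1\le\theta_2$ iff $\theta_2-\theta_1\in\Lambda_{G,P}^{pos}$. For a finite collection $\{\theta_i\}\subset -\Lambda_{G,P}^{pos}$ such that the complement
$-\Lambda_{G,P}^{pos}\setminus \bigcup_i\{\theta\le\theta_i\}$
is finite, let $\bar S^{\{\theta_i\}}_{P,\Ran}\subset \bar S^0_{P,\Ran}$ denote the image of $\bigsqcup_i \bar S^{\theta_i}_{P,\Ran}\to \bar S^0_{P,\Ran}$ and set $U^{\{\theta_i\}}=\bar S^0_{P,\Ran}-\bar S^{\{\theta_i\}}_{P,\Ran}$, which is open. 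Applying $\cG_{\Ran, M}\times^{\gL^+(M)_{\Ran}}(\cdot)$ gives a corresponding open subprestack
$\cG_{\Ran, M}\times^{\gL^+(M)_{\Ran}} U^{\{\theta_i\}}\hook{}\cG_{\Ran, M}\times^{\gL^+(M)_{\Ran}} \bar S^0_{P,\Ran}$.
The hypothesis on each $(\id\times v^{\theta}_{S,\Ran})^!K$, applied to all strata $\theta$ contained in $U^{\{\theta_i\}}$, shows that the restriction of $K$ to each such open is unital.

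It remains to glue. Consider the diagram
$$
\cG_{\Ran, M}\times^{\gL^+(M)_{\Ran}}\bar S^0_{P,\Ran}\;\getsup{\phi_s}\;
(\cG_{\Ran, M}\times^{\gL^+(M)_{\Ran}}\bar S^0_{P,\Ran})\times_{\Ran}(\Ran\times\Ran)^{\subset}\;\toup{\phi_b}\;\cG_{\Ran, M}\times^{\gL^+(M)_{\Ran}}\bar S^0_{P,\Ran}.
$$
Over each $\cG_{\Ran, M}\times^{\gL^+(M)_{\Ran}} U^{\{\theta_i\}}\times_{\Ran}(\Ran\times\Ran)^{\subset}$ one has a unitality isomorphism $\phi_b^!K\iso \phi_s^!K_{\{\theta_i\}}$ for some $K_{\{\theta_i\}}$ on the corresponding twisted open. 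By Zariski descent for $Shv$ one has
$$Shv(\cG_{\Ran, M}\times^{\gL^+(M)_{\Ran}}\bar S^0_{P,\Ran})\iso \lim Shv(\cG_{\Ran, M}\times^{\gL^+(M)_{\Ran}} U^{\{\theta_i\}}),$$
taken over the finite collections $\{\theta_i\}$ above; the objects $K_{\{\theta_i\}}$ are compatible under the transition maps and thus assemble into a single object $K'$ in the limit equipped with $\phi_b^!K\iso \phi_s^!K'$, which is precisely the datum that $K\in Shv(\cG_{\Ran, M}\times^{\gL^+(M)_{\Ran}}\bar S^0_{P,\Ran})_{untl}$.

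The only real point of care, rather than a serious obstacle, is verifying that the opens $\cG_{\Ran, M}\times^{\gL^+(M)_{\Ran}} U^{\{\theta_i\}}$ form a system in which the Zariski descent for $Shv$ may be applied to recover the whole twisted ind-scheme. This comes down to the same cofinality statement for the family of opens $U^{\{\theta_i\}}$ that made Lemma~\ref{Lm_1.4.12} work, combined with the fact that $\cG_{\Ran, M}\times^{\gL^+(M)_{\Ran}}(\cdot)$ is exact on such open covers; both are straightforward.
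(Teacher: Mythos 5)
Your proof is correct and follows essentially the same route as the paper: the paper deduces the corollary "as in Corollary~\ref{Cor_1.4.15_now}", i.e.\ by combining the fact that the restriction functors along the $(\Ran\times\Ran)^{\subset}$-equivariant maps preserve unital subcategories with the Zariski-descent gluing argument of Lemma~\ref{Lm_1.4.12} over the cofinal family of opens $U^{\{\theta_i\}}$, transported through the twist $\cG_{\Ran, M}\times^{\gL^+(M)_{\Ran}}(\cdot)$, which is exactly what you do.
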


\sssec{} The prestack $\cG_{\Ran, M}\times^{\gL^+(M)_{\Ran}} \Mod^{-,\theta, \subset}_{M,\Ran}$ has a natural unital structure, so one gets the corresponding category of unital sheaves on it. Define
$$
\Inv(\cG_{\Ran, M}\times^{\gL^+(M)_{\Ran}}S^{\theta}_{P, \Ran})_{untl}
$$
as $\Inv(\cG_{\Ran, M}\times^{\gL^+(M)_{\Ran}}S^{\theta}_{P, \Ran})\cap Shv(\cG_{\Ran, M}\times^{\gL^+(M)_{\Ran}}S^{\theta}_{P, \Ran})_{untl}$.
As in Corollary~\ref{Cor_1.5.13_now} one gets the following.

\begin{Cor} 
\label{Cor_4.3.9_now}
Let $\theta\in -\Lambda_{G,P}^{pos}$. The functor 
$$
(\id\times\gt^{\theta}_{S,\Ran})^!: Shv(\cG_{\Ran, M}\times^{\gL^+(M)_{\Ran}} \Mod^{-,\theta, \subset}_{M,\Ran})\to Shv(\cG_{\Ran, M}\times^{\gL^+(M)_{\Ran}}S^{\theta}_{P, \Ran})
$$
is fully faithful,  and its essential image is $\Inv(\cG_{\Ran, M}\times^{\gL^+(M)_{\Ran}}S^{\theta}_{P, \Ran})$. It induces an equivalence between the corresponding full subcategories
$$
Shv(\cG_{\Ran, M}\times^{\gL^+(M)_{\Ran}} \Mod^{-,\theta, \subset}_{M,\Ran})_{untl}\,\iso\, \Inv(\cG_{\Ran, M}\times^{\gL^+(M)_{\Ran}}S^{\theta}_{P, \Ran})_{untl}
$$
of unital objects. The composition
\begin{multline*}
\Inv(\cG_{\Ran, M}\times^{\gL^+(M)_{\Ran}}S^{\theta}_{P, \Ran})\hook{} Shv(\cG_{\Ran, M}\times^{\gL^+(M)_{\Ran}}S^{\theta}_{P, \Ran})\\
\toup{(\id\times i^{\theta}_{S,\Ran})^!} Shv(\cG_{\Ran, M}\times^{\gL^+(M)_{\Ran}}\Mod^{-,\theta, \subset}_{M,\Ran})
\end{multline*}
is an equivalence. It restricts to an equiavlence between the full subcategories
$$
\Inv(\cG_{\Ran, M}\times^{\gL^+(M)_{\Ran}}S^{\theta}_{P, \Ran})_{untl}\,\iso\, Shv(\cG_{\Ran, M}\times^{\gL^+(M)_{\Ran}}\Mod^{-,\theta, \subset}_{M,\Ran})_{untl}.
$$
\end{Cor}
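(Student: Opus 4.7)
The plan is to deduce Corollary~\ref{Cor_4.3.9_now} from its un-twisted local counterpart Corollary~\ref{Cor_1.5.13_now} by descent along the $\gL^+(M)_{\Ran}$-torsor $\cG_{\Ran,M}\to \Ran\times\Bun_M$. For any $\gL^+(M)_{\Ran}$-equivariant $V\to\Ran$ (we will apply this to $V=S^{\theta}_{P,\Ran}$, $V=\Mod^{-,\theta,\subset}_{M,\Ran}$, and $V=\Gr^{\theta}_{M,\Ran,+}$), descent along the torsor gives the canonical equivalence
$$
Shv(\cG_{\Ran,M}\times^{\gL^+(M)_{\Ran}} V)\,\iso\, Shv(\cG_{\Ran,M}\times_{\Ran} V)^{\gL^+(M)_{\Ran}},
$$
where $\gL^+(M)_{\Ran}$ acts diagonally on the right hand side. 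The residual $\gL(U(P))_{\Ran}$-equivariance (which carves out $\Inv$) is a full-subcategory inclusion, since $\gL(U(P))_{\Ran}$ is ind-prounipotent. Under these identifications the map $\id\times\gt^{\theta}_{S,\Ran}$ becomes the twist by $\cG_{\Ran,M}$ of the $\gL^+(M)_{\Ran}$-equivariant map $\gt^{\theta}_{S,\Ran}$, and similarly for $\id\times i^{\theta}_{S,\Ran}$.

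First I would verify that after descent the functor $(\id\times\gt^{\theta}_{S,\Ran})^!$ corresponds to $(\gt^{\theta}_{S,\Ran})^!$ from Corollary~\ref{Cor_1.5.13_now}, tensored with $\id_{Shv(\cG_{\Ran,M})}$ over $Shv(\Ran)$, and analogously for $(\id\times i^{\theta}_{S,\Ran})^!$. The fully faithfulness of the first functor together with the identification of its essential image then follows from Corollary~\ref{Cor_1.5.13_now} formally: the conservative functor $\oblv$ is monadic, and for the full subcategory characterization one uses that $\gL(U(P))_{\Ran}$-equivariance on $\cG_{\Ran,M}\times_{\Ran} V$ descends to the condition on pullback to $V$ that defines $\SI^{\theta}_{P,\Ran}(S)$ inside $\Shv(S^{\theta}_{P,\Ran})^{\gL^+(M)_{\Ran}}$. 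The same reasoning yields the inverse equivalence via $(\id\times i^{\theta}_{S,\Ran})^!$, reducing it to the statement of Corollary~\ref{Cor_1.5.13_now} after twisting.

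Second, for the unital assertions I would note that the prestacks $\cG_{\Ran,M}\times^{\gL^+(M)_{\Ran}}(-)$ inherit a unital structure from the $(\Ran\times\Ran)^{\subset}$-equivariant structure on the respective $(-)$, which is compatible with the natural unital structure on $\cG_{\Ran,M}$ pulled back from the one on $\cG_{\Ran,M}\to\Ran\times\Bun_M$ via the map $\varphi_b$. With this in place the equivalence $Shv(\Mod^{-,\theta,\subset}_{M,\Ran})_{untl}\,\iso\,\SI^{\theta}_{P,\Ran,untl}(S)$ of Corollary~\ref{Cor_1.5.13_now} transports verbatim through the descent, since the defining condition of being unital is a condition on the $!$-pullback under the action map $\varphi_b$ that is preserved by the $\cG_{\Ran,M}$-twist (both $\varphi_s^!$ and $\varphi_b^!$ commute with the descent equivalence).

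The only non-formal verification is the last point, namely that the descent equivalence intertwines the two unitality conditions. This amounts to checking that for any $K$ in the twisted category, the criterion $\varphi_b^! K\in \Im(\varphi_s^!)$ on $\cG_{\Ran,M}\times^{\gL^+(M)_{\Ran}}V\times_{\Ran}(\Ran\times\Ran)^{\subset}$ is equivalent to the corresponding criterion for the descended object on $V$. This is a formal consequence of the fact that the $(\Ran\times\Ran)^{\subset}$-action commutes with the $\gL^+(M)_{\Ran}$-action in the sense that $\cG_{\Ran,M}$ is itself $(\Ran\times\Ran)^{\subset}$-equivariant as a $\gL^+(M)_{\Ran}$-torsor. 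Once this is recorded, the rest of the corollary is purely formal transport of Corollary~\ref{Cor_1.5.13_now} through the descent equivalence.
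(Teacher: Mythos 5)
Your route is in substance the paper's route: the paper gives no independent argument for this corollary, but obtains it (via its Lemma~\ref{Lm_4.4.2}, "obtained similarly to Lemma~\ref{Lm_1.4.10_now}") by transporting the untwisted statements of Lemma~\ref{Lm_1.4.10_now}/Corollary~\ref{Cor_1.5.13_now} through the $\cG_{\Ran,M}$-twist, exactly the reduction you propose, and the unital assertions are handled as you say, by the $(\Ran\times\Ran)^{\subset}$-equivariance of all the maps involved together with the analogue of Lemma~\ref{Lm_1.4.31} (Lemma~\ref{Lm_4.3.11}).

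One step of your write-up is not right as stated, though it is repairable and does not change the outcome. You claim that under descent $(\id\times\gt^{\theta}_{S,\Ran})^!$ "corresponds to $(\gt^{\theta}_{S,\Ran})^!$ from Corollary~\ref{Cor_1.5.13_now} tensored with $\id_{Shv(\cG_{\Ran,M})}$ over $Shv(\Ran)$". Since $\gL^+(M)_{\Ran}$ acts diagonally on $\cG_{\Ran,M}\times_{\Ran}V$, the category $Shv(\cG_{\Ran,M}\times^{\gL^+(M)_{\Ran}}V)$ is not $Shv(\cG_{\Ran,M})\otimes_{Shv(\Ran)}Shv(V)^{\gL^+(M)_{\Ran}}$, so Corollary~\ref{Cor_1.5.13_now} (which concerns $\gL^+(M)_{\Ran}$-equivariance on $V$ alone) cannot be imported by a plain base change of module categories over $Shv(\Ran)$; if one wants a tensor decomposition it would have to be a relative tensor over $Shv(\gL^+(M)_{\Ran})$, which is a more delicate statement. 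The robust (and intended) argument is simpler: $\id\times\gt^{\theta}_{S,\Ran}$ is the base change of $\gt^{\theta}_{S,\Ran}$ along $\cG_{\Ran,M}\times_{\Ran}\Mod^{-,\theta,\subset}_{M,\Ran}\to\Mod^{-,\theta,\subset}_{M,\Ran}$, its fibres are still homogeneous under the ind-prounipotent $\gL(U(P))_{\Ran}$ (which acts trivially on the $\cG_{\Ran,M}$-factor), so the non-equivariant statement of the $\cS\cI$-version (Corollary~\ref{Cor_2.2.5_for_cScI-versions}, i.e. the argument of \cite{DL}, A.4.4--A.4.5) applies verbatim to the twisted fibration; one then passes to diagonal $\gL^+(M)_{\Ran}$-invariants, and full faithfulness and the identification of the essential image survive this limit because the equivariance conditions are full subcategories compatible with the $!$-pullbacks. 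With that substitution your unitality discussion goes through as written.
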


\sssec{} For $\theta\in -\Lambda_{G,P}^{pos}$ consider the map 
$$
\cG_{\Ran, M}\times^{\gL^+(M)_{\Ran}}\Mod^{-,\theta, \subset}_{M,\Ran}\,\iso\, \Mod^{+, -\theta}_{\Bun_M}\times_{X^{-\theta}}(X^{-\theta}\times\Ran)^{\subset}\;\toup{\id\times \pr^{-\theta}_{\Ran}}\; \Mod^{+, -\theta}_{\Bun_M}.
$$
As in Lemma~\ref{Lm_1.4.31}, one has the following.
\begin{Lm}
\label{Lm_4.3.11}
The functor 
$$
(\id\times \pr^{-\theta}_{\Ran})^!: Shv(\Mod^{+, -\theta}_{\Bun_M})\to Shv(\Mod^{+, -\theta}_{\Bun_M}\times_{X^{-\theta}}(X^{-\theta}\times\Ran)^{\subset})
$$ 
is fully faithful, and its essential image is $Shv(\Mod^{+, -\theta}_{\Bun_M}\times_{X^{-\theta}}(X^{-\theta}\times\Ran)^{\subset})_{untl}$. \QED
\end{Lm}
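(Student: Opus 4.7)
The plan is to prove this as a direct analog of Lemma~\ref{Lm_1.4.31}, transposing Gaitsgory's argument from (\cite{Gai19Ran}, Proposition~4.2.7) to the current setting. The essential geometric input is the universal homological contractibility of
$\pr^{-\theta}_{\Ran}: (X^{-\theta}\times\Ran)^{\subset}\to X^{-\theta}$
recalled in Section~\ref{Sect_1.3.13_now}. Since $\id\times\pr^{-\theta}_{\Ran}$ is obtained from $\pr^{-\theta}_{\Ran}$ by base change along the projection $\Mod^{+,-\theta}_{\Bun_M}\to X^{-\theta}$, it is also universally homologically contractible. Full faithfulness of $(\id\times\pr^{-\theta}_{\Ran})^!$ then follows in the usual way: the unit of the adjunction between the partially defined left adjoint and $!$-pullback is an equivalence, as the fibres of the map are homologically contractible.

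Next, I would check that the essential image lies in the unital subcategory. Let
$Y=\Mod^{+,-\theta}_{\Bun_M}\times_{X^{-\theta}}(X^{-\theta}\times\Ran)^{\subset}$.
The action of the category object $(\Ran\times\Ran)^{\subset}$ on $Y$ is induced from its action on $(X^{-\theta}\times\Ran)^{\subset}$, and the map $\id\times\pr^{-\theta}_{\Ran}$ is constant along the $(\Ran\times\Ran)^{\subset}$-orbits: for the diagram
$$
Y\;\getsup{\varphi_s}\;Y\times_{\Ran}(\Ran\times\Ran)^{\subset}\;\toup{\varphi_b}\;Y,
$$
the compositions $(\id\times\pr^{-\theta}_{\Ran})\comp\varphi_s$ and $(\id\times\pr^{-\theta}_{\Ran})\comp\varphi_b$ coincide, so the criterion of Section~\ref{Sect_1.2.1_now} shows that $(\id\times\pr^{-\theta}_{\Ran})^!K$ is unital for any $K$.

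For the converse, given $K\in Shv(Y)_{untl}$, by Section~\ref{Sect_1.2.1_now} there exists $K'\in Shv(Y)$ with $\varphi_b^!K\,\iso\,\varphi_s^!K'$. Combining this equivariance with the contractibility of the fibres of $\pr^{-\theta}_{\Ran}$, one descends $K$ along $\id\times\pr^{-\theta}_{\Ran}$ to an object of $Shv(\Mod^{+,-\theta}_{\Bun_M})$. The main technical step is to identify the category of unital sheaves on $Y$ with the totalization of the simplicial diagram of $!$-pullbacks built from $\varphi_s,\varphi_b$ and their higher analogs, and to show that this totalization computes $Shv(\Mod^{+,-\theta}_{\Bun_M})$ via the universal homological contractibility of $\pr^{-\theta}_{\Ran}$; this is exactly the argument of (\cite{Gai19Ran}, Proposition~4.2.7), which goes through verbatim after substituting $\Mod^{+,-\theta}_{\Bun_M}$ in place of the base appearing there. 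The potential subtlety, and hence the only point that needs verification rather than mere citation, is that this descent commutes with the base change along $\Mod^{+,-\theta}_{\Bun_M}\to X^{-\theta}$; this is automatic because universal homological contractibility is stable under base change and $!$-pullback commutes with limits of sheaves.
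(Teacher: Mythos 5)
Your proposal is correct and follows essentially the same route as the paper: the paper proves this lemma by reducing to the argument of Lemma~\ref{Lm_1.4.31}, i.e. by invoking the proof of (\cite{Gai19Ran}, Proposition~4.2.7) after base change along $\Mod^{+,-\theta}_{\Bun_M}\to X^{-\theta}$, using the universal homological contractibility of $\pr^{-\theta}_{\Ran}$ from Section~\ref{Sect_1.3.13_now}. Your preliminary observations (full faithfulness from contractibility, and unitality of the image since $\id\times\pr^{-\theta}_{\Ran}$ coequalizes $\varphi_s,\varphi_b$) are consistent with that argument.
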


\sssec{One more t-structure} Define a t-structure on $\Inv(\cG_{\Ran, M}\times^{\gL^+(M)_{\Ran}}\bar S^0_{P,\Ran})_{untl}$ along the lines of Section~\ref{Sect_t-structure_on_SI_untl}. 

 In details, for $\theta\in -\Lambda_{G,P}^{pos}$ we equip 
\begin{equation}
\label{category_Inv_untl_loc_theta_stratum} 
\Inv(\cG_{\Ran, M}\times^{\gL^+(M)_{\Ran}}S^{\theta}_{P, \Ran})_{untl}
\end{equation}
with a t-structure as follows. An object $K$ of the latter category is connective/coconnective iff the image of 
$$
(\id\times i^{\theta}_{S,\Ran})^!K[\<\theta, 2\check{\rho}-2\check{\rho}_M\>]
$$ 
under the equivalence 
\begin{equation}
\label{equiv_for_Sect_4.3.12}
Shv(\cG_{\Ran, M}\times^{\gL^+(M)_{\Ran}}\Mod^{-,\theta, \subset}_{M,\Ran})_{untl}\,\iso\, Shv(\Mod^{+, -\theta}_{\Bun_M})
\end{equation}
of Lemma~\ref{Lm_4.3.11} is connective/coconnective. 
 
 Now the full subcategory of connective objects in $\Inv(\cG_{\Ran, M}\times^{\gL^+(M)_{\Ran}}\bar S^0_{P,\Ran})_{untl}$ is defined as the smallest full subcategory closed under colimits, under extensions, and containing for each $\theta\in -\Lambda_{G,P}^{pos}$ the objects $(\id\times v^{\theta}_{S,\Ran})_!K$ for
$$
K\in (\Inv(\cG_{\Ran, M}\times^{\gL^+(M)_{\Ran}}S^{\theta}_{P, \Ran})_{untl})^{\le 0}.
$$ 
This is an accessible t-structure on $\Inv(\cG_{\Ran, M}\times^{\gL^+(M)_{\Ran}}\bar S^0_{P,\Ran})_{untl}$.

\sssec{} By definition, $K\in \Inv(\cG_{\Ran, M}\times^{\gL^+(M)_{\Ran}}\bar S^0_{P,\Ran})_{untl}$ is coconnective iff for any $\theta\in -\Lambda_{G,P}^{pos}$ the object $(\id\times v^{\theta}_{S,\Ran})^!K\in \Inv(\cG_{\Ran, M}\times^{\gL^+(M)_{\Ran}} S^{\theta}_{P,\Ran})_{untl}$ is coconnective. 

\sssec{Proof of Theorem~\ref{Thm_loc_glob_equiv}} We only have to check that (\ref{functor_bar_pi^0_glob^!_unital}) is essentially surjective. The category $\Inv(\cG_{\Ran, M}\times^{\gL^+(M)_{\Ran}}\bar S^0_{P,\Ran})_{untl}$ is generated by objects of the form
$$
(\id\times v^{\theta}_{S,\Ran})_*K
$$
for $K\in \Inv(\cG_{\Ran, M}\times^{\gL^+(M)_{\Ran}}S^{\theta}_{P, \Ran})_{untl}$. By Corollary~\ref{Cor_4.3.9_now} and Lemma~\ref{Lm_4.3.11}, there is $L\in Shv(\Mod^{+, -\theta}_{\Bun_M})$ equipped with an isomorphism 
$$
(\id\times\gt^{\theta}_{S,\Ran})^!
(\id\times \pr^{-\theta}_{\Ran})^!L\,\iso\, K.
$$
Here the LHS identifies with $(\pi^{\theta}_{glob})^!(p^{\theta}_{glob})^! L$ using the diagram (\ref{diag_for_proof_of_4.2.10}). Since (\ref{square_for_Sect_4.2.8}) is cartesian, one has 
$$
(\id\times v^{\theta}_{S,\Ran})_*K\,\iso\, (\bar \pi^0_{glob})^! (v^{\theta}_{glob})_*(p^{\theta}_{glob})^! L.
$$
So, $\bar K=(v^{\theta}_{glob})_*(p^{\theta}_{glob})^! L\in \Inv(\Bunt_P)$ is equipped with $
(\bar\pi^0_{glob})^!\bar K\,\iso\, (\id\times v^{\theta}_{S,\Ran})_*K
$ as desired. \QED
 
\begin{Pp}
\label{Pp_4.3.16_now}
 The functor (\ref{functor_bar_pi^0_glob^!_unital}) composed with the cohomollogical shift $[d]$ is a t-exact equivalence, so identifies the t-structures on these categories. Here $d$ is the function of the connected component of $\Bunt_P$ given over $\Bunt_P^{\eta}$, $\eta\in\Lambda_{G,P}$ by 
$$
d=d(\eta)=(\dim U)(g-1)+\<\eta, 2\check{\rho}-2\check{\rho}_M\>.
$$  
\end{Pp}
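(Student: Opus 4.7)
The starting point is Theorem \ref{Thm_loc_glob_equiv}, which provides the equivalence of the underlying $\infty$-categories. It remains to show that, after the shift $[d]$, the functor is t-exact; the reflection of t-structures then follows automatically from the fact that the functor is an equivalence. Note that $[d]$ is a well-defined auto-equivalence since $d$ is locally constant on the components $\Bunt_P^\eta$, and this decomposition pulls back compatibly to the local target.

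Both t-structures are constructed by stratified gluing indexed by $\theta\in -\Lambda^{pos}_{G,P}$: their connective subcategories are generated under colimits and extensions by the $!$-extensions $(v^\theta_{glob})_!K$ (respectively $(\id\times v^\theta_{S,\Ran})_!K$) of connective objects on each stratum. By Corollary \ref{Cor_4.3.5_now} these $!$-extensions are intertwined by $(\bar\pi^0_{glob})^!$, so t-exactness of (\ref{functor_bar_pi^0_glob^!_unital})$[d]$ reduces to the stratum-by-stratum assertion that for each $\theta$ the induced equivalence
\[
(\pi^\theta_{glob})^![d]:\; \Inv(_{\theta}\Bunt_P)\;\iso\; \Inv(\cG_{\Ran, M}\times^{\gL^+(M)_{\Ran}}S^{\theta}_{P, \Ran})_{untl}
\]
is t-exact, and this can be verified connected-component by connected-component over $\eta\in\Lambda_{G,P}$.

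On a fixed stratum and component, Lemma \ref{Lm_4.1.9_now}(ii) identifies $\Inv(_\theta\Bunt_P)$ with $Shv(\Mod^{+,-\theta}_{\Bun_M})$ via $(p^\theta_{glob})^!$, while Corollary \ref{Cor_4.3.9_now} combined with Lemma \ref{Lm_4.3.11} gives the analogous identification on the local side via $(\id\times i^\theta_{S,\Ran})^!(\id\times\pr^{-\theta}_\Ran)^!$. The commutative square (\ref{diag_for_proof_of_4.2.10}) from the proof of Proposition \ref{Pp_4.2.10_now} shows that $(\pi^\theta_{glob})^!$ intertwines these two identifications of the stratum categories with $Shv(\Mod^{+,-\theta}_{\Bun_M})$. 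Under them, the local t-structure is by construction the perverse t-structure on $Shv(\Mod^{+,-\theta}_{\Bun_M})$ shifted by $[\<\theta, 2\check\rho - 2\check\rho_M\>]$, whereas the global inherited perverse t-structure corresponds to the perverse t-structure on $Shv(\Mod^{+,-\theta}_{\Bun_M})$ shifted by the (component-dependent) relative dimension of the smooth morphism $p^\theta_{glob}$, which is obtained by base change from $\Bun_P\to\Bun_M$.

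The remaining step is to compute the net difference between these two shifts and verify that it equals $d(\eta)$ over $\Bunt_P^\eta$. Using the formula $\dim\Bun_P^\zeta = (g-1)\dim P + \<\zeta, 2\check\rho - 2\check\rho_M\>$ (Riemann--Roch applied to the twist $\mathfrak{u}(P)_{\cF_M}$) and tracking how the label $\eta$ of $\Bunt_P^\eta$ propagates through the fibre-product decomposition $_\theta\Bunt_P\simeq \Bun_P\times_{\Bun_M}\Mod^{+,-\theta}_{\Bun_M}$, the value $d(\eta) = (g-1)\dim U + \<\eta, 2\check\rho - 2\check\rho_M\>$ should emerge. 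The main obstacle will be entirely numerical: one must carefully sort out the dimensions of the relevant connected components of $\Bun_P$, the relative dimensions of the auxiliary smooth morphisms, the sign conventions for the $\Lambda_{G,P}$-grading of $\Bun_M$, and the various normalization shifts entering the two t-structures. Once this bookkeeping is complete, t-exactness of $(\bar\pi^0_{glob})^![d]$ is immediate.
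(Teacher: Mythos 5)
Your proposal is correct and takes essentially the same route as the paper's proof: reduce to strata using Corollary~\ref{Cor_4.3.5_now} together with the $!$-restriction characterizations of the two t-structures, identify both stratum categories with $Shv(\Mod^{+,-\theta}_{\Bun_M})$ (via $(p^{\theta}_{glob})^!$ on the global side and the unital equivalences of Corollary~\ref{Cor_4.3.9_now}, Lemma~\ref{Lm_4.3.11} on the local side, intertwined by $(\pi^{\theta}_{glob})^!$), and compare the resulting shifts. The ``bookkeeping'' you defer is precisely the single fact the paper invokes --- $p^{\theta}_{glob}$, being base-changed from $\Bun_P\to\Bun_M$, is smooth of relative dimension $d(\eta)+\<\theta, 2\check{\rho}-2\check{\rho}_M\>$ over the component with $\cF_M\in\Bun_M^{\eta}$, $\cF'_M\in\Bun_M^{\eta+\theta}$, i.e.\ $d$ is the relative dimension of $\tilde\gq_P: \Bunt_P\to\Bun_M$ --- and your Riemann--Roch count for the twist $\gu(P)_{\cF'_M}$ does yield exactly this value, so the argument closes as in the paper.
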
 
\begin{proof}
Note that $d$ is the relative dimension of $\tilde\gq_P: \Bunt_P\to \Bun_M$. For $\theta\in -\Lambda_{G,P}^{pos}$ one has the commutative diagram
$$
\begin{array}{ccc}
\Mod^{+, -\theta}_{\Bun_M}\times_{X^{-\theta}}(X^{-\theta}\times\Ran)^{\subset} & \toup{\id\times i^{\theta}_{S,\Ran}}& \cG_{\Ran, M}\times^{\gL^+(M)_{\Ran}} S^{\theta}_{P,\Ran}\\
\downarrow\lefteqn{\scriptstyle\id\times \pr^{-\theta}_{\Ran}} && \downarrow\lefteqn{\scriptstyle \pi^{\theta}_{glob}}\\
\Mod^{+, -\theta}_{\Bun_M} & \toup{s^{\theta}_{glob}} & _{\theta}\Bunt_P,
\end{array}
$$
where we denoted by $s^{\theta}_{glob}$ the natural section of $p^{\theta}_{glob}$. The  map $p^{\theta}_{glob}: {_{\theta}\Bunt_P}\to \Mod^{+, -\theta}_{\Bun_M}$ is smooth of relative dimension $d(\eta)+\<\theta, 2\check{\rho}-2\check{\rho}_M\>$ over the connected component of $\Mod^{+, -\theta}_{\Bun_M}$ classifying $(\cF_M, \cF'_M, \beta, D)$ with $\cF_M\in\Bun_M^{\eta}, \cF'_M\in \Bun_M^{\eta+\theta}$. This explains the shifts by $\<\theta, 2\check{\rho}-2\check{\rho}_M\>$ in the definition of the t-structure on (\ref{category_Inv_untl_loc_theta_stratum}). 

 The above immediately implies that (\ref{functor_bar_pi^0_glob^!_unital}) composed with the cohomollogical shift $[d]$ is left t-exact. To see that it is right t-exact let $\theta\in -\Lambda_{G,P}^{pos}$, $K\in \Inv(_{\theta}\Bunt_P)^{\le 0}$. It suffices to show that 
$$
(\bar\pi^0_{glob})^!(v^{\theta}_{glob})_!K[d]
$$
is connective in $\Inv(\cG_{\Ran, M}\times^{\gL^+(M)_{\Ran}}\bar S^0_{P,\Ran})_{untl}$. 
By Corollary~\ref{Cor_4.3.5_now}, the latter complex identifies with
$(\id\times v^{\theta}_{S,\Ran})_!(\pi^{\theta}_{glob})^!K[d]$. So, it remains to show that
$(\pi^{\theta}_{glob})^!K[d]$ is connective in 
$$
\Inv(\cG_{\Ran, M}\times^{\gL^+(M)_{\Ran}}S^{\theta}_{P,\Ran})_{untl},
$$ 
which means by definition that the image of $(\id\times i^{\theta}_{S,\Ran})^!
(\pi^{\theta}_{glob})^!K[d+\<\theta, 2\check{\rho}-2\check{\rho}_M\>]$ under the equivalence (\ref{equiv_for_Sect_4.3.12}) is connective. This is true, because 
$$
(s^{\theta}_{glob})^![d+\<\theta, 2\check{\rho}-2\check{\rho}_M\>]: \Inv(_{\theta}\Bunt_P)\to Shv(\Mod^{+, -\theta}_{\Bun_M})
$$ 
is a t-exact equivalence.
\end{proof}

\sssec{} Define $\pi^{\theta}_{loc}$ by the cartesian square
$$
\begin{array}{ccc}
\cG_{\Ran, M}\times^{\gL^+(M)_{\Ran}}S^{\theta}_{P,\Ran} & \toup{\id\times v^{\theta}_{S,\Ran}} & \cG_{\Ran, M}\times^{\gL^+(M)_{\Ran}} \bar S^0_{P,\Ran}\\
\downarrow\lefteqn{\scriptstyle \pi^{\theta}_{loc}} && \downarrow\lefteqn{\scriptstyle \bar \pi^0_{loc}}\\
\gL^+(M)_{\Ran}\backslash S^{\theta}_{P,\Ran} & \toup{v^{\theta}_{S,\Ran}} & \gL^+(M)_{\Ran}\backslash \bar S^0_{P,\Ran}.
\end{array}
$$
The $(_*, ^!)$-base change isomorphism for the latter diagram yields a natural transformation 
\begin{equation}
\label{transformation_for_Sect_4.3.16}
(\id\times v^{\theta}_{S,\Ran})^*(\bar\pi^0_{loc})^!\to (\pi^{\theta}_{loc})^!(v^{\theta}_{S,\Ran})^*
\end{equation}
of functors $\SI^{\le 0}_{P,\Ran}(S)\to \Inv(\cG_{\Ran, M}\times^{\gL^+(M)_{\Ran}}S^{\theta}_{P,\Ran})$.
\begin{Lm} 
\label{Lm_4.3.18}
The natural transformation (\ref{transformation_for_Sect_4.3.16}) is an isomorphism.
\end{Lm}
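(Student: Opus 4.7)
The plan is to reduce (\ref{transformation_for_Sect_4.3.16}) to a composition of base-change isomorphisms using the explicit Braden presentations of $(v^{\theta}_{S,\Ran})^*$ and $(\id\times v^{\theta}_{S,\Ran})^*$. By Section~\ref{Sect_1.5.17} and Section~\ref{Sect_4.3.3_now}, these functors decompose as
\begin{align*}
(v^{\theta}_{S,\Ran})^* &\simeq (\gt^{\theta}_{S,\Ran})^!\,(\gt^{\theta}_{P^-,\Ran})_*\,(v^{\theta}_{P^-,\Ran})^!,\\
(\id\times v^{\theta}_{S,\Ran})^* &\simeq (\id\times\gt^{\theta}_{S,\Ran})^!\,(\id\times\gt^{\theta}_{P^-,\Ran})_*\,(\id\times v^{\theta}_{P^-,\Ran})^!.
\end{align*}
All four prestacks appearing in the Braden diagram (\ref{diag_for_Sect_1.5.17}) are $\gL^+(M)_{\Ran}$-stable, so the twisted diagram obtained by applying $\cG_{\Ran,M}\times^{\gL^+(M)_{\Ran}}(\cdot)$ sits above (\ref{diag_for_Sect_1.5.17}) in a ladder of cartesian squares, whose vertical arrows are $\bar\pi^0_{loc}$, $\pi^{\theta}_{loc}$, and the two intermediate analogs. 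Under these identifications, the transformation (\ref{transformation_for_Sect_4.3.16}) factors as the composition of the three base-change transformations associated to these three cartesian squares.

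The two $!$-pullback steps, namely the base change for $v^{\theta}_{P^-,\Ran}$ and that for $\gt^{\theta}_{S,\Ran}$, are automatic isomorphisms since $!$-pullback always satisfies base change along cartesian squares. The remaining, and only non-trivial, step is the $*$-pushforward base change along $\gt^{\theta}_{P^-,\Ran}$, which constitutes the main obstacle.

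I would handle this step by (pro-)smooth base change. Fix $I\in fSets$ and restrict to $X^I\subset\Ran$. The vertical arrows of the relevant square are obtained by twisting along the placid pro-smooth group scheme $\gL^+(M)_I$; equivalently, they are the $!$-pullbacks along the $\gL^+(M)_I$-torsor $\cG_{I,M}\to X^I\times\Bun_M$. Since $\Mod_M^{-,\theta}\in\Sch_{ft}$, the $\gL^+(M)_I$-action on each participating prestack factors through a smooth finite-type group scheme quotient $\cG'_I\to X^I$, after which the required base change reduces to the classical smooth base change for $*$-pushforward on a cartesian square of finite-type algebraic stacks. Passing to the limit over $I\in fSets$ and over the level quotients recovers the statement over $\Ran$, which completes the proof.
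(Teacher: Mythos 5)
Your argument is correct and coincides with the paper's own proof of Lemma~\ref{Lm_4.3.18}: the paper establishes it precisely by combining the descriptions of $(v^{\theta}_{S,\Ran})^*$ and $(\id\times v^{\theta}_{S,\Ran})^*$ from Sections~\ref{Sect_1.5.17} and~\ref{Sect_4.3.3_now} with the $({}_*,{}^!)$-base change for the square comparing the twisted and untwisted Braden diagrams. Your extra finite-level (pro-smooth) reduction justifying the pushforward base change along $\gt^{\theta}_{P^-,\Ran}$ merely spells out what the paper leaves implicit.
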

\begin{proof} This follows from the description of the functor (\ref{functor_(v^theta_S,Ran)^*}) in Section~\ref{Sect_1.5.17}, the description of $(\id\times v^{\theta}_{S,\Ran})^*$ in Section~\ref{Sect_4.3.3_now}, and the $(_*, ^!)$-base change.
\end{proof}

\sssec{} It is immediate to see that the proof of Lemma~\ref{Lm_1.4.27} and of Corollary~\ref{Cor_1.5.19}
hold also after the twist $\cG_{\Ran, M}\times^{\gL^+(M)_{\Ran}}\cdot$ (cf. also Section~\ref{Sect_4.4.5_now} below). This gives the following.
\begin{Lm} 
\label{Lm_4.3.20}
Let $\theta\in -\Lambda_{G,P}^{pos}$. The functors in the dual pair 
$$
(\id\times v^{\theta}_{S,\Ran})^*: \Inv(\cG_{\Ran, M}\times^{\gL^+(M)_{\Ran}}\bar S^0_{P,\Ran})\leftrightarrows \Inv(\cG_{\Ran, M}\times^{\gL^+(M)_{\Ran}}S^{\theta}_{P, \Ran}): (\id\times v^{\theta}_{S,\Ran})_*
$$
preserve the correspinding full subcategories of unital sheaves. The same is true for the dual pair
$$
(\id\times v^{\theta}_{S,\Ran})_!: \Inv(\cG_{\Ran, M}\times^{\gL^+(M)_{\Ran}}S^{\theta}_{P, \Ran}) \leftrightarrows \Inv(\cG_{\Ran, M}\times^{\gL^+(M)_{\Ran}}\bar S^0_{P,\Ran}): (\id\times v^{\theta}_{S,\Ran})^!.
$$
\QED
\end{Lm}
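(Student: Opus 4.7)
The plan is to transfer the proofs of Lemma~\ref{Lm_1.4.27} and Corollary~\ref{Cor_1.5.19} verbatim after applying the twist $\cG_{\Ran, M}\times^{\gL^+(M)_{\Ran}}(-)$. By the description in Section~\ref{Sect_4.3.3_now}, the functor $(\id\times v^{\theta}_{S,\Ran})^*$ factors as
$$
(\id\times\gt^{\theta}_{S,\Ran})^!\,(\id\times\gt^{\theta}_{P^-,\Ran})_*\,(\id\times v^{\theta}_{P^-,\Ran})^!
$$
along the twist of the diagram~(\ref{diag_for_Sect_1.5.17}). The two $!$-pullback functors manifestly preserve the unital subcategories of sheaves, by the analogue of Remark~\ref{Rem_A.1.7}, since the twisted maps are tautologically equivariant for the $(\Ran\times\Ran)^{\subset}$-action.

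The substantive point, exactly as in the proof of Lemma~\ref{Lm_1.4.27}, is to show that the $*$-pushforward $(\id\times\gt^{\theta}_{P^-,\Ran})_*$ preserves unitality. By $(_*,^!)$-base change this reduces to the cartesianness of the natural square
$$
\begin{array}{ccc}
\cX\times_{\Ran}(\Ran\times\Ran)^{\subset} & \toup{\phi_b} & \cX\\
\downarrow\lefteqn{\scriptstyle \id\times\gt^{\theta}_{P^-,\Ran}\times\id} && \downarrow\lefteqn{\scriptstyle \id\times\gt^{\theta}_{P^-,\Ran}}\\
\cG_{\Ran, M}\times^{\gL^+(M)_{\Ran}}\Mod^{-,\theta, \subset}_{M,\Ran}\times_{\Ran}(\Ran\times\Ran)^{\subset} & \toup{\phi_b} & \cG_{\Ran, M}\times^{\gL^+(M)_{\Ran}}\Mod^{-,\theta, \subset}_{M,\Ran},
\end{array}
$$
where $\cX=\cG_{\Ran, M}\times^{\gL^+(M)_{\Ran}}(\Gr^{\theta}_{P^-,\Ran}\cap \bar S^0_{P,\Ran})$. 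Since both the action of $(\Ran\times\Ran)^{\subset}$ and the torsor $\cG_{\Ran, M}$ are pulled back from $\Ran$ and neither modifies the direction along $\bar S^0_{P,\Ran}$, the cartesianness of this square reduces to that of the analogous untwisted square established in Lemma~\ref{Lm_1.4.27}. The latter is a local statement verified there using that the Zastava space $Z^0$ attached to $0\in\Lambda^{pos}_{G,P}$ is a point. The preservation of unitality by $(\id\times v^{\theta}_{S,\Ran})_*$ then follows by passing to right adjoints, yielding the first adjoint pair.

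For the second pair, I would deduce the preservation of unitality by $(\id\times v^{\theta}_{S,\Ran})_!$ from that of $(\id\times v^{\theta}_{S,\Ran})^*$ in the same formal manner as in Corollary~\ref{Cor_1.5.19}, invoking (\cite{Ly9}, Lemma~1.8.15 and 1.8.16) to transfer compatibility with a full subcategory from a functor to its opposite-handed adjoint. The only real subtlety to verify is that the unital structure on $\cG_{\Ran, M}\times^{\gL^+(M)_{\Ran}}(-)$ is compatible with the $(\Ran\times\Ran)^{\subset}$-action in the way needed above; but since $\cG_{\Ran, M}\to\Ran\times\Bun_M$ is the $\gL^+(M)_{\Ran}$-torsor of trivializations over $\cD_{\cI}$, it is tautologically unital along the $\Ran$-direction and carries no data on the $\Bun_M$-side, so this compatibility is automatic. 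I do not anticipate a serious obstacle beyond this bookkeeping.
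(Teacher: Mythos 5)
Your proposal is correct and follows essentially the same route as the paper, which proves the lemma precisely by observing that the proofs of Lemma~\ref{Lm_1.4.27} and Corollary~\ref{Cor_1.5.19} go through after the twist $\cG_{\Ran, M}\times^{\gL^+(M)_{\Ran}}(-)$, i.e.\ via the factorization of $(\id\times v^{\theta}_{S,\Ran})^*$ through the twisted diagram of Section~\ref{Sect_4.3.3_now}, the cartesianness of the square for $\id\times\gt^{\theta}_{P^-,\Ran}$ (checked locally via the Zastava space $Z^0$ being a point), and the formal passage to the second adjoint pair using (\cite{Ly9}, Lemma~1.8.15, 1.8.16). Your write-up just spells out the bookkeeping the paper leaves implicit (cf.\ also the identification in Section~\ref{Sect_4.4.5_now}).
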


\sssec{} 
\label{Sect_5.3.21_now}
Denote by 
$$
\pi^{\theta}_{\div, loc}: \Mod_{\Bun_M}^{+, -\theta}\to \gL^+(M)_{\theta}\backslash \Mod_M^{-,\theta}
$$ 
the map given by restriction to $\cD_D$ of an object $(\cF_M, \cF'_M, D, \cF_M\,\iso\, \cF'_M\mid_{X-\supp(D)})$ of the source. 
\index{$\pi^{\theta}_{\div, loc}$, Section~\ref{Sect_5.3.21_now}}

\sssec{Proof of Theorem~\ref{Th_4.2.12_global_compatibility}} By Proposition~\ref{Pp_4.3.16_now}, $(\bar\pi^0_{glob})^!\IC_{\Bunt_P}[d]$ is the intermediate extension in $\Inv(\cG_{\Ran, M}\times^{\gL^+(M)_{\Ran}}\bar S^0_{P,\Ran})_{untl}$ from $\cG_{\Ran, M}\times^{\gL^+(M)_{\Ran}} S^0_{P,\Ran}$. Apriori, 
$$
(\bar \pi^0_{loc})^!('\IC^{\frac{\infty}{2}}_{P,\Ran})[-\dim\Bun_M]\in \Inv(\cG_{\Ran, M}\times^{\gL^+(M)_{\Ran}}\bar S^0_{P,\Ran}).
$$
We show that the latter object lies in the unital subcategory and also is the intermediate extension of its restriction to the same open stratum. 

\medskip\noindent 
{\bf Step 1}. We claim that 
$$
(\bar \pi^0_{loc})^!('\IC^{\frac{\infty}{2}}_{P,\Ran})[-\dim\Bun_M]\in
(\Inv(\cG_{\Ran, M}\times^{\gL^+(M)_{\Ran}}\bar S^0_{P,\Ran})_{untl})^{\le 0},
$$
and its $*$-restriction to non-open strata is placed in strictly negative degrees.

Indeed, by Lemma~\ref{Lm_4.3.20} and a version of Corollary~\ref{Cor_1.5.21}, it suffices to show that for $\theta\in -\Lambda_{G,P}^{pos}$, 
$$
(\id\times v^{\theta}_{S,\Ran})^*(\bar \pi^0_{loc})^!('\IC^{\frac{\infty}{2}}_{P,\Ran})[-\dim\Bun_M]
\in (\Inv(\cG_{\Ran, M}\times^{\gL^+(M)_{\Ran}} S^{\theta}_{P,\Ran})_{untl})^{\le 0},
$$ 
and the latter object lies in $(\Inv(\cG_{\Ran, M}\times^{\gL^+(M)_{\Ran}} S^{\theta}_{P,\Ran})_{untl})^{< 0}$ unless $\theta=0$.

By Lemma~\ref{Lm_4.3.18}, the latter object identifies with
\begin{equation}
\label{object_for_proof_in_4.3.21}
(\pi^{\theta}_{loc})^!(v^{\theta}_{S,\Ran})^*('\IC^{\frac{\infty}{2}}_{P,\Ran})[-\dim\Bun_M].
\end{equation}
Consider the diagram
$$
\begin{array}{ccc}
&& \cG_{\Ran, M}\times^{\gL^+(M)_{\Ran}} S^{\theta}_{P,\Ran}\\
&& \downarrow\lefteqn{\scriptstyle \id\times\gt^{\theta}_{S,\Ran}}\\
\Mod_{\Bun_M}^{+, -\theta} & \getsup{\id\times \pr^{-\theta}_{\Ran}} & 
\cG_{\Ran, M}\times^{\gL^+(M)_{\Ran}} \Mod^{-,\theta, \subset}_{M,\Ran}\\
\downarrow\lefteqn{\scriptstyle \pi^{\theta}_{\div, loc}} && \downarrow\lefteqn{\scriptstyle \pi^{\theta}_{\Mod, loc}}\\
\gL^+(M)_{\theta}\backslash \Mod_M^{-,\theta} & \getsup{\tau^{\theta}} & \gL^+(M)_{\Ran}\backslash \Mod^{-,\theta, \subset}_{M,\Ran}
\end{array} 
$$ 
where $\pi^{\theta}_{\Mod, loc}$ is given by restrictions to $\cD_{\cI}$, and $\tau^{\theta}$ is defined in Section~\ref{Sect_1.6.2}. One has 
$$
\gt^{\theta}_{S,\Ran}\pi^{\theta}_{loc}=\pi^{\theta}_{\Mod, loc} (\id\times \gt^{\theta}_{S,\Ran}).
$$ 
So, by Corollary~\ref{Cor_3.3.5}, (\ref{object_for_proof_in_4.3.21}) identifies with
$$
(\id\times \gt^{\theta}_{S,\Ran})^! (\pi^{\theta}_{\Mod, loc})^! (\tau^{\theta})^!\Sat_{M, X^{-\theta}}(\Fact(\cO(U(\check{P}))_{<0})_{\theta})
[-\dim\Bun_M-\<\theta, 2\check{\rho}-2\check{\rho}_M\>].
$$
The latter object is unital by  Lemma~\ref{Lm_4.3.11}. 

 It remains to show that 
\begin{equation}
\label{complex_for_Sect_4.3.21}
(\pi^{\theta}_{\div, loc})^!\Sat_{M, X^{-\theta}}(\Fact(\cO(U(\check{P}))_{<0})_{\theta})[-\dim\Bun_M]
\end{equation}
is placed in perverse degrees $\le 0$, and the inequality is strict unless $\theta=0$. For $\theta=0$ the map $\pi^{\theta}_{\div, loc}$ is the map $\Bun_M\to \Spec k$, and (\ref{complex_for_Sect_4.3.21}) is $\IC_{\Bun_M}$. If $\theta\ne 0$ then the claim follows from Lemma~\ref{Lm_C.3.44} as in Corollary~\ref{Cor_3.3.7}. Namely, $\Sat_{M, X^{-\theta}}(\Fact(\cO(U(\check{P}))_{<0})_{\theta})$ is placed in perverse degrees $<0$. 

\medskip\noindent
{\bf Step 2} Let $\theta\in -\Lambda_{G,P}^{pos,*}$. It remains to show that 
$$
(\id\times v^{\theta}_{S,\Ran})^!(\bar \pi^0_{loc})^!('\IC^{\frac{\infty}{2}}_{P,\Ran})[-\dim\Bun_M]\in \Inv(\cG_{\Ran, M}\times^{\gL^+(M)_{\Ran}}S^{\theta}_{P, \Ran})_{untl}
$$ 
is strictly coconnective. 

 Since our object is unital, there is $\cK\in Shv(\Mod^{+, -\theta}_{\Bun_M})$ equipped with an isomorphism
$$
(\id\times\pr^{-\theta}_{\Ran})^!\cK\,\iso\, (\id\times i^{\theta}_{S,\Ran})^!(\id\times v^{\theta}_{S,\Ran})^!(\bar \pi^0_{loc})^!('\IC^{\frac{\infty}{2}}_{P,\Ran})[-\dim\Bun_M+\<\theta, 2\check{\rho}-2\check{\rho}_M\>]
$$
for $\id\times\pr^{-\theta}_{\Ran}: \cG_{\Ran, M}\times^{\gL^+(M)_{\Ran}}\Mod^{-,\theta,\subset}_{M,\Ran}\to \Mod^{+, -\theta}_{\Bun_M}$. It remains to show that $\cK$ is placed in perverse degrees $>0$. 

 We argue as in Proposition~\ref{Pp_3.3.6}. Namely, consider a stratum $\oo{X}{}^{\gU(-\theta)}\subset X^{-\theta}$ as in Section~\ref{Sect_3.3.8_stratification}. Set 
$$
\Mod_{\Bun_M}^{+, \gU(-\theta)}=\Mod^{+, -\theta}_{\Bun_M}\times_{X^{-\theta}} \oo{X}{}^{\gU(-\theta)}.
$$ 
Denote by $\cK^{\gU(-\theta)}$ the !-restriction of $\cK$ to $\Mod_{\Bun_M}^{+, \gU(-\theta)}$. It suffices to show that $\cK^{\gU(-\theta)}$ is placed in perverse degrees $>0$. 

 Assume $\gU(-\theta)$ is given by the decomposition $-\theta=\sum_k n_k\theta_k$, where $n_k>0$, $\theta_k\in \Lambda_{G,P}^{pos, *}$, and $\theta_k$ are pairwise distinct. Set $I=\sqcup_k I_k$, where $I_k=\{1,\ldots, n_k\}$, so $X^I=\prod_k X^{n_k}$. We have the etale symmetrization map $\oo{X}{}^I\to \oo{X}{}^{\gU(-\theta)}$. Let us show that the !-restriction of $\cK^{\gU(-\theta)}$ to 
$$
\Mod_{\Bun_M}^{+, \gU(-\theta)}\times_{\oo{X}{}^{\gU(-\theta)}} \oo{X}{}^I
$$ 
is placed in perverse degrees $>0$.
 
 Let $\gL^+(M)_{\oo{I}}$ denote the restriction of $\gL^+(M)_I$ to $\oo{X}{}^I$. Consider the map 
$$
\nu: \oo{X}{}^I\to (X^{-\theta}\times\Ran)^{\subset}
$$ 
sending $(x_i)_{i\in I}$ to $(D,\cI)$ with $D=\sum_k \theta_k \sum_{i\in I_k} x_i$ and $\cI=(x_i)_{i\in I}$. It gives rise to the composition
\begin{equation}
\label{composition_for_Step2_proof_of_Th_4.2.19}  
\gL^+(M)_{\oo{I}}\backslash (\Mod^{-,\gU(-\theta)}_M\times_{\oo{X}{}^{\gU(-\theta)}} \oo{X}{}^I)\to \gL^+(M)_{\Ran}\backslash \Mod^{-,\theta,\subset}_{M,\Ran}\toup{v^{\theta}_{S,\Ran} i^{\theta}_{S,\Ran}}
 \gL^+(M)_{\Ran}\backslash \bar S^0_{P,\Ran}
\end{equation}
We have seen in the proof of Proposition~\ref{Pp_3.3.6} that the $!$-restriction under (\ref{composition_for_Step2_proof_of_Th_4.2.19}) of $'\IC^{\frac{\infty}{2}}_{P,\Ran}[\<\theta, 2\check{\rho}-2\check{\rho}_M\>]
$ is placed in perverse degrees $>0$. 
Consider the natural map 
$$
a: \Mod_{\Bun_M}^{+, \gU(-\theta)}\times_{\oo{X}{}^{\gU(-\theta)}} \oo{X}{}^I\to \gL^+(M)_{\oo{I}}\backslash (\Mod^{-, \gU(-\theta)}_M\times_{\oo{X}{}^{\gU(-\theta)}} \oo{X}{}^I)
$$
given by restricting the corresponding datum to $\cD_{\cI}$ for $\cI\in \oo{X}{}^I$.
The functor $a^![-\dim\Bun_M]$ is t-exact for the perverse t-structures. Our claim follows.
\QED

\ssec{Proof of Proposition~\ref{Pp_4.3.3_key}}
\label{Sect_proof_of_Pp_4.3.3_key}

\sssec{} The argument is close to (\cite{Gai19Ran}, 3.6.3). For $\theta\in -\Lambda_{G,P}^{pos}$ consider the diagram
\begin{equation}
\label{diag_for_Sect_4.1.1}
\begin{array}{ccc}
\cG_{\Ran, M}\times^{\gL^+(M)_{\Ran}} \Mod^{-, \theta,\subset}_{M,\Ran} & \toup{\id\times i^{\theta}_{S,\Ran}} &\cG_{\Ran, M}\times^{\gL^+(M)_{\Ran}}S^{\theta}_{P, \Ran}\\
 & \searrow\lefteqn{\scriptstyle \id} & \downarrow\lefteqn{\scriptstyle \id\times \gt^{\theta}_{S,\Ran}}\\
&& \cG_{\Ran, M}\times^{\gL^+(M)_{\Ran}} \Mod^{-, \theta,\subset}_{M,\Ran} 
\end{array}
\end{equation}

The following is obtained similarly to Lemma~\ref{Lm_1.4.10_now}.
\begin{Lm}
\label{Lm_4.4.2}
Let $\theta\in -\Lambda_{G,P}^{pos}$. For the diagram (\ref{diag_for_Sect_4.1.1})
the functor 
$$
(\id\times \gt^{\theta}_{S,\Ran})^!: Shv(\cG_{\Ran, M}\times^{\gL^+(M)_{\Ran}} \Mod^{-, \theta,\subset}_{M,\Ran})\to Shv(\cG_{\Ran, M}\times^{\gL^+(M)_{\Ran}}S^{\theta}_{P, \Ran})
$$ 
is fully faithful, its essential image is the full subcategory
$\Inv(\cG_{\Ran, M}\times^{\gL^+(M)_{\Ran}}S^{\theta}_{P, \Ran})$. The composition
\begin{multline*}
\Inv(\cG_{\Ran, M}\times^{\gL^+(M)_{\Ran}}S^{\theta}_{P, \Ran})\hook{} Shv(\cG_{\Ran, M}\times^{\gL^+(M)_{\Ran}}S^{\theta}_{P, \Ran})\toup{(\id\times i^{\theta}_{S,\Ran})^!} \\
Shv(\cG_{\Ran, M}\times^{\gL^+(M)_{\Ran}} \Mod^{-, \theta,\subset}_{M,\Ran})
\end{multline*}
is an equivalence. \QED
\end{Lm}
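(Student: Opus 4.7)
The plan is to deduce Lemma~\ref{Lm_4.4.2} from its untwisted counterpart Lemma~\ref{Lm_1.4.10_now} by exploiting that the construction $\cG_{\Ran, M}\times^{\gL^+(M)_{\Ran}}(\cdot)$ is a torsor twist which is compatible with the relevant equivariance conditions. First I would unpack the definition of the invariants category as introduced in Section~\ref{Sect_The stack cY_Ran}: for any $\gL^+(M)_{\Ran}$-invariant (and moreover $H_{\Ran}$-invariant) locally closed subprestack $V\subset \Gr_{G,\Ran}$, one has
$$
\Inv(\cG_{\Ran, M}\times^{\gL^+(M)_{\Ran}} V)\;\subset\; Shv(\cG_{\Ran, M}\times_{\Ran} V)^{\gL^+(M)_{\Ran}}\;\iso\; Shv(\cG_{\Ran, M}\times^{\gL^+(M)_{\Ran}} V),
$$
where the extra equivariance defining the full subcategory is the $\gL(U(P))_{\Ran}$-equivariance acting only on the second factor (since the action of $H_{\Ran}$ on $\cG_{\Ran,M}$ factors through $\gL^+(M)_{\Ran}$, the $\gL(U(P))_{\Ran}$-part acts trivially on the first factor).

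Next, observe that all the geometric ingredients used in the proof of Lemma~\ref{Lm_1.4.3} (and hence of Lemma~\ref{Lm_1.4.10_now}) survive after twisting by $\cG_{\Ran, M}$: the map $\id\times \gt^{\theta}_{S,\Ran}$ is obtained from $\gt^{\theta}_{S,\Ran}$ by the $\gL^+(M)_{\Ran}$-torsor base change, so the group ind-scheme $\gL(U(P))_{\Ran}$ (acting only on the second factor) is still ind-prounipotent and still acts transitively on the fibres of $\id\times \gt^{\theta}_{S,\Ran}$, since the fibres of $\id\times \gt^{\theta}_{S,\Ran}$ are canonically identified with those of $\gt^{\theta}_{S,\Ran}$. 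Given this, the fully faithfulness of $(\id\times \gt^{\theta}_{S,\Ran})^!$ with essential image the full subcategory of $\gL(U(P))_{\Ran}$-equivariant objects (i.e.\ of $\Inv$) follows from (\cite{DL}, Lemmas~A.4.4, A.4.5), applied exactly as in Lemma~\ref{Lm_1.4.3}i). The second assertion, that the composition with $(\id\times i^{\theta}_{S,\Ran})^!$ is an equivalence, then follows by the standard contraction / Braden-style argument as in Lemma~\ref{Lm_1.4.3}ii), where the $\Gm$-action by $\lambda_0$ on $S^{\theta}_{P,\Ran}$ contracts it onto $\Mod^{-,\theta,\subset}_{M,\Ran}$ and commutes with the $\gL^+(M)_{\Ran}$-action, hence descends to the twisted situation.

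The main obstacle to turning this sketch into a rigorous argument is purely bookkeeping: one must verify that the equivariant formalism of \cite{Chen} and (\cite{Ly9}, Sections~1.3.2--1.3.24) behaves well after the twist by the pro-smooth torsor $\cG_{\Ran, M}$. This is essentially automatic because the $\gL^+(M)_{\Ran}$-action on $\cG_{\Ran, M}$ is simply transitive, so all manipulations with partially defined adjoints and with Braden's theorem commute with the twist (the relevant contracting $\Gm$-action factors through $\bar T$, which lies in the center of $M$, hence commutes with the whole $\gL^+(M)_{\Ran}$-action). In practice it suffices to check the assertions Zariski-locally on $\Bun_M$ after trivializing $\cG_{\Ran, M}$, where the statement reduces verbatim to Lemma~\ref{Lm_1.4.10_now}, and then to invoke descent along $\cG_{\Ran, M}\to \Ran\times \Bun_M$, which is valid because this is a torsor under a placid pro-smooth group scheme.
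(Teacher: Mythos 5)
Your proposal is correct and follows essentially the same route as the paper, whose entire proof of this lemma is the remark that it is obtained as in Lemma~\ref{Lm_1.4.10_now}: the twist by the $\gL^+(M)_{\Ran}$-torsor $\cG_{\Ran, M}$ changes nothing because $\gL(U(P))_{\Ran}$ acts trivially on $\cG_{\Ran, M}$, is ind-prounipotent, and acts transitively along the fibres of $\id\times \gt^{\theta}_{S,\Ran}$, so (\cite{DL}, A.4.4, A.4.5) applies verbatim. The only inessential wrinkle is your appeal to the Braden/contraction argument for the last assertion: since $i^{\theta}_{S,\Ran}$ is a section of $\gt^{\theta}_{S,\Ran}$, the fact that the composition with $(\id\times i^{\theta}_{S,\Ran})^!$ is an equivalence is an immediate formal consequence of the first assertion (as in part i) of Lemma~\ref{Lm_1.4.3}), and no contraction principle is needed there.
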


\sssec{} Let $\cF\in \Inv(\Bunt_P)$. By Lemma~\ref{Lm_4.4.2}, we have to show that the natural map 
\begin{equation}
\label{map_Sect_4.4.3} 
(\id\times i^{\theta}_{S,\Ran})^!(\id\times v^{\theta}_{S,\Ran})^*(\bar\pi^0_{glob})^!\cF\to (\id\times i^{\theta}_{S,\Ran})^!(\pi^{\theta}_{glob})^!(v^{\theta}_{glob})^*\cF 
\end{equation}
is an equivalence. By construction, the functor 
$$
(\id\times i^{\theta}_{S,\Ran})^!(\id\times v^{\theta}_{S,\Ran})^*: \Inv(\cG_{\Ran, M}\times^{\gL^+(M)_{\Ran}}\bar S^0_{P,\Ran})\to Shv(\cG_{\Ran, M}\times^{\gL^+(M)_{\Ran}} \Mod^{-, \theta,\subset}_{M,\Ran})
$$ 
rewrites as
$$
(\id\times \gt^{\theta}_{P^-,\Ran})_*(\id\times v^{\theta}_{P^-,\Ran})^!
$$
for the diagram
\begin{equation}
\label{diag_for_Sect_4.4.3}
\begin{array}{ccc}
\cG_{\Ran, M}\times^{\gL^+(M)_{\Ran}} \Mod^{-, \theta,\subset}_{M,\Ran} &\getsup{\id\times  \gt^{\theta}_{P^-,\Ran}} & \cG_{\Ran, M}\times^{\gL^+(M)_{\Ran}} (\Gr^{\theta}_{P^-,\Ran}\cap \bar S^0_{P,\Ran})\\ & \swarrow\lefteqn{\scriptstyle \id\times v^{\theta}_{P^-,\Ran}} \\ \cG_{\Ran, M}\times^{\gL^+(M)_{\Ran}}  \bar S^0_{\Ran},
\end{array}
\end{equation}
compare with Section~\ref{Sect_1.5.17}. 

\sssec{} Recall the definition of the Zastava space $\cZ^{\theta}$. It classifies collections: 
\begin{itemize}
\item a point 
$$
(\cF_M, \cF'_M, D, \beta: \cF_M\,\iso\, \cF'_M\mid_{X-\supp(D)})\in \Mod_{\Bun_M}^{+,-\theta},
$$ 
\item a $P^-$-torsor $\cF'_{P^-}$ on $X$ with an isomorphism ${\cF'_{P^-}\times_{P^-} M}\,\iso\, \cF'_M$ on $X$, 
\item a trivialization $\tilde\beta: \cF_M\times_M P^-\,\iso\, \cF'_{P^-}\mid_{X-\supp(D)}$ whose extension of scalars via $P^-\to M$ is $\beta$. It is required that for any $V\in\Rep(G)^{\heartsuit}$ finite-dimensional the map
$$
\kappa^V: V^{U(P)}_{\cF_M}\toup{\tilde\beta} V_{\cF'_{P^-}},
$$
initially defined over $X-\supp(D)$, is regular over $X$. 
\end{itemize}
Write $q: \cZ^{\theta}\to \Bunt_P$ for the map sending the above point to $(\cF_M, \cF'_G, \kappa)$, where $\cF'_G=\cF'_{P^-}\times_{P^-} G$, and $\kappa$ is obtained from $\tilde\beta$ as above. Write $p: \cZ^{\theta}\to \Mod_{\Bun_M}^{+,-\theta}$ for the projection sending the above point to $(\cF_M, \cF'_M, D, \beta)$.

 Let $\gs: \Mod_{\Bun_M}^{+,-\theta}\to \cZ^{\theta}$ be the section of $p$ sending $(\cF_M, \cF'_M, D, \beta)$ to itself together with $(\cF'_{P^-}, \tilde\beta)$, where $\cF'_{P^-}=\cF'_M\times_M P^-$, and $\tilde\beta$ is obtained from $\beta$ via the extensions of scalars. Set $\oo{\cZ}{}^{\theta}=q^{-1}(\Bun_P)$. Let $j_{\cZ}: \oo{\cZ}{}^{\theta}\hook{} \cZ^{\theta}$ be the open immersion. 
 
\sssec{} 
\label{Sect_4.4.5_now}
We have a natural identification of $\id\times \gt^{\theta}_{P^-, \Ran}$ in (\ref{diag_for_Sect_4.4.3}) with the map
$$
p\times\id: \cZ^{\theta}\times_{X^{-\theta}} (X^{-\theta}\times\Ran)^{\subset}\to \Mod_{\Bun_M}^{+,-\theta}\times_{X^{-\theta}} (X^{-\theta}\times\Ran)^{\subset}
$$
So, the LHS of (\ref{map_Sect_4.4.3}) rewrites as
\begin{equation}
\label{complex_for_Sect_4.4.5}
(\id\times \gt^{\theta}_{P^-,\Ran})_*(\id\times v^{\theta}_{P^-,\Ran})^!(\bar\pi^0_{glob})^!\cF\,\iso\, (p\times\id)_*(\id\times \pr^{-\theta}_{\Ran})^!q^!\cF.
\end{equation}
We used that $\bar\pi^0_{glob}\comp (\id\times v^{\theta}_{P^-,\Ran})$ equals the composition in the top line of the diagram
$$
\begin{array}{cccc}
\cZ^{\theta}\times_{X^{-\theta}} (X^{-\theta}\times\Ran)^{\subset} & \toup{\id\times \pr^{-\theta}_{\Ran}} & \cZ^{\theta} & \toup{q} \Bunt_P\\
\downarrow\lefteqn{\scriptstyle p\times\id} && \downarrow\lefteqn{\scriptstyle p}\\
\Mod_{\Bun_M}^{+, -\theta}\times_{X^{-\theta}} (X^{-\theta}\times\Ran)^{\subset} & \toup{\id\times \pr^{-\theta}_{\Ran}} & \Mod_{\Bun_M}^{+, -\theta}
\end{array}
$$
By base change, (\ref{complex_for_Sect_4.4.5}) rewrites as 
$$
(\id\times \pr^{-\theta}_{\Ran})^!p_*q^!\cF. 
$$
By the contraction principle along the fibres of $p$ (\cite{DG1}, 3.2.2), the latter complex rewrites as $(\id\times \pr^{-\theta}_{\Ran})^!\gs^*q^!\cF$. 

\sssec{} Now we rewrite the RHS of (\ref{map_Sect_4.4.3}). We have the cartesian square
\begin{equation}
\label{cart_square_for_Sect_4.4.6}
\begin{array}{ccc}
\Mod_{\Bun_M}^{+, -\theta} & \toup{\gs} & \cZ^{\theta}\\
\downarrow\lefteqn{\scriptstyle q^{\theta}} && \downarrow\lefteqn{\scriptstyle q}\\
_{\theta}\Bunt_P & \toup{v^{\theta}_{glob}} &\Bunt_P,
\end{array}
\end{equation}
here the map $q^{\theta}$ is defined so that it sends a point $(\cF_M, \cF'_M, D,\beta)$ as above to itself together with $\cF'_P=\cF'_M\times_M P$. 
 Note that 
$$
\pi^{\theta}_{glob}(\id\times i^{\theta}_{S,\Ran})
$$ 
equals the composition
$$
\Mod_{\Bun_M}^{+, -\theta}\times_{X^{-\theta}} (X^{-\theta}\times\Ran)^{\subset}\toup{\id\times \pr^{-\theta}_{\Ran}}\Mod_{\Bun_M}^{+, -\theta}\toup{q^{\theta}} {_{\theta}\Bunt_P}.
$$
So, the RHS of (\ref{map_Sect_4.4.3}) identifies with
$$
(\id\times \pr^{-\theta}_{\Ran})^!(q^{\theta})^!(v^{\theta}_{glob})^*\cF 
$$
We conclude that the map (\ref{map_Sect_4.4.3}) is obtained from the natural transformation
\begin{equation}
\label{transformation_for_Sect_4.4.6}
\gs^*q^!\cF\to (q^{\theta})^!(v^{\theta}_{glob})^*\cF
\end{equation}
coming from the cartesian square (\ref{cart_square_for_Sect_4.4.6}) by applying $(\id\times \pr^{-\theta}_{\Ran})^!$. 

 So, it suffices to show that for $\cF\in \Inv(\Bunt_P)$ the natural transformation (\ref{transformation_for_Sect_4.4.6}) is an isomorphism. 
 
\sssec{} The category $\Inv(\Bunt_P)$ is generated by objects $(v^{\theta'}_{glob})_!(p^{\theta'}_{glob})^!K$ for $\theta'\in -\Lambda_{G,P}^{pos}$, $K\in Shv(\Mod^{+, -\theta'}_{\Bun_M})$. We check that  (\ref{transformation_for_Sect_4.4.6}) is an isomorphism for $\cF=(v^{\theta'}_{glob})_!(p^{\theta'}_{glob})^!K$.

 The RHS of (\ref{transformation_for_Sect_4.4.6}) vanishes unless $\theta=\theta'$, and in the latter case identifies with $K$. Consider the diagram
$$
\Bun_P\times_{\Bun_M} \Mod^{+,-\theta'}_{\Bun_M}\;\,\hook{j_{glob}\times\id} \;\,\Bunt_P\times_{\Bun_M} \Mod^{+,-\theta'}_{\Bun_M}\;\,\toup{\bar v^{\theta'}_{glob}} \;\,\Bunt_P,
$$ 
where we used $h^{\ra}$ to form the fibred products, and $\bar v^{\theta'}_{glob}$ was defined in Section~\ref{Sect_1.3.19_now}. By (\ref{iso_for_Lm_4.1.12}),
$$
(j_{glob}\times\id)_!(p^{\theta'}_{glob})^!K\,\iso\, ((j_{glob}\times\id)_!\omega)\otimes^! (\tilde p^{\theta'}_{glob})^!K
$$
So, 
$$
\cF\,\iso\, (\bar v^{\theta'}_{glob})_*(((j_{glob}\times\id)_!\omega)\otimes^! (\tilde p^{\theta'}_{glob})^!K).
$$ 
Set $\theta''=\theta-\theta'$. The base change of $\bar v^{\theta'}_{glob}$ by $q: \cZ^{\theta}\to \Bunt_P$ is empty unless $\theta''\in -\Lambda^{pos}_{G,P}$. From now on assume $\theta''\in -\Lambda^{pos}_{G,P}$, as otherwise both sides of (\ref{transformation_for_Sect_4.4.6}) vanish and our claim is evident.

  Consider the diagram
$$
\begin{array}{cccc}
\Mod^{+,-\theta'}_{\Bun_M}\;\getsup{\pr_2}&
\cZ^{\theta''}\times_{\Bun_M} \Mod^{+,-\theta'}_{\Bun_M}& \toup{u} & \cZ^{\theta}\\
& \downarrow\lefteqn{\scriptstyle q\times\id} && \downarrow\lefteqn{\scriptstyle q}\\
& \Bunt_P\times_{\Bun_M} \Mod^{+,-\theta'}_{\Bun_M} & \toup{\bar v^{\theta'}_{glob}} & \Bunt_P
\end{array}
$$
where we used the maps $h^{\ra}: \Mod^{+,-\theta'}_{\Bun_M}\to\Bun_M$ to form the fibred products, the square is cartesian (so defining the map $u$), and $\pr_2$ is the projection. We get
$$
(q \times\id)^! (((j_{glob}\times\id)_!\omega)\otimes^! (\tilde p^{\theta'}_{glob})^!K)\,\iso\, \pr_2^!K\otimes^! (q \times\id)^! (j_{glob}\times\id)_!\omega.
$$
As in (\cite{DL}, 4.2.25), one has canonically
$$
(j_{\cZ}\times\id)_!\omega\,\iso\, (q \times\id)^! (j_{glob}\times\id)_!\omega
$$
for  $j_{\cZ}\times\id: \oo{\cZ}{}^{\theta''}\times_{\Bun_M} \Mod^{+,-\theta'}_{\Bun_M}\hook{} \cZ^{\theta''}\times_{\Bun_M} \Mod^{+,-\theta'}_{\Bun_M}$. Thus, the LHS of (\ref{transformation_for_Sect_4.4.6}) rewrites as
\begin{equation}
\label{complex_for_Sect_4.4.7}
\gs^*u_!(\pr_2^!K\otimes^! (j_{\cZ}\times\id)_!\omega)
\end{equation}
Recall that the composition $\cZ^{\theta''}\to \Bunt_P\to \Bun_M$ is a locally trivial fibration in the smooth topology, and for this reason $(j_{\cZ})_!\omega$ is ULA under the latter composition. So, by Lemma~\ref{Lm_4.1.13}, 
$$
\pr_2^!K\otimes^! (j_{\cZ}\times\id)_!\omega\,\iso\, (j_{\cZ}\times\id)_!(\pr_2^0)!K
$$
with $\pr_2^0=\pr_2(j_{\cZ}\times\id)$. 

 
 Consider the diagram
$$
\oo{\cZ}{}^{\theta''}\times_{\Bun_M} \Mod^{+,-\theta'}_{\Bun_M}\, \toup{u^0} \, \cZ^{\theta}\,\getsup{\gs} \, \Mod^{+, -\theta}_{\Bun_M}, 
$$
where $u^0$ is the restriction of $u$. The limit of this diagram is empty unless $\theta''=0$.  
It follows that the LHS of (\ref{transformation_for_Sect_4.4.6}) vanishes unless $\theta''=0$. 
In the case $\theta=\theta'$ the map $u$ identifies with $\gs$, so that (\ref{complex_for_Sect_4.4.7}) becomes 
$$
\gs^*\gs_!K\,\iso\, K.
$$
So, (\ref{transformation_for_Sect_4.4.6}) is an isomorphism for $\cF$ of our form. Proposition~\ref{Pp_4.3.3_key} is proved. \QED

\ssec{Restriction to strata revisited}

\sssec{} For $\theta\in -\Lambda_{G,P}^{pos}$ recall the morphisms
$$
\Mod^{-,\theta}_M/\gL^+(M)_{\theta}\getsup{\pi^{\theta}_{\div, loc}} \Mod^{+, -\theta}_{\Bun_M} \getsup{p^{\theta}_{glob}} {_{\theta}\Bunt_P}\toup{v^{\theta}_{glob}} \Bunt_P
$$
Here $\pi^{\theta}_{\div, loc}$ is defined in Section~\ref{Sect_5.3.21_now}, and $p^{\theta}_{glob}$ in Section~\ref{Sect_1.3.19_now}. Our Theorems~\ref{Thm_loc_glob_equiv}, \ref{Th_4.2.12_global_compatibility} combined with Corollary~\ref{Cor_3.3.5} yield the following. 

\begin{Thm} 
\label{Th_5.5.2}
For $\theta\in -\Lambda_{G,P}^{pos, *}$ one has canonical isomorphisms in $Shv(\Mod^{+, -\theta}_{\Bun_M})$
\begin{multline*}
(v^{\theta}_{glob})^*\IC_{\Bunt_P}[\dim\Bunt_P+\<\theta, 2\check{\rho}-2\check{\rho}_M\>]\,\iso\\ (p^{\theta}_{glob})^!(\pi^{\theta}_{div, loc})^! \Sat_{M, X^{-\theta}}(\Fact(\cO(U(\check{P}))_{<0})_{\theta})
\end{multline*}
and
\begin{multline*}
(v^{\theta}_{glob})^!\IC_{\Bunt_P}[\dim\Bunt_P+\<\theta, 2\check{\rho}-2\check{\rho}_M\>]\,\iso\\ (p^{\theta}_{glob})^!(\pi^{\theta}_{div, loc})^! \DD\Sat_{M, X^{-\theta}}(\Fact(\cO(U(\check{P}))_{<0})_{\theta}).
\end{multline*}
\end{Thm}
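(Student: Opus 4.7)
The plan is to deduce Theorem~\ref{Th_5.5.2} by stratum-wise applying the local-to-global compatibility of Theorem~\ref{Th_4.2.12_global_compatibility} together with the $*$-restriction formulas for $'\!\IC^{\frac{\infty}{2}}_{P,\Ran}$ from Corollary~\ref{Cor_3.3.5}, transporting the result from the ``less global'' space $\cG_{\Ran, M}\times^{\gL^+(M)_{\Ran}}\bar S^0_{P,\Ran}$ down to $\Bunt_P$ via the base-change machinery of Proposition~\ref{Pp_4.3.3_key} and Lemma~\ref{Lm_4.3.18}. The $!$-restriction statement will then be extracted by applying Verdier duality, using that $\IC_{\Bunt_P}$ is Verdier self-dual on each connected component and that $\Sat_{M, X^{-\theta}}$ intertwines the natural dualities on $\Fact(\Rep(\check{M})_{<0})_{\theta}$ and $\Sph_{M, X^{-\theta}, +}^{constr}$.

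Concretely, I would first apply the functor $(\id\times v^{\theta}_{S,\Ran})^*$ to both sides of the isomorphism of Theorem~\ref{Th_4.2.12_global_compatibility}. On the global side, Proposition~\ref{Pp_4.3.3_key} converts $(\id\times v^{\theta}_{S,\Ran})^*(\bar\pi^0_{glob})^!$ into $(\pi^{\theta}_{glob})^!(v^{\theta}_{glob})^*$; on the local side, Lemma~\ref{Lm_4.3.18} converts $(\id\times v^{\theta}_{S,\Ran})^*(\bar\pi^0_{loc})^!$ into $(\pi^{\theta}_{loc})^!(v^{\theta}_{S,\Ran})^*$, and then Corollary~\ref{Cor_3.3.5}(i)--(ii) identifies $(v^{\theta}_{S,\Ran})^*('\!\IC^{\frac{\infty}{2}}_{P,\Ran})$ with $(\gt^{\theta}_{S,\Ran})^!(\tau^{\theta}_{untl})^!\Sat_{M, X^{-\theta}}(\Fact(\cO(U(\check{P}))_{<0})_{\theta})$ up to shift. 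Combining the identity $\gt^{\theta}_{S,\Ran}\pi^{\theta}_{loc} = \pi^{\theta}_{\Mod, loc}(\id\times \gt^{\theta}_{S,\Ran})$ used in Step~1 of the proof of Theorem~\ref{Th_4.2.12_global_compatibility} with the commuting square $\tau^{\theta}\pi^{\theta}_{\Mod, loc} = \pi^{\theta}_{\div, loc}(\id\times \pr^{-\theta}_{\Ran})$, both sides become $!$-pullbacks from $\Mod^{+,-\theta}_{\Bun_M}$. The fully faithfulness provided by Corollary~\ref{Cor_4.3.9_now}, Lemma~\ref{Lm_4.3.11}, and the commutativity of the diagram of Proposition~\ref{Pp_4.2.10_now} then allows one to descend the isomorphism to $\Mod^{+,-\theta}_{\Bun_M}$ and factor out $(p^{\theta}_{glob})^!$, yielding the first isomorphism of Theorem~\ref{Th_5.5.2}. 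For the $!$-restriction statement, one repeats the argument with $(\id\times v^{\theta}_{S,\Ran})^*$ replaced by $(\id\times v^{\theta}_{S,\Ran})^!$, invoking the Verdier self-duality of $\IC_{\Bunt_P}$ and the compatibility of $\Sat_{M, X^{-\theta}}$ with Verdier duality to pass from the $*$- to the $!$-fibre on the Satake side.

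The main difficulty I anticipate is the careful bookkeeping of cohomological shifts and t-exactness properties across all the base changes. In particular, the shift $[\dim\Bunt_P + \<\theta, 2\check{\rho}-2\check{\rho}_M\>]$ of Theorem~\ref{Th_5.5.2} must be recovered by reconciling the $[\dim\Bun_P]$ shift from Theorem~\ref{Th_4.2.12_global_compatibility} with the relative-dimension shift along the smooth map $p^{\theta}_{glob}$ (computed via Proposition~\ref{Pp_4.3.16_now}) and the $[-\<\theta, 2\check{\rho}-2\check{\rho}_M\>]$ shift inherited from Corollary~\ref{Cor_3.3.5}, all while keeping track of the component dependence $\eta\in\Lambda_{G,P}$. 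A secondary subtlety, specific to the $!$-restriction, is ensuring that $\Sat_{M, X^{-\theta}}(\Fact(\cO(U(\check{P}))_{<0})_{\theta})$ actually lies in the constructible subcategory $\Sph_{M, X^{-\theta}, +}^{constr}$ where the Verdier duality $\DD$ appearing in the statement is well-defined, and that the duality on this category is compatible with the one on $\Rep(\check{M})^-_{<0}$ via $\Sat_{M, X^{-\theta}}$.
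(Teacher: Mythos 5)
Your treatment of the first isomorphism is essentially the paper's own proof: the chain Corollary~\ref{Cor_3.3.5} $\to$ Lemma~\ref{Lm_4.3.18} $\to$ Theorem~\ref{Th_4.2.12_global_compatibility} $\to$ Proposition~\ref{Pp_4.3.3_key}, read inside $\Inv(\cG_{\Ran, M}\times^{\gL^+(M)_{\Ran}} S^{\theta}_{P,\Ran})_{untl}\,\iso\, Shv(\Mod^{+,-\theta}_{\Bun_M})$ (Corollary~\ref{Cor_4.3.9_now}, Lemma~\ref{Lm_4.3.11}) and then descended using full faithfulness of $(\id\times\gt^{\theta}_{S,\Ran})^!(\id\times\pr^{-\theta}_{\Ran})^!$, together with the two commuting squares relating $\pi^{\theta}_{loc}$, $\pi^{\theta}_{\Mod, loc}$, $\tau^{\theta}$ and $\pi^{\theta}_{\div, loc}$, is exactly what the paper does, shifts included.

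The gap is in your plan for the second isomorphism. "Repeating the argument with $(\id\times v^{\theta}_{S,\Ran})^*$ replaced by $(\id\times v^{\theta}_{S,\Ran})^!$" reduces, after the (now trivial) commutation of $!$-pullbacks, to knowing the $!$-restriction $(v^{\theta}_{S,\Ran})^!\,{'\!\IC^{\frac{\infty}{2}}_{P,\Ran}}$ on the stratum. But this is precisely Conjecture~\ref{Con_3.3.8}, which is not proved in the paper; indeed Remark~\ref{Rem_4.3.10_now} points out that it is Theorem~\ref{Th_5.5.2} that (almost) yields the conjecture, so deriving the $!$-statement this way is circular. Likewise, the "compatibility of $\Sat_{M, X^{-\theta}}$ with Verdier duality", intertwining dualities on $\Fact(\Rep(\check{M})_{<0})_{\theta}$ and $\Sph_{M, X^{-\theta},+}^{constr}$, is neither established in the paper nor needed: in the statement $\DD$ simply sits in front of the Satake object.

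The correct route, and the paper's, stays entirely on the global, finite-type side: apply $\DD$ to the already-proved first isomorphism in $Shv(\Mod^{+,-\theta}_{\Bun_M})$. One uses that $\IC_{\Bunt_P}$ is self-dual, that $\DD$ exchanges $(v^{\theta}_{glob})^*$ and $(v^{\theta}_{glob})^!$, that $p^{\theta}_{glob}$ is smooth so $\DD$ commutes with $(p^{\theta}_{glob})^!$ up to the shift already recorded in Proposition~\ref{Pp_4.3.16_now}, and — the one nontrivial point you omit — that $(\pi^{\theta}_{\div, loc})^![-2\dim\Bun_M]\,\iso\,(\pi^{\theta}_{\div, loc})^*$, because the $\gL^+(M)_{\theta}$-action on $\Mod_M^{-,\theta}$ factors through a smooth group scheme of finite type over $X^{-\theta}$; this is what lets $\DD$ pass through $(\pi^{\theta}_{\div, loc})^!$ and produces the displayed $\DD\Sat_{M, X^{-\theta}}(\Fact(\cO(U(\check{P}))_{<0})_{\theta})$ with the correct shift. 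Your concern that the Satake object lies in the constructible subcategory where $\DD$ makes sense is legitimate but easy: for fixed $\theta$ the graded piece $\cO(U(\check{P}))_{\theta}$ is finite-dimensional, so $\Fact(\cO(U(\check{P}))_{<0})_{\theta}$ and its Satake image on the finite-type scheme $\Mod_M^{-,\theta}$ are constructible.
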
 
\begin{proof}
Consider the diagram
$$
\begin{array}{ccccc}
&& \cG_{\Ran, M}\times^{\gL^+(M)_{\Ran}}\Mod^{-,\theta,\subset}_{M,\Ran} & \getsup{\id\times\gt^{\theta}_{S,\Ran}} & \cG_{\Ran, M}\times^{\gL^+(M)_{\Ran}} S^{\theta}_{P,\Ran}\\
& \swarrow\lefteqn{\scriptstyle \id\times\pr^{-\theta}_{\Ran}} & \downarrow\lefteqn{\scriptstyle \pi^{\theta}_{\Mod, loc}} && \downarrow\lefteqn{\scriptstyle \pi^{\theta}_{loc}}\\
\Mod^{+, -\theta}_{\Bun_M} && \gL^+(M)_{\Ran}\backslash \Mod^{-,\theta,\subset}_{M,\Ran} & \getsup{\gt^{\theta}_{S,\Ran}} & \gL^+(M)_{\Ran}\backslash S^{\theta}_{P,\Ran}\\
& \searrow\lefteqn{\scriptstyle \pi^{\theta}_{\div, loc}} & \downarrow\lefteqn{\scriptstyle \tau^{\theta}}\\
&& \gL^+(M)_{\Ran}\backslash \Mod^{-,\theta}_M,
\end{array}
$$
here $\tau^{\theta}$ is defined in Section~\ref{Sect_1.6.2}. 
From Theorem~\ref{Th_4.2.12_global_compatibility} we get canonical isomorphisms
\begin{multline*}
(\pi^{\theta}_{loc})^!(\gt^{\theta}_{S,\Ran})^!(\tau^{\theta})^!\Sat_{M, X^{-\theta}}(\Fact(\cO(U(\check{P}))_{<0})_{\theta})[\<\theta, 2\check{\rho}_M-2\check{\rho}\>]
\,\overset{\ref{Cor_3.3.5}}{\iso}\\
(\pi^{\theta}_{loc})^!(v^{\theta}_{S,\Ran})^*('\IC^{\frac{\infty}{2}}_{P,\Ran})
\, \overset{\ref{Lm_4.3.18}}{\iso}\,
(\id\times v^{\theta}_{S,\Ran})^*(\bar\pi^0_{loc})^!('\IC^{\frac{\infty}{2}}_{P,\Ran})
\,\overset{\ref{Th_4.2.12_global_compatibility}}{\iso}\\
(\id\times v^{\theta}_{S,\Ran})^*(\bar\pi^0_{glob})^!\IC_{\Bunt_P}[\dim\Bunt_P]\,\overset{\ref{Pp_4.3.3_key}}{\iso}\,(\pi^{\theta}_{glob})^!(v^{\theta}_{glob})^*\IC_{\Bunt_P}[\dim\Bunt_P] 
\end{multline*}
in 
$$
\Inv(\cG_{\Ran, M}\times^{\gL^+(M)_{\Ran}} S^{\theta}_{P,\Ran})_{untl}\,\iso\, Shv(\Mod^{+, -\theta}_{\Bun_M})
$$ 
where the latter equivalence is given by Corollary~\ref{Cor_4.3.9_now} and Lemma~\ref{Lm_4.3.11}. Now chasing the above diagram yields the first isomorphism, since 
$$
(\id\times\gt^{\theta}_{S,\Ran})^!(\id\times\pr^{-\theta}_{\Ran})^!
$$ 
is fully faithful.

 The second one is obtained by passing to the Verdier duality. We used the fact that the $\gL^+(M)_{\theta}$-action on $\Mod_M^{-,\theta}$ factors through an action of a smooth group scheme of finite type over $X^{-\theta}$, so $(\pi_{\div, loc}^{\theta})^![-2\dim\Bun_M]\,\iso\, (\pi_{\div, loc}^{\theta})^*$ with our conventions.
\end{proof} 
 
\begin{Rem} Theorem~\ref{Th_5.5.2} after further $*$-restriction to a stratum
$$
\Mod_{\Bun_M}^{+, \gU(-\theta)}\times_{\Bun_M}\Bun_P\hook{} {_{\theta}\Bunt_P}
$$ has been established before in a {\rm non-canonical form} in (\cite{BFGM}, Theorem~1.12). In the case of $B=P$ this is the result of Gaitsgory (\cite{Gai19Ran}, Theorem~3.9.3). 
\end{Rem}
 
\appendix

\section{Generalities}

\ssec{Invariants under category objects}
\label{Sect_Invariants_under category objects}
\sssec{} By a \select{category object} in $C\in 1-\Cat$ we mean a map $\cX: \bfitDelta^{op}\to C$ such that for any $n\ge 0$ the morphisms $[1]\toup{i, i+1} [n]$ yield an isomorphism 
$$
\cX[n]\,\iso\, \cX[1]\times_{\cX[0]} \cX[1]\times_{\cX[0]}\ldots \cX[1],
$$ 
where $[1]$ appears $n$ times. Then we say that $\cX[1]$ \select{acts on $\cX[0]$ on the right}. For $0,1: [0]\to [1]$ we denote by $s,t: \cX[1]\to \cX[0]$ the corresponding maps. Here $s,t$ stand for the \select{source} and the \select{target}. 

 Given a map $\tau: c\to \cX[0]$ in $C$, we say that the $\cX[1]$-action on $\cX[0]$ \select{is extended} to a right $\cX$-action on $c$ if the following holds. We are given a category object $\cX': \bfitDelta^{op}\to C$ and a map $\cX'\to \cX$ of category objects in $C$ such that  $\cX'[0]\to \cX[0]$ is the map $\tau$, and the square is cartesian
$$
\begin{array}{ccc}
\cX'[0] &\getsup{s} & \cX'[1]\\
\downarrow\lefteqn{\scriptstyle\tau} && \downarrow\\
\cX[0] & \getsup{s} & \cX[1]
\end{array}
$$
In this situation the square
$$
\begin{array}{ccc}
\cX'[0] &\getsup{t} & \cX'[1]\\
\downarrow\lefteqn{\scriptstyle\tau} && \downarrow\\
\cX[0] & \getsup{t} & \cX[1]
\end{array}
$$
is not necessarily cartesian. We think of $t: \cX'[1]\to c$ as the action map.  

\begin{Rem} Suppose $C$ admits finite limits, let $Corr(C)$ be the category of correspondences in $C$ (cf. \cite{R}). The following is established in (\cite{G}, published version, Cor. 4.4.5 of Chapter 9). Let $\cX$ be a category object in $C$. Then $\cX[1]\in Alg(Corr(C))$ with the product given by the diagram 
$$
\cX[1]\times \cX[1]\gets \cX[1]\times_{t, \cX[0], s} \cX[1]\toup{m} \cX[1], 
$$
where $m$ is the composition map. The unit is the diagram $*\gets X[0]\toup{u} X[1]$, where $u$ is the unit. 
\end{Rem}

\sssec{} Recall the involution $rev: \bfitDelta\,\iso\, \bfitDelta$ defined in (\cite{G}, I.1, 1.1.10). It preserves the category objects inside $\Fun(\bfitDelta^{op}, C)$. For a category object $\cX$ the composition with $rev$ is denoted $\cX^{rm}$, the same category object with reversed multiplication. 

\sssec{} 
\label{Sect_A.0.4}
Let $\cX: \bfitDelta^{op}\to \PreStk_{lft}$ be a category object with $S=\cX[0], H=\cX[1]$, so $H$ acts on $S$ on the right. Then set 
$$
S/H=\underset{[n]\in\bfitDelta^{op}}{\colim} \cX[n].
$$
We think of it as \select{the quotient of $S$ by $H$} in the sense of prestacks.

 Consider the subcategory $\bfitDelta_s\subset \bfitDelta$ with all objects and morphisms which are injective maps $[n]\to [m]$ (\cite{HTT}, 6.5.3.6). By (\cite{HTT}, 6.5.3.7), $\bfitDelta_s^{op}\hook{} \bfitDelta^{op}$ is cofinal, so in the definition of $S/H$ we may equivalenty take the colimit over $\bfitDelta_s^{op}$.

 Define the category of $H$-equivariant objects $Shv(S)^H$ of $Shv(S)$ as $\Tot(Shv(\cX([\bullet]))$. Here we applied the functor $Shv: (\PreStk_{lft})^{op}\to\DGCat_{cont}$ to $\cX$. So, by definition $Shv(S)^H\,\iso\, Shv(S/H)$. 
In this generality, $\oblv: Shv(S)^H\to Shv(S)$ is comonadic by (\cite{HA}, 4.7.5.1). 

Let's call the unit category object acting on $S$ the constant functor $\bfitDelta^{op}\to \PreStk_{lft}$ with value $S$. The unit section yields a morphism of category objects $S\to H$ acting on $S$. Note that $S/S\,\iso\, S$, so $Shv(S)^S\,\iso\, Shv(S)$. Applying the invariants, we get a functor $Shv(S)^H\to Shv(S)^S\,\iso\,Shv(S)$, which is the evaluation at $[0]\in\bfitDelta$. 

 

\sssec{} The target map $t: H\to S$ is naturally lifted to a right action of $H$ on itself, so that the map $t: H\to S$ attached to $[0]\toup{1} [1]$ is $H$-equivariant. Namely, we have the functor $\eta: \bfitDelta\to \bfitDelta$, $[n]\mapsto [0]\star [n]$ together with the natural transformation $[n]\to [0]\star [n]$ functorial in $[n]\in\bfitDelta$. The right action of $H$ on itself is obtained as the composition 
$$
\bfitDelta^{op}\toup{\eta}\bfitDelta^{op}\toup{\cX} C
$$ 

 Recall the category $\bfitDelta_{-\infty}$ from (\cite{HA}, 4.7.2.1). Its objects are $[n]$ for integers $n\ge -1$, and morphisms from $[n]$ to $[m]$ are nondecreasing maps $\alpha:  \{-\infty\}\cup [n]\to \{-\infty\}\cup [m]$ with $\alpha(-\infty)=-\infty$, here we view $-\infty$ as the least element of both categories.  
 
 Let $\phi: \bfitDelta_{-\infty}^{op}\to \bfitDelta^{op}$ be the functor given by $\{-\infty\}\cup [n]\mapsto \{-\infty\}\star[n]$. A map $f: \{-\infty\}\cup [n]\to \{-\infty\}\cup [m]$ in $\bfitDelta_{-\infty}$ is sent to the induced map $\{-\infty\}\star[n]\to \{-\infty\}\star[m]$.
 
 Restricting $\cX$ along $\phi$, we get a split augmented simplicial object in the sense of (\cite{HA}, 4.7.2.2). The corresponding augmented simplicial object is a colimit diagram by (\cite{HA}, 4.7.2.3), namely, 
$$
\underset{[n]\in\bfitDelta^{op}}{\colim} H\times_{t, S, s} H^n_S\,\iso\, S
$$ 
in $\PreStk_{lft}$. This says that the quotient of $H$ by the natural right action of $H$ on itself identifies with $S$. Here 
$$
H^n_S=H\times_{t, S, s} H\times_{t, S, s}\ldots\times_{t, S, s} H, 
$$
where $H$ appears $n$ times. 
   
 Consider the inclusion $[n]\hook{} \{-\infty\}\star[n]$ functorial in $[n]\in\bfitDelta$, it gives a morphism of simplicial diagrams 
$$
\alpha_n: H\times_{t, S, s} H^n_S\to H^n_S
$$ 
(functorial in $[n]\in\bfitDelta$). Passing to the colimit, this gives the map 
$$
S\,\iso\, \mathop{\colim}\limits_{[n]\in\bfitDelta^{op}} H\times_{t, S, s} H^n_S\to  \mathop{\colim}\limits_{[n]\in\bfitDelta^{op}} H^n_S \,\iso\, S/H,
$$
which is the natural map $f: S\to S/H$.

\sssec{} 
\label{Sect_A.0.6}
Assume $t: H\to S$ universally homologically contractible in the sense of (\cite{Gai19Ran}, A.1.8). So, for $n\ge 0$ the functor $\alpha_{n}^!: Shv(H^n_S)\to Shv(H\times_{t, S, s} H^n_S)$ is fully faithful. Passing to the limit we conclude that $f^!: Shv(S/H)\to Shv(S)$ is fully faithful. 

 Note that $K\in Shv(S)$ lies in the full subcategory $Shv(S/H)$ iff $s^!(K)$ lies in the essential image of the full embedding $t^!: Shv(S)\to Shv(H)$. We also write $Shv(S)_{untl}=Shv(S/H)$. 

\begin{Rem} 
\label{Rem_A.1.7}
Let $\cX$ be a category object in $\PreStk_{lft}$ such that the source map $s: H\to S$ is universally homologically contractible. Applying the above for $\cX^{rm}$, we see that $f^!: Shv(S/H)\to Shv(S)$ is fully faithful. An object $K\in Shv(S)$ lies in the essential image of $f^!$ iff $t^!K$ lies in the essential image of $s^!: Shv(S)\to Shv(H)$.

 Assume in addition $h: Z\to S$ is a map in $\PreStk_{lft}$, and the right $H$-action on $S
$ is extended to a right $H$-action on $Z$. Then $h^!: Shv(S)\to Shv(Z)$ induces a functor $h^!: Shv(S)_{untl}\to Shv(Z)_{untl}$. Assume, moreover, that $h$ is ind-schematic (of ind-finite type), and the square is cartesian
$$
\begin{array}{ccc}
Z\times_{h, S,s} H & \toup{t} & Z\\
\downarrow && \downarrow\lefteqn{\scriptstyle h}\\
H & \toup{t} & S.
\end{array}
$$
then $h_*: Shv(Z)\to Shv(S)$ induces a functor $h_*: Shv(Z)_{untl}\to Shv(S)_{untl}$.  
\end{Rem}

\ssec{Some coinvariants}

\sssec{} Let $I\in fSets$, $G$ be a smooth affine algebraic group of finite type. It is well known that $\gL(G)_I\to X^I$ is a placid group ind-scheme. For the convenience of the reader, we give an argument here.

 For $S\in\Sch^{aff}$ let $\cI\subset \Map(S, X)$ be an $S$-point of $\Ran$. For $n\ge 1$ let $n\Gamma_{\cI}\subset S\times X$ be the closed subscheme given by the $n$-th power of the sheaf of ideals defining $\Gamma_{\cI}\subset S\times X$. So, $\underset{n\in \NN}{\colim} \; n\Gamma_{\cI}\,\iso\,\cD_{\cI}$, where the colimit is calculated in $\Sch^{aff}$. 
Let $\gL^+_n(G)_I\to X^I$ be the group scheme sending an $S$-point $\cI$ of $X^I$ to $\Map(n\Gamma_{\cI}, G)$. In particular, $\gL^+_0(G)_I$ is trivial. Note that $\gL^+(G)_I\,\iso\, \underset{n\in \NN^{op}}{\lim} \gL^+_n(G)_I$. Let 
$$
K_{n, I}=\Ker(\gL^+(G)_I\to \gL^+_n(G)_I),
$$ 
in particular $K_{0, I}=\gL^+(G)_I$. For $n>0$ the group scheme $K_{n, I}\to X^I$ is prounipotent.
 
  For each $n\ge 0$, the quotient in the sense of etale stacks $\gL(G)_I/K_{n, I}$ is an ind-scheme of ind-finite type over $X^I$. For $n<m$ the map $f_{n,m}: \gL(G)_I/K_{m, I}\to \gL(G)_I/K_{n, I}$ is a $K_{n, I}/K_{m,I}$-torsor. Then $\gL(G)\,\iso\, \underset{n\in \NN^{op}}{\lim} \gL(G)_I/K_{n, I}$ taken in $\PreStk$. To see this is a placid ind-scheme over $X^I$ argue as in (\cite{Ly4}, 0.0.51, 0.0.36).   

 Namely, pick a presentation $\Gr_{G, I}\,\iso\, \underset{i\in I}{\colim} Y_i$, where $I$ is small filtered, $Y_i\in\Sch_{ft}$, and for $i\to i'$ in $Y_i\to Y_{i'}$ is a closed immersion. Let 
$$
\cZ_i=\underset{n\in \NN^{op}}{\lim} (\gL(G)_I/K_{n, I})\times_{\Gr_{G, I}} Y_i.
$$ 
Then $\cZ_i$ is a placid scheme over $X^I$. If $i\to i'$ is a map in $I$ then $\cZ_i\to \cZ_{i'}$ is a placid closed immersion, and $\underset{i\in I}{\colim} \cZ_i\,\iso\, \gL(G)$ is $\PreStk$. 
 
 Note that 
$$
Shv(\gL(G)_I)\,\iso\, \underset{n\in \NN^{op}}{\lim} Shv(\gL(G)_I/K_{n, I})
$$
with respect to the transition functors $(f_{n,m})_*$ for $n<m$. One has also
$$
Shv(\gL(G)_I)^{\gL^+(G)_I}\,\iso\, Shv(\Gr_{G,I}),
$$
where we used the $\gL^+(G)_I$-action on $\gL(G)_I$ by right translations. This is a particular case of the following claim established in (\cite{Ly4}, 0.0.36-0.0.40).
\begin{Lm} Let $S\in \Sch_{ft}$, $H\to S$ be a placid group ind-scheme over $S$, $G\subset H$ be a closed placid group subscheme of $H$ over $S$. Assume $G$ is prosmooth over $S$. Then $Shv(H)^G\,\iso\, Shv(H/G)$ canonically, so that $\oblv: Shv(H)^G\to Shv(H)$ identifies with $q^*: Shv(H/G)\to Shv(H)$ for $q: H\to H/G$. Here we used the action of $G$ on $H$ by right translations. 
\QED
\end{Lm}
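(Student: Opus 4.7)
The plan is to establish the equivalence $Shv(H)^G \,\iso\, Shv(H/G)$ by a two-step reduction: first from the placid group ind-scheme $H$ to its $G$-stable placid closed subschemes, and then from the placid prosmooth group $G$ to smooth quotients of finite type. The idea is to use the fact, recorded in Section~\ref{Sect_A.0.6}, that a universally homologically contractible map $t\colon H\to S$ yields a fully faithful $!$-pullback $f^!\colon Shv(S/H)\hook{} Shv(S)$, and to apply this to the structural prosmooth quotient maps of $G$.

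First I would present $H\,\iso\,\colim_{i\in\cI} H_i$ as a filtered colimit of placid closed subschemes $H_i$, chosen to be stable under the right $G$-action. This is possible because right translation preserves the ind-structure of $H$ (being a group ind-scheme action) and because one can enlarge any $H_i$ to its orbit closure, which remains placid since $G$ is placid and the action is ind-schematic. Correspondingly $H/G\,\iso\,\colim_i H_i/G$, so by the limit presentations
$$
Shv(H)^G \,\iso\, \lim_{i\in \cI^{op}} Shv(H_i)^G, \qquad Shv(H/G) \,\iso\, \lim_{i\in \cI^{op}} Shv(H_i/G)
$$
it suffices to establish the equivalence $Shv(H_i)^G \,\iso\, Shv(H_i/G)$ compatibly with $\oblv$ for each placid scheme $H_i$.

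Next I would present $G\,\iso\,\lim_{n\in\NN^{op}} G_n$ with each $G_n\to S$ smooth of finite type, transition maps smooth surjective, and $K_n=\Ker(G\to G_n)$ closed normal prosmooth subgroup schemes of $G$. Since $H_i$ is placid over $S$ and $G_n$ is of finite type, the standard argument (as in the cited \cite{Ly4}, 0.0.36--0.0.40) shows that the $G$-action on $H_i$ factors through some $G_{n}$. Now for this $n$, the group $K_n$ acts trivially on $H_i$, so the source map of the category object $K_n \times_S H_i \rightrightarrows H_i$ is the projection, which is prosmooth and hence universally homologically contractible relative to $S$; by Section~\ref{Sect_A.0.6} this gives $Shv(H_i)^G \,\iso\, Shv(H_i)^{G_n}$. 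For the residual smooth action of $G_n$ of finite type, classical smooth descent yields $Shv(H_i)^{G_n}\,\iso\, Shv(H_i/G_n)\,\iso\, Shv(H_i/G)$, compatibly with $\oblv$ and $q^*$.

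Assembling the pieces by taking the limit over $i\in\cI^{op}$ yields the desired canonical equivalence $Shv(H)^G\,\iso\, Shv(H/G)$, with $\oblv$ corresponding to $q^*$. The main obstacle I anticipate is the technical verification in Step~2 that a $G$-action on a placid scheme $H_i$ necessarily factors through a finite-type quotient $G_n$, and that this factorization is functorial in $i$ so as to assemble coherently in the colimit. This is precisely the content of the cited material from \cite{Ly4} and rests on the compatibility between the cofiltered pro-presentation of $G$ and the cofiltered presentation of the placid scheme $H_i$ via smooth affine transition maps; the prosmoothness of $G$ (rather than mere placidity) enters exactly to ensure the universal homological contractibility required to apply Section~\ref{Sect_A.0.6}.
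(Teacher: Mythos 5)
There is a genuine gap in your Step~2, and it is fatal to the strategy. You claim that because $H_i$ is placid and $G_n$ is of finite type, "the $G$-action on $H_i$ factors through some $G_n$". The factorization statement you are invoking holds for actions of a prosmooth group on a scheme (or ind-scheme) of \emph{finite type}, where finite presentation forces the action map to factor through a deep enough finite-type quotient; it does not hold for actions on placid (pro-finite-type) schemes. In the situation at hand it demonstrably fails: the action of $G$ on $H$ is by right translations, hence free, so on any nonempty $G$-stable closed piece $H_i$ no subgroup $K_n=\Ker(G\to G_n)$ can act trivially. Your picture is also inconsistent with the geometry: if the action factored through $G_n$, then $H_i/G$ would equal $H_i/G_n$, a quotient of an infinite-dimensional placid scheme by a finite-type group, whereas in the motivating example ($H=\gL(G)_I$, $G=\gL^+(G)_I$, $H_i$ the preimage of a closed finite-type subscheme $Y_i\subset\Gr_{G,I}$) the quotient $H_i/G\,\iso\,Y_i$ is of finite type. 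Consequently the chain $Shv(H_i)^G\,\iso\,Shv(H_i)^{G_n}\,\iso\,Shv(H_i/G_n)\,\iso\,Shv(H_i/G)$ breaks at the first step. (A secondary point: even for a genuinely trivial $K_n$-action, invariants are "no condition" only when $K_n$ is pro-unipotent, or at least has homologically contractible fibres; prosmoothness alone does not give the universal homological contractibility needed to apply Section~\ref{Sect_A.0.6} — think of a torus acting trivially.)

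The reduction should go the other way, which is what the paper sketches just before the lemma and what the cited reference carries out: instead of shrinking $G$ on pieces of $H$, quotient $H$ by the normal subgroups $K_n\subset G$. One has $H\,\iso\,\lim_n H/K_n$ with each $H/K_n$ an ind-scheme of ind-finite type, the maps $H/K_m\to H/K_n$ being $K_n/K_m$-torsors, and $Shv(H)$ is by definition built from the categories $Shv(H/K_n)$. On $H/K_n$ the action of $G$ genuinely factors through the smooth finite-type group $G/K_n$, and $H/K_n\to H/G$ is a $G/K_n$-torsor, so smooth descent at each finite level gives $Shv(H/K_n)^{G/K_n}\,\iso\,Shv(H/G)$; one then passes to the (co)limit over $n$, compatibly with $i$ and with the identification of $\oblv$ with $q^*$. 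So your Step~1 (writing $H$ as a colimit of $G$-stable placid closed subschemes and commuting invariants with the limit) is fine, but Step~2 must be replaced by this descent along the tower $H/K_n$ rather than a factorization of the action on $H_i$ itself.
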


\ssec{Some invariants}

\begin{Lm} 
\label{Lm_A.1.3}
Let $S\in\Sch_{ft}$, let $G, U$ be placid group schemes over $S$ with $U$ prounipotent. Assume $G$ acts on $U$ by automorphisms of group schemes over $S$. Let $H=G\rtimes U$ be the semi-direct product. Let $Y\,\iso\, \underset{i\in I}{\colim} Y_i$ be an ind-scheme of ind-finite type over $S$, where $I$ is small filtered, $Y_i\in (\Sch_{ft})_{/S}$, and for $i\to i'$ in $I$, $f_{ii'}: Y_i\to Y_{i'}$ is a closed immersion (over $S$). Assume $H$ acts on $Y$ over $S$ preserving each $Y_i$ and acting on $Y_i$ through a finite-dimensional quotient group scheme (with a prounipotent kernel). Then in the constructible context the functor $\oblv: Shv(Y)^H\to Shv(Y)^G$ admits a left adjoint $\Av^U_!$.
\end{Lm}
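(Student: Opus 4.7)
My plan is to build $\Av^U_!$ by reducing first to the individual $Y_i$ and then to a finite-dimensional quotient of $H$. Since each $Y_i$ is $H$-stable, the closed immersions $f_{ii'}\colon Y_i\hookrightarrow Y_{i'}$ are $H$-equivariant; in the constructible context $(f_{ii'})_! = (f_{ii'})_*$ is fully faithful and commutes with the forgetful from $H$- to $G$-equivariant sheaves. Thus $Shv(Y)^H\iso\colim_i Shv(Y_i)^H$ and $Shv(Y)^G\iso\colim_i Shv(Y_i)^G$ under $!$-pushforward, with $\oblv$ respecting the colimit presentation. As in (\cite{DL}, A.3.3), it then suffices to construct left adjoints at each $Y_i$ commuting with $(f_{ii'})_!$; their colimit will be the desired $\Av^U_!$ on $Y$.

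Fix $Y_i$. The $H$-action factors through a finite-dimensional quotient with prounipotent kernel, and since $U\subset H$ is normal, I may choose a normal prounipotent subgroup $K\subset H$ with $K\subset U$ so that $\bar H:=H/K$ is finite-dimensional. Then $\bar H\iso \bar G\ltimes \bar U$, where $\bar U:=U/K$ is finite-dimensional unipotent and $\bar G$ is the image of $G$; a further prounipotent quotient of $\bar G$ renders it finite-dimensional as well. The prounipotency of all kernels gives canonical equivalences $Shv(Y_i)^H\iso Shv(Y_i)^{\bar H}$ and $Shv(Y_i)^G\iso Shv(Y_i)^{\bar G}$. In these terms the forgetful is $q^!$ for the smooth map $q\colon Y_i/\bar G\to Y_i/\bar H$, which is a $\bar U$-fibration. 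Because $\bar U$ is unipotent, its fibers are affine spaces, so $q$ has cohomologically trivial fibers of finite relative dimension; hence $q_!$ is everywhere defined on constructible sheaves and is left adjoint to $q^!$. This yields the desired left adjoint on $Y_i$.

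The main obstacle is verifying the two compatibilities. First, the construction must be independent of the auxiliary $K$: replacing $K$ by a smaller prounipotent $K'$ produces a $K/K'$-fibration between the corresponding quotients, and the prounipotency of $K/K'$ identifies the two candidate left adjoints canonically. Second, for $f_{ii'}\colon Y_i\hookrightarrow Y_{i'}$, after passing to a common $K$ valid for both $Y_i$ and $Y_{i'}$, the cartesian square relating $Y_i/\bar G\to Y_{i'}/\bar G$ and $Y_i/\bar H\to Y_{i'}/\bar H$ yields the base change isomorphism $(f_{ii'})_!\,q_!\iso q'_!\,(f_{ii'})_!$ via proper base change for the closed immersions. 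The $\infty$-categorical coherence of these isomorphisms is formal and is handled by (\cite{Ly9}, 1.2.15), as in (\cite{DL}, A.3.3). The essential content beyond this bookkeeping is that unipotent $!$-averaging is everywhere defined in the constructible context---a property that fails for $\cD$-modules and that explains the hypothesis on the sheaf theory.
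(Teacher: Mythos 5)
Your overall strategy coincides with the paper's proof: for each fixed $Y_i$ you identify $\oblv$ with the $!$-pullback along a morphism of finite type quotient stacks and use that, in the constructible context, the $!$-pushforward is its left adjoint; you then assemble these adjoints over the ind-presentation of $Y$ by a formal passage-to-adjoints argument (the paper invokes (\cite{Ly}, 9.2.39) for this step; your appeal to (\cite{Ly9}, 1.2.15) and (\cite{DL}, A.3.3) plays the same role, and the required commutation of the local adjoints with the closed pushforwards $(f_{ii'})_!$ is indeed available, as you note).

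One step of your reduction on a fixed $Y_i$ is incorrect as stated, though it is inessential. You claim one may choose a normal prounipotent subgroup $K\subset U$ with $\bar H=H/K$ finite-dimensional. This is impossible whenever $G$ is a genuinely infinite-dimensional placid group scheme (e.g.\ an arc group such as $\gL^+(M)_I$): since $K\subset U$, the subgroup $G$ embeds into $H/K$, so $H/K$ cannot be of finite type. Your following sentence, proposing a further prounipotent quotient of $\bar G$, contradicts the assertion that $\bar H$ is already finite-dimensional, and you do not explain why that further kernel is normal in $\bar H$ (it would have to be normalized by $\bar U$ for the quotient group to exist). None of this refinement is needed: the hypothesis of the lemma already provides a finite-dimensional quotient of $H$ with prounipotent kernel through which the action on $Y_i$ factors; taking $\bar G$ to be the image of $G$ (its kernel is again prounipotent), the identification $\oblv\,\iso\, q^!$ for $q: Y_i/\bar G\to Y_i/\bar H$ and the adjunction $q_!\dashv q^!$ hold with no semidirect-product or affine-space-fibration input --- existence of $q_!$ in the constructible context requires only that $q$ be of finite type. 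With this repair your argument is exactly the paper's.
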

\begin{proof}
{\bf Step 1} Pick $i\in I$. Let us show that $\oblv: Shv(Y_i)^H\to Shv(Y_i)^G$ admits a left adjoint $\Av^i_!$. Assume $H$ acts on $Y_i$ through its finite-dimensional quotient $K$ with $\Ker(H\to K)$ prounipotent. Let $\bar G$ be the image of $G$ in $K$. The above functor identifies with $f_i^!: Shv(Y_i/K)\to Shv(Y_i/\bar G)$ for $f_i: Y_i/\bar G\to Y_i/K$. So, $f_i^!$ has a left adjoint $(f_i)_!$.

\medskip\noindent
{\bf Step 2} Consider the diagram $\tau: I^{op}\times [1]\to \DGCat_{cont}$ sending $i$ to $\oblv: Shv(Y_i)^H\to Shv(Y_i)^G$ with the transition functors for $i\to i'$ being $f_{ii'}^!$. The functor $\oblv: Shv(Y)^H\to Shv(Y)^G$ is obtained by passing to the limit over $I^{op}$ in the
diagram $\tau$. We may pass to the left adjoints in the diagram $\tau$ and get a diagram
$\tau^L: I\times [1]^{op}\to \DGCat_{cont}$ sending $i$ to $\Av^i_!: Shv(Y_i)^G\to Shv(Y_i)^H$. Let $\Av_!: Shv(Y)^G\to Shv(Y)^H$ be obtained from $\tau ^L$ by passing to the colimit over $I$. By (\cite{Ly}, 9.2.39), $\Av^!$ is the desired left adjoint.
\end{proof}

\ssec{About t-structures}

\sssec{} 
\label{Sect_A.4.1}
Let $\alpha: S'\to S$ be a closed immersion in $\Sch_{ft}$. $Y\to S$ be a ind-scheme of ind-finite type over $S$, $Y'=Y\times_S S'$. Let $G$ be a placid prosmooth group scheme over $S$ acting on $Y$ over $S$. Assume $Y\,\iso\, \underset{i\in I}{\colim} Y_i$, where $I$ is small filtered, and for $i\to j$ in $I$ the map $Y_i\to Y_j$ is $G$-equivariant closed immersion in $\Sch_{ft}$. Let $\bar\alpha: Y'\to Y$ be obtained by base change from $\alpha$. Let $G'=G\times_S S'$. Assume that the $G$-action on each $Y_i$ factors through a quotient group scheme $G\to G_i$ over $S$, where $G_i$ is smooth and of finite type over $S$, and such that $\Ker(G\to G_i)$ is a prounipotent group scheme over $S$. It is shown in (\cite{Ly9}, Section~1.3.24)
that the diagram canonically commutes
$$
\begin{array}{ccc}
Shv(Y)^G & \toup{\bar\alpha^!} & Shv(Y')^{G'}\\
\downarrow\lefteqn{\scriptstyle \oblv[dimrel]} && \downarrow\lefteqn{\scriptstyle \oblv[dimrel]}\\
Shv(Y) & \toup{\bar\alpha^!} & Shv(Y')
\end{array}
$$  
Here the functors $\oblv[\dimrel]$ are those defined in (\cite{DL}, A.5.3). 

\sssec{Example} Consider the stack quotient $q_{\Ran}: \Ran\to \Ran/\gL^+(M)_{\Ran}$. For $I\in fSets$ let also $q_I: X^I\to X^I/\gL^+(M)_I$ be the corresponding stack quotient. By Section~\ref{Sect_A.4.1}, the functors $\oblv[\dimrel]: Shv(X^I)^{\gL^+(M)_I}\to Shv(X^I)$ are t-exact for the perverse t-structures and compatible with the $!$-restrictions with respect to the closed immersions $X^J\to X^I$ for a map $I\to J$ in $fSets$. Passing to the limit over $I\in fSets$, the latter functors yield the functor $\oblv[\dimrel]: Shv(\Ran)^{\gL^+(M)_{\Ran}}\to Shv(\Ran)$.

\ssec{Properties of some functors}

\sssec{} 
\label{Sect_A.5.1_now}
Let $H$ be an affine algebraic group of finite type over $e$. $C=\Rep(H)$. Let $\cA\in CAlg(C)$ and $E=\cA-mod(C)$. Then $C$ is rigid by (\cite{G}, I.3, 3.7.4), and $E$ is rigid by (\cite{G}, I.1, 9.1.3). We get the adjoint pair $\ind: C\leftrightarrows E: \oblv$ in $C-mod$. Set $C(X)=C\otimes Shv(X), E(X)=E\otimes Shv(X)$. 

 Since $\ind$ is symmetric monoidal, it yields by functoriality a symmetric monoidal functor $\ind_I: (C_{X^I}, \otimes^!)\to (E_{X^I}, \otimes^!)$. Let $\oblv_I$ denote the right adjoint to $\ind_I$. By (\cite{Ly10}, 2.7.4), the dual pair $(\ind_I, \oblv_I)$ takes place in $(C_{X^I}, \otimes^!)-mod$. 
 
 From (\cite{Ly10}, 2.7.5) we conclude that $\oblv_I$ is monadic, and the corresponding monad identifies canonically with $\cA_{X^I}\in CAlg(C_{X^I})$. So, $E_{X^I}\,\iso\, \cA_{X^I}-mod(C_{X^I})$. 

\sssec{} Let $\Gamma\hook{} H$ be a closed group subscheme. Let $f: B(\Gamma)\to B(H)$ be the corresponding map. Assume $H/\Gamma$ quasi-affine (so, also quasi-compact by definition). We get an adjoint pair $f^*: \Rep(H)\leftrightarrows \Rep(\Gamma): f_*$ in $\Rep(H)-mod$, because $f$ is schematic quasi-compact. Set $D=\Rep(\Gamma)$. Write $e\in \Rep(\Gamma)$ for the trivial representation. 
Let $A=f_*e\in CAlg(C)$. Note that $D$ is rigid by (\cite{G}, I.3, 3.7.4). 

 Set $D(X)=D\otimes Shv(X)$. Let $I\in fSets$. Since $f^*$ is symmetric monoidal, as in Section~\ref{Sect_2.0.1}, it yields by functoriality a symmetric monoidal functor 
$$
L_I: (C_ {X^I},\otimes^!)\to (D_{X^I},\otimes^!).
$$ 
Let $R_I: D_{X^I}\to C_{X^I}$ denote the right adjoint to $L_I$. By (\cite{Ly10}, 2.7.4), the dual pair $(L_I, R_I)$ takes place in $(C_{X^I},\otimes^!)-mod$. By (\cite{G}, I.3, 3.3.3), $f_*: D\to C$ is monadic, so $D\,\iso\, A-mod(C)$ canonically. 
 
\begin{Lm} 
\label{Lm_A.5.2}
The functor $R_I: D_{X^I}\to C_{X^I}$ is monadic, and the corresponding monad identifies canonically with $R_IL_I(e_{X^I})\,\iso\, A_{X^I}\in CAlg(C_{X^I}, \otimes^!)$, where $e_{X^I}\in (C_{X^I}, \otimes^!)$ is the unit. So, $D_{X^I}\,\iso\, A_{X^I}-mod(C_{X^I})$.
\end{Lm}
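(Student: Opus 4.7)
The plan is to mimic the argument of Section~\ref{Sect_A.5.1_now} and deduce the lemma from the general factorization-spreading machinery of (\cite{Ly10}, Section~2.7), applied to the symmetric monoidal functor $f^*:C\to D$ whose right adjoint $f_*$ is monadic with associated monad $A=f_*e\in CAlg(C)$.

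First I would check the hypotheses needed to invoke (\cite{Ly10}, 2.7.5). Since $H/\Gamma$ is quasi-affine, $f:B(\Gamma)\to B(H)$ is schematic, quasi-compact and quasi-separated, so $f_*$ is continuous and $f^*$ satisfies the projection formula over $C$. Thus $D\,\iso\, A\text{-}mod(C)$ is rigid over the rigid category $C$, and the adjunction $(f^*,f_*)$ takes place in $C\text{-}mod$. Tensoring with $Shv(X)$ over the unit category and spreading along $\Ran$ as in Section~\ref{Sect_2.0.1} then produces the dual pair $(L_I,R_I)$ in $(C_{X^I},\otimes^!)\text{-}mod$ in a way compatible with the $!$-restrictions for maps $I\to J$ in $fSets$.

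Next I would apply (\cite{Ly10}, 2.7.5) to conclude two things simultaneously. The first is that the endofunctor $R_IL_I$ of $C_{X^I}$ is canonically isomorphic, as a monad, to the $\otimes^!$-product with the object $R_IL_I(e_{X^I})\in CAlg(C_{X^I},\otimes^!)$, where $e_{X^I}$ is the unit. The second is that this object identifies with $A_{X^I}$, the $!$-restriction to $X^I$ of the commutative factorization algebra $\Fact(A)\in\Fact(C)$ attached to $A$: indeed, the construction of $\Fact(A)$ in (\cite{Ly10}, 2.4) and its behavior under symmetric monoidal functors with continuous right adjoints is precisely what produces $R_IL_I(e_{X^I})\,\iso\,A_{X^I}$.

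Finally, by Barr--Beck--Lurie, monadicity of $R_I$ will follow from continuity of $R_I$ (already noted) together with conservativity. Conservativity is the one point requiring a genuine check: I would argue that if $d\in D_{X^I}$ has $R_I(d)=0$, then after further $!$-restriction along $\vartriangle^{(I/J)}:X^J\to X^I$ for each surjection $I\to J$ and passage to the generic stratum $\oo{X}{}^J$, one reduces to the conservativity of the pointwise functor $f_*:D\to C$, which is monadic by hypothesis. Granting monadicity, identification of the monad from the previous step then yields the asserted equivalence $D_{X^I}\,\iso\,A_{X^I}\text{-}mod(C_{X^I})$. The only real obstacle is verifying conservativity of $R_I$, and this is essentially built into the spreading construction of (\cite{Ly10}, 2.7.5).
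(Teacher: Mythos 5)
Your overall strategy coincides with the paper's: both invoke the spreading machinery and (\cite{Ly10}, 2.7.5), and your identification of the monad with $A_{X^I}$ via $\Fact(A)$ matches the conclusion of that lemma. The problem is the conservativity step, which is where your argument has a genuine gap. First, you cannot defer it to the cited lemma: levelwise conservativity over $\Tw(I)$ is precisely the \emph{hypothesis} of (\cite{Ly10}, 2.7.5), not part of its output, so saying it is ``essentially built into'' the spreading construction is circular. The paper verifies exactly this hypothesis: for each $(I\to J\to K)\in\Tw(I)$ the right adjoint $\alpha_\phi^R=\id_{Shv(X^K)}\otimes (f^J)_*$ must be shown conservative, and then monadicity of $R_I$ together with the monad identification come out of the lemma in one stroke — one never runs Barr--Beck directly on the glued functor $R_I$.

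Second, your substitute argument (restrict to strata and reduce to conservativity of the pointwise $f_*$) does not close the gap. It presupposes that $R_I$ commutes with $!$-restriction along diagonals and with passage to open strata, which is not automatic for a right adjoint and is itself part of what the levelwise machinery provides; it also presupposes that objects of $D_{X^I}$ are detected by their restrictions to the strata $\oo{X}{}^J$, which requires the $\ov{\Fact}$-type description and is not addressed. Most seriously, even granting both points, over $\oo{X}{}^J$ you land on the conservativity of $\id\otimes (f_*)^{\otimes J}$, not of $f_*$ alone, and a tensor product of conservative continuous functors in $\DGCat_{cont}$ need not be conservative. The paper's proof handles exactly this: it identifies $(f_*)^{\otimes J}$ with $(f^J)_*$ for $f^J\colon B(\Gamma^J)\to B(H^J)$, uses that $(H^J)/(\Gamma^J)\,\iso\,(H/\Gamma)^J$ is quasi-affine (quasi-affineness being preserved under products) to get conservativity of $(f^J)_*$, and then uses dualizability of $Shv(X^K)$ in $\DGCat_{cont}$ to conclude that tensoring with the sheaf category preserves conservativity. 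Without these ingredients your reduction to the ``pointwise'' case does not establish what the lemma actually needs.
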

\begin{proof} We apply (\cite{Ly10}, Lemma~2.7.5). In the notations of \select{loc.cit.}, we have the functor $\cF: \Tw(I)\times [1]\to Shv(X^I)-mod$ sending $(I\to J\toup{\phi} K)\in \Tw(I)$ to the functor 
$$
\alpha_{\phi}: Shv(X^K)\otimes C^{\otimes J}\to Shv(X^K)\otimes D^{\otimes J}.
$$ 
Here $\alpha_{\phi}$ comes from the direct image functor 
\begin{equation}
\label{functor_f^J_*_appendix}
(f^J)_*: \Rep(\Gamma^J)\,\iso\,\Rep(\Gamma)^{\otimes J}\to \Rep(H)^{\otimes J}\,\iso\, \Rep(H^J)
\end{equation} 
under the map $f^J: B(\Gamma^J)\to B(H^J)$. 

 Let $\alpha^R_{\phi}$ denote the right adjoint to $\alpha_{\phi}$. To apply (\cite{Ly10}, Lemma~2.7.5), it remains to show that for each $(I\to J\toup{\phi} K)\in\Tw(I)$ the functor $\alpha^R_{\phi}$ is conservative.

Since $(H^J)/(\Gamma^J)\,\iso\, (H/\Gamma)^J$ is quasi-affine, so (\ref{functor_f^J_*_appendix}) is conservative. Since $Shv(X^K)$ is dualizable in $\DGCat_{cont}$, this shows that $\alpha_{\phi}^R$  is conservative, as desired.
\end{proof} 

\ssec{Placid group schemes over $\Ran$}
\label{Sect_Placid group schemes over Ran}

\sssec{} Our conventions about the theory of placid group (ind)-schemes acting on categories is that of (\cite{DL}, A.4). Assume $Y\to \Ran\gets G$ are maps in $\PreStk$, where $G\in \Grp(\PreStk_{/\Ran})$. For $I\in fSets$ let $G_I=G\times_{\Ran} X^I, Y_I=Y\times_{\Ran} X^I$. Assume the projectioin $f_I: G_I\to X^I$ is a placid prosmooth group scheme over $X^I$, $Y_I$ is an ind-scheme of ind-finite type over $X^I$. So, for $I\to J$ in $fSets$ the corresponding maps $G_J\to G_I$ and $Y_J\to Y_I$ are closed immersions. 

 Assume $G$ acts on $Y$ over $\Ran$. The purpose of this subsection is to discuss the t-structures on $Shv(Y)^G$. 

\sssec{} For a monoidal category $\cA$ the category $Alg+Mod(\cA)$ is defined in (\cite{G}, I.1, 3.5.4). For $I\in fSets$ the category of invariants $Shv(Y_I)^{G_I}$ is defined in (\cite{Ly9}, 1.3.12) as $\Fun_{Shv(G_I)}(Shv(X^I), Shv(Y_I))\in Shv(X^I)-mod$. Note that $(Shv(G_I), Shv(Y_I))\in Alg+Mod(Shv(X^I)-mod)$. In particular, the action map 
$$
Shv(G_I)\otimes Shv(Y_I)\to Shv(Y_I)
$$
factors as $Shv(G_I)\otimes_{Shv(X^I)} Shv(Y_I)\to Shv(Y_I)$. Recall that $f_I^*\omega_{X^I}\in CoAlg(Shv(G_I))$, and 
$$
Shv(Y_I)^{G_I}\iso f_I^*\omega_{X^I}-comod(Shv(Y_I))
$$
by (\cite{Ly9}, 1.3.17). Since the comonad $f_I^*\omega_{X^I}$ acting on $Shv(Y_I)$ is $Shv(X^I)$-linear, $f_I^*\omega_{X^I}-comod(Shv(Y_I))\in Shv(X^I)-mod$ naturally. 
By definition, 
$$
Shv(Y)^G:=\underset{I\in fSets}{\lim} Shv(Y_I/G_I),
$$ 
where the transition functors are !-pullbacks. 

 Write $\oblv_{\Ran}: Shv(Y)^G\to Shv(Y)$ for the oblivion functor, $\oblv^R_{\Ran}$ for its right adjoint. Write $\oblv_I: Shv(Y_I)^{G_I}\to Shv(Y_I)$ for the oblivion functor, $\oblv_I^R$ for its right adjoint. Set $\cA=\oblv_{\Ran}\oblv_{\Ran}^R$ and $\cA_I=\oblv_I\oblv_I^R$.

\begin{Lm} The functor $\oblv_{\Ran}: Shv(Y)^G\to Shv(Y)$ is comonadic. Besides, for a map $I\to J$ in $fSets$ the diagram in $Shv(\Ran)-mod$ commutes
$$
\begin{array}{ccc}
Shv(Y_I) & \toup{\cA_I} & Shv(Y_I)\\
\downarrow && \downarrow\\
Shv(Y_J) & \toup{\cA_J} & Shv(Y_J).
\end{array}
$$
There is a functor $fSets\times [1]\to Shv(\Ran)-mod$, sending $I$ to $\cA_I: Shv(Y_I)\to Shv(Y_I)$, and the comonad $\cA$ on $Shv(Y)$ is obtained by passing to the limit over $fSets$ in $\cA_I: Shv(Y_I)\to Shv(Y_I)$.
\end{Lm}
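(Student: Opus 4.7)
The plan is to proceed in three stages. First, I would establish the commutativity of the stated square via base change at each level. Second, I would package these compatibilities into the functor $\cA_\bullet: fSets \times [1] \to Shv(\Ran)-mod$ using the functoriality of base change. Third, I would deduce the comonadicity of $\oblv_\Ran$, together with the description $\cA \iso \lim_I \cA_I$, by passing to the limit over $fSets$.

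For the first stage, the comonad $\cA_I$ on $Shv(Y_I)$ is computed by the groupoid formula $\cA_I(K) \iso (a_I)_* p_I^! K$, where $p_I, a_I: G_I \times_{X^I} Y_I \rightrightarrows Y_I$ are the projection and the action map. For a morphism $I \to J$ in $fSets$ and the induced closed immersion $\bar\vartriangle: Y_J \hookrightarrow Y_I$, the base-change squares for both $p_I$ and $a_I$ along $\bar\vartriangle$ are cartesian, because $G$ and $Y$ are prestacks over $\Ran$; applying $(^!, {_*})$-base change for the action and the trivial commutation of iterated !-pullbacks for the projection yields the natural isomorphism $\bar\vartriangle^! \cA_I \iso \cA_J \bar\vartriangle^!$. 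The higher compatibilities organizing these isomorphisms into a functor out of $fSets \times [1]$ are inherited from the functoriality of base change, yielding the second stage.

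For the third stage, by construction $Shv(Y) \iso \lim_{I \in fSets} Shv(Y_I)$ and $Shv(Y)^G \iso \lim_{I \in fSets} Shv(Y_I)^{G_I}$ in $Shv(\Ran)-mod$, both limits taken along !-pullbacks; hence $\oblv_\Ran \iso \lim_I \oblv_I$ and $\cA \iso \lim_I \cA_I$. Each $\oblv_I$ is comonadic by (\cite{HA}, 4.7.5.1) combined with the comodule description $Shv(Y_I)^{G_I} \iso \cA_I-comod(Shv(Y_I))$ recalled above. By the second stage these equivalences are functorial in $I$, and passing to the limit they should yield $Shv(Y)^G \iso \cA-comod(Shv(Y))$, exhibiting $\oblv_\Ran$ as comonadic with comonad $\cA$.

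The main obstacle is precisely this final commutation of comodule-taking with the $fSets$-indexed limit. I would verify it either through the explicit cosimplicial bar description of comodules --- using that !-pullbacks preserve totalizations of such split cosimplicial objects --- or, alternatively, by applying Barr--Beck directly to $\oblv_\Ran$: conservativity is clear since each $\oblv_I$ is conservative, and the preservation of totalizations of $\oblv_\Ran$-split cosimplicial objects is checked componentwise at each $I$, where a $\oblv_\Ran$-split cosimplicial object restricts to a $\oblv_I$-split one, to which the comonadicity of $\oblv_I$ applies.
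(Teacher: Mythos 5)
Your treatment of comonadicity itself (your option (ii)) is essentially the paper's argument: conservativity descends from the $\oblv_I$, and the Barr--Beck condition is checked on $\oblv_{\Ran}$-split cosimplicial objects componentwise. But the two places where you wave at ``base change'' are exactly where the nonformal content of the lemma sits, and as written they are gaps. First, the formula $\cA_I(K)\iso (a_I)_*p_I^!K$ and the $({}^!,{}_*)$-base change against $\bar\vartriangle^!$ are not justified: $G_I\to X^I$ is a placid prosmooth group scheme of infinite type, the comonad of the adjunction $(\oblv_I,\oblv_I^R)$ is the action of the coalgebra $f_I^*\omega_{X^I}\in Shv(G_I)$ (a renormalized $*$-pullback along the prosmooth $f_I$), and identifying it with $(a_I)_*p_I^!$ is itself a claim needing proof -- already in finite type the two differ by a shift, and in infinite type the comparison has to be run through the finite-dimensional quotients of $G_I$; likewise pushforward along the action map, whose fibres are copies of $G_I$, is not covered by the finite-type six-functor formalism, and cartesianness of the squares (which is all that ``being prestacks over $\Ran$'' gives) is far from enough. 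The paper avoids all of this by quoting (\cite{Ly9}, 1.3.24, Claim 1): the whole adjoint pair $\oblv_J\dashv\oblv_J^R$ is obtained from $\oblv_I\dashv\oblv_I^R$ by applying $\otimes_{Shv(X^I)}Shv(X^J)$, which gives at once the commutation of the right adjoints, hence of the comonads, with the transition functors, together with all coherences.

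Second, neither of your two proposed fixes actually proves the last assertion of the lemma, namely $\cA\iso\lim_I\cA_I$. Barr--Beck applied to $\oblv_{\Ran}$ yields comonadicity for $\cA=\oblv_{\Ran}\oblv_{\Ran}^R$ but says nothing about how $\oblv_{\Ran}^R$ is computed; what is needed is that the evaluation functors intertwine $\oblv_{\Ran}^R$ with $\oblv_I^R$, i.e.\ the Beck--Chevalley commutation of the right adjoints with the transition $!$-pullbacks, which the paper obtains from the citation above combined with (\cite{G}, I.1, 2.6.4). Commutation of the underlying endofunctors $\cA_I$ with the transitions -- even granting your Stage 1 -- is weaker, unless you also track the comonad structures and the comodule identifications, which ``functoriality of base change'' does not supply. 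Finally, in the componentwise Barr--Beck check you implicitly use that totalizations in $Shv(Y)^G=\lim_I Shv(Y_I)^{G_I}$ and in $Shv(Y)=\lim_I Shv(Y_I)$ are computed componentwise; this requires the observation, made explicitly in the paper, that the transition functors are $!$-pullbacks along diagonals admitting left adjoints $\vartriangle_!$ and hence preserve limits.
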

\begin{proof} Since each 
$\oblv_I: Shv(Y_I/G_I)\to Shv(Y_I)$ is conservative, $\oblv_{\Ran}$ is also conservative by (\cite{Ly}, 2.5.3). 

 For $I\in fSets$ the adjoint pair $\oblv_I: f_I^*\omega_{X^I}-comod(Shv(Y_I))\leftrightarrows Shv(Y_I): \coind$ takes place in $Shv(X^I)-mod$. For a map $I\to J$ in $fSets$ the similar adjoint pair for $J$ is obtained from the previous one by applying $\_\otimes_{Shv(X^I)} Shv(X^J)$ by (\cite{Ly9}, 1.3.24, Claim 1). So, the diagram commutes
$$
\begin{array}{ccc}
Shv(Y_J/G_J) & \getsup{\oblv_J^R} & Shv(Y_J)\\
\uparrow && \uparrow\\
Shv(Y_I/G_I) & \getsup{\oblv_I^R} & Shv(Y_I).
\end{array}
$$ 
Now, by (\cite{G}, ch I.1, 2.6.4), for each $I$ the diagram commutes
$$
\begin{array}{ccc}
Shv(Y)^G & \getsup{\oblv_{\Ran}^R} & Shv(Y)\\
\downarrow && \downarrow\\
Shv(Y_I/G_I) & \getsup{\oblv_I^R} & Shv(Y_I),
\end{array}
$$
where the vertical arrow are the structure maps in our limit diagrams.

 Let now $V$ be a simplicial object of $(Shv(Y)^G)^{op}$ which is $\oblv_{\Ran}$-split in $Shv(Y)^{op}$. Then its image in $Shv(Y_I)^{op}$ is a split simplicial object. So, the image $V_I$ of $V$ in $Shv(Y_I/G_I)^{op}$ admits a colimit denoted $v_I$, and this colimit is preserved by $\oblv_I$ in $Shv(Y_I)^{op}$. 
 
 For each map $I\to J$ in $fSets$ let $\vartriangle: X^J\to X^I$ be the corresponding diagonal. The transition map $Shv(Y_I)\to Shv(Y_J)$ is $\vartriangle^!: Shv(Y_I)\to Shv(Y_I)\otimes_{Shv(X^I)} Shv(X^J)$. It has a left adjoint, hence preserves limits. Similarly, by (\cite{Ly4}, 0.3.6), $Shv(Y_I/G_I)\to Shv(Y_J/G_J)$ is the functor 
$$
\vartriangle^!: Shv(Y_I/G_I)\to Shv(Y_I/G_I)\otimes_{Shv(X^I)} Shv(X^J),
$$ 
so this functor preserves limits. 
 
 We see that each transition functor $Shv(Y_I/G_I)^{op}\to Shv(Y_J/G_J)^{op}$ in the diagram defining $(Shv(Y)^G)^{op}$ sends $v_I$ to $v_J$. Now by (\cite{Ly}, 2.2.69) we conclude that $V$ admits a colimit $v$ in $(Shv(Y)^G)^{op}$, whose image in $Shv(Y_I/G_I)^{op}$ is $v_I$.   
 
  We see that $\colim \oblv_I(V_I)\,\iso\, \oblv_I(v_I)$ for each $I$ in $Shv(Y_I)^{op}$, and the transition functors $Shv(Y_I)^{op}\to Shv(Y_J)^{op}$ preserve these colimits. Thus, $\oblv_{\Ran}(V)$ has a colimit in $Shv(Y)^{op}$, whose image in each $Shv(Y_I)^{op}$ is $\oblv_I(v_I)$. So, in $Shv(Y)^{op}$ we get 
$$
\colim \oblv_{\Ran}(V)\,\iso\, \oblv_{\Ran}(v).
$$
Thus, $\oblv_{\Ran}$ is comonadic by Barr-Beck theorem (\cite{HA}, 4.7.3.5).  
\end{proof}  

\sssec{} Equip each $Shv(Y_I/G_I)$ with a perverse t-structure as in (\cite{DL}, A.5.3). 
We get 
$$
Shv(Y)^G\,\iso\, \underset{I\in fSets^{op}}{\colim} Shv(Y_I/G_I),
$$ 
where the transition map for $I\to J$ in $fSets$ is the map $\vartriangle_!: Shv(Y_J/G_J)\to Shv(Y_I/G_I)$, the latter functor is fully faithful and t-exact.

 Define a t-structure on $Shv(Y)$ by requiring that $Shv(Y)^{\le 0}$ is the smallest full subcategory closed under extensions, colimits, and containing for each $I\in fSets$ the image of  $Shv(Y_I)^{\le 0}$. By (\cite{HA}, 1.4.4.11), this is an accessible t-structure on $Shv(Y)^G$. 
 
  Note that $K\in Shv(Y)$ lies in $Shv(Y)^{>0}$ iff for any $I$ the image of $K$ in $Shv(Y_I)$ lies in $Shv(Y_I)^{>0}$. So, the t-structure on $Shv(Y)$ is compatible with filtered colimits. 

\begin{Lm} 
\label{Lm_A.6.5}
For each $I\in fSets$ the functor $\cA_I: Shv(Y_I)\to Shv(Y_I)$ is left t-exact.
\end{Lm}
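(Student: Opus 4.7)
The plan is to compute the comonad $\cA_I = \oblv_I \oblv_I^R$ explicitly via base change along a quotient map and deduce left t-exactness from the smoothness of the action. The main preparatory step is a reduction to the finite-type setting which hinges on the placidity of $G_I$.

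First, I would reduce to the case in which $Y_I$ is a scheme of finite type over $X^I$ and the $G_I$-action factors through a smooth finite-type quotient $\bar G$ of $G_I$ over $X^I$ of some relative dimension $d$. Since $G_I$ is placid prosmooth and $Y_I$ is an ind-scheme of ind-finite type, one has $Y_I \iso \colim_\beta Y_{I,\beta}$ with each $Y_{I,\beta}$ a $G_I$-stable closed subscheme of finite type; the action on each such piece factors through a smooth finite-type quotient $\bar G_{I,\beta}$ because the prounipotent kernel of $G_I \to \bar G_{I,\beta}$ acts trivially. This gives $Shv(Y_{I,\beta})^{G_I} \iso Shv(Y_{I,\beta})^{\bar G_{I,\beta}}$ and a compatible factorization of $\cA_I$. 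Together with the fact that the perverse t-structure on $Shv(Y_I)$ can be checked by $!$-restriction to the $Y_{I,\beta}$, this reduces the lemma to showing left t-exactness on each finite-type piece.

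On such a finite-type piece, the quotient map $q: Y \to Y/\bar G$ is a $\bar G$-torsor, hence smooth of relative dimension $d$. The forgetful functor identifies with $q^!$, its continuous right adjoint with $q_*$, and hence $\cA_I K \iso q^! q_* K$. Applying $(^!,\,*)$-base change along the Cartesian square
$$
\begin{array}{ccc}
\bar G \times_{X^I} Y & \toup{\act} & Y\\
\downarrow\lefteqn{\scriptstyle \pr_Y} && \downarrow\lefteqn{\scriptstyle q}\\
Y & \toup{q} & Y/\bar G
\end{array}
$$
yields $\cA_I K \iso (\pr_Y)_* \act^! K$. Since $\act$ and $\pr_Y$ are smooth of relative dimension $d$, $\act^![-d]$ and $\pr_Y^*[d]$ are t-exact for the perverse t-structures; in particular, $\act^!$ is left t-exact (it shifts perverse degrees up by $d$), and $\pr_Y^*$ is right t-exact, so $(\pr_Y)_*$ is left t-exact as its right adjoint. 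The composition is therefore left t-exact.

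The main obstacle will be justifying the reduction in step one: one must check that $\cA_I$ is genuinely compatible with the passage $G_I \rightsquigarrow \bar G_{I,\beta}$ on each finite-type $Y_{I,\beta}$, and that left t-exactness of $\cA_I$ on $Shv(Y_I)$ can be detected piece-wise on the $Y_{I,\beta}$. These compatibilities should follow from the fact that equivariance under a prounipotent group acting trivially is vacuous and from the compatibility of perverse t-structures on ind-schemes of ind-finite type with their underlying ind-structure, but the bookkeeping with adjunctions and base change isomorphisms requires care.
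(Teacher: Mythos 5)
Your reduction to a finite-type piece on which the action factors through a smooth finite-type quotient $\bar G$ is fine and is essentially what the paper does (it reduces to compact coconnective objects, which are extensions by zero from $G_I$-stable closed subschemes of finite type, and then invokes Lemma~\ref{Lm_A.6.6}). The gap is in the computation on that piece. You may take the forgetful functor to be $q^*$ or $q^!$ (they differ by the shift $[2d]$), but in either case the comonad is $q^*q_*$: the right adjoint of $q^!=q^*[2d]$ is $q_*[-2d]$, not $q_*$. Your formula $\cA_I\simeq q^!q_*\simeq (\pr_Y)_*\act^!$ is therefore off by $[2d]$, and this matters: since $q_*$ sends $Shv(Y)^{\ge 0}$ into $D^{\ge -d}$ (fibres of $q$ have dimension $\le d$) while $q^!$ sends $D^{\ge -d}$ only into $D^{\ge -2d}$, the functor $q^!q_*$ is \emph{not} left t-exact for $d>0$. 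Correspondingly, the t-exactness claims you use to conclude are reversed: for a smooth map $f$ of relative dimension $d$ it is $f^*$ that is left t-exact (it raises perverse degree by $d$) and $f^!=f^*[2d]$ that is right t-exact (it lowers it by $d$). So $\act^!$ is not left t-exact, $\pr_Y^*$ is not right t-exact, and $(\pr_Y)_*$ is not left t-exact either: for $\pr:\AA^d\to\Spec k$ and the constant perverse sheaf $e[d]$ one has $\pr_*(e[d])\simeq e[d]$, which sits in perverse degree $-d$.

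The correct bookkeeping is exactly the paper's proof of Lemma~\ref{Lm_A.6.6}: the comonad is $q^*q_*\simeq (\pr_Y)_*\act^*$; since the fibres of $q$ have dimension $\le d$, $q_*$ maps $D^{\ge 0}$ into $D^{\ge -d}$ by (\cite{Asterisque100}, 4.2.4), and since $q^*[d]$ is t-exact, $q^*$ maps $D^{\ge -d}$ into $D^{\ge 0}$; hence $q^*q_*$ is left t-exact (equivalently, $\act^*$ sends $D^{\ge 0}$ into $D^{\ge d}$ and $(\pr_Y)_*$ sends $D^{\ge d}$ into $D^{\ge 0}$). With the comonad identified correctly and these shifts, your argument goes through and coincides with the paper's.
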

\begin{proof}
It suffices to show that for any $K\in Shv(Y_I)^c\cap Shv(Y_I)^{\ge 0}$, 
$\cA_I(K)\in Shv(Y_I)^{\ge 0}$. So, we may assume $K$ is the extension by zero from a closed subscheme of finite type $Z\hook{} Y_I$ stable under $G_I$. Moreover, the action of $G_I$ on $Shv(Z)$ factors through an action of a quotient group scheme of finite type $H\to X^I$ smooth over $X^I$ such that $\Ker(G_I\to H)$ is prounipotent over $X^I$. Our claim follows now from Lemma~\ref{Lm_A.6.6} below.  
\end{proof}
\begin{Lm} 
\label{Lm_A.6.6}
Let $S\in \Sch_{ft}, f: \cG\to S$ be a group scheme of finite type, where $f$ is smooth of relative dimension $d$. Let $\cY\to S$ be a map in $\Sch_{ft}$. Assume $\cG$ acts on $\cY$ over $S$. Then the action of $f^*(\omega_S)\in CoAlg(Shv(\cG))$ on $Shv(\cY)$ is a left t-exact functor on $Shv(\cY)$.
\end{Lm}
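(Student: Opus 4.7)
My plan is to identify the functor in question as the comonad $p_*a^*$ arising from a smooth base change square, and then to combine the perverse t-amplitudes of smooth pullback and its right adjoint to conclude left t-exactness. Under the identification $Shv(\cY)^{\cG}\iso Shv(\cY/\cG)$ with the conventions of (\cite{DL}, A.5.3), the oblivion functor is $\oblv = q^*$ along $q:\cY\to\cY/\cG$, with right adjoint $q_*$; the coaction of $f^*\omega_S$ is thus realized by the comonad $\cA = q^*q_*$.

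Smooth base change in the cartesian square
$$
\begin{array}{ccc}
\cG\times_S\cY & \toup{a} & \cY\\
\downarrow\lefteqn{\scriptstyle p} && \downarrow\lefteqn{\scriptstyle q}\\
\cY & \toup{q} & \cY/\cG
\end{array}
$$
yields $q^*q_*\iso p_*a^*$, where $a,p$ are the action and projection maps. The technical point requiring some care is this base change along the non-representable $q$; one can either invoke smooth base change directly, or deduce it from the standard $(^!,_*)$-base change $q^!q_*\iso p_*a^!$ using $h^!\iso h^*[2d]$ for any smooth $h$ of relative dimension $d$ (applied to both $q$ and $a$).

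Both $a$ and $p$ are smooth of relative dimension $d$: $p$ is the pullback of $f$ along $\cY\to S$, and $a$ is conjugate to $p$ via the automorphism $(g,y)\mapsto(g,gy)$ of $\cG\times_S\cY$. The key input is the standard fact that $h^*[d]$ is perverse t-exact for any smooth $h$ of relative dimension $d$. Applied to $a$, this gives directly $a^*(Shv(\cY)^{\ge 0})\subset Shv(\cG\times_S\cY)^{\ge d}$. For $p_*$, the same t-exactness of $p^*[d]$ combined with the adjunction $p^*\dashv p_*$ and the t-structure axiom $\Hom({}^pD^{\le 0},{}^pD^{\ge 1})=0$ forces $p_*(Shv(\cG\times_S\cY)^{\ge m})\subset Shv(\cY)^{\ge m-d}$: indeed, for $L\in Shv(\cY)^{\le -d}$ we have $p^*L\in Shv(\cG\times_S\cY)^{\le 0}$, so $\Hom(L,p_*K)=\Hom(p^*L,K)=0$ whenever $K\in Shv(\cG\times_S\cY)^{\ge 1}$, giving $p_*K\in Shv(\cY)^{\ge -d+1}$. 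Composing the two bounds yields
$$
\cA(Shv(\cY)^{\ge 0})=p_*a^*(Shv(\cY)^{\ge 0})\subset p_*(Shv(\cG\times_S\cY)^{\ge d})\subset Shv(\cY)^{\ge 0},
$$
which is the desired left t-exactness.
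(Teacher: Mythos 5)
Your proof is correct and follows essentially the same route as the paper: identify the coaction of $f^*\omega_S$ with the comonad attached to the quotient map $q:\cY\to\cY/\cG$ and conclude by adding up perverse amplitudes of a smooth pullback and a pushforward of relative dimension $d$. The only (cosmetic) difference is that you base-change $q^*q_*$ to $p_*a^*$ on $\cG\times_S\cY$ and obtain the amplitude bound for $p_*$ formally from the adjunction with the t-exact $p^*[d]$, whereas the paper bounds the pushforward along $\cY\to\cY/\cG$ directly by citing (\cite{Asterisque100}, 4.2.4).
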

\begin{proof}
Write $p: \cY\to \cY/\cG$ for the projection, where $\cY/\cG$ denotes the stack quotient (over $S$). Then the corresponding comonad is $p^*p_*: Shv(\cY)\to Shv(\cY)$. Since all the fibres of $f: \cG\to S$ are of dimension $\le d$, the perverse amplitude of $p_*$ is $\ge -d$ by (\cite{Asterisque100}, 4.2.4). Since. $p^*[d]$ is t-exact, we are done.
\end{proof}

\begin{Lm} The functor $\cA: Shv(Y)\to Shv(Y)$ is left t-exact.
\end{Lm}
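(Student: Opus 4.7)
The plan is to reduce the claim to Lemma~\ref{Lm_A.6.5} via the compatibility of the comonads $\cA_I$ across $I\in fSets$ established in the preceding lemma. First, I will use the presentation $Shv(Y)\,\iso\,\underset{I\in fSets}{\lim} Shv(Y_I)$ with transition functors the $!$-pullbacks along the closed immersions $Y_J\hook{} Y_I$ attached to maps $I\to J$ in $fSets$, where each $Shv(Y_I)$ is equipped with the perverse t-structure. The relevant t-structure on $Shv(Y)$ is the limit one: an object $K\in Shv(Y)$ is coconnective iff its image $K_I$ in $Shv(Y_I)$ is coconnective for every $I\in fSets$.

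Second, the preceding lemma asserts exactly that $\cA$ is obtained as the limit over $I\in fSets$ of the $\cA_I$: for every $K\in Shv(Y)$ and every $I\in fSets$ there is a canonical identification $(\cA K)_I\,\iso\,\cA_I(K_I)$, functorial in $I$. In particular, this is what the commutativity of the square in that lemma encodes, together with the description of $\cA$ as the limit of the diagram $fSets\times[1]\to Shv(\Ran)-mod$, $I\mapsto \cA_I$.

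Combining the two ingredients, let $K\in Shv(Y)^{\ge 0}$. Each $K_I$ lies in $Shv(Y_I)^{\ge 0}$ by the characterization of the t-structure, so by Lemma~\ref{Lm_A.6.5} each $(\cA K)_I\,\iso\,\cA_I(K_I)$ lies in $Shv(Y_I)^{\ge 0}$. Therefore $\cA K\in Shv(Y)^{\ge 0}$, giving the desired left t-exactness.

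I anticipate no substantial obstacle: the entire content of the argument is absorbed in Lemma~\ref{Lm_A.6.5} (the pointwise, finite-type statement) and in the preceding lemma (compatibility of the $\cA_I$ with $!$-restrictions across $I\in fSets$). The only minor bookkeeping is verifying that the t-structure on $Shv(Y)$ is indeed detected pointwise at each $I$ along the presentation $Shv(Y)\,\iso\,\underset{I\in fSets}{\lim} Shv(Y_I)$, which parallels the discussion of the t-structure preceding the statement.
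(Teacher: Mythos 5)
Your proof is correct and is essentially the paper's own argument: view $K$ as the compatible family $\{K_I\}$ under the presentation $Shv(Y)\,\iso\,\underset{I\in fSets}{\lim} Shv(Y_I)$, identify $\cA(K)$ with $\{\cA_I(K_I)\}$ via the preceding lemma, and apply Lemma~\ref{Lm_A.6.5} together with the pointwise detection of coconnectivity (which the paper records just before the statement, in the equivalent form for $Shv(Y)^{>0}$).
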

\begin{proof} Let $K\in Shv(Y)^{>0}$. Viewing $Shv(Y)\,\iso\,\underset{I\in fSets}{\lim} Shv(Y_I)$, assume $K$ is given by a collection $K=\{K_I\}_{I\in fSets}$, here $K_I$ is the !-restriction of $K$ to $Y_I$. Then $\cA(K)=\{\cA_I(K_I)\}_{I\in fSets}$. For each $I$, $\cA_I(K_I)\in Shv(Y_I)^{>0}$ by Lemma~\ref{Lm_A.6.5}, so $\cA(K)\in Shv(Y)^{>0}$ also.
\end{proof}

\sssec{} Note that for a map $I\to J$ in $fSets$ the adjoint pair $\oblv_J: Shv(Y_J/G_J)\leftrightarrows Shv(Y_J): \oblv_J^R$ in $Shv(X^J)-mod$ is obtained from the adjoint pair 
$$
\oblv_I: Shv(Y_I/G_I)\leftrightarrows Shv(Y_I): \oblv_I^R
$$ 
in $Shv(X^I)-mod$ by applying $Shv(X^J)\otimes_{Shv(X^I)}$. 

\sssec{} Finally, we apply (\cite{Ly}, 9.3.26) to get a t-structure on $Shv(Y)^G$ given by 
$$
(Shv(Y)^G)^{>0}=\oblv_{\Ran}^{-1}(Shv(Y)^{>0}), \;\; \;\; (Shv(Y)^G)^{\le 0}=\oblv_{\Ran}^{-1}(Shv(Y)^{\le 0}).
$$ 
The functor $\oblv_{\Ran}: Shv(Y)^G\to Shv(Y)$ is t-exact, and $\coind: Shv(Y)\to Shv(Y)^G$ is left t-exact. 

\section{The ULA property and perversity}
\label{Sect_The ULA property and perversity}

\ssec{Relation to the perverse t-structures} 
\label{Sect_Relation to the perverse t-structures} 

The purpose of Section~\ref{Sect_Relation to the perverse t-structures} is to establish Proposition~\ref{Pp_B.1.4_t-exactness} below. It can be seen as a complement to (\cite{Ly_GAFA}, Section~4.8). 

\sssec{} For a map $p: Y\to S$ in $\Sch_{ft}$, $F\in Shv(Y)^c$ we say that $F$ is ULA with respect to $p$ if it satisfies (\cite{Del77}, Definition 2.12). In particular, this notion is stable under any base change $S'\to S$ by a map in $\Sch_{ft}$. By (\cite{BG}, Theorem~B.2),
for $S$ smooth this definition is equivalent to (\cite{BG}, Definition 5.1).  

\sssec{} 
\label{Sect_B.1.2_now}
Let $S\toup{q_1} S_1\getsup{p_1} Y_1$ be a diagram in $\Sch_{ft}$ with $S_1$ smooth. Set $Y=Y_1\times_{S_1} S$. Let $p: Y\to S, q: Y\to Y_1$ be the projections. For $L\in Shv(Y_1)$ in (\cite{Ly_GAFA}, Section~4.8) we introduced the functor $\cF_L: Shv(S)\to Shv(Y)$ given by 
$$
\cF_L(K)=p^*K\otimes q^*L[-\dim S_1].
$$ 
Here $\dim S_1$ is viewed as a function of a connected component of $S_1$. 

 Given $L\in Shv(Y_1)^c$ in (\cite{Ly_GAFA}, Definition~4.8.2) we called $L$ \select{locally acyclic with respect to the diagram} $S\getsup{p} Y\toup{q} Y_1$ if the natural map
$$
\cF_{\DD(L)}(\DD(K))\to \DD(\cF_L(K))
$$
in $Shv(Y)$ defined in \select{loc.cit.} is an equivalence for any $K\in Shv(S)^c$. Here $\DD$ denotes the Verdier duality. We say that $L$ is \select{universally locally acyclic with respect to this diagram} if the same property holds after any smooth base change $S'_1\to S_1$.   

Recall the following.
\begin{Pp}[\cite{Ly_GAFA}, 4.8.3] 
\label{Pp_B.1.3_ULA}
In the situation of Section~\ref{Sect_B.1.2_now} assume $L$ is ULA with respect to $p_1$. Then $L$ is ULA with respect to the diagram $S\getsup{p} Y\toup{q} Y_1$. \QED
\end{Pp}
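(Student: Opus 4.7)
The plan is to derive $\cF_{\DD L}(\DD K) \iso \DD\cF_L(K)$ from the standard characterization of the ULA property, transported across the cartesian square by base change. Two observations give free reductions: the conclusion is stable under any smooth base change $S'_1 \to S_1$, as is the ULA hypothesis on $L$, so it suffices to produce the natural iso on $Y$ itself; and ULA is preserved under arbitrary base change, so in particular $q^*L \in Shv(Y)^c$ is ULA with respect to $p: Y \to S$.

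Unfolding both sides using the compatibilities $\DD q^* \iso q^!\DD$ and $\DD p^* \iso p^!\DD$ on compact objects, together with $\DD(A \otimes B) \iso \HOM(A, \DD B)$, one computes
\[
\DD\cF_L(K) \;\iso\; \HOM\bigl(p^*K,\; q^!\DD L\bigr)[\dim S_1], \qquad \cF_{\DD L}(\DD K) = p^*\DD K \otimes q^*\DD L\,[-\dim S_1],
\]
reducing the task to producing a natural iso $p^*\DD K \otimes q^*\DD L \iso \HOM(p^*K, q^!\DD L)[2\dim S_1]$.

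The essential input is the standard equivalent formulation of ULA (cf.\ \cite{BG}, 5.1.2): $q^*L$ is ULA with respect to $p$ iff for every $N \in Shv(S)$ the canonical map $p^*N \otimes \DD(q^*L) \to \HOM(q^*L, p^!N)$ is an isomorphism. Specializing to $N = \DD K$ and using $\DD(q^*L) = q^!\DD L$, $p^!\DD K = \DD p^*K$, and $\HOM(q^*L, \DD p^*K) \iso \DD(p^*K \otimes q^*L)$, this yields
\[
p^*\DD K \otimes q^!\DD L \;\iso\; \DD(p^*K \otimes q^*L).
\]
Matching this against the computations of $\DD\cF_L(K)$ and $\cF_{\DD L}(\DD K)$ above, the only remaining discrepancy is the identification $q^!\DD L \iso q^*\DD L\,[-2\dim S_1]$. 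This holds because $\DD L$ is itself ULA with respect to $p_1$ (the ULA property is self-dual), which on such sheaves yields a projection-formula-type iso $q^!F \iso q^*F \otimes q^!e_{Y_1}$, combined with $q^!e_{Y_1} \iso e_Y[-2\dim S_1]$ coming from the smoothness of $S_1$ via $\omega_{Y_1/S_1} \iso \omega_{Y_1}[-2\dim S_1]$ and the cartesian base change.

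The main obstacle is bookkeeping: carefully tracking the canonical identifications between $q^*$ and $q^!$ applied to the ULA sheaf $\DD L$, where $q: Y \to Y_1$ is the base change of the arbitrary map $q_1: S \to S_1$, and matching the shifts $[\pm 2\dim S_1]$ contributed by the smoothness of $S_1$ against those built into the definition of $\cF_L$. Naturality of the resulting iso, and its coincidence with the natural map from (\cite{Ly_GAFA}, Section~4.8), follow from the construction of $\cF_L$ as a composition of pullbacks and tensor products, on which each canonical iso used above acts functorially.
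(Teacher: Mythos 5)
Your overall strategy is reasonable — rewrite $\DD\cF_L(K)$ as $\HOM(q^*L,p^!\DD K)[\dim S_1]$ and use the ULA property of $q^*L$ over $S$ to turn the inner Hom into a tensor product — and the target identity $\DD(p^*K\otimes q^*L)\iso p^*\DD K\otimes q^*\DD L[-2\dim S_1]$ is the right one. But the "essential input" you quote is misapplied: the BG~5.1.2-type characterization is formulated over a \emph{smooth} base, whereas here the base of $p$ is $S$, an arbitrary finite type scheme (only $S_1$ is smooth). As you state it, with the absolute Verdier dual $\DD(q^*L)$, it is false: take $Y_1=S_1$, $p_1=\id$, $L=e_{S_1}$ (constant sheaves are ULA with respect to the identity); then $q^*L=e_S$ is ULA with respect to $p=\id_S$, yet the asserted isomorphism reads $N\otimes\omega_S\iso N$, which fails whenever $S$ is singular. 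The correct statement over a general base involves the relative dual $\DD_{Y/S}(q^*L)=\HOM(q^*L,p^!e_S)$, which for non-smooth $S$ is not a shift of $\DD(q^*L)$.

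The step meant to absorb the discrepancy is also false: in the same example $q^!\DD L=q_1^!\omega_{S_1}=\omega_S$ while $q^*\DD L[-2\dim S_1]=e_S$, and $q^!e_{Y_1}=q_1^!e_{S_1}=\omega_S[-2\dim S_1]\ne e_Y[-2\dim S_1]$; your justification conflates the relative dualizing complex of $p_1$ (indeed $\omega_{Y_1}[-2\dim S_1]$, by smoothness of $S_1$) with that of $q$, which is governed by the arbitrary map $q_1:S\to S_1$ and is not a shift of the constant sheaf. In your write-up these two mistakes cancel — both amount to pretending $S$ is smooth — so the final formula is correct, but the argument as written does not prove it. The repair is to run the computation with duals relative to $S$: for the ULA sheaf $q^*L$ one has $\DD_{Y/S}(q^*L)\otimes p^*N\iso\HOM(q^*L,p^!N)$, relative duality commutes with the base change along $q_1$, i.e. $q^*\DD_{Y_1/S_1}(L)\iso\DD_{Y/S}(q^*L)$, and only at the end does smoothness of $S_1$ enter, via $\DD_{Y_1/S_1}(L)=\DD L[-2\dim S_1]$, which accounts precisely for the shift built into $\cF_L$. (Note the paper itself gives no proof — the statement is quoted from \cite{Ly_GAFA} — so these points must be justified, not cited away.)
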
 

\begin{Pp} 
\label{Pp_B.1.4_t-exactness}
In the situation of Section~\ref{Sect_B.1.2_now} assume $L$ is ULA with respect to $p_1$. If $L$ is placed in perverse degrees $\le 0$ (resp., $\ge 0$) then $\cF_L$ is right t-exact (resp., left t-exact) for the perverse t-structures. If $L$ is perverse then $\cF_L$ is t-exact for the perverse t-structures.
\end{Pp}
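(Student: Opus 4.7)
My plan is to prove the right $t$-exactness statement (when $L$ is placed in perverse degrees $\le 0$), deduce the left $t$-exactness (when $L$ is placed in perverse degrees $\ge 0$) via Verdier duality using Proposition~\ref{Pp_B.1.3_ULA}, and observe that the third claim is the combination of the first two. Concretely, Proposition~\ref{Pp_B.1.3_ULA} yields a natural isomorphism $\DD \cF_L(K) \iso \cF_{\DD L}(\DD K)$ for compact $K$, which extends by continuity to all $K$. Since Verdier duality inverts perverse degrees on each of the three categories $\Shv(Y_1)$, $\Shv(S)$, $\Shv(Y)$ involved, right $t$-exactness of $\cF_L$ for $L$ in perverse degrees $\le 0$ converts directly into left $t$-exactness of $\cF_{\DD L}$ for $\DD L$ in perverse degrees $\ge 0$.

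For the right $t$-exactness, I would first reduce to the case $L$ perverse. The functor $L \mapsto \cF_L(K)$ is exact in $L$ (being the composition of $q^*$, a cohomological shift, and tensoring with the fixed object $p^*K$), extensions of ULA objects remain ULA, and the perverse $t$-structure is compatible with filtered colimits. A standard devissage via truncation triangles then reduces the problem to showing that $\cF_L$ is $t$-exact whenever $L$ is a perverse ULA sheaf on $Y_1$ over $p_1$.

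In this case, I would stratify $Y_1$ by smooth locally closed subschemes $Y_1^\alpha$ such that $p_1|_{Y_1^\alpha}$ is smooth of relative dimension $d_\alpha$ and $L|_{Y_1^\alpha}$ is a shifted local system $\cL_\alpha[d_\alpha + \dim S_1]$. The existence of such a common refinement follows from generic smoothness applied to $p_1$ over the smooth base $S_1$, combined with the ULA hypothesis (which enforces local constancy of the perverse cohomology sheaves of $L$ after appropriate stratification of $Y_1$) and perversity of $L$ (which pins down the shift). Base-changing along $q_1$ yields a stratification of $Y$ by $Y^\alpha := Y_1^\alpha \times_{S_1} S$, with $p^\alpha : Y^\alpha \to S$ smooth of relative dimension $d_\alpha$. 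The restriction $\cF_L(K)|_{Y^\alpha}$ becomes $(p^\alpha)^* K \otimes q^{\alpha *} \cL_\alpha [d_\alpha]$, which is perverse for $K$ perverse on $S$: indeed, $(p^\alpha)^*[d_\alpha]$ is $t$-exact by smoothness of $p^\alpha$, and tensoring with the local system $q^{\alpha *} \cL_\alpha$ preserves perversity. Perversity of $\cF_L(K)$ globally then follows by verifying the defining support and cosupport conditions stratum by stratum.

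The main obstacle is the construction of the common refinement accommodating both the smoothness of $p_1$ on strata and the local constancy of $L$ together with the correct perverse shift; this is a standard consequence of the ULA hypothesis over a smooth base, but requires careful bookkeeping. Once the stratification is in place, the remainder of the argument is formal.
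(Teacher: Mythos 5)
The reduction to right $t$-exactness via Verdier duality and Proposition~\ref{Pp_B.1.3_ULA} matches the paper, but the core of your argument has a genuine gap: the stratification you rely on does not exist. You claim that ULA over the smooth base $S_1$ together with perversity of $L$ produces locally closed smooth strata $Y_1^\alpha$ with $p_1|_{Y_1^\alpha}$ smooth of relative dimension $d_\alpha$ and $L|_{Y_1^\alpha}\simeq \cL_\alpha[d_\alpha+\dim S_1]$ a \emph{single} shifted local system. This is false already for $S_1=\Spec k$, where the ULA condition is vacuous: a perverse sheaf such as $\IC(Y_1)$ for singular $Y_1$ is not, on the smaller strata, a local system concentrated in the single degree $-d_\alpha$; perversity only gives inequalities off the open stratum, and ULA does not upgrade them to equalities. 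Since this structural claim is the engine of your stratum-by-stratum verification, the argument collapses at that point. Two further (smaller) issues: even granting the structure, knowing that $\cF_L(K)|_{Y^\alpha}$ is perverse on each stratum only gives the support conditions, not the cosupport conditions, so "verifying the defining support and cosupport conditions stratum by stratum" needs the $!$-restrictions as well, which your description of $L$ on strata does not control; and your devissage to $L$ perverse silently uses that perverse truncations of a ULA complex are again ULA, which is not addressed (and is not needed if one argues as below).

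The paper's proof avoids all of this by never reducing to $L$ perverse and never producing a stratification: for right $t$-exactness it suffices to check the support condition pointwise, i.e.\ that for every (not necessarily closed) point $y\in Y$ the $*$-fibre $(\cF_L(K))_y$ sits in degrees $\le -\dim y$. Writing $s=p(y)$, $y_1=q(y)$, $s_1=q_1(s)$, one applies the ULA consequence (\cite{BG}, 5.1.2) to the test object $\cT=\IC(\ov{\{s_1\}})$: right $t$-exactness of $\cT\mapsto L\otimes p_1^*\cT[-\dim S_1]$ gives $L_{y_1}[\dim s_1-\dim S_1]$ in degrees $\le-\dim y_1$, while $K_s$ is in degrees $\le-\dim s$, and the inequality $\dim y\le \dim s+\dim y_1-\dim s_1$ then yields the bound for $(\cF_L(K))_y\simeq K_s\otimes L_{y_1}[-\dim S_1]$. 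Left $t$-exactness then follows by duality via Proposition~\ref{Pp_B.1.3_ULA} (and in the $\cD$-module setting one runs the same estimate on $!$-fibres of $\DD\cF_L(K)$). If you want to salvage your approach you would have to replace the false structural claim by genuine stalk/costalk estimates for ULA complexes on strata, at which point you are essentially forced into the pointwise argument above.
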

\begin{proof}
Assume we are in the constructible context. Assume $L$ is placed in perverse degrees $\le 0$. It suffices to show that $\cF_L$ is right t-exact. The other properties follow by duality using Proposition~\ref{Pp_B.1.3_ULA}.  

Assume $K\in Shv(S)^c$ is placed in perverse degrees $\le 0$. Let $y\in Y$ be a non necessarily closed point, let $s=p(y), y_1=q(y)$, $s_1=q_1(s)$. By $\dim y$ we mean the dimension of the closure of $y$. It suffices to show that the $*$-fibre $(\cF_L(K))_y$ is placed in degrees $\le -\dim y$. 

Let $S'_1$ be the closure of $s_1$. Applying (\cite{BG}, 5.1.2) to $L$, we see that the functor 
$$
Shv(S_1)\to Shv(Y_1), \, \cT\mapsto L\otimes p_1^*(\cT)[-\dim S_1]
$$
is right t-exact for the perverse t-structures. In particular, $L\otimes p_1^*\IC(S'_1)[-\dim S_1]$ is placed in perverse degrees $\le 0$. So, the $*$-fibre $L_{y_1}[\dim S'_1-\dim S_1]$ is placed in degrees $\le -\dim y_1$. Here $\dim S'_1=\dim s_1$. Besides, the $*$-fibre $K_s$ is placed in degrees $\le -\dim s$. Note that $\dim y\le \dim (s\times_{s_1} y_1)=\dim s+\dim y_1-\dim s_1$. Thus, 
$$
(\cF_L(K))_y\,\iso\, K_s\otimes L_y[-\dim S_1]
$$
is placed in degrees $\le -\dim s-\dim y_1+\dim s_1\le -\dim y$. We are done.

 For $\cD$-modules we can argue similarly considering the $!$-fibres of $\DD(\cF_L(K))$ in view of Proposition~\ref{Pp_B.1.3_ULA}. 
\end{proof}

\ssec{Spaces of points and divisors}
\label{Sect_Spaces of points and divisors}
\sssec{} The purpose of section~\ref{Sect_Spaces of points and divisors} is to establish Proposition~\ref{Pp_ULA_and_perverse_appendix} and Corollary~\ref{Cor_B.2.9}.

\sssec{} Let $I\in fSets$ with $r=\mid I\mid$. Let $d>0$. Write $\oo{X}{}^{(d)}\subset X^{(d)}$ for the complement to all the diagonals, it classifies reduced effective divisors $D$ on $X$ of degree $d$.

Write $Y:=(\oo{X}{}^{(d)}\times X^I)^{\subset}\subset \oo{X}{}^{(d)}\times X^I$ for the closed subscheme classifying $(D, (x_i))\in \oo{X}{}^{(d)}\times X^I$ such that $D\le \sum_i x_i$. Let $p: Y\to \oo{X}{}^{(d)}$ be the projection. 

\begin{Pp} 
\label{Pp_ULA_and_perverse_appendix}
i) The dualizing object $\omega_Y$ is ULA with respect to $p$.\\
ii) The object $\omega_Y[-r]$ is perverse.
\end{Pp}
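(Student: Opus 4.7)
The plan is to identify $Y$ with an explicit fiber product, from which both assertions will follow. Concretely, I will consider the symmetrization $\sigma: X^I \to X^{(r)}$ (an étale $\Sigma_I$-Galois cover) and the addition-of-divisors map $\tau: \oo{X}{}^{(d)} \times X^{(r-d)} \to X^{(r)}$, $(D, E') \mapsto D + E'$. These are naturally compatible and I will verify that $Y$ represents the fiber product, i.e.\ that there is a cartesian square
\[
\begin{array}{ccc}
Y & \toup{\tilde\sigma} & X^I\\
\downarrow\lefteqn{\scriptstyle q} && \downarrow\lefteqn{\scriptstyle \sigma}\\
\oo{X}{}^{(d)} \times X^{(r-d)} & \toup{\tau} & X^{(r)}
\end{array}
\]
where $q(D, (x_i)) = (D, \sum_i x_i - D)$ (well-defined since $D \le \sum_i x_i$). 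The projection $p$ then factors as $p = \pr_1 \comp q$ where $\pr_1$ is the first projection. Both identifications are immediate at the level of the moduli problem each scheme represents.

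For part (ii), the key observation is that $\sigma$ is finite flat (Galois), so its base change $q$ is finite flat as well. Hence $Y$ is finite flat over the smooth variety $\oo{X}{}^{(d)} \times X^{(r-d)}$ of dimension $r$, so $Y$ is Cohen--Macaulay of pure dimension $r$. The perversity of $\omega_Y[-r]$ then follows from the standard characterization: for $Y$ Cohen--Macaulay equidimensional of dimension $n$, the dualizing complex $\omega_Y$ is concentrated in cohomological degree $-2n$ as a single sheaf, and both $\omega_Y[-n]$ and its Verdier dual $e_Y[n]$ satisfy the perverse support conditions.

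For part (i), I will use the factorization $p = \pr_1 \comp q$. Since $X$ is a smooth projective curve, $X^{(r-d)}$ is smooth proper, so $\pr_1$ is smooth proper. The dualizing complex $\omega_{\oo{X}{}^{(d)} \times X^{(r-d)}}$ is a shifted lisse sheaf on a smooth scheme, hence ULA over $\pr_1$. Since $q$ is finite (hence proper), $\omega_Y \,\iso\, q^!\omega_{\oo{X}{}^{(d)} \times X^{(r-d)}}$. Invoking the standard fact that the ULA property is preserved under $q^!$ for a finite morphism $q$, one concludes that $\omega_Y$ is ULA over $p$.

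\textbf{Main obstacle.} The main technical point will be the preservation of ULA under $q^!$ for the finite flat morphism $q$. The cleanest route is probably to reduce, after an étale localization on $\oo{X}{}^{(d)} \times X^{(r-d)}$, to the case of a closed immersion composed with an étale map, and use the stability of ULA under smooth pullback, proper pushforward, and Verdier duality (since $q^! = \DD \comp q^* \comp \DD$ up to the relative dualizing twist, which is a lisse shift along $q$ because $q$ is finite flat with smooth target). Alternatively, one may argue directly: since $p$ is flat proper with smooth base and Cohen--Macaulay fibers (flatness transports the CM property fiberwise), $\omega_Y$ is automatically ULA over $p$. The verification of the fiber-product presentation of $Y$ is straightforward once the moduli interpretations are spelled out, and does not require further analysis.
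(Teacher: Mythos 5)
Your identification of $Y$ with the fibre product $X^I\times_{X^{(r)}}(\oo{X}{}^{(d)}\times X^{(r-d)})$ is correct (though $\sigma\colon X^I\to X^{(r)}$ is only finite flat, not \'etale: it ramifies along the diagonals), and it does show $q$ is finite flat and $p$ is flat and proper. But both of your routes to part (i) rest on false lemmas. ULA over a base $S$ is \emph{not} preserved by $q^!$ (equivalently $q^*$) along a finite, or even finite flat, morphism: take $S=\AA^1_t$, $Z'=\AA^2_{x,t}\to S$ the projection, $G=e_{Z'}$ (ULA, as the projection is smooth), and $q\colon Z=\{y^2=x^2-t\}\to Z'$ the degree-$2$ finite flat cover branched along the parabola. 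Then $Z$ is smooth, the composite $Z\to \AA^1_t$ is $(x,y)\mapsto x^2-y^2$, which has a nondegenerate critical point at the origin, so the vanishing cycles of $t$ are nonzero and $q^*G\,\iso\, e_Z$ (hence also $q^!\omega_{Z'}\,\iso\,\omega_Z$, a shift of it) is not ULA over $S$. This counterexample has exactly the shape of your situation: $q$ is a branched cover, and whether $\omega_Y$ is ULA over $\oo{X}{}^{(d)}$ is precisely a statement about how the branch locus (the diagonals) sits relative to the fibres of $p$ --- a point your argument never engages. Your alternative route fails for the same reason: ``flat proper with CM fibres $\Rightarrow$ $\omega$ ULA'' is false, as a family of plane cubics degenerating to a nodal cubic shows (flat, proper, all fibres lci hence CM, smooth total space, but the vanishing cycles at the node are nonzero). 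Flatness and CM-ness are coherent conditions, while ULA is a topological one; no implication of this kind holds.

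Part (ii) suffers from the same coherent-versus-constructible confusion. CM-ness concentrates the \emph{coherent} dualizing complex, but it does not imply that the Verdier dualizing complex $\omega_Y$ is concentrated, nor that $\omega_Y[-\dim Y]$ is perverse: the cone over the Segre embedding of $\PP^1\times\PP^2$ (the rank $\le 1$ locus of $2\times 3$ matrices) is CM of dimension $4$ by Hochster--Eagon, yet the link $L$ of the vertex has $H^2(L)\ne 0$ (Gysin sequence), so $\omega_Y[-4]$ has nonzero stalk cohomology in degree $+1$ at the vertex and is not perverse. The correct sufficient condition is that $Y$ be a local complete intersection of pure dimension $r$, which is what the paper uses --- and your fibre square does deliver it after a repair: $\sigma$ is a finite flat morphism between smooth varieties, hence syntomic (flat lci); syntomic morphisms are stable under base change, so $q$ is syntomic and $Y$ is lci of pure dimension $r$. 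So (ii) is salvageable along your lines, but (i), which is the real content of the proposition, is not: the paper proves it by passing to the \'etale cover $\oo{X}{}^{d}\to\oo{X}{}^{(d)}$, writing $\tilde Y$ as an intersection of $d$ incidence divisors, and resolving $e_{\tilde Y}$ by a (tensor product of Koszul-type) complex whose terms are constant sheaves on the subvarieties $\oo{X}{}^K\times X^{I-\tilde K}$, each smooth over the base, so that ULA follows from stability under triangles. Your proposal contains no substitute for this step.
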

 The proof of Proposition~\ref{Pp_ULA_and_perverse_appendix} is given in the rest of Section~\ref{Sect_The ULA property and perversity}.
 
\sssec{Proof of Proposition~\ref{Pp_ULA_and_perverse_appendix} for $d=1$}  
i) The inclusion $Y\subset X\times X^I$ is a divisor with normal crossings. For $K\subset I$ let $Y_K=\{(x, (x_i))\in Y\mid$  $x_i=x$ for all $i\in K\}$.  Write $S_m(I)$ for the set of subsets $K\subset I$ with $\mid K\mid=m$. The irreducible components of $Y$ are $Y_K$ for $K\in S_1(I)$. 

By Lemma~\ref{Lm_divisor_normal_crossings} below, we have an exact sequence of constructible sheaves on $Y$
\begin{equation}
\label{complex_for_d=1_case_Pp_ULA_and_perverse_appendix}
0\to e_Y\to \underset{K\in S_1(I)}{\oplus} e_{Y_K}\to \underset{K\in S_2(I)}{\oplus} e_{Y_K}\to\ldots\to \underset{K\in S_r(I)}{\oplus} e_{Y_K}\to 0
\end{equation}
Since each composition $Y_K\hook{} Y\toup{p} X$ is smooth, we conclude that both $e_Y$ and $\omega_Y$ are ULA with respect to $p$.

\medskip\noindent
ii) Since $Y$ is a locally complete intersection, $\omega_Y[-r]$ is perverse. \QED

\begin{Lm} 
\label{Lm_divisor_normal_crossings}
Let $Z\in\Sch_{ft}$ be smooth irreducible scheme, $Y\subset Z$ be a divisor with normal crossings. Write $Y_i$ ($i\in I$) for the irreducible components of $Y$. Set $r=\mid I\mid$. Let $S_m(I)$ be the set of subsets $K\subset I$ with $\mid K\mid=m$. For a nonempty subset $K\subset I$ write $Y_K=\cap_{k\in K} Y_k$. Then there is an exact sequence of constructible sheaves on $Y$
$$
e_Y\to \underset{K\in S_1(I)}{\oplus} e_{Y_K}\to \underset{K\in S_2(I)}{\oplus} e_{Y_K}\to\ldots\to \underset{K\in S_r(I)}{\oplus} e_{Y_K}\to 0.
$$
\end{Lm}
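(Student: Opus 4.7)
The plan is to build the complex as a signed Čech-type resolution associated to the closed cover $\{Y_i\}_{i \in I}$ of $Y$, and then verify exactness stalk-by-stalk by recognizing each stalk complex as the augmented simplicial (co)chain complex of a nonempty simplex.

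First I would fix a linear order on $I$ to produce signs. For $K \in S_m(I)$ and $K' = K \sqcup \{i\} \in S_{m+1}(I)$, the inclusion $Y_{K'} \hookrightarrow Y_K$ (note: adding an index shrinks the intersection) induces, via the unit of the adjunction $(i_K)_* \dashv (i_K)^!$ composed with the base change along $Y_{K'} \hookrightarrow Y_K \hookrightarrow Y$, a canonical restriction map $e_{Y_K} \to e_{Y_{K'}}$ of sheaves on $Y$. I would assemble these with the standard simplicial sign $(-1)^{\#\{j \in K' : j < i\}}$ into the differential
\[
d^m : \bigoplus_{K \in S_m(I)} e_{Y_K} \longrightarrow \bigoplus_{K' \in S_{m+1}(I)} e_{Y_{K'}},
\]
and set $d^0 : e_Y \to \bigoplus_{K \in S_1(I)} e_{Y_K}$ to be the sum of the restriction maps along $Y_i \hookrightarrow Y$. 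The relation $d^{m+1} \circ d^m = 0$ is then the usual simplicial identity from the signs, yielding a complex.

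Next I would pass to stalks. Fix $y \in Y$, and let $J = J(y) := \{i \in I : y \in Y_i\}$, which is nonempty because $y$ lies in the union $Y = \bigcup_i Y_i$. The stalk $(e_{Y_K})_y$ equals $e$ when $K \subset J$ and vanishes otherwise, so the stalk complex is
\[
e \longrightarrow \bigoplus_{\substack{K \subset J \\ |K|=1}} e \longrightarrow \bigoplus_{\substack{K \subset J \\ |K|=2}} e \longrightarrow \cdots \longrightarrow \bigoplus_{\substack{K \subset J \\ |K|=|J|}} e \longrightarrow 0,
\]
with the higher terms identically zero. This is precisely the augmented simplicial cochain complex (with coefficients in $e$) of the standard simplex on the vertex set $J$.

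Finally, since any nonempty simplex is contractible, its augmented cochain complex is acyclic; hence the stalk complex is exact. Because exactness of a complex of constructible sheaves can be checked on stalks, this proves exactness of the global complex (including injectivity of $e_Y \to \bigoplus_{i} e_{Y_i}$). I do not anticipate a serious obstacle: the only point requiring care is the sign bookkeeping to ensure that the maps assemble into a genuine complex, and this is handled uniformly by the standard simplicial convention.
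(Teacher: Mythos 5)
Your proof is correct, but it takes a different route from the paper. The paper does not argue stalkwise on the constant-sheaf complex: it first writes down the Verdier-dual complex built from the canonical maps $\omega_{Y_{K'}}\to\omega_{Y_K}$ for $K\subset K'$ (with the same simplicial signs you use), and then proves exactness of its dual by induction on the number of components, using that the $!$-restriction of the dual complex to the deepest intersection $\cap_{i\in I}Y_i$ is the usual Koszul complex, with the exactness away from that locus supplied by the induction hypothesis. Your argument instead identifies each stalk of the constant-sheaf complex at a point $y$ with the augmented simplicial cochain complex of the (nonempty) simplex on $J(y)=\{i: y\in Y_i\}$ and invokes contractibility; since exactness of constructible sheaves is checked on (geometric) stalks, this is complete. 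What your approach buys: it is more elementary (no duality, no induction, no Koszul computation) and it actually proves a more general statement — the normal crossings hypothesis, and even irreducibility of the components, play no role; any finite closed cover $Y=\cup_i Y_i$ with $Y_K=\cap_{k\in K}Y_k$ works. What the paper's formulation buys is that the dual complex in terms of dualizing sheaves is written down explicitly, which is the form in which the canonical maps between closed strata are most naturally specified. One small wording fix: the restriction map $e_{Y_K}\to e_{Y_{K'}}$ is obtained by applying $(i_K)_*$ to the unit $\mathrm{id}\to j_*j^*$ of the $(j^*,j_*)$-adjunction for the closed immersion $j:Y_{K'}\hookrightarrow Y_K$, not from the adjunction $(i_K)_*\dashv (i_K)^!$; the maps you intend are the evident ones, so nothing in the argument changes.
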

\begin{proof}
We define instead the Verdier dual complex 
$$
\cK=(0\to \underset{K\in S_r(I)}{\oplus} \omega_{Y_K}\to \ldots\to 
\underset{K\in S_2(I)}{\oplus} \omega_{Y_K}\to
\underset{K\in S_1(I)}{\oplus} \omega_{Y_K}\to \omega_Y\to 0)
$$
as follows. For $K\subset K'\subset I$ we have the closed immersion $Y_{K'}\hook{} Y_K$, hence a canonical map $\omega_{Y_{K'}}\to \omega_{Y_K}$.

Pick a linear order on $I$, that is, a bijection $I\,\iso\, \{1,\ldots, r\}$. Note that any $K\subset I$ gets a linear order induced by that of $I$. The restriction of the differential 
$$
d: \underset{K\in S_m(I)}{\oplus} \omega_{Y_K}\to \underset{L\in S_{m-1}(I)}{\oplus} \omega_{Y_{L}}
$$ 
to the object $\omega_{Y_K}$ with $K\in S_m(I)$ is the map
$$
\omega_{Y_K}\to \underset{L\subset K, \, L\in S_{m-1}(I)}{\oplus} \omega_{Y_{L}},
$$
whose $L$-component equals $(-1)^{j-1}$ times the canonical map $\omega_{Y_K}\to \omega_{Y_L}$. Here $j\in \{1,\ldots, n\}$ is such that if $K=\{k_1,\ldots, k_m\}$ in its linear order that $K-L=\{k_j\}$. The last differential
$$
\underset{K\in S_1(I)}{\oplus} \omega_{Y_K}\to \omega_Y
$$
is just the natural map. It is easy to see that  $d^2=0$. 

We show, by induction on $n$, that $\DD(\cK)$ is an exact complex in the abelian category of constructible sheaves on $Y$. The !-restriction of $\cK$ to $\cap_{i\in I} Y_i$ is the usual Koszul complex. The exactness of our sequence outside $\cap_{i\in I} Y_i$ follows from the induction hypothesis. We are done.
\end{proof}

\sssec{} Write $Q(I)$ for the set of equivalence relations on $I$.
Let $\cS$ be the set of pairs: $\tilde K\subset I$ and an equivalence relation $(\tilde K\toup{\phi} K)\in Q(\tilde K)$ such that $\mid K\mid=d$. For $(\tilde K\toup{\phi} K)\in\cS$ consider the map 
$$
\alpha_{\tilde K,\phi}: \oo{X}{}^K\times X^{I-\tilde K}\to Y
$$
sending $(y_k)\in \oo{X}{}^K, (x_i)\in X^{I-\tilde K}$ to the collection: $D=\sum_{k\in K} y_k$, $(x_i)\in X^I$ with $(x_i)_{i\in \tilde K}$ beling the image of $(y_k)$ under $\oo{X}{}^K\to X^{\tilde K}$. Set 
$$
Y_{\tilde K,\phi}=\oo{X}{}^K\times X^{I-\tilde K}.
$$ 
Note that $\alpha_{\tilde K,\phi}$ is a closed immersion, and $Y_{\tilde K,\phi}$ is smooth of dimension $d+\mid I-\tilde K\mid$. 

We make $\cS$ into a partially ordered set as follows. Given $(\tilde K_1\toup{\phi_1} K_1)\in \cS$, $(\tilde K_2\toup{\phi_2} K_2)\in \cS$ say that the first object is less or equal to the second if $\tilde K_1\subset \tilde K_2$, and for each $k_1\in K_1$ there is $k_2\in K_2$ such that $\phi_1^{-1}(k_1)\subset \phi_2^{-1}(k_2)$. In this case there is a unique isomorphism $K_1\,\iso\, K_2$ making the diagram commute
$$
\begin{array}{ccc}
\tilde K_1 & \hook{} & \tilde K_2\\
\downarrow\lefteqn{\scriptstyle \phi_1} && \downarrow\lefteqn{\scriptstyle \phi_2}\\ 
K_1 & \iso & K_2.
\end{array}
$$
 
  If $(\tilde K_1, \phi_1)\le (\tilde K_2, \phi_2)$ then $Y_{\tilde K_2,\phi_2}\hook{} Y$ factors uniquely through $Y_{\tilde K_1,\phi_1}\hook{} Y$.  

\sssec{} Write $\cS^m=\{(\tilde K,\phi)\in \cS\mid \dim Y_{\tilde K,\phi}=m\}$. 
We will see that there is an exact sequence of constructible sheaves on $\tilde Y:=Y\times_{\oo{X}{}^{(d)}} \oo{X}{}^{d}$
\begin{multline}
\label{complex_cK_hopefully}
0\to e_{\tilde Y}\to \underset{(\tilde K,\phi)\in \cS^r}{\oplus} e_{\tilde Y_{\tilde K,\phi}}\to \underset{(\tilde K,\phi)\in \cS^{r-1}}{\oplus} e_{\tilde Y_{\tilde K,\phi}}\to \underset{(\tilde K,\phi)\in \cS^{r-2}}{\oplus} e_{\tilde Y_{\tilde K,\phi}}\to\ldots\\ \to \underset{(\tilde K,\phi)\in \cS^d}{\oplus} e_{\tilde Y_{\tilde K,\phi}}\to 0.
\end{multline} 
Here $\tilde Y_{\tilde K,\phi}=Y_{\tilde K,\phi}\times_{\oo{X}{}^{(d)}} \oo{X}{}^{d}$. 

\sssec{End of the proof of Proposition~\ref{Pp_ULA_and_perverse_appendix}}
ii) It suffices to show that $Y$ is a locally complete intersection. It suffices to prove this after the \'etale base change $\oo{X}{}^d\to\oo{X}{}^{(d)}$. For $1\le i\le d$ let 
$$
\tilde Y^j=\{((y_j), (x_i)_{i\in I})\in X^d\times X^I\mid y_j\le \sum_i x_i\}.
$$ 
Since $\tilde Y^j\subset\tilde Y$ is an effective Cartier divisor, $\tilde Y=\cap_{1\le j\le d} \tilde Y^j$ is a locally given by $d$ equations. Each of the irreducible components of $\tilde Y$ is smooth of dimension $\mid I\mid$. So, $\tilde Y$ is a locally complete intersection. Our claim follows.

\medskip\noindent
i) The argument below is due to Dima Arinkin\footnote{We are grateful to him for suggesting this idea to us}. For $1\le j\le d$ the constant sheaf on $\tilde Y^j$ is represented by the complex
$$
\underset{K\in S_1(I)}{\oplus} e_{\tilde Y^j_K}\to \underset{K\in S_2(I)}{\oplus} e_{\tilde Y^j_K}\to\ldots\to \underset{K\in S_r(I)}{\oplus} e_{\tilde Y^j_K}\to 0
$$
given by (\ref{complex_for_d=1_case_Pp_ULA_and_perverse_appendix}). Tensoring all these complexes for $1\le j\le d$ we see that $e_Y$ is represented by a complex of the form 
(\ref{complex_cK_hopefully}). For any $(\tilde K,\phi)\in \cS$ the composition $\tilde Y_{\tilde K,\phi}\hook{} \tilde Y\to Y\toup{p} \oo{X}{}^{(d)}$ is smooth, $e_{\tilde Y}$ is ULA over $\oo{X}{}^{(d)}$. The ULA property is local in the smooth topology of the source. We are done. \QED

\begin{Cor}
\label{Cor_B.2.9} Let $Z\to \oo{X}{}^{(d)}$ be a map in $\Sch_{ft}$, let $p_Z: Y\times_{\oo{X}{}^{(d)}} Z\to Z$ be the projection. Then the functor $p_Z^![d-r]$ is t-exact for the perverse t-structures. Here $r=\dim Y$. 
\end{Cor}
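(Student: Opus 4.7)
The plan is to deduce the corollary directly from Proposition~\ref{Pp_B.1.4_t-exactness} applied to $L := \omega_Y[-r]$, viewing the structure map $Z\to \oo{X}{}^{(d)}$ as the left leg of a diagram $Z\to \oo{X}{}^{(d)}\gets Y$ with $S_1 := \oo{X}{}^{(d)}$ smooth of dimension $d$. By Proposition~\ref{Pp_ULA_and_perverse_appendix}, the object $L$ is perverse and, since the ULA property is preserved under cohomological shifts, $L$ is still ULA with respect to $p_1 := p: Y\to \oo{X}{}^{(d)}$. Proposition~\ref{Pp_B.1.4_t-exactness} then delivers the t-exactness of the functor
$$
\cF_L(K) \;=\; p_Z^*K \otimes q^*\omega_Y[-r-d], \qquad K\in Shv(Z),
$$
where $q: Y\times_{\oo{X}{}^{(d)}} Z \to Y$ denotes the other projection in the cartesian square.

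It then remains to reconcile $\cF_L$ with $p_Z^![d-r]$. For this I would invoke Proposition~\ref{Pp_B.1.3_ULA} to upgrade the ULA property of $\omega_Y$ over $p$ to the local acyclicity of $\omega_Y$ with respect to the diagram $Z\getsup{p_Z} Y\times_{\oo{X}{}^{(d)}} Z\toup{q} Y$ in the sense of Section~\ref{Sect_B.1.2_now}. The defining isomorphism $\cF_{\DD\omega_Y}(\DD K)\iso \DD(\cF_{\omega_Y}(K))$, combined with $\DD\omega_Y = e_Y$ and $q^*e_Y = e_{Y\times_{\oo{X}{}^{(d)}} Z}$, reduces to $p_Z^*(\DD K)[-d]\iso \DD(p_Z^*K\otimes q^*\omega_Y[-d])$. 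Dualizing and using $\DD\comp p_Z^*\comp \DD \iso p_Z^!$ gives the key identity
$$
p_Z^!K \;\iso\; p_Z^*K \otimes q^*\omega_Y[-2d],
$$
which rewrites $\cF_L(K)$ as $p_Z^!K[d-r]$, establishing the claim.

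The main thing to be careful with is the bookkeeping in the second paragraph: one needs to match the shift $-\dim S_1 = -d$ built into the definition of $\cF_L$ against the perverse normalization shift $-r$ of $\omega_Y[-r]$, and then against the relative dualizing shift $-2d$ coming from the smoothness of $\oo{X}{}^{(d)}$. Once the identity $p_Z^!K\iso p_Z^*K\otimes q^*\omega_Y[-2d]$ is in hand (either as above, or directly from any of the standard consequences of the ULA property as in (\cite{BG}, 5.1.2)), the t-exactness assertion is formal.
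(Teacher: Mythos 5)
Your proof is correct and follows essentially the paper's route: the paper's one-line proof applies Proposition~\ref{Pp_B.1.4_t-exactness} to the perverse sheaf $e_Y[r]$ (which, $Y$ being a local complete intersection of dimension $r$, is the same as your $\omega_Y[-r]$) together with Proposition~\ref{Pp_ULA_and_perverse_appendix}. The only difference is that you spell out the identification $\cF_L\simeq p_Z^![d-r]$ via the duality isomorphism of Section~\ref{Sect_B.1.2_now} and Proposition~\ref{Pp_B.1.3_ULA}, a reconciliation the paper leaves implicit; your bookkeeping there is accurate.
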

\begin{proof}
Apply Proposition~\ref{Pp_B.1.4_t-exactness} to the perverse sheaf $e_Y[r]$ using Proposition~\ref{Pp_ULA_and_perverse_appendix}. 
\end{proof}

\ssec{Property of some t-structures}
\sssec{} 
\label{Sect_B.3.1_now}
Let $S\gets{} Y\toup{p} Z$ be a diagram in $\Sch_{ft}$, let $\cG$ be a smooth group scheme of finite type over $S$ acting on $Y$ over $S$. Write $h: Y\to Y/\cG$ for the stack quotient. Let $C$ be the limit in $\DGCat_{cont}$ of the diagram
$$
Shv(Y)^{\cG}\toup{\oblv} Shv(Y)\getsup{p^!} Shv(Z)
$$
Assume $a\in\ZZ$ is such that $p^![a]$ is t-exact for the perverse t-structures.

 An object of $C$ is a collection $K\in Shv(Y)^{\cG}, K'\in Shv(Z)$ together with an isomorphism $\gamma: \oblv(K)\,\iso\, p^! K'$. Equip $Shv(Z)$ with the perverse t-structure.
Equip $C$ with the t-structure such that $(K, K', \gamma)\in C^{\le 0}$ iff $K'\in Shv(Z)^{\le 0}$. This is an accessible t-structure on $C$. 
 
\begin{Lm} 
\label{Lm_B.3.2}
We have $C^{>0}=\{(K, K',\gamma)\in C\mid K'\in Shv(Z)^{>0}\}$. The t-structure on $C$ is compatible with filtered colimits.
\end{Lm}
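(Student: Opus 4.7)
The strategy is to construct the truncation functor $\tau^{\le 0}_C$ on $C$ explicitly, which simultaneously verifies the t-structure axioms and identifies $C^{> 0}$. Let $F \colon C \to Shv(Z)$ be the projection $(K, K', \gamma) \mapsto K'$; it is continuous (as a limit projection in $\DGCat_{cont}$), and by the defining condition $C^{\le 0} = F^{-1}(Shv(Z)^{\le 0})$. The task is to show the analogous description $C^{> 0} = F^{-1}(Shv(Z)^{> 0})$.

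Given $c = (K, K', \gamma) \in C$, I set $L' := \tau^{\le 0}_Z K'$. Since $p^![a]$ is t-exact by hypothesis, applying $p^!$ to the truncation triangle in $Shv(Z)$ yields $p^! L' \,\iso\, \tau^{\le -a}_{Shv(Y)} p^! K' \,\iso\, \tau^{\le -a}_{Shv(Y)} \oblv(K)$. Using the perverse t-structure on $Shv(Y)^{\cG}$ from (\cite{DL}, A.5.3), for which $\oblv[\dimrel]$ is t-exact, set $L := \tau^{\le -a + \dimrel}_{Shv(Y)^{\cG}} K$. Then $\oblv(L) \,\iso\, \tau^{\le -a}_{Shv(Y)} \oblv(K) \,\iso\, p^! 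L'$ canonically. This produces a triple $(L, L', \delta) \in C^{\le 0}$ together with a canonical morphism $\eta \colon (L, L', \delta) \to c$ whose two components are the canonical truncation maps.

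Universality of $\eta$ is then routine: for any $c' = (M, M', \epsilon) \in C^{\le 0}$ with a map $c' \to c$, the $Shv(Z)$-component factors uniquely through $L' \to K'$ since $M' \in Shv(Z)^{\le 0}$, and the $Shv(Y)^{\cG}$-component factors uniquely through $L \to K$ since $\oblv(M) = p^! M' \in Shv(Y)^{\le -a}$ places $M$ in $(Shv(Y)^{\cG})^{\le -a + \dimrel}$; the two factorizations are automatically compatible via the canonical identifications $\oblv(M) = p^! M'$ and $\oblv(L) = p^! L'$ by naturality. Hence $(L, L', \delta)$ realizes $\tau^{\le 0}_C c$. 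Since $\oblv$ is conservative on $Shv(Y)^{\cG}$ and $\oblv(L) = p^! L'$, we have $\tau^{\le 0}_C c = 0$ iff $L' = 0$ iff $K' \in Shv(Z)^{> 0}$, which proves the first claim. Compatibility with filtered colimits is then formal: $F$ preserves filtered colimits as a continuous functor, and $Shv(Z)^{> 0}$ is closed under filtered colimits since the perverse t-structure on $Shv(Z)$ for $Z \in \Sch_{ft}$ is compatible with filtered colimits; therefore $C^{> 0} = F^{-1}(Shv(Z)^{> 0})$ is closed under filtered colimits in $C$.

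The main obstacle is the construction in the first step: the two t-exactness hypotheses — for $p^![a]$ on $Shv(Z) \to Shv(Y)$ and for $\oblv[\dimrel]$ on $Shv(Y)^{\cG} \to Shv(Y)$ — must combine so that the truncation in $Shv(Z)$ and a corresponding truncation in $Shv(Y)^{\cG}$ descend along the structure map $\oblv(K) \iso p^! K'$ to a coherent object of $C$. This is precisely what the numerical coincidence $p^!(\tau^{\le 0} K') \iso \oblv(\tau^{\le -a + \dimrel}_{Shv(Y)^{\cG}} K)$ provides, and is the reason the integer $a$ enters the hypothesis.
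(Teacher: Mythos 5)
Your proof is correct in substance, but it follows a different route than the paper. The paper first applies the autoequivalence $(K,K',\gamma)\mapsto (K[a],K',\gamma)$, so that the right leg of the cospan becomes the t-exact functor $p^![a]$; it then transfers a t-structure to $Shv(Y)^{\cG}$ through the comonadic description of $\oblv$ (comonadicity plus left t-exactness of the comonad, Lemma~\ref{Lm_A.6.6}, give via \cite{Ly}, 9.3.22, 9.3.26 a t-structure for which $\oblv$ is t-exact, detects both aisles, and is compatible with filtered colimits), and finally quotes the general limit statement Lemma~\ref{Lm_B.3.3}, from which both assertions of the lemma drop out at once. You instead construct the coreflection $\tau^{\le 0}_C$ by hand, truncating the two components compatibly; this buys an explicit, self-contained argument that avoids Lemma~\ref{Lm_B.3.3} and the comonadic transfer, at the cost of redoing by hand what those general lemmas package. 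Both arguments ultimately rest on the same two inputs: t-exactness of $p^![a]$, and t-exactness (after a shift) of $\oblv$ for a t-structure on $Shv(Y)^{\cG}$.

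Three points in your write-up deserve attention, none of them fatal. First, in the universality step you pass from $\oblv(M)\iso p^!M'$ being connective (in the appropriate shifted range) to $M$ itself being connective in $Shv(Y)^{\cG}$; this is a \emph{detection} statement, not a consequence of t-exactness of $\oblv[\dimrel]$ alone, and it is exactly the nontrivial ingredient. It does hold here, either because $Y\to Y/\cG$ is a smooth cover and the perverse t-structure on the quotient is detected by shifted pullback along a smooth atlas, or, as the paper arranges it, via comonadicity of $\oblv$ together with Lemma~\ref{Lm_A.6.6} and (\cite{Ly}, 9.3.26); it should be stated. Second, the ``automatic compatibility'' of the two factorizations is cleanest via the fiber-product description of mapping spaces in the limit category: $\Map_C(c',-)$ is the pullback of the three mapping spaces over the $Shv(Y)$-one, and all three comparison maps induced by $\eta$ are equivalences for $c'\in C^{\le 0}$, whence the universality. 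Third, your truncation indices carry the wrong sign relative to the paper's conventions (t-exactness of $p^![a]$ gives $p^!\tau^{\le 0}_Z K'\iso \tau^{\le a}_{Shv(Y)}p^!K'$, and correspondingly $L$ should be the truncation of $K$ in the matching degree $a-\dimrel$ rather than $-a+\dimrel$); this is immaterial bookkeeping and does not affect the argument.
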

\begin{proof}
Applying the autoequivalence $(K, K',\gamma)\mapsto (K[a], K',\gamma)$
of $C$, we may view $C$ as the limit of the diagram
$$
Shv(Y)^{\cG}\toup{\oblv} Shv(Y)\getsup{p^![a]} Shv(Z)
$$

By (\cite{Ly9}, 1.3.4), $\oblv: Shv(Y)^{\cG}\to Shv(Y)$ is comonadic. By Lemma~\ref{Lm_A.6.6}, the corresponding comonad on $Shv(Y)$ is left t-exact for the perverse t-structures. Equip now $Shv(Y)^{\cG}$ with the t-structure as in (\cite{Ly}, 9.3.22), that is, we set $(Shv(Y)^{\cG})^{\le 0}=\oblv^{-1}(Shv(Y)^{\le 0})$. By (\cite{Ly}, 9.3.26), $(Shv(Y)^{\cG})^{\ge 0}=\oblv^{-1}(Shv(Y)^{\ge 0})$, the functor $\oblv: Shv(Y)^{\cG}\to Shv(Y)$ is t-exact, and the t-structure on $Shv(Y)^{\cG}$ is compatible with filtered colimits. So, the t-structure on $C$ is the one obtained as in Lemma~\ref{Lm_B.3.3} below. The claim follows.
\end{proof}  

\begin{Lm}[\cite{G}, I.3, 1.5.8] 
\label{Lm_B.3.3}
Let $I\in 1-\Cat$, $I\to \DGCat_{cont}, i\mapsto C_i$ be a diagram. Assume each $C_i$ is equipped with an accessible t-structure, and all the transition functors $C_i\to C_j$ are t-exact. Set $C=\lim_{i\in I} C_i$ in $\DGCat_{cont}$. Then $C$ acquires a unique t-structure such that every evaluation functor $C\to C_i$ is t-exact. For. this t-structure one has $C^{\le 0}\,\iso\, \lim_{i\in I} C_i^{\le 0}$ and $C^{>0}\,\iso\, \lim_{i\in I} C_i^{>0}$. If, moreover, the t-structure on each $C_i$ is compatible with filtered colimits then the same holds for the t-structure on $C$.
\QED
\end{Lm}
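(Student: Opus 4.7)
The plan is to construct the t-structure on $C$ by declaring
$$
C^{\le 0} = \underset{i \in I}{\lim}\, C_i^{\le 0}, \qquad C^{\ge 1} = \underset{i \in I}{\lim}\, C_i^{\ge 1},
$$
viewed as full subcategories of $C$ via the evident embeddings. Since each transition functor is t-exact, the restriction of the diagram $i \mapsto C_i$ to the connective (resp. coconnective) parts is well-defined, and the limits make sense. The key geometric input needed is essentially trivial here: I just have to verify the three t-structure axioms for the pair $(C^{\le 0}, C^{\ge 1})$.

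For the shift axiom, $C^{\le 0}[1] \subset C^{\le 0}$ holds because each $C_i^{\le 0}[1] \subset C_i^{\le 0}$. For hom vanishing, if $X \in C^{\le 0}$ and $Y \in C^{\ge 1}$, then $\Map_C(X, Y)$ is computed as the totalization (limit over $I$) of the mapping spaces $\Map_{C_i}(\ev_i X, \ev_i Y)$, each of which is connective by the axiom in $C_i$; this yields that $\Map_C(X, Y)$ is itself connective, whence it has no negative homotopy contributing to vanishing of the relevant hom. For the truncation triangle, given $X \in C$, I will build $\tau^{\le 0} X$ componentwise: each $\ev_i X$ has a truncation triangle $\tau^{\le 0} \ev_i X \to \ev_i X \to \tau^{\ge 1} \ev_i X$, and $t$-exactness of the transition functors $C_i \to C_j$ says these triangles assemble compatibly under the structure maps. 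The main obstacle — and really the only nontrivial step — is to check that these pointwise truncations lift to a functor $C \to C$, not merely to the underlying $I$-family of objects. One standard way to do this is to use that $C \to \Fun(I, \prod_i C_i)$ induced by evaluations is fully faithful onto the subcategory of coherent sections, and t-exactness of transition functors guarantees that truncations of a coherent section remain coherent.

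Uniqueness is then automatic: if $C$ carries any t-structure for which every $\ev_i$ is t-exact, then $X \in C^{\le 0}$ forces $\ev_i X \in C_i^{\le 0}$ for all $i$, and conversely; so the t-structure is pinned down by the evaluations.  Accessibility follows from (\cite{HA}, 1.4.4.11) applied to a set of generators obtained by taking compact-like generators of each $C_i^{\le 0}$ and using the presentability of $C$ as a limit in $\DGCat_{cont}$.

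Finally, for compatibility with filtered colimits: since limits in $\DGCat_{cont}$ are computed in such a way that each evaluation functor $\ev_i : C \to C_i$ is continuous and jointly conservative, filtered colimits in $C$ are detected pointwise via the $\ev_i$. Hence if each $C_i^{\ge 0}$ is stable under filtered colimits, a filtered colimit of objects of $C^{\ge 0}$ remains in $C^{\ge 0}$ after applying every $\ev_i$, and therefore lies in $C^{\ge 0}$ itself. This is a short bookkeeping argument once the t-structure is in place; no further geometric input is needed.
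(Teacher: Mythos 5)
The paper offers no proof of this lemma at all: it is quoted verbatim from \cite{G}, I.3, 1.5.8, so your sketch can only be compared with the standard argument — which is indeed the route you take: declare $C^{\le 0}$, $C^{>0}$ to be the sections that are levelwise (co)connective, verify the axioms, and get uniqueness from joint conservativity of the evaluations. That architecture is right, and your treatment of uniqueness and of compatibility with filtered colimits (evaluations preserve filtered colimits because the transition functors are continuous, and are jointly conservative) is fine; note also that accessibility of the resulting t-structure is not actually claimed in the statement, so that digression is unnecessary.

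Two of your justifications, however, do not work as written. In the orthogonality step, ``connectivity'' of the mapping spaces is not the relevant property: every mapping space is connective, and connectivity of the terms says nothing about $\pi_0$ of the limit, which is exactly what has to vanish. The correct input is stronger: for $X_i\in C_i^{\le 0}$ and $Y_i\in C_i^{\ge 1}$ the whole mapping space $\Map_{C_i}(X_i,Y_i)$ is contractible (its $\pi_n$ is $\Hom_{hC_i}(X_i[n],Y_i)=0$, since $X_i[n]$ is still connective for $n\ge 0$), and a limit of contractible spaces is contractible, whence $\Hom_{hC}(X,Y)=0$. Second, for the truncation functor your appeal to ``coherent sections'' of $\Fun(I,\prod_i C_i)$ is not an argument; the clean way is to note that t-exactness of the transition functors means they intertwine the adjunctions $C_i^{\le 0}\leftrightarrows C_i$ (inclusion and $\tau^{\le 0}$), so one may pass to right adjoints in the diagram of inclusions and take the limit of adjunctions: the inclusion $\lim_i C_i^{\le 0}\hookrightarrow C$ then acquires a right adjoint, which is the desired truncation, and the fiber sequence $\tau^{\le 0}X\to X\to \tau^{>0}X$ follows levelwise. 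With these two repairs your sketch becomes the complete (standard) proof.
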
  

\section{Graded Satake functors}
\label{Sect_append_Graded Satake functors}

\sssec{} In this section we define some graded versions of the usual Satake functors for the Levi subgroup $M$ of $G$ and relate them to the usual Satake functors.

\ssec{Case of $\check{M}_{neg}$}
\label{Sect_case_checkM_neg}

\sssec{} Set $\Rep(\check{M})_{pos}=\underset{\theta\in\Lambda_{G,P}^{pos}}{\oplus}\Rep(\check{M})_{\theta}$. The canonical pairing $\Rep(\check{M})\otimes\Rep(\check{M})\to\Vect$ restricts to a perfect pairing 
\begin{equation}
\label{perfect_pairing_case_checkM_neg}
\Rep(\check{M})_{neg}\otimes\Rep(\check{M})_{pos}\to\Vect.
\end{equation}

For $\lambda\in\Lambda^+_M$ write $U^{\lambda}$ for the irreducible $\check{M}$-module of h.w. $\lambda$. Let $\Lambda^+_{M, neg}$ be the set of those $\lambda\in\Lambda^+_M$ whose image in $\Lambda_{G,P}$ lies in $-\Lambda_{G,P}^{pos}$. Then 
$$
\Rep(\check{M})_{neg}\,\iso\, \underset{\lambda\in\Lambda^+_{M, neg}}{\oplus} \Vect\otimes U^{\lambda}.
$$ 

 The product $m: \Rep(\check{M})_{neg}\otimes \Rep(\check{M})_{neg}\to \Rep(\check{M})_{neg}$ admits a continuous right adjoint $m^R$. Indeed, it is given by the property that for $\nu\in \Lambda^+_{M, neg}$, 
$$
m^R(U^{\nu})=\underset{\lambda,\mu\in \Lambda^+_{M, neg}}{\oplus} U^{\lambda}\otimes U^{\mu}\otimes\Hom_{\check{M}}(U^{\lambda}\otimes U^{\mu}, U^{\nu})\in \Rep(\check{M})_{neg}.
$$
Another way is to note that the object $m(U^{\lambda}\otimes U^{\mu})\in\Rep(\check{M}_{neg})$ is compact. 

\sssec{} Write $\cO(\check{M})$ (resp., $\cO(Z(\check{M}))$ for the algebra of regular functions on $\check{M}$ (resp., on $Z(\check{M})$). For $\theta\in\Lambda_{G,P}$ write $e^{\theta}\in \cO(Z(\check{M}))$ for the character $\theta: Z(\check{M})\to \Gm$.  Let 
$$
\cO(Z(\check{M}))_{neg}\subset \cO(Z(\check{M}))
$$ 
be the $e$-subalgebra generated by $e^{\theta}$ for $\theta\in -\Lambda_{G,P}^{pos}$. Set $Z(\check{M})_{neg}=\Spec \cO(Z(\check{M}))_{neg}$. Then $Z(\check{M})_{neg}$ is naturally a monoid in $\Sch_{ft}$, namely the coproduct map for $\cO(Z(\check{M}))$ restricts to one for $\cO(Z(\check{M}))_{neg}$. The map $Z(\check{M})\to Z(\check{M})_{neg}$ is a morphism of monoids and an open immersion.   

\sssec{} Set 
$$
\cO(\check{M})_{neg}=\underset{\theta\in -\Lambda_{G,P}^{pos}}{\oplus} \cO(\check{M})_{\theta},
$$ 
where $\cO(\check{M})_{\theta}\subset \cO(\check{M})$ is the subspace of those functions on which $Z(\check{M})$ acts by $\theta$. That is, $f: \check{M}\to e$ lies in $\cO(\check{M})_{\theta}$ iff for $z\in Z(\check{M}), g\in \check{M}$ one has $f(gz)=\theta(z)f(g)$. Note that
$$
\cO(\check{M})_{neg}=\underset{\lambda\in \Lambda^+_{M, neg}}{\oplus} U^{\lambda}\otimes (U^{\lambda})^*.
$$

One has naturally $\cO(\check{M})_{neg}\in CAlg(\Rep(\check{M}\times \check{M}))$. Set
$$
\check{M}_{neg}=\Spec \cO(\check{M})_{neg}
$$
 
  Under the comultiplication map $\cO(\check{M})\to \cO(\check{M})\otimes\cO(\check{M})$ for $\lambda\in\Lambda^+_M$ the subspace $U^{\lambda}\otimes (U^{\lambda})^*$ is sent inside 
$$
(U^{\lambda}\otimes (U^{\lambda})^*)\otimes (U^{\lambda}\otimes (U^{\lambda})^*)\subset  \cO(\check{M})\otimes\cO(\check{M}).
$$ 
This shows that $\cO(\check{M})_{neg}$ is a bialgebra, the comultiplication map for $\cO(\check{M})$ preserves the corresponding subspaces. So, $\check{M}_{neg}:=
\Spec(\check{M})_{neg}$ is naturally a monoid, and $\cO(\check{M})_{neg}\to \cO(\check{M})$ is a map of bialgebras, which gives the map of monoids $\check{M}\to \check{M}_{neg}$. 

The restriction along $Z(\check{M})\to \check{M}$ is a morphism of algebras $\cO(\check{M})\to \cO(Z(\check{M}))$. It restricts to a morphism of algebras
$\cO(\check{M})_{neg}\to \cO(Z(\check{M}))_{neg}$. The corresponding morphism $Z(\check{M})_{neg}\to \check{M}_{neg}$ is a morphism of monoids. 

\sssec{} 
\label{Sect_C.1.4}
Let $B(\check{M}_{neg})\,\iso\, \underset{[n]\in\bfitDelta^{op}}{\colim} \check{M}_{neg}^n$ in $\PreStk$, this is the classifying space of this monoid. We have the natural map $q: \Spec e\to B(\check{M}_{neg})$. By definition, 
$$
\QCoh(B(\check{M}_{neg}))\,\iso\, \Tot(\QCoh(\check{M}_{neg}^{\bullet})).
$$  
Let $B_{et}(\check{M}_{neg})\in\Stk$ be the \'etale sheafification of $B(\check{M}_{neg})$. 
Note that $B_{et}(\check{M}_{neg})$ is a 1-Artin stack. By (\cite{G}, ch. I.3),
$$
\QCoh(B_{et}(\check{M}_{neg}))\,\iso\, \QCoh(B(\check{M}_{neg})).
$$ 

As in (\cite{Ga1}, 5.5.3), for any $D\in \DGCat_{cont}$ the co-simplicial category $\Tot(D\otimes \QCoh(\check{M}_{neg}^{\bullet}))$ satisfies the comonadic Beck-Chevalley conditions, so the functor 
$$
q^*: D\otimes \QCoh(B(\check{M}_{neg}))\to D
$$ 
is comonadic, and the corresponding comonad is given by the action of $\cO(\check{M}_{neg})\in coAlg(\Vect)$. 

\sssec{} The right adjoint to the inclusion $\Rep(\check{M})_{neg}\to\Rep(\check{M})$ is continuous, it is the projection 
$$
\underset{\lambda\in \Lambda^+_M}{\oplus} U^{\lambda}\otimes\Vect\to \underset{\lambda\in \Lambda^+_{M, neg}}{\oplus} U^{\lambda}\otimes\Vect
$$ 
on the components corresponding to $\Lambda^+_{M, neg}$. Denote by $\oblv_{neg}$ the composition $\Rep(\check{M})_{neg}\to\Rep(\check{M})\toup{\oblv}\Vect$. So, $\oblv_{neg}$ has a continuous right adjoint $\oblv_{neg}^R: \Vect\to \Rep(\check{M})_{neg}$. 

 View $\cO(\check{M}_{neg})$ as an object of $\Rep(\check{M})$ via the $\check{M}$-action on $\check{M}_{neg}$ by left translations. The functor $\oblv_{neg}^R$ is given by
$\Vect\to \Rep(\check{M})_{neg}$, $V\mapsto \cO(\check{M}_{neg})\otimes V$. The comonad $\oblv_{neg}\oblv_{neg}^R$ is given by the action of $\cO(\check{M}_{neg})\in coAlg(\Vect)$ on $\Vect$. 

\begin{Lm} 
\label{Lm_oblv_neg_is_comonadic}
The functor $\oblv_{neg}$ is comonadic. 
\end{Lm}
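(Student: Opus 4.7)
The plan is to verify the two hypotheses of the comonadic form of the Barr-Beck-Lurie theorem, i.e.\ the dual of (\cite{HA}, 4.7.3.5), for $\oblv_{neg}$. Namely, we check: (i) $\oblv_{neg}$ is conservative, and (ii) $\Rep(\check{M})_{neg}$ admits totalizations of $\oblv_{neg}$-split cosimplicial objects and $\oblv_{neg}$ preserves them. The existence of such totalizations is automatic since $\Rep(\check{M})_{neg}$ is presentable.

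For (i), factor $\oblv_{neg}$ as $\Rep(\check{M})_{neg}\toup{\iota}\Rep(\check{M})\toup{\oblv}\Vect$. Objects with distinct central characters admit no nonzero morphisms, so the decomposition $\Rep(\check{M})\iso\prod_{\theta\in\Lambda_{G,P}}\Rep(\check{M})_{\theta}$ holds in $\DGCat_{cont}$ and exhibits $\iota$ as the inclusion of a direct factor; in particular it is fully faithful, hence conservative. The classical $\oblv\colon\Rep(\check{M})\to\Vect$ is conservative since $\Rep(\check{M})$ is compactly generated by the irreducibles $U^\lambda$, each of which has nonzero underlying vector space.

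For (ii), the crucial structural point is that, being the inclusion of a direct factor, $\iota$ has \emph{both} a left and a right adjoint, both equal to the projection $\iota^R$. In particular $\iota$ preserves all small limits. Now let $X^{\bullet}$ be an $\oblv_{neg}$-split cosimplicial object in $\Rep(\check{M})_{neg}$. Since $\oblv\circ\iota=\oblv_{neg}$, the cosimplicial object $\iota(X^{\bullet})$ in $\Rep(\check{M})$ is $\oblv$-split, so the classical comonadicity of $\oblv\colon\Rep(\check{M})\to\Vect$ (i.e.\ the Tannakian identification $\Rep(\check{M})\iso\cO(\check{M})$-$\on{comod}$) implies that $\oblv$ commutes with $\Tot\iota(X^{\bullet})$. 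Combined with $\iota$ preserving limits, we obtain
$$
\oblv_{neg}(\Tot X^{\bullet})\,\iso\,\oblv\,\iota(\Tot X^{\bullet})\,\iso\,\oblv(\Tot\iota X^{\bullet})\,\iso\,\Tot\,\oblv\,\iota X^{\bullet}\,\iso\,\Tot\,\oblv_{neg}(X^{\bullet}),
$$
as required. Finally, one should verify that the resulting comonad on $\Vect$ is indeed $(-)\otimes\cO(\check{M}_{neg})$, which is immediate from the formula $\oblv_{neg}^R(V)=V\otimes\cO(\check{M}_{neg})$ computed in Section~\ref{Sect_case_checkM_neg}.

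The main obstacle, or rather the only nontrivial input, is the observation that $\Rep(\check{M})\iso\prod_{\theta}\Rep(\check{M})_{\theta}$ as a product in $\DGCat_{cont}$: this is what endows $\iota$ with a left adjoint (not just a right adjoint) and thus lets it commute with totalizations. Granted this, the lemma reduces cleanly to the classical comonadicity of the standard forgetful functor on $\Rep(\check{M})$.
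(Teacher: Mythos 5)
Your proof is correct and follows essentially the same route as the paper's: conservativity of $\oblv_{neg}$, the fact that the inclusion $\Rep(\check{M})_{neg}\hook{}\Rep(\check{M})$ admits coinciding left and right adjoints (hence preserves totalizations), the known comonadicity of $\oblv\colon\Rep(\check{M})\to\Vect$ applied to split cosimplicial objects, and then the Barr--Beck criterion (\cite{HA}, 4.7.3.5) for $\oblv_{neg}$. The only cosmetic differences are that the paper phrases the split-object step in terms of simplicial objects of the opposite categories and simply asserts the two adjoints, whereas you justify them via the direct sum decomposition by central characters.
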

\begin{proof} The functor $\oblv_{neg}$ is conservative. The inclusion $\Rep(\check{M})_{neg}\hook{} \Rep(\check{M})$ admits both left and right adjoints, and they coincide. Let now $x^{\bullet}$ be a simplicial object of $\Rep(\check{M})_{neg}^{op}$ whose image in $\Vect^{op}$ is a split simplicial obect. By Lurie-Barr-Beck theorem (\cite{HA}, 4.7.3.5), $x^{\bullet}$ has a colimit $x$ in $\Rep(\check{M})^{op}$, and this colimit is preserved by $\oblv$. Since $\Rep(\check{M})_{neg}^{op}$ admits colimits, and the inclusion $\Rep(\check{M})_{neg}^{op}\hook{} \Rep(\check{M})^{op}$ preserves colimits, $x\in \Rep(\check{M})_{neg}$. Our claim follows now from (\cite{HA}, 4.7.3.5).
\end{proof}

So, $\Rep(\check{M})_{neg}\,\iso\, \QCoh(B(\check{M}_{neg}))$ canonically.

\sssec{}  Note that (\cite{Ly10}, 4.1.22) applies to $C=\Rep(\check{M})_{neg}$, so $\Fact(C)\to \ov{\Fact}(C)$ is an equivalence, here we used the notations of (\cite{Ly10}, Section 4.1). 
 
 For any $I\in fSets$ the natural functor $(\Rep(\check{M})_{neg})_{X^I}\to \Rep(\check{M})_{X^I}$ is fully faithful. Indeed, in the $\ov{\Fact}$ realization it is obtained by passing to the limit over $(I\toup{p} J\to K)\in \Tw(I)^{op}$ of the fully faithful functors
$$
C^{\otimes K}\otimes Shv(X^I_{p, d})\to (\Rep(\check{M}))^{\otimes K}\otimes Shv(X^I_{p, d})
$$
in the notations of \select{loc.cit}. Here 
$X^I_{p,d}=\{(x_i)\in X^I\mid \mbox{if} \; p(i)\ne p(i')\;\mbox{then}\; x_i, x_{i'}\;\mbox{are disjoint}\}$.   

 Since $\ov{\Fact}(C)\,\iso\, \underset{I\in fSets}{\lim} (\ov{\Fact}(C)\otimes_{Shv(\Ran)} Shv(X^I))$, the natural functor 
$$
\Fact(\Rep(\check{M})_{neg})\to \Fact(\Rep(\check{M}))
$$
is fully faithful.

\sssec{} Define the $-\Lambda_{G,P}^{pos}$-graded functor (\ref{functor_Sat_M_Ran_+_graded}) as follows. It is given by a compatible collections of functors for $I\in fSets$
\begin{equation}
\label{functor_Sat_M_I_+}
\Sat_{M, I, +}: (\Rep(\check{M})_{neg})_{X^I}\to Shv(\Gr_{M,I, +})^{\gL^+(M)_I}
\end{equation}
constructed below.

 Take 
$$
A=\cO(\check{M})_{neg}\otimes\omega_X\in CAlg(\Rep(\check{M})\otimes Shv(X)).
$$ 
Here we view $\cO(\check{M})_{neg}\in CAlg(\Rep(\check{M}))$ via the $\check{M}$-action on $\check{M}_{neg}$ by left translations. Applying the construction of (\cite{Ly10}, Section 7), we get the chiral algebra on $\Gr_{M,X}$ denoted $\cT_A$ in \select{loc.cit.}, as well as the corresponding Chevalley-Cousin  complex $\cC(\cT_A)\in \Sph_{M,\Ran}$. For $I\in fSets$ we denote by $\cC(\cT_A)_{X^I}\in \Sph_{M, I}$ its !-restriciton to $\Gr_{M, I}$. 

 Since $A$ and hence $\cT_A$ are $-\Lambda_{G,P}^{pos}$-graded, for each $I$ the object $\cC(\cT_A)_{X^I}$ inherits a $-\Lambda_{G,P}^{pos}$-grading. The resulting grading of $\cC(\cT_A)$ is given by restriction to the component $\Gr_{M,\Ran,\theta}$ of $\Gr_{M,\Ran}$ for $\theta\in\Lambda_{G,P}$. 

 If $\theta\in \Lambda_{G,P}$ then over $\Gr_{M,\Ran,\theta}$ the complex $\cC(\cT_A)$ vanishes unless $\theta\in -\Lambda_{G,P}^{pos}$ and in the latter case it is the extension by zero under $\Gr^{\theta}_{M, \Ran, +}\hook{} \Gr_{M,\Ran,\theta}$. 
 
 Set $\Sph_{M, I, +}=Shv(\Gr_{M, I, +})^{\gL^+(M)_I}$ for $I\in fSets$. They form naturally a sheaf of categories over $\Ran$ (for any of our sheaf theories) denoted $\Sph_{M, \Ran, +}$. Note that 
$$
\Sph_{M, \Ran, +}\subset \Sph_{M,\Ran}
$$ 
is fully faithful, this is the full subcategory of those sheaves which are extensions by zero under $\Gr_{M,\Ran, +}\hook{} \Gr_{M,\Ran}$. 

\sssec{} Note that $\Gr_{M,\Ran, +}$ is a factorization prestack over $\Ran$, and $\Sph_{M,\Ran, +}$ is a weak factorization sheaf of categories (for $\cD$-modules this is a true factorization sheaf of categories). 

 For $I,J\in fSets$ the usual exterior convolution functors 
$$
\Sph_{M, I}\boxtimes \Sph_{M, J}\to \Sph_{M, I\sqcup J}
$$ 
are considered in (\cite{Ly10}, 7.3.11), they are $Shv(X^{I\sqcup J})$-linear. They restrict to the exterior convolution functors 
$$
\Sph_{M, I, +}\boxtimes \Sph_{M, J, +}\to \Sph_{M, I\sqcup J, +}
$$ 
on the corresponding full subcategories. 
 
 Now for $I\in fSets$ the monoidal operation on $\Sph_{M, I}$ defined in (\cite{Ly10}, 7.3.11) also restricts to a monoidal operation on $\Sph_{M, I, +}$. Moreover, $\Sph_{M, I, +}$ has a natural $-\Lambda_{G,P}^{pos}$ grading compatible with the monoidal operation. Thus, $\Sph_{M, \Ran, +}$ is a weak factorization sheaf of $-\Lambda_{G,P}^{pos}$-graded monoidal categories over $\Ran$.  
 
\sssec{} 
\label{Sect_C.1.10_now}
Arguing as in (\cite{Ly10}, 7.3.8-7.3.9), one promotes for $I\in fSets$ the object $\cC(\cT_A)\in\Sph_{M, I, +}$ to an object of $(\Rep(\check{M})_{pos})_{X^I}\otimes_{Shv(X^I)} \Sph_{M, I, +}$. As in (\cite{Ly10}, 7.3.10), one gets canonically
$$
\Fun_{Shv(X^I)}((\Rep(\check{M})_{neg})_{X^I}, \Sph_{M, I, +})\,\iso\, (\Rep(\check{M})_{pos})_{X^I}\otimes_{Shv(X^I)} \Sph_{M, I, +}
$$
in view of the pairing (\ref{perfect_pairing_case_checkM_neg}). The object $\cC(\cT_A)_{X^I}$ for $I\in fSets$ gives now the desired functor (\ref{functor_Sat_M_I_+}). 

 Formal properties of these functors are as follows. For $I\in fSets$, $\Sat_{M, I, +}$ is a monoidal functor compatible with $-\Lambda_{G,P}^{pos}$-grading. The commutative chiral product on $\Fact(\Rep(\check{M})_{neg})$ is compatible with the exterior convolutions on $\Sph_{M, \Ran, +}$ as in (\cite{Ly10}, 7.3.16).

 Let $\epsilon\in Z(\check{M})$ be the image of $-1$ under $2\check{\rho}_M: \Gm\to \check{T}$. As in (\cite{Ly10}) we get the twists $\Rep(\check{M})_{neg}^{\epsilon}\in CAlg(\DGCat_{cont})$ of the category $\Rep(\check{M})_{neg}$. Though 
$$
\Fact(\Rep(\check{M})_{neg}^{\epsilon})\,\iso\, \Fact(\Rep(\check{M})_{neg})
$$ 
in $Shv(\Ran)-mod$, this isomorphism is not compatible with the factorization structures on these categories. The functor
$$
\Sat_{M, \Ran, +}: \Fact(\Rep(\check{M})_{neg}^{\epsilon})\to \Sph_{M,\Ran, +}
$$
is compatible with the factorization structures. 

\sssec{} Let $\theta\in -\Lambda_{G,P}^{pos}$. As in (\cite{Ly10}, Appendix E), write $\cTw(fSets)_{\theta}$ for the category of $(J\to K)\in\cTw(fSets)$ and a map $\und{\theta}: J\to -\Lambda_{G,P}^{pos}$ such that $\sum_{j\in J} \und{\theta}(j)=\theta$. A morphism from $(\und{\theta}^1, J_1\to K_1)$ to $(\und{\theta}^2, J_2\to K_2)$ is a map $\tau: J_1\to J_2$ in $fSets$ such that $\tau_*\und{\theta}^1=\und{\theta}^2$.
 
\sssec{Proof of Proposition~\ref{Pp_3.2.2}} 
\label{Sect_Proof_of_Pp_3.2.2}
We claim that (\ref{Sat_functor_for_Pp_3.2.2}) is $Shv((X^{-\theta}\times\Ran)^{\subset})$-linear.

 In the notations of (\cite{Ly10}, Appendix E), one has 
$$
\Fact(\Rep(\check{M})_{neg})_{\theta}\,\iso\, 
\underset{(J\to K, \, \und{\theta})\in \cTw(fSets)_{\theta}}{\colim} (\underset{j\in J}{\otimes} \Rep(\check{M})_{\und{\theta}(j)})\otimes Shv(X^K)
$$ 
in $Shv((X^{-\theta}\times\Ran)^{\subset})-mod$. Thus, it suffices to show that for an object $(K\toup{\id} K, \und{\theta})\in \cTw(fSets)_{\theta}$ the composition of $\Sat_{M,\Ran,+}$ with
$$
(\underset{k\in K}{\otimes} \Rep(\check{M})_{\und{\theta}(k)})\otimes Shv(X^K)\to \Fact(\Rep(\check{M})_{neg})_{\theta}
$$
is $Shv((X^{-\theta}\times\Ran)^{\subset})$-linear. 

 For $K=*$ this is clear. Indeed, if $\lambda\in\Lambda^+_M$ whose image $\theta$ in $\Lambda_{G,P}$ lies in $-\Lambda_{G,P}^{pos}$ then $\ov{\Gr}_{M, x}^{\lambda}\subset \Gr_{M, x, +}^{\theta}$. Here $\Gr_{M, x, +}^{\theta}$ is the fibre of $\Gr^{\theta}_{M,\Ran,+}$ over $\{x\}\in\Ran$. 
For general $K$ this follows from the fact that the 
chiral multplication on $\Fact(\Rep(\check{M})_{neg})$ is compatible with the exterior product on $\Sph_{M, \Ran, +}$ as in (\cite{Ly10}, 7.3.16). Namely, apply this for the exterior product map $\underset{k\in K}{\boxtimes}\Sph_{M, *, +}\to \Sph_{M, K, +}$. \QED

\sssec{Example} Let us illustrate Proposition~\ref{Pp_3.2.2} in the case $M=T$. In this case $\Lambda_{G,P}^{pos}=\Lambda^{pos}$ and $\Rep(\check{M})_{neg}\,\iso\, \underset{\theta\in -\Lambda^{pos}}{\oplus} \Vect$. So, for $\theta\in -\Lambda^{pos}$ we have canonically 
$$
\Fact(\Rep(\check{M})_{neg})_{\theta}\,\iso\, Shv((X^{-\theta}\times\Ran)^{\subset})
$$ 
by (\cite{Ly10}, E.2.2). In this case for $\theta\in -\Lambda^{pos}$ the functor (\ref{Sat_functor_for_Pp_3.2.2}) becomes
$$
\Sat_{T,\Ran, +}: Shv((X^{-\theta}\times\Ran)^{\subset})\to Shv((X^{-\theta}\times\Ran)^{\subset})^{\gL^+(T)_{\Ran}}.
$$
It is the t-exact functor $\pr^*[\dimrel]$ for the projection 
$$
\pr: (X^{-\theta}\times\Ran)^{\subset}/\gL^+(T)_{\Ran}\to (X^{-\theta}\times\Ran)^{\subset},
$$ 
here we use the conventions of (\cite{DL}, A.5).

\ssec{Case of $\check{M}_{neg}^{untl}$}
\label{Sect_case_of_checkM_neg_untl}

\sssec{} Set
$\cO(\check{M}_{neg}^{untl})=e\oplus (\underset{\theta\in -\Lambda_{G,P}^{pos,*}}{\oplus} \cO(\check{M})_{\theta})$.
Write also $\Lambda^{+, untl}_{M, neg}$ be the set of those $\lambda\in\Lambda^+_M$ which satisfy two properties: 
\begin{itemize}
\item the image of $\lambda$ in $\Lambda_{G,P}$ lies in $-\Lambda_{G,P}^{pos}$;
\item if the image of $\lambda$ in $\Lambda_{G,P}$ vanishes then $\lambda=0$. 
\end{itemize}
This is a subsemi-group in $\Lambda^+_M$. We have 
$$
\cO(\check{M}_{neg}^{untl})=\underset{\lambda\in \Lambda^{+, untl}_{M, neg}}{\oplus} U^{\lambda}\otimes (U^{\lambda})^*.
$$
Note that $\cO(\check{M}_{neg}^{untl})\subset \cO(\check{M}_{neg})$ is a subalgebra. Set $\check{M}_{neg}^{untl}=\Spec \cO(\check{M}_{neg}^{untl})$. The coproduct on $\cO(\check{M})$ preserves the corresponding subspaces, so induces a coproduct map on $\cO(\check{M}_{neg}^{untl})$ making it into a bialgebra. The morphisms $\cO(\check{M}_{neg}^{untl})\hook{} \cO(\check{M}_{neg})\hook{}\cO(\check{M})$ are those of bialgebras, thus we get a diagram of monoids
$$
\check{M}\to \check{M}_{neg}\to \check{M}_{neg}^{untl}.
$$
\begin{Rem} If $M=T$ then $\check{M}_{neg}\to \check{M}_{neg}^{untl}$ is an isomorphism.
In general, the latter map is not an isomorphism, this happens already for $\check{M}$ being a maximal parabolic of $\check{G}=\SL_3$.
\end{Rem}

\sssec{} Let $B(\check{M}_{neg}^{untl})=\underset{[n]\in\bfitDelta^{op}}{\colim} (\check{M}^{untl}_{neg})^n$ in $\PreStk$, the classifying space of this monoind. By definition, 
$$
\QCoh(B(\check{M}_{neg}^{untl}))\,\iso\, \Tot(\QCoh((\check{M}^{untl}_{neg})^{\bullet})).
$$ 

 Consider the map $q: \Spec e\to B(\check{M}_{neg}^{untl})$. Write $B_{et}(\check{M}_{neg}^{untl})$ for the etale sheafification of $B(\check{M}_{neg}^{untl})$, this is a 1-Artin stack. As in Section~\ref{Sect_C.1.4}, for any $D\in\DGCat_{cont}$ the co-simplicial category $\Tot(D\otimes \QCoh((\check{M}^{untl}_{neg})^{\bullet}))$ satisfies the comonadic Beck-Chevalley conditions. So, the functor
$$
q^*: D\otimes \QCoh(B(\check{M}_{neg}^{untl}))\to D
$$
is comonadic, and the corresponding comonad is given by the action of $\cO(\check{M}_{neg}^{untl})\in coAlg(\Vect)$. 

\sssec{} Set 
$$
\Rep(\check{M})^{untl}_{pos}=\Vect\oplus (\underset{\theta\in \Lambda_{G,P}^{pos, *}}{\oplus} \Rep(\check{M})_{\theta}).
$$
The restriction of the canonical pairing $\Rep(\check{M})\otimes\Rep(\check{M})\to\Vect$ (coming from the fact that the latter category is rigid) yields a perfect pairing
$$
\Rep(\check{M})^{untl}_{pos}\otimes \Rep(\check{M})^{untl}_{neg}\to\Vect.
$$

 Let $\oblv_{neg}^{untl}: \Rep(\check{M})_{neg}^{untl}\to\Vect$ be the composition $\Rep(\check{M})_{neg}^{untl}\to\Rep(\check{M})\toup{\oblv}\Vect$. As in Lemma~\ref{Lm_oblv_neg_is_comonadic}, one checks that $\oblv_{neg}^{untl}$ is comonadic, and
$$
\Rep(\check{M})_{neg}^{untl}\,\iso\, \cO(\check{M}_{neg}^{untl})-comod(\Vect)
$$ 
canonically. Thus, $\Rep(\check{M})_{neg}^{untl}\,\iso\, \QCoh(B(\check{M}_{neg}^{untl}))$, so we also write $\Rep(\check{M}_{neg}^{untl}):=\Rep(\check{M})_{neg}^{untl}$. 

\sssec{} The product functor $m: \Rep(\check{M}_{neg}^{untl})\otimes \Rep(\check{M}_{neg}^{untl})\to \Rep(\check{M}_{neg}^{untl})$ preserves compact objects, so its right adjoint $m^R$ is continuous. This also follows from the fact that the diagonal morphism 
$$
B_{et}(\check{M}_{neg}^{untl})\to B_{et}(\check{M}_{neg}^{untl})\times B_{et}(\check{M}_{neg}^{untl})
$$ 
is schematic quasi-compact.

 Now (\cite{Ly10}, 4.1.22) applies to $C:=\Rep(\check{M}_{neg}^{untl})$ showing that $\Fact(C)\to \ov{\Fact}(C)$ is an equivalence in the notations of \select{loc.cit.}
 
\sssec{} For any $I\in fSets$ the natural functor $\Rep(\check{M}_{neg}^{untl})_{X^I}\to \Rep(\check{M}_{neg})_{X^I}$ is fully faithful. Indeed, in the $\ov{\Fact}$ realization it is obtained as the limit over $(I\toup{p} J\to K)\in \Tw(I)^{op}$ of the fully faithful functors
$$
\Rep(\check{M}_{neg}^{untl})^{\otimes K}\otimes Shv(X^I_{p, d})\to \Rep(\check{M}_{neg})^{\otimes K}\otimes Shv(X^I_{p, d}).  
$$
Similarly, $\Fact(\Rep(\check{M}_{neg}^{untl}))\to \Fact(\Rep(\check{M}_{neg}))$ is fully faithful. 

Let $\theta\in -\Lambda_{G,P}^{pos}$. By functoriality, we get a canonical functor $\Fact(\Rep(\check{M}_{neg}^{untl}))_{\theta}\to \Fact(\Rep(\check{M}_{neg}))_{\theta}$, which is also fully faithful.  

\sssec{} For $I\in fSets$ the prestacks $\Gr_{M,\Ran}^{\subset}, \Gr_{M, I}^{\subset}$ are defined in Section~\ref{Sect_3.2.4_now}. Set 
$$
\Sph_{M, I, +}^{untl}=Shv(\Gr_{M, I}^{\subset})^{\gL^+(M)_I}.
$$ 
As $I$ varies in $fSets$, they form a sheaf of categories on $\Ran$ denoted 
$$
\Sph_{M,\Ran, +}^{untl}=Shv(\Gr_{M, \Ran}^{\subset})^{\gL^+(M)_{\Ran}}.
$$ 
This sheaf of categories is $-\Lambda_{G,P}^{pos}$-graded, the component attached to $\theta\in -\Lambda_{G,P}^{pos}$ is 
$$
\Sph_{M,\Ran, +}^{\theta, untl}:=Shv(\Gr_{M, X^{-\theta}}^{\theta, \subset})^{\gL^+(M)_{\Ran}}.
$$ 
Note that $\Sph_{M,\Ran, +}^{0, untl}\,\iso\, Shv(\Ran)^{\gL^+(M)_{\Ran}}$. We have the full embedding 
$$
(i_{untl})_!: \Sph_{M,\Ran, +}^{untl}\hook{} \Sph_{M,\Ran, +},
$$ 
which is also $-\Lambda_{G,P}^{pos}$-graded.
 
\sssec{} Define the functors (\ref{functor_Sat_M_Ran^untl}) along the lines of Section~\ref{Sect_case_checkM_neg}. Namely, consider $\cO(\check{M}_{neg}^{untl})\in \Rep(\check{M})$ via the $\check{M}$-action on $\check{M}_{neg}^{untl}$ by left translations. Set
$$
A=\cO(\check{M}_{neg}^{untl})\otimes\omega_X\in CAlg(\Rep(\check{M})\otimes Shv(X)).
$$

Applying the construction of (\cite{Ly10}, Section~7), we get the chiral algebra $\cT_A$ on $\Gr_{M,X}$, which is a $\gL^+(M)_X$-equivariant perverse sheaf graded by $-\Lambda_{G,P}^{pos}$. Write $\cC(\cT_A)\in\Sph_{M,\Ran}$ for the corresponding Chevalley-Cousin complex. For $I\in fSets$ we denote by $\cC(\cT_A)_{X^I}\in \Sph_{M, I}$ its $!$-restriction to $\Gr_{M, I}$. Since $A$ is unital, $\cC(\cT_A)_{X^I}$ is a shifted perverse sheaf. It inherits a grading by $-\Lambda_{G,P}^{pos}$.

 Given $\theta\in \Lambda_{G,P}$, the object $\cC(\cT_A)$ over $\Gr_{M,\Ran, \theta}$ vanishes unless $\theta\in -\Lambda_{G,P}^{pos}$, and in the latter case it
is the extension by zero under the closed immersion 
$$
\Gr^{\theta,\subset}_{M, X^{-\theta}}\hook{} \Gr_{M,\Ran, \theta}.
$$ 
For $\theta\in -\Lambda_{G,P}^{pos}$ the $\theta$-component of $\cC(\cT_A)$ is $\cC(\cT_A)$ restricted to $\Gr_{M,\Ran, \theta}$. 

\sssec{} Note that $\Gr_{M, \Ran}^{\subset}$ is a factorization prestack over $\Ran$, and $\Sph_{M,\Ran, +}^{untl}$ is naturally a weak factorization sheaf of categories over $\Ran$. 

For $I, J\in fSets$ the exterior convolution functor
$$
\Sph_{M,I, +}\boxtimes \Sph_{M, J, +}\to \Sph_{M, I\sqcup J, +}
$$ 
restricts to the exterior convolution functors
$$
\Sph_{M,I, +}^{untl}\boxtimes \Sph_{M, J, +}^{untl}\to \Sph_{M, I\sqcup J, +}^{untl}
$$

 For $I\in fSets$ the monoidal operation on $\Sph_{M, I, +}$ also restricts to a monoidal operation on $\Sph_{M,I, +}^{untl}$ similarly. So, $\Sph_{M,\Ran, +}^{untl}$ is a weak factorization sheaf of monoidal categories on $\Ran$. 
 
 As in Section~\ref{Sect_C.1.10_now}, one promotes $\cC(\cT_A)\in \Sph_{M,I,+}^{untl}$ to an object of 
$$
(\Rep(\check{M}_{pos}^{untl}))_{X^I}\otimes_{Shv(X^I)} \Sph_{M,I,+}^{untl}\,\iso\, 
\Fun_{Shv(X^I)}((\Rep(\check{M}_{neg}^{untl}))_{X^I}, \Sph_{M,I,+}^{untl}).
$$
This is our functor $\Sat_{M, I}^{untl}$. As $I$ varies in $fSets$, this compatible system of functors defines the desired functor (\ref{functor_Sat_M_Ran^untl}). By Proposition~\ref{Pp_3.2.2}, the obtained functor (\ref{functor_Sat_M_Ran_theta^untl}) is $Shv((X^{-\theta}\times\Ran)^{\subset})$-linear. 

\sssec{Proof of Proposition~\ref{Pp_about_functor_Sat_M_Ran^untl}} 
\label{Sect_C.2.10} It follows from the construction of $\Sat_{M, \Ran}^{untl}$. \QED

\sssec{} The full embedding $\Sph_{M,\Ran, +}^{untl}\hook{} \Sph_{M,\Ran}$ is stable under the chiral symmetric monoidal structure $\otimes^{ch}$ on $\Sph_{M,\Ran}$. This defines the chiral symmetric monoidal structure $\otimes^{ch}$ on $\Sph_{M,\Ran, +}^{untl}$.

\ssec{Case of $\Rep(\check{M})_{<0}$}
\label{Sect_case_Rep(checkM)_<0}

\sssec{} Recall that $\Rep(\check{M})_{<0}$ is defined in Section~\ref{Sect_3.2.8_now}. Let $\Lambda^+_{M, <0}$ be the preimage of $-\Lambda_{G,P}^{pos, *}$ under $\Lambda^+_M\to \Lambda_{G,P}$. Note that 
$$
\Rep(\check{M})_{<0}\,\iso\, \underset{\lambda\in \Lambda^+_{M, <0}} \Vect\otimes U^{\lambda}
$$ is compactly generated. The product $m: \Rep(\check{M})_{<0}\otimes \Rep(\check{M})_{<0}\to \Rep(\check{M})_{<0}$ admits a continuous right adjoint given by
$$
m^R(U^{\nu})\,\iso\, \underset{\lambda_1,\lambda_2\in \Lambda^+_{M, <0}}{\oplus} U^{\lambda_1}\otimes U^{\lambda_2}\otimes\Hom_{\check{M}}(U^{\lambda_1}\otimes U^{\lambda_2}, U^{\nu}).
$$

Set 
$
\Rep(\check{M})_{>0}=\underset{\theta\in \Lambda_{G,P}^{pos,*}}{\oplus} \Rep(\check{M})_{\theta}.
$ 
The canonical self-duality on $\Rep(\check{M})$ restricts to a perfect pairing $\Rep(\check{M})_{>0}\otimes \Rep(\check{M})_{<0}\to \Vect$. Under this duality the dual $(m^R)^{\vee}$ of $m^R$ identifies with the product map on $\Rep(\check{M})_{>0}$. 

 Now by (\cite{Ly10}, 2.6.4, 2.6.8), $\Fact(\Rep(\check{M})_{<0})$ is dualizable in $Shv(\Ran)-mod$, and its dual identifies canonically with $\Fact(\Rep(\check{M})_{>0})$. This duality is compatible with the $-\Lambda_{G,P}^{pos,*}$-gradings. 
 
\sssec{} For $\theta\in -\Lambda_{G,P}^{pos, *}$ write $\cTw(fSets)^{\theta}$ for the category defined as in (\cite{Ly10}, E.1.2). Namely, its objects are collections $(\und{\theta}, J\to K)$, where $(J\to K)\in \cTw(fSets)$, and $\und{\theta}: J\to -\Lambda_{G,P}^{pos, *}$ a map satisfying $\sum_j \und{\theta}(j)=\theta$. A morphism from $(\und{\theta}^1, J_1\to K_1)$ to $(\und{\theta}^2, J_2\to K_2)$ is a morphism from $(J_1\to K_1)$ to $(J_2\to K_2)$ in $\cTw(fSets)$ such that $\phi_*\und{\theta}^1=\und{\theta}^2$ for the corresponding morphism $\phi: J_1\to J_2$ in $fSets$.
 
\sssec{} 
\label{Sect_C.3.3_about_cF_Ran,C}
For $C=\Rep(\check{M})_{<0}$ consider the functor $\cF_{\Ran, C}: \cTw(fSets)\to Shv(\Ran)-mod$ defined in (\cite{Ly10}, 2.1.14). By (\cite{Ly10}, 2.6.3), we may pass to $Shv(\Ran)$-linear continuous right adjoints in the latter functor and get $\cF^R_{\Ran, C}: \cTw(fSets)^{op}\to Shv(\Ran)-mod$, so 
$$
\Fact(C)\,\iso\, \underset{\cTw(fSets)}{\colim} \cF_{\Ran, C}\,\iso\, \underset{\cTw(fSets)}{\lim} \cF^R_{\Ran, C}
$$ 
in $Shv(\Ran)-mod$.

 For $\theta\in -\Lambda_{G,P}^{pos, *}$ consider the functor 
$$
\cF_{\Ran, C}^{\theta}: \cTw(fSets)^{\theta}\to Shv((X^{-\theta}\times\Ran)^{\subset})-mod
$$ 
defined as in (\cite{Ly10}, E.1.3), so that  $\Fact(C)_{\theta}\,\iso\, \underset{\cTw(fSets)^{\theta}}{\colim} \cF_{\Ran, C}^{\theta}$. Explicitly, $\cF_{\Ran, C}^{\theta}$ sends $(\und{\theta}, J\to K)$ to $(\underset{j\in J}{\otimes} C_{\und{\theta}(j)})\otimes Shv(X^K)$. For the same reasons we may pass to continuous right adjoints in the functor $\cF_{\Ran, C}^{\theta}$ and get the functor
$$
\cF_{\Ran, C}^{\theta, R}: (\cTw(fSets)^{\theta})^{op}\to Shv((X^{-\theta}\times\Ran)^{\subset})-mod,
$$ 
so that $\Fact(C)_{\theta}\,\iso\, \underset{(\cTw(fSets)^{\theta})^{op}}{\lim} \cF_{\Ran, C}^{\theta, R}$. 

\sssec{} Set for brevity $C=\Rep(\check{M})_{<0}$ and $D=\Rep(\check{M})_{neg}^{untl}$. Set $E=\underset{\theta\in -\Lambda_{G,P}^{pos, *}}{\oplus}\Vect$. We view $E$ as a $-\Lambda_{G,P}^{pos, *}$-graded non-unital commutative algebra in $\DGCat_{cont}$. 

In (\cite{Ly10}, 4.1.10) we defined a canonical map in $Shv(\Ran)-mod$
\begin{equation}
\label{functor_for_Sect_C.3.1} 
\Fact(C)\to \ov{\Fact}(C),
\end{equation}
and similarly for $C$ replaced by $E$.
 
\begin{Lm} 
\label{Lm_C.3.5_now}
i) The functor (\ref{functor_for_Sect_C.3.1}) is an equivalence. For $I\in fSets$ the natural functor $C_{X^I}\to D_{X^I}$ 
is fully faithful. Similarly, $\Fact(C)\to \Fact(D)$ is fully faithful and compatible with $-\Lambda_{G,P}^{pos}$-gradings.

\smallskip\noindent
ii) The natural functor $\Fact(E)\to \ov{\Fact}(E)$ is an equivalence.
\end{Lm}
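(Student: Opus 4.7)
The plan is to deduce everything from (\cite{Ly10}, 4.1.22), exactly as was done for $\Rep(\check{M})_{neg}$ in Section~\ref{Sect_case_checkM_neg} and for $\Rep(\check{M})_{neg}^{untl}$ in Section~\ref{Sect_case_of_checkM_neg_untl}. The hypothesis needed in order to conclude $\Fact(C)\to \ov{\Fact}(C)$ is an equivalence is that the product $m:C\otimes C\to C$ admits a continuous right adjoint. For $C=\Rep(\check{M})_{<0}$, this was already verified at the start of Section~\ref{Sect_case_Rep(checkM)_<0} via the explicit formula for $m^R$ on the compact generators $U^\nu$. For $E=\underset{\theta\in -\Lambda_{G,P}^{pos,*}}{\oplus}\Vect$, the product is the direct sum of the obvious tensor-product-in-$\Vect$ maps $\Vect_{\theta_1}\otimes\Vect_{\theta_2}\to \Vect_{\theta_1+\theta_2}$ (zero when $\theta_1+\theta_2\notin -\Lambda_{G,P}^{pos,*}$), and its right adjoint is given componentwise by projections onto direct summands, hence is continuous. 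So (\cite{Ly10}, 4.1.22) delivers the equivalences $\Fact(C)\to \ov{\Fact}(C)$ of (i) and $\Fact(E)\to \ov{\Fact}(E)$ of (ii).

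For the fully faithfulness assertions in (i), I would mimic the argument given for $\Rep(\check{M}_{neg}^{untl})_{X^I}\to \Rep(\check{M}_{neg})_{X^I}$ in Section~\ref{Sect_case_of_checkM_neg_untl}. Once the first step is in place, both $C_{X^I}$ and $D_{X^I}$ are realized through $\ov{\Fact}$ as limits over $(I\toup{p} J\to K)\in \Tw(I)^{op}$ of $C^{\otimes K}\otimes Shv(X^I_{p,d})$ and $D^{\otimes K}\otimes Shv(X^I_{p,d})$ respectively. Since $C\hookrightarrow D$ is the inclusion of the non-unital direct summand in $D=\Vect\oplus C$, it is fully faithful; tensor products of fully faithful functors in $\DGCat_{cont}$ remain fully faithful, and so do their tensorings with $Shv(X^I_{p,d})$ and the subsequent limits. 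Passing further to the limit over $I\in fSets$ yields fully faithfulness of $\Fact(C)\to \Fact(D)$. Compatibility with the $-\Lambda_{G,P}^{pos}$-grading is automatic, since on both sides the grading is inherited componentwise from the central character decomposition under $Z(\check{M})$ and is preserved by all the functors in sight.

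The only real point to watch is matching up the precise hypotheses of (\cite{Ly10}, 4.1.22): beyond continuity of $m^R$ it presumably demands compact generation of $C$ and compatibility of the multiplication with the direct sum decomposition, both of which hold here by the explicit description $C\iso \underset{\lambda\in\Lambda^+_{M,<0}}{\oplus}\Vect\otimes U^\lambda$, and trivially for $E$. Beyond this bookkeeping the argument is a direct application of the cited result and involves no new ideas, which is why I expect no genuine obstacle.
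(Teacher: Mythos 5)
Your reduction of both equivalence claims to a direct citation of (\cite{Ly10}, 4.1.22) for $C=\Rep(\check{M})_{<0}$ and for $E$ is exactly the step that does not go through, and dismissing it as ``bookkeeping'' hides the whole content of the lemma. In the two places where the paper does invoke (\cite{Ly10}, 4.1.22) --- for $\Rep(\check{M})_{neg}$ in Section~\ref{Sect_case_checkM_neg} and for $\Rep(\check{M})_{neg}^{untl}$ in Section~\ref{Sect_case_of_checkM_neg_untl} --- the algebra is unital, whereas $C$ and $E$ are genuinely non-unital (the unit summand $\Vect$ is precisely what has been removed). Continuity of $m^R$, which you present as the operative hypothesis, is already recorded for $C$ at the beginning of Section~\ref{Sect_case_Rep(checkM)_<0}; if that alone sufficed, Lemma~\ref{Lm_C.3.5_now} would be a one-line remark as in the earlier sections instead of a statement with a substantive proof. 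So either you verify that the hypotheses of (\cite{Ly10}, 4.1.22) genuinely cover this non-unital situation (they are not just continuity of $m^R$), or your argument has a gap at its central point.

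The paper's proof avoids applying 4.1.22 to $C$ or $E$ altogether. It first shows that $\ov{\Fact}(C)\to\ov{\Fact}(D)$ is fully faithful (a limit over $\Tw(I)^{op}$ and then over $I\in fSets$ of the summand inclusions $C^{\otimes K}\otimes Shv(X^I_{p,d})\to D^{\otimes K}\otimes Shv(X^I_{p,d})$); then it shows that $a\colon\Fact(C)\to\Fact(D)$ admits a continuous $Shv(\Ran)$-linear right adjoint, by passing to right adjoints in the $\cTw(fSets)$-indexed diagrams and using (\cite{Ly}, 9.2.39). Since $\Fact(D)\to\ov{\Fact}(D)$ is an equivalence by 4.1.22 applied to the unital $D$, Lemma~\ref{Lm_C.3.6_now_cont_right_adjoint} applied to the composition $\Fact(C)\to\ov{\Fact}(C)\to\ov{\Fact}(D)$ produces a continuous $Shv(\Ran)$-linear right adjoint of (\ref{functor_for_Sect_C.3.1}); finally, both sides are factorization sheaves of categories by (\cite{Ly10}, 4.1.15 and 2.1.18), the functor is compatible with factorization and is an equivalence after restriction along $X\to\Ran$, so (\cite{Ly10}, B.1.11) gives the equivalence, and (ii) is handled the same way. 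None of this appears in your proposal. A smaller point: your blanket claim that tensoring fully faithful functors in $\DGCat_{cont}$ preserves fully faithfulness is false in general; it is salvaged here only because $C\hook{} D$ is the inclusion of a direct summand, hence admits a retraction.
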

\begin{proof}
i) Let $I\in fSets$. 
The natural functor $\ov{\Fact}(C)_I\to \ov{\Fact}(D)_I$ is fully faithful. Indeed, it is obtained by passing to the limit over $(I\toup{p} J\to K)\in\Tw(I)^{op}$ in the fully faithful functors
$$
C^{\otimes K}\otimes Shv(X^I_{p, d})\to D^{\otimes K}\otimes Shv(X^I_{p, d})
$$
Passing to the limit over $I\in fSets$ in the fully faithful functors $\ov{\Fact}(C)_I\to \ov{\Fact}(D)_I$ we conclude that $\ov{\Fact}(C)\to \ov{\Fact}(D)$ is fully faithful.

 Consider the functor 
$$
\cF_{\Ran, C}: \cTw(fSets)\to Shv(\Ran)-mod
$$ 
defined in (\cite{Ly10}, 2.1.14), it sends $(J\to K)$ to $C^{\otimes J}\otimes Shv(X^K)$. By definition, one has $\Fact(C)\,\iso\, \underset{\cTw(fSets)}{\colim} \cF_{\Ran, C}$. By (\cite{Ly10}, 2.6.2) we may pass to the right adjoints in $\cF_{\Ran, C}$ and get the functor $$
\cF^R_{\Ran, C}: \cTw(fSets)^{op}\to Shv(\Ran)-mod.
$$ 
So,  $\Fact(C)\,\iso\, \underset{\cTw(fSets)^{op}}{\lim}\cF^R_{\Ran, C}$, and the same holds for $C$ replaced by $D$.
 
 The natural functor $a: \Fact(C)\to \Fact(D)$ admits a continuous $Shv(\Ran)$-linear right adjoint $a^R$. Indeed, $a$ is obtained from morphism of functors $\cF_{\Ran, C}\to \cF_{\Ran, D}$ from $\cTw(fSets)$ to $Shv(\Ran)-mod$. We may pass to right adjoints in the corresponding diagram $\cTw(fSets)\times [1]\to Shv(\Ran)-mod$ and get a diagram $(\cTw(fSets)\times [1])^{op}\to Shv(\Ran)-mod$. The desired claim about $a^R$ follows now from (\cite{Ly}, 9.2.39). 
 
 Recall that $\Fact(D)\to \ov{\Fact}(D)$ is an equivalence by (\cite{Ly10}, 4.1.22).
 Now we apply Lemma~\ref{Lm_C.3.6_now_cont_right_adjoint} below to the composition $\Fact(C)\to\ov{\Fact}(C)\to \ov{\Fact}(D)$ and conclude that (\ref{functor_for_Sect_C.3.1}) has a continuous $Shv(\Ran)$-linear right adjoint. 

From (\cite{Ly10}, 4.1.15) we know that $\ov{\Fact}(C)$ is a factorization sheaf of categories over $\Ran$. By (\cite{Ly10}, 2.1.18), $\Fact(C)$ is also a factorization sheaf of categories. The functor (\ref{functor_for_Sect_C.3.1}) is compatible with factorizations. Since (\ref{functor_for_Sect_C.3.1}) has a continuous $Shv(\Ran)$-linear right adjoint and  induces an equivalence after restriction under $X\to \Ran$, 
our claim follows from (\cite{Ly10}, B.1.11).

\medskip\noindent
ii) Similar to i). 
\end{proof}

\begin{Lm} 
\label{Lm_C.3.6_now_cont_right_adjoint}
Let $A\in CAlg(\DGCat_{cont})$ and 
$D\toup{L^0} C^0\hook{j} C$ be a diagram in $A-mod$. Set $L=j\comp L^0$. Assume $L$ has a right adjoint $R: C\to D$ in $A-mod$, and $j$ is fully faithful. Then $Rj$ is the right adjoint of $L^0$ in $A-mod$, so $L^0$ admits a right adjoint in $A-mod$.
\end{Lm}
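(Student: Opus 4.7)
The plan is to verify directly that $Rj: C^0 \to D$ is the right adjoint of $L^0$ in $A-mod$, using the fully faithfulness of $j$ and the $A$-linearity of the existing adjunction $(L, R)$.

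First I would check the underlying adjunction in $\DGCat_{cont}$. For $d \in D$ and $c^0 \in C^0$, the fully faithfulness of $j$ gives a natural equivalence
$$
\on{Maps}_{C^0}(L^0 d, c^0) \;\iso\; \on{Maps}_C(j L^0 d, j c^0) \;=\; \on{Maps}_C(L d, j c^0),
$$
and then the $(L, R)$-adjunction yields $\on{Maps}_C(L d, j c^0) \iso \on{Maps}_D(d, R j c^0)$. Composing gives the desired adjunction datum between $L^0$ and $Rj$. Concretely, one constructs the unit as the composite $\id \to RL = RjL^0$ of the unit of $(L, R)$, and the counit is obtained from the counit $LR \to \id_C$ of $(L, R)$ together with the fact that $j$ is fully faithful, which gives $L^0 Rj = (j^{-1} \text{ on essential image}) \circ LR \circ j \to \id_{C^0}$ in a canonical way. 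The triangle identities for $(L^0, Rj)$ follow from those for $(L, R)$ together with $j$ being fully faithful.

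Next I would upgrade this to an adjunction in $A-mod$. Since $L$, $R$, $j$, and $L^0$ are all morphisms in $A-mod$, the composite $Rj$ is a morphism in $A-mod$. It then suffices to promote the natural equivalence above to one of $A$-linear mapping objects $\uMap_A^{C^0}(L^0 d, c^0) \iso \uMap_A^{D}(d, Rj c^0)$. Both sides factor through the analogous mapping objects in $C$ via $j$, and the $A$-linearity of the $(L, R)$-adjunction in $A-mod$ (given by hypothesis) supplies the compatibility with the $A$-action. Equivalently, one can package the construction as producing the adjoint pair $(L^0, Rj)$ inside the $(\infty, 2)$-category of $A$-module categories: the unit and counit constructed above are morphisms in $A-mod$ because each of $L$, $R$, $j$, and their units/counits are; and the triangle identities hold there because they hold in $\DGCat_{cont}$ and $A-mod \to \DGCat_{cont}$ is faithful on 2-morphisms in the relevant sense.

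There is essentially no main obstacle: the statement is formal, and the only subtlety is keeping track of the $A$-linearity of the adjunction 2-morphisms, which is automatic from the given hypotheses. One could alternatively invoke a general principle (e.g.\ \cite{HA}, Proposition 7.3.2.6 or its analogue in the $A$-linear setting) that a fully faithful right (resp.\ left) inclusion into a category with a right (resp.\ left) adjoint transfers the adjunction — but writing it out by hand as above is probably cleaner and self-contained.
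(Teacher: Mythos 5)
Your proposal is correct and amounts to essentially the same formal argument as the paper: the paper simply introduces the a priori existing right adjoints $j^R$ of $j$ and $R^0$ of $L^0$ and observes that $R\comp j\,\iso\, R^0\comp j^R\comp j\,\iso\, R^0$ by fully faithfulness, whereas you verify the adjunction $(L^0, R\comp j)$ directly on mapping spaces — a cosmetic difference only. In both cases the $A$-linearity of $R\comp j$ is immediate from the hypotheses, so the upgrade to $A-mod$ is handled just as you indicate.
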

\begin{proof} Let $j^R: C\to C^0$ be the right adjoint to $j$, $R^0: C^0\to D$ be the right adjoint to $L^0$. Then $R^0\comp j^R\,\iso\, R$ and $R\comp j\,\iso\, R^0\comp j^R\comp j\,\iso\, R^0$.
\end{proof}


\sssec{} Set 
$$
\cO(\check{M})_{<0}=\underset{\theta\in -\Lambda_{G,P}^{pos,*}}{\oplus} \cO(\check{M})_{\theta}
$$
One has $\cO(\check{M})_{<0}\in CAlg^{nu}(\Rep(\check{M}\times \check{M}))$ and $\cO(\check{M}^{untl}_{neg})=e\oplus \cO(\check{M})_{<0}$. The coproduct map on $\cO(\check{M})$ preserves the corresponding subspaces, so induces a coalgebra structure on $\cO(\check{M})_{<0}$, so $\cO(\check{M}_{<0})\to \cO(\check{M}^{untl}_{neg})$ is a morphism of bialgebras in $\Vect$ (such that the underlying algebras are non-unital). 

 The inclusion $C\hook{} \Rep(\check{M})$ has a continuous right adjoint given by the projection
\begin{equation}
\label{functor_projection_on_Rep(checkM)_structly_neg}
\underset{\lambda\in\Lambda^+_M}{\oplus} U^{\lambda}\otimes V_{\lambda}\mapsto \underset{\lambda\in\Lambda^+_{M, <0}}{\oplus} U^{\lambda}\otimes V_{\lambda}
\end{equation}
for $V_{\lambda}\in\Vect$. The functor (\ref{functor_projection_on_Rep(checkM)_structly_neg}) is also left adjoint to the inclusion. 

 Let $\oblv_{<0}$ be the composition $C\hook{} \Rep(\check{M})\toup{\oblv} \Vect$. Then $\oblv_{<0}$ has a continuous right adjoint $\oblv_{<0}^R$, and the comonad $\oblv_{<0}\oblv_{<0}^R$ on $\Vect$ identifies with the coalgebra $\cO(\check{M})_{<0}$ in $\Vect$. As in Lemma~\ref{Lm_oblv_neg_is_comonadic}, one shows that $\oblv_{<0}$ is comonadic, so $C\,\iso\, \cO(\check{M})_{<0}-comod(\Vect)$. 

\sssec{} 
\label{Sect_C.3.8_now_E_introduced}
Let $\oblv_{gr}: C=\Rep(\check{M})_{<0}\to E$ be the forgetful functor, which forgets the $\check{M}$-action but keeps the grading (the subscript $gr$ stands for `graded'). This is a map in $CAlg^{nu}(\DGCat_{cont})$ compatible with $-\Lambda_{G,P}^{pos, *}$-gradings. The right adjoint $\oblv_{gr}^R: C\to E$ to $\oblv_{gr}$ is continuous and $-\Lambda_{G,P}^{pos, *}$-graded. Let $e_{\theta}\in E$ be the object $e$ placed in degree $\theta$. Then for $\theta\in -\Lambda_{G,P}^{pos,*}$ one has canonically
$$
\oblv_{gr}^R(e_{\theta})\,\iso\, \cO(\check{M})_{\theta}.
$$
As in Lemma~\ref{Lm_oblv_neg_is_comonadic}, for $\theta\in -\Lambda_{G,P}^{pos,*}$ one gets $C_{\theta}\,\iso\,  \cO(\check{M})_{\theta}-comod(\Vect)$. 

 Note that $C, E$ are equipped with natural t-structures compatible with gradings, and both $\oblv_{gr}, \oblv_{gr}^R$ are t-exact. 

\sssec{} The functor $\oblv_{gr}$ yields the functor 
$$
\oblv_{gr, \Fact}: \Fact(C)\to \Fact(E)
$$ 
compatible with the $-\Lambda_{G,P}^{pos, *}$-gradings defined in (\cite{Ly10}, E.1.1). In the notations of (\cite{Ly10}, 2.2.15), the functor $\oblv_{gr,\Fact}: (\Fact(C), \otimes^!)\to (\Fact(E),\otimes^!)$ is non-unital symmetric monoidal. 

 For $\theta\in -\Lambda_{G,P}^{pos, *}$ denote by 
\begin{equation}
\label{functor_oblv_gr,theta}
\oblv_{gr, \theta}: \Fact(C)_{\theta}\to \Fact(E)_{\theta}\,\iso\, Shv(X^{-\theta})
\end{equation} 
the $\theta$-component of $\oblv_{gr, \Fact}$. The isomorphism used in (\ref{functor_oblv_gr,theta}) is given by (\cite{Ly10}, E.1.11). As in (\cite{Ly10}, E.1.3), the functor (\ref{functor_oblv_gr,theta}) is $Shv((X^{-\theta}\times\Ran)^{\subset})$-linear. 

\begin{Lm} The functor (\ref{functor_oblv_gr,theta}) has a $Shv((X^{-\theta}\times\Ran)^{\subset})$-linear continuous right adjoint $\oblv_{gr, \theta}^R$ compatible with factorization. 
\end{Lm}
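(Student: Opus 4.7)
The plan is to construct $\oblv_{gr,\theta}^R$ by passing to continuous right adjoints in the diagram presentation of $\Fact(C)_\theta$ recalled in Section~\ref{Sect_C.3.3_about_cF_Ran,C}. The functor $\oblv_{gr,\theta}$ is assembled from a morphism $\cF^\theta_{\Ran, C} \to \cF^\theta_{\Ran, E}$ of functors $\cTw(fSets)^\theta \to Shv((X^{-\theta}\times\Ran)^{\subset})-mod$, whose value at $(\und\theta, J \to K) \in \cTw(fSets)^\theta$ is the $Shv((X^{-\theta}\times\Ran)^{\subset})$-linear functor
$$
\bigl(\bigotimes_{j\in J} \oblv_{gr}|_{C_{\und\theta(j)}}\bigr) \otimes \id_{Shv(X^K)} : \bigl(\bigotimes_{j\in J} C_{\und\theta(j)}\bigr) \otimes Shv(X^K) \to \bigl(\bigotimes_{j\in J} E_{\und\theta(j)}\bigr) \otimes Shv(X^K).
$$

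At each such vertex, every tensor factor $\oblv_{gr}|_{C_{\und\theta(j)}}: C_{\und\theta(j)} \to E_{\und\theta(j)} = \Vect$ admits a continuous right adjoint sending $e \mapsto \cO(\check{M})_{\und\theta(j)}$, by Section~\ref{Sect_C.3.8_now_E_introduced}. Since each $C_{\und\theta(j)}$ and $Shv(X^K)$ is dualizable in $\DGCat_{cont}$, the tensor product of these continuous right adjoints gives a continuous right adjoint to the vertex functor, naturally $Shv((X^{-\theta}\times\Ran)^{\subset})$-linear. Passing to continuous right adjoints in the full diagram, as in the argument preceding (\cite{Ly10}, 2.6.3) adapted to $\cTw(fSets)^\theta$, yields a morphism $\cF^{\theta, R}_{\Ran, E} \to \cF^{\theta, R}_{\Ran, C}$ of $(\cTw(fSets)^\theta)^{op}$-diagrams. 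Taking the limit gives the desired $\oblv_{gr,\theta}^R: \Fact(E)_\theta \to \Fact(C)_\theta$, continuous and $Shv((X^{-\theta}\times\Ran)^{\subset})$-linear by (\cite{Ly}, 9.2.39).

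For factorization compatibility, assembling the family $\{\oblv_{gr,\theta}^R\}_\theta$ for $\theta \in -\Lambda_{G,P}^{pos,*}$ yields $\oblv_{gr,\Fact}^R: \Fact(E) \to \Fact(C)$. To verify that it respects the factorization structures, one restricts to the disjoint locus $\Ran^J_d \subset \Ran^J$ for each $J \to *$ in $fSets$, where both $\Fact(C)$ and $\Fact(E)$ decompose as external products of their restrictions along $X \to \Ran$. The vertex-wise right adjoints in the construction, being tensor products of the atomic $\oblv_{gr}^R: \Vect \to C_{\und\theta(j)}$, are manifestly compatible with this external-product decomposition, and the limit preserves this compatibility. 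The main obstacle I anticipate is precisely this last step: one must check that the index category $\cTw(fSets)^\theta$ decomposes correctly into a product-type structure upon restriction to $\Ran^J_d$, indexed by factorizations $\theta = \theta_1 + \cdots + \theta_{|J|}$ with $\theta_j \in -\Lambda_{G,P}^{pos,*}$, so that the limit of vertex-wise right adjoints matches the external product of the limits for the $\theta_j$-pieces. This should follow from the general formalism of (\cite{Ly10}, Section~2) together with the graded enhancement of (\cite{Ly10}, Appendix~E), but requires careful bookkeeping of the grading throughout.
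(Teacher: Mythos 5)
Your proposal is correct and follows essentially the same route as the paper: one passes to vertex-wise continuous right adjoints in the natural transformation $\cF^{\theta}_{\Ran,C}\to\cF^{\theta}_{\Ran,E}$ of $\cTw(fSets)^{\theta}$-diagrams (the key point being that $\underset{j\in J}{\otimes}C_{\und{\theta}(j)}\to\Vect$ has a continuous, $Shv((X^{-\theta}\times\Ran)^{\subset})$-linear right adjoint) and then obtains $\oblv_{gr,\theta}^R$ as the limit over $(\cTw(fSets)^{\theta})^{op}$, invoking (\cite{Ly}, 9.2.39). The paper is in fact terser than you on the factorization compatibility, which it treats as implicit in the construction, so your extra discussion of the disjoint locus is harmless elaboration rather than a divergence.
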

\begin{proof} By functoriality, the map $\oblv_{gr}$ yields a natural transformation $\cF^{\theta}_{\Ran, C}\to \cF^{\theta}_{\Ran, E}$ of functors $\cTw(fSets)^{\theta}\to Shv((X^{-\theta}\times\Ran)^{\subset})-mod$. We may view the latter as a diagram
$[1]\times \cTw(fSets)^{\theta}\to Shv((X^{-\theta}\times\Ran)^{\subset})-mod$. We may pass to right adjoints in the latter diagram. Indeed, using Section~\ref{Sect_C.3.3_about_cF_Ran,C} this follows from the fact that for any $(\und{\theta}, J\to K)\in \cTw(fSets)^{\theta}$ the corresponding forgetful functor
$$
o: Shv(X^K)\otimes (\underset{j\in J}{\otimes} C_{\und{\theta}(j)})\to Shv(X^K)
$$
has a $Shv((X^{-\theta}\times\Ran)^{\subset})$-linear continuous right adjoint $o^R$. 
Indeed, the forgetful functor $\underset{j\in J}{\otimes} C_{\und{\theta}(j)}\to\Vect$ has a continuous right adjoint. 

By (\cite{Ly}, 9.2.39) the desired functor $\oblv_{gr, \theta}^R$ is obtained by passing to the limit over $(\und{\theta}, J\to K)\in (\cTw(fSets)^{\theta})^{op}$ in the functors $o^R$. 
\end{proof}

\sssec{} Let $K\in fSets$ and $\und{\theta}: K\to -\Lambda_{G,P}^{pos, *}$ be a map with $\theta=\sum_k \und{\theta}(k)$. For each $k$ we get the composition $C_{\und{\theta}(k)}\hook{} C\toup{\oblv}\Vect$. Consider their tensor products $\underset{k\in K}{\otimes} C_{\und{\theta}(k)}\to\Vect$. For $D\in \DGCat_{cont}$ tensoring by $D$ we get the functor
\begin{equation}
\label{functor_hopefully_comonadic_graded_version}
(\underset{k\in K}{\otimes} C_{\und{\theta}(k)})\otimes D\to D
\end{equation}

\begin{Lm} 
\label{Lm_first_for_graded_comonadicity}
For any $D\in \DGCat_{cont}$ the functor (\ref{functor_hopefully_comonadic_graded_version}) is comonadic.
\end{Lm}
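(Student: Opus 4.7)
The plan is to realize $F$ as a tensor product $L \otimes \id_D$, where
$$
L := \underset{k \in K}{\otimes} \oblv_k \colon \underset{k \in K}{\otimes} C_{\und{\theta}(k)} \to \Vect,
$$
and $\oblv_k \colon C_{\und{\theta}(k)} \hook{} C \toup{\oblv_{<0}} \Vect$ is the composition. The proof then splits into two parts: (a) verify that $L$ itself is comonadic, and (b) check that comonadicity over $\Vect$ is preserved under tensoring with an arbitrary $D \in \DGCat_{cont}$.

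For (a), I would first show each $\oblv_k$ is comonadic. Mimicking the proof of Lemma~\ref{Lm_oblv_neg_is_comonadic} with $\Rep(\check M)_{neg}$ replaced by $C = \Rep(\check M)_{<0}$, the functor $\oblv_{<0}$ is comonadic. Since $C_{\und{\theta}(k)} \hook{} C$ is a direct summand whose left and right adjoints both equal the projection on the $\und{\theta}(k)$-graded component, this inclusion is fully faithful and preserves and detects all colimits. Barr--Beck--Lurie (\cite{HA}, 4.7.3.5) then yields the comonadicity of $\oblv_k$ and the identification $C_{\und{\theta}(k)} \simeq \cO(\check M)_{\und{\theta}(k)}\text{-comod}(\Vect)$; in particular each $C_{\und{\theta}(k)}$ is compactly generated. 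Taking tensor products of these identifications gives
$$
\underset{k}{\otimes} C_{\und{\theta}(k)} \simeq \Bigl( \underset{k}{\otimes} \cO(\check M)_{\und{\theta}(k)} \Bigr)\text{-comod}(\Vect),
$$
under which $L$ becomes the forgetful functor for the coalgebra $\cB := \underset{k}{\otimes} \cO(\check M)_{\und{\theta}(k)}$ in $\Vect$, and is therefore comonadic with continuous right adjoint $V \mapsto V \otimes \cB$.

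The main step is (b): for a comonadic functor $L \colon C \to \Vect$ with continuous right adjoint $R$, the functor $L \otimes \id_D \colon C \otimes D \to D$ is again comonadic, with right adjoint $R \otimes \id_D$. Under the identification $C \simeq \cB\text{-comod}(\Vect)$ this amounts to an equivalence $C \otimes D \simeq \cB\text{-comod}(D)$ under which $L \otimes \id_D$ becomes the forgetful, after which comonadicity is Barr--Beck--Lurie once more. The hard part is justifying the latter equivalence; I would do so by checking the Barr--Beck--Lurie hypotheses on $F = L \otimes \id_D$ directly. Conservativity is the delicate point and uses that $C$ is compactly generated (as established in (a)): any nonzero object of $C \otimes D$ admits a nonzero map from some $c \otimes d$ with $c \in C$ compact and $d \in D$, and the image $\oblv(c) \otimes d$ in $D$ is nonzero since $\oblv(c) \ne 0$ in $\Vect$. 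Preservation of $F$-split totalizations then reduces, via continuity of $\_ \otimes D$, to the corresponding property of $L$. This completes the plan.
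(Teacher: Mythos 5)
Your step (a) is consistent with what the paper itself records (Lemma~\ref{Lm_oblv_neg_is_comonadic} and Section~\ref{Sect_C.3.8_now_E_introduced} give the comonadicity of $\oblv_{<0}$ and the identification $C_{\theta}\simeq \cO(\check{M})_{\theta}\text{-comod}(\Vect)$), although already the assertion that the $K$-fold tensor product of these identifications yields $\underset{k\in K}{\otimes}C_{\und{\theta}(k)}\simeq(\underset{k\in K}{\otimes}\cO(\check{M})_{\und{\theta}(k)})\text{-comod}(\Vect)$ is an instance of the very phenomenon under discussion and would need justification. The genuine gap is in step (b), which is where the entire content of the lemma sits. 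Your conservativity argument is a non sequitur: from $x\neq 0$ you produce a nonzero map $c\otimes d\to x$ with $c$ compact and observe that $F(c\otimes d)=\oblv(c)\otimes d\neq 0$; this does not give $F(x)\neq 0$, because the induced map $F(c\otimes d)\to F(x)$ may perfectly well be zero. (Such a generator argument would work if $F$ had a left adjoint with generating essential image, but the forgetful functor from comodules has a continuous right adjoint, not a left one.) Preservation of conservativity, and of comonadicity, under $-\otimes D$ is not a formal consequence of compact generation; it is precisely the nontrivial input that has to be supplied, and it is why the paper invokes (\cite{Ga1}, Lemma~5.5.4) rather than a general argument. Likewise, your claim that preservation of $F$-split totalizations ``reduces, via continuity of $-\otimes D$'' to the corresponding property of $L$ is not an argument: totalizations are limits, $-\otimes D$ is only guaranteed to commute with colimits, and an $F$-split cosimplicial object of $(\underset{k\in K}{\otimes}C_{\und{\theta}(k)})\otimes D$ is in no sense induced from one in $\underset{k\in K}{\otimes}C_{\und{\theta}(k)}$, so nothing about $L$ alone bears on it.

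For comparison, the paper closes exactly these two points by factoring the functor (\ref{functor_hopefully_comonadic_graded_version}) as the direct-summand inclusion $(\underset{k\in K}{\otimes} C_{\und{\theta}(k)})\otimes D\hook{} C^{\otimes K}\otimes D$ followed by the forgetful functor $C^{\otimes K}\otimes D\to D$, the latter being comonadic by (\cite{Ga1}, Lemma~5.5.4). Conservativity of the composite is then immediate, and for an $F$-split cosimplicial object one first forms its totalization in $C^{\otimes K}\otimes D$ (it exists and is preserved by the big forgetful functor, by comonadicity), and then uses Lemma~\ref{Lm_for_comonadicity_now}: the summand inclusion has coinciding left and right adjoints, a property that survives $-\otimes D$, so the inclusion is fully faithful, preserves limits, and its image is closed under limits; hence the totalization lies in the summand, is computed there, and is preserved by $F$, after which (\cite{HA}, 4.7.3.5) applies. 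If you wanted to avoid the external comonadicity input altogether, you could try to exploit semisimplicity of $\Rep(\check{M})$, under which $\underset{k\in K}{\otimes}C_{\und{\theta}(k)}$ is a direct sum of copies of $\Vect$ indexed by tuples of irreducibles and $F$ becomes, summandwise, tensoring by a nonzero vector space; but that too requires an actual verification of both Barr--Beck hypotheses, and as written your plan is missing the key step.
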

\begin{proof}
The inclusion $\underset{k\in K}{\otimes} C_{\und{\theta}(k)}\hook{} C^{\otimes K}$ is fully faithful, and its right adjoint concides with its left adjoint by Lemma~\ref{Lm_for_comonadicity_now}. The same remains true after tensoring by $D$. 
The oblivion $C^{\otimes K}\otimes D\to D$ is comonadic by (\cite{Ga1}, Lemma~5.5.4). This implies that (\ref{functor_hopefully_comonadic_graded_version}) is conservative, as composition of conservative functors 
$$
(\underset{k\in K}{\otimes} C_{\und{\theta}(k)})\otimes D\to C^{\otimes K}\otimes D\to D.
$$ 
Let now $x^{\bullet}$ be a simplicial object of $((\underset{k\in K}{\otimes} C_{\und{\theta}(k)})\otimes D)^{op}$ whose image in $D^{op}$ is a split simplicial object. By (\cite{HA}, 4.7.3.5), $x^{\bullet}$ has a limit in $C^{\otimes K}\otimes D$, and this limit is preserved by the projection $C^{\otimes K}\otimes D\to D$. Now $(\underset{k\in K}{\otimes} C_{\und{\theta}(k)})\otimes D$ admits limits, and the inclusion $(\underset{k\in K}{\otimes} C_{\und{\theta}(k)})\otimes D\to C^{\otimes K}\otimes D$ preserves limits. Our claim follows now from (\cite{HA}, 4.7.3.5). 
\end{proof}
\begin{Lm} 
\label{Lm_for_comonadicity_now}
Let $I\subset I'$ be small sets. For $i\in I'$ let $D_i\in\DGCat_{cont}$. The inclusion functor $L: \underset{i\in I}{\oplus} D_i\to \underset{i\in I'}{\oplus} D_i$ has a right adjoint $R$ given by the projection $R(\underset{i\in I'}{\oplus} d_i)=\underset{i\in I}{\oplus} d_i$. Besides, $R$ is also left adjoint to $L$. \QED
\end{Lm}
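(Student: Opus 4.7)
The plan is to reduce the statement to the standard fact that $\DGCat_{cont}$ is a pointed $(\infty,1)$-category with biproducts: for any small family $(D_i)_{i \in I'}$, the coproduct $\oplus_{i \in I'} D_i$ formed in $\DGCat_{cont}$ coincides with the product of the underlying DG categories (this is the presentability input, a consequence of the fact that coproducts in $\cPr^L$ agree with products). In particular, any partition $I' = I \sqcup (I' \setminus I)$ yields a canonical biproduct decomposition
$$
\oplus_{i \in I'} D_i \;\simeq\; \bigl(\oplus_{i \in I} D_i\bigr) \oplus \bigl(\oplus_{i \in I' \setminus I} D_i\bigr),
$$
and with respect to this decomposition the functor $L$ of the lemma is the inclusion of the first factor (filling in $0$ on $I' \setminus I$) while $R$ is the projection onto the first factor.

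Granted the biproduct decomposition, I would verify each adjunction by a direct calculation with mapping spaces. For the right adjunction, given $d \in \oplus_{i \in I} D_i$ and $d' \in \oplus_{i \in I'} D_i$, mapping spaces in the product decompose componentwise, and the vanishing $\Hom(0, d'_j) \simeq *$ for $j \in I' \setminus I$ gives
$$
\Hom(L(d), d') \;\simeq\; \prod_{i \in I} \Hom(d_i, d'_i) \;\simeq\; \Hom(d, R(d')).
$$
For the left adjunction the same computation applies with the roles swapped, using instead that $\Hom(d'_j, 0) \simeq *$ since $\DGCat_{cont}$ is pointed. Both identifications are natural and continuous, giving the two adjunctions simultaneously and functorially.

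There is no real obstacle here: the assertion is the standard "split inclusions admit both adjoints" phenomenon in any pointed additive setting. The only point worth flagging is the need to invoke presentability to make sense of $\oplus_{i \in I'} D_i$ as a genuine biproduct when $I'$ is infinite, so that the projection $R$ is well-defined as a continuous functor and not merely as a map from the product. Once this is in place, the proof is formal.
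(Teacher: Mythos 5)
Your argument is correct, and it is exactly the standard justification for this statement, which the paper records without proof as evident: the coproduct in $\DGCat_{cont}$ of a small family agrees with the product, the projection $R$ is continuous because colimits are computed componentwise, and both adjunctions follow from the componentwise mapping-space computation together with pointedness ($\Hom(0,x)\simeq \ast \simeq \Hom(x,0)$). Nothing is missing, so there is nothing to compare beyond noting that your write-up supplies the routine details the paper leaves implicit.
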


\sssec{} Given $\EE\in \DGCat_{cont}$, which is compactly generated and equipped with a t-structure, we say that the t-structure on $\EE$ is \select{compactly generated} if $\EE^{\le 0}$ is generated under filtered colimits by $\EE^{\le 0}\cap \EE^c$. Recall that such t-structure is compatible with filtered colimits by (\cite{Ly}, 9.3.5).   

\begin{Lm} 
\label{Lm_oblv_gr_theta_is_comonadic} 
i) Let $\theta\in -\Lambda_{G,P}^{pos, *}$. The functor 
$$
\oblv_{gr,\theta}: \Fact(C)_{\theta}\to Shv(X^{-\theta})
$$ 
is comonadic. \\
ii) Equip $Shv(X^{-\theta})$ with the perverse t-structure. There is a natural t-structure on $\Fact(C)_{\theta}$ such that $\oblv_{gr,\theta}$ is t-exact. This t-structure is accessible, compatible with filtered colimits, left and right complete. Moreover, for $I\in fSets$ there is a natural t-structures on $\bar C_{X^I,\theta}, \bar E_{X^I,\theta}$ such that $\bar C_{X^I,\theta}\to \bar E_{X^I,\theta}$ is t-exact. These t-structures are accessible, left complete, and compatible with filtered colimits. For a map $I\to J$ in $fSets$ the morphism $\vartriangle_!: \bar C_{X^J, \theta}\to \bar C_{X^I,\theta}$ is t-exact. 
\end{Lm}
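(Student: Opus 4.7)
\emph{Part (i).} The strategy is to apply the dual Lurie--Barr--Beck theorem (\cite{HA}, 4.7.3.5) to the left adjoint $\oblv_{gr,\theta}$, whose continuous right adjoint was constructed just above. Both conditions --- conservativity and preservation of $\oblv_{gr,\theta}$-split cosimplicial limits --- will be checked componentwise using the limit presentation $\Fact(C)_{\theta}\,\iso\, \underset{(\cTw(fSets)^{\theta})^{op}}{\lim}\cF^{\theta,R}_{\Ran,C}$ from Section~\ref{Sect_C.3.3_about_cF_Ran,C}, in which each piece is of the form $(\underset{j\in J}{\otimes} C_{\und{\theta}(j)})\otimes Shv(X^K)$. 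On each such piece the corresponding forgetful functor is comonadic by Lemma~\ref{Lm_first_for_graded_comonadicity}; since the transition functors of the limit are $Shv$-linear continuous right adjoints, both conservativity and the preservation of $\oblv_{gr,\theta}$-split cosimplicial limits descend pointwise. Conservativity can also be seen concretely by restricting along the stratification of $X^{-\theta}$ by diagonals indexed by unordered partitions $\gU(-\theta)$ of $-\theta$, reducing on each open stratum to the componentwise forgetful.

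\emph{Part (ii).} Once (i) is in hand, we are in the setting of (\cite{Ly}, 9.3.22, 9.3.26): provided the comonad $\oblv_{gr,\theta}\oblv_{gr,\theta}^R$ on $Shv(X^{-\theta})$ is left t-exact for the perverse t-structure, we obtain a unique t-structure on $\Fact(C)_{\theta}$ for which $\oblv_{gr,\theta}$ is t-exact, and accessibility, compatibility with filtered colimits, and left/right completeness then transfer from the analogous properties of the perverse t-structure on $Shv(X^{-\theta})$. To verify left t-exactness of the comonad, I would identify it explicitly as convolution with a perverse (up to shift) object on $X^{-\theta}$ built out of $\cO(\check{M})_{\theta}$ spread along the factorization diagonals: informally, $\oblv_{gr,\theta}^R$ is the ``free $\cO(\check{M})_{\theta}$-comodule'' construction over the Ran space, so the composite $\oblv_{gr,\theta}\oblv_{gr,\theta}^R$ acts by tensoring with such an object; since $\cO(\check{M})_{\theta}$ lies in the heart, the required t-exactness follows. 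The non-factorized input --- that $\oblv_{gr}$ and $\oblv_{gr}^R$ are t-exact at the level of $C,E$ --- is precisely what was noted at the end of Section~\ref{Sect_C.3.8_now_E_introduced}.

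\emph{Sections over $X^I$ and $\vartriangle_!$.} For $I\in fSets$, the t-structures on $\bar C_{X^I,\theta}$ and $\bar E_{X^I,\theta}$ are obtained by the same comonadic transfer applied to the sections of the factorization sheaves of categories over $X^I$, the target $\bar E_{X^I,\theta}$ carrying a perverse-type t-structure on an explicit configuration-style prestack. T-exactness of $\oblv_{gr,\theta}$ at the $X^I$-level follows from part (ii) by base change. For a morphism $I\to J$ in $fSets$, t-exactness of $\vartriangle_!\colon \bar C_{X^J,\theta}\to \bar C_{X^I,\theta}$ reduces via $\oblv_{gr,\theta}$ to t-exactness of the corresponding pushforward at the $\bar E$-level, which is standard for the closed diagonal embedding of smooth configuration spaces once the t-structure is normalized (via the appropriate dimension shift) so that $\vartriangle_!$ is t-exact. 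The main obstacle I anticipate is the clean explicit identification of the comonad $\oblv_{gr,\theta}\oblv_{gr,\theta}^R$ in terms sufficient to read off left t-exactness; everything else is formal bookkeeping with gradings, shifts, and comonadic transfer.
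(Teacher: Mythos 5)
Your proposal has genuine gaps in both parts. For (i), you propose to verify comonadicity ``componentwise'' with respect to the limit presentation $\Fact(C)_{\theta}\,\iso\,\underset{(\cTw(fSets)^{\theta})^{op}}{\lim}\cF^{\theta,R}_{\Ran,C}$, but $\oblv_{gr,\theta}$ is not a map of limit diagrams for that presentation: it only lax-commutes with the transition right adjoints. Concretely, the relevant Beck--Chevalley squares involve the right adjoint $m^R$ of the product on $C=\Rep(\check{M})_{<0}$, and $\oblv_{gr}\comp m^R_C$ differs from $m^R_E\comp(\oblv_{gr}\otimes\oblv_{gr})$: evaluated on $U^{\nu}$ the former gives $\underset{\lambda,\mu}{\oplus}\,\und{U}^{\lambda}\otimes\und{U}^{\mu}\otimes\Hom_{\check{M}}(U^{\lambda}\otimes U^{\mu},U^{\nu})$, while the latter gives one copy of $\und{U}^{\nu}$ for each decomposition of the grading. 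So neither conservativity nor preservation of $\oblv$-split totalizations can be ``read off pointwise'' in that presentation, and your alternative stratification remark is too vague to substitute. The paper avoids this by working with the $\ov{\Fact}$ presentation: limits over $\Tw(I)^{op}$ of the pieces $(C^{\otimes K})_{\theta}\otimes Shv(X^I_{p,d})$, then over $fSets$, where $\oblv_{gr}$ being symmetric monoidal makes the forgetful functor literally a limit of the componentwise comonadic functors of Lemma~\ref{Lm_first_for_graded_comonadicity}; comonadicity then passes through the limits by (\cite{Ly10}, 7.2.4), and one concludes via the equivalence $\Fact(C)\,\iso\,\ov{\Fact}(C)$ of Lemma~\ref{Lm_C.3.5_now}.

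For (ii), your transfer of the t-structure along the comonadic functor hinges on left t-exactness of the comonad $\oblv_{gr,\theta}\oblv^R_{gr,\theta}$, i.e.\ of $-\otimes^!\cK$ with $\cK=\oblv_{gr,\theta}(\Fact(\cO(\check{M})_{<0})_{\theta})$. Your justification (``since $\cO(\check{M})_{\theta}$ lies in the heart'') does not control the perverse amplitude of $\cK$ on $X^{-\theta}$: the spreading along diagonals involves $*$-pushforwards, and what can be proved is only a connectivity bound (cf.\ Lemma~\ref{Lm_C.3.19_now} and Lemma~\ref{Lm_C.3.44}); the coconnectivity you would need is precisely the open question recorded after Section~\ref{Sect_C.3.17_now} (see also Section~\ref{Sect_C.3.42_Questions}). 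The paper's actual construction avoids this entirely: it puts the tensor-product t-structure on each piece $(C^{\otimes K})_{\theta}\otimes Shv(X^I_{p,d})$, passes to the $\Tw(I)^{op}$-limit using (\cite{G}, I.3, 1.5.8) to obtain the t-structures on $\bar C_{X^I,\theta}$ (with t-exact evaluations, whence t-exact $\vartriangle_!$ by construction), defines $\Fact(C)_{\theta}^{\le 0}$ by generation under colimits and extensions via (\cite{HA}, 1.4.4.11), identifies the induced t-structure on $\Fact(E)_{\theta}\,\iso\,Shv(X^{-\theta})$ with the perverse one by (\cite{Ly10}, E.1.39), and then checks t-exactness of $\oblv_{gr,\theta}$ and the completeness properties directly. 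Finally, your argument that t-exactness of $\vartriangle_!$ ``reduces to the $\bar E$-level once the t-structure is normalized so that $\vartriangle_!$ is t-exact'' is circular as stated.
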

\begin{proof} i) Let $I\in fSets$. Recall the category $\bar C_{X^I}$ defined in (\cite{Ly10}, 4.1.1). Recall that 
$$
\bar C_{X^I}\,\iso\, \underset{\theta'\in -\Lambda_{G,P}^{pos,*}}{\oplus} \bar C_{X^I, \theta'}
$$ 
is graded naturally by (\cite{Ly10}, E.1.30). First, we check that $\bar C_{X^I, \theta}\to \bar E_{X^I, \theta}$ is comonadic. Given $(I\toup{p} J\to K)\in \Tw(I)$, the functor 
\begin{equation}
\label{functor_for_Lm_oblv_gr_theta_is_comonadic}
(C^{\otimes K})_{\theta}\otimes Shv(X^I_{p, d})\to (E^{\otimes K})_{\theta}\otimes Shv(X^I_{p, d})
\end{equation}
is comonadic by (\cite{Ly10}, 7.2.4), as it is obtained by passing to the product over $\und{\theta}: K\to -\Lambda_{G,P}^{pos, *}$ with $\sum_{k\in K} \und{\theta}(k)=\theta$ in the functors
$$
(\underset{k\in K}{\otimes} C_{\und{\theta}(k)})\otimes Shv(X^I_{p, d})\to Shv(X^I_{p, d}),
$$
which are comonadic by Lemma~\ref{Lm_first_for_graded_comonadicity}. The functor $\bar C_{X^I, \theta}\to \bar E_{X^I, \theta}$ is now comonadic by (\cite{Ly10}, 7.2.4), as it is obtained by passing to the limit over $(I\to J\to K)\in \Tw(I)^{op}$ in the comonadic functors (\ref{functor_for_Lm_oblv_gr_theta_is_comonadic}).  

 Passing further to the limit over $I\in fSets$, we see that 
$$
\oblv_{gr, \theta}: \ov{\Fact}(C)_{\theta}\,\iso\, \underset{I\in fSets}{\lim} \bar C_{X^I, \theta}\to \underset{I\in fSets}{\lim} \bar E_{X^I, \theta}\,\iso\, \ov{\Fact}(E)_{\theta}
$$ 
is also comonadic. Our claim follows now from Lemma~\ref{Lm_C.3.5_now}. 

\medskip\noindent 
ii) Given finite collection of objects of $\DGCat_{cont}$ with accessible t-structures
we equip their tensor product with a t-structure as in (\cite{Ly}, 9.3.17). Note that $C^{\otimes K}, E^{\otimes K}$ are equipped with natural t-structures compatible with gradings, so $(C^{\otimes K})_{\theta}, (E^{\otimes K})_{\theta}$ are equipped with the induced t-structures and 
$$
(C^{\otimes K})_{\theta}\to (E^{\otimes K})_{\theta}
$$ 
is t-exact. The t-structures on $C^{\otimes K}, E^{\otimes K}$ are both left and right complete. 

We equip $Shv(X^I_{p, d})$ with the perverse t-structure, it is compactly generated. So, by (\cite{Ly}, 9.3.10), the functor (\ref{functor_for_Lm_oblv_gr_theta_is_comonadic}) is t-exact. The t-structure on $(C^{\otimes K})_{\theta}\otimes Shv(X^I_{p, d})$ is compatible with filtered colimits by (\cite{Ly}, 9.3.5 and 9.3.7). Since $Shv(X^I_{p, d})\,\iso\,\Ind(\D^b(\Perv(X^I_{p, d})))$, from (\cite{AGKRRV}, E.9.6) we see that the t-structure on $(C^{\otimes K})_{\theta}\otimes Shv(X^I_{p, d})$ is left complete.   

The transition functors in the diagram 
$$
\Tw(I)^{op}\to Shv(X^I)-mod, \; (I\toup{p} J\to K)\mapsto (C^{\otimes K})_{\theta}\otimes Shv(X^I_{p, d})
$$ 
are t-exact. So, by (\cite{G}, I.3, 1.5.8), there is a unique t-structure on 
$$
\bar C_{X^I, \theta}\,\iso\, \underset{(I\toup{p} J\to K)\in\Tw(I)^{op}}{\lim} (C^{\otimes K})_{\theta}\otimes Shv(X^I_{p, d})
$$
compatible with filtered colimits such that each evaluation functor 
$$
\bar C_{X^I, \theta}\to (C^{\otimes K})_{\theta}\otimes Shv(X^I_{p, d})
$$ 
is t-exact. This t-structure is accessible as 
$$
\underset{(I\toup{p} J\to K)\in\Tw(I)^{op}}{\lim} ((C^{\otimes K})_{\theta}\otimes Shv(X^I_{p, d}))^{\le 0}
$$
is presentable, and left complete. The same holds for $C$ replaced by $E$. By construction, $\bar C_{X^I,\theta}\to \bar E_{X^I,\theta}$ is t-exact.  

 We may pass to left adjoints in the diagram $fSets\to Shv(\Ran)-mod$, $I\mapsto \bar C_{X^I, \theta}$. So, $\ov{\Fact}(C)_{\theta}\,\iso\, \underset{I\in fSets^{op}}{\colim} \bar C_{X^I, \theta}$ taken in $Shv(\Ran)-mod$, here the transition functors are !-extensions. By construction, for a map $I\to J$ in $fSets$ the transition functor $\bar C_{X^J, \theta}\to \bar C_{X^I, \theta}$ in the above diagram is t-exact. 
 
 Define $\Fact(C)_{\theta}^{\le 0}\subset \Fact(C)_{\theta}$ as the smallest full subcategory closed under extensions, colimits, and containing for each $I\in fSets$ the image of $C_{X^I, \theta}^{\le 0}$. By (\cite{HA}, 1.4.4.11), this defines an accessible t-structure on $\Fact(C)_{\theta}$. 
 
 Note that an object $K\in \ov{\Fact}(C)_{\theta}$ is coconnective iff its $!$-restriction under $X^I\to \Ran$ is coconnective in $\bar C_{X^I,\theta}$ for any $I\in fSets$. This shows that the t-structure on $\ov{\Fact}(C)_{\theta}$ is compatible with filtered colimits.
 
 The same construction applies with $C$ replaced by $E$. The so obtained t-structure on $Shv(X^{-\theta})$ coincides with the perverse one by (\cite{Ly10}, E.1.39). By construction, $\oblv_{gr,\theta}$ is t-exact. 
 
  Let us show that the t-structure on $\Fact(C)_{\theta}$ is right complete. By (\cite{Ly}, 4.0.10), it suffices to show that for $K\in \Fact(C)_{\theta}$ the natural map $\underset{n\in\NN}{\colim} \tau^{\le n} K\to K$ in $\Fact(C)_{\theta}$ is an isomorphism. Since $\oblv_{gr, \theta}$ is conservative, continuous and t-exact, our claim follows from the fact that the t-structure on $Shv(X^{-\theta})$ is right complete by (\cite{Ly4}, 0.0.10). 
  
  Let us show that the t-structure on $\Fact(C)_{\theta}$ is left complete. For $n\in\ZZ$ we have $\Fact(C)_{\theta}^{\ge n}\,\iso\, \underset{I\in fSets}{\lim} (\bar C_{X^I,\theta})^{\ge n}$. Since the t-structure on each $\bar C_{X^I, \theta}$ is left complete, we get
$$
\underset{n\in\ZZ^{op}}{\lim} \Fact(C)_{\theta}^{\ge-n}\,\iso\, \underset{n\in\ZZ^{op}}{\lim} \;\underset{I\in fSets}{\lim} (\bar C_{X^I,\theta})^{\ge -n}\,\iso\, \underset{I\in fSets}{\lim} \bar C_{X^I,\theta}\,\iso\, \Fact(C)_{\theta}
$$  
as desired. We are done.
\end{proof}

\sssec{Example} Let for a moment $\theta$ be one of generators of the semi-group $-\Lambda_{G,P}^{pos,*}$. For $I\in fSets$ one has canonically $\bar C_{X^I,\theta}\,\iso\, C_{\theta}\otimes Shv(X)$, which is viewed as a $Shv(X^I)$-module via the inclusion of the main diagonal $X\hook{} X^I$. Similarly, $\bar E_{X^I, \theta}\,\iso\, Shv(X)$ as a $Shv(X^I)$-module. This is obtained from (\cite{Ly10}, 4.1.4). So, $\Fact(C)_{\theta}\,\iso\, C_{\theta}\otimes Shv(X)$. 

\sssec{} Set $B=\underset{\theta\in -\Lambda_{G,P}^{pos, *}}{\oplus} e_{\theta}\in E$. We view $B$ as an object of $CAlg^{nu}(E)$, which is $-\Lambda_{G,P}^{pos,*}$-graded compatibly with the non-unital commutative algebra structure. By (\cite{Ly10}, E.1.23), for $\theta\in -\Lambda_{G,P}^{pos, *}$ one has canonically $\Fact(B)_{\theta}\,\iso\, \omega_{X^{-\theta}}$ in $Shv(X^{-\theta})$. 

\sssec{} For $I\in fSets$, $\theta\in -\Lambda_{G,P}^{pos, *}$ denote by $\Tw(I)^{\theta}$ the category of collections $(I\to J\to K)\in\Tw(I)$ and $\und{\theta}: J\to -\Lambda_{G,P}^{pos,*}$. A morphism from $(I\to J_1\to K_1, \und{\theta}^1)$ to $(I\to J_2\to K_2, \und{\theta}^2)$ is a morphism from $(I\to J_1\to K_1)$ to $(I\to J_2\to K_2)$ in $\Tw(I)$ such that for the underlying map $\phi: J_1\to J_2$ one has $\phi_*\und{\theta}^1=\und{\theta}^2$. 

\begin{Lm} 
\label{Lm_C.3.19_now}
Let 
$$
\cA=\underset{\theta\in -\Lambda_{G,P}^{pos,*}}{\oplus} \cA_{\theta}\in CAlg^{nu}(C)
$$ 
with $\cA_{\theta}\in C^{\heartsuit}_{\theta}$ for any $\theta$. Recall that, by (\cite{Ly10}, E.1.9), $\Fact(\cA)\in\Fact(C)$ is $-\Lambda_{G,P}^{pos, *}$-graded, as well as its !-restriction $\cA_{X^I}\in C_{X^I}$ under $X^I\to\Ran$. Let $\theta\in -\Lambda_{G,P}^{pos, *}$. 

\smallskip\noindent
i) If $I\in fSets$ then $\cA_{X^I, \theta}\in \bar C_{X^I, \theta}$ is strictly connective. 

\smallskip\noindent
ii) The object $\Fact(\cA)_{\theta}\in \Fact(C)_{\theta}$ is strictly connective. 
\end{Lm}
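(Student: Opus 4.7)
The plan is to deduce (ii) from (i) using the construction of the t-structure on $\Fact(C)_\theta$, and then to prove (i) by applying the t-exact, conservative forgetful functor $\oblv_{gr}:C\to E$ of Section~\ref{Sect_C.3.8_now_E_introduced} to reduce to an explicit computation in $\bar E_{X^I,\theta}$.

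For the implication (i) $\Rightarrow$ (ii): by the construction in the proof of Lemma~\ref{Lm_oblv_gr_theta_is_comonadic} ii), the full subcategory $\Fact(C)_\theta^{\le 0}$ is the smallest full subcategory closed under colimits and extensions and containing, for each $I\in fSets$, the image of $\bar C_{X^I,\theta}^{\le 0}$ under the $!$-extension $(j_I)_!:\bar C_{X^I,\theta}\to\Fact(C)_\theta$, where $j_I:X^I\to\Ran$. Since any object of $\Fact(C)_\theta$ is the colimit over $fSets^{op}$ of the $!$-extensions of its $!$-restrictions, one has $\Fact(\cA)_\theta\,\iso\,\underset{I\in fSets^{op}}{\colim}\,(j_I)_!\cA_{X^I,\theta}$. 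Given (i), this presents $\Fact(\cA)_\theta$ as a colimit of connective objects, hence connective.

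For (i), fix $I\in fSets$. The functor $o_I:\bar C_{X^I,\theta}\to\bar E_{X^I,\theta}$ induced by $\oblv_{gr}:C\to E$ is comonadic and t-exact; both properties are established in the proof of Lemma~\ref{Lm_oblv_gr_theta_is_comonadic} at the level of the evaluations $(C^{\otimes K})_\theta\otimes Shv(X^I_{p,d})\to (E^{\otimes K})_\theta\otimes Shv(X^I_{p,d})$ along $\Tw(I)^{op}$, and they pass to the limit. A conservative t-exact functor reflects connectivity. By the naturality of the $\Fact$ construction with respect to the symmetric monoidal functor $\oblv_{gr}$ (cf.\ (\cite{Ly10}, E.1.1)), one has $o_I(\cA_{X^I,\theta})\,\iso\,(\oblv_{gr}\cA)_{X^I,\theta}$, so it suffices to verify that this latter object is connective in $\bar E_{X^I,\theta}$.

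Finally, writing $\cA':=\oblv_{gr}\cA\in CAlg^{nu}(E)^\heartsuit$ with components $\cA'_\nu\in\Vect^\heartsuit$, I would invoke the standard colimit presentation of the $!$-restriction of a non-unital commutative factorization algebra from (\cite{Ly10}, Section~2.4). This expresses $\cA'_{X^I,\theta}$ as a colimit over an indexing category of tuples $(I\to J\to K,\,\und{\nu})$ with $\und{\nu}:J\to -\Lambda_{G,P}^{pos,*}$ satisfying $\sum_{j\in J}\und{\nu}(j)=\theta$, of objects of the form $\phi_!\bigl(\bigl(\underset{j\in J}{\otimes}\cA'_{\und{\nu}(j)}\bigr)\otimes\omega_{X^K}\bigr)$, where $\phi:X^K\to X^I$ is the closed diagonal associated to $I\to K$. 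Since $\phi_!$ is t-exact for the perverse t-structure, $\omega_{X^K}$ is perverse, and $\underset{j\in J}{\otimes}\cA'_{\und{\nu}(j)}\in\Vect^\heartsuit$, each term is perverse on $X^I$ and hence connective with respect to the t-structure on $\bar E_{X^I,\theta}$ defined, as in the proof of Lemma~\ref{Lm_oblv_gr_theta_is_comonadic}, by requiring all evaluations to $(E^{\otimes K})_\theta\otimes Shv(X^I_{p,d})$ to be t-exact. The colimit of connective objects is connective, proving the claim. The main obstacle will be matching the explicit colimit presentation of $\cA'_{X^I,\theta}$ with the limit description of the t-structure on $\bar E_{X^I,\theta}$ via evaluations along $\Tw(I)^{op}$, so that connectivity read off from each term of the colimit is genuinely connectivity with respect to the ambient t-structure.
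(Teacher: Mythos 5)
Your overall strategy---reducing to $E$ via the conservative, t-exact functor induced by $\oblv_{gr}$ and then computing with an explicit colimit presentation of the $!$-restriction---could be made to work, but as written the argument does not prove the lemma. The statement asserts \emph{strict} connectivity, i.e.\ that $\cA_{X^I,\theta}$ and $\Fact(\cA)_{\theta}$ lie in perverse degrees $<0$, whereas at every stage you only conclude connectivity (degrees $\le 0$). Worse, the step that is supposed to supply the degree bound rests on a false claim: $\omega_{X^K}$ is \emph{not} perverse; it is $\IC_{X^K}[\,\mid K\mid\,]$, placed in perverse degree $-\mid K\mid$. Ironically, this is exactly the point you need: since $\cA_{\und{\theta}(j)}\in C^{\heartsuit}_{\und{\theta}(j)}$ and $K$ is nonempty, each term $(\underset{j\in J}{\otimes}\cA_{\und{\theta}(j)})\otimes\omega_{X^K}$ sits in perverse degrees $-\mid K\mid\le -1$, and this strict negativity---preserved by t-exact functors, by right t-exact functors (which send degrees $\le -1$ to degrees $\le -1$), and by colimits---is what yields strict connectivity. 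Without tracking it, your conclusion is strictly weaker than the lemma.

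For comparison, the paper argues directly in $\bar C_{X^I,\theta}$: part (i) uses the colimit over $\Tw(I)^{\theta}$ of the terms $(\underset{j\in J}{\otimes}\cA_{\und{\theta}(j)})\otimes\omega_{X^K}$, the t-exactness of the structure functors $(\underset{j\in J}{\otimes}C_{\und{\theta}(j)})\otimes Shv(X^K)\to\bar C_{X^I,\theta}$ (built into the definition of the t-structure), and the strict connectivity of each term (an analog of \cite{Ly10}, 4.1.13); part (ii) is proved from the colimit over $\cTw(fSets)^{\theta}$, using that $\vartriangle^K_!$ is right t-exact, rather than via your reconstruction of $\Fact(\cA)_{\theta}$ as the colimit over $fSets^{op}$ of $!$-extensions of its $!$-restrictions (which is true, but an additional step you would have to justify). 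Your detour through $E$ is also not needed: since the t-structure on $\bar C_{X^I,\theta}$ is defined by requiring the evaluations to be t-exact, the degree estimate can be read off directly at the level of $(\underset{j\in J}{\otimes}C_{\und{\theta}(j)})\otimes Shv(X^K)$, though if you do keep the reduction to $E$ you must also verify the compatibility $o_I(\cA_{X^I,\theta})\,\iso\,(\oblv_{gr}\cA)_{X^I,\theta}$ you invoke.
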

\begin{proof}
i) As in (\cite{Ly10}, E.1.9), 
$$
\cA_{X^I, \theta}\,\iso\, \underset{(I\to J\to K, \, \und{\theta})\in \Tw(I)^{\theta}}{\colim} (\underset{j\in J}{\otimes} \cA_{\und{\theta}(j)})\otimes\omega_{X^K},
$$
where the colimit is taken in $\bar C_{X^I,\theta}$. For each $(I\to J\to K, \und{\theta})\in \Tw(I)^{\theta}$ the functor
$$
(\underset{j\in J}{\otimes}C_{\und{\theta}(j)})\otimes Shv(X^K)\to \bar C_{X^I, \theta}
$$
is t-exact, and $(\underset{j\in J}{\otimes} \cA_{\und{\theta}(j)})\otimes\omega_{X^K}$ is strictly connective in $(\underset{j\in J}{\otimes}C_{\und{\theta}(j)})\otimes Shv(X^K)$. 
Here we used an analog of (\cite{Ly10}, 4.1.13). Since $(\bar C_{X^I,\theta})^{<0}$ is closed our colimits, our claim follows.

\smallskip\noindent
ii) 
We have 
$$
\Fact(\cA)_{\theta}\,\iso\, \underset{(\und{\theta}, J\to K)\in \cTw(fSets)^{\theta}}{\colim} \; (\underset{j\in J}{\otimes} \cA_{\und{\theta}(j)})\otimes\omega_{X^K}
$$ 
taken in $\Fact(C)_{\theta}$. For each $(\und{\theta}, J\to K)\in \cTw(fSets)^{\theta}$ in the diagram 
$$
(\underset{j\in J}{\otimes} C_{\und{\theta}(j)})\otimes Shv(X^K)\to \,\bar C_{X^K, \theta}\,\toup{\vartriangle^K_!} \,\ov{\Fact}(C)_{\theta}
$$ 
the first arrow is t-exact, the second one is right t-exact. Here $\vartriangle^K: X^K\to \Ran$ is the natural map. Our claim follows now as in i).
\end{proof}
\begin{Rem} The version of Lemma~\ref{Lm_C.3.19_now} with $C$ replaced by $E$ also holds. 
\end{Rem}

\sssec{} 
\label{Sect_C.3.17_now}
View $\cO(\check{M})$ as an object of $\Rep(\check{M})$ via the $\check{M}$-action on itself by left translations, and $\cO(\check{M})_{<0}\subset \cO(\check{M})$ as a subobject. Then 
$$
\cO(\check{M})_{<0}\,\iso\, \underset{\lambda\in \Lambda_{M, <0}^+}{\oplus} U^{\lambda}\otimes \und{U}^{\lambda}\in CAlg^{nu}(C),
$$ 
here for $V\in\Rep(\check{M})$ we denote by $\und{V}$ its image under $\oblv: \Rep(\check{M})\to\Vect$. So, we get $\Fact(\cO(\check{M})_{<0})\in\Fact(C)$. 

 The map 
$$
\id: \cO(\check{M})_{<0}\to \cO(\check{M})_{<0}\,\iso\, \oblv_{gr}^R(B)
$$ 
gives by adjointness the counit map $\oblv_{gr}(\cO(\check{M})_{<0})\to B$, which is $-\Lambda_{G,P}^{pos, *}$-graded morphism in $CAlg^{nu}(E)$. For $\lambda\in \Lambda^+_{M, <0}$ lying over $\theta\in -\Lambda_{G,P}^{pos, *}$ its restriction to $U^{\lambda}\otimes (\und{U^{\lambda}})^*$ factors as $U^{\lambda}\otimes (\und{U^{\lambda}})^*\toup{\ev} e_{\theta}\hook{} B$, where $\ev$ is the canonical pairing. This counit map induces a morphism 
$$
\Fact(\oblv_{gr}(\cO(\check{M})_{<0}))_{\theta}\,\iso\,
\oblv_{gr,\theta}(\Fact(\cO(\check{M})_{<0})_{\theta})\to \omega_{X^{-\theta}}
$$
of factorization algebras in $\Fact(E)$. By adjointness, it gives rise to a morphism
\begin{equation}
\label{morphism_in_Fact(C)_theta_SectC.3}
\Fact(\cO(\check{M})_{<0})_{\theta}\to \oblv_{gr,\theta}^R(\omega_{X^{-\theta}})
\end{equation}
in $\Fact(C)_{\theta}$ compatible with factorization. The map (\ref{morphism_in_Fact(C)_theta_SectC.3}) is an isomorphism. Indeed, by factorization, it suffices to check this after !-restriction to the main diagonal $X\to X^{-\theta}$, where it follows from the description of $\oblv_{gr}^R$ in Section~\ref{Sect_C.3.8_now_E_introduced}. 

 From Lemma~\ref{Lm_oblv_gr_theta_is_comonadic} we get
$$
\Fact(C)_{\theta}\,\iso\, \oblv_{gr,\theta}(\Fact(\cO(\check{M})_{<0})_{\theta})-comod(Shv(X^{-\theta}))
$$ 
By Lemma~\ref{Lm_C.3.19_now}, $\Fact(\cO(\check{M})_{<0})_{\theta}\in \Fact(C)_{\theta}$ is strictly connective. 

\sssec{Question} For $\theta\in -\Lambda_{G,P}^{pos, *}$ what are the perverse cohomology sheaves of the complex $\oblv_{gr,\theta}(\Fact(\cO(\check{M})_{<0})_{\theta})$ on $X^{-\theta}$? 

\sssec{} 
\label{Sect_C.3.1_now}
Define the exterior convolution on the sheaf of categories $\Sph_{M, \Conf,+}$ over $\Conf$ as follows. Given $\theta,\theta'\in \Lambda_{G,P}^{pos,*}$ consider the diagram
\begin{multline*}
\Gr_{M, X^{\theta}, +}^{-\theta}\times \Gr_{M, X^{\theta'}, +}^{-\theta'}\getsup{p} \wt{\Gr_{M, X^{\theta}, +}^{-\theta}\times \Gr_{M, X^{\theta'}, +}^{-\theta'}}\toup{q} 
\\ \Gr_{M, X^{\theta}, +}^{-\theta}\ttimes \Gr_{M, X^{\theta'}, +}^{-\theta'} \toup{m}\Gr_{M, X^{\theta+\theta'}, +}^{-\theta-\theta'}
\end{multline*}

 Here $\wt{\Gr_{M, X^{\theta}, +}^{-\theta}\times \Gr_{M, X^{\theta'}, +}^{-\theta'}}$ is the prestack classifying collections: $D, D'\in {X^{\theta}\times X^{\theta'}}$, $M$-torsors $\cF_M, \cF'_M$ on $X$ with trivializations 
$$
\beta: \cF_M\,\iso\, \cF^0_M\mid_{X-\supp(D)}, \; \beta': \cF'_M\,\iso\, \cF^0_M\mid_{X-\supp(D')}
$$ 
inducing isomorphisms on $X$
$$
\bar\beta: \cF_{M/[M,M]}\,\iso\,\cF^0_{M/[M,M]}(D), \;\; \bar\beta': \cF'_{M/[M,M]}\,\iso\,\cF^0_{M/[M,M]}(D'),
$$ 
and another trivialization $\gamma: \cF_M\,\iso\, \cF^0_M\mid_{X-\supp(D')}$. 

 The map $p$ forgets the trivialization $\gamma$. Here $\Gr_{M, X^{\theta}, +}^{-\theta}\ttimes \Gr_{M, X^{\theta'}, +}^{-\theta'}$ is the prestack classifying collections: $D, D'\in {X^{\theta}\times X^{\theta'}}$, $M$-torsors $\cF_M, \cF''_M$ on $X$ with trivializations 
$$
\beta: \cF_M\,\iso\, \cF^0_M\mid_{X-\supp(D)}, \; \eta: \cF''_M\,\iso\, \cF_M\mid_{X-\supp(D')}
$$ 
inducing isomorphisms on $X$
$$
\bar\beta: \cF_{M/[M,M]}\,\iso\, \cF^0_{M/[M,M]}(D), \;\;\; \bar\eta: \cF''_{M/[M,M]}\,\iso\, \cF_{M/[M,M]}(D').
$$
  
  The morphism $q$ sends the above collection to $(D,D', \cF_M,\cF''_M, \beta, \eta)$, where $\cF''_M$ is obtained as the gluing of $\cF_M\mid_{X-\supp(D')}$ with $\cF'_M\mid_{\cD_{D'}}$ via the isomorphism 
$$
(\beta')^{-1}\gamma: \cF_M\mid_{\oo{\cD}_{D'}}\,\iso\, \cF'_M\mid_{\oo{\cD}_{D'}}.
$$ 

 The map $m$ sends the above collection to $(D+D', \cF''_M)$ with the trivialization $\beta\eta: \cF''_M\,\iso\, \cF^0_M\mid_{X-\supp(D+D')}$. 
 
  The above diagram has an evident extension to the cases when $\theta=0$ or $\theta'=0$.

\sssec{} 
\label{Sect_C.3.2_now}
For $\theta,\theta'\in \Lambda_{G,P}^{pos}$ the exterior convolution functor
$$
\Sph_{M, X^{\theta},+}\otimes \Sph_{M, X^{\theta'},+}\to \Sph_{M, X^{\theta+\theta'},+}
$$
sends the pair $\cB\in \Sph_{M, X^{\theta}}, \cB'\in \Sph_{M, X^{\theta'}}$ to $m_*(\cB\tboxtimes \cB')$, where $\cB\tboxtimes \cB'$ is given informally by $q^*(\cB\tboxtimes \cB')\,\iso\, p^*(\cB\boxtimes\cB')$. 

 For $\theta=0$ or $\theta'=0$ we get the usual action of $\Vect$ on the corresponding object of $\DGCat_{cont}$. 
 
\sssec{} 
\label{Sect_C.3.3_now}
To make the latter definition rigorous, one considers the diagram
\begin{multline*}
(\gL^+(M)_{-\theta}\backslash\Gr_{M, X^{\theta}, +}^{-\theta})\times (\gL^+(M)_{-\theta'}\backslash\Gr_{M, X^{\theta'}, +}^{-\theta'})\getsup{\tilde p} \Conv_{M, \theta, \theta',+} \\\toup{\tilde m} \gL^+(M)_{-\theta-\theta'}\backslash \Gr_{M, X^{\theta+\theta'}, +}^{-\theta-\theta'}.
\end{multline*}
Here $\Conv_{M, \theta, \theta',+}$ is the prestack classifying: $D\in X^{\theta}, D'\in X^{\theta'}$, $M$-torsors $\cF_M, \cF'_M, \cF''_M$ on $\cD_{D+D'}$ with isomorphisms 
$$
\cF_M\,\iso\, \cF'_M\mid_{\cD_{D+D'}-\supp(D)}, \;\;\; \cF'_M\,\iso\, \cF''_M\mid_{\cD_{D+D'}-\supp(D')}
$$ 
inducing isomorphisms $\cF_{M/[M,M]}(D)\,\iso\, \cF'_{M/[M,M]}$ and $\cF'_{M/[M,M]}(D')\,\iso\, \cF''_{M/[M,M]}$ over the whole of $\cD_{D+D'}$. 

 The map $\tilde m$ sends the above collection to $D+D'\in X^{\theta+\theta'}$, $M$-torsors $\cF_M, \cF''_M$ on $\cD_{D+D'}$ together with compatible isomorphisms $\cF_M\,\iso\, \cF''_M\mid_{\cD_{D+D'}-\supp(D+D')}$
and  
$$
\cF_{M/[M,M]}(D+D')\,\iso\, \cF''_{M/[M,M]}\mid_{\cD_{D+D'}}.
$$ 

 The map $\tilde p$ sends the above collection to the pair 
$$
(D, \cF_M\mid_{\cD_D}, \cF'_M\mid_{\cD_D}, \cF_M\,\iso\, \cF'_M\mid_{\oo{\cD}_D}, \cF_{M/[M,M]}(D)\,\iso\, \cF'_{M/[M,M]}\mid_{\cD_D})
$$
and
$$
(D, \cF'_M\mid_{\cD_{D'}}, \cF''_M\mid_{\cD_{D'}}, \cF'_M\,\iso\, \cF''_M\mid_{\oo{\cD}_{D'}}, \cF'_{M/[M,M]}(D')\,\iso\, \cF''_{M/[M,M]}\mid_{\cD_D'})
$$
The map $\tilde m$ is representable and ind-proper. The exterior convolution of $\cB\in \Sph_{M, X^{\theta}}$, $\cB'\in \Sph_{M, X^{\theta'}}$ is defined as 
$$
\tilde m_*(\tilde p^*)(\cB\boxtimes\cB').
$$  
The description from Section~\ref{Sect_C.3.2_now} shows that $\tilde p^*(\cB\boxtimes\cB')$ is well-defined. This defines the $\otimes^*$-monoidal structure on $\Sph_{M, \Conf,+}$. It is not symmetric. The induced functor
$$
\Sph_{M, X^{\theta},+}\boxtimes \Sph_{M, X^{\theta'},+}\to \Sph_{M, X^{\theta+\theta'},+}
$$
is $Shv(X^{\theta+\theta'})$-linear. The usual semi-smallness argiment shows that the latter functor is t-exact. 

\sssec{} 
\label{Sect_C.3.4_now}
Let us define the chiral symmetric monoidal structure on $\Sph_{M,\Conf^*,+}$. Given $J\in fSets$ let $\und{\theta}: J\to -\Lambda_{G,P}^{pos,*}$, $j\mapsto \theta_j$ be a map and $\theta=\sum_j \theta_j$. 
Let 
$$
(\underset{j\in J}{\prod} X^{-\theta_j})_d\subset \underset{j\in J}{\prod} X^{-\theta_j}
$$ 
be the open subscheme classifying collections $(D_j)_{j\in J}$ with pairwise disjoint supports. Consider the diagram
$$
\underset{j\in J}{\prod} (\Gr_{M, X^{-\theta_j},+}^{\theta_j}/\gL^+(M)_{\theta_j})\;\getsup{\tilde p_d} \;\Conv_{M, \und{\theta}, +}\;\toup{\tilde m_d}\; \Gr_{M, X^{-\theta},+}^{\theta})/\gL^+(M)_{\theta}
$$  
Here $\Conv_{M, \und{\theta}, +}$ is the prestack classifying: $(D_j)\in (\underset{j\in J}{\prod} X^{-\theta_j})_d$ for which we set $D=\sum_j D_j$, $M$-torsors $\cF_M, \cF'_M$ on $\cD_D$ with compatible isomorphisms 
\begin{equation}
\label{iso_for_Sect_C.3.4}
\cF_M\,\iso\, \cF'_M\mid_{\oo{\cD}_D}\;\;\;\mbox{and}\;\;\; \cF_{M/[M,M]}(D)\,\iso\, \cF'_{M/[M,M]}\mid_{\cD_D}.
\end{equation}

The map $\tilde m_d$ sends the above collection to: $\cF_M,\cF'_M, D$ and the isomorphisms (\ref{iso_for_Sect_C.3.4}). The $j$-th component of $\tilde p_d$ sends the above collection to $(D_j, \cF_M\mid_{\cD_{D_j}}, \cF'_M\mid_{\cD_{D_j}})$ with the induced isomorphisms
$$
\cF_M\,\iso\, \cF'_M\mid_{\oo{\cD}_{D_j}}\;\;\;\mbox{and}\;\;\; \cF_{M/[M,M]}(D_j)\,\iso\, \cF'_{M/[M,M]}\mid_{\cD_{D_j}}
$$  
Note that $\tilde p_d$ is an open immersion, and $\tilde m_d$ is obtained by base change from the \'etale map $(\underset{j\in J}{\prod} X^{-\theta_j})_d\to X^{-\theta}$ given by the sum of divisors. The functor
$$
\underset{j\in J}{\otimes}^{ch}: (\underset{j\in J}{\boxtimes} \Sph_{M, X^{-\theta_j}, +})\to \Sph_{M, X^{-\theta}, +}
$$ 
sends $\underset{j\in J}{\boxtimes} K_j$ to 
$(\tilde m_d)_*(\tilde p_d)^!(\underset{j\in J}{\boxtimes} K_j)$ for $K_j\in \Sph_{M, X^{-\theta_j}, +}$. It is $Shv(X^{-\theta})$-linear. 

Given a linear order on $J$, we get a natural transformation 
$$
-\otimes^*-\;\to\; -\otimes^{ch}-.
$$
The above allows to imitate the theory of $\Lie^*$ and chiral algebras in $\Sph_{M,\Conf^*, +}$. 

\sssec{Pseudo-tensor structures obtained by restriction} 
\label{Sect_Pseudo-tensor structures obtained by restriction}
For $\theta\in -\Lambda_{G,P}^{pos, *}$
set 
$$
\Gr^{\theta}_{M, X, +}=(\Gr^{\theta}_{M, X^{-\theta}, +})\times_{X^{-\theta}} X,
$$ 
where the map $X\to X^{-\theta}$ is the main diagonal. We have the closed embeddings
$$
\Gr^{\theta,\subset}_{M, X^{-\theta}}
\;\gets\;\Gr^{\theta}_{M, X, +}\;\to\; \Gr^{\theta}_{M, X^{-\theta}, +},
$$
where the left map is obtained from $X\to\Ran$ by the base change $\Gr^{\theta,\subset}_{M, X^{-\theta}}\to\Ran$. 

Set
$$
\Gr_{M, X^{pos, *}}=\underset{\theta\in -\Lambda_{G,P}^{pos, *}}{\sqcup} \Gr^{\theta}_{M, X, +}
\;\;\;\mbox{and}\;\;\; \Sph_{M, X^{pos, *}}=Shv(\Gr_{M, X^{pos, *}})^{\gL^+(M)_X}.
$$ 
We view $\Sph_{M, X^{pos, *}}$ as a $-\Lambda_{G,P}^{pos, *}$-graded sheaf of categories on $X$. 

Set $\Gr_{M, X, +}^0=X$ viewed as the unit section inside $\Gr_{M, X}$. Let 
$$
\Gr_{M, X^{pos}}=\underset{\theta\in -\Lambda_{G,P}^{pos}}{\sqcup} \Gr^{\theta}_{M, X, +}
\;\;\;\mbox{and}\;\;\; \Sph_{M, X^{pos}}=Shv(\Gr_{M, X^{pos}})^{\gL^+(M)_X}.
$$
For $\theta\in -\Lambda_{G,P}^{pos, *}$ set $\Sph_{M, X^{pos, *}}^{\theta}=Shv(\Gr^{\theta}_{M, X, +})^{\gL^+(M)_X}$. 

\index{$\Gr^{\theta}_{M, X, +}$, $\Gr_{M, X^{pos, *}}$, $\Gr_{M, X^{pos}}$, 
$\Sph_{M, X^{pos, *}}$, $\Sph_{M, X^{pos, *}}^{\theta}$, $\Sph_{M, X^{pos}}$, Section~\ref{Sect_Pseudo-tensor structures obtained by restriction}}

\begin{Rem} 
\label{Rem_C.3.6}
If $\cO^{\otimes}\to \Fin_*$ is an $\infty$-operad in the sense of \cite{HA}, and $\cO\subset \cO^{\otimes}$ is the underlying $\infty$-category, let $D\subset \cO$ be a full subcategory stable under isomorphisms. The construction of (\cite{HA}, 2.2.1) extends $D$ to an $\infty$-operad $D^{\otimes}\to\Fin_*$ together with a full embedding $D^{\otimes}\hook{} \cO^{\otimes}$, which is a morphism of $\infty$-operads. So, for $\<n\>\in\Fin_*$ the fibre $D^{\otimes}_{\<n\>}\subset \cO^{\otimes}_{\<n\>}$ is the full subcategory generated by objects of the form $x_1\oplus\ldots\oplus x_n$ with $x_i\in D$ (in the notations of \cite{HA}, 2.1.1.15). We also refer to $D^{\otimes}\to \Fin_*$ as the pseudo-tensor structure on $D$ obtained by restriction.
\end{Rem}

\sssec{} 
\label{Sect_C.3.7_now}
The chiral symmetric monoidal structures on $\Sph_{M, \Ran, +}^{untl}$ and on  $\Sph_{M,\Conf^*, +}$ yield by restriction (in the sense of Remark~\ref{Rem_C.3.6})
the same pseudo-tensor structure on $\Sph_{M, X^{pos, *}}$ also denoted $\otimes^{ch}$. 

 The $\otimes^*$-monoidal structures on $\Sph_{M, \Ran, +}^{untl}$ and on  $\Sph_{M,\Conf^*, +}$ yield by restriction (in the sense of Remark~\ref{Rem_C.3.6})
the same pseudo-tensor structure on $\Sph_{M, X^{pos, *}}$ also denoted $\otimes^*$. 

 This follows from the fact that for $n>0$ the product $X^n\to X^{(n)}\gets X$ identifies with $X$ up to nilpotents (that we may ignore when considering the categories of sheaves). 
 
 In the constructible context the $\otimes^*$ (resp., $\otimes^{ch}$)-pseudo-tensor structure on $\Sph_{M, X^{pos, *}}$ is actually a non-unital monoidal structure (resp., a non-unital symmetric monoidal structure). 

\sssec{} Given $\theta\in -\Lambda^{pos,*}_{G,P}$, consider the category $fSets^{\theta}$ defined as in (\cite{Ly10}, E.1.4). Namely, its objects are pairs $(K, \und{\theta})$, where $K\in fSets$ and $\und{\theta}: K\to -\Lambda^{pos,*}_{G,P}$ with $\theta=\sum_j \und{\theta}(j)$. A morphism from $(K_1, \und{\theta}^1)$ to $(K_2, \und{\theta}^2)$ in $fSets^{\theta}$ is a map $\tau: K_1\to K_2$ in $fSets$ such that $\tau_*\und{\theta}^1=\und{\theta}^2$. 

 As in (\cite{Ly10}, E.1.22) we set $\und{X}^{-\theta}=\underset{(K, \und{\theta})\in (fSets^{\theta})^{op}}{\colim} X^K$. By \select{loc.cit.}, we have a canonical map $\und{X}^{-\theta}\to X^{-\theta}$, which induces an isomorphism $Shv(X^{-\theta})\,\iso\, Shv(\und{X}^{-\theta})$. 
 
\sssec{} 
\label{Sect_C.3.30_now}
 For $(\und{\theta}, J)\in fSets^{\theta}$ set 
$$
\Gr_{M, J, \und{\theta}}=\Gr_{M, X^{-\theta}, +}^{\theta}\times_{X^{-\theta}} X^J,
$$ 
where we have used the map $X^J\to X^{-\theta}$ sending $(x_j)$ to $-\sum_j \und{\theta}(j)x_j$. This gives
$$
\underset{(\und{\theta}, J)\in (fSets^{\theta})^{op}}{\colim} \Gr_{M, J, \und{\theta}}\,\iso\, \Gr_{M, \und{X}^{-\theta}, +}^{\theta}
$$
in $\PreStk$.  Given a map $\phi: (\und{\theta}, J)\to (\und{\mu}, K)$ in $fSets^{\theta}$ let $\vartriangle^{(\und{\theta}/\und{\mu})}: \Gr_{M, K, \und{\mu}}\to \Gr_{M, J, \und{\theta}}$ denote the transition map in the above diagram, it is obtained by base change from $X^K\to X^J$. The corresponding morphism
$$
\Gr_{M, K, \und{\mu}}/\gL^+(M)_{K}\to \Gr_{M, J, \und{\theta}}/\gL^+(M)_J
$$
is also denoted by $\vartriangle^{(\und{\theta}/\und{\mu})}$ by abuse of notations. Write 
$$
\vartriangle^{J,\und{\theta}}:  \Gr_{M, J, \und{\theta}}\to \Gr_{M, \und{X}^{-\theta}, +}^{\theta}$$ 
for the natural map. As in (\cite{Ly10}, Section~E.4), one gets now
$$
\Sph_{M, X^{-\theta}, +}\,\iso\, \underset{(\und{\theta}, J)\in fSets^{\theta}}{\lim} Shv(\Gr_{M, J, \und{\theta}})^{\gL^+(M)_J},
$$
the transition functors being given by $!$-pullbacks. 

\index{$\vartriangle^{(\und{\theta}/\und{\mu})}, \vartriangle^{J,\und{\theta}}$, Section~\ref{Sect_C.3.30_now}}

\sssec{} The functor (\ref{funct_Sat_M_X^-theta})  is given by a version of the chiral Hecke algebra $\cC(\cT_A)_{X^{-\theta}}$ on $\Gr_{M, X^{-\theta},+}^{\theta}$ that we are going to define. By the above, as a mere object of $\Sph_{M, X^{-\theta}, +}$
it is a datum of a compatible system of objects
\begin{equation}
\label{complex_cC(cT_A)_J_und(theta)}
\cC(\cT_A)_{J,\und{\theta}}\in Shv(\Gr_{M, J, \und{\theta}})^{\gL^+(M)_J}
\end{equation} 
for $(\und{\theta}, J)\in fSets^{\theta}$. Namely, for a map $(J_1, \und{\theta}^1)\to (J_2,\und{\theta}^2)$ in $fSets^{\theta}$ and the corresponding morphism
$
\vartriangle^{(\und{\theta}^1/\und{\theta}^2)}: \Gr_{M, J_2,\und{\theta}^2}\to \Gr_{M, J_1,\und{\theta}^1}
$
we will have an isomorphism 
$$
(\vartriangle^{(\und{\theta}^1/\und{\theta}^2)})^!\cC(\cT_A)_{J_1,\und{\theta}^1}\,\iso\, \cC(\cT_A)_{J_2,\und{\theta}^2}
$$ 
together with higher compatibilities.

\sssec{} Set for brevity 
$$
A=\cO(\check{M})_{<0}\in CAlg^{nu}(\Rep(\check{M}))
$$ 
viewed as an object of $\Rep(\check{M})$ as in Section~\ref{Sect_C.3.17_now}.  

 In (\cite{Ly10}, 7.3.4-7.3.5) we attached to $A$ the chiral algebra $\cT_A\in \Perv(\Gr_{M, X})^{\gL^+(M)_X}$, more precisely a Lie algebra in $(\Sph_{M, \Ran},\otimes^{ch})$. It is actually graded by $-\Lambda_{G,P}^{pos,*}$. For $\theta\in -\Lambda_{G,P}^{pos,*}$ the component $(\cT_A)_{\theta}$ is an object of $\Perv(\Gr^{\theta}_{M, X,+})^{\gL^+(M)_X}$. 

By construction, $\cT_A$ is a Lie algebra object in $(\Sph_{M, X^{pos,*}}, \, \otimes^{ch})$. Using Section~\ref{Sect_C.3.7_now}, view $\cT_A$ as a Lie algebra object in $(\Sph_{M, \Conf^*, +},\otimes^{ch})$ via the pseudo-tensor embedding 
$$
(\Sph_{M, X^{pos,*}}, \otimes^{ch})\to (\Sph_{M, \Conf^*, +}, \otimes^{ch}).
$$ 

 Define  $\cC(\cT_A)$ as the Chevalley-Cousin complex of $\cT_A$ in $(\Sph_{M, \Conf^*, +}, \otimes^{ch})$. We write $\cC(\cT_A)_{X^{-\theta}}$ for its restriction to $\Gr_{M, X^{-\theta},+}^{\theta}$.
 
\sssec{} It is instructive to do the following calculation. Let $\cF\in \Sph_{M, X^{pos}, *}$ and $n>0$. Write $\Sym^{n, ch}(\cF)$ for the $n$-th symmetric power of $\cF$ in the non-unital symmetric monoidal category $(\Sph_{M, \Conf^*, +}, \,\otimes^{ch})$. For $\theta\in -\Lambda_{G,P}^{pos, *}$ denote by $\cF_{\theta}$ the $\theta$-component of $\cF$. 

 Pick $\theta\in -\Lambda_{G,P}^{pos, *}$ and $(J, \und{\theta})\in fSets^{\theta}$. The object $(\vartriangle^{J, \und{\theta}})^!\Sym^{n, ch}(\cF)$ is described as follows. 
Write $Q(J, n)$ for the set of equivalence relations on $J$ with $n$ equivalence classes, we view them as maps $J\to K$ in $fSets$. For a morphism $\phi: (\und{\theta}, J)\to (\und{\mu}, K)$ in $fSets^{\theta}$ given by $\phi: J\to K$ we have the open immersions
$$
(\prod_{k\in K}\Gr_{M, X,+}^{\und{\mu}(k)})\;\,\getsup{j_{\phi}}\;\,
(\prod_{k\in K}\Gr_{M, X,+}^{\und{\mu}(k)})\times_{X^K} \oo{X}{}^K\;\, \hook{j^{\phi}}\;\,
\Gr_{M, K, \und{\mu}}.
$$
Both of them are obtained by base change from the open immersion $\oo{X}{}^K\hook{} X^K$, the complement to all the diagonals. Then
$$
(\vartriangle^{J, \und{\theta}})^!\Sym^{n, ch}(\cF)\,\iso\, \underset{(J\toup{\phi} K)\in Q(J, n)}{\oplus} \vartriangle^{(\und{\theta}/\und{\mu})}_! (j^{\phi})_*(j_{\phi})^!(\underset{k\in K}{\boxtimes} \cF_{\und{\mu}(k)})
$$
in $Shv(\Gr_{M, J,\und{\theta}})^{\gL^+(M)_J}$, where in the latter formula we let $\und{\mu}=\phi_*\und{\theta}$. 

\sssec{} 
\label{Sect_C.3.35_description}
 Let us make explicit the definition of (\ref{complex_cC(cT_A)_J_und(theta)}). Given $(\und{\theta}, J)\in fSets^{\theta}$ and a map $\phi: J\to K$ in $fSets$ let $\und{\mu}=\phi_*\und{\theta}$. One has
$$
\cC(\cT_A)_{J,\und{\theta}}\,\iso\, \underset{(J\toup{\phi} K)\in Q(J)}{\oplus} \;\vartriangle^{(\und{\theta}/\und{\mu})}_*(j^{\phi})_*(j_{\phi})^!(\underset{k\in K}{
\boxtimes} (\cT_A)_{\und{\mu}(k)}[1]),
$$
here $\und{\mu}=\phi_*\und{\theta}$, and the differential is defined as for the usual Chevalley-Cousin complex in (\cite{BD_chiral}, 3.4.11). 

 In particular, for $(*, \theta)\in fSets^{\theta}$ we get $\cC(\cT_A)_{*,\theta}\,\iso\, (\cT_A)_{\theta}[1]$. 

\sssec{} 
\label{Sect_C.3.36}
For $\theta\in -\Lambda_{G,P}^{pos, *}$ 
the natural maps $X^J\to\Ran$ define by passing to the colimit over $(J, \und{\theta})\in fSets^{\theta}$ a morphism denoted $\eta_{\theta}: \und{X}^{-\theta}\to \Ran$. Note that $\eta_{\theta}$ is psedo-proper. View $\Sph_{M, X^{-\theta}, +}$ as a $Shv(\Ran)$-module via the restriction of scalars through $\eta_{\theta}^!$.
 
\index{$\eta_{\theta}$, Section~\ref{Sect_C.3.36}}

We promote $\cC(\cT_A)_{X^{-\theta}}$ to an object of 
$$
\Fact(\Rep(\check{M})_{>0})_{-\theta}\otimes_{Shv(\Ran)} \Sph_{M, X^{-\theta}, +}
$$ 
as follows. First, for $(J, \und{\theta})\in fSets^{\theta}$ we promote $\cC(\cT_A)_{J,\und{\theta}}$ to an object of
$$
\Fact(\Rep(\check{M})_{>0})_{J, -\theta}\otimes_{Shv(X^J)} Shv(\Gr_{M, J, \und{\theta}})^{\gL^+(M)_J}
$$
as in (\cite{Ly10}, 7.3.8-7.3.9). This construction is compatible with the pullbacks $(\vartriangle^{(\und{\theta}/\und{\mu})})^!$ for a map $(J, \und{\theta})\to (K, \und{\mu})$ (including higher compatibilities). The corresponding compatible system is an object of
\begin{multline*}
\underset{(J, \und{\theta})\in fSets^{\theta}}{\lim} \Fact(\Rep(\check{M})_{>0})_{J, -\theta}\otimes_{Shv(X^J)} Shv(\Gr_{M, J, \und{\theta}})^{\gL^+(M)_J}
\,\iso\\ 
\underset{(J, \und{\theta})\in fSets^{\theta}}{\lim} (\Fact(\Rep(\check{M})_{>0})_{-\theta}\otimes_{Shv(\Ran)} Shv(\Gr_{M, J, \und{\theta}})^{\gL^+(M)_J}\,\iso\\ 
\Fact(\Rep(\check{M})_{>0})_{-\theta}\otimes_{Shv(\Ran)} \Sph_{M, X^{-\theta}, +},
\end{multline*}
here for the second isomorphism we used the fact that $\Fact(\Rep(\check{M})_{>0})_{-\theta}$ is dualizable in $Shv(\Ran)-mod$. 


 We have the canonical equivalence
$$
\Fact(\Rep(\check{M})_{>0})_{-\theta}\otimes_{Shv(\Ran)} \Sph_{M, X^{-\theta}, +}\,\iso\; \Fun_{Shv(\Ran)}(\Fact(C)_{\theta}, \Sph_{M, X^{-\theta}, +}),
$$ 
and we denote by 
$$
\Sat_{M, X^{-\theta}}: \Fact(C)_{\theta}\to\Sph_{M, X^{-\theta}, +}
$$ 
the $Shv(\Ran)$-linear functor given by $\cC(\cT_A)_{X^{-\theta}}$. It is compatible with factorizations of $\Ran$. 

\sssec{} Recall the canonical equivalences $\Fact(C)_{\theta}\otimes_{Shv(\Ran)} Shv(X)\,\iso\, C_{\theta}\otimes Shv(X)$ and 
$$
\Sph_{M, X^{-\theta}, +}\otimes_{Shv(\Ran)} Shv(X)\,\iso\, \Sph_{M, X^{pos, *}}^{\theta}.
$$ 
After the base change by $X\to\Ran$, the functor $\Sat_{M, X^{-\theta}}$ identifies with the $Shv(X)$-linear t-exact functor denoted
$$
\Sat_{M, X^{pos, *}}^{\theta}: 
C_{\theta}\otimes Shv(X)\to \Sph_{M, X^{pos, *}}^{\theta}.
$$
For $V\in C_{\theta}$ it sends $V\otimes\IC_X$ to $(V\otimes(\cT_A)_{\theta})^{\check{G}}$, where $\check{G}$ acts diagonally (so, this is the usual Satake functor). 

 The following version of (\cite{Ly10}, 7.3.16) holds. 

\begin{Lm} 
\label{Lm_C.3.38_comm_diagram}
Let $(J, \und{\theta})\in fSets^{\theta}$. Once a linear order on $J$ is chosen, one has a commutativity datum for the the diagram 
\begin{equation}
\label{diag_for_Lm_C.3.38}
\begin{array}{ccc}
\underset{j\in J}{\boxtimes} (C_{\und{\theta}(j)}\otimes Shv(X)) & \to &\Fact(C)_{\theta}\\ \\
\downarrow\lefteqn{\scriptstyle \underset{j\in J}{\boxtimes} \Sat_{M, X^{pos, *}}^{\und{\theta}(j)}} && \downarrow\lefteqn{\scriptstyle \Sat_{M, X^{-\theta}}} 
\\ \\ 
\underset{j\in J}{\boxtimes} \Sph_{M, X^{pos, *}}^{\und{\theta}(j)} & \to & \Sph_{M, X^{-\theta}, +}.
\end{array}
\end{equation}
Here the low horizontal arrow is the exterior convolution for $\Sph_{M, \Conf^*, +}$, and the top horizontal arrow is the structure map for $\underset{\cTw(fSets)^{\theta}}{
\colim} \cF^{\theta}_{\Ran, C}$ for $(\und{\theta}, J\toup{\id} J)\in \cTw(fSets)^{\theta}$
given in Section~\ref{Sect_C.3.3_about_cF_Ran,C}. The composition of functors in (\ref{diag_for_Lm_C.3.38}) is t-exact. 
\end{Lm}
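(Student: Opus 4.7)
The plan is to prove this as the graded non-unital analog of (\cite{Ly10}, 7.3.16). I would first unpack both compositions on generating objects $\underset{j\in J}{\boxtimes}(V_j\otimes\IC_X)$ with $V_j\in C_{\und{\theta}(j)}^\heartsuit$, and then match them via the explicit Chevalley--Cousin formula from Section~\ref{Sect_C.3.35_description}.

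For the right-then-down composition, the top horizontal map of (\ref{diag_for_Lm_C.3.38}) is, by the colimit presentation of $\Fact(C)_\theta$ recalled in Section~\ref{Sect_C.3.3_about_cF_Ran,C}, the structure morphism attached to the object $(\und{\theta}, J\toup{\id}J)\in\cTw(fSets)^\theta$. Applying $\Sat_{M,X^{-\theta}}$ amounts, via the factorization enhancement of $\cC(\cT_A)_{X^{-\theta}}$ set up in Section~\ref{Sect_C.3.36}, to contracting $\underset{j}{\boxtimes}V_j$ with the chiral Chevalley--Cousin complex. The down-then-right composition, on the other hand, produces the $J$-fold chiral convolution $\underset{j\in J}{\otimes}^{ch}\Sat_{M,X^{pos,*}}^{\und{\theta}(j)}(V_j)$. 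The key observation is that, by the very definition of the Chevalley--Cousin complex of a Lie algebra in a chiral symmetric monoidal category, its restriction over the open locus $(\prod_j X^{-\und{\theta}(j)})_d$ of disjoint divisors factorizes as an external chiral product; concretely, among the summands in the description of Section~\ref{Sect_C.3.35_description}, only the one indexed by the identity $J\to J$ survives generically. Contracting with the $V_j$'s then identifies the two compositions, the linear order on $J$ serving to rigidify the iterated binary chiral product as a symmetric one.

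For the t-exactness claim, I would combine two inputs: each individual functor $\Sat_{M,X^{pos,*}}^{\und{\theta}(j)}$ is t-exact for the perverse t-structures, and the chiral exterior convolution on $\Sph_{M,\Conf^*,+}$ is t-exact by the semi-smallness argument invoked in Section~\ref{Sect_C.3.3_now} applied to the correspondence $(\tilde p_d, \tilde m_d)$ defined in Section~\ref{Sect_C.3.4_now}. Composition of t-exact functors is t-exact.

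The main obstacle will not be the commutativity datum on the level of objects, which is essentially a rewriting of the definitions above, but rather assembling the higher coherences required for a genuine isomorphism of functors in $\DGCat_{cont}$. This I plan to handle formally by organizing both compositions as natural transformations between diagrams $\cTw(fSets)^\theta\to \DGCat_{cont}$ and exploiting the universal property of the colimit presentation of $\Fact(C)_\theta$, following the scheme of (\cite{Ly10}, 7.3.16) without essential modification.
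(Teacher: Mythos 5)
There is a genuine gap, and it begins with a misreading of the bottom horizontal arrow. In this paper ``exterior convolution'' for $\Sph_{M,\Conf^*,+}$ means the non-symmetric $\otimes^*$-convolution of Sections~\ref{Sect_C.3.1_now}--\ref{Sect_C.3.3_now} (defined via the correspondence with ind-proper $\tilde m$, and t-exact by the semi-smallness remark at the end of Section~\ref{Sect_C.3.3_now}), not the chiral structure $\otimes^{ch}$ of Section~\ref{Sect_C.3.4_now}. This is exactly why the statement requires a linear order on $J$: the $\otimes^*$-product is not symmetric, whereas $\otimes^{ch}$ needs no ordering, so your remark that the order ``rigidifies the iterated binary chiral product as a symmetric one'' is backwards. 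With the correct reading, the intended (terse) argument is that contracting the Cousin description of $\cC(\cT_A)_{J,\und{\theta}}$ from Section~\ref{Sect_C.3.35_description} against $\boxtimes_j V_j$ reproduces the iterated $\otimes^*$-convolution of the $\Sat_{M,X^{pos,*}}^{\und{\theta}(j)}(V_j)$ --- a ``fusion equals convolution'' statement --- and the t-exactness is that of the $\otimes^*$-exterior convolution.

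Your argument, as written, proves neither this nor a correct substitute. Matching the two compositions because ``only the summand indexed by $\id\colon J\to J$ survives generically'' only identifies them over the open locus of pairwise disjoint points; neither side is determined by that restriction, and the non-identity summands of $\cC(\cT_A)_{J,\und{\theta}}$, supported on the diagonals, are precisely what make the contraction equal to the convolution rather than to the $*$-extension of its generic part. The $\otimes^{ch}$-product you put in the bottom row is by definition $(\tilde m_d)_*(\tilde p_d)^!$, i.e. the $*$-extension from the disjoint locus, and already for $M=T$ it differs over the diagonals from the $\otimes^*$-convolution of two Satake images; so the commutativity datum you assert, with $\otimes^{ch}$ as the bottom arrow, is false in general rather than merely differently proved. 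Finally, your t-exactness step is misattributed: the semi-smallness argument of Section~\ref{Sect_C.3.3_now} concerns the $\otimes^*$-diagram, while $(\tilde m_d)_*(\tilde p_d)^!$ is only left t-exact (it involves a $*$-extension from an open subscheme), so the composition would not come out t-exact along your route.
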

\begin{proof}
The first claim follows from the description of $\cC(\cT_A)_{J, \und{\theta}}$ in Section~\ref{Sect_C.3.35_description}. The t-exactness of the composition follows from the fact that the exterior convolution functors for $\Sph_{M, \Conf^*, +}$ are t-exact.
\end{proof}

\sssec{} As in the proof of Proposition~\ref{Pp_3.2.2} (cf. Section~\ref{Sect_Proof_of_Pp_3.2.2}), Lemma~\ref{Lm_C.3.38_comm_diagram} 
implies that the functor $\Sat_{M, X^{-\theta}}$ is naturally $Shv(X^{-\theta})$-linear, and thus defines (\ref{funct_Sat_M_X^-theta}).  

\sssec{} 
\label{Sect_C.3.14_now}
Define the chiral symmetric monoidal structure on 
$$
\Sph_{M, \Conf, +}=\Sph_{M,\Conf^*,+}\oplus\Vect
$$ 
by applying the functor 
$$
CAlg^{nu}(\DGCat_{cont})\to CAlg(\DGCat_{cont}), \; M\mapsto M\oplus\Vect
$$ 
to $(\Sph_{M, \Conf^*, +}, \, \otimes^{ch})\in CAlg^{nu}(\DGCat_{cont})$.

\sssec{Example} Let us illustrate the functors (\ref{funct_Sat_M_X^-theta}) in the special case $M=T$. In this case $C\,\iso\, \underset{\theta\in -\Lambda_{G,P}^{pos, *}}{\oplus}\Vect$. For $\theta\in -\Lambda_{G,P}^{pos, *}$ we get $\Fact(C)_{\theta}\,\iso\, Shv(X^{-\theta})$ by (\cite{Ly10}, E.1.11), 
$$
\Gr^{\theta}_{M, X^{-\theta}, +}=X^{-\theta}\;\;\;\mbox{and}\;\;\; \Sph_{M, X^{-\theta}, +}\,\iso\, Shv(X^{-\theta})^{\gL^+(M)_{\theta}}.
$$  
In this case the functor
$$
\Sat_{T, X^{-\theta}}: Shv(X^{-\theta})\to Shv(X^{-\theta})^{\gL^+(M)_{\theta}} 
$$
is the t-exact functor $\pr^*[\dimrel]$ for the projection $X^{-\theta}/\gL^+(M)_{\theta}\to X^{-\theta}$, here we use the conventions of (\cite{DL}, A.5). 

\sssec{Questions}
\label{Sect_C.3.42_Questions}
 i) For $\theta\in -\Lambda_{G,P}^{pos, *}$ is the functor $\Sat_{M, X^{-\theta}}: \Fact(C)_{\theta}\to \Sph_{M, X^{-\theta}, +}$ t-exact?

\medskip\noindent
ii) For $\theta\in -\Lambda_{G,P}^{pos, *}$ is it true that $\cC(\cT_A)_{X^{-\theta}}$ is placed in perverse degree $-\dim X^{-\theta}$? 

\sssec{} Here is a partial answer to Question~\ref{Sect_C.3.42_Questions} i) sufficient for our purposes.
\begin{Lm} 
\label{Lm_C.3.44}
Let $\cA$ be as in Lemma~\ref{Lm_C.3.19_now}. Then for any $\theta\in -\Lambda_{G,P}^{pos, *}$ the object $\Sat_{M, X^{-\theta}}(\Fact(\cA)_{\theta})$ in strictly connective in $\Sph_{M, X^{-\theta}, +}$.
\end{Lm}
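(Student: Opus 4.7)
The plan is to combine the colimit presentation of $\Fact(\cA)_\theta$ from Lemma~\ref{Lm_C.3.19_now} with the t-exactness statement of Lemma~\ref{Lm_C.3.38_comm_diagram}. First I would unfold $\Fact(\cA)_\theta$ as the colimit
\[
\Fact(\cA)_\theta \,\iso\, \underset{(\und\theta,\, J\toup{\phi} K) \in \cTw(fSets)^\theta}{\colim} \,\Bigl(\underset{j\in J}{\otimes} \cA_{\und\theta(j)}\Bigr) \otimes \omega_{X^K}
\]
in $\Fact(C)_\theta$, along the lines of Lemma~\ref{Lm_C.3.19_now}(ii). Since $\Sat_{M, X^{-\theta}}$ is continuous and $(\Sph_{M, X^{-\theta},+})^{<0}$ is closed under colimits, it suffices to verify that $\Sat_{M, X^{-\theta}}$ sends each term to a strictly connective object.

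Next, for each $(\und\theta, J\toup\phi K) \in \cTw(fSets)^\theta$ I would set $\und\mu = \phi_*\und\theta$ and factor the structure map into $\Fact(C)_\theta$ as
\[
\Bigl(\underset{j\in J}{\otimes} C_{\und\theta(j)}\Bigr)\otimes Shv(X^K) \,\toup{m_\phi}\, \Bigl(\underset{k\in K}{\otimes} C_{\und\mu(k)}\Bigr)\otimes Shv(X^K) \,\to\, \Fact(C)_\theta,
\]
where $m_\phi$ applies, in each $k$-factor, the tensor-product map $\underset{j\in J_k}{\otimes} C_{\und\theta(j)} \to C_{\und\mu(k)}$ coming from the tensor product on $\Rep(\check{M})_{<0}$, and the second arrow is the structure map for $(\und\mu, K \toup{\id} K)$; this factorization realizes the canonical morphism $(\phi,\id): (\und\theta, J\to K) \to (\und\mu, K\to K)$ in $\cTw(fSets)^\theta$. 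Because tensor product in $\Rep(\check{M})$ is exact on the heart, $m_\phi$ sends $(\underset{j\in J}{\otimes} \cA_{\und\theta(j)})\otimes \omega_{X^K}$ to an object of the form $(\underset{k\in K}{\otimes} B_k)\otimes \omega_{X^K}$ with $B_k\in C^\heartsuit_{\und\mu(k)}$, which lies strictly in degree $-|K|<0$ in $(\underset{k\in K}{\otimes} C_{\und\mu(k)})\otimes Shv(X^K)$ since $\omega_{X^K}$ is strictly in perverse degree $-|K|\le -1$.

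Finally, applying Lemma~\ref{Lm_C.3.38_comm_diagram} to $(\und\mu, K)\in fSets^\theta$ (with any linear order on $K$), the composition
\[
\Bigl(\underset{k\in K}{\otimes} C_{\und\mu(k)}\Bigr)\otimes Shv(X^K) \,\to\, \Fact(C)_\theta \,\toup{\Sat_{M, X^{-\theta}}}\, \Sph_{M, X^{-\theta},+}
\]
is t-exact, hence the image of each term of the colimit lies in degree $\le -|K|<0$ in $\Sph_{M, X^{-\theta},+}$, which gives the lemma. The only real bookkeeping point will be to check that the factorization of the structure map above indeed corresponds to the functoriality of $\cF_{\Ran,C}^\theta$ along the morphism $(\phi,\id)$, which unwinds directly from the definition of $\cF_{\Ran,C}^\theta$ recalled in Section~\ref{Sect_C.3.3_about_cF_Ran,C}.
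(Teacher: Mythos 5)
Your proposal is correct and follows essentially the same route as the paper: present $\Fact(\cA)_{\theta}$ via the colimit from Lemma~\ref{Lm_C.3.19_now}, reduce each term to one indexed by $(\phi_*\und{\theta}, K\toup{\id} K)$ using functoriality of $\cF^{\theta}_{\Ran,C}$, and conclude from the t-exactness statement of Lemma~\ref{Lm_C.3.38_comm_diagram}. You merely spell out more explicitly the reduction step and the observation that the image term $(\underset{k\in K}{\otimes} B_k)\otimes\omega_{X^K}$ sits in degrees $\le -|K|<0$, which the paper leaves implicit.
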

\begin{proof}
Recall the isomorphism
$$
\Fact(\cA)_{\theta}\,\iso\, \underset{(\und{\theta}, J\to K)\in \cTw(fSets)^{\theta}}{\colim} \; (\underset{j\in J}{\otimes} \cA_{\und{\theta}(j)})\otimes\omega_{X^K},
$$ 
where the colimit is taken in $\Fact(C)_{\theta}$. Since $\Sat_{M, X^{-\theta}}$ preserves colimits, it suffices to show that for any $(\und{\theta}, J\toup{\phi} K)\in \cTw(fSets)^{\theta}$ the object
$$
\Sat_{M, X^{-\theta}}((\underset{j\in J}{\otimes} \cA_{\und{\theta}(j)})\otimes\omega_{X^K})\in  \Sph_{M, X^{-\theta}, +}
$$
is strictly connective. We have the natural map $(\und{\theta}, J\toup{\phi} K)\to (\phi_*\und{\theta}, K\to K)$ in $\cTw(fSets)^{\theta}$, so we may and do assume that $\phi$ is the map $\id: J\to J$. Our result follows from the t-exactness claim in Lemma~\ref{Lm_C.3.38_comm_diagram}. 
\end{proof}

\ssec{The ranification functor} 

\sssec{} In Section~\ref{Sect_3.2.4_now} we defined for $\theta\in -\Lambda_{G,P}^{pos}$ the map 
$$
\tau_{untl}^{\theta}: \Gr_{M, X^{-\theta}}^{\theta,\subset}/\gL^+(M)_{\Ran}\to \Gr_{M, X^{-\theta}, +}^{\theta}/\gL^+(M)_{\theta}.
$$
For $\theta\in -\Lambda_{G,P}^{pos}$ set $\cR^{\theta}=(\tau^{\theta}_{untl})^!: \Sph_{M, X^{-\theta}, +}\to\Sph_{M,\Ran, +}^{\theta, untl}$. As $\theta$ varies in $-\Lambda_{G,P}^{pos}$ these functors form the ranification functor
\begin{equation}
\label{Ranification_functor}
\cR: \Sph_{M, \Conf, +}\to \Sph_{M,\Ran, +}^{untl}
\end{equation}
graded by $-\Lambda_{G,P}^{pos}$. 

\begin{Rem} 
\label{Rem_gluing_torsor_using_divisors}
Let $S\in\Sch^{aff}$, let $(D,\cI)$ be an $S$-point of $(\Conf\times\Ran)^{\subset}$. The groupoid of $M$-torsors on $\cD_{\cI}$ is equivalent to the groupoid of triples $(\cF_M^1, \cF_M^2, \beta)$, where $\cF_M^1$ (resp., $\cF_M^2$) is a $M$-torsor on $\cD_D$ (resp., on $\cD_{\cI}-\supp D$), and 
$$
\beta: \cF_M^1\,\iso\, \cF_M^2\mid_{\oo{\cD}_D}
$$ 
is an isomorphism.
\end{Rem}
 
\begin{Pp} 
\label{Pp_from_Raskin_rlax}
i) The functor (\ref{Ranification_functor}) is naturally right-lax monoidal with respect to the $\otimes^*$-monoidal structures on the source and on the target.

\smallskip\noindent
ii) The functor (\ref{Ranification_functor}) is naturally left-lax symmetric monoidal with respect to the $\otimes^{ch}$-symmeetric monoidal structures on the source and on the target.
\end{Pp}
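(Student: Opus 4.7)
The plan is to exhibit both lax-monoidal structures by comparing the convolution diagrams defining $\otimes^*$ and $\otimes^{ch}$ on $\Sph_{M,\Conf,+}$ with their counterparts on $\Sph_{M,\Ran,+}^{untl}$, and then invoking $(_*,{}^!)$-base change. The key geometric input is that the maps $\tau^\theta_{untl}$ fit into morphisms between the two families of convolution diagrams, naturally over $(X^{-\theta}\times\Ran)^{\subset}$; this follows formally from the explicit descriptions in Sections~\ref{Sect_C.3.3_now}, \ref{Sect_C.3.4_now}, together with Remark~\ref{Rem_gluing_torsor_using_divisors} which identifies an $M$-torsor on $\cD_{\cI}$ with the gluing datum of an $M$-torsor on $\cD_D$ and one on $\cD_{\cI}-\supp(D)$. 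This follows the pattern of Raskin's construction in the torus case (\cite{Ras2}, Proposition-Construction~4.10.1), which we adapt to the reductive setting.

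For part~(i), I would first construct, for each pair $\theta,\theta'\in -\Lambda_{G,P}^{pos}$, a natural morphism from a Ran-space thickening of the convolution prestack $\Conv_{M,-\theta,-\theta',+}$ (living over $((X^{-\theta}\times X^{-\theta'})\times\Ran)^{\subset}$) to the base change of $\Conv_{M,-\theta,-\theta',+}$ along $\tau_{untl}^\theta\times\tau_{untl}^{\theta'}$. This comparison fits into two commuting squares, one for each leg $(\tilde p,\tilde m)$ of the convolution diagram. Applying $(_*,{}^!)$-base change to the right square (using the ind-properness of $\tilde m$) and the identification of $\tilde p^!$ with $\tilde p^*$ up to a cohomological shift on the left, one assembles the natural transformation
$$
\cR(\cF_1)\otimes^*\cR(\cF_2)\to\cR(\cF_1\otimes^*\cF_2)
$$
in $\Sph_{M,\Ran,+}^{untl}$, with associativity inherited from the associativity of the convolution prestack diagram. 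The lax direction (right-lax rather than strict) is witnessed by the fact that the comparison map of convolution prestacks is \emph{not} an isomorphism: the Ran parameter $\cI$ carries strictly more data than the divisor $D_1+D_2$.

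For part~(ii), the analogous construction uses the chiral convolution diagram from Section~\ref{Sect_C.3.4_now}, where the left leg $\tilde p_d$ is an \emph{open} immersion onto the disjoint-supports locus rather than a placid projection. After pulling back by $\tau_{untl}$, the Ran-space version of this disjoint locus sits as an open subprestack with nontrivial complement (since disjointness of $\supp(D_j)$'s does not imply disjointness of the associated $\cI_j$'s). Consequently, the $(_*,j^!)$-base change runs in the opposite direction from part~(i), producing a natural map
$$
\cR(\cF_1\otimes^{ch}\cF_2)\to \cR(\cF_1)\otimes^{ch}\cR(\cF_2),
$$
i.e.\ a \emph{left}-lax structure. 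The main obstacle is upgrading this from a merely bilinear lax structure to a lax \emph{symmetric} monoidal structure: this requires producing a coherent morphism of $\Fin_*$-diagrams between the Conf- and Ran-space chiral convolutions, with all higher simplices accounted for. In Raskin's torus case this is achieved via an explicit simplicial model; adapting the argument in our setting will require combining that model with the graded formalism of (\cite{Ly10}, Appendix~E) to accommodate the nontrivial $-\Lambda_{G,P}^{pos}$-grading and the interaction between the unital component at $\theta=0$ and the strictly negative components.
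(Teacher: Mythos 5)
Your overall strategy is the paper's (an adaptation of \cite{Ras2}, 4.9.1--4.9.3): compare the $\otimes^*$- and $\otimes^{ch}$-convolution diagrams for $\Sph_{M,\Conf,+}$ and $\Sph^{untl}_{M,\Ran,+}$ through $\tau_{untl}$, using Remark~\ref{Rem_gluing_torsor_using_divisors}. But in (i) the step that actually produces the transformation is missing. Writing $\cC=\gL^+(M)_{\Ran}\backslash\Gr^{\subset}_{M,\Ran}$ and $\cM=\gL^+(M)_{\Conf}\backslash\Gr_{M,\Conf,+}$, the cartesian square one needs is the one over the \emph{output} leg, i.e.\ $\Conv_{\cM}\times_{\cM}\cC$ (not the base change along $\tau_{untl}\times\tau_{untl}$ over $\cM\times\cM$ that you describe); base change along it identifies $\tau_{untl}^!$ of the Conf-side convolution with a pushforward from $\Conv_{\cM}\times_{\cM}\cC$, while $\cR(\cB_1)\otimes^*\cR(\cB_2)$ is a pushforward from $\Conv_{\cC}$. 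To compare the two you need an adjunction along the comparison map $\alpha:\Conv_{\cC}\to\Conv_{\cM}\times_{\cM}\cC$. The paper's key observation, resting on the gluing remark, is that $\alpha$ is a base change of the explicit map $(\Conf\times\Ran)^{\subset}\times(\Conf\times\Ran)^{\subset}\to(\Conf\times\Conf)\times_{sum,\Conf}(\Conf\times\Ran)^{\subset}$ and hence pseudo-proper, so the counit $\alpha_!\alpha^!\to\id$ exists and yields $\cR(\cB_1)\otimes^*\cR(\cB_2)\to\cR(\cB_1\otimes^*\cB_2)$. Ind-properness of $\tilde m$ and the $\tilde p^!$ versus $\tilde p^*$ shift do not by themselves relate the two pushforwards, and ``the Ran parameter carries more data'' is not the operative reason for right-laxness: the direction is exactly that of the counit of $(\alpha_!,\alpha^!)$.

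In (ii) there is the analogous gap, and your geometric picture is only partly right. The Ran-side disjoint convolution $\Conv_{\cC,d}$ is indeed open in the $\alpha$-preimage of $\Conv_{\cM,d}\times_{\cM}\cC$, but the other leg of that factorization is pseudo-proper, so combining $\id\to j_*j^!$ with the counit from (i) produces morphisms pointing in incompatible directions and does not assemble into the claimed map. The paper's point is that the composite comparison map $\beta:\Conv_{\cC,d}\to\Conv_{\cM,d}\times_{\cM}\cC$ is schematic and \'etale (again by exhibiting it as a base change of an explicit map of Conf/Ran prestacks; its fibres record the ways of splitting $\cI$ into disjoint pieces containing $D_1$ and $D_2$, so it is not an open immersion), and the unit $\id\to\beta_*\beta^*$ then gives $\cR(\cB_1\otimes^{ch}\cB_2)\to\cR(\cB_1)\otimes^{ch}\cR(\cB_2)$. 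Finally, the higher coherences you single out as the main obstacle are handled in the paper at the same informal level as in \cite{Ras2}; the substance of the proof is precisely the identification of $\alpha$ as pseudo-proper and of $\beta$ as schematic \'etale, which your proposal does not supply.
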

\begin{proof} i) This is an analog of (\cite{Ras2}, 4.9.1) in our setting. Set temporary 
$$
\cC=\gL^+(M)_{\Ran}\backslash \Gr_{M,\Ran}^{\subset}\;\;\;\mbox{and}\;\;\; \cM=\gL^+(M)_{\Conf}\backslash \Gr_{M,\Conf, +}.
$$ 
Write for short $\cC\times\cC\gets\Conv_{\cC}\to \cC$ and $\cM\times\cM\gets \Conv_{\cM} \to \cM$ for the convolution diagrams for the $\otimes^*$-monoidal structures on $\Sph_{M,\Ran, +}^{untl}$
and $\Sph_{M, \Conf, +}$ respectively.

They fit into the commutative diagram
$$
\begin{array}{ccccc}
&&\Conv_{\cC} \\
& \swarrow & \downarrow\lefteqn{\scriptstyle \alpha} & \searrow\\
\cC\times\cC && \Conv_{\cM}\times_{\cM} \cC & \to & \cC\\
\downarrow\lefteqn{\scriptstyle \tau_{untl}\times\tau_{untl}}  && \downarrow && \downarrow\lefteqn{\scriptstyle \tau_{untl}}\\
\cM\times\cM &\gets & \Conv_{\cM} & \to &\cM,
\end{array}
$$
where the right square is cartesian. Using Remark~\ref{Rem_gluing_torsor_using_divisors}, one checks that $\alpha$ is obtained by base change from the natural map
$$
(\Conf\times\Ran)^{\subset}\times (\Conf\times\Ran)^{\subset}\to (\Conf\times\Conf)\times_{sum, \Conf} (\Conf\times\Ran)^{\subset},
$$
where $sum: \Conf\times\Conf\to\Conf$ is the sum of divisors. Since $\alpha$ is pseudo-proper, one has a natural morphism $\alpha_!\alpha^!\to\id$, which yields the desired natural transformation 
$$
\cR(\cB_1)\otimes^*\cR(\cB_2)\to \cR(\cB_1\otimes^*\cB_2)
$$ 
functorial in $\cB_1,\cB_2\in \Sph_{M,\Conf, +}$. 

\medskip\noindent
ii) This is an analog of (\cite{Ras2}, 4.9.2) in our setting. Write for short $\cC\times\cC \gets\Conv_{\cC, d}\to \cC$ and $\cM\times\cM\gets \Conv_{\cM, d}\to \cM$ for the convolution diagrams for the $\otimes^{ch}$-symmetric monoidal structures on $\Sph_{M,\Ran, +}^{untl}$ and $\Sph_{M, \Conf, +}$ respectively. 
Here $\Conv_{\cM, d}\subset \Conv_{\cM}$ and $\Conv_{\cC, d}\subset \Conv_{\cC}$ are open immersions, and the subscript $d$ stands for `disjoint'

They fit into a commutative diagram
$$
\begin{array}{ccccc}
&&\Conv_{\cC,d} \\
& \swarrow & \downarrow\lefteqn{\scriptstyle \beta} & \searrow\\
\cC\times\cC && \Conv_{\cM, d}\times_{\cM} \cC & \to & \cC\\
\downarrow\lefteqn{\scriptstyle \tau_{untl}\times\tau_{untl}}  && \downarrow && \downarrow\lefteqn{\scriptstyle \tau_{untl}}\\
\cM\times\cM &\gets & \Conv_{\cM,d} & \to &\cM,
\end{array}
$$
where the right square is cartesian. The map $\beta$ is obtained by base change from the natural map
\begin{multline*}
((\Conf\times\Ran)^{\subset}\times (\Conf\times\Ran)^{\subset})\times_{\Ran\times\Ran} (\Ran\times\Ran)_d\to \\
(\Conf\times\Conf)_d\times_{\Conf} (\Conf\times\Ran)^{\subset}
\end{multline*}
So, as in (\cite{Ras2}, 4.9.2), $\beta$ is schematic and \'etale. The map $\id\to \beta_*\beta^*$ yields the desired natural transformation
$$
\cR(\cB_1\otimes^{ch} \cB_2)\to \cR(\cB_1)\otimes^{ch}\cR(\cB_2)
$$
functorial in $\cB_1,\cB_2\in \Sph_{M,\Conf, +}$. 
\end{proof}

\begin{Lm} For $\cB_i\in \Sph_{M,\Conf, +}$ the diagram  
$$
\begin{array}{ccc}
\cR(\cB_1)\otimes^*\cR(\cB_2) & \toup{\ref{Pp_from_Raskin_rlax}i)} & \cR(\cB_1\otimes^*\cB_2)\\
\downarrow && \downarrow\\
\cR(\cB_1)\otimes^{ch} \cR(\cB_2) & \getsup{\ref{Pp_from_Raskin_rlax}ii)} & \cR(\cB_1\otimes^{ch}\cB_2
\end{array}
$$
commutes. Here the vertical arrows come from the natural transformations $-\otimes^*-\to -\otimes^{ch}-$. Moreover, the higher version of this statement encoding the compatibility with the monoidal structures holds. 
\end{Lm}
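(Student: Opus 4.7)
The plan is to unpack both natural transformations geometrically and exhibit the two compositions in the diagram as a common natural transformation attached to a single geometric datum. The key observation is that the \'etale morphism $\beta\colon \Conv_{\cC,d}\to \Conv_{\cM,d}\times_\cM \cC$ used in the proof of Proposition~\ref{Pp_from_Raskin_rlax}(ii) is obtained by restricting the pseudo-proper morphism $\alpha\colon \Conv_\cC\to \Conv_\cM\times_\cM\cC$ used in Proposition~\ref{Pp_from_Raskin_rlax}(i) to the open disjoint loci. These assemble into a commutative square whose horizontal arrows are open embeddings $j^\cC\colon \Conv_{\cC,d}\hook{} \Conv_\cC$ and $j^{\cM\cC}\colon \Conv_{\cM,d}\times_\cM\cC\hook{}\Conv_\cM\times_\cM\cC$, with vertical arrows $\beta$ and $\alpha$. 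Crucially, the two natural transformations $-\otimes^*-\to-\otimes^{ch}-$ on $\Sph_{M,\Conf,+}$ and on $\Sph_{M,\Ran,+}^{untl}$ are in each case induced by the unit $\id\to j_*j^*$ for the open embedding of the disjoint locus, applied after convolution push-forward.

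First I would trace through both routes explicitly. The top-then-right route begins with the counit $\alpha_!\alpha^!\to \id$ (giving the right-lax structure for $\otimes^*$) and then postcomposes with the unit attached to $j^\cM$ on the $\cM$-side (which becomes the transition $\otimes^*\to\otimes^{ch}$ on the target of $\cR$). The left-vertical-then-bottom route starts with the unit attached to $j^\cC$ on the $\cC$-side (converting $\otimes^*$ to $\otimes^{ch}$ on the source) and then applies the unit $\id\to\beta_*\beta^*$ furnishing the left-lax structure for $\otimes^{ch}$. When both compositions are written in terms of $\tilde m_*,\tilde p^*,\tau_{untl}^!$ and the open embeddings, they reduce via base change along the commutative square above to one and the same map. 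The 1-categorical commutativity then boils down to the standard compatibility between the base change isomorphism for the pseudo-proper--open-embedding square and the interplay of the $(j_!,j^!)$ and $(\alpha_!,\alpha^!)$ adjunctions, which is part of the formalism used throughout \cite{Ly10}.

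The hard part will be the final sentence, asserting the full coherent compatibility with the monoidal structures. The cleanest way to address this is to reinterpret both lax structures as arising from a single functor out of a suitable $\infty$-category of correspondences in $\PreStk$ in the sense of \cite{R}, along the lines of the factorization formalism of (\cite{Ly10}, Appendices). In that framework the entire data is produced by functoriality from a single morphism of correspondence prestacks, so the higher coherences are automatic and the coherent assertion reduces to the 1-categorical check already carried out.
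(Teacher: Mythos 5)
Your proposal is correct and follows essentially the same route as the paper: the paper's proof consists precisely of observing that $\alpha$ and $\beta$ fit into a commutative square with the open embeddings $\Conv_{\cC,d}\hook{}\Conv_{\cC}$ and $\Conv_{\cM,d}\times_{\cM}\cC\hook{}\Conv_{\cM}\times_{\cM}\cC$ (citing the analogy with \cite{Ras2}, 4.9.3), which is exactly the square your argument is built on. Your more detailed unwinding of the two composites and the correspondence-category remark for the higher coherences are consistent elaborations of what the paper leaves implicit.
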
  
\begin{proof}
This is analogous to (\cite{Ras2}, 4.9.3). It follows from the commutativity of the diagram
$$
\begin{array}{ccc}
\Conv_{\cC} & \toup{\alpha} & \Conv_{\cM}\times_{\cM} \cC\\
\uparrow && \uparrow\\
\Conv_{\cC, d} & \toup{\beta} & \Conv_{\cM, d}\times_{\cM} \cC, 
\end{array}
$$
where we used the notations from the proof of Proposition~\ref{Pp_from_Raskin_rlax}. 
\end{proof}

\sssec{} As in \cite{FG}, write $Com-coalg^{ch}(\Sph_{M,\Conf, +})$ and $Com-coalg^{ch}(\Sph_{M,\Ran, +}^{untl})$ for the corresponding categories of cocommutative coalgebras for the chiral symmetric monoidal structures. We also get the corresponding full subcategories of factorization coalgebras 
$$
Com-coalg^{ch, fact}(\Sph_{M,\Conf, +})\subset Com-coalg^{ch}(\Sph_{M,\Conf, +})
$$
and 
$$
Com-coalg^{ch, fact}(\Sph_{M,\Ran, +}^{untl})\subset Com-coalg^{ch}(\Sph_{M,\Ran, +}^{untl})
$$
as in \select{loc.cit}, and similarly for $\Conf$ replaced by $\Conf^*$. By Proposition~\ref{Pp_from_Raskin_rlax}ii), (\ref{Ranification_functor}) extends to the functor
$$
\cR: Com-coalg^{ch}(\Sph_{M,\Conf, +})\to Com-coalg^{ch}(\Sph_{M,\Ran, +}^{untl}).
$$
One checks that it preserves the corresponding full subcategories of factorization coalgebras. 

\begin{Rem} Though $(\Sph_{M,\Conf^*, +}, \otimes^{ch})$ is pro-nilpotent in the sense of (\cite{FG}, 4.1.1), we warn the reader that $(\Sph_{M,\Conf, +}, \otimes^{ch})$ is not pro-nilpotent in that sense.
\end{Rem}

\sssec{} Consider the functor 
$$
Add-unit: Com-coalg^{ch, fact}(\Sph_{M,\Conf^*, +})\to Com-coalg^{ch, fact}(\Sph_{M,\Conf, +})
$$ 
sending $M$ to $e\oplus M\in \Vect\oplus \Sph_{M,\Conf^*, +}\,\iso\, \Sph_{M,\Conf, +}$, here $e$ is viewed as the constant sheaf on $\Spec k\subset \Conf$. 

\begin{Rem} 
\label{Rem_C.4.8}
Recall from \cite{BD_chiral} that if $K\in CAlg(Shv(X),\otimes^!)$ then $\vartriangle_!K[-1]\in \Lie-alg^{ch}(Shv(\Ran))$ naturally, here $\vartriangle: X\to\Ran$ is the natural map. Indeed, if for $I\in fSets$ one lets $\lambda_I=(e[1])^{\otimes I}[-\mid I\mid]$ then for $I$ of order 2 
$$
\lambda_I\,\iso\, \Hom_{Shv(X^I)}(j_*j^*(\IC_X)^{\boxtimes I}), i_!\IC_X)
$$ 
canonically\footnote{According to our conventions, we ignore the Tate twists}, here $i: X\to X^I$ and $j: \oo{X}{}^I\to X^I$ is the complement to the diagonal. This defines the Lie algebra structure on $\vartriangle_!\IC_X$. Tensoring the Lie bracket $j_*j^*(\IC_X)^{\boxtimes I})\to i_!\IC_X$ with $K^{\boxtimes I}$ and composing with the product map $K\otimes^!K\to K$, one gets the canonical morphism $\lambda_I\to \Hom_{Shv(X^I)}(j_*j^*((K[-1])^{\boxtimes I}), i_!K[-1])$ defining the Lie bracket on $K[-1]$.
\end{Rem}

\sssec{} Let also
$$
Add-unit_X: \Sph_{M, X^{pos, *}}\to \Sph_{M, X^{pos}}
$$
be the functor sending $M$ to $\IC_X\oplus M$, where $\IC_X\in Shv(\Gr_{M, X, +}^0)^{\gL^+(M)_X}$. 

It extends naturally to the functor
$$
Add-unit_X: \Lie-alg^{ch}(\Sph_{M, X^{pos, *}})\to \Lie-alg^{ch}(\Sph_{M, X^{pos}})
$$
between the corresponding categories of Lie algebras with respect to the chiral symmetric monoidal structures. 

\sssec{} As in \cite{FG}, we consider the functor of `homological Chevalley complex' (and its restriction to the full subcategories in the diagrams below)
$$
\begin{array}{ccc}
\Lie-alg^{ch}(\Sph^{untl}_{M,\Ran, +}) & \toup{C^{ch}} & Com-coalg^{ch}(\Sph_{M,\Ran, +}^{untl})\\
\cup && \cup\\
\Lie-alg^{ch}(\Sph_{M, X^{pos}}) & \toup{C^{ch}} & Com-coalg^{ch, fact}(\Sph_{M,\Ran, +}^{untl})
\end{array}
$$
and
$$
\begin{array}{ccc}
\Lie-alg^{ch}(\Sph_{M,\Conf^*, +}) & \toup{C^{ch}} & Com-coalg^{ch}(\Sph_{M,\Conf^*, +})\\
\cup && \cup\\
\Lie-alg^{ch}(\Sph_{M, X^{pos, *}}) & \toup{C^{ch}} & Com-coalg^{ch, fact}(\Sph_{M,\Conf^*, +}).
\end{array}
$$
The fact that $C^{ch}$ preserves the corresponding full subcategories is obtained as in (\cite{FG}, 5.2.1). 

\begin{Pp} 
\label{Pp_C.4.10}
The diagram commutes
$$
\begin{array}{ccc}
\Lie-alg^{ch}(\Sph_{M, X^{pos}}) & \toup{C^{ch}} & Com-coalg^{ch, fact}(\Sph_{M,\Ran, +}^{untl})\\
\uparrow\lefteqn{\scriptstyle Add-unit_X} && \uparrow\lefteqn{\scriptstyle \cR}\\
\Lie-alg^{ch}(\Sph_{M, X^{pos, *}}) & \toup{Add-unit\,\comp\, C^{ch}} & Com-coalg^{ch, fact}(\Sph_{M,\Conf, +})
\end{array}
$$
\end{Pp}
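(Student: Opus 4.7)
The plan is to construct a natural comparison transformation between the two composites and verify it is an equivalence by combining the factorization axiom with the explicit description of the Chevalley--Cousin complex in Section~\ref{Sect_C.3.35_description}. First, decompose the statement by the $-\Lambda_{G,P}^{pos}$-grading. In degree $\theta = 0$, both sides produce the unit factorization coalgebra $\omega_{\Ran}$: on the upper route this is the $0$-component of $C^{ch}(\mathcal{L}\oplus \IC_X)$, fed entirely by the trivial Lie summand $\IC_X$; on the lower route, $Add$-$unit$ inserts $e\in \Vect = \Sph_{M,X^0,+}$ and $\cR^0 = (\tau^0_{untl})^!$ is the pullback along the projection $\Ran/\gL^+(M)_{\Ran}\to \Spec k$, which returns $\omega_{\Ran}$.

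For $\theta\in -\Lambda_{G,P}^{pos,*}$ I will construct the comparison by using the left-lax symmetric monoidal structure on $\cR$ from Proposition~\ref{Pp_from_Raskin_rlax}(ii). Since $\cR$ is compatible with $\otimes^{ch}$ (up to left-lax structure) and preserves factorization coalgebras, it sends the cocommutative factorization coalgebra $Add$-$unit(C^{ch}(\mathcal{L}))$ on $\Conf$ to a factorization cocommutative coalgebra on $\Ran$ equipped with a canonical map to $C^{ch}(Add$-$unit_X(\mathcal{L}))$ built out of the structure maps of both Chevalley complexes. Using the concrete formula of Section~\ref{Sect_C.3.35_description}, on the right the $\theta$-piece of $C^{ch}(\mathcal{L}^+)$ restricted to $\Gr_{M,K,\und\mu}$ is a direct sum over surjections $\phi: J \to K$ and weight assignments $\und\mu: K \to -\Lambda_{G,P}^{pos}$ summing to $\theta$, with slots carrying $\und\mu(k)=0$ contributing $\IC_X$; marginalizing over these null slots picks out the assignments into $-\Lambda_{G,P}^{pos,*}$, which is exactly the combinatorial content of a pair $(D,\cI)\in (X^{-\theta}\times\Ran)^\subset$: $D$ encodes the non-trivial weights while $\cI$ records the full point set including the unit-carrying coordinates.

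To verify the comparison is an equivalence, I reduce to the main diagonal $\vartriangle: X\to \Ran$ by factorization. On this fiber, $\cR(Add$-$unit(C^{ch}(\mathcal{L})))\!\mid_X$ identifies with the restriction of $Add$-$unit_X(\mathcal{L})$ to $X$ in the single-point strata, while the Chevalley-Cousin complex of a Lie algebra on $X$ viewed as a factorization coalgebra on $\Ran$ has the same $X$-fiber by Remark~\ref{Rem_C.4.8} and the construction of $C^{ch}$ in terms of the Cousin stratification. Since both sides are factorization coalgebras on $\Ran$ agreeing after $!$-restriction to $X$, they are canonically isomorphic via the comparison map by the rigidity of factorization coalgebras under restriction to the main diagonal.

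The main obstacle will be step~(i): promoting the pointwise match to a coherent equivalence in $Com$-$coalg^{ch, fact}(\Sph_{M,\Ran,+}^{untl})$, not merely an isomorphism of underlying objects in $\Sph_{M,\Ran,+}^{untl}$. This requires tracking how the left-lax structure on $\cR$ interacts with all the comultiplication data of $C^{ch}$; concretely, one has to check that the natural transformations $\cR(\mathcal{B}_1\otimes^{ch}\mathcal{B}_2)\to \cR(\mathcal{B}_1)\otimes^{ch}\cR(\mathcal{B}_2)$ assemble with the Chevalley differentials into a morphism of the full $\mathbf{\Delta}^{op}$-simplicial objects underlying the two coalgebra structures. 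For this, the diagram analyses from the proofs of Proposition~\ref{Pp_from_Raskin_rlax} together with the formalism of (\cite{FG}, Section~5) should suffice, since the key input is the étale-local structure of the convolution space $\Conv_{\cC,d}$ over $\Conv_{\cM,d}\times_{\cM}\cC$.
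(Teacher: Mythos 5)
Your outline has the right shape, and several steps are sound: the unit/degree-zero component matches on both sides, your computation of the fibres over $X\subset\Ran$ (namely $\omega$ on the zero section plus the Lie algebra shifted by $[1]$ on the strictly negative components) is correct, and your closing argument is valid in the form you intend it — a morphism of \emph{factorization} coalgebras in $\Sph_{M,\Ran,+}^{untl}$ that becomes an equivalence after $!$-restriction to $X$ is an equivalence, by stratifying $X^I$ by partial diagonals and using factorization. The genuine gap sits exactly where you flag "the main obstacle": the comparison morphism $\cR(Add-unit(C^{ch}(\cL)))\to C^{ch}(Add-unit_X(\cL))$ is asserted to be "built out of the structure maps of both Chevalley complexes", but the left-lax structure on $\cR$ from Proposition~\ref{Pp_from_Raskin_rlax}(ii) does not by itself produce it. That structure only says that $\cR$ carries cocommutative $\otimes^{ch}$-coalgebras to cocommutative $\otimes^{ch}$-coalgebras; it does not relate $\cR$ applied to a Chevalley complex computed in $(\Sph_{M,\Conf,+},\otimes^{ch})$ to the Chevalley complex of a Lie algebra computed in $(\Sph_{M,\Ran,+}^{untl},\otimes^{ch})$. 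Note in particular that $\cR(\cL)$ is \emph{not} (the pushforward of) $Add-unit_X(\cL)$: by base change along $\tau_{untl}$, it is the unital spread-out of $\cL$ over the locus where the divisor is supported at one point of the configuration, so the bar-construction terms on the two sides do not match termwise through the oplax maps alone, and your "marginalizing over the null slots" paragraph remains a heuristic rather than a construction.

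What is missing is precisely the input the paper isolates: the Lie algebra lives on the $X$-supported category $\Sph_{M, X^{pos,*}}$, on which the chiral pseudo-tensor structures restricted from $\Sph_{M,\Conf^*,+}$ and from $\Sph_{M,\Ran,+}^{untl}$ coincide (Section~\ref{Sect_Pseudo-tensor structures obtained by restriction}); geometrically this is encoded by the commutative square of closed immersions
$$
\begin{array}{ccc}
\Gr_{M, X^{pos, *}}/\gL^+(M)_X & \hook{} & \Gr^{\subset}_{M,\Ran}/\gL^+(M)_{\Ran}\\
\downarrow && \downarrow\lefteqn{\scriptstyle \tau_{untl}}\\
\Gr_{M, \Conf^*, +}/\gL^+(M)_{\Conf^*} & \hook{} &\Gr_{M, \Conf, +}/\gL^+(M)_{\Conf},
\end{array}
$$
compatible with $\tau_{untl}$, which is what the paper (following Raskin's Proposition-Construction~4.10.1) uses to identify, stratum by stratum in the Cousin description of Section~\ref{Sect_C.3.35_description}, the two composites over the complement of the zero connected component, the zero component being evident. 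Once the comparison is constructed through this compatibility (as a morphism of factorization coalgebras), your restriction-to-$X$ argument does close the proof; without it, the central identification is assumed rather than proved.
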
 
\begin{proof}
This is obtained as in (\cite{Ras2}, Proposition-Construction~4.10.1). Over the connected component $\Ran\,\iso\, \Gr^{0, \subset}_{M, X^0}\subset \Gr^{\subset}_{M,\Ran}$ the isomorphism is evident. It remains to obtain it over the complement open subset. This uses the commutative diagram
$$
\begin{array}{ccc}
\Gr_{M, X^{pos, *}}/\gL^+(M)_X & \hook{} & \Gr^{\subset}_{M,\Ran}/\gL^+(M)_{\Ran}\\
\downarrow && \downarrow\lefteqn{\scriptstyle \tau_{untl}}\\
\Gr_{M, \Conf^*, +}/\gL^+(M)_{\Conf^*} & \hook{} &\Gr_{M, \Conf, +}/\gL^+(M)_{\Conf},
\end{array}
$$
where we the unnamed maps are the corresponding closed immersions. 
\end{proof}

\sssec{} 
\label{Sect_C.4.11}
As in \cite{FG}, write $\Ind^{*\to ch}_{\Lie}$ for the left adjoint to the oblivion functor 
$$
\Lie-alg^{ch}(\Sph_{M, X^{pos, *}})\to \Lie-alg^*(\Sph_{M, X^{pos, *}}),
$$ 
here the superscript $*$ means that we consider the $\otimes^*$-monoidal structure on the corresponding category. As in \cite{FG}, we write 
$$
\triv: \Sph_{M, X^{pos, *}}\to \Lie-alg^*(\Sph_{M, X^{pos, *}})
$$ 
for the functor sending $K$ to itself with the zero Lie bracket. As in (\cite{FG}, 6.5.2), one has the PBW-filtration on $\Ind^{*\to ch}_{\Lie}(M)$ for $M\in \Lie-alg^*(\Sph_{M, X^{pos, *}})$. Moreover, for $K\in \Sph_{M, X^{pos, *}}$ and $M=\triv(K)$ this filtration splits canonically, and we get 
$$
\Ind^{*\to ch}_{\Lie}(\triv(K))[1]\,\iso\, \underset{m>0}{\oplus} \Sym^{m, !}(K[1]),
$$
where $\Sym^{m, !}$ is calculated in the symmetric monoidal category $(\Sph_{M, X^{pos, *}}, \otimes^!)$. Thus, 
$$
Add-unit_X(\Ind^{*\to ch}_{\Lie}(\triv(K)))\,\iso\, \left(\underset{m\ge 0}{\oplus} \Sym^{m, !}(K[1])\right)[-1]\in \Lie-alg(\Sph_{M, X^{pos}}),
$$
where the symmetric algebra is calculated in $(\Sph_{M, X^{pos}}, \otimes^!)$.

\sssec{} 
\label{Sect_C.4.12_toadd}
If $C(X)\in CAlg^{nu}(Shv(X)-mod)$ then $\Fact(C)$ is equipped with the chiral symmetric monoidal structure defined in (\cite{Ly10}, 2.2.5). If $\cE\in CAlg(C(X)), \otimes^!)$ then  
$\cE[-1]\in \Lie-alg^{ch}(\Fact(C))$ naturally as in Remark~\ref{Rem_C.4.8}, and one has canonically 
$$
C^{ch}(\cE[-1])\,\iso\,\Fact(\cE)\in Com-coalg^{ch}(\Fact(C)).
$$ 
 
\sssec{Proof of Proposition~\ref{Pp_3.2.5_Satake}} 
\label{Sect_C.4.12}
Combine Proposition~\ref{Pp_C.4.10} and 
Sections~\ref{Sect_C.4.11}-\ref{Sect_C.4.12_toadd}. We also use the fact that the functor $\Sat^{untl}_{M,\Ran}$ is a factorization functor, hence is symmetric monoidal with respect to the symmetric monoidal structures $\otimes^{ch}$ on both categories (and similarly for $\Sat_{M, \Conf^*}$). 
\QED 

\printindex

\end{document}